\newtheoremstyle{quest}{\topsep}{\topsep}{}{}{\bfseries}{}{ }{\thmname{#1}\thmnote{ #3}.}
\theoremstyle{quest}
\theoremstyle{plain}
\theoremstyle{definition}
\newtheorem{theorem}{Theorem}[section]
\newtheorem{corollary}[theorem]{Corollary}
\newtheorem{proposition}[theorem]{Proposition}
\newtheorem{lemma}[theorem]{Lemma}
\newtheorem{requirement}[theorem]{Requirement}
\newtheorem{definition}[theorem]{Definition}
\newtheorem{remark}[theorem]{Remark}
\definecolor{dkgreen}{rgb}{0,0.6,0}
\definecolor{gray}{rgb}{0.5,0.5,0.5}
\definecolor{mauve}{rgb}{0.58,0,0.82}
\tiny\color{gray},  
 \numberwithin{equation}{section} 
\title{\vspace{-50pt} Global Steady Prandtl Expansion Over a Moving Boundary}
\author{ \Large Sameer Iyer \footnote{\url{sameer_iyer@brown.edu}. Division of Applied Mathematics, Brown University, 182 George Street, Providence, RI 02912, USA. Partially supported by NSF grant 1209437. }}
\date{September 15, 2016}
\newcommand{\ud}{\,\mathrm{d}}
\newcommand{\p}{\ensuremath{\partial}}
\newcommand{\n}{\ensuremath{\nonumber}}
\newcommand{\eps}{\ensuremath{\epsilon}}
\begin{document}
\maketitle
\vspace{-30pt}
\begin{center}
\end{center}

\begin{abstract}
In this three-part monograph, we prove that steady, incompressible Navier-Stokes flows posed over the moving boundary, $y = 0$, can be decomposed into Euler and Prandtl flows in the inviscid limit globally in $[1, \infty) \times [0,\infty)$, assuming a sufficiently small velocity mismatch. Sharp decay rates and self-similar asymptotics are extracted for both Prandtl and Eulerian layers. We then develop a functional framework to capture precise decay rates of the remainders, and prove the corresponding embedding theorems by establishing weighted estimates for their higher order tangential derivatives. These tools are then used in conjunction with a third order energy analysis, which in particular enables us to control the nonlinearity $vu_y$ globally. 
\end{abstract}

\tableofcontents

\section{Introduction}

We consider the steady, incompressible Navier-Stokes equations in two dimensions:
\begin{align} \label{NS.1}
&U^{NS}U^{NS}_X + V^{NS}U^{NS}_Y + P^{NS}_X = \epsilon \Delta U^{NS}, \\ \label{NS.2}
&U^{NS}V^{NS}_X + V^{NS}V^{NS}_Y + P^{NS}_Y = \epsilon \Delta V^{NS}, \\ \label{NS.3}
&U^{NS}_X + V^{NS}_Y = 0,
\end{align}
in the domain, 
\begin{align} \label{N}
\Omega = [1, \infty) \times \mathbb{R}_+.
\end{align}
The boundary $Y=0$ is moving with velocity $u_b > 0$. The no-slip boundary conditions are placed on this portion of the boundary: 
\begin{equation} \label{NS.noslip}
U^{NS}(X,0) = u_b = 1-\delta, \hspace{3 mm} V^{NS}(X,0) = 0. 
\end{equation}

The boundary conditions at $X = 1$ will be prescribed explicitly in the text. We take $X = 1$ for convenience (this enables us to replace weights of $(1+x)^k$ with $x^k$). Throughout this paper, we assume that prescribed Euler flow is the shear flow: 
\begin{equation} \label{E.1}
(U^E, V^E) = (1,0). 
\end{equation}

We are interested in the limit as $\epsilon \rightarrow 0$. Formally, one expects that the solutions to Navier-Stokes equations in (\ref{NS.1}) - (\ref{NS.3}) converges to the Euler shear flow in (\ref{E.1}). This does not happen, however, due to the mismatch at the boundary $Y = 0$, between the no-slip condition enforced for Navier-Stokes, (\ref{NS.noslip}), and $U^E(X, Y=0) = 1$. 

To account for the mismatch at the boundary, Prandtl in 1904 proposed a thin fluid boundary layer which connects the velocity of $u_b$ to the Euler velocity of $1$. The Prandtl hypothesis is that the Navier-Stokes solutions can be decomposed, up to leading order in $\epsilon$, as the sum of the prescribed Euler flow and a boundary layer, the latter of which corrects the disparity at the boundary between Euler and Navier-Stokes: 
\begin{equation} \label{bltheory}
U^{NS} = 1 + u^0_p + h.o.t(\epsilon), \hspace{3 mm} V^{NS} = 0 + \sqrt{\epsilon}v^0_p + \sqrt{\epsilon} v^1_e + h.o.t(\epsilon). \footnote{Here, ``h.o.t" is an acronym for ``higher order terms."}
\end{equation}

\textit{The contribution of this paper is to validate the boundary layer theory, equations (\ref{bltheory}), in the domain $\Omega$, which in particular implies that the tangential variable can be taken in $[1,\infty)$ if the mismatch is sufficiently small}: 
\begin{equation}
U^E - U^{NS}|_{Y=0} = 1 - u_b = 1 - (1-\delta) = \delta << 1. 
\end{equation}

\subsection*{Boundary Layer Expansion}

We will work with scaled, boundary layer variables: 
\begin{equation} \label{scaled.variables}
x = X, \hspace{3 mm} y = \frac{Y}{\sqrt{\epsilon}}.
\end{equation}

The scaled Navier-Stokes unknowns are then given by: 
\begin{equation}
U^\epsilon(x,y) = U^{NS}(X,Y), \hspace{3 mm} V^\epsilon(x,y) = \frac{V^{NS}(X,Y)}{\sqrt{\epsilon}}, \hspace{3 mm} P^\epsilon(x,y) = P^{NS}(X,Y). 
\end{equation}

These unknowns satisfy the following system: 
\begin{align} \label{scaled.NS.1}
&U^{\epsilon} U^{\epsilon}_x + V^{\epsilon} U^{\epsilon}_y + P^{\epsilon}_x = U^{\epsilon}_{yy} + \epsilon U^{\epsilon}_{xx}, \\ \label{scaled.NS.2}
&U^{\epsilon} V^{\epsilon}_x + V^{\epsilon} V^{\epsilon}_y + \frac{P^{\epsilon}_y}{\epsilon} = V^{\epsilon}_{yy} + \epsilon V^{\epsilon}_{xx}, \\ \label{scaled.NS.3}
&U^{\epsilon}_x + V^{\epsilon}_y = 0. 
\end{align}

Note that the steady Prandtl system is obtained by considering the leading order in $\eps$ of the above system (\ref{scaled.NS.1}) - (\ref{scaled.NS.3}). We start with the following asymptotic expansion: 
\begin{align} \label{expansion.1}
&U^\epsilon(x,y) = 1 + u^0_p + \sum_{i=1}^{n} \epsilon^{\frac{i}{2}} u^i_e + \epsilon^{\frac{i}{2}} u^i_p  + \epsilon^{\frac{n}{2}+\gamma} u(x,y), \\ \label{expansion.2}
&V^\epsilon(x,y) = \sum_{i=0}^{n-1} \epsilon^{\frac{i}{2}} v^i_p + \epsilon^{\frac{i}{2}}v^{i+1}_e + \epsilon^{\frac{n}{2}}v^n_p +  \epsilon^{\frac{n}{2}+\gamma} v(x,y), \\ \label{expansion.3}
& P^\epsilon(x,y) = \sum_{i=1}^n  \epsilon^{\frac{i}{2}}P^i_e + \epsilon^{\frac{i}{2}} P^i_p + \epsilon^i P^{i,a}_e + \epsilon^{\frac{i+1}{2}} P^{i,a}_p + \epsilon^{\frac{n}{2}+\gamma}P(x,y). 
\end{align}

Here, $\gamma \in [0,\frac{1}{4})$. We will use the word ``profiles" to refer to the terms which appear in the expansions (\ref{expansion.1}) - (\ref{expansion.2}), excluding the remainders, $[u,v,P]$. All of the profiles with subscript-$e$ are functions of Eulerian variables, $(x,Y)$, whereas all terms with subscript-$p$ are functions of boundary layer variables, $(x,y)$. Here, $[u^i_p, v^i_p]$ are boundary layers to be constructed. The number of intermediate layers, $n$, is dependent on universal constants. The pressures $P^i_e, P^i_p$ are the pressures associated with the $i'th$ Euler and Prandtl layers, respectively. We will show that $P^i_p = 0$, that is the leading-order pressure in the boundary layers is zero. The pressures $P_P^{i,a}, P^{1,a}_e$ are \textit{auxiliary} pressures, which are higher-order, whose purpose is to capitalize on the gradient structure of our problem (see \ref{grad.pres.2}). After these layers are constructed, the Navier-Stokes remainders $[u,v,P]$ are then constructed. Let us now designate names for the partial expansions:
\begin{align} \label{barus}
u_s^{(i)} := 1 + \sum_{j=0}^{i-1} \epsilon^{\frac{j}{2}} u^j_p + \sum_{j=1}^i \epsilon^{\frac{j}{2}}u^j_e, \hspace{3 mm} \bar{u}^{(i)}_s := u_s^{(i)} + \epsilon^{\frac{i}{2}} u^i_p = 1 + \sum_{j=0}^{i} \epsilon^{\frac{j}{2}} u^j_p + \sum_{j=1}^i \epsilon^{\frac{j}{2}}u^j_e \\ \label{barvs}
v_s^{(i)} := \sum_{j=0}^{i-1} \epsilon^{\frac{j}{2}} v^j_p + \sum_{j=1}^i \epsilon^{\frac{j}{2}-\frac{1}{2}} v^j_e, \hspace{3 mm} \bar{v}^{(i)}_s := v_s^{(i)} + \epsilon^{\frac{i}{2}} v^i_p =  \sum_{j=0}^{i} \epsilon^{\frac{j}{2}} v^j_p + \sum_{j=1}^i \epsilon^{\frac{j}{2}-\frac{1}{2}} v^j_e,
\end{align}

For the Pressure expansion: 
\begin{align}
P_{s}^{(i)} &:= \sum_{j=1}^{i-1} \epsilon^{\frac{j}{2}} P_p^j + \sum_{j=1}^{i-1} \epsilon^{\frac{j+1}{2}}P_p^{j,a} + \sum_{j=1}^i  \epsilon^{\frac{j}{2}} P^j_e + \sum_{j=1}^i \epsilon^j P^{j, a}_e. \\
\bar{P}_s^{(i)} &:= \sum_{j=1}^i \epsilon^{\frac{j}{2}} P_p^j + \sum_{j=1}^i \epsilon^{\frac{j}{2}} P^j_e + \sum_{j=1}^i \epsilon^j P^{j, a}_e + \sum_{j=1}^i \epsilon^{\frac{j+1}{2}}P_p^{j,a}.
\end{align}

We insert the expansions (\ref{expansion.1}) - (\ref{expansion.3}) into (\ref{scaled.NS.1}) - (\ref{scaled.NS.3}) and collect a heirarchy of equations in powers of $\epsilon$. Doing so yields the linearized Prandtl-equations: 
\begin{align} \label{sys.pr.i.intro}
&(1 + u^0_p) u_{px}^{i} + u_{sx}^{(i)}u_p^{i} + v_s^{(i)} u^i_{py} + u_{py}^{0}\Big(v^i_p - v^i_p(x,0)\Big) + P^i_{px} = u^i_{pyy} + f^{(i)}, \\
&u^i_p(x,0) = -u^i_e(x,0), \hspace{3 mm} \lim_{y \rightarrow \infty} u^i_p(x,y) = 0, \hspace{3 mm} u^i_p(1,y) = U_i(y). 
\end{align}

and the Euler equations: 
\begin{align} \label{i.Euler.intro}
u^{i}_{ex} + P^i_{ex} = 0, \hspace{3 mm} v^i_{ex} + P^i_{eY} = 0, \hspace{3 mm} u^i_{ex} + v^i_{eY} = 0. 
\end{align}

The forcing term $f^{(i)}$ will be defined precisely in (\ref{f.i}). These equations are derived rigorously in the analysis leading up to equations (\ref{euler.1.V.0}), (\ref{pr.eqn.1}),  (\ref{i.Euler}), and (\ref{sys.pr.i}).

\subsection*{Boundary Data:} \label{subsection.BD}

The no-slip boundary condition at the boundary $\{y = 0\}$ is the most important, and must be enforced at each order in $\epsilon$, which gives: 
\begin{align} \label{intro.BC.1}
u^0_p(x,0) = -\delta, \hspace{3 mm} u^i_e(x,0) + u^i_p(x,0) = 0 \text{ for } i \ge 1, \hspace{3 mm} u(x,0) = 0, \\
v^{i-1}_p(x,0) + v^{i}_e(x,0) = 0 \text{ for } i \ge 1, \hspace{3 mm} v^n_p(x,0) = 0, \hspace{3 mm} v(x,0) = 0. 
\end{align}

The in-flow ($x = 1$) boundary conditions for the leading order boundary layer, $u^0_p$, is: 
\begin{align}  \label{intro.BC0.1}
u^0_p(1,y) = U_0(y), \hspace{3 mm} U_0(0) = -\delta, \hspace{3 mm} \lim_{y \rightarrow \infty} U_0(y) = 0.
\end{align}

We assume the rapid decay of the profile: 
\begin{align}  \label{intro.BC0.2}
||\langle y \rangle^m \p_y^j U_0(y)||_{L^\infty} \le C(m,j), \text{ for any } m, j \ge 0.
\end{align}

We will in addition assume the following smallness condition:
\begin{align} \label{intro.BC0.3}
||\langle y \rangle^m \p_y^j U_0(y)||_{L^\infty} \le \mathcal{O}(\delta; j, m) \text{ for any } m \ge 0, j = 0,1,2.
\end{align}

The in-flow ($x = 1$) boundary conditions for the boundary-layer profiles are: 
\begin{align} \label{intro.BC.2}
u^i_p(1,y) = U_i(y), \hspace{3 mm} u(1,y) = 0, \hspace{3 mm} v(1,y) = 0, \text{ for } 1 \le i \le n - 1.
\end{align}

Here, $U_i(y)$, for $i \le 1 \le n-1$, will be prescribed to be rapidly decaying in $y$, so: 
\begin{align} \label{dd}
||\langle y \rangle^m u^i_p(1,y)||_{L^\infty} = ||\langle y \rangle^m U_i(y)||_{L^\infty} \le C(i,m) \text{ for } 1 \le i \le n-1.
\end{align}

For the final Prandtl layer, $u^n_p$, which occurs at order $\eps^{\frac{n}{2}}$, the in-flow data is determined through the analysis and is not explicitly prescribed. This is due to retaining that $[u^n_p, v^n_p]$ are divergence free, while cutting off $v^n_p$ for large values of $y$. The reader is invited to turn to equation (\ref{cutoff.defn}) and corresponding discussion for details regarding this matter. We will enforce $||\langle y \rangle^m U_n(y)||_{L^\infty} \le C(m)$. However $U_n$ is an auxiliary in-flow, which is used to construct $[u^n_p, v^n_p]$. That is: $u^n_p(1,y) \neq U_n(y)$, and the in-flow velocity, $u^n_p(1,y)$, is given by an implicit, bounded profile which decays as $y \rightarrow \infty$. We will need several compatibility conditions on the in-flow data for the Prandtl layers. The first of these is:
\begin{align} \label{compatibility.1}
U_0(0) &= -\delta, \hspace{3 mm} \partial_{yy} U_0(y) = 0, \hspace{3 mm} U_i(0) = -u^i_e(1,0).
\end{align}

However, we shall also need higher-order compatibility conditions on the $U_i(y)$ at $y = 0$, (see for instance Remarks \ref{hoc}, \ref{hoc2}) which we refrain from depicting explicitly here. Finally, the boundary conditions of the boundary-layer profiles as $y \rightarrow \infty$ are:
\begin{align} \label{intro.BC.4}
\lim_{y \rightarrow \infty} [u^i_p(x), v^i_p(x)] = \lim_{y \rightarrow \infty} [u(x),v(x)] = 0 \text{ for all } x \ge 1, \text{ and all }  0 \le i \le n. 
\end{align} 

These boundary conditions are known as the ``matching condition", and physically correspond to the Navier-Stokes flow matching the outer Euler flow away from the boundary, $y = 0$. According to our construction, we will have rapid matching up to order $\eps^{\frac{n-1}{2}}$: 
\begin{equation}
\lim_{y \rightarrow \infty} [\langle y \rangle^N u^i_p(x), \langle y \rangle^N v^i_p(x)] = 0 \text{ for all } x \ge 1, \text{ for } 1 \le i \le n-1.
\end{equation}

At the highest-order in $\eps$, we enforce the matching condition: 
\begin{equation} \label{intro.BC.match}
\lim_{y \rightarrow \infty} \eps^{\frac{n}{2}}[u^n_p(x,y), v^n_p(x,y)] = \lim_{y \rightarrow \infty} \eps^{\frac{n}{2}+\gamma}[u(x,y), v(x,y)] = 0 \text{ for all } x \ge 1.
\end{equation}

Let us now turn to the Euler flows. At leading order, we have the prescription: $[u^0_e, v^0_e] = [1,0]$. The higher-order Euler flows will be described starting in Section \ref{section.euler} and Subsection \ref{sub.EL}. The higher-order Euler flows are obtained as suitable Poisson extensions of the $Y=0$ boundary data, $v^{i-1}_p(x,0)$ (see (\ref{intro.BC.1})), which depend on the constructed Prandtl layers. For these higher-order Euler flows, we do not prescribe the in-flow data, $u^i_e(1,Y)$. Rather, the in-flow conditions are obtained through the analysis, so we state: 
\begin{equation} \label{IF.EUL}
\text{Eulerian In-Flow  =} 1 + \sum_{i=1}^n \epsilon^{\frac{i}{2}} u^i_e(1,\cdot). 
\end{equation}

\subsection*{Main Result:}

In order to state our main result, we need to introduce the norm $Z$ in which we control the remainder solutions, $[u,v]$:
\begin{definition} The norm $Z$ is defined through: 
\begin{align} \nonumber
||u,v||_Z := &||u,v||_{X_1 \cap X_2 \cap X_3} + \epsilon^{N_2} ||u,v||_{Y_2} + \epsilon^{N_3} ||u,v||_{Y_3} + \epsilon^{N_4} ||ux^{\frac{1}{4}} , \sqrt{\eps} v x^{\frac{1}{2}}||_{L^\infty} \\ \nonumber 
&+ \epsilon^{N_5} \sup_{x \ge 20} ||\sqrt{\eps} v_x x^{\frac{3}{2}} , u_x x^{\frac{5}{4}} ||_{L^\infty} + \eps^{N_6}  \sup_{x \ge 20} ||u_y x^{\frac{1}{2}}||_{L^2_y}\\ \label{norm.Z.INTRO}
& + \epsilon^{N_7} \Big[\int_{20}^\infty x^4 ||\sqrt{\eps} v_{xx}||_{L^\infty_y}^2 dx \Big]^{\frac{1}{2}} .
\end{align}
\end{definition}
Here, $N_i$, are large numbers which will be specified in (\ref{sel.N2}) - (\ref{sel.N4}). They depend only on universal constants. The parameter $n$ from (\ref{expansion.1}) - (\ref{expansion.2}) will be taken much larger than any of the $N_i$. The norms $||\cdot||_{X_i}$ are energy norms defined in (\ref{norm.x0}) - (\ref{norm.x2}). The norms $||\cdot||_{Y_i}$ are elliptic norms defined in (\ref{norm.Y2}) - (\ref{norm.Y3}). For the purposes of stating the main result, we can refrain from being too specific with regards to the definitions of these norms. The essential point that we will record concerns the uniform component: 
\begin{equation} \label{intro.comp.Z}
\epsilon^{N_4} ||ux^{\frac{1}{4}} , \sqrt{\eps} v x^{\frac{1}{2}}||_{L^\infty} \le ||u,v||_{Z}, 
\end{equation}

\vspace{2 mm}

The main result of this paper is:

\begin{theorem} \label{thm.m.1} Suppose the the outer Euler flow is prescribed with $u^0_e = 1$. Suppose the boundary and in-flow data are specified satisfying the conditions outlined in (\ref{intro.BC.1}) - (\ref{intro.BC.match}). Then there exists an $n$ depending on only universal constants such that the asymptotic expansions in (\ref{expansion.1}) - (\ref{expansion.3}) are valid globally on the domain $\Omega$, for $0 \le \gamma < \frac{1}{4}$, so long as the mismatch between the Eulerian boundary trace and the motion of the boundary, $\delta$, and the viscosity, $\eps$, are taken sufficiently small relative to universal constants, and $\eps << \delta$. The remainders, $[u,v]$, in the expansions (\ref{expansion.1}) - (\ref{expansion.2}) are uniquely determined in the space $Z$:
\begin{align}
||u,v||_{Z} \lesssim \eps^{\frac{1}{4}-\gamma - \kappa},
\end{align} 

where $\kappa$ is any fixed constant such that $\gamma + \kappa < \frac{1}{4}$.
\end{theorem}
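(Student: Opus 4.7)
My plan is to decouple the argument into two large steps: first, \emph{profile construction} --- building the Euler layers $\{u^i_e,v^i_e\}$ and Prandtl layers $\{u^i_p,v^i_p\}$ together with their pressures, and deriving sharp (self-similar) weighted decay estimates --- and second, \emph{remainder analysis} --- solving the perturbed Navier--Stokes system for $[u,v,P]$ in the norm $Z$. I would build profiles inductively in $i$: the leading Prandtl corrector $u^0_p$ satisfies a genuine nonlinear Prandtl equation with boundary trace $-\delta$ at $y=0$, which under the smallness (\ref{intro.BC0.3}) is solvable globally via an Oleinik-type approach; the key output is the self-similar $x\to\infty$ asymptotics, yielding a positive lower bound on $u^0_{py}$ in the matched region that will serve as the coercive multiplier powering the remainder energy estimates. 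For $i\ge 1$, the linearized system (\ref{sys.pr.i.intro}) is solved around the already-constructed $u^0_p$, and each higher-order Euler layer is obtained as the Poisson extension to $\{Y\ge 0\}$ of the Prandtl trace $-v^{i-1}_p(x,0)$ from (\ref{intro.BC.1}); the auxiliary pressures $P^{i,a}_p,P^{i,a}_e$ are then chosen to restore the gradient structure alluded to in the introduction.

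Once enough profiles are in place that the truncation error $R^{(n)}$ obtained by inserting the partial expansion into (\ref{scaled.NS.1})--(\ref{scaled.NS.3}) is $\mathcal{O}(\eps^{N})$ in the relevant weighted spaces, I would set up a fixed-point iteration. Writing $L[u,v] = \eps^{-n/2-\gamma}R^{(n)} + N(u,v)$, where $L$ is the Navier--Stokes operator linearized about $(\bar u_s^{(n)},\bar v_s^{(n)})$ and $N$ collects the quadratic remainder terms, I would iterate $L[u^{(k+1)},v^{(k+1)}] = \eps^{-n/2-\gamma}R^{(n)}+N(u^{(k)},v^{(k)})$ and show that this map is a contraction on a ball in $Z$ of radius of order $\eps^{1/4-\gamma-\kappa}$ once $\eps,\delta$ are small and $\eps \ll \delta$. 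Uniqueness in $Z$ follows from the same linear estimate applied to the difference of two solutions.

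The bulk of the work is therefore the a priori theory for $L$ in $Z$. The energy components $\|\cdot\|_{X_1},\|\cdot\|_{X_2},\|\cdot\|_{X_3}$ would come from first-, second-, and third-order tangential energy identities paired with weights in $x$ designed so that boundary contributions at $x=1$ and $x=\infty$ are benign and the coercive $u^0_{py}$ term dominates the commutator errors. The elliptic components $\|\cdot\|_{Y_2},\|\cdot\|_{Y_3}$ would be obtained by inverting the scaled Laplacian and treating the convective terms perturbatively in $\sqrt{\eps}$. The weighted $L^\infty$ pieces of (\ref{norm.Z.INTRO}) --- uniform bounds on $ux^{1/4},\sqrt{\eps}vx^{1/2}$ and their $\p_x$-derivative variants, and $L^2_x$ control of $\sqrt{\eps}v_{xx}x^2$ --- would then be recovered from the energy and elliptic controls via weighted Gagliardo--Nirenberg embeddings applied to higher-order tangential derivatives, precisely the embedding theorems announced in the abstract.

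The central obstacle, and the reason a \emph{third}-order energy is forced on us, is global control on $[1,\infty)\times[0,\infty)$ of the transport term $vu_y$: the $\sqrt{\eps}$ factor attached to $v$ does not suffice because $u_y$ carries no smallness, and integrating by parts in $y$ against $u$ or $u_x$ eventually demands an $L^\infty_y$ bound on $\sqrt{\eps} v_{xx}$ carrying the strong $x^2$-weight built into the last summand of (\ref{norm.Z.INTRO}). My plan is to execute the three energy estimates in coupled fashion, at each order using the positivity of $u^0_{py}$ to absorb the dangerous commutator and the weighted $L^\infty$ embeddings to close back on the top-order quantities. Once the linear $Z$-estimate is in hand, the quadratic nonlinearity $N(u,v)$ is small thanks to (\ref{intro.comp.Z}), the iteration converges, and the resulting unique remainder obeys $\|u,v\|_Z \lesssim \eps^{1/4-\gamma-\kappa}$ as claimed.
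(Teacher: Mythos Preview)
Your overall two-step architecture (profile construction, then remainder analysis) matches the paper, and your identification of the third-order energy as being forced by the $vu_y$ nonlinearity is correct. But there is a genuine gap in your existence/uniqueness scheme.

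You propose to obtain the remainder by contraction mapping: iterate $L[u^{(k+1)},v^{(k+1)}] = \eps^{-n/2-\gamma}R^{(n)}+N(u^{(k)},v^{(k)})$ and show the map contracts on a small $Z$-ball. This will not close. The lowest-order energy identity (multiplier $u$) produces the term $\int\int \eps^{n/2+\gamma}\,\bar v\,u_y\cdot u$, and the only way to estimate it at the correct weight is to integrate by parts in $y$ to get $-\tfrac12\int\int \eps^{n/2+\gamma}\,\bar v_y\,u^2$, which then succumbs to $\|\bar v_y x^{1/2}\|_{L^2}\|u x^{-1/2-}\|_{L^2}\|u x^{1/4}\|_{L^\infty}$. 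This relies on the \emph{perfect-derivative} structure $u_y\cdot u=\partial_y(u^2/2)$, which is destroyed the moment you take differences: for $\hat u=u_1-u_2$, the term $(v_1 u_{1y}-v_2 u_{2y})\cdot\hat u$ contains $\hat v\,u_{1y}\cdot\hat u$, and there is no cancellation to exploit. Without that cancellation you are forced to put $\hat v$ in $L^\infty$ and $u_{1y},\hat u$ in weighted $L^2$, but the weights available in $Z$ give exactly $\|\hat v x^{1/2}\|_{L^\infty}\|u_{1y}\|_{L^2}\|\hat u x^{-1/2}\|_{L^2}$, and the last factor hits the \emph{critical} Hardy inequality (power $x^{-1/2}$ in $L^2$) and cannot be bounded by $\|\hat u,\hat v\|_Z$. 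The paper avoids this by proving existence via \emph{compactness} (Schaefer fixed point on an auxiliary $\alpha$-regularized system with weighted Stokes operator, then $\alpha\to 0$, $N\to\infty$), never needing the contraction estimate.

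For the same reason, your uniqueness argument (``the same linear estimate applied to the difference'') is incomplete. The paper's uniqueness proof re-runs the entire energy/positivity hierarchy with strictly weaker weights $x^{-b}$, $0<b$ small, so that $\|\hat u x^{-1/2-b}\|_{L^2}$ becomes subcritical and Hardy applies; the extra $x^{-b}$ is paid for by a new positive term $b\|\hat u x^{-1/2-b}\|_{L^2}^2$ generated when the weighted multiplier $\hat\psi x^{-2b}$ hits $u_R\hat u_x$. This is a nontrivial additional step, not a corollary of the $Z$-estimate.

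A smaller point: the coercivity in the positivity estimate does not come from a ``positive lower bound on $u^0_{py}$'' (indeed $u^0_{py}\sim x^{-1/2}$ decays). The positive terms are $\int\int u_R v_y^2 x$ and $\int\int \eps u_R v_x^2 x$, driven by $\min u_R\ge 1-\delta>0$. The role of $u^0_{py}$ is rather that its self-similar decay $\|u^0_{py}y^2 x^{-1/2}\|_{L^\infty}\le\mathcal O(\delta)$ is what allows the dangerous convective term $\int\int u^0_{py} uv$ in the energy estimate to be absorbed.
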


Because $\frac{n}{2}$ is large relative to $N_4$ in (\ref{intro.comp.Z}), we immediately find: 

\begin{corollary}[Inviscid $L^\infty$ Convergence] Under the hypothesis of Theorem \ref{thm.m.1}, there exists a unique Navier-Stokes solution $[U^{NS}, V^{NS}, P^{NS}]$ on $\Omega$ such that:
\begin{align}
&\sup_{(X,Y) \in \Omega} \Big|U^{NS}(X,Y) - 1 - u^0_p(X,y) \Big| X^{\frac{1}{4}} \lesssim \epsilon^{\frac{1}{2}}, \\
& \sup_{(X,Y) \in \Omega} \Big|V^{NS}(X,Y) -  \sqrt{\epsilon}v^0_p(X,y) - \sqrt{\epsilon} v^1_e(X,Y)\Big| X^{\frac{1}{2}} \lesssim  \epsilon.
\end{align}
\end{corollary}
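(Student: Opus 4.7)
The plan is to translate the scaled expansion of Theorem \ref{thm.m.1} back to the unscaled Navier--Stokes variables and then absorb everything of order $\varepsilon^{i/2}$ with $i \ge 1$ into the error. Since $U^{NS}(X,Y) = U^\varepsilon(x,y)$ and $V^{NS}(X,Y) = \sqrt{\varepsilon}\,V^\varepsilon(x,y)$ with $x = X$, $y = Y/\sqrt{\varepsilon}$, the expansions (\ref{expansion.1})--(\ref{expansion.2}) give the pointwise identities
\begin{align*}
U^{NS} - 1 - u^0_p &= \sum_{i=1}^{n} \varepsilon^{\frac{i}{2}} (u^i_e + u^i_p) + \varepsilon^{\frac{n}{2}+\gamma} u, \\
V^{NS} - \sqrt{\varepsilon}\, v^0_p - \sqrt{\varepsilon}\, v^1_e &= \sqrt{\varepsilon} \sum_{i=1}^{n-1} \varepsilon^{\frac{i}{2}}(v^i_p + v^{i+1}_e) + \varepsilon^{\frac{n+1}{2}} v^n_p + \varepsilon^{\frac{n+1}{2}+\gamma} v.
\end{align*}
Thus the corollary reduces to controlling each profile term and the Navier--Stokes remainder in a weighted $L^\infty$ norm.

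The main step is the remainder bound. From the embedding (\ref{intro.comp.Z}) and the $Z$-estimate of Theorem \ref{thm.m.1},
\[
\| u\, x^{\frac{1}{4}}\|_{L^\infty} + \|\sqrt{\varepsilon}\, v\, x^{\frac{1}{2}}\|_{L^\infty} \lesssim \varepsilon^{-N_4}\| u, v\|_Z \lesssim \varepsilon^{\frac{1}{4} - \gamma - \kappa - N_4}.
\]
Multiplying by the prefactor $\varepsilon^{\frac{n}{2}+\gamma}$ (respectively $\varepsilon^{\frac{n}{2}+\gamma}$ for $V$, after noting $X^{1/4} \le X^{1/2}$ times a bounded factor on the relevant regions is not needed because the weight is already encoded), the remainder contributes at most $\varepsilon^{\frac{n}{2} + \frac{1}{4} - \kappa - N_4}$ to the $X^{1/4}$-weighted $U$ bound and $\varepsilon^{\frac{n}{2} + \frac{1}{4} - \kappa - N_4}$ to the $X^{1/2}$-weighted $V$ bound. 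Since $n$ is chosen in Theorem \ref{thm.m.1} to be much larger than $N_4$, we may ensure $\frac{n}{2} - \kappa - N_4 \ge \frac{3}{4}$, which makes both contributions $\mathcal{O}(\varepsilon^{N})$ for arbitrarily large $N$, hence in particular $\lesssim \varepsilon^{1/2}$ and $\lesssim \varepsilon$, respectively.

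The intermediate profile sums are handled entirely by the pointwise decay estimates that were established in the construction of the Prandtl and Euler layers earlier in the monograph. Each $u^i_p + u^i_e$ ($i \ge 1$) satisfies a uniform bound $\|u^i_p + u^i_e\|_{L^\infty} \lesssim x^{-1/4}$ with a constant independent of $\varepsilon$, so $\varepsilon^{i/2}$ for $i \ge 1$ gives the $\varepsilon^{1/2}$ bound. The analogous decay $\|v^i_p + v^{i+1}_e\|_{L^\infty} \lesssim x^{-1/2}$ produces the $\varepsilon$ bound for $V^{NS}$. Finally, uniqueness of $[U^{NS}, V^{NS}, P^{NS}]$ in the relevant class is inherited from the uniqueness of $[u,v]$ in $Z$ asserted by Theorem \ref{thm.m.1}, since two Navier--Stokes solutions with the given asymptotics would produce two remainders satisfying the same remainder system with zero data.

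The only genuine obstacle is verifying that the profile decay rates $x^{-1/4}$ and $x^{-1/2}$ hold uniformly for \emph{every} $i$ up to $n$; however, these rates are exactly what the Prandtl/Euler layer constructions in the body of the paper deliver, so the corollary is indeed immediate once Theorem \ref{thm.m.1} is in hand.
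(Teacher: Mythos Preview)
Your proposal is correct and follows the approach the paper implies: the paper declares the corollary ``immediate'' from Theorem \ref{thm.m.1} and (\ref{intro.comp.Z}), and you are simply making that implication explicit by splitting into the intermediate profile sums and the remainder, then invoking the weighted $L^\infty$ bounds from Chapter I together with the $Z$-norm control of the remainder. One minor caveat worth noting is that the profile $u^1_p$ only decays like $x^{-1/4+\sigma_1}$ (Proposition \ref{Pp1}), so the exact $X^{1/4}$ weight in the $U^{NS}$ bound carries a tiny $x^{\sigma_1}$ loss already present in the paper's own estimates; this does not affect your argument.
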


\subsection*{Existing Literature:}

Let us first discuss the issue of establishing wellposedness of the Prandtl equation, which becomes an issue in the unsteady setting (in contrast to the steady setting of the present paper). This program was initiated in the classic works \cite{Oleinik}, \cite{Oleinik1}, in which, under the monotonicity assumption $U^\eps_y(t = 0) > 0$, globally regular solutions are constructed on the $[0,L] \times \mathbb{R}_+$, where $L$ is sufficiently small, and local solutions are constructed for arbitrary, but finite $L$. This was extended in \cite{Xin}, in which global weak solutions were constructed for arbitrary $L$, under both monotonicity and favorable outer-Euler pressure ($\p_x P^E(t,x) \le 0$ for $t \ge 0$) assumptions. 

From a physical standpoint, the monotonicity and favorable pressure assumptions mentioned above are stabilizing and in particular prevent boundary layer separation. This phenomena was known to Prandtl, see Figure 2 in \cite{Prandtl}. More recently, it was announced in \cite{MD} that a proof of boundary layer separation in the steady setting has been obtained. 

The main tool used both in \cite{Oleinik1} and \cite{Xin} is the Crocco transform. Still under monotonicity hypothesis, local wellposedness was obtained in \cite{AL} and \cite{MW}, neither works using the Crocco transform. \cite{AL} use energy methods coupled with a Nash-Moser iteration, and \cite{MW} use energy methods applied to a good unknown which enjoys crucial cancellation properties. Generalizing to multiple monotonicity regions, \cite{KMVW} have shown the Prandtl equation is locally well-posed, if an analyticity assumption is made on the complement of the monotonicity regions. 

Indeed, when the assumption of monotonicity is removed, the wellposedness results are largely in the analytic or  Gevrey setting. The reader should consult \cite{Caflisch1} - \cite{Caflisch2},  \cite{Kuka}, \cite{Lom}, \cite{Vicol}, and \cite{GVM} for some results in this direction. In the Sobolev setting without monotonicity, the equations are linearly and nonlinearly ill-posed (see \cite{GVD} and \cite{GVN}). A finite-time blowup result was obtained in \cite{EE} when the outer Euler flow is taken to be zero, in \cite{KVW} for a particular, periodic outer Euler flow, and in \cite{Hunter} for both the inviscid and viscous Prandtl equations. The above discussion is not comprehensive: we refer the reader to the review articles, \cite{E}, \cite{Temam} and references therein for a more thorough review of the wellposedness theory. 

The question with which we are concerned is the validity of the asymptotic expansion (\ref{expansion.1}) - (\ref{expansion.3}) in the inviscid limit. Let us first discuss unsteady flows. Local-in-time convergence is established in \cite{Caflisch1}, \cite{Caflisch2} in the analyticity framework, in \cite{DMM} in the Gevrey setting, and in \cite{Mae} when the initial vorticity distribution is supported away from the boundary. The reader should see also \cite{Asano}, \cite{Taylor} for related results. Despite the boundary layer classically having thickness $\sqrt{\eps}$, an interesting criteria was given in \cite{Kato} which points to phenomena occurring in a sub-layer of size $\epsilon$. There are also several linear and nonlinear instability results (for instance, \cite{Grenier},  \cite{GGN1}, \cite{GGN2}, \cite{GGN3}, \cite{GN2}) which show the invalidity of Prandtl's expansion generically in Sobolev spaces in the unsteady setting. 

For steady flows, there are very few validity results. \cite{GN} is the first result in this direction, establishing validity of the boundary layer expansion for steady state flows in a rectangular domain over a moving boundary. Geometric effects of the boundary were subsequently considered in \cite{Iyer}. The crucial idea in \cite{GN} was the use of a positivity estimate, which is coupled with energy estimates and elliptic estimates. Both of these results are \textit{local} in the tangential variable. In the present work, we prove validity of the boundary layer expansion \textit{globally} in the tangential variable, $x$, in the setting of small data. 

One preliminary piece of our analysis is to obtain the asymptotics of the Prandtl layer, $u^0_p$. This has nontrivial dynamics due to the mismatched boundary conditions, $u^0_p(y = 0) = -\delta$, while $\lim_{y \rightarrow \infty} u^0_p(y) = 0$. These asymptotics were first studied in \cite[pg. 493, Inequality 5]{Serrin} using maximum principle techniques and are valid for large data. The result in \cite{Serrin} gives that the difference  between $u^0_p$ and a Gaussian``front" solution to the heat equation (call it $w$) is $o(1)$ in $x$, uniformly in $y$. Under the hypothesis of small data, we sharpen these asymptotics in the following sense: first, $w$ is shown to belong to a higher-order Sobolev space $H^k(m)$, where this weight is in the self-similar variable $z = \frac{y}{\sqrt{x}}$. Second, we obtain rates of decay of $w$ in $x$ in various norms. 

We now detail the main difficulties and ideas behind our analysis.

\subsection*{Sharp Decay of Profiles (Chapter I):}

The key issue that we must capture in our analysis is the decay as $x \rightarrow \infty$ of various quantities. More specifically, a central difficulty is to control contributions from the nonlinearity $V^\epsilon U^\epsilon_y$. Let us now introduce the equations for the remainders, $[u,v,P]$:
\begin{align} \label{intro.NSR.sys.1}
-\Delta_\epsilon u + S_u + P_x = f, \hspace{3 mm} -\Delta_\epsilon v + S_v + \frac{P_y}{\epsilon} = g, \hspace{3 mm} u_x + v_y = 0.
\end{align}

Here, the terms $S_u, S_v$ contain the linearizations of $[u,v]$ around the previously constructed profiles, $[\bar{u}_s^{(n)}, \bar{v}_s^{(n)}]$. These terms, together with $f, g$ are specifically defined in (\ref{defn.SU.SV}) - (\ref{defn.Su}). To organize this discussion, let us record the heuristic: 
\begin{equation} \label{heur.SF.1}
\text{ Difficult Contributions from $V^\eps U^\eps_y$  = } \bar{u}^{(n)}_{sy} v + \bar{v}^{(n)}_su_y + \eps^{\frac{n}{2}+\gamma} vu_y.
\end{equation}

First, let us discuss $\bar{u}_{sy}^{(n)}v$ from (\ref{heur.SF.1}), which will motivate the crucial decay rates appearing in (\ref{REQ}). Applying the scaled multiplier of $(u,\epsilon v)$ to the system (\ref{intro.NSR.sys.1}) requires controlling the large convective term, $\int \int u^0_{py}uv$. We do not have the ability to create a derivative through the Poincare inequality, and so we trade factors of $x$ and $y$ in the following manner:
\begin{equation} \label{intro.cvc}
\Big| \int \int u^0_{py} uv \Big| \le ||u^0_{py} y^2 x^{-\frac{1}{2}}||_{L^\infty}^2 ||\frac{u}{y}||_{L^2} ||\frac{v x^{\frac{1}{2}}}{y}||_{L^2} \le \mathcal{O}(\delta) ||u_y||_{L^2} ||v_y x^{\frac{1}{2}}||_{L^2}.
\end{equation}

The first crucial observation we make is the identification of a self-similar front$, \phi_\ast(\frac{y}{\sqrt{x}})$, which bridges the boundary conditions: $u^0_p(x,0) = -\delta$, $u^0_p(x,\infty) = 0$. Then, temporarily identifying $u^0_p \approx \phi_\ast(\frac{y}{\sqrt{x}})$, (\ref{intro.cvc}) will be satisfied:
\begin{requirement}[Self-Similarity of Prandtl profiles]
\begin{equation} \label{intro.s.s.absorb}
 ||y^2 x^{-\frac{1}{2}} u^0_{py}||_{L^\infty} = ||y^2 x^{-1} \phi_\ast'\Big(\frac{y}{\sqrt{x}} \Big) ||_{L^\infty} = ||z^2 \phi_\ast'(z)||_{L^\infty} \le \mathcal{O}(\delta). 
 \end{equation}
\end{requirement}

Summarizing the energy estimate that we obtain: 
\begin{equation} \label{intro.EE}
||u_y||_{L^2}^2 \le \mathcal{O}(\delta) ||\{\sqrt{\eps}v_x, v_y\} x^{\frac{1}{2}}||_{L^2}^2 + \text{ Forcing Terms.}
\end{equation}

Above, the key point is the loss of weight, $x^{\frac{1}{2}}$. The next ingredient is recovering this weight in the Positivity estimate, (see Proposition \ref{prop.pos}), which is summarized:  
\begin{equation} \label{intro.PE}
||\{\sqrt{\eps}v_x, v_y\} x^{\frac{1}{2}}||_{L^2}^2 \lesssim ||u_y||_{L^2}^2 + \text{ Forcing Terms.}
\end{equation}

In order to prove (\ref{intro.PE}), we must apply the weighted multilplier $v_y x$. Referring to the final two terms in (\ref{heur.SF.1}), this gives (temporarily ignoring factors of $\epsilon$): 
\begin{align} \label{intro.vab}
\Big| \int \int \bar{v}_s^{(n)} u_y \cdot v_y x \Big| + \Big| \int \int v u_y \cdot v_y x \Big| \le ||\{\bar{v}_s^{(n)}, v\} x^{\frac{1}{2}}||_{L^\infty} ||u_y||_{L^2} ||v_y x^{\frac{1}{2}}||_{L^2}. 
\end{align}

The latter two $L^2$ quantities are controlled by the left-hand sides of (\ref{intro.PE}) - (\ref{intro.vab}). From this, we obtain the requirements: 
\begin{requirement}[Uniform Decay]
\begin{equation} \label{REQ}
\bar{v}_s^{(n)} + v \sim x^{-\frac{1}{2}}, \text{ as } x \rightarrow \infty. 
\end{equation}
\end{requirement}

We emphasize that this requirement is inflexible, and we cannot sacrifice even a logarithmic factor of $x$ here. The main contribution of Chapter I is the construction of profiles $u^i_p, v^i_p, u^i_e, v^i_e$ which satisfy the requirement of $v^{(n)}_s$ in (\ref{REQ}). The most difficult task is to obtain the estimate (\ref{REQ}) for the Eulerian profiles, $v^1_e$, as these are solutions to elliptic boundary value problems in which the boundary condition exhibits exactly the required decay rate, $|v^1_e(x,0)| \le x^{-\frac{1}{2}}$. We refer the reader to the crucial Proposition \ref{Lemma.Sing.Unif} in which we introduce novel techniques centered around the explicit integral representation of $v^1_e$, enabling us to prove the required decay, $|v^1_e| \lesssim x^{-\frac{1}{2}}$. 

Note that we construct the expansions, (\ref{expansion.1}) - (\ref{expansion.3}) for any $n \in \mathbb{N}$, which is required as discussed in the paragraph following (\ref{below.1}). Our ability to do this relies on the Cauchy-Riemann structure of the Eulerian profiles, which we use in Lemma \ref{grad.pres.2}.

\subsection*{The Norm $Z$ (Chapter II):}

Let us now turn to the $v$ term in Requirement (\ref{REQ}): the main contribution of Chapter II is to prove the required decay estimate $|v| \lesssim x^{-\frac{1}{2}}$ by using the crucial norm $Z$ (see Lemmas \ref{L.Evol}, \ref{Lemma.UIMP}). The challenge is to extract this precise uniform decay information from the energy norms that are controlled. Bearing in mind the inflexibility of (\ref{REQ}), obtaining the decay for $v$ using the norms $X_i$ is an extremely delicate matter, in which key quantities must overcome the critical Hardy inequality. To see this, we first use the $H^1_y(\mathbb{R}_+) \hookrightarrow L^\infty_y(\mathbb{R}_+)$ Sobolev embedding (ignoring factors of $\eps$): 
\begin{align}
\sup_{x \ge 1} ||vx^{\frac{1}{2}}||_{L^\infty_y} \le \sup_{x \ge 1} ||v||_{L^2_y}^{\frac{1}{2}} \cdot \sup_{x \ge 1}  ||v_y x||_{L^2_y}^{\frac{1}{2}}.
\end{align}

For the first quantity on the right-hand side above, we write:
\begin{align}
\p_x \int v^2 \ud y = \int 2vv_x \ud y. 
\end{align}

Recall now the quantities, $||u_y,  \sqrt{\eps} v_x x^{\frac{1}{2}}, v_y x^{\frac{1}{2}}||_{L^2_{xy}}^2$, which are controlled on the left-hand sides (\ref{intro.EE}), (\ref{intro.PE}) and constitute the energy norm $X_1$.  The right-hand side above fails to be $x$-integrable, precisely because of criticality of Hardy's inequality with power $x^{-\frac{1}{2}}$ in $L^2$:
\begin{align} \label{HCrit.1}
|\int \int vv_x \ud y \ud x| \le ||v x^{-\frac{1}{2}}||_{L^2_{xy}} ||v_x x^{\frac{1}{2}}||_{L^2_{xy}} \xcancel{\le} ||v_x x^{\frac{1}{2}}||_{L^2_{xy}}^2. 
\end{align}

To avert this, we move to higher-order derivatives, which invokes the full strength of the norm $Z$. Indeed, suppose we knew $v_x \sim x^{-\frac{3}{2}}$, then coupled with the boundary condition $v \rightarrow 0$ as $x \rightarrow \infty$, this would immediately imply $v \sim x^{-\frac{1}{2}}$. Establishing the decay rate, $||v_x||_{L^\infty_y} \le x^{-\frac{3}{2}}$, then becomes the goal, which requires us to go to third-order energy estimates (thus explaining the presence of $X_1, X_2, X_3$ in the norm $Z$). Our main uniform estimates, given in Lemmas \ref{L.Evol}, \ref{Lemma.UIMP} are given by the following sequence:
\begin{align} \label{below.1}
||v x^{\frac{1}{2}}||_{L^\infty_{xy}} \lesssim ||v_x x^{\frac{3}{2}}||_{L^\infty_{xy}} \lesssim \sup_{x \ge 1} ||v_x x ||_{L^2_y}^{\frac{1}{2}} \cdot \sup_{x \ge 1} ||v_{xy} x^2||_{L^2_y}^{\frac{1}{2}} \lesssim ||u,v||_{X_1 \cap X_2 \cap X_3}. 
\end{align}

The key point is that the quantities $||v_x x||_{L^2_y}$ and $||v_{xy}x^2||_{L^2_y}$ appearing above \textit{do not} face issues of Hardy-criticality present in (\ref{HCrit.1}), which can be seen in Lemma \ref{L.Evol}. 

Applying $\p_x^k$ to the system creates singularities near the corner at $(1,0)$. To handle this we cutoff near the boundary, $x = 1$, when performing higher order energy estimates. Cutoff functions interact poorly with nonlinearities, and so we need to supplement energy estimates with elliptic estimates which retain additional control of $[u,v]$ near $x = 1$ (though not all the way up to the boundary, $x = 1$). These are characterized by the norms $||\cdot||_{Y_i}$ in (\ref{norm.Z.INTRO}). These $Y_i$ norms are controlled by invoking the elliptic theory, which in turn requires sacrificing factors of $\eps$. For this, we require a high power of $\eps^n$ to accompany nonlinear terms, which in turn requires us to go to high order expansions in (\ref{expansion.1}) - (\ref{expansion.3}).

\subsection*{Existence and Uniqueness (Chapter III):}

Upon proving our main \textit{a-priori} estimate, Theorem \ref{thm.m.2}, we prove existence and uniqueness of a solution in $Z$. As in (\ref{intro.cvc}), applying the multiplier $u$ produces the nonlinearity $vu_y \cdot u$, which is a perfect derivative and therefore vanishes. This cancellation property is destroyed upon taking differences, and so we cannot rely on a standard application of the contraction mapping theorem. A sequence of auxiliary, approximate systems are then carefully designed to produce enough compactness enabling us to show existence of a solution in the space $Z$. 

The uniqueness in $Z$ is a more delicate matter, again due to a lack of the perfect derivative structure. For this, we take the difference of two solutions in $Z$ and repeat the energy analysis with weaker weights. The reader is referred to Lemma \ref{L1U}, in which the weights must be selected carefully in a small interval below those weights appearing in the energy analysis, for instance in (\ref{intro.EE}), (\ref{intro.PE}). These methods are carried out in Chapter III.

\subsection*{Notation and Important Parameters}  
There are three important parameters in this paper: $\eps$, $\delta$, and $n$ (see (\ref{expansion.1}) - (\ref{expansion.3})). The notation $A \lesssim B$ means $A \le CB$, where $C$ is some constant which is independent of small $\delta, \eps$, and large $n$. Constants denoted by $\mathcal{O}(\delta)$ or $\mathcal{O}(\eps)$ satisfy $\mathcal{O}(\delta), \mathcal{O}(\eps) \rightarrow 0$ as $\delta, \eps \rightarrow 0$, respectively. Given any parameter, say $p$, constants denoted by $C(p)$ mean those constants which depend (perhaps poorly) on large values of $p$. Given two parameters, $\delta$ and $\sigma$, for instance, we will write $\mathcal{O}(\delta; \sigma)$ to denote a constant which depends on $\sigma$ and $\delta$, but such that for fixed $\sigma$, $\delta$ can be made small to make the constant small, for instance $\delta \times \sigma$. We define $||f||_{L^p_y}^p := \int f(x,y)^p dy$. When unspecified, $||\cdot||_{L_p}$ means the $L^p$ norm of two-variables. We define here the differential operators: 
\begin{align} \label{def.Scaled.Lap}
&\Delta_\eps u := (\p_{xx} + \eps \p_{yy}) u \text{ for any profile } u; \hspace{5 mm} \Delta [u^i_e, v^i_e] := (\p_{xx} + \p_{YY}) [u^i_e, v^i_e].
\end{align}

The variable $z$ will denote a self-similar variable, so typically $z = \frac{y}{\sqrt{x}}$ or $z = \frac{\eta}{\sqrt{x}}$. Finally, the word ``profiles" refers to terms in the expansion (\ref{expansion.1}) - (\ref{expansion.2}), excluding the remainders $[u,v]$, and ``profile terms" refers to the linearizations in $S_u, S_v$, defined in (\ref{defn.Su}).

\break

\part*{\centerline{Chapter I: Construction of Profiles}} 
\addcontentsline{toc}{part}{Chapter I: Construction of Profiles}

\section{Overview of Results}

The purpose of this chapter is to construct each of the profiles appearing in the expansions (\ref{expansion.1}) - (\ref{expansion.3}), with the exception of the final terms, $[u,v,P]$. This results of this chapter are used extensively in Chapter II. Let us first introduce the following notation for $\bar{u}^{(n)}_s, \bar{v}^{(n)}_s$: 
\begin{align} \label{defn.UR}
&u_R^P := \sum_{j=0}^n \epsilon^{\frac{j}{2}} u^j_p, \hspace{3 mm} u_R^E := 1 + \sum_{j=1}^n \epsilon^{\frac{j}{2}} u^j_e, \hspace{3 mm} u_R := \bar{u}_s^{(n)} = u_R^P + u_R^E, \\ \label{defn.VR}
&v_R^P := \sum_{j=0}^n \epsilon^{\frac{j}{2}} v^j_p, \hspace{3 mm} v_R^E := \sum_{j=1}^n \epsilon^{\frac{j}{2}-\frac{1}{2}} v^j_e, \hspace{3 mm} v_R := \bar{v}^{(n)}_s = v_R^P + v_R^E.
\end{align}

We shall also have occasion to further split $u^P_R$ to distinguish the final layer via: 
\begin{align}
u^{P,n-1}_R =  \sum_{j = 0}^n \epsilon^{\frac{j}{2}} u^j_p, \text{ so that } u^P_R = u^{P,n-1}_R + \epsilon^{\frac{n}{2}} u^n_p.
\end{align}

Inserting the expansion (\ref{expansion.1}) - (\ref{expansion.3}) into the scaled NS equations, (\ref{scaled.NS.1}) - (\ref{scaled.NS.3}), motivates the following definition:
\begin{definition} The $n$'th remainder is denoted by: 
\begin{align} \label{Rude.n.INTRO}
R^{u,n} &:= -\Delta_\epsilon \bar{u}_s^{(n)} + \bar{u}_s^{(n)} \bar{u}_{sx}^{(n)} + \bar{v}_s^{(n)} \bar{u}_{sy}^{(n)} + \bar{P}_{sx}^{(n)}, \\ \label{Rvde.n.INTRO}
R^{v,n} &:=  -\Delta_\epsilon \bar{v}_s^{(n)} + \bar{u}_s^{(n)} \bar{v}_{sx}^{(n)} +  \bar{v}^{(n)}_s \bar{v}^{(n)}_{sy}   + \frac{\partial_y}{\epsilon} \bar{P}_s^{(n)}.
\end{align}
\end{definition}

The main result of this chapter is: 
\begin{theorem} \label{thm.m.part.1} Let $n \ge 2 \in \mathbb{N}$. Let $\delta, \eps$ be sufficiently small relative to universal constants, and $\eps << \delta$. Let the boundary and in-flow data from (\ref{intro.BC.1}) - (\ref{IF.EUL}) be prescribed. Then there exist Prandtl profiles $[u^j_p, v^j_p, P^j_p]$ for $j = 1,...,n$, Euler profiles $[u^j_e, v^j_e, P^j_e]$ for $j = 1,...,n$, and auxiliary pressures $[P^{j,a}_p, P^{j,a}_e]$ for $j = 1,...,n$ such that for $R^{u,n}, R^{v,n}$ as defined in (\ref{Rude.n.INTRO}) - (\ref{Rvde.n.INTRO}), and for any $\gamma \in [0,\frac{1}{4})$, $n \ge 2$, and for $\sigma_n = \frac{1}{10,000}$, $\kappa>0$ arbitrarily small, the following remainder estimate holds for any $k \ge 0$: 
\begin{align} \label{Rnl2.INTRO.1}
&\epsilon^{-\frac{n}{2}-\gamma} \Big| \partial_x^k R^{u,n} + \sqrt{\epsilon} \partial_x^k R^{v,n} \Big| \le C(n, \kappa) \epsilon^{\frac{1}{4}-\gamma-\kappa} x^{-k-\frac{3}{2}+2\sigma_n}, \\  \label{Rnl2.INTRO}
&\epsilon^{-\frac{n}{2}-\gamma} ||\sqrt{\epsilon}\partial_x^k R^{u,n}, \sqrt{\epsilon} \partial_x^k R^{v,n}||_{L^2_y} \le C(n, \kappa) \epsilon^{\frac{1}{4}-\gamma-\kappa} x^{-k-\frac{5}{4}+2\sigma_n + \kappa}.
\end{align}

The following bounds hold on $[u_R,v_R]$ by construction, for any $[k, j, m] \ge 0$, so long as $n$ is sufficiently large relative to $m$.
\begin{align} \label{PE0.1}
&||\partial_x^k \partial_y^j v^P_R z^m x^{k + \frac{j}{2} + \frac{1}{2}}||_{L^\infty}  \le C(k, j, m) \text{ if } k \ge 1, \\  \label{PE0.2}
& || \partial_y^j v^P_R z^m x^{ \frac{j}{2} + \frac{1}{2}}||_{L^\infty}  \le C(j, m) \text{ if } j \ge 2, \\  \label{PE1}
 &|| \partial_y^j v^P_R z^m x^{ \frac{j}{2} + \frac{1}{2}}||_{L^\infty}  \le \mathcal{O}(\delta; m, j)\text{ if }j = 0,1. \\  \label{PE0.3}
 & || \partial_x^k \partial_y^j u^P_R z^m x^{k + \frac{j}{2}}||_{L^\infty}  \le C(k, j, m) \text{ for } k > 1, j \ge 0 \\  \label{PE0.4}
 &|| \partial_x u^P_R z^m x||_{L^\infty}  \le \mathcal{O}(\delta; m), \\ \label{PE2}
 & || \partial_x \p_y^j u^P_R z^m x||_{L^\infty}  \le C(m, j) \text{ for }j \ge 1 \\  \label{PE0.5}
 & || \p_y^j u_R^{P,n-1} y^j z^m||_{L^\infty} \le \mathcal{O}(\delta; m, j), \text{ for } 0 \le j \le 2, \\ \label{PE3.b}
 & || \p_y^j u_R^{P,n-1} y^j z^m||_{L^\infty} \le C(m,j), \text{ for } j > 2,\\ \label{PE3}
 & || \p_y^j u^n_p y^j x^{\frac{1}{2}-\sigma_n} ||_{L^\infty} \le C(n,j) \text{ for all } j \ge 0, \\  \label{PE4}
& ||\partial_x^k \partial_Y^j v^E_R x^{k+j + \frac{1}{2}}||_{L^\infty}  \le C(k, j) \text{ for } k + j > 0, \\ \label{PE5}
&  ||\partial_x^k \partial_Y^j u^E_R x^{k+j + \frac{1}{2}}||_{L^\infty}  \le \sqrt{\eps} C(k, j) \text{ for } k + j > 0 \\ \label{PE4.n.1}
& ||\partial_x^k v^E_R  x^{k-\frac{1}{2}}Y ||_{L^\infty} \le C(k,j) \text{ for } k \ge 1, \\ \label{PE4.new.2}
&  ||\{u^E_R - 1, v^E_R \} x^{\frac{1}{2}}, v^E_{RY}  x^{\frac{3}{2}}||_{L^\infty} \le \mathcal{O}(\delta). 
\end{align}
\end{theorem}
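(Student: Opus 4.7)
The proof is by induction on the order $j$ of the profile, constructing the Prandtl-Euler pair $([u^j_e,v^j_e],[u^j_p,v^j_p])$ together with its pressures sequentially for $j=0,1,\ldots,n$, with the auxiliary pressures $P^{j,a}_e, P^{j,a}_p$ built in parallel to preserve the gradient structure alluded to in the introduction at (\ref{grad.pres.2}). The base case fixes $[u^0_e,v^0_e]=[1,0]$ and constructs the leading Prandtl layer $[u^0_p,v^0_p]$ as the solution to the nonlinear Prandtl equation in $[1,\infty)\times\R_+$ with boundary datum $u^0_p(x,0)=-\delta$, $u^0_p(1,y)=U_0(y)$. The heart of this step is to identify the self-similar front $\phi_\ast(y/\sqrt{x})$ flagged at (\ref{intro.s.s.absorb}), refine Serrin's maximum-principle asymptotics in the small-data regime, and run a fixed point for the difference $u^0_p-\phi_\ast$ in a space weighted by $z=y/\sqrt{x}$. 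The smallness hypothesis (\ref{intro.BC0.3}) makes the iteration a small perturbation of the heat equation and is the source of the $\mathcal{O}(\delta;\cdot)$ prefactors in (\ref{PE1}), (\ref{PE0.4}), (\ref{PE0.5}), (\ref{PE4.new.2}).

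For $j\ge 1$, the construction alternates. First, the $j$-th Euler profile is obtained as the divergence-free, curl-free extension into $\Omega$ of the Dirichlet datum $v^j_e(x,0)=-v^{j-1}_p(x,0)$ dictated by the no-penetration matching condition in (\ref{intro.BC.1}); $u^j_e$ and $P^j_e$ are then read off via the Euler system (\ref{i.Euler.intro}). This makes $v^j_e$ a Poisson extension of a trace decaying like $x^{-1/2}$, and the sharp uniform bound $|v^j_e|\lesssim x^{-1/2}$ demanded by (\ref{PE4}), (\ref{PE4.n.1}), (\ref{PE4.new.2}) is the most delicate step in the entire chapter, because energy or Calderón-Zygmund techniques lose a logarithm at this exponent. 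One is forced to work with the explicit half-plane Poisson kernel and split the convolution according to the location of the pole relative to the target point, as in the integral-representation analysis of Proposition \ref{Lemma.Sing.Unif}. Decay in $\p_x^k \p_Y^j$ then follows by scaling, and the additional $\sqrt{\eps}$ in (\ref{PE5}) is extracted from the incompressibility identity $u^j_{ex}=-v^j_{eY}$ combined with $x$-integration against vanishing boundary data.

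Second, the $j$-th Prandtl correction $[u^j_p,v^j_p]$ is constructed by solving the linearized Prandtl system (\ref{sys.pr.i.intro}) with coefficients $(1+u^0_p, v^{(j)}_s)$, forcing $f^{(j)}$ assembled from already-constructed lower-order profiles, Dirichlet boundary value $u^j_p(x,0)=-u^j_e(x,0)$, and in-flow datum $U_j$. Existence is standard since $1+u^0_p>0$ for small $\delta$ (monotone Oleinik theory), and the weighted bounds (\ref{PE0.1})-(\ref{PE3.b}) are propagated by differentiating the equation in $y$, exploiting the self-similar dominance inherited from the leading layer, and using the compatibility conditions (\ref{compatibility.1}) together with the higher-order conditions referenced in Remarks \ref{hoc}, \ref{hoc2} to keep corner singularities at $(1,0)$ under control. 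The final layer $u^n_p$ is constructed with an additional cutoff of $v^n_p$ for $y$ beyond a scale $\sim x^{\sigma_n}$, which enforces the rapid matching (\ref{intro.BC.match}); this cutoff is exactly what produces the weaker decay $x^{-1/2+\sigma_n}$ recorded in (\ref{PE3}) and accounts for the $2\sigma_n$ loss in (\ref{Rnl2.INTRO.1})-(\ref{Rnl2.INTRO}).

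Finally, the remainder bounds follow by expanding $R^{u,n}, R^{v,n}$ from (\ref{Rude.n.INTRO})-(\ref{Rvde.n.INTRO}) in powers of $\eps^{1/2}$: by design, every term of exponent $\le n/2$ is cancelled by the Prandtl, Euler, or auxiliary-pressure equations satisfied at each order, so $R^{u,n}$ and $R^{v,n}$ reduce to a sum of products $\eps^{(a+b)/2}(\text{profile})_a\cdot(\text{profile})_b$ with $a+b>n$, together with the commutator and cutoff errors generated in the construction of $[u^n_p,v^n_p]$. Each such product is estimated pointwise in $L^\infty$ and in $L^2_y$ using (\ref{PE0.1})-(\ref{PE4.new.2}); the $x^{-k-3/2+2\sigma_n}$ rate in (\ref{Rnl2.INTRO.1}) and the corresponding $x^{-k-5/4+2\sigma_n+\kappa}$ in (\ref{Rnl2.INTRO}) drop out from the exponent bookkeeping, and the combination $\sqrt{\eps}\p_x^k R^{v,n}$ is chosen precisely so that the $\eps^{-1}\p_y\bar P_s^{(n)}$ contribution in $R^{v,n}$ is brought to the same $\eps$-scale as the tangential remainder. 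The main obstacle throughout is Step 2: ensuring that the iterated Poisson extensions of traces at the critical rate $x^{-1/2}$ never degrade, which is what forces the kernel-splitting argument of Proposition \ref{Lemma.Sing.Unif} and, indirectly, dictates the choice of the weights used in the energy framework of Chapter II.
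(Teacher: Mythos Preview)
Your outline matches the paper's approach essentially step for step: inductive construction of alternating Euler and Prandtl layers, with the self-similar front governing $u^0_p$, the Poisson-kernel splitting of Proposition~\ref{Lemma.Sing.Unif} for the critical $x^{-1/2}$ decay of the Euler correctors, the auxiliary pressures cancelling the pure-Euler interactions via the Cauchy--Riemann structure, and a cutoff on the final Prandtl layer before assembling the remainder.

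Two technical points to correct. First, the cutoff scale for $v^n_p$ is not $y\sim x^{\sigma_n}$ but rather $z=y/\sqrt{x}\sim \eps^{-1/2}$ (see (\ref{cutoff.defn})); your scale would truncate the layer essentially at $y\sim 1$ and destroy the boundary-layer structure. Second, the $\sigma_n$ loss in (\ref{PE3}) and in the remainder estimates does not come from the cutoff itself but from the weighted energy/positivity estimates for the linearized Prandtl equation (the norms $P_k(\sigma)$ in (\ref{norm.P})--(\ref{norm.Pk}) require a small $\sigma>0$ to avoid critical Hardy weights, and $\sigma_n$ is fixed in (\ref{sigma.i})); the cutoff then \emph{preserves} these pre-existing bounds (Lemmas~\ref{L.CL}--\ref{L.CL.2}) rather than generating the loss. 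Also, the $\sqrt{\eps}$ in (\ref{PE5}) is simply the $\eps^{j/2}$ prefactor with $j\ge 1$ in the expansion of $u^E_R$, not an integration identity. These are slips of attribution rather than structural gaps; with them fixed, your sketch is the paper's argument.
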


The profiles $u_R, v_R$ from (\ref{defn.UR}) - (\ref{defn.VR}) arise as coefficients in the linearized problem for the Navier-Stokes remainders $[u,v, P]$, which is to be analyzed in Chapter II. The estimates obtained in (\ref{PE0.1}) - (\ref{PE5}) are therefore essential to the analysis of Chapter II. In particular, we invite the reader to compare the requirements discussed in (\ref{intro.s.s.absorb}) and (\ref{REQ}) with the estimates we prove in (\ref{PE0.1}) - (\ref{PE4.new.2}).

The structure of this chapter is as follows: 

\begin{itemize}

\item[(Step 1)]  Construction of the zeroeth-order Prandtl layers, $[u^0_p, v^0_p]$ (Section \ref{section.u0p}): The distinguishing feature of $u^0_p$ are the mismatched boundary conditions, as seen from (\ref{intro.eqn.u0p}). As shown in Proposition \ref{thm.front}, this contributes a ``front"-profile which looks similar to $e_\delta$, defined in (\ref{defn.erf}). The decay rates for Prandtl profiles from estimates (\ref{PE0.1}) - (\ref{PE5}) are dictated by this front profile. For the zeroeth layer, this is formalized in Proposition \ref{prop.ww2}, Corollaries \ref{cp1} and \ref{cp2}. It is essential that we obtain estimates weighted in the self-similar variable, $z = \frac{y}{\sqrt{x}}$, as can be seen from (\ref{PE0.1}) - (\ref{PE4.new.2}) above.

\item[(Step 2)] Construction of Euler-1 layers, $[u^1_e, v^1_e]$ (Section \ref{section.euler}): Given the boundary conditions, which are known to satisfy the progressive estimate: $|\p_x^k v^1_e(x,0)| \lesssim x^{-\frac{1}{2}-k}$, we must obtain the sharp uniform estimate $|\p_x^k v^1_e(x,Y)| \lesssim  x^{-\frac{1}{2}-k}$. This is a delicate matter, as these profiles are elliptic, and as indicated in (\ref{REQ}), the required decay rates must be obtained \textit{exactly}. We introduce a method to obtain the required pointwise estimates using directly the Poisson integral formulation, which is carried out in Proposition \ref{Lemma.Sing.Unif}. This section is a major contribution of Chapter I.

\item[(Step 3)] Construction of Prandtl-1 layer, $[u^1_p, v^1_p]$ (Section \ref{section.prandtl.1}): This profile is controlled using coupled energy and positivity estimates, given in Lemmas \ref{lemma.pr.1.e} and Lemma \ref{lemma.pr.1.p}.

\item[(Step 4)] Construction of Intermediate Euler and Prandtl layers (Section \ref{Section.Inter}):. The essential mechanism here is as follows: as one consider linearizations for $[u^j_p, v^j_p]$ for $j > 1$, one encounters terms which scale poorly in $z = \frac{y}{\sqrt{x}}$, due to Euler-Euler interactions. However, due to the Cauchy-Riemann structure present in the Euler profiles (see (\ref{div.curl})), we may introduce auxiliary pressures $P^{i,a}_p, P^{i,a}_E$ which creates cancellations of all terms which are ``purely-Eulerian". This is seen in (\ref{grad.pres.1}) - (\ref{grad.pres.2}). 

\item[(Step 5)] Construction of Final Prandtl layer, $[u^n_p, v^n_p]$ (Subsection \ref{Final.Pr.Sub}): The final Prandtl layer satisfies the boundary condition $v^n_p(x,0) = 0$, and so has a contribution as $y \uparrow \infty$. We cut-off this layer in the region $z \le \frac{1}{\sqrt{\eps}}$, which honors the parabolic scaling of the Prandtl layers. This is a generalization of the cut-off used in \cite{GN}. It is delicate to ensure these cutoff layers obey desirable estimates, which is done in Lemma \ref{Lpf}. It is also delicate to ensure that this process contributes an error that satisfies estimates (\ref{Rnl2.INTRO.1}) - (\ref{Rnl2.INTRO}) above. This is proven in Lemma \ref{Lemma FE}.

\end{itemize}

\section{Asymptotics of Prandtl Layer, $u^0_p$:} \label{section.u0p}

Our starting point is the leading order terms from (\ref{sys.pr.i.intro}), which yields the following system for the Prandtl layer, $[u^0_p, v^0_p]$: 
\begin{align} \label{eqn.Pr.0.1}
&\Big(1 + u^0_p \Big) u^0_{px} + \Big(v^0_p + v^1_e(x,0) \Big) u^0_{py} = u^0_{pyy}, \hspace{5 mm} u^0_{px} + v^0_{py} = 0, \\ \label{eqn.Pr.0.2}
&u^0_p(x,0) = -\delta, \hspace{3 mm} v^0_p(x,0) = -v^1_e(x,0), \hspace{3 mm} u^0_p(1,y) = U_0(y). 
\end{align}

As shown in \cite[pg. 9]{GN}, taking $[u^0_p, v^0_p]$ to solve the system (\ref{eqn.Pr.0.1}) - (\ref{eqn.Pr.0.2}) creates an error:  
\begin{align} \label{errore.1}
R^{u,0} := \epsilon u^0_{pxx} + \sqrt{\epsilon}y v^1_{eY} u^0_{py} + \epsilon u^0_{py} \int_0^y \int_{y'}^y v^1_{eYY} dy'' dy' 
\end{align}

This $R^{u,0}$ contribution is higher-order in $\eps$, and so will be accounted for as a forcing term in the construction of the next Prandtl layer, $[u^1_p, v^1_p]$ (see (\ref{forcing.pr.1})). After introducing the von-Mises coordinates, 
\begin{equation} \label{VM}
\eta = \int_0^y \Big( 1 + u^0_p \Big) dy',
\end{equation}

the equation for $u^0_p$ becomes parabolic, with $x$ being the time-like variable: 
\begin{equation} \label{intro.eqn.u0p}
u^0_{px} = \partial_\eta((1+u^0_p)u^0_{p\eta}), \hspace{3 mm} u^0_p(x,0) = -\delta, \hspace{3 mm} u^0_p(x,\infty) = 0, \hspace{3 mm} u^0_p(1,\eta) = U_0, 
\end{equation}

Via the maximum principle, as in \cite{GN},
\begin{equation} \label{pr.pos}
1 + u^0_p \ge 1- \delta. 
\end{equation}

For the analysis in Section \ref{section.u0p}, it is convenient to introduce the shifted unknown 
\begin{equation} \label{defn.q}
q = u^0_p + \delta.
\end{equation}

The shifted unknown then satisfies the IBVP: 
\begin{align} \label{q.eqn}
q_x = \partial_\eta \Big( (1-\delta + q) q_\eta \Big), \hspace{3 mm} q(x,0) = 0, \hspace{3 mm} q(x,\infty) = \delta ,\hspace{3 mm} q(1,\eta) = u^0_p(1,\eta) + \delta. 
\end{align}

\subsection{Existence of Front Profile}

The dynamics of the solution to equations (\ref{eqn.Pr.0.1}) - (\ref{eqn.Pr.0.2}), or equivalently, (\ref{q.eqn}), are governed by a self-similar ``front", in the sense of \cite{BKL}. This is due to the mismatch in boundary conditions at $y = 0$ and $y = \infty$, as seen from (\ref{q.eqn}). Inserting a self-similar anzatz $\phi_\ast(z) = \phi_\ast(\frac{\eta}{\sqrt{x}})$ into equation (\ref{q.eqn}) gives the following ODE:  
\begin{equation} \label{ODE1}
(1-\delta+\phi_\ast) \phi_\ast'' + \Big| \phi_\ast' \Big|^2 + \frac{z}{2} \phi_\ast'= 0, \hspace{3 mm} \phi_\ast(0) = q(x,0) = 0, \hspace{3 mm} \phi_\ast(\infty) = q(x,\infty) = \delta. 
\end{equation}

Here the $'$ denotes $\partial_z$, where $z$ is the self-similar variable for $\phi_\ast$. As the profile $\phi_\ast$ that we seek is a nonlinear variant of the Gaussian error function, we study: 
\begin{equation} \label{diffofdiff}
\psi = \phi_\ast - e_\delta, 
\end{equation}

where $e_\delta$ is the Gaussian front profile with value $\delta$ at $+\infty$: 
\begin{equation} \label{defn.erf}
e_\delta(z) = \frac{\delta}{\sqrt{\pi}}\int_0^z e^{-\frac{t^2}{4}} dt.
\end{equation}

Let us record the following: 
\begin{lemma} \label{edelta} With $e_\delta$ being defined as (\ref{defn.erf}), $e_\delta(\frac{\eta}{\sqrt{x}})$ is an explicit solution of the heat equation, which bridges two distinct boundary conditions at $y = 0$ and $y = \infty$: 
\begin{align} \label{heateqn1}
\Big(\p_x - \p_{\eta \eta}\Big) e_\delta(\frac{\eta}{\sqrt{x}}) = 0, \hspace{3 mm} e_\delta(0) = 0, \hspace{3 mm} e_\delta(\infty) = \delta. 
\end{align}
\end{lemma}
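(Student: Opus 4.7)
\textbf{Proof Proposal for Lemma \ref{edelta}.}
The statement is a direct verification, so the plan is simply a chain-rule computation together with a classical Gaussian integral identity. Write $z = \eta/\sqrt{x}$ and let $E(x,\eta) := e_\delta(z)$. From the definition (\ref{defn.erf}), the fundamental theorem of calculus gives
\begin{equation*}
e_\delta'(z) = \frac{\delta}{\sqrt{\pi}} e^{-z^2/4}, \qquad e_\delta''(z) = -\frac{\delta z}{2\sqrt{\pi}} e^{-z^2/4}.
\end{equation*}

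First I will compute $E_x$. Since $\partial_x z = -\tfrac{\eta}{2} x^{-3/2} = -\tfrac{z}{2x}$, the chain rule yields
\begin{equation*}
E_x = e_\delta'(z)\, \partial_x z = -\frac{z}{2x}\cdot \frac{\delta}{\sqrt{\pi}}\, e^{-z^2/4}.
\end{equation*}
Next, since $\partial_\eta z = x^{-1/2}$, I compute
\begin{equation*}
E_\eta = \frac{1}{\sqrt{x}}\, e_\delta'(z), \qquad E_{\eta\eta} = \frac{1}{x}\, e_\delta''(z) = -\frac{z}{2x}\cdot \frac{\delta}{\sqrt{\pi}}\, e^{-z^2/4}.
\end{equation*}
Comparing the two displays gives $E_x = E_{\eta\eta}$, which is the claimed heat equation in (\ref{heateqn1}).

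For the boundary conditions, the value at $\eta=0$ is immediate because $e_\delta(0) = \frac{\delta}{\sqrt{\pi}}\int_0^0 e^{-t^2/4}\,dt = 0$. For the limit as $\eta \to \infty$ (equivalently $z \to \infty$), the substitution $u = t/2$ gives
\begin{equation*}
\int_0^\infty e^{-t^2/4}\, dt = 2\int_0^\infty e^{-u^2}\, du = \sqrt{\pi},
\end{equation*}
so $e_\delta(\infty) = \delta$. This completes the verification.

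The only conceivable obstacle is bookkeeping of the constants in the chain rule and the Gaussian normalization, but there is no real difficulty: the coefficient $\tfrac{1}{\sqrt{\pi}}$ in (\ref{defn.erf}) and the exponent $-t^2/4$ are chosen precisely to arrange $E_x = E_{\eta\eta}$ and $e_\delta(\infty) = \delta$ simultaneously. No estimates, no compatibility issues at the corner $(1,0)$, and no appeal to earlier results are required; the lemma is purely a direct computation that simply records the explicit heat kernel profile used to drive the front construction of $u_p^0$ in the rest of Section \ref{section.u0p}.
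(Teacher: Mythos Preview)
Your proof is correct and follows essentially the same approach as the paper: a direct chain-rule computation of $\partial_x$ and $\partial_{\eta\eta}$ applied to $e_\delta(\eta/\sqrt{x})$, together with the Gaussian integral $\int_0^\infty e^{-t^2/4}\,dt = \sqrt{\pi}$ for the boundary condition at infinity. If anything, your write-up is slightly more explicit than the paper's, which simply records the derivative identities and then asserts that the heat equation is equivalent to them.
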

\begin{proof}

We first record the identities: 
\begin{align} \label{basic.id}
\frac{\partial z}{\partial \eta} = \frac{1}{\sqrt{x}}, \hspace{3 mm} \frac{\partial z}{ \partial x} = -\frac{z}{2x}.
\end{align} 

Differentiating (\ref{defn.erf}) gives the identities: 
\begin{align} \label{equiv.front}
e_\delta'(z) = \frac{ \delta}{\sqrt{\pi}} e^{-\frac{z^2}{4}}, \hspace{3 mm} e_{\delta}''(z) = -\frac{z}{2x} \frac{\delta}{\sqrt{\pi}} e^{-\frac{z^2}{4}} = -\frac{z}{2x} e_\delta'(z). 
\end{align}

One then checks that: $\p_{x} e_\delta(\frac{\eta}{\sqrt{x}}) = \p_{\eta \eta} e_\delta(\frac{\eta}{\sqrt{x}})$ is equivalent to (\ref{equiv.front}). The boundary condition at $0$ is trivial from (\ref{defn.erf}), and the boundary condition at $\infty$ arises from: $e_\delta(\infty) = \frac{ \delta}{\sqrt{\pi}} \int_0^\infty e^{-\frac{t^2}{4}} dt = \delta$.  The lemma is proven.

\end{proof}

Using the heat equation for $e_\delta$, coupled with (\ref{diffofdiff}) we obtain: 
\begin{align} \n
&\Big( 1 - \delta + \psi \Big) \psi'' + \Big| \psi' \Big|^2 + \frac{z}{2} \psi' = -2\psi' e_\delta' - |e_\delta'|^2 - \psi e_\delta'' - e_\delta \psi'' - e_\delta e_{\delta}'', \\ \label{eqn.front.diff}
& \psi(0) = \psi(\infty) = 0. 
\end{align}

The first task is to obtain existence of a solution, $\psi$, to the above boundary value problem in a suitable Sobolev space.\footnote{It is clear by rescaling $z \rightarrow (1-\delta)^{\frac{1}{2}}z$, we can replace the factor of $1-\delta$ in front of $\psi''$ by simply $1$. This rescaling would change the main linear operator, $(1-\delta) \psi'' + \frac{z}{2} \psi'$ to $\psi'' + \frac{z}{2} \psi'$. For notational ease, then, we work simply with the $\psi''$ instead of $(1-\delta)\psi''$. The actual self-similar variable, then, is really $(1-\delta)\frac{\eta}{\sqrt{x}}$, but as $(1-\delta)$ is near $1$, this causes no confusion in the analysis to follow. }

\begin{proposition} \label{thm.front} For $\delta$ sufficiently small, there exists a unique solution to the equation (\ref{eqn.front.diff}) in $H^2_w(\mathbb{R}_+)$ satisfying $||\psi||_{H^2_w} \lesssim \delta$, where $H^2_w$ is a weighted variant of $H^2$ which is formally defined in (\ref{defn.h2w})
\end{proposition}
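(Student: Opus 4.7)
The plan is to recast (\ref{eqn.front.diff}) as a small-data fixed-point problem centered on the Hermite-type operator $L_0 \psi := \psi'' + \tfrac{z}{2}\psi'$, which is the linearization at $(\psi,\delta) = (0,0)$. Using the heat-equation identity $e_\delta'' = -\tfrac{z}{2}e_\delta'$ from (\ref{equiv.front}) to rewrite the $\psi e_\delta''$ term, I would move all terms to one side and rearrange (\ref{eqn.front.diff}) into the schematic form
$$L_0 \psi \;=\; F_\delta \;-\; \mathcal{R}_\delta(\psi) \;-\; \mathcal{N}(\psi),$$
where $F_\delta := -|e_\delta'|^2 - e_\delta e_\delta''$ is a pure Gaussian forcing of pointwise size $\mathcal{O}(\delta^2)e^{-z^2/4}$, $\mathcal{R}_\delta(\psi) := e_\delta \psi'' + 2 e_\delta' \psi' - \tfrac{z}{2} e_\delta' \psi - \delta \psi''$ is an $\mathcal{O}(\delta)$ linear perturbation whose nonconstant coefficients carry Gaussian decay, and $\mathcal{N}(\psi) := \psi\psi'' + |\psi'|^2$ is the quadratic nonlinearity. (The trailing $-\delta \psi''$ may alternatively be absorbed by the rescaling $z \mapsto (1-\delta)^{1/2}z$ mentioned in the paper's footnote.)

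\textbf{Step 1 (linear theory for $L_0$).} I would take the weighted space to be $L^2_w := L^2(w\,dz)$ with Gaussian weight $w(z) := e^{z^2/4}$, which is the natural choice because $L_0 = \tfrac{1}{w}(w\psi')'$, so $L_0$ is symmetric in $L^2_w$ and the energy identity
$$-\int_0^\infty (L_0 \psi)\,\psi\, w\,dz \;=\; \int_0^\infty w\,|\psi'|^2\,dz$$
holds for $\psi$ with $\psi(0) = 0 = \psi(\infty)$. A weighted Hardy/Poincar\'e inequality then produces coercivity on the energy space, and Lax--Milgram yields invertibility of $L_0$ onto $L^2_w$. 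A bootstrap from $\psi'' = L_0 \psi - \tfrac{z}{2}\psi'$, together with the same weighted Hardy bound, upgrades this to the $H^2_w$ estimate $\|\psi\|_{H^2_w} \lesssim \|L_0\psi\|_{L^2_w}$, where $H^2_w$ is the natural weighted Sobolev space matching (\ref{defn.h2w}).

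\textbf{Step 2 (fixed point).} Define $T\psi := L_0^{-1}\!\bigl(F_\delta - \mathcal{R}_\delta(\psi) - \mathcal{N}(\psi)\bigr)$. The pointwise bounds $|e_\delta|,|e_\delta'|,|z e_\delta'| \lesssim \delta\, e^{-z^2/8}$ coming from (\ref{equiv.front}) yield $\|F_\delta\|_{L^2_w} \lesssim \delta^2$ (the weight $e^{z^2/4}$ is absorbed by the $e^{-z^2/2}$ coming from $|e_\delta'|^2$, leaving Gaussian margin) and $\|\mathcal{R}_\delta(\psi)\|_{L^2_w} \lesssim \delta\,\|\psi\|_{H^2_w}$. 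The one-dimensional weighted embedding $H^2_w \hookrightarrow W^{1,\infty}$ together with the $L^2_w$-control of $\psi''$ give $\|\mathcal{N}(\psi)\|_{L^2_w} \lesssim \|\psi\|_{H^2_w}^2$. Hence on $B_{M\delta} := \{\|\psi\|_{H^2_w} \le M\delta\}$,
$$\|T\psi\|_{H^2_w} \;\le\; C\bigl(\delta^2 + \delta\cdot M\delta + M^2 \delta^2\bigr) \;\le\; M\delta$$
provided $M$ is large relative to $C$ and $\delta$ is small. The analogous difference estimates for $\mathcal{R}_\delta$ and $\mathcal{N}$ (the latter using $\mathcal{N}(\psi_1) - \mathcal{N}(\psi_2)$ being linear in $\psi_1 - \psi_2$ with coefficient controlled in $H^2_w$) show $T$ is a strict contraction on $B_{M\delta}$, and Banach fixed point produces the unique $\psi$ with $\|\psi\|_{H^2_w} \lesssim \delta$.

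\textbf{Main obstacle.} The hard part is the simultaneous compatibility of three requirements on the weight $w$: $L_0$ must be coercive (forcing a growing weight of Gaussian type), the forcing $F_\delta$ must still lie in $L^2_w$ (this works because $F_\delta$ itself already decays like $e^{-z^2/4}$, leaving a full Gaussian of room), and the control of the quadratic terms $\psi\psi'' + |\psi'|^2$ against $\|\psi\|_{H^2_w}^2$ requires an $L^\infty$-type embedding in the weighted space. The precise weighted Hardy inequality handling the boundary point $z = 0$ (where $w(0) = 1$ is harmless but the Dirichlet condition must be exploited) is the main technical ingredient for the bootstrap in Step 1 and also for closing the estimate on $\tfrac{z}{2}e_\delta'\psi$ in $\mathcal{R}_\delta(\psi)$.
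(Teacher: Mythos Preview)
Your proposal is correct and follows the same skeleton as the paper's proof: invert the linear operator $L_0 = \partial_z^2 + \tfrac{z}{2}\partial_z$ with Dirichlet conditions, then close a contraction on a ball of radius $\mathcal{O}(\delta)$ treating the $e_\delta$-terms as small linear perturbations and $\psi\psi''+|\psi'|^2$ as quadratic. The differences are in the execution. First, the norm in (\ref{defn.h2w}) is \emph{polynomially} weighted, $\|\psi\|_{H^2_w}^2 = \int|\psi''|^2 + \int(1+z^2)(\psi^2+|\psi'|^2)$, not Gaussian; your phrase ``matching (\ref{defn.h2w})'' is therefore inaccurate, though your Gaussian-weighted space is strictly smaller, so your conclusion implies the stated one. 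Second, the paper handles the linear theory by the conjugation $e^{z^2/8}(-L_0)e^{-z^2/8} = -\partial_z^2 + z^2/16 + 1/4$ and applies Lax--Milgram to this Schr\"odinger form on finite intervals $(0,r)$, passing to the limit $r\to\infty$ only after the uniform nonlinear estimate; you work directly on $\mathbb{R}_+$ via the divergence form $L_0 = w^{-1}(w\psi')'$. These are equivalent viewpoints---the conjugation and the Gaussian weight encode the same structure---but the paper's finite-interval step sidesteps the functional-analytic issues (domain of $L_0$, decay of $\psi$ at infinity) that your direct approach must address through the weighted Hardy inequality you flag. Third, the paper's contraction estimates are obtained by direct integration-by-parts against $\psi$ and $\psi''$ in the polynomial-weighted norm rather than through an abstract $L_0^{-1}$ bound, which keeps the argument more elementary at the cost of being less modular. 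Your route is cleaner conceptually and yields a stronger conclusion; the paper's is more hands-on and avoids the need to verify the full $H^2$-with-Gaussian-weight mapping property of $L_0^{-1}$.
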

\begin{proof}

For this argument, fix $r > 0$. We will eventually let $r \rightarrow \infty$. Our starting point is to establish existence of solutions to the linear operator  
\begin{equation}
\Big(-\partial_z^2 - \frac{z}{2}\partial_z \Big).
\end{equation}

We recall the identity given in \cite{BKL}: 
\begin{equation}
e^{\frac{z^2}{8}} \Big(-\partial_z^2 - \frac{z}{2}\partial_z  \Big) e^{-\frac{z^2}{8}} \psi = \Big( -\partial_z^2 + \frac{z^2}{16} + \frac{1}{4} \Big) \psi,
\end{equation}
which in turn implies 
\begin{equation} \label{front.eqn.ur}
\Big(-\partial_z^2 - \frac{z}{2}\partial_z  \Big) \psi = f, \hspace{3 mm} \psi(0) = \psi(r) = 0
\end{equation}

if and only if 
\begin{align} \label{prb.tilde.front}
& \Big(-\partial_z^2 + \frac{z^2}{16} + \frac{1}{4} \Big) \tilde{\psi}_{(r)}  = \tilde{f}, \hspace{3 mm} \tilde{\psi}_{(r)}(0) = \tilde{\psi}_{(r)}(r) = 0, \hspace{3 mm} \tilde{\psi}_{(r)} = e^{\frac{z^2}{8}} \psi_{(r)}, \hspace{3 mm} \tilde{f} = e^{\frac{z^2}{8}}f. 
\end{align}

The subscripts are included to emphasize the domain, $(0,r) \subset \mathbb{R}$. Consider: 
\begin{equation}
B[\tilde{\psi}_{(r)},v] := \int_0^r \partial_z \tilde{\psi}_{(r)} \cdot \partial_z v + \int_0^r \Big(\frac{z^2}{16} + \frac{1}{4} \Big) \tilde{\psi}_{(r)} v, \hspace{3 mm} H^1_0(0,r) \times H^1_0(0,r) \rightarrow \mathbb{R}.
\end{equation}

$B$ is clearly bounded and coercive on $H^1_0(0,r)$. One obtains the existence of a unique $H^1$ weak solution to the system (\ref{prb.tilde.front}), and correspondingly to (\ref{front.eqn.ur}) from the Lax-Milgram Lemma. Via elliptic regularity, this implies $H^2$ regularity which, in original unknowns, translates to the existence of a unique $\psi_{(r)} \in H^2(0,r)$ for each $f \in L^2(0,r)$ such that:
\begin{equation} \label{front.lin.1}
||\psi_{(r)}||_{H^2(0,r)} \le C(r) ||f||_{L^2(0,r)},\text{ and } \psi_{(r)}(0) = \psi_{(r)}(r) = 0. 
\end{equation}

Let us rewrite the nonlinear problem (\ref{eqn.front.diff}) as a fixed point to:
\begin{equation} \label{front.nonlin.stab.1}
-\psi_{(r)}'' -\frac{z}{2} \psi_{(r)}' = f(\overline{\psi}_{(r)}) + (e_\delta + \delta)\overline{\psi}_{(r)}'', \hspace{3 mm} \psi_{(r)}(0) = \psi_{(r)}(r) = 0,
\end{equation}

where
\begin{equation} \label{front.nonlin.stab.2}
f(\overline{\psi}_{(r)}) = \overline{\psi}_{(r)}\overline{\psi}_{(r)}'' + \Big| \overline{\psi}_{(r)}' \Big|^2 + 2e_\delta \overline{\psi}_{(r)}' + \Big|e_\delta' \Big|^2 + e_{\delta}'' \overline{\psi}_{(r)} +  e_\delta e_{\delta}''.
\end{equation}

Define now the norm: 
\begin{equation} \label{defn.h2w}
||\psi_{(r)}||_{H^2_w}^2 :=  \int |\psi_{(r)}''|^2 + \int (1+z^2) \Big|\psi_{(r)}, \psi_{(r)}'\Big|^2,
\end{equation}

and the parameter: 
\begin{equation}
R(\delta) = \max \Big\{ ||e_\delta||_{L^\infty}, ||e_\delta'||_{L^\infty}, ||e_\delta''||_{L^\infty}, ||(1+z^2)^{\frac{1}{2}}e_\delta''||_{L^2} , ||e_\delta' (1+z^2)^{\frac{1}{2}}||_{L^2}\Big\}
\end{equation}

Via a standard integration by parts argument applied to (\ref{front.nonlin.stab.1}), we obtain the stabilizing estimate: 
\begin{align}
||\psi_{(r)}||_{H^2_w}^2 \le R(\delta)^4 + R(\delta)^2 ||\overline{\psi}_{(r)}||_{H^2_w}^2 + ||\overline{\psi}_{(r)}||_{H^2_w}^4,
\end{align}

which proves that if $||\overline{\psi}_{(r)}||_{H^2_w} \le R(\delta)$, then 
\begin{equation}
||\psi_{(r)}||_{H^2_w}^2 \le R(\delta)^4 + 2R(\delta)^2 ||\overline{\psi}_{(r)}||_{H^2_w}^2
\end{equation}

Selecting $\delta$ small enough then ensures $3R(\delta)^4 < R(\delta)^2$, ensuring that 
\begin{equation}
\psi_{(r)} = N(\overline{\psi_{(r)}}) \in B_{R(\delta)} \subset H^2_w \text{ whenever } \overline{\psi_{(r)}} \in B_{R(\delta)} \subset H^2_w.
\end{equation}

The second step is to prove the nonlinear map $N$ is a contraction map on $B_{R(\delta)} \subset H^2_w$. As such, label the pairs: 
\begin{equation} \label{front.nonlin.stab.3}
-\psi_{(i,r)}'' -\frac{z}{2} \psi_{(i,r)}' = f(\overline{\psi_{(i, r)}}) + \Big(e_\delta + \delta\Big) \overline{\psi}_{(r)}'', \hspace{3 mm} \psi_{(i, r)}(0) = \psi_{(i, r)}(r) = 0, \hspace{3 mm} i = 1,2. 
\end{equation}

Taking differences yields and performing a standard integration by parts argument gives:  
\begin{align}
||\psi_{1,r} - \psi_{2,r}||_{H^2_w}^2 \le R(\delta)^2 ||\overline{\psi}_{1,r} - \overline{\psi}_{2,r}||_{H^2_w}^2, 
\end{align} 

By the contraction mapping theorem there exists a unique solution to the nonlinear problem (\ref{front.nonlin.stab.1}) - (\ref{front.nonlin.stab.2}). Moreover, as this solution lies in the ball $B_{R(\delta)}$, it obeys the estimate 
\begin{equation}
||\psi_{(r)}||_{H^2_w(0,r)} \le R(\delta),
\end{equation}

uniformly in $r$. Therefore, we may let $r \rightarrow \infty$ to obtain a solution to the problem (\ref{eqn.front.diff}). 

\end{proof}

We may repeat the procedure in the above theorem with any weight, giving: 
\begin{equation}
||\langle z \rangle^M  \psi||_{H^2} \le \mathcal{O}(\delta; M). 
\end{equation}

By further differentiating equation (\ref{eqn.front.diff}), we can obtain 
\begin{equation} \label{GL.Front.est.1}
||\langle z \rangle^M \psi ||_{H^k} \le \mathcal{O}(\delta; M, k). 
\end{equation}

Our front from here on will be denoted as $\phi^\ast = \psi + e_\delta$. We will abuse notation and depict
\begin{equation} \label{GLFront}
\phi^\ast(z) = \phi^\ast(x,\eta). 
\end{equation}

We have the following corollary to the preceding analysis:

\begin{corollary} \label{corfront} The front $\phi^\ast$ obeys the following bounds, for any $m, k, l \ge 0$:
\begin{align} \label{corphi}
||z^m \partial_\eta^l \partial_x^k \Big(\phi^\ast - \delta \Big) ||_{L^p_\eta} \le \mathcal{O}(\delta) x^{-k-\frac{l}{2}+\frac{1}{2p}}
\end{align} 
\end{corollary}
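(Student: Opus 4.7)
The plan is to reduce everything to estimates in the self-similar variable $z = \eta/\sqrt{x}$, where both pieces of $\phi^\ast - \delta = \psi + (e_\delta - \delta)$ admit weighted Sobolev bounds of size $\mathcal{O}(\delta)$. From the basic identities (\ref{basic.id}), if $F$ is a function of $z$ alone, a simple induction on $k$ establishes that
\begin{equation}
\partial_x^k F(z) = x^{-k} \sum_{j=1}^{k} c_{k,j}\, z^{j}\, F^{(j)}(z),
\end{equation}
for universal constants $c_{k,j}$, and combining with $\partial_\eta^l = x^{-l/2}\partial_z^l$ yields
\begin{equation}
\partial_\eta^l \partial_x^k F(z) = x^{-k-\frac{l}{2}} \sum_{1\le j \le k,\; 0 \le i \le l} \tilde{c}_{k,l,i,j}\, z^{i}\, F^{(j+l-i)}(z),
\end{equation}
where the sum vanishes identically when $k=0$ unless $l=0$.

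Next, I would change variables $\eta \mapsto z$ in the $L^p_\eta$ norm, picking up the Jacobian $d\eta = \sqrt{x}\, dz$, which contributes the precise factor $x^{1/(2p)}$ in the statement. Thus
\begin{equation}
\|z^m \partial_\eta^l \partial_x^k (\phi^\ast - \delta)\|_{L^p_\eta} \lesssim x^{-k-\frac{l}{2}+\frac{1}{2p}} \sum_{i,j} \big\| z^{m+i}\, (\phi^\ast-\delta)^{(j+l-i)}(z) \big\|_{L^p_z},
\end{equation}
so the corollary reduces to showing each $L^p_z$ norm on the right is $\mathcal{O}(\delta)$.

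For the $\psi$ contribution, I would invoke (\ref{GL.Front.est.1}), which yields $\|\langle z\rangle^{M}\psi\|_{H^{K}} \le \mathcal{O}(\delta; M, K)$ for any $M,K \ge 0$. Choosing $K$ large (depending on $j+l-i$) and $M$ large (depending on $m+i$), and using the one-dimensional Sobolev embedding $H^1(\mathbb{R}_+) \hookrightarrow L^\infty$ together with interpolation to cover all $p \in [1,\infty]$, gives the claimed $\mathcal{O}(\delta)$ bound on each weighted $L^p_z$ norm of $\psi$ and its derivatives. For the $e_\delta - \delta$ contribution, one has the explicit representation
\begin{equation}
e_\delta(z) - \delta = -\frac{\delta}{\sqrt{\pi}} \int_z^\infty e^{-t^2/4}\, dt,
\end{equation}
so all derivatives of $e_\delta - \delta$ are, up to the prefactor $\delta$, polynomial multiples of $e^{-z^2/4}$ (using (\ref{equiv.front}) and induction on the derivative order). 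Therefore every weighted $L^p_z$ norm is controlled directly by $\delta$ times a Gaussian integral, which is $\mathcal{O}(\delta)$.

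The proof is essentially just bookkeeping for the chain rule plus the two ingredients above, so I do not anticipate a substantive obstacle; the mildest subtlety is ensuring that when $k = 0$ the identity for $\partial_x^k F$ is handled correctly (the sum is then replaced by $F$ itself with no loss of weight), but this only strengthens the estimate since we keep the $x^{1/(2p)}$ from the change of variables while losing no powers of $x$.
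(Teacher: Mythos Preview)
Your proof is correct and follows exactly the same approach as the paper: decompose $\phi^\ast - \delta = \psi + (e_\delta - \delta)$, invoke the weighted Sobolev bounds (\ref{GL.Front.est.1}) for $\psi$ and the explicit Gaussian form (\ref{defn.erf}) for $e_\delta$, and convert $(x,\eta)$-derivatives to $z$-derivatives via the chain rule and (\ref{basic.id}). The paper's own proof is in fact a one-sentence summary of precisely these ingredients, so your write-up is a correct and more detailed expansion of it.
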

\begin{proof}
This follows immediately from writing $\phi^\ast - \delta = \psi + (e_\delta - \delta)$, the definition of $e_\delta$ in (\ref{defn.erf}), the bounds in (\ref{GL.Front.est.1}), and then the chain rule together with the identities (\ref{basic.id}).
\end{proof}

Let us now record the following fact:
\begin{corollary} A solution $\phi_\ast$ to the system (\ref{ODE1}) must satisfy $\phi_\ast'(0) > 0$. 
\end{corollary}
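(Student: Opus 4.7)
The plan is to exploit the first-order linear structure that the ODE (\ref{ODE1}) gives for $w := \phi_\ast'$ once one divides by the coefficient $1-\delta+\phi_\ast$, and combine it with the boundary conditions $\phi_\ast(0)=0$, $\phi_\ast(\infty)=\delta$ to force a definite sign on $w(0)$.

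First I would check that the coefficient $1-\delta+\phi_\ast$ is bounded strictly away from zero. By Proposition \ref{thm.front} we can write $\phi_\ast = e_\delta + \psi$ with $\|\psi\|_{H^2_w} \lesssim \delta$, and the Sobolev embedding $H^2(\mathbb{R}_+)\hookrightarrow L^\infty$ together with $0\le e_\delta \le \delta$ gives $\|\phi_\ast\|_{L^\infty} \lesssim \delta$. Consequently, for $\delta$ small,
\[
1-\delta+\phi_\ast(z) \;\ge\; 1 - C\delta \;>\; 0 \quad \text{for all } z\in[0,\infty),
\]
so the ODE (\ref{ODE1}) may be rearranged as a linear first-order equation for $w=\phi_\ast'$:
\[
w'(z) + a(z)\,w(z) = 0, \qquad a(z) := \frac{w(z) + z/2}{1-\delta+\phi_\ast(z)}.
\]
Since $\phi_\ast \in H^2_w$ implies $w$ is continuous and $a$ is locally integrable on $[0,\infty)$, the integrating factor formula applies on every finite interval, yielding
\[
w(z) = w(0)\,\exp\!\Big(-\!\int_0^z a(s)\,ds\Big).
\]
In particular $w(z)$ is either identically zero or carries the strict sign of $w(0)$ throughout $[0,\infty)$.

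Next I would close the argument using the boundary values. Integration and the fundamental theorem of calculus give
\[
\int_0^\infty w(z)\,dz = \lim_{Z\to\infty}\bigl(\phi_\ast(Z) - \phi_\ast(0)\bigr) = \delta > 0.
\]
If $w(0)=0$, then $w\equiv 0$ and $\phi_\ast$ is constant, contradicting $\phi_\ast(0)=0 \neq \delta=\phi_\ast(\infty)$. If $w(0)<0$, then $w(z)<0$ on $[0,\infty)$, so the improper integral above is non-positive, again contradicting its value $\delta>0$. The only remaining possibility is $w(0)=\phi_\ast'(0)>0$, which is the desired conclusion.

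The only real obstacle is the a priori positivity of the denominator $1-\delta+\phi_\ast$; without it the integrating factor representation and the sign-preservation step both break down. This is precisely what forces us to invoke the smallness estimate for $\psi$ from Proposition \ref{thm.front} together with $\delta\ll 1$, rather than attempting the argument for arbitrary data. Everything else is bookkeeping on a linear first-order ODE with continuous coefficients.
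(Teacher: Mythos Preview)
Your proof is correct and follows essentially the same route as the paper: both arguments use the positivity of $1-\delta+\phi_\ast$ to see that $w=\phi_\ast'$ satisfies a homogeneous first-order equation $w'+a(z)w=0$, so its sign is preserved, and then the boundary conditions $\phi_\ast(0)=0$, $\phi_\ast(\infty)=\delta$ force that sign to be positive. The only cosmetic difference is that the paper phrases the sign-preservation step in phase-plane language (the line $\{v_f=0\}$ consists of equilibria and cannot be crossed by ODE uniqueness), whereas you write down the integrating-factor formula explicitly.
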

\begin{proof}
Define $u_f = \phi_\ast, v_f = \phi_\ast'$. By (\ref{corphi}), $|u_f| \le c_0 \delta$ for some constant $c_0$. The system (\ref{ODE1}) is then: $u_f' = v_f$, $(1-\delta + u_f)v_f' + |v_f|^2 + \frac{z}{2}v_f = 0$. The invariant set $\Gamma = \{u_f = C, v_f = 0\}$, for $-c_0\delta \le C \le c_0 \delta$ contains equilibria. The solution $\phi_\ast$ corresponds to an orbit with initial condition $(u_f(0) = 0, v_f(0))$ and final condition $(u_f(\infty) = \delta, v_f(\infty))$. By ODE uniqueness, the trajectory cannot cross the set $\Gamma$ unless at $(\delta, 0)$. Thus, if $v_f(0) \le 0$, $v_f(z) \le 0$ for all $z$, which violates $u_f(\infty) = \delta$. Thus, we must have $v_f(0) > 0$. 
\end{proof}

\subsection{Zeroeth Prandtl Layer, $u^0_p$}

We define the remainder, $w$, via:  
\begin{equation} \label{rep.}
w = q - \phi^\ast(\cdot), \hspace{3 mm}  g := w(1,\cdot) = q(1,\cdot) - \phi^\ast(\cdot) = U_0(1,\cdot) + \delta - \phi^\ast(\cdot). 
\end{equation}

The initial data in (\ref{rep.}) are clearly rapidly decaying after recalling (\ref{intro.BC0.1}) - (\ref{intro.BC0.3}), via the relation: $g = U_0 - (e_\delta -  \delta) - \psi$. Moreover, the following smallness is obtained, again by recalling the assumptions (\ref{intro.BC0.1}) - (\ref{intro.BC0.3}): 
\begin{align}
||\langle y \rangle^m \p_y^j g ||_{L^\infty} \le \mathcal{O}(\delta; j, m) \text{ for any }m, \text{ and } j = 0,1,2. 
\end{align}

The equation satisfied by $w$ is the following (after substituting $q = w + \phi^\ast$): 
\begin{align} \label{RNG.eqn.w}
w_x = (1-\delta) w_{\eta \eta} + \partial_\eta \Big( qw_\eta + w \partial_\eta \phi^\ast \Big) = (1-\delta) w_{\eta \eta} + K, \\  
w(x,0) = 0, \hspace{3 mm} w(x,\infty) = 0, \hspace{3 mm} w(1,\eta) = g(\eta), 
\end{align}

where 
\begin{equation} \label{RNG.eqn.K}
K = w_\eta^2 + 2 \phi^\ast_\eta w_\eta + w w_{\eta \eta} + \phi^\ast w_{\eta \eta} + w \phi^\ast_{\eta \eta}.
\end{equation}

Let us first make the following basic observation regarding our initial data: 
\begin{lemma}[Compatibility of Initial Data] Suppose the compatibility conditions in (\ref{compatibility.1}) are enforced. Then the initial data of $w_x$ respects the boundary condition $w_x(1,0) = 0$. 
\end{lemma}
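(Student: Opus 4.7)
The plan is to compute $w_x(1,0)$ by first evaluating the Prandtl equation (\ref{eqn.Pr.0.1}) at the corner $(x,y) = (1,0)$, then transporting the resulting identity back to the $(x,\eta)$ coordinate system through the von Mises change of variables (\ref{VM}).

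First I evaluate (\ref{eqn.Pr.0.1}) at the corner. By (\ref{eqn.Pr.0.2}) the prefactor satisfies $1 + u^0_p(1,0) = 1-\delta$, and the transport coefficient vanishes: $v^0_p(1,0) + v^1_e(1,0) = 0$. By the compatibility hypothesis (\ref{compatibility.1}), the right-hand side reduces to $u^0_{pyy}(1,0) = U_0''(0) = 0$. Combining these three facts forces $(1-\delta)\, u^0_{px}(1,0) = 0$, hence $u^0_{px}(1,0) = 0$.

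Next I translate this to the $\eta$-coordinate. Writing $y = Y(x,\eta)$ for the inverse of the map (\ref{VM}), the definition $q(x,\eta) = u^0_p(x, Y(x,\eta)) + \delta$ together with the chain rule yields
\begin{equation*}
q_x\big|_\eta(1,0) \;=\; u^0_{px}\big|_y(1,0) \;+\; u^0_{py}\big|_y(1,0) \cdot Y_x\big|_\eta(1,0).
\end{equation*}
Differentiating $\eta = \int_0^{Y(x,\eta)}(1+u^0_p(x,y'))\, dy'$ at fixed $\eta$ gives
\begin{equation*}
Y_x\big|_\eta \cdot (1 + u^0_p)\big|_{y=Y} \;+\; \int_0^{Y} u^0_{px}(x,y')\, dy' \;=\; 0,
\end{equation*}
and at $\eta = 0$ (so $Y = 0$) the domain of integration is empty, forcing $Y_x\big|_\eta(x,0) \equiv 0$. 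Therefore $q_x(1,0) = u^0_{px}(1,0) = 0$. Finally, since $w = q - \phi^\ast(\eta/\sqrt{x})$ and
\begin{equation*}
\partial_x \phi^\ast(\eta/\sqrt{x}) \;=\; -\frac{\eta}{2 x^{3/2}}\, \phi_\ast'(\eta/\sqrt{x})
\end{equation*}
contains an explicit factor of $\eta$, I conclude $w_x(1,0) = q_x(1,0) - 0 = 0$.

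The only (minor) obstacle is the bookkeeping for the von Mises Jacobian $Y_x|_\eta$, but it trivializes at the wall because the relevant integral has empty domain there. The compatibility condition $U_0''(0) = 0$ from (\ref{compatibility.1}) plays the decisive role: without it, the corner evaluation would produce the nonzero value $u^0_{px}(1,0) = U_0''(0)/(1-\delta)$, obstructing $w_x(1,0) = 0$.
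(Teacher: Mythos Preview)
Your proof is correct and follows essentially the same route as the paper's one-line argument, which says to evaluate (\ref{eqn.Pr.0.1}) at $y=0$ and (\ref{q.eqn}) at $(x,\eta)=(1,0)$. You carry out the first evaluation exactly as intended, and in place of invoking (\ref{q.eqn}) directly you compute the von Mises Jacobian $Y_x|_\eta$ explicitly to transfer $u^0_{px}(1,0)=0$ to $q_x(1,0)=0$; since (\ref{q.eqn}) is nothing but (\ref{eqn.Pr.0.1}) rewritten in von Mises coordinates, the two computations are equivalent, and your version has the advantage of not needing to unpack $q_\eta(1,0)$ and $q_{\eta\eta}(1,0)$ in terms of $U_0'(0)$ and $U_0''(0)$.
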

\begin{proof}

This follows upon evaluating (\ref{eqn.Pr.0.1}) at $y = 0$ and (\ref{q.eqn}) at $\eta = 0, x= 1$.
\end{proof}

\begin{remark}[Higher Order Compatibility] \label{hoc}
This same calculation for higher-order compatibility conditions at $x = 1, y = 0$ can be made. We omit displaying them explicitly, but will feel free to assume higher-order compatibility of the initial data at $y = 0$ as needed. 
\end{remark}

Let us now define a series of norms in which we control the remainder, $w$:
\begin{align} \label{defn.Q}
||w||_{Q(0,0)}^2 := \sup_{x \ge 1} \int w^2, w_\eta^2 x, w_{\eta \eta}^2 x^2 + \int_1^\infty \int w_\eta^2,  w_{\eta \eta}^2 x,  w_{\eta \eta \eta}^2 x^2. 
\end{align}

We shall need the general, differentiated and weighted variant of the above norms:
\begin{align} \nonumber
||w||_{Q(\sigma_0, k)}^2 &:= \sup_{x \ge 1} \int  \Big|\partial_x^k w\Big|^2 x^{2k-2\sigma_0}, \Big| \partial_x^k w_\eta\Big|^2x^{2k+1-2\sigma_0} ,  \Big| \partial_x^k w_{\eta \eta}\Big|^2 x^{2k + 2- 2\sigma_0} \\ \label{defn.Q.2} 
& + \int_1^\infty \int \Big|\partial_x^k  w_\eta\Big|^2  x^{2k-2\sigma_0},  \Big|\partial_x^k  w_{\eta \eta}\Big|^2 x^{2k+1-2\sigma_0}, \Big|\partial_x^k  w_{\eta \eta \eta}\Big|^2 x^{2k+2-2\sigma_0} . 
\end{align}

\begin{remark}
One should take note of several points. First, each application of $\partial_x$ to the system (\ref{RNG.eqn.w}) adds enhanced decay of $x^{-1}$, which is reflected in the norms above. This is because of the behavior of the profiles $\phi^\ast$ under the application of $\partial_x$, see estimate (\ref{corphi}).

Second, upon controlling $w$ in the $Q$-norms in (\ref{defn.Q}), the dynamics of $q = w + \phi^\ast$ will be dominated by those of the front $\phi^\ast$ (so long as the parameter $\sigma_0 < \frac{1}{4}$, which we will enforce). The small parameter $\sigma_0$ arises for technical reasons when performing weighted estimates, but should be ignored for the unweighted estimates. 

The final point is that upon heuristically identifying $w_x \approx w_{\eta \eta}$ through the equation (\ref{RNG.eqn.w}), one sees that the norms $Q(\sigma_0, k)$ have an iterative structure:  
\begin{align}
\int |\p_x^{k+1} w|^2 x^{2(k+1)} \ud \eta \approx \int |\p_x^k w_{\eta \eta}|^2 x^{2k+2} \ud \eta,
\end{align}

the quantity on the left-hand side above being in $||w||_{Q(0,k+1)}$ and the quantity on the right-hand side above being in $||w||_{Q(0,k)}$ (upon taking $\sup$ in $x$). The reason for this structure is that the equation (\ref{RNG.eqn.w}) is quasilinear.
\end{remark}

Through standard Sobolev interpolation, it is clear that: 
\begin{lemma} For any $ \sigma_0 \ge 0$, and $k \ge 0$, 
\begin{align} \label{q.sobolev}
\sup_x || \partial_x^k w x^{\frac{1}{4}-\sigma_0 + k}||_{L^\infty_\eta} + \sup_x  || \partial_x^k w_{\eta} x^{\frac{3}{4}-\sigma_0 + k}||_{L^\infty_\eta} \lesssim ||w||_{Q(\sigma_0, k)}. 
\end{align}
\end{lemma}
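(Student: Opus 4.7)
The plan is to apply the standard one-dimensional interpolation inequality fiber-wise in $\eta$ for each fixed $x \ge 1$, and then redistribute the tangential weight $x^{\frac{1}{4}-\sigma_0+k}$ (respectively $x^{\frac{3}{4}-\sigma_0+k}$) in a way that matches the weights that appear in the definition of $Q(\sigma_0, k)$ in (\ref{defn.Q.2}). Since $w(x,0) = 0$ and $w(x,\infty) = 0$ (and these boundary conditions propagate to $\partial_x^k w$ via the higher-order compatibility assumed in Remark \ref{hoc}), the Agmon-type inequality
\[
\|f\|_{L^\infty_\eta(\mathbb{R}_+)}^2 \le 2\|f\|_{L^2_\eta(\mathbb{R}_+)} \|f_\eta\|_{L^2_\eta(\mathbb{R}_+)}
\]
is available, either from $f(\eta)^2 = 2\int_0^\eta f f_\eta$ or from $f(\eta)^2 = -2\int_\eta^\infty f f_\eta$.

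For the first estimate, I would apply this to $f = \partial_x^k w(x, \cdot)$, which yields
\[
\|\partial_x^k w(x,\cdot)\|_{L^\infty_\eta}^2 \le 2\,\|\partial_x^k w(x,\cdot)\|_{L^2_\eta}\,\|\partial_x^k w_\eta(x,\cdot)\|_{L^2_\eta}.
\]
Multiplying through by $x^{2(\frac{1}{4}-\sigma_0+k)} = x^{k-\sigma_0} \cdot x^{k+\frac{1}{2}-\sigma_0}$, I obtain
\[
\|\partial_x^k w(x,\cdot)\, x^{\frac{1}{4}-\sigma_0+k}\|_{L^\infty_\eta}^2 \le 2\Bigl(\|\partial_x^k w(x,\cdot)\|_{L^2_\eta}^2\, x^{2k-2\sigma_0}\Bigr)^{\!1/2} \Bigl(\|\partial_x^k w_\eta(x,\cdot)\|_{L^2_\eta}^2\, x^{2k+1-2\sigma_0}\Bigr)^{\!1/2}.
\]
Reading off from (\ref{defn.Q.2}), each factor is bounded by $\|w\|_{Q(\sigma_0, k)}$ uniformly in $x \ge 1$, and taking the supremum in $x$ gives the desired bound for the first term.

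For the second estimate, the same argument applied to $f = \partial_x^k w_\eta(x,\cdot)$ gives
\[
\|\partial_x^k w_\eta(x,\cdot)\|_{L^\infty_\eta}^2 \le 2\,\|\partial_x^k w_\eta(x,\cdot)\|_{L^2_\eta}\,\|\partial_x^k w_{\eta\eta}(x,\cdot)\|_{L^2_\eta},
\]
and the weight is distributed as $x^{2(\frac{3}{4}-\sigma_0+k)} = x^{k+\frac{1}{2}-\sigma_0}\cdot x^{k+1-\sigma_0}$, matching exactly the weights on $\partial_x^k w_\eta$ and $\partial_x^k w_{\eta\eta}$ in the $Q$-norm. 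Taking the supremum in $x$ concludes the proof.

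The only step worth flagging is the weight bookkeeping: the exponents $\frac{1}{4}-\sigma_0+k$ and $\frac{3}{4}-\sigma_0+k$ on the $L^\infty$ side are arranged to be exactly the averages of the weights attached to the two adjacent derivatives in (\ref{defn.Q.2}), which is precisely what the Agmon interpolation requires. No further difficulty is anticipated, as this is a direct Sobolev embedding result with no nonlinear structure to exploit.
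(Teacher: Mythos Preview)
Your proof is correct and follows essentially the same approach as the paper: both apply the one-dimensional Agmon/Sobolev interpolation $\|f\|_{L^\infty_\eta}^2 \lesssim \|f\|_{L^2_\eta}\|f_\eta\|_{L^2_\eta}$ fiber-wise in $\eta$ (using decay as $\eta \to \infty$) and then split the $x$-weight to match the pairs of weighted $L^2_\eta$ norms appearing in the definition of $Q(\sigma_0,k)$. Your weight bookkeeping is exactly right.
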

\begin{proof}
As the profile $w$ decays at $\eta \rightarrow \infty$ for each fixed $x$, we have: 
\begin{align} \nonumber
|\partial_x^k w|^2  x^{\frac{1}{2}-2\sigma_0 + 2k} &= \Big|2\int_\eta^\infty  \partial_x^k w \partial_x^k w_\eta x^{\frac{1}{2}-2\sigma_0 + 2k} d\eta'\Big| \lesssim ||w x^{k-\sigma_0}||_{L^2_\eta} ||w_\eta x^{k-\sigma_0 +\frac{1}{2}}||_{L^2_\eta} \\ 
& \lesssim ||w||_{Q(\sigma_0, k)}^2.
\end{align}

A similar computation works for the $\partial_x^k w_\eta$ term in (\ref{q.sobolev}).

\end{proof}

We now give the energy estimates for $w$:
\begin{proposition} \label{prop.ww2} For any $\sigma_0 > 0$, and $m \ge 0$, $w$ satisfies the following estimate: 
\begin{align} \label{w.w.2}
&||z^m w||_{Q(\sigma_0, 0)} \le \mathcal{O}(\delta; m, \sigma_0), \\
&||z^m w||_{Q(\sigma_0, k)} \le C(k, m, \sigma_0),  \text{ for }k > 0.
\end{align}
\end{proposition}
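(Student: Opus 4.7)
The plan is a layered energy argument on the quasilinear parabolic equation \eqref{RNG.eqn.w}, organised as induction in $k$ (number of $x$-derivatives), then in the number of $\eta$-derivatives appearing in the $Q(\sigma_0,k)$ norm, and finally lifted to the $z^m$-weighted statement. First, rewrite \eqref{RNG.eqn.w} in the quasilinear divergence form
\begin{equation*}
w_x \;=\; \partial_\eta\!\bigl((1-\delta+q)\,w_\eta\bigr) \;+\; \partial_\eta(w\,\phi^\ast_\eta),
\end{equation*}
which follows by subtracting the PDE satisfied by $\phi^\ast$ (a direct consequence of \eqref{ODE1} written in $(\eta,x)$ variables) from that of $q$ in \eqref{q.eqn}. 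For the base case $k=0$, test against $w\,x^{-2\sigma_0}$ on $[1,X]\times\mathbb{R}_+$. The boundary conditions $w|_{\eta=0}=w|_{\eta=\infty}=0$ kill $\eta$-boundary terms; the positivity estimate \eqref{pr.pos} yields $1-\delta+q\ge 1-\delta$, producing the coercive term $(1-\delta)\int\!\!\int w_\eta^2\,x^{-2\sigma_0}$; the $x$-integration supplies both $\sup_X\|w(X)\|_{L^2}^2 X^{-2\sigma_0}$ and, crucially, a positive bulk term $2\sigma_0\int\!\!\int w^2 x^{-2\sigma_0-1}$. The only source is $-\int\!\!\int w_\eta\,w\,\phi^\ast_\eta\,x^{-2\sigma_0}$, and Corollary \ref{corfront} gives $\|\phi^\ast_\eta\|_{L^\infty}\lesssim \delta\,x^{-1/2}$, so Cauchy--Schwarz, the smallness of $\delta$, and the Hardy/$\sigma_0$-term on the left absorb it. With initial data $g=U_0-(e_\delta-\delta)-\psi$ of size $\mathcal O(\delta)$ by \eqref{intro.BC0.3} and Proposition \ref{thm.front}, this yields $\|w\|_{Q(\sigma_0,0)}\lesssim \delta$.

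Next, the higher-$\eta$-derivative contributions to $\|w\|_{Q(\sigma_0,0)}$ are obtained by testing with $-w_{\eta\eta}\,x^{1-2\sigma_0}$ and then $w_{\eta\eta\eta\eta}\,x^{2-2\sigma_0}$ (equivalently, differentiating the equation in $\eta$ and pairing with $w_{\eta\eta}\,x^{1-2\sigma_0}$); the positivity of $1-\delta+q$ continues to supply the principal diffusive term, Corollary \ref{corfront} handles the decay of $\phi^\ast_{\eta\eta}$ and $\phi^\ast_{\eta\eta\eta}$, and the quasilinear term $w\,w_{\eta\eta}$ is controlled by the Sobolev embedding \eqref{q.sobolev} in $L^\infty$, which is already $\mathcal O(\delta)$ by the previous step, so that it can be absorbed into the diffusion. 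The powers of $x$ in the weights are chosen precisely so that integration by parts in $x$ transfers the $x$-derivative onto the weight to reproduce a lower-order absorbable term, mirroring the $\sigma_0$-mechanism of the base case. To install the $z^m$-weight, replace the multipliers by $z^{2m}$ times themselves; because $z^{2m}=\eta^{2m}x^{-m}$ with $\partial_x z^{2m}=-m z^{2m}/x$, the $x$-derivative contributes a negative term combining favourably with the $\sigma_0$-term, while $\eta$-commutators produce lower-order weighted pieces that close by induction on $m$. The required decay in $z$ of $\phi^\ast$ and of $g$ (from \eqref{GL.Front.est.1} and \eqref{intro.BC0.3}) furnishes the $z^m$-integrability of the source and initial data.

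For $k\ge 1$, apply $\partial_x^k$ to the quasilinear form, so that the principal part becomes $\partial_\eta\!\bigl((1-\delta+q)\,\partial_x^k w_\eta\bigr)$ plus commutators $\sum_{j=1}^{k}\binom{k}{j}\partial_\eta\!\bigl((\partial_x^j q)\,\partial_x^{k-j}w_\eta\bigr)$ and the analogous expansion of the $w\,\phi^\ast_\eta$ piece. Testing against $\partial_x^k w\cdot x^{2k-2\sigma_0}$ and repeating the base-case argument, we use that each extra $x$-derivative of $q=w+\phi^\ast$ contributes a factor $x^{-1}$: on the $\phi^\ast$ side by \eqref{corphi}; on the $w$ side by the inductive hypothesis. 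The extra factor $x^{2k-2\sigma_0}$ in the $Q(\sigma_0,k)$-weight exactly compensates these losses. The resulting bound is no longer of order $\delta$ but only $C(k,m,\sigma_0)$, reflecting the fact that $\partial_x^j\phi^\ast$ for $j\ge 1$ need not be small but only decaying in $x$. I expect the main obstacle to be the middle-order commutators $(\partial_x^j q)\,\partial_x^{k-j}w_\eta$ with $1\le j\le k-1$, where neither factor is automatically small. These are handled by trading $\partial_x^j q\sim \partial_x^{j-1}\partial_\eta((1-\delta+q)q_\eta)$ through the equation \eqref{q.eqn} and then invoking \eqref{q.sobolev} together with the $z^m$-weight to place one factor into $L^\infty_\eta$ with the correct decay in $x$; it is here that the self-similar weight $z^m$ is genuinely essential rather than cosmetic, since without it the $L^\infty_\eta$ norm of $\partial_x^{k-j}w_\eta$ would not carry enough decay to close the induction.
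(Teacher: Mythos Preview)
Your overall architecture matches the paper's: layered energy estimates with multipliers $w\,x^{-2\sigma_0}$, then $-w_{\eta\eta}\,x^{1-2\sigma_0}$, then a third-level step; smallness of $\delta$ via Corollary~\ref{corfront} to absorb the $\phi^\ast$-terms; iteration in $k$ and in the self-similar weight $z^m$. The divergence-form rewriting is equivalent to the paper's expanded $K$.

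There is one technical gap at the third level of the $k=0$ case. Your proposed multipliers --- either $w_{\eta\eta\eta\eta}\,x^{2-2\sigma_0}$, or ``differentiating in $\eta$ and pairing with $w_{\eta\eta}$'' --- both run into trouble at $\eta=0$. Since $w_\eta(x,0)\neq 0$ in general, the $\eta$-differentiated unknown does not inherit a homogeneous Dirichlet condition, and the integration by parts producing $\tfrac12\partial_x\!\int w_{\eta\eta}^2$ leaves the uncontrolled trace term $w_{x\eta}(x,0)\,w_{\eta\eta}(x,0)\,x^{2-2\sigma_0}$. (Your parenthetical is also not an equivalence: $\int w_{x\eta}\,w_{\eta\eta}$ is not a total $x$- or $\eta$-derivative of a square.) The paper avoids this by differentiating in $x$ instead --- since $w(x,0)\equiv 0$ forces $w_x(x,0)=0$ --- and testing \eqref{w.s.1} with $w_x\,x^{2-2\sigma_0}$. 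This yields control of $\sup_x\|w_x\|_{L^2_\eta}$ and $\|w_{x\eta}\|_{L^2}$, and the missing $w_{\eta\eta},w_{\eta\eta\eta}$ pieces of $Q(\sigma_0,0)$ are then recovered algebraically from the equation via $w_x\approx (1-\delta)w_{\eta\eta}+K$, $w_{x\eta}\approx (1-\delta)w_{\eta\eta\eta}+K_\eta$.

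One further remark: the loss of $\mathcal O(\delta)$ for $k\ge 1$ is not because $\partial_x^j\phi^\ast$ fails to be small --- Corollary~\ref{corfront} gives $\mathcal O(\delta)$ for all $j$ --- but because the initial datum $\partial_x^k w(1,\cdot)$, computed from the equation, involves $\partial_\eta^j U_0$ with $j\ge 3$, which by \eqref{intro.BC0.2}--\eqref{intro.BC0.3} is only assumed bounded, not small.
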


\begin{remark} Notice that the weights, $z = \frac{\eta}{\sqrt{x}}$, honor the parabolic scaling of (\ref{RNG.eqn.w}), and are propagated by the linear flow. 
\end{remark}

\begin{proof}

The proof proceeds in several steps. 

\subsubsection*{Step 1: Multiplier $M = w$:}

Applying the multiplier $M = w$ to the system (\ref{RNG.eqn.w}) generates the following positive terms:
\begin{align}
\int \Big( w_x - (1-\delta) w_{\eta \eta} \Big) \cdot w = \frac{\partial_x}{2} \int w^2 + (1-\delta) \int w_\eta^2. 
\end{align}

Next, we must treat the nonlinear and linearized terms in $K$. The boundary condition $w(x,0) = 0$ enables us to integrate by parts the quasilinear term:
\begin{align}
&\Big| \int w_\eta^2 \cdot w + w w_{\eta \eta} \cdot w \Big| = \Big| \int w w_\eta^2 \Big| \lesssim ||w||_{L^\infty_\eta} \Big( \int w_\eta^2 \Big), \\ \nonumber
& \Big| \int \phi^\ast w_{\eta \eta} \cdot w + \int \phi^\ast_{\eta \eta} w \cdot w + \int 2 \phi^\ast_\eta w_\eta \cdot w \Big| = \Big| \int \phi^\ast_{\eta \eta} w^2 + \int \phi^\ast w_\eta^2 \Big| \\ 
& \hspace{20 mm} \le ||\phi^\ast_{\eta \eta} \eta^2||_{L^\infty_\eta}  \int \frac{w^2}{\eta^2} +  ||\phi^\ast||_{L^\infty} ||w_\eta||_{L^2_\eta}^2 \le \mathcal{O}(\delta) \int w_\eta^2. 
\end{align}

Above, we have used Corollary \ref{corfront} to absorb two factors of $\eta$ to into $\phi^\ast_{\eta \eta}$. We have also used the Hardy inequality, which is available as $w(x,0) = 0$. Summarizing, we have: 
\begin{align} \label{new.1}
\partial_x \int w^2 + \int w_\eta^2 \lesssim \Big[\mathcal{O}(\delta) + ||w||_{L^\infty_\eta} \Big] \int w_\eta^2. 
\end{align}

\subsubsection*{Step 2: Multiplier $M = -w_{\eta \eta} x$}

Next, we will apply the multiplier $M = -w_{\eta \eta} x$. This generates the positive terms: 
\begin{align}
-\int \Big( \partial_x - (1-\delta) \partial_{\eta \eta} \Big) w \cdot w_{\eta \eta} x = \frac{\partial_x}{2} \int w_\eta^2 x - \int w_\eta^2 + (1-\delta) \int w_{\eta \eta}^2 x.
\end{align}

Let us now turn to the terms in $K$:
\begin{align} \label{imrp.u}
&\Big| \int w_\eta^2 w_{\eta \eta} x  +  \int w w_{\eta \eta}^2 x \Big| \le ||w, w_\eta x^{\frac{1}{2}}||_{L^\infty_\eta} \Big( ||w_\eta||_{L^2_\eta}^2 + ||x^{\frac{1}{2}}w_{\eta \eta}||_{L^2_\eta}^2 \Big), \\ \nonumber
&\Big| \int \phi_\eta^\ast w_\eta w_{\eta \eta} x +  \int \phi^\ast w_{\eta \eta}^2 x +  \int \phi^\ast_{\eta \eta} w w_{\eta \eta} x\Big| \\ \nonumber
&\hspace{20 mm} \le ||\phi^\ast, \eta x^{\frac{1}{2}} \phi^\ast_{\eta \eta}, x^{\frac{1}{2}}\phi^\ast_\eta||_{L^\infty} \Big( ||w_\eta||_{L^2_\eta}^2 + ||w_{\eta \eta} x^{\frac{1}{2}}||_{L^2_\eta}^2 \Big) \\
& \hspace{20 mm} \le \mathcal{O}(\delta) \Big( ||w_\eta||_{L^2_\eta}^2 + ||w_{\eta \eta}x^{\frac{1}{2}}||_{L^2_\eta}^2 \Big).
\end{align}

Summarizing, we have: 
\begin{align} \label{new.2}
\partial_x \int w_{\eta}^2 x + \int w_{\eta \eta}^2 x \lesssim \Big(1 + ||w_\eta x^{\frac{1}{2}}||_{L^\infty_\eta} \Big) \int w_\eta^2 + ||w, w_\eta x^{\frac{1}{2}}||_{L^\infty_\eta} \int w_{\eta \eta}^2 x. 
\end{align}

\subsubsection*{Step 3: Multiplier $w_x x^2$}

The third step is to take $\partial_x$ of the equation (\ref{RNG.eqn.w}). Doing so gives the system: 
\begin{align} \label{w.s.1}
\Big(\partial_x - (1-\delta)\partial_{\eta \eta} \Big) w_x = \partial_x K, \hspace{3 mm} w_x(x,0) = 0, \\ \label{w.s.2} 
w_x(1,\cdot) = (1-\delta) g_{\eta \eta} + g_\eta^2 + 2\phi^\ast_\eta g_\eta + g g_{\eta \eta} + \phi^\ast g_{\eta \eta} + \phi^\ast_{\eta \eta} g.
\end{align}

First, we will record that $||\langle \eta \rangle^m w_x(1,\eta)||_{L^\infty} \lesssim \mathcal{O}(\delta; m)$, for any $m \ge 0$, by (\ref{w.s.2}) and (\ref{intro.BC0.1}) - (\ref{intro.BC0.3}). Let us now apply the multiplier $M = w_x x^2$ to the system (\ref{w.s.1}). Again, we will remain cognizant of the boundary condition $w_x(x,0) = 0$ when integrating by parts. Doing so gives the positive terms: 
\begin{align}
\int \Big(\partial_x - (1-\delta) \partial_{\eta \eta} \Big)w_x \cdot w_x x^2 = \partial_x \int w_{x}^2 x^2 - \int w_x^2 x + (1-\delta) \int w_{\eta x}^2 x^2.  
\end{align}

Next, we come to the nonlinearity in $K_x$, where integrating by parts as necessary: 
\begin{align} \nonumber
&\Big| \int w_\eta w_{\eta x} w_x x^{2} + w_x w_{\eta \eta} w_x x^2 + w w_{\eta \eta x} w_x x^2 \Big| \\ &
 \hspace{10 mm} \le ||w_\eta x^{\frac{1}{2}}||_{L^\infty_\eta} ||w_{\eta x} x||_{L^2_\eta} ||w_x x^{\frac{1}{2}}||_{L^2_\eta} + ||w||_{L^\infty} ||w_{\eta x} x||_{L^2_\eta}^2, \\ \nonumber
&\Big| \int \phi^\ast_{\eta x} w_\eta w_x x^2 + \int \phi^\ast_\eta w_{\eta x} w_x x^2  \Big| \\ \n
& \hspace{10 mm} \le ||\phi^\ast_{\eta x} x \eta ||_{L^\infty_\eta} ||w_\eta||_{L^2_\eta} ||w_{x\eta} x||_{L^2_\eta} + ||\phi^\ast_\eta \eta||_{L^\infty_\eta} ||w_{\eta x} x||_{L^2_\eta}^2, \\
& \hspace{10 mm} \le \mathcal{O}(\delta) ||w_\eta||_{L^2_\eta} ||w_{x\eta} x||_{L^2_\eta} + \mathcal{O}(\delta) ||w_{\eta x} x||_{L^2_\eta}^2  \\ \nonumber
& \Big| \int \phi^\ast_x w_{\eta \eta} w_x x^2 + \int \phi^\ast w_{\eta \eta x} w_x x^2\Big| \\ \n
& \hspace{10 mm} \le ||\phi^\ast_x x||_{L^\infty_\eta} ||w_{\eta \eta} x^{\frac{1}{2}}||_{L^2_\eta} ||w_x x^{\frac{1}{2}}||_{L^2_\eta} + ||\phi^\ast, \phi^\ast_\eta \eta||_{L^\infty_\eta} ||w_{\eta x}x||_{L^2_\eta}^2, \\ 
& \hspace{10 mm} \le \mathcal{O}(\delta) ||w_{\eta \eta} x^{\frac{1}{2}}||_{L^2_\eta} ||w_x x^{\frac{1}{2}}||_{L^2_\eta} + \mathcal{O}(\delta) ||w_{\eta x}x||_{L^2_\eta}^2 \\ \n
& \Big| \int \phi^\ast_{\eta \eta x} w w_x x^2 + \int \phi^\ast_{\eta \eta} w_x^2 x^2 \Big| \\ \n
& \hspace{10 mm}\le ||\phi^\ast_{\eta \eta x} \eta x^{\frac{3}{2}}||_{L^\infty_\eta} ||w_x x^{\frac{1}{2}}||_{L^2_\eta} ||w_\eta||_{L^2_\eta} + ||\phi^\ast_{\eta \eta} x||_{L^\infty} ||w_x x^{\frac{1}{2}}||_{L^2}^2 \\
& \hspace{10 mm} \le \mathcal{O}(\delta) ||w_x x^{\frac{1}{2}}||_{L^2_\eta} ||w_\eta||_{L^2_\eta} + \mathcal{O}(\delta) ||w_x x^{\frac{1}{2}}||_{L^2_\eta}^2. 
\end{align}

Summarizing this piece: 
\begin{align} \nonumber
\partial_x \int w_x^2 x^2 &+ \int w_{\eta x}^2 x^2 \lesssim \Big[\mathcal{O}(\delta) + ||w, w_\eta x^{\frac{1}{2}}||_{L^\infty_\eta} \Big] \int w_{\eta x}^2x^2  \\ \label{new.3}
& + \Big[1 + \mathcal{O}(\delta) + ||w, w_\eta x^{\frac{1}{2}}||_{L^\infty_\eta} \Big] \int w_x^2 x + \mathcal{O}(\delta) \int w_\eta^2, w_{\eta \eta}^2 x. 
\end{align}

With these estimates in hand, we may now apply a standard continuous induction argument to conclude the global existence of $w$ satisfying (\ref{w.w.2}) with $k = m = \sigma_0 = 0$.  

\subsubsection*{Step 4: Weighted Estimates}

We now apply the weighted multiplier $M = x^{-2\sigma_0} z^m w$ to the system (\ref{RNG.eqn.w}). First, we have the following positive terms: 
\begin{align} \nonumber
\int \Big(w_x - w_{\eta \eta} \Big) \cdot wz^{2m} x^{-2\sigma_0} &=  \frac{\partial_x}{2} \int w^2 z^{2m} x^{-2\sigma_0} + \Big( \frac{m}{2} + \sigma_0 \Big) \int w^2 z^{2m} x^{-1-2\sigma_0} \\ \label{LINHON}
& + \int w_\eta^2z^{2m} x^{-2\sigma_0} -m(2m-1) \int w^2 z^{2m-2} x^{-1-2\sigma_0}.
\end{align}

The last term above has been estimated inductively at the $m-1$'th iterate. Next, we address the terms in $K$: 
\begin{align}
&\Big| \int \Big( w_\eta^2 + ww_{\eta \eta} \Big) \cdot wz^{2m}x^{-\sigma_0} \Big| \lesssim ||w||_{L^\infty_\eta} ||w_\eta z^m x^{-\sigma_0}||_{L^2_\eta}^2 , \\
& \Big| \int \Big( 2\phi^\ast_\eta w_\eta + \phi^\ast w_{\eta \eta} + w \phi^\ast_{\eta \eta} \Big) \cdot z^{2m} x^{-2\sigma_0} w \Big| \lesssim ||\eta \phi^\ast_\eta, \eta^2 \phi_{\eta \eta}^\ast||_{L^\infty_\eta} ||w_\eta z^m x^{-\sigma_0}||_{L^2_\eta}^2. 
\end{align}

Summarizing: 
\begin{align} \label{summer.1}
\partial_x \int w^2 z^{2m}x^{-2\sigma_0} &+ \int w_\eta^2 z^{2m}x^{-2\sigma_0} + \int w^2 z^{2m} x^{-1-2\sigma_0} \lesssim \int w^2 z^{2m-2} x^{-1-2\sigma_0}.
\end{align}

By integrating in $x$:
\begin{align} \nonumber
\sup_{x \ge 1} \int w^2 z^{2m}x^{-2\sigma_0} &+ \int \int w_\eta^2 z^{2m}x^{-2\sigma_0} + \int \int w^2 z^{2m}x^{-1-2\sigma_0} \\
& \lesssim \int_{x=1} w^2 y^{2m} + \int \int w^2 z^{2m-2} x^{-1-2\sigma_0} \le \mathcal{O}(\delta, m, \sigma_0). 
\end{align}

The next step is to apply the multiplier $M = -w_{\eta \eta} x^{1-2\sigma_0} z^{2m}$. First, this gives: 
\begin{align} \nonumber
- \int \Big( \partial_x& - \partial_{\eta \eta} \Big) w \cdot w_{\eta \eta} z^{2m} x^{1-2\sigma_0} = \\ \nonumber
&\frac{\partial_x}{2} \int w_\eta^2 z^{2m} x^{1-2\sigma_0} + \int w_{\eta \eta}^2 z^{2m} x^{1-2\sigma_0} + \frac{m}{2} \int w_\eta^2 z^{2m} x^{-2\sigma_0} \\ 
&- \frac{1-2\sigma_0}{2} \int w_\eta^2 z^{2m}x^{-2\sigma_0} + 2m \int w_\eta w_x z^{2m-1} x^{\frac{1}{2}-2\sigma_0}.
\end{align} 

Next, let us turn to the nonlinear terms in $K$:
\begin{align} \nonumber
\Big| \int \Big(w_\eta^2 &+ ww_{\eta \eta} \Big) \cdot w_{\eta \eta} z^{2m}x^{1-2\sigma_0} \Big| \le ||w, w_\eta x^{\frac{1}{2}}||_{L^\infty_\eta} \Big( ||w_{\eta} z^mx^{-\sigma_0}||_{L^2_\eta}^2 \times \\ 
& ||w_{\eta \eta} z^m x^{\frac{1}{2}-\sigma_0}||_{L^2_\eta}^2 \Big), 
\end{align}

and the linearized terms: 
\begin{align} \nonumber
\Big| \int \Big(\phi^\ast_\eta w_\eta &+ \phi^\ast w_{\eta \eta} + \phi^\ast_{\eta \eta} w \Big) \cdot w_{\eta \eta} z^{2m} x^{1-2\sigma} \Big| \\ 
&\le ||x^{\frac{1}{2}} \phi^\ast_\eta, x^{\frac{1}{2}} \eta \phi^\ast_{\eta \eta}, \phi^\ast||_{L^\infty} ||w_\eta x^{-\sigma_0} z^m||_{L^2_\eta} ||w_{\eta \eta} z^m x^{\frac{1}{2}-\sigma_0}||_{L^2_\eta}.
\end{align}

Summarizing, 
\begin{align} \label{summer.2}
\partial_x \int w_\eta^2 z^{2m} x^{1-2\sigma_0} &+ \int w_{\eta \eta}^2 z^{2m}x^{1-2\sigma_0} + \int w_\eta^2 z^{2m}x^{-2\sigma_0} \lesssim 
\int w_\eta w_x z^{2m-1} x^{\frac{1}{2}-2\sigma_0} \\
& \lesssim \frac{1}{10,000} \int w_{\eta}^2 z^{2m} x^{-2\sigma_0} + C \int w_x^2 x^{1-2\sigma_0} z^{2m-2}.
\end{align}

Integrating in $x$: 
\begin{align} \nonumber
\sup_{x \ge 1} \int w_\eta^2 z^{2m} x^{1-2\sigma_0} &+ \int_1^{\infty} \int w_{\eta \eta}^2 z^{2m}x^{1-2\sigma_0} + \int_1^\infty \int w_\eta^2 z^{2m} x^{-2\sigma_0}\\ 
& \lesssim \int_{x=1} w_\eta^2 y^{2m} + \int \int w_x^2 x^{1-2\sigma_0} z^{2m-2} \le \mathcal{O}(\delta; \sigma_0, m). 
\end{align}

The final step is to apply the multiplier $M = w_x x^{2-2\sigma_0} z^{2m}$ to the system (\ref{w.s.1}). This generates the following terms: 
\begin{align} \nonumber
\int \Big(\partial_x &- \partial_{\eta \eta} \Big)w_x \cdot w_x x^{2-2\sigma_0} z^{2m} = \frac{\partial_x}{2} \int w_x^2 x^{2-2\sigma_0} z^{2m} + \int w_{\eta x}^2 x^{2-2\sigma_0} z^{2m} \\ &+ \Big(\frac{m}{2} - 1 + \sigma_0\Big) \int w_x^2 x^{1-2\sigma_0} z^{2m} - m(2m-1) \int w_x^2 x^{1-2\sigma_0} z^{2m-2}.
\end{align}

We turn to the nonlinearities contained in $K_x$, for which we integrate by parts as needed:
\begin{align} 
\Big|\int \Big(w_\eta w_{\eta x} &+ w_x^2 w_{\eta \eta} + w w_{\eta \eta x} w \Big) \cdot w_x x^{2-2\sigma_0} z^{2m} \Big| \\ \nonumber
&\lesssim ||w, w_\eta x^{\frac{1}{2}}||_{L^\infty} \Big( ||w_x x^{\frac{1}{2}-\sigma_0}z^m||_{L^2_\eta}^2 + ||w_x x^{\frac{1}{2}-\sigma_0}z^{m-1}||_{L^2_\eta}^2 + ||w_x x^{\frac{1}{2}-\sigma_0} z^m||_{L^2_\eta}^2 \Big).
\end{align}

Next, we come to the linearizations around the front profile, $\phi^\ast$:
\begin{align} \nonumber
&\Big| \int \phi^\ast_{\eta x} w_\eta w_x x^{2-2\sigma_0} z^{2m} + \int \phi^\ast_\eta w_{\eta x} w_x x^{2-2\sigma_0} z^{2m}  \Big| \\ 
& \hspace{10 mm} \le ||\phi^\ast_{\eta x} \eta x z^{2m}, \phi^\ast_\eta \eta z^{2m} ||_{L^\infty} \Big( ||w_\eta ||_{L^2_\eta}^2 +  ||w_{x \eta} x||_{L^2_\eta}^2 \Big), \\ 
& \Big| \int \phi^\ast_x w_{\eta \eta} w_x x^{2-2\sigma_0} z^{2m} \Big| \le ||z^{2m} x \phi^\ast_x ||_{L^\infty} \Big( ||w_{\eta \eta} x^{\frac{1}{2}-\sigma_0} ||_{L^2_\eta}^2 + ||w_x x^{\frac{1}{2}-\sigma_0}||_{L^2_\eta}^2 \Big), \\ \nonumber
& \Big| \int \phi^\ast w_{x \eta \eta} w_x x^{2-2\sigma_0} z^{2m} \Big| = \Big| \int \phi^\ast_\eta w_x w_{x\eta} x^{2-2\sigma_0} z^{2m} + 2m \int \phi^\ast w_x w_{x\eta} x^{\frac{3}{2}-2\sigma_0} z^{2m-1} \Big| \\
& \hspace{10 mm} \lesssim ||\eta \phi^\ast_\eta z^{2m}, \phi^\ast ||_{L^\infty} \Big( ||x w_{x\eta}||_{L^2_\eta}^2 + ||x^{1-\sigma_0} w_{x\eta}z^m||_{L^2_\eta}^2 + ||z^{m-1} w_x x^{\frac{1}{2}-\sigma_0}||_{L^2_\eta}^2 \Big), \\ \nonumber
& \Big| \int \phi^\ast_{\eta \eta x} w w_x x^{2-2\sigma_0} z^{2m} + \int \phi^\ast_{\eta \eta} w_x^2 x^{2-2\sigma_0} z^{2m} \Big| \\
& \hspace{10 mm} \lesssim ||z^{2m} x \eta^2 \phi^\ast_{\eta \eta x}, z^{2m} \eta^2 \phi^\ast_{\eta \eta} ||_{L^\infty} \Big( ||w_\eta||_{L^2_\eta}^2 + ||w_{x\eta} x||_{L^2_\eta}^2 \Big).
\end{align}

Summarizing the results of this multiplier: 
\begin{align} \nonumber
\frac{\partial_x}{2} \int w_x^2 x^{2-2\sigma_0} z^{2m} &+ \int w_{\eta x}^2 x^{2-2\sigma_0} z^{2m} \lesssim
 \int w_x^2 x^{1-2\sigma_0} z^{2m} \\ \label{summer.3}
 &+ \int w_x^2 x^{1-2\sigma_0}z^{2m-2} + \mathcal{O}(\delta) \int w_\eta^2, w_{\eta \eta}^2x, w_{x \eta}^2 x^2,  
\end{align}

Again, we may integrate in $x$ to obtain: 
\begin{align} \nonumber
\sup_{x \ge 1} \int &w_x^2 x^{2-2\sigma_0} z^{2m} + \int_1^\infty \int w_{\eta x}^2 x^{2-2\sigma_0} z^{2m} \\ \nonumber
& \lesssim \int_{x=1} w_x(1)^2 y^{2m} + \int_1^\infty \int w_x^2 x^{1-2\sigma_0} z^{2m} + \int_1^{\infty} \int w_x^2 x^{1-2\sigma_0} z^{2m-2} \\ 
&+ \mathcal{O}(\delta) \int_1^\infty \int w_\eta^2, w_{\eta \eta}^2 x^{\frac{1}{2}}, w_{x\eta}^2 x.
\end{align}

The third and fourth terms on the right-hand side are controlled by $||w||_Q^2$, and the first term is controlled  by the initial data. For the second term, we must use the equation (\ref{RNG.eqn.w}): 
\begin{align} \nonumber
||w_x &x^{\frac{1}{2}-\sigma_0} z^{m}||_{L^2} \le ||w_{\eta \eta} x^{\frac{1}{2}-\sigma_0} z^{m}||_{L^2} + ||\Big(w_\eta^2 + 2\phi^\ast_\eta w_\eta + w w_{\eta \eta} + \phi^\ast w_{\eta \eta} + w \phi^\ast_{\eta \eta} \Big) x^{\frac{1}{2}-\sigma_0} z^{m}||_{L^2} \\ \nonumber
& \le ||w_{\eta \eta} x^{\frac{1}{2}-\sigma_0} z^{m}||_{L^2} + ||w_\eta x^{\frac{1}{2}}, \phi^\ast_\eta x^{\frac{1}{2}}, w, \phi^\ast, \phi^\ast_{\eta \eta} \eta x^{\frac{1}{2}}||_{L^\infty} ||w_\eta x^{-\sigma_0} z^m, w_{\eta \eta} x^{\frac{1}{2}-\sigma_0} z^m||_{L^2} \\
& \lesssim ||w_{\eta \eta} x^{\frac{1}{2}-\sigma_0} z^{m}, w_{\eta} x^{-\sigma_0}z^m||_{L^2}. 
\end{align}

As the majorizing terms above have been controlled, we can conclude: 
\begin{align}
\sup_{x \ge 1} \int &w_x^2 x^{2-2\sigma_0} z^{2m} + \int_1^\infty \int w_{\eta x}^2 x^{2-2\sigma_0} z^{2m} \le \mathcal{O}(\delta). 
\end{align}

By subsequently relating $w_x$ to $w_{\eta \eta}$ and $w_{\eta x}$ to $w_{\eta \eta \eta}$, we have shown that $||z^mw||_{Q(\sigma_0, 0)} \le \mathcal{O}(\delta, m, \sigma_0)$, which is the $k =0$ case of (\ref{w.w.2}). We may upgrade the previous set of estimates to higher order $x$ derivatives by iterating Steps 2, 3, and 4. The mechanism for doing so is that each application of $\p_x$ to the profile terms $\phi^\ast$ produces an extra factor of $x^{-1}$, and similarly each time $\p_x$ hits $z$, one extra factor of $x^{-1}$ is produced by (\ref{basic.id}). As this is similar to the previous steps, we omit the details. We remark that controlling higher order derivatives does not require smallness of the initial datum (hence, the smallness assumption (\ref{intro.BC0.3}) is only for $j = 0,1,2$).

\end{proof}

We now give the following corollaries:

\begin{corollary}[$u^0_p$ Asymptotics] \label{cp1} For any $m \ge 0$, $2 \le p \le \infty$,
\begin{align} \label{main.decay.B}  
&||z^m \partial_y^l \partial_x^k u^0_p||_{L^p_y} \le C(m, l, k) x^{-k-\frac{l}{2}+ \frac{1}{2p}} \text{ for }2k+l > 2,\\ \label{main.decay}  
&||z^m \partial_y^l \partial_x^k u^0_p||_{L^p_y} \le \mathcal{O}(\delta; m, l, k) x^{-k-\frac{l}{2}+ \frac{1}{2p}} \text{ for }2k+l \le 2.
\end{align}
\end{corollary}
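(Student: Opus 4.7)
I would proceed by the natural decomposition $u^0_p = (\phi^\ast - \delta) + w$ dictated by (\ref{defn.q}) and (\ref{rep.}), estimate each piece in the von-Mises $(x,\eta)$ coordinates where Proposition \ref{thm.front} and Proposition \ref{prop.ww2} live, and then transfer the estimates back to the original $(x,y)$ coordinates via the bound (\ref{pr.pos}). The two regimes in the statement correspond exactly to where $\mathcal{O}(\delta)$-smallness is available: $2k+l \le 2$ is precisely the range in which both the front (via Corollary \ref{corfront}) and the remainder (via the $k=0$ estimate in Proposition \ref{prop.ww2}) contribute a small constant, whereas for $2k+l > 2$ Proposition \ref{prop.ww2} only gives boundedness $C(k,m,\sigma_0)$ and this is what propagates.

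\textbf{The front and remainder pieces in von-Mises.} Corollary \ref{corfront} immediately gives $\|z^m \partial_\eta^l \partial_x^k (\phi^\ast - \delta)\|_{L^p_\eta} \lesssim \mathcal{O}(\delta)\, x^{-k-l/2+1/(2p)}$ for every $(k,l,m,p)$, so the front always contributes $\mathcal{O}(\delta)$. For the remainder $w$, the definition (\ref{defn.Q.2}) of $Q(\sigma_0, k)$ directly encodes the target $L^2_\eta$ weights $x^{2k + l - 2\sigma_0}$ for $l \in \{0,1,2\}$; the Sobolev embedding (\ref{q.sobolev}) upgrades these to $L^\infty_\eta$ estimates with the scaling $x^{-k-l/2+1/4-\sigma_0}$; standard $L^p$ interpolation between $L^2_\eta$ and $L^\infty_\eta$ yields the intermediate $p$. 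For higher $\eta$-derivatives ($l \ge 3$) I would use the parabolic equation (\ref{RNG.eqn.w}), $w_{\eta\eta} = \frac{1}{1-\delta}(w_x - K)$, to convert $\partial_\eta^l$ into combinations of $\partial_x^j \partial_\eta^{l-2j}$ acting on $w$ plus controlled products of profile terms, each of which is already in the $Q$-hierarchy. Taking $\sigma_0$ arbitrarily small absorbs it into the constant. The smallness dichotomy comes for free: at $k=0$ Proposition \ref{prop.ww2} gives $\mathcal{O}(\delta;m,\sigma_0)$, which combined with the equation-based reduction of higher $\eta$-derivatives covers the six cases $(k,l) \in \{(0,0),(0,1),(0,2),(1,0)\}$ with smallness, while $2k+l > 2$ falls under $C(k,m,\sigma_0)$.

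\textbf{Change of variables.} Finally I convert $L^p_\eta$, $\partial_\eta$, and $\partial_x|_\eta$ to $L^p_y$, $\partial_y$, and $\partial_x|_y$. The Jacobian $\partial_y \eta = 1 + u^0_p$ lies in $[1-\delta, 1]$ by (\ref{pr.pos}) together with the maximum principle, so the $L^p$ norms are equivalent and the self-similar weights $(\eta/\sqrt{x})^m$ and $(y/\sqrt{x})^m$ are comparable up to a constant depending only on $\delta$. The derivative conversions follow from $\partial_y = (1+u^0_p)\partial_\eta$ and $\partial_x|_y = \partial_x|_\eta + (\partial_x \eta)|_y\, \partial_\eta$; iterating the chain rule produces polynomial expressions in $\partial_x^j \partial_\eta^i u^0_p$ whose bounds are available at the previous induction level $(k',l')$ with $k'+l' < k+l$. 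The main technical obstacle is bookkeeping in this induction: one must verify that every Jacobian factor introduced carries the right power of $x$ so that the target scaling $x^{-k-l/2+1/(2p)}$ is preserved, and that the $\mathcal{O}(\delta)$ smallness is not inadvertently lost when multiplying by coefficients like $\partial_y u^0_p$ (which is itself small by the $l=1$ case already proven at an earlier step of the induction). Once this iterative structure is set up, the corollary follows.
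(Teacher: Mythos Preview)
Your proposal is correct and follows essentially the same route as the paper: decompose $u^0_p = (\phi^\ast - \delta) + w$, bound the front by Corollary \ref{corfront} and the remainder by Proposition \ref{prop.ww2} with Sobolev interpolation, then transfer from $(x,\eta)$ to $(x,y)$ via the bounded Jacobian $1+u^0_p$. The paper's proof is terse where you are explicit---in particular your remarks on using the equation $w_x = (1-\delta)w_{\eta\eta} + K$ to reach the $(k,l)=(1,0)$ case with smallness from the $k=0$ norm, and on the chain-rule induction for the coordinate change, fill in details the paper leaves to the reader---but the underlying argument is the same.
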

\begin{proof}

We start with the representation of $u^0_p$ via (\ref{rep.}):
\begin{align} \label{transfer}
||z^m \partial_\eta^l \partial_x^k u^0_p||_{L^p_\eta} = ||z^m \partial_\eta^l \partial_x^k \Big(\psi + e_\delta - \delta\Big)||_{L^p_\eta} + ||z^m \partial_\eta^l \partial_x^k w||_{L^p_\eta}.  
\end{align}

The first quantity on the right-hand side above is controlled by Corollary \ref{corfront}. For the second quantity on the right-hand side, we simply use (\ref{w.w.2}) with $0 < \sigma_0 < \frac{1}{4}$, coupled with the standard Sobolev interpolation. Recall now: $\eta = \int_0^y 1 + u^0_p(x,\theta)d\theta$, from which we have the equivalence: $(1-\delta)y \le \eta \le (1+\delta)y$. Moreover, we have the Jacobians of the change of coordinates are given by: $\eta'(y) = 1+u^0_p, \hspace{3 mm} y'(\eta) = \frac{1}{1+u^0_p}$, both of which are bounded and bounded away from zero. Therefore, we can transfer (\ref{transfer}) to $(x,y)$ coordinates.

\end{proof}

We may give bounds for $v^0_p$: 
\begin{corollary}[Estimates for $v^0_p$] \label{cp2} We have the following bounds for $v$, for any $m \ge 0$:
\begin{align} \label{main.v.l2.B}
&||z^m \partial_x^k  v^0_p(x, \cdot)||_{L^2_y} \le C(k,m) x^{-k -\frac{1}{4}} \text{ for }k \ge 1, \\ \label{main.v.l2}
&||z^m  v^0_p(x, \cdot)||_{L^2_y} \le \mathcal{O}(\delta; m) x^{-\frac{1}{4}}, \\ \label{main.v.B}
&||z^m \partial_x^k v^0_p(x,\cdot)||_{L^\infty_y}  \le C(k,m) x^{-k-\frac{1}{2}} \text{ for }k \ge 1, \\ \label{main.v}
&||z^m v^0_p(x,\cdot)||_{L^\infty_y}  \le \mathcal{O}(\delta; m) x^{-\frac{1}{2}}.
\end{align}
\end{corollary}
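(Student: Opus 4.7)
My plan is to derive the four bounds from the corresponding estimates on $\partial_x^{k+1} u^0_p$ in Corollary \ref{cp1} via the divergence-free structure of the Prandtl system. Using the incompressibility $u^0_{px} + v^0_{py} = 0$ from (\ref{eqn.Pr.0.1}) together with the far-field matching $\lim_{y \to \infty} v^0_p(x,y) = 0$ from (\ref{intro.BC.4}), one integrates in $y$ to obtain the representation
\[
\partial_x^k v^0_p(x,y) = \int_y^\infty \partial_x^{k+1} u^0_p(x, y')\, dy', \qquad k \ge 0.
\]
All of the stated bounds then reduce to estimating this integral, with the self-similar variable $z = y/\sqrt{x}$ providing the mechanism that converts the $x^{-(k+1)}$ decay of $\partial_x^{k+1} u^0_p$ into the $x^{-k-\frac{1}{2}}$ or $x^{-k-\frac{1}{4}}$ decay of $\partial_x^k v^0_p$.

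For the weighted $L^\infty$ bounds (\ref{main.v.B})--(\ref{main.v}), I would use $y \le y'$ on the domain of integration to move the weight $(y/\sqrt{x})^m$ inside and then change variables $z' = y'/\sqrt{x}$, yielding
\[
z^m \bigl|\partial_x^k v^0_p(x,y)\bigr| \le \sqrt{x} \int_z^\infty (z')^m \bigl|\partial_x^{k+1} u^0_p(x, z'\sqrt{x})\bigr|\, dz'.
\]
Applying Corollary \ref{cp1} with weight $m+2$, $l = 0$, and derivative order $k+1$ gives the pointwise bound $(z')^{m+2}\,|\partial_x^{k+1} u^0_p| \lesssim x^{-(k+1)}$, so the integrand is dominated by $(z')^{-2} x^{-(k+1)}$; the $z'$-integral is then uniformly bounded in $z$, producing the overall factor $\sqrt{x} \cdot x^{-(k+1)} = x^{-k-\frac{1}{2}}$. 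Since $2(k+1) + 0 > 2$ precisely when $k \ge 1$, the implicit constant is $C(k,m)$ for $k \ge 1$ and is of size $\mathcal{O}(\delta;m)$ for $k = 0$ by (\ref{main.decay}). For the $L^2$ bounds (\ref{main.v.l2.B})--(\ref{main.v.l2}), I would square the resulting pointwise estimate $z^m|\partial_x^k v^0_p| \lesssim x^{-k-\frac{1}{2}} \Phi(z)$ with $\Phi \in L^2_z$, integrate in $y$, and change variables $y = z\sqrt{x}$ to pick up an extra factor of $\sqrt{x}$; this yields $\|z^m \partial_x^k v^0_p\|_{L^2_y}^2 \lesssim x^{-2k-\frac{1}{2}}$, whence the claimed exponent $-k-\frac{1}{4}$.

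The argument presents no essential obstacle beyond Corollary \ref{cp1}: the only care required is to choose the auxiliary weight power in $z$ large enough so the tail integral in $z'$ converges, and to verify that the $\mathcal{O}(\delta)$ smallness in the base case $k = 0,\ l = 0$ of Corollary \ref{cp1} transfers cleanly into the smallness of $v^0_p$ in (\ref{main.v.l2}) and (\ref{main.v}). This last point is important, because the $x^{-1/2}$ decay of $v^0_p$ with smallness $\mathcal{O}(\delta)$ is precisely the rate and prefactor demanded by Requirement (\ref{REQ}) on the leading-order contribution to $v^P_R$.
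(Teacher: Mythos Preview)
Your proposal is correct and uses a genuinely different technical route from the paper. Both arguments begin from the same integral representation $\partial_x^k v^0_p = \int_y^\infty \partial_x^{k+1} u^0_p$, but the order and tools diverge. The paper first proves the $L^2_y$ bound via the weighted Hardy inequality $\|y^m f\|_{L^2_y} \le C\|y^{m+1} f'\|_{L^2_y}$ for functions vanishing at infinity, writing $\|z^m \partial_x^k v^0_p\|_{L^2_y} \le \|z^m y\, \partial_x^{k+1} u^0_p\|_{L^2_y} = \sqrt{x}\,\|z^{m+1}\partial_x^{k+1} u^0_p\|_{L^2_y}$ and invoking only the $L^2$ case of Corollary~\ref{cp1}; the $L^\infty$ bound then follows by the Sobolev interpolation $\|\cdot\|_{L^\infty_y} \lesssim \|\cdot\|_{L^2_y}^{1/2}\|\partial_y(\cdot)\|_{L^2_y}^{1/2}$. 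You instead go pointwise first, using the $L^\infty$ case of Corollary~\ref{cp1} to extract an explicit self-similar envelope $\Phi(z)$, and then integrate $\Phi^2$ to recover the $L^2$ bound. The paper's approach is slightly more economical in that it only consumes the $L^2$ estimate from Corollary~\ref{cp1}, while yours draws on the full $L^\infty$ weighted control (arbitrary $m$) to make the $z'$-tail integrable; on the other hand, your argument is more transparently tied to the self-similar scaling and makes the origin of the $\sqrt{x}$ factor completely explicit as a Jacobian. One small point to tighten: the domination by $(z')^{-2}$ you quote is only valid for $z' \gtrsim 1$; near $z' = 0$ you should invoke the unweighted $m=0$ bound from Corollary~\ref{cp1} (equivalently, replace $z'$ by $\langle z' \rangle$ throughout) so that the $z'$-integral is genuinely uniformly bounded down to $z = 0$.
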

\begin{proof}

First, we give $L^2$ via the Hardy inequality, which is available for $m \ge 0$ as $\partial_x^k v(x,y = \infty) = 0$:
\begin{align} \label{one.one}
||z^m \partial_x^k v^0_p(x,\cdot)||_{L^2_y} \le ||z^m y \partial_x^k v^0_{py}(x,\cdot)||_{L^2_y} = ||z^m y \partial_x^k u^0_{px}(x,\cdot)||_{L^2_y} \lesssim x^{-k-\frac{1}{4}}.
\end{align}

We have used estimate (\ref{main.decay.B}) for $k > 0$, and one obtains the smallness of $\mathcal{O}(\delta)$ for $k = 0$ by applying (\ref{main.decay}). The $L^\infty$ estimate then follows via standard Sobolev interpolation. Indeed, as $\lim_{y \rightarrow \infty} v^0_p(x,y) = 0$, with rapid decay for fixed $x$, one writes: 
\begin{align} \n
|y^m \p_x^k v^0_p(x,y)|^2 &= \int_y^\infty 2 (y^m \p_x^k v^0_p) \p_y (y^m \p_x^k v^0_p) \ud y' \\
& = \int_y^\infty 2 y^m \p_x^k v^0_p m y^{m-1} \p_x^k v^0_p \ud y' + \int_y^\infty 2 (y^m \p_x^k v^0_p) y^m \p_x^k v^0_{py} \ud y'.
\end{align}

Dividing both sides by $x^m$ then gives: 
\begin{align} 
||z^m \partial_x^k v^0_p(x,\cdot)||_{L^\infty_y} &\lesssim ||z^m \partial_x^k v^0_p(x)||_{L^2_y}^{\frac{1}{2}}||z^m \partial_x^k v^0_{py}(x)||_{L^2_y}^{\frac{1}{2}} + x^{-\frac{1}{4}}||z^{m-\frac{1}{2}}\p_x^k v^0_p||_{L^2_y},
\end{align}

from which the desired results follow upon consultation with (\ref{one.one}) and (\ref{main.decay.B}) - (\ref{main.decay}).
\end{proof}

\section{Euler-1 Layer} \label{section.euler}

\subsection{Derivation of Equations}

In this section we construct the next order in the expansion, $[u^1_e, v^1_e, P^1_e]$. The analysis will be taking place in Euler coordinates, that is $(x,Y)$, where $Y = \sqrt{\epsilon}y$ as in (\ref{scaled.variables}). We make the notational convention that differential operators applied to $[u^1_e, v^1_e, P^1_e]$ will always be in $(x,Y)$ coordinates, so for example $\Delta u^1_e := \partial_x^2 u^1_e + \partial_Y^2 u^1_e$. Let us now expand the partial expansion in order to find the lowest-order terms which we then take to solve the Euler-1 equation. The first equation is:  
\begin{align} \nonumber
-\Delta_\epsilon u_s^{(1)} &+ u_s^{(1)} u_{sx}^{(1)} + v_s^{(1)}u_{sy}^{(1)} + P_{sx}^{(1)} = -\Delta_\epsilon u^0_p - \epsilon^{\frac{3}{2}} \Delta u^1_e + \Big(\bar{u}^0_s + \sqrt{\epsilon} u^1_e \Big)\Big( u^0_{px} + \sqrt{\epsilon}u^1_{ex} \Big) \\ &+ \Big(v^0_p + v^1_e \Big)\Big( u^0_{py} + \epsilon u^1_{eY} \Big) + \sqrt{\epsilon} P^1_{ex} + \epsilon P^{1,a}_{ex}.
\end{align}

Removing now the terms found in equation (\ref{eqn.Pr.0.1}), and retaining the lowest-order purely Euler terms gives: 
\begin{align} \label{deri.e.1}
\sqrt{\epsilon}\Big[ u^1_{ex} + P^1_{ex} \Big] = 0. 
\end{align}

The remaining terms from the above expansion are placed into the next order, which is discussed in detail in equations (\ref{deri.pr.1.st}) - (\ref{deri.pr.1.end}). Let us now turn to the second equation:
\begin{align} \nonumber
-\Delta_\epsilon v_s^{(1)} + u_s^{(1)} v_{sx}^{(1)} &+ v_s^{(1)} v_{sy}^{(1)} + \frac{\partial_y}{\epsilon} P_s^{(1)} = -\Delta_\epsilon v^0_p - \epsilon \Delta v^1_e + \Big( \bar{u}^0_s + \sqrt{\epsilon}u^1_e \Big) \Big( v^0_{px} + v^1_{ex} \Big) \\
&+ \Big( v^0_p + v^1_e \Big)\Big(v^0_{py} + \sqrt{\epsilon}v^1_{eY} \Big) + P^1_{eY} + \sqrt{\epsilon}P^{1,a}_{eY}. 
\end{align}

Taking the order-1 purely Eulerian terms gives: 
\begin{align} \label{deri.e.2}
v^1_{ex} + P^1_{eY} = 0.
\end{align}

The remaining terms are contributed to the next-order error in (\ref{deri.pr.1.st}) - (\ref{deri.pr.1.end}). Putting (\ref{deri.e.1}) - (\ref{deri.e.2}) together with the divergence-free condition yields the following system for the Euler-1 layer:
\begin{equation} \label{euler.1.V.0}
u^1_{ex} + P^1_{ex} = 0, \hspace{3 mm} v^1_{ex} + P^1_{eY} = 0, \hspace{3 mm} u^1_{ex} + v^1_{eY} = 0, \text{ in } \Omega. 
\end{equation}

with prescribed boundary data 
\begin{equation} \label{euler.1.V.BC}
v^1_e(x,0) = - v^0_p(x,0).
\end{equation}

Without loss of generality, taking $P^1_e = -u^1_e$, gives the div-curl system 
\begin{equation} \label{div.curl}
 v^1_{ex} -u^1_{eY} = 0, \hspace{3 mm} u^1_{ex} + v^1_{eY} = 0. 
\end{equation}

Notice that these equations are the Cauchy-Riemann equations for $v^1_e = Re(f)$, $u^1_e = Im(f)$, $f$ holomorphic on $\Omega$.  Then by taking the curl of the equation,
\begin{equation} \label{euler.1.V}
\Delta v^1_e = 0 \text{ in } \Omega, \hspace{3 mm} v^1_{e}(x,0) = -v^0_{p}(x,0). 
\end{equation}

The following boundary decay rates follow from (\ref{main.v.B}) - (\ref{main.v}):
\begin{equation} \label{e.trace}
\Big| \partial_x^k v^0_p(x,0) \Big| \le C(k) x^{-\frac{1}{2} - k}, \hspace{3 mm} \Big| v^0_p(x,0) \Big| \le \mathcal{O}(\delta) x^{-\frac{1}{2}}
\end{equation}

\subsection{Uniform Decay Estimates}

Motivated by (\ref{e.trace}), let us define for this section $f := \chi_{x \ge -10} E_{\mathbb{R}_+}^{(m)} v^0_p$, where $E_{\mathbb{R}_+}^{(m)}$ is the $m$'th order Sobolev extension operator on the half-line $\mathbb{R}_+$ (see \cite[Chapter 5]{Adams}). Here, $m$ is selected as large as required by the remainder of our analysis. Then, $f$ agrees with $v^0_p$ on $[0,\infty)$, and is cut-off past $x \le -10$. Then by standard Sobolev extension theory, $||f||_{H^m} \le C(m) ||v^0_p||_{H^m}$. Our setup for this subsection is: 
\begin{align}
\Delta v = 0 \text{ on } \mathbb{H}, \hspace{5 mm} v(x,0) = f(x), \hspace{3 mm} \Big| \partial_x^k f(x) \Big| \le x^{-k-\frac{1}{2}},  \hspace{3 mm} \Big| f(x) \Big| \le \mathcal{O}(\delta) x^{-\frac{1}{2}}.
\end{align}

Using the Poisson Kernel, we have: 
\begin{equation} \label{defn.PE}
v(x, Y) = P_Y \ast f = \int_{-\infty}^\infty P_Y(x - t) f(t) dt = \int_{-\infty}^\infty \frac{Y}{Y^2 + (x-t)^2} f(t) dt. 
\end{equation}

We will need the following pointwise estimates on the Poisson Kernel:
\begin{lemma} \label{Lemma.pw.PY} For $x > 0$, $k \ge 0$, and $0 \le s \le k$, we have: 
\begin{align} \label{elem}
|\p_x^k P_Y(x)| \lesssim x^{-s-1}Y^{-(k-s)}. 
\end{align}
\end{lemma}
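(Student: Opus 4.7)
The cleanest route is via the complex representation of the Poisson kernel: one checks that
\[
P_Y(x) \;=\; \frac{Y}{x^2+Y^2} \;=\; \operatorname{Im}\!\left(\frac{1}{x-iY}\right),
\]
and therefore, differentiating $k$ times in $x$,
\[
\partial_x^k P_Y(x) \;=\; \operatorname{Im}\!\left(\frac{(-1)^k k!}{(x-iY)^{k+1}}\right),
\]
so that
\[
|\partial_x^k P_Y(x)| \;\le\; \frac{k!}{|x-iY|^{k+1}} \;=\; \frac{k!}{(x^2+Y^2)^{(k+1)/2}}.
\]
The entire task is then to show $(x^2+Y^2)^{(k+1)/2}\gtrsim x^{s+1}Y^{k-s}$ for every $0\le s\le k$.

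For this, the plan is to apply the weighted AM-GM inequality $a^{p}b^{q}\le pa+qb\le a+b$ with the choice $p=(s+1)/(k+1)$, $q=(k-s)/(k+1)$, which satisfy $p,q\ge 0$ and $p+q=1$ exactly when $0\le s\le k$. Applying it to $a=x^2$, $b=Y^2$ gives
\[
x^{2(s+1)/(k+1)}\,Y^{2(k-s)/(k+1)} \;\le\; x^2+Y^2,
\]
and raising this to the $(k+1)/2$ power yields $x^{s+1}Y^{k-s}\le (x^2+Y^2)^{(k+1)/2}$. Combining with the derivative bound,
\[
|\partial_x^k P_Y(x)| \;\le\; \frac{k!}{x^{s+1}Y^{k-s}} \;\lesssim\; x^{-s-1}Y^{-(k-s)},
\]
which is the claimed estimate.

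If one prefers to avoid complex arithmetic, the alternative is a direct induction on $k$: one verifies by Leibniz that $\partial_x^k P_Y(x)$ is a finite sum of terms of the form $Y\,x^{j}(x^2+Y^2)^{-(k+1-j')}$ with $j+2j'=2k$, each of which is controlled by a constant multiple of $(x^2+Y^2)^{-(k+1)/2}$; the AM-GM step above then finishes the argument identically. The main (and only) subtlety is the endpoint cases $s=0$ and $s=k$, but these are exactly the cases where the AM-GM weights degenerate to $(1,0)$ or $(0,1)$ and the inequality remains valid, so no extra care is required.
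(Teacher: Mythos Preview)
Your proof is correct and takes a genuinely different route from the paper. The paper proceeds by writing out explicit formulas for $\partial_x^k P_Y(x)$ (separate expansions for $k$ even and $k$ odd, each a sum of terms of the form $Y x^{2j}(Y^2+x^2)^{-m}$ or $Y x^{2j+1}(Y^2+x^2)^{-m}$), then multiplies each term by $x^{s+1}Y^{k-s}$ and applies Young's inequality for products with carefully chosen conjugate exponents to bound every term by a constant. Your approach bypasses the explicit derivative formulas entirely via the complex representation $P_Y(x)=\operatorname{Im}\bigl((x-iY)^{-1}\bigr)$, which immediately yields the clean bound $|\partial_x^k P_Y(x)|\le k!\,(x^2+Y^2)^{-(k+1)/2}$, and then a single weighted AM-GM inequality handles all $0\le s\le k$ at once. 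The advantage of your argument is brevity and transparency: it makes clear that the estimate is nothing more than the trivial bound on the $k$-th derivative of a holomorphic function, followed by interpolation between $x$ and $Y$. The paper's approach, while more laborious, has the minor benefit of being entirely real-variable and of making the structure of the derivatives explicit (which is not needed elsewhere). Both yield constants depending on $k$, which is acceptable here since $\lesssim$ in this paper allows dependence on anything other than $\delta,\epsilon,n$.
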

\begin{proof}
First, let us address the $k = 0$ case. Indeed, 
\begin{align} 
x P_Y(x) &= \frac{Yx}{Y^2 + x^2} \le \frac{Y^2 + x^2}{Y^2 + x^2} \le 1.
\end{align}

For $k \ge 1$, we record the basic identities: 
\begin{align} \label{stp.even}
\p_x^k P_Y(x) = \sum_{j=0}^{\frac{k}{2}} c_{j,k} Yx^{2j}  (Y^2 + x^2)^{-(\frac{k}{2}+1)-j} \text{ if $k$ even,} \\ \label{stp.odd}
\p_x^k P_Y(x) = \sum_{j=0}^{\frac{k-1}{2}} c_{j,k} Yx^{2j+1} (Y^2 + x^2)^{-\frac{k+1}{2}-1-j} \text{ if $k$ odd.}
\end{align}

For the $k$ even case, we multiply (\ref{stp.even}) by $x^{k+1}$, use the binomial formula, and then apply Young's inequality for products in the following manner: 
\begin{align} \n
|x^{s+1} Y^{k-s} \p_x^k P_Y(x)| &= | \sum_{j=0}^{\frac{k}{2}} c_{j,k} Y^{1+k-s}x^{2j + s + 1}  (Y^2 + x^2)^{-(\frac{k}{2}+1)-j}| \\ \n
& \lesssim  \sum_{j=0}^{\frac{k}{2}} c_{j,k} \frac{Y^{1+k-s}x^{2j + s + 1}}{Y^{k+2+2j} + x^{k+2+2j}} \\
& \lesssim \sum_{j = 0}^{\frac{k}{2}} \frac{Y^{(1+k-s)(\frac{k+2+2j}{1+k-s})} + x^{(s+1+2j)(\frac{k+2+2j}{s+1+2j})}}{Y^{k+2+2j} + x^{k+2+2j}} \lesssim 1,
\end{align}

where the Young's conjugates are: 
\begin{align}
1 = \frac{1+k-s}{k+2+2j} + \frac{s+1+2j}{k+2+2j}. 
\end{align}

Performing the same calculation using (\ref{stp.odd}), we have: 
\begin{align} \n
|x^{s+1} Y^{k-s} \p_x^k P_Y(x)| &= |\sum_{j=0}^{\frac{k-1}{2}} c_{j,k} Y^{1+k-s}x^{s+2j+2} (Y^2 + x^2)^{-\frac{k+1}{2}-1-j}| \\ \n
& \lesssim \sum_{j=0}^{\frac{k-1}{2}} c_{j,k} \frac{Y^{1+k-s}x^{s+2j+2}}{ Y^{k+3+ 2j} + x^{k+3+2j}} \\
& \lesssim \sum_{j=0}^{\frac{k-1}{2}} c_{j,k} \frac{Y^{(1+k-s)(\frac{k+2j+3}{1+k-s})} + x^{(s+2j+2)(\frac{k+2j+3}{s+2j+2})}}{ Y^{k+3+ 2j} + x^{k+3+2j}} \lesssim 1. 
\end{align}

Here, the Young's conjugates are: 
\begin{align}
1 = \frac{1+k-s}{k+2j+3} + \frac{s+2j+2}{k+2j+3}.
\end{align}

This proves (\ref{elem}).

\end{proof}

We now prove the following uniform estimates for the $v^1_e(x, Y)$, which now drop superscripts and denote by $v$ for notational ease: 
\begin{proposition} \label{Lemma.Sing.Unif}
Let $v$ be the Poisson extension defined via (\ref{defn.PE}). Then $v$ satisfies the following bounds: 
\begin{equation} \label{v.cases}
\sup_{Y \ge 0} \Big| \partial_{x}^kv(x, Y) \Big| \le C(k) x^{-k - \frac{1}{2}}, \hspace{3 mm} \sup_{Y \ge 0} \Big| v(x, Y) \Big| \le \mathcal{O}(\delta) x^{- \frac{1}{2}}.
\end{equation}
\end{proposition}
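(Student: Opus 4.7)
The plan is to estimate the Poisson integral in (\ref{defn.PE}) directly, exploiting the pointwise bounds on $P_Y$ from Lemma \ref{Lemma.pw.PY} together with the decay structure $|\partial_t^j f(t)| \lesssim \langle t \rangle^{-j-\frac{1}{2}}$ inherited from (\ref{e.trace}) and the Sobolev extension. The crux is that the required rate $x^{-1/2}$ sits exactly at the critical scaling for convolution with the Poisson kernel, so a careful geometric decomposition of the $t$-integration is needed to avoid any logarithmic loss.

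\textbf{Reduction to the base case.} I would first handle the higher derivatives by integration by parts. Writing $\partial_x^k P_Y(x-t) = (-\partial_t)^k P_Y(x-t)$ and integrating by parts in $t$, the boundary contributions vanish (the cutoff at $t=-10$ and the decay of $f$ at $+\infty$), producing
\begin{equation*}
\partial_x^k v(x,Y) \;=\; \int_{-\infty}^{\infty} P_Y(x-t)\, \partial_t^k f(t)\, dt.
\end{equation*}
Thus it suffices to prove the base estimate with $f$ replaced by $f^{(k)}$, which decays as $\langle t\rangle^{-k-\frac{1}{2}}$ by (\ref{e.trace}); the smallness factor $\mathcal{O}(\delta)$ is only assumed at $k=0$, consistent with the two cases in (\ref{v.cases}).

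\textbf{Decomposition of the base case.} For $k=0$, I split
\begin{equation*}
v(x,Y) \;=\; \underbrace{\int_{|t-x|\le x/2} P_Y(x-t)\, f(t)\,dt}_{I_1} \;+\; \underbrace{\int_{\{|t-x|>x/2\}\cap\{t\ge -10\}} P_Y(x-t)\, f(t)\,dt}_{I_2}.
\end{equation*}
On the near region $I_1$ one has $t \sim x$, hence $|f(t)| \lesssim \mathcal{O}(\delta)\, x^{-1/2}$; pulling this constant out and invoking the $Y$-uniform estimate $\int P_Y(s)\,ds \lesssim 1$ yields $|I_1| \lesssim \mathcal{O}(\delta)\, x^{-1/2}$. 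For $I_2$ I use Lemma \ref{Lemma.pw.PY} with $s=0$ to get $|P_Y(x-t)| \lesssim |x-t|^{-1}$ uniformly in $Y$, and split further according to $t \in [-10,x/2]$ or $t\in[3x/2,\infty)$. In the first sub-region $|x-t|^{-1}\lesssim x^{-1}$ and
\begin{equation*}
\int_{-10}^{x/2} \langle t\rangle^{-1/2}\,dt \;\lesssim\; x^{1/2},
\end{equation*}
so this piece is $\lesssim x^{-1/2}$; in the second sub-region $|t-x|\gtrsim t$, and the integrand is $\lesssim t^{-3/2}$, which integrates to $\lesssim x^{-1/2}$.

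\textbf{Main obstacle.} The sharpness of the rate is the delicate point: any loss of $\log x$ would be fatal for the use in Requirement (\ref{REQ}) and would cascade through the entire global analysis. The split at the scale $|t-x|\sim x$ is tuned precisely so that the near-region integral does not see the slowly decaying tail of $f$, while the far-region integral sees only the $|x-t|^{-1}$ tail of the kernel; both edge estimates balance at the exponent $-\frac{1}{2}$ with no marginal integrals. Uniformity in $Y$ comes for free from the $Y$-independent pointwise bound $|P_Y(x)|\lesssim |x|^{-1}$ and the scale-invariant normalization of $P_Y$, so the estimate is valid even in the degenerate limits $Y \to 0^+$ and $Y \to \infty$.
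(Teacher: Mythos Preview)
Your base case ($k=0$) is correct and is essentially the paper's argument with a cosmetically different split point (you cut at $|t-x|=x/2$, the paper at $|t|=x/(1+\alpha)$): near the kernel singularity use the unit mass of $P_Y$, far from it use $|P_Y(s)|\lesssim |s|^{-1}$.

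The gap is in your reduction step for $k\ge 1$. After the global integration by parts you have $\partial_x^k v = P_Y * f^{(k)}$ with $|f^{(k)}(t)|\lesssim \langle t\rangle^{-k-\frac12}$, and you then rerun the base-case decomposition. But on your far-left sub-region $t\in[-10,x/2]$ the bound $|P_Y(x-t)|\lesssim x^{-1}$ combined with
\[
\int_{-10}^{x/2}\langle t\rangle^{-k-\frac12}\,dt = O(1)\qquad (k\ge 1)
\]
only gives $O(x^{-1})$, not $O(x^{-k-\frac12})$. The implication ``$|g(t)|\lesssim\langle t\rangle^{-w}\Rightarrow \sup_Y|P_Y*g|\lesssim x^{-w}$'' is simply false for $w>1$: a bump $g$ supported near the origin satisfies $|g|\lesssim\langle t\rangle^{-w}$ for every $w$, yet $\sup_Y|P_Y*g(x)|\sim x^{-1}$. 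This is exactly why the paper's Corollary~\ref{cor.w} is stated only for $w\in(0,1)$. The pointwise decay of $f^{(k)}$ alone cannot do the job; the derivative structure (equivalently, vanishing moments) is essential and you have discarded it.

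The paper handles this by splitting \emph{before} moving derivatives, and choosing region by region where the derivatives land: on the region $I_2=\{|t|\le x/(1+\alpha)\}$, which is far from the kernel singularity $t=x$, the derivatives are kept on $P_Y$ and the sharper bound $|\partial_x^k P_Y(x-t)|\lesssim |x-t|^{-k-1}\lesssim x^{-k-1}$ from Lemma~\ref{Lemma.pw.PY} (with $s=k$) gives $x^{-k-1}\int_{I_2}\langle t\rangle^{-\frac12}\,dt\lesssim x^{-k-\frac12}$; on $I_1,I_3$ the derivatives are transferred to $f$ by integration by parts as you propose, picking up controllable boundary terms at $t=\pm x/(1+\alpha)$. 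That region-dependent placement of derivatives is the missing ingredient.
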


\begin{proof}

The $Y = 0$ case is clear by (\ref{e.trace}), and so we must treat the case when $Y > 0$. In this regime, several \textit{qualitative} facts are available for use. In particular, $P_Y$ is smooth, has unit mass, and is positive everywhere. As is standard, we shall exploit these qualitative facts in order to extract quantitative bounds independent of $Y > 0$. Fixing any $\alpha > 0$, we shall split the integral (\ref{defn.PE}) into three pieces:
\begin{align} \nonumber
v(x,Y) &= \int_{-\infty}^{-\frac{x}{1+\alpha}} P_Y(x-t)f(t) dt + \int_{-\frac{x}{1+\alpha}}^{\frac{x}{1+\alpha}} P_Y(x-t)f(t) dt + \int_{\frac{x}{1+\alpha}}^\infty P_Y(x-t) f(t) dt \\ \label{Im}
& = I_1 + I_2 + I_3.  
\end{align}

For the sake of concreteness, fix: $\alpha = \frac{1}{10,000}$. $\delta$ and $\eps$ will be taken small relative to this universal constant, $\alpha$. Let us first turn to $I_3$: 
\begin{align} 
x^{\frac{1}{2}} \Big| I_3 \Big| &= \Big| \int_{\frac{x}{1+\alpha}}^\infty P_Y(x-t) x^{\frac{1}{2}} f(t) dt \Big| \\
& \le \Big| \int_{\frac{x}{1+\alpha}}^\infty P_Y(x-t) \{ x^{\frac{1}{2}} - |t|^{\frac{1}{2}} \} f(t) dt \Big| + \Big| \int_{\frac{x}{1+\alpha}}^\infty P_Y(x-t) |t|^{\frac{1}{2}} f(t) dt \Big| \\
& \le \int_{\frac{x}{1+\alpha}}^\infty P_Y(x-t) \{ \Big(\frac{x}{|t|} \Big)^{\frac{1}{2}} - 1 \} |t|^{\frac{1}{2}}f(t) dt + ||f x^{\frac{1}{2}}||_{L^\infty} \int_{-\infty}^\infty P_Y(x-t) dt \\ \label{region.B}
& \le \sup_{t \ge \frac{x}{1+\alpha}} \Big| \Big( \frac{x}{|t|} \Big)^{\frac{1}{2}} - 1 \Big| ||fx^{\frac{1}{2}}||_{L^\infty} \int_{-\infty}^\infty P_Y(x-t) dt  + \mathcal{O}(\delta) \le \mathcal{O}(\delta).
\end{align}

where we have used for each fixed $Y > 0$, $P_Y \ge 0$ and $\int P_Y = 1$. By symmetry, $I_1$ works the same way. Therefore, we are left with $I_2$, where the main mechanism are pointwise estimates on $P_Y$. According to (\ref{elem}), with $k = s = 0$,  
\begin{equation}
\sup_{ -\frac{x}{1+\alpha} \le t \le \frac{x}{1+\alpha} }P_Y(x-t) \lesssim \frac{1}{x},
\end{equation}

as $x - t > 0$ int the region $-\frac{x}{1+\alpha} \le t \le \frac{x}{1+\alpha}$. Thus, we have: 
\begin{align} \nonumber
\Big| I_2 \Big| &\le \int_{-\frac{x}{1+\alpha}}^{\frac{x}{1+\alpha}} P_Y(x-t) |f(t)| dt \lesssim \frac{1}{x} \int_{-\frac{x}{1+\alpha}}^{\frac{x}{1+\alpha}} |f(t)| dt \le \frac{\mathcal{O}(\delta)}{x} \int_{-\frac{x}{1+\alpha}}^{\frac{x}{1+\alpha}} |1+t|^{-\frac{1}{2}}dt \\ \label{region.A}
& \le \mathcal{O}(\delta) x^{-\frac{1}{2}}.
\end{align}

Combining (\ref{region.B}) - (\ref{region.A}), we may conclude that 
\begin{equation}
\Big| x^{\frac{1}{2}}v \Big| \le \mathcal{O}(\delta). 
\end{equation}

We now need to establish this procedure for higher-order $x$ derivatives for $v$. We continue with the $I_1, I_2, I_3$ splitting used above. Upon differentiating (\ref{Im}) with respect to $x$, let us treat each term $I_i$ individually. 
\begin{align} \nonumber
\partial_{x} I_1 &= \frac{-1}{1+\alpha}P_Y(x + \frac{x}{1+\alpha})f(-\frac{x}{1+\alpha}) + \int_{-\infty}^{-\frac{x}{1+\alpha}} \partial_{x} P_Y(x - t) f(t) dt \\ \nonumber
&= \frac{-1}{1+\alpha}P_Y(x + \frac{x}{1+\alpha})f(-\frac{x}{1+\alpha})  - \int_{-\infty}^{-\frac{x}{1+\alpha}} \partial_t P_Y(x - t) f(t) dt \\ \label{stp.1}
&=  B_1 + \int_{-\infty}^{-\frac{x}{1+\alpha}}P_Y(x - t) f'(t) dt.
\end{align}

Here, 
\begin{align}
B_1 = \frac{-1}{1+\alpha}P_Y(x + \frac{x}{1+\alpha})f(-\frac{x}{1+\alpha}) - P_Y(x + \frac{x}{1+\alpha})f(\frac{-x}{1+\alpha}).
\end{align}

Note now in a similar manner to (\ref{elem}), 
\begin{align}
\Big| B_1 \Big| \lesssim P_Y(\frac{2+\alpha}{1+\alpha} x)|f(-\frac{x}{1+\alpha})| \lesssim \frac{1}{ x} \mathcal{O}(\delta) x^{-\frac{1}{2}} \lesssim x^{-\frac{3}{2}}. 
\end{align}

Using the same method as evaluating (\ref{region.B}): 
\begin{align} \nonumber
x^{\frac{3}{2}} \Big|& \int_{-\infty}^{-\frac{x}{1+\alpha}} P_Y(x - t) f'(t) dt \Big| \\ \nonumber
& \le \int_{-\infty}^{-\frac{x}{1+\alpha}} P_Y(x - t) |t|^{\frac{3}{2}} \Big| f'(t) \Big| dt + \int_{-\infty}^{-\frac{x}{1+\alpha}} P_Y(x - t) \Big| \{ x^{\frac{3}{2}} - |t|^{\frac{3}{2}} \} \Big| \Big|f'(t)\Big| dt \\ \nonumber
& \le \int_{-\infty}^\infty P_Y(x - t) |t|^{\frac{3}{2}} \Big| f'(t) \Big| dt + \int_{-\infty}^{-\frac{x}{1+\alpha}} P_Y(x - t) \Big| \{ x^{\frac{3}{2}} - |t|^{\frac{3}{2}} \}\Big| \Big|f'(t)\Big| dt \\ \nonumber 
& \le ||x^{\frac{3}{2}}f'||_{L^\infty} + \int_{-\infty}^{-\frac{x}{1+\alpha}} P_Y(x-t) \Big| \{ \Big( \frac{x}{|t|} \Big)^{\frac{3}{2}} - 1 \} \Big| |t|^{\frac{3}{2}} \Big| f'(t) \Big| dt \\ \label{regBplus}
& \lesssim ||x^{\frac{3}{2}}f'||_{L^\infty} + \sup_{t \le -\frac{x}{1+\alpha}}  \Big| \{ \Big( \frac{x}{|t|} \Big)^{\frac{3}{2}} - 1 \} \Big| || x^{\frac{3}{2}} f' ||_{L^\infty} \int_{-\infty}^\infty P_Y(x - t) dt \le C.
\end{align}

Next, we turn to $I_2$, which after differentiating gives:
\begin{align} \nonumber
\partial_{x} I_2 &= \frac{1}{1+\alpha}P_Y(x - \frac{x}{1+\alpha})f(\frac{x}{1+\alpha}) + \frac{1}{1+\alpha}  P_Y(x + \frac{x}{1+\alpha})f(-\frac{x}{1+\alpha}) \\ \label{stp.3}
& + \int_{-\frac{x}{1+\alpha}}^{\frac{x}{1+\alpha}} \partial_{x} P_Y(x - t) f(t) dt.
\end{align}

Via (\ref{elem}) with $k = 1, s= 0$, 
\begin{align}
\sup_{Y > 0, t \in [-\frac{x}{1+\alpha}, \frac{x}{1+\alpha}]} \Big| \partial_{x} P_Y(x - t) \Big| \le \sup_{Y > 0, t \in [-\frac{x}{1+\alpha}, \frac{x}{1+\alpha}]}  \frac{1}{(x - t)^2} \lesssim \frac{1}{x^2}. 
\end{align}

For $I_2$, this then facilitates the bound: 
\begin{align}
\Big| \int_{-\frac{x}{1+\alpha}}^{\frac{x}{1+\alpha}} \partial_{x}P_Y(x - t) f(t) dt \Big| \lesssim \frac{1}{x^2} \int_{-\frac{x}{1+\alpha}}^{\frac{x}{1+\alpha}} (1+t)^{-\frac{1}{2}} dt \lesssim x^{-\frac{3}{2}}. 
\end{align}

For the $\p_x I_3$ term, we must integrate by parts in the following manner: 
\begin{align} \nonumber
\partial_{x} I_3 &= \frac{1}{1+\alpha}P(x - \frac{x}{1+\alpha})  f(\frac{x}{1+\alpha}) + \int_{\frac{x}{1+\alpha}}^\infty \partial_{x} P_Y(x - t) f(t) dt \\ \label{e.sing.i3}
& = B_3 + \int_{\frac{x}{1+\alpha}}^\infty P_Y(x - t) f'(t) dt.
\end{align}

Here, $B_3$ contains the boundary terms: 
\begin{align} \n
B_3 &= \frac{1}{1+\alpha}P_Y(x - \frac{x}{1+\alpha})  f(\frac{x}{1+\alpha}) - P_Y(x - \frac{x}{1+\alpha})f(\frac{x}{1+\alpha}) \\
& = \frac{1}{1+\alpha}P_Y(\frac{\alpha}{1+\alpha}x)   f(\frac{x}{1+\alpha}) - P_Y(\frac{\alpha}{1+\alpha}x) f(\frac{x}{1+\alpha}). 
\end{align}

Again, via direct calculation using the definition of $P_Y$, it is clear that: 
\begin{equation}
\Big| B_3 \Big| \lesssim x^{-\frac{3}{2}}. 
\end{equation}

We may estimate the integral in (\ref{e.sing.i3}) in identical fashion to (\ref{regBplus}). This procedure can be iterated for higher-order derivatives. To see this, let us start with the splitting given in (\ref{Im}). We will apply $\p_x^k$, where $k \ge 2$: 
\begin{align}
\p_x^k v(x,Y) = \p_x^k I_1 + \p_x^k I_2 + \p_x^k I_3. 
\end{align}

Our starting point is the expression (\ref{stp.1}), from which it becomes clear that applying $\p_x^k$ to $I_1$, for generic constants $c_\alpha > 0$ (which can change from term to term), we have: 
\begin{align} \label{stp.2}
\p_x^k I_1 = \sum_{j=0}^{k-1} c_j \p_x^j P_Y(c_\alpha x) \p_x^{k-1-j} f(c_\alpha x) + \int_{-\infty}^{-\frac{x}{1+\alpha}} P_Y(x-t) \p_t^k f(t) dt. 
\end{align}

Using (\ref{elem}), we estimate: 
\begin{align} \label{stp.4}
|\p_x^j P_Y(c_\alpha x) \p_x^{k-1-j} f(c_\alpha x)| \le  c_\alpha x^{-j-1} x^{-(k-1-j) - \frac{1}{2}} \lesssim x^{-k-\frac{1}{2}}. 
\end{align}

For the integration term in (\ref{stp.2}), we follow (\ref{regBplus}) to give: 
\begin{align}
x^{k+\frac{1}{2}} \Big|& \int_{-\infty}^{-\frac{x}{1+\alpha}} P_Y(x - t) \p_t^k f(t) dt \Big| \\ \nonumber
& \le \int_{-\infty}^{-\frac{x}{1+\alpha}} P_Y(x - t) |t|^{k+\frac{1}{2}} \Big| \p_t^k f(t) \Big| dt + \int_{-\infty}^{-\frac{x}{1+\alpha}} P_Y(x - t) \Big| \{ x^{k+\frac{1}{2}} - |t|^{k+\frac{1}{2}} \} \Big| \Big|\p_t^k f(t)\Big| dt \\ \nonumber
& \le \int_{-\infty}^\infty P_Y(x - t) |t|^{k+\frac{1}{2}} \Big| \p_t^k f(t) \Big| dt + \int_{-\infty}^{-\frac{x}{1+\alpha}} P_Y(x - t) \Big| \{ x^{k+\frac{1}{2}} - |t|^{k+\frac{1}{2}} \}\Big| \Big|\p_t^k f(t)\Big| dt \\ \nonumber 
& \le ||x^{k+\frac{1}{2}}\p_x^k f||_{L^\infty} + \int_{-\infty}^{-\frac{x}{1+\alpha}} P_Y(x-t) \Big| \{ \Big( \frac{x}{|t|} \Big)^{k+\frac{1}{2}} - 1 \} \Big| |t|^{k+\frac{1}{2}} \Big| \p_t^k f(t) \Big| dt \\ \label{stp.5}
& \lesssim ||x^{k+\frac{1}{2}}\p_x^k f||_{L^\infty} + \sup_{t \le -\frac{x}{1+\alpha}}  \Big| \{ \Big( \frac{x}{|t|} \Big)^{k+\frac{1}{2}} - 1 \} \Big| || x^{k+\frac{1}{2}} \p_x^k f ||_{L^\infty} \int_{-\infty}^\infty P_Y(x - t) dt \le C.
\end{align}

By symmetry, the same estimate applies to $I_3$, and so we turn to $I_2$. Referring to (\ref{stp.3}), one sees the following expression: 
\begin{align}
\p_x^k I_2 = \sum_{j=0}^{k-1} c_j \p_x^j P_Y(c_\alpha x) \p_x^{k-1-j} f(c_\alpha x) + \int_{-\frac{x}{1+\alpha}}^{\frac{x}{1+\alpha}} \p_x^k P_Y(x-t) f(t) \ud t,
\end{align}

where $c_\alpha > 0$ denotes generic constants which depend on $\alpha$, not on $\delta, \eps$, and are strictly positive. Referring to (\ref{stp.4}), it remains to estimate the integral term above: 
\begin{align} \n
|\int_{-\frac{x}{1+\alpha}}^{\frac{x}{1+\alpha}} \p_x^k P_Y(x-t) f(t) \ud t| &\le \int_{-\frac{x}{1+\alpha}}^{\frac{x}{1+\alpha}} |\p_x^k P_Y(x-t)| |f(t)| \ud t \\ \n
& \lesssim \sup_{-\frac{x}{1+\alpha} \le t \le \frac{x}{1+\alpha}} |\p_x^k P_Y(x-t)| \int_{-\frac{x}{1+\alpha}}^{\frac{x}{1+\alpha}} |(1+t)|^{\frac{1}{2}} \ud t \\
& \lesssim x^{-k-1} x^{\frac{1}{2}} = x^{-k - \frac{1}{2}}. 
\end{align}

This then proves the desired result. 

\end{proof}

By repeating the above proof, we actually have the following: 
\begin{corollary} \label{cor.w} Consider boundary values $g(x)$ satisfying: 
\begin{align}
\Big| \partial_x^k g(x) \Big| \le x^{-k-w}, \text{ where } w \in (0,1).
\end{align}
Then the Poisson extension satisfies:
\begin{align} \label{w.weight}
\sup_Y \Big| \partial_x^k \{ P_Y \ast g \} \Big| \lesssim x^{-k-w}. 
\end{align}
\end{corollary}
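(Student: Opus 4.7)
The plan is to mimic the proof of Proposition \ref{Lemma.Sing.Unif} essentially verbatim, replacing the exponent $\frac{1}{2}$ throughout by the parameter $w \in (0,1)$. The upper bound $w < 1$ will enter at exactly one spot, namely the integrability of $|1+t|^{-w}$ over a compact interval centered at the origin; the lower bound $w > 0$ plays no active role beyond asserting a nontrivial decay rate. I expect no new analytical idea to be needed, only arithmetic bookkeeping of exponents.

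Concretely, I would fix a small universal $\alpha > 0$ and begin with the three-piece splitting
\[v(x,Y) = \Big(\int_{-\infty}^{-x/(1+\alpha)} + \int_{-x/(1+\alpha)}^{x/(1+\alpha)} + \int_{x/(1+\alpha)}^\infty\Big) P_Y(x-t)\, g(t)\, dt = I_1 + I_2 + I_3.\]
For $I_3$ (and by symmetry $I_1$), the computation in (\ref{region.B}) gives
\[x^w |I_3| \le \sup_{t \ge x/(1+\alpha)} \Big(\tfrac{x}{|t|}\Big)^w \cdot \|t^w g\|_{L^\infty} \cdot \int_{-\infty}^\infty P_Y(x-t)\, dt \le (1+\alpha)^w \lesssim 1,\]
using $P_Y \ge 0$, $\int P_Y = 1$, and $x/|t| \le 1+\alpha$ in the region. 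For $I_2$, I would invoke Lemma \ref{Lemma.pw.PY} with $k = s = 0$ to bound $P_Y(x-t) \lesssim 1/x$ uniformly in $Y > 0$ for $|t| \le x/(1+\alpha)$, and then use the integrability of $|1+t|^{-w}$ (which is exactly where $w < 1$ is essential) to get
\[|I_2| \lesssim \frac{1}{x} \int_{-x/(1+\alpha)}^{x/(1+\alpha)} |1+t|^{-w}\, dt \lesssim \frac{x^{1-w}}{x} = x^{-w}.\]

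For the higher-order bound $|\p_x^k(P_Y \ast g)| \lesssim x^{-k-w}$, I would follow the recursion in Proposition \ref{Lemma.Sing.Unif} exactly: in $I_1$ and $I_3$, integration by parts via $\p_x P_Y(x-t) = -\p_t P_Y(x-t)$ transfers all $k$ derivatives onto $g$, producing boundary contributions of the form $\p_x^j P_Y(c_\alpha x)\, \p_x^{k-1-j} g(c_\alpha x)$ for $0 \le j \le k-1$, each controlled by $x^{-j-1} \cdot x^{-(k-1-j)-w} = x^{-k-w}$ via Lemma \ref{Lemma.pw.PY} and the hypothesis on $g$; the residual integral is then handled by the same argument as in $I_3$ above, with $w$ replaced by $k+w$, as in (\ref{stp.5}). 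In $I_2$, all derivatives stay on $P_Y$, and Lemma \ref{Lemma.pw.PY} with $s = 0$ gives $|\p_x^k P_Y(x-t)| \lesssim x^{-k-1}$ uniformly in $Y > 0$ on the central region, so the identical integrability estimate for $|1+t|^{-w}$ produces the desired $x^{-k-w}$. The only mild obstacle is the combinatorial bookkeeping of boundary terms at each order $k$, but the structure is identical to the prototype case $w = \tfrac{1}{2}$, so the proof proceeds by direct analogy.
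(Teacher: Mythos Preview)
Your proposal is correct and follows essentially the same approach as the paper, which simply states ``By repeating the above proof'' in reference to Proposition~\ref{Lemma.Sing.Unif}. Your treatment of $I_3$ (bounding $(x/|t|)^w \le (1+\alpha)^w$ directly rather than splitting $x^w - |t|^w$) is a minor streamlining, and you correctly identify that the constraint $w < 1$ enters precisely through the integrability of $|1+t|^{-w}$ over the central interval in $I_2$.
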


We now give an estimate more suitable to obtaining $L^2$ in $Y$ estimates.  
\begin{lemma} \label{lem.e.ss} For any $k \ge 1$, and $0 \le k-s \le 1$, we have:
\begin{align} \label{e.ss.1}
\Big| \partial_{x}^k v(x, Y)\Big| \lesssim x^{-s-\frac{1}{2}} Y^{-(k-s)}.
\end{align}
\end{lemma}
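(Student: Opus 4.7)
The case $k - s = 0$ is exactly the statement of Proposition~\ref{Lemma.Sing.Unif}, so I focus on $k - s = 1$ (that is $s = k - 1$), where the goal is $|\partial_x^k v(x, Y)| \lesssim x^{-k + \frac{1}{2}} Y^{-1}$. The plan is to start from the Poisson representation $\partial_x^k v(x, Y) = \int_{-\infty}^\infty \partial_x^k P_Y(x - t) f(t) \ud t$ and split the domain of integration into three pieces $A_1 + A_2 + A_3$ along the thresholds $t = \pm x/(1+\alpha)$, mirroring the decomposition (\ref{Im}) used in Proposition~\ref{Lemma.Sing.Unif}. In each piece the strategy parallels the Proposition's, but the mixed pointwise bounds from Lemma~\ref{Lemma.pw.PY} (specifically with parameter $s = k-1$) and the trivial bound $|P_Y| \le Y^{-1}$ are invoked in place of their $Y$-independent analogues, producing the extra $Y^{-1}$ factor.

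For $A_1$ and $A_2$, the support conditions force $|x - t| \gtrsim x$, so Lemma~\ref{Lemma.pw.PY} with parameter $s = k - 1$ yields $|\partial_x^k P_Y(x - t)| \lesssim |x - t|^{-k} Y^{-1}$ throughout both regions. For $A_2$, integrating this bound against $|f(t)| \lesssim (1 + |t|)^{-\frac{1}{2}}$ over an interval of length $\sim x$ gives $|A_2| \lesssim x^{-k} Y^{-1} \cdot x^{\frac{1}{2}} = x^{-k + \frac{1}{2}} Y^{-1}$. For $A_1$, the substitution $u = |t|/x$ rescales the integral to $x^{-k + \frac{1}{2}} Y^{-1} \int_{1/(1+\alpha)}^\infty (1+u)^{-k} u^{-\frac{1}{2}} \ud u$, and the remaining integral converges since $k \ge 1$.

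The delicate piece is $A_3$, whose support contains the near-diagonal $t \approx x$ where the pointwise bound on $\partial_x^k P_Y$ is too singular to integrate. I plan to mimic the integration-by-parts maneuver already carried out in Proposition~\ref{Lemma.Sing.Unif}: use the identity $\partial_x^k P_Y(x - t) = (-1)^k \partial_t^k P_Y(x - t)$ and integrate by parts $k$ times on $[x/(1+\alpha), \infty)$, producing a sum of boundary terms at $t = x/(1+\alpha)$ together with a single reduced integral $\int_{x/(1+\alpha)}^\infty P_Y(x-t) f^{(k)}(t) \ud t$. The boundary terms take the form $c_j \, \partial_x^j P_Y(\alpha x/(1+\alpha)) \cdot f^{(k-1-j)}(x/(1+\alpha))$ for $0 \le j \le k - 1$; for $j \ge 1$, Lemma~\ref{Lemma.pw.PY} with $s = j - 1$ gives $|\partial_x^j P_Y(\alpha x/(1+\alpha))| \lesssim x^{-j} Y^{-1}$, which when paired with $|f^{(k - 1 - j)}(x/(1+\alpha))| \lesssim x^{-(k-1-j) - \frac{1}{2}}$ bounds each term by $x^{-k + \frac{1}{2}} Y^{-1}$; for $j = 0$ one uses the trivial $|P_Y| \le Y^{-1}$ instead.

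For the reduced integral $\int_{x/(1+\alpha)}^\infty P_Y(x - t) f^{(k)}(t) \ud t$, the decisive departure from the proof of Proposition~\ref{Lemma.Sing.Unif} is to estimate the kernel by $|P_Y(x - t)| \le Y^{-1}$ rather than by the mass property $\int P_Y = 1$, trading the latter for a uniform $Y^{-1}$ factor. Combined with $|f^{(k)}(t)| \lesssim t^{-k - \frac{1}{2}}$, this yields $Y^{-1} \int_{x/(1+\alpha)}^\infty t^{-k - \frac{1}{2}} \ud t \lesssim Y^{-1} x^{-k + \frac{1}{2}}$, since $k \ge 1$. I expect the main obstacle to be precisely the tension at work in $A_3$: the $Y^{-1}$ scaling of the Poisson kernel can only be captured at the cost of discarding its mass-one cancellation, so sharp $x$-decay can be retained only by first transferring all derivatives onto $f^{(k)}$ through iterated integration by parts, and only then invoking the $L^\infty$ kernel bound on the resulting tame integrand.
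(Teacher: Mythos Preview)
Your approach is correct and follows essentially the same decomposition and term-by-term strategy as the paper. The one point you should make explicit is the passage from the endpoint $k-s=1$ to the full range $0 \le k-s \le 1$: the lemma is actually applied later in the paper with non-integer values of $k-s$ (e.g.\ $k=1$, $k-s = \tfrac{1}{2}+\kappa$), so you should note that once you have $|\partial_x^k v| \lesssim x^{-k-\frac{1}{2}}$ and $|\partial_x^k v| \lesssim Y^{-1} x^{-k+\frac{1}{2}}$, the intermediate bound follows by the trivial interpolation $\min(A,B) \le A^{1-\theta} B^{\theta}$ with $\theta = k-s$.

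The paper organizes this slightly differently: it first observes that for $Y \le x$ the desired estimate is weaker than Proposition~\ref{Lemma.Sing.Unif} and hence already known, then restricts to $Y \ge x$ and proves all $0 \le k-s \le 1$ simultaneously by using the tradeoff $\tfrac{1}{Y} \le Y^{-(k-s)} x^{-1+(k-s)}$ in the regime $Y \ge x$ (see \eqref{tradeoff.Euler}). Your endpoint-plus-interpolation argument is equivalent and arguably cleaner, since it avoids the $Y \ge x$ case split entirely; the paper's version has the mild advantage of producing the general-$s$ bound in one pass without a separate interpolation step. Either way the mechanics on each piece $A_1,A_2,A_3$ are the same.
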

\begin{proof}

By Lemma \ref{Lemma.Sing.Unif}, the claim is only meaningful for $Y \ge x$, so make this assumption to begin. Again, we start with the splitting (\ref{Im}). By symmetry it suffices to consider $I_2, I_3$. Applying $\partial_x^k$, we have the expression (for generic constants $c_{\alpha} > 0$):
\begin{align} \label{I2.1}
\partial_x^k I_2 = \sum_{j = 0}^{k-1} c_{j,k,\alpha} \p_x^j P_Y(c_\alpha x) \p_x^{k-1-j} f(c_\alpha x) + \int_{-\frac{x}{1+\alpha}}^{\frac{x}{1+\alpha}} \p_x^k P_Y(x - t) f(t) dt, 
\end{align}

First, 
\begin{align} \n
|\p_x^j P_Y(c_\alpha x) \p_x^{k-1-j}f(c_\alpha x)| &\lesssim  Y^{-(k-s)} x^{-j-1+(k-s)} x^{-(k-1-j)-\frac{1}{2}} \\ \label{stp.7}
& \lesssim Y^{-(k-s)} x^{-s-\frac{1}{2}}.
\end{align}

Above, we have used (\ref{elem}) for the case $j \ge 1$. For the $j = 0$ case we use the assumption that $Y \ge x$, and the following estimate on $P_Y$, which is valid so long as $k-s \le 1$:
\begin{equation}
P_Y(x - t) = \frac{Y}{Y^2 + (x - t)^2} \le \frac{Y}{Y^2} = \frac{1}{Y} \le Y^{-(k-s)} x^{-1+(k-s)}.
\end{equation}

For the integral term in (\ref{I2.1}), via (\ref{elem}), 
\begin{align} \label{maxim.1}
\sup_{\substack{Y \ge 0 \\ t \in [-\frac{x}{1+\alpha}, \frac{x}{1+\alpha}]}} \Big| \partial_{x}^{k}P_Y(x - t) \Big|  \lesssim  Y^{-(k-s)} x^{-s-1}. 
\end{align}

From here, the estimate on the integral in (\ref{I2.1}) follows: 
\begin{align}
\Big| \int_{-\frac{x}{1+\alpha}}^{\frac{x}{1+\alpha}} \partial_{x} P_Y(x - t) f(t) dt \Big| \lesssim Y^{-(k-s)}x^{-s-1} \int_{-\frac{x}{1+\alpha}}^{\frac{x}{1+\alpha}} (1 + t)^{-\frac{1}{2}}dt \lesssim Y^{-(k-s)}x^{-s-\frac{1}{2}}. 
\end{align}

For the $I_3$ contribution, we apply $\partial_{x}^k$ and integrate by parts in $t$, reads: 
\begin{align} \label{dxi3}
\partial_{x} I_3 &= \sum_{j=0}^{k-1} c_{j,k,\alpha} \p_x^j P_Y(c_\alpha x) \p_x^{k-1-j} f(c_\alpha x) + \int_{\frac{x}{1+\alpha}}^\infty P_Y(x - t) \p_t^k f(t) dt. 
\end{align}

First, as shown in (\ref{stp.7})
\begin{align}
|\p_x^j P_Y(c_\alpha x) \p_x^{k-1-j} f(c_\alpha x)| \lesssim  Y^{-(k-s)} x^{-s-\frac{1}{2}}.
\end{align}

Turning to the integration in (\ref{dxi3}), our point of view on the kernel: 
\begin{equation}
P_Y(x - t) = \frac{Y}{Y^2 + (x - t)^2} \le \frac{Y}{Y^2} = \frac{1}{Y}. 
\end{equation}

Then in the regime $Y \ge x$, we have (since $k-s \le 1$): 
\begin{align} \n
\Big| \int_{\frac{x}{1+\alpha}}^\infty P_Y(x - t) \p_t^k f(t) dt \Big| &\le \frac{1}{Y} \int_{\frac{x}{1+\alpha}}^\infty t^{-k-\frac{1}{2}} dt \le \frac{1}{Y}x^{-k+\frac{1}{2}} \\  \label{tradeoff.Euler}
&\le \frac{1}{Y}x^{-k+\frac{1}{2}} \Big( \frac{Y}{x} \Big)^{1-(k-s)}  \le Y^{-(k-s)}x^{-s-\frac{1}{2}}.
\end{align}

This establishes the desired result.  
\end{proof}

\begin{lemma}[Interpolation Estimates] \label{interp.Euler} We have the following decay estimates: 
\begin{equation}
x^{k + \frac{1}{2}} ||\partial_Y^k v(x, \cdot)||_{L^\infty_Y} \le C, \hspace{3 mm} x^{ \frac{3}{2}} ||\partial_Y v(x, \cdot)||_{L^\infty_Y} \le \mathcal{O}(\delta). 
\end{equation}
\end{lemma}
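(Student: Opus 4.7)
The plan is to reduce both estimates to Proposition~\ref{Lemma.Sing.Unif} by combining the harmonicity of $v$ with a one-dimensional Gagliardo--Nirenberg interpolation in the $Y$ variable. Since $\Delta v = 0$, induction on $j$ gives the even-order identity $\partial_Y^{2j} v = (-1)^j \partial_x^{2j} v$ for every $j \ge 0$; applying Proposition~\ref{Lemma.Sing.Unif} with $k$ replaced by $2j$ then yields $\|\partial_Y^{2j} v(x,\cdot)\|_{L^\infty_Y} \le C(j)\, x^{-2j - \frac{1}{2}}$ uniformly in $Y \ge 0$, which handles the first displayed estimate for all even $k$.

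The key one-dimensional tool for the odd orders is the standard inequality $\|F'\|_{L^\infty([0,\infty))} \le C \|F\|_{L^\infty}^{1/2}\|F''\|_{L^\infty}^{1/2}$, valid for any $F \in C^2([0,\infty))$ that decays at infinity. This follows by writing $F'(\xi) = h^{-1}(F(Y+h) - F(Y))$ via the mean value theorem (so $|F'(\xi)| \le 2h^{-1}\|F\|_\infty$) and $|F'(Y) - F'(\xi)| \le h\|F''\|_\infty$, then optimizing in $h > 0$. For an odd order $k = 2j+1 \ge 3$ I would apply this with $F(Y) = \partial_Y^{k-1} v(x, Y)$; using the even-case identity already established, the right-hand side collapses to
\[
C\,\|\partial_x^{k-1} v\|_{L^\infty_Y}^{1/2}\,\|\partial_x^{k+1} v\|_{L^\infty_Y}^{1/2},
\]
and Proposition~\ref{Lemma.Sing.Unif} bounds this by $C\, x^{-k-1/2}$, finishing the first displayed estimate in the remaining (odd) case.

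For the sharper bound in the second displayed estimate, I apply the same interpolation to $F(Y) = v(x, Y)$, giving
\[
\|\partial_Y v(x, \cdot)\|_{L^\infty_Y} \le C\,\|v(x, \cdot)\|_{L^\infty_Y}^{1/2}\,\|\partial_Y^2 v(x, \cdot)\|_{L^\infty_Y}^{1/2} = C\,\|v\|_{L^\infty_Y}^{1/2}\,\|\partial_x^2 v\|_{L^\infty_Y}^{1/2}.
\]
Proposition~\ref{Lemma.Sing.Unif} supplies $\|v\|_{L^\infty_Y} \le \mathcal{O}(\delta)\, x^{-1/2}$ and $\|\partial_x^2 v\|_{L^\infty_Y} \le C\, x^{-5/2}$, so the geometric mean is bounded by $C\,\mathcal{O}(\delta)^{1/2}\, x^{-3/2}$, which is $\mathcal{O}(\delta)\, x^{-3/2}$ in the paper's convention because $\mathcal{O}(\delta)^{1/2} \to 0$ as $\delta \to 0$.

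The only non-routine point is verifying that $\partial_Y^j v(x, Y) \to 0$ as $Y \to \infty$ for the function $F$ to which the interpolation is applied. For $F = v$ this follows from the Poisson representation (\ref{defn.PE}) by splitting the $t$-integral at $|t| = A$, using $P_Y(x-t) \le Y^{-1}$ on one side and the integrability of $|f(t)|$ at infinity on the other, then optimizing in $A$; for the higher-order $F = \partial_Y^{k-1} v$ the identity $\partial_Y^{2j} v = (-1)^j \partial_x^{2j} v$ reduces the question to the decay of $\partial_x^{k-1} v$ in $Y$, which is provided by Lemma~\ref{lem.e.ss}. I anticipate no serious obstacle at this step.
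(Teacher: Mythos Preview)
Your proof is correct and follows essentially the same approach as the paper: harmonicity converts $\partial_Y^2 v$ into $-\partial_x^2 v$, a one-dimensional Gagliardo--Nirenberg interpolation on the half-line $Y\in[0,\infty)$ bridges odd and even orders, and Proposition~\ref{Lemma.Sing.Unif} supplies the needed $x$-decay; the paper simply cites Nirenberg \cite{GNS1} for the interpolation while you sketch it directly. One small remark: your own mean-value-theorem derivation of $\|F'\|_\infty \le C\|F\|_\infty^{1/2}\|F''\|_\infty^{1/2}$ works for any bounded $F\in C^2([0,\infty))$ with $\|F''\|_\infty < \infty$ and does not actually use decay of $F$ as $Y\to\infty$, so the final paragraph verifying this decay is unnecessary.
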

\begin{proof}
We treat the case when $k = 1$, with the extension for higher $k$ values being obvious. This follows via standard interpolation arguments: fix an $x$, and consider $v_Y(x, \cdot)$ as a function of the $Y$ variable, defined on the half-space $Y \ge 0$. Then via Gagliardo-Nirenberg interpolation estimates, and via harmonicity $v_{xx} = -v_{YY}$,  
\begin{equation} \label{GNS.1}
x^{\frac{3}{2}}||v_Y(x, \cdot)||_{L^\infty_Y} \lesssim \Big(x^{\frac{1}{2}} ||v(x, \cdot)||_{L^\infty_Y} \Big)^{\frac{1}{2}}\Big(x^{\frac{5}{2}} ||v_{xx}(x,\cdot)||_{L^\infty_Y} \Big)^{\frac{1}{2}}.
\end{equation}

Importantly, via Remark 4, Page 125 in \cite{GNS1}, we need not include a lower-order term on the right-hand side of (\ref{GNS.1}), which is due to the fact that our domain, for each fixed $x$, is the half-space $Y \ge 0$. Taking the supremum over $x$ and applying (\ref{v.cases}) yields the desired result. The smallness of $\mathcal{O}(\delta)$ for $k = 1$ case is guaranteed due to the $v$ term in (\ref{GNS.1}). 

\end{proof}

\begin{proposition}[Euler Correctors]  \label{thm.euler.1} Suppose $[u^1_e, v^1_e]$ solves the boundary value problem: 
\begin{align}
\Delta v^1_e = 0, \hspace{3 mm} v^1_e(x,0) = -v^0_p(x,0), \hspace{3 mm} u^1_e = \int_x^\infty v^1_{eY}(x', Y) dx', \text{ for } x \ge 1.
\end{align}
Then the following bounds holds for any $k,j \ge 0$: 
\begin{align} \label{euler.DECAY.main}
&\sup_{Y \ge 0} \Big| \partial_x^k \partial_Y^j v^1_e \Big|x^{k + j + \frac{1}{2}} \le C,  \hspace{5 mm} \sup_{Y \ge 0} \Big[ \Big| u^1_e, v^1_e \Big|x^{\frac{1}{2}} + \Big| u^1_{ex}, v^1_{eY} \Big|x^{\frac{3}{2}} \Big]  \le \mathcal{O}(\delta).
\end{align}
\end{proposition}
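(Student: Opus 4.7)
The plan is to read off the bounds on $v^1_e$ and its pure $x$-derivatives directly from Proposition \ref{Lemma.Sing.Unif} applied to the Poisson extension of $-v^0_p(x,0)$, whose boundary decay rates are recorded in (\ref{e.trace}): this gives $|\partial_x^k v^1_e|x^{k+\frac12}\le C(k)$ with the extra smallness $|v^1_e|x^{1/2}\le\mathcal{O}(\delta)$ at $k=0$. All other derivatives are then obtained by two purely algebraic reductions. First, harmonicity gives $\partial_Y^2 v^1_e = -\partial_x^2 v^1_e$, so that $\partial_x^k\partial_Y^{2l}v^1_e=(-1)^l\partial_x^{k+2l}v^1_e$ already inherits the desired bound. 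For an odd number of $Y$-derivatives one writes $\partial_x^k\partial_Y^{2l+1}v^1_e=(-1)^l\partial_Y(\partial_x^{k+2l}v^1_e)$ and applies the half-space Gagliardo--Nirenberg interpolation exactly as in the proof of Lemma \ref{interp.Euler}, using $\partial_Y^2(\partial_x^{k+2l}v^1_e)=-\partial_x^{k+2l+2}v^1_e$:
\begin{equation*}
x^{k+2l+\frac32}\,\|\partial_Y\partial_x^{k+2l}v^1_e(x,\cdot)\|_{L^\infty_Y}\lesssim \bigl(x^{k+2l+\frac12}\|\partial_x^{k+2l}v^1_e\|_{L^\infty_Y}\bigr)^{\!1/2}\bigl(x^{k+2l+\frac52}\|\partial_x^{k+2l+2}v^1_e\|_{L^\infty_Y}\bigr)^{\!1/2}\lesssim 1,
\end{equation*}
and on the $j=1$ rung the first factor on the right carries an $\mathcal{O}(\delta)$, propagating the smallness to $v^1_{eY}$.

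For $u^1_e$, the prescribed formula $u^1_e(x,Y)=\int_x^\infty v^1_{eY}(x',Y)\,dx'$ is consistent with the Cauchy--Riemann pair (\ref{div.curl}): differentiating under the integral gives $u^1_{ex}=-v^1_{eY}$, and the identity $\partial_Y u^1_e=\int_x^\infty v^1_{eYY}\,dx'=-\int_x^\infty v^1_{exx}\,dx'=v^1_{ex}$ (using harmonicity and decay at $x=\infty$) recovers $u^1_{eY}=v^1_{ex}$. The estimate $|u^1_{ex}|x^{3/2}=|v^1_{eY}|x^{3/2}\le\mathcal{O}(\delta)$ is then immediate. For $u^1_e$ itself, the previous step gives $|v^1_{eY}(x',Y)|\le\mathcal{O}(\delta)(x')^{-3/2}$, so
\begin{equation*}
|u^1_e(x,Y)|\le \mathcal{O}(\delta)\int_x^\infty (x')^{-3/2}\,dx' = 2\,\mathcal{O}(\delta)\,x^{-1/2},
\end{equation*}
which is the claimed smallness for $u^1_e$.

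The only point that requires care is the convergence of the integral defining $u^1_e$ and the legitimacy of differentiating under it; both follow from the uniform (in $Y$) polynomial decay of $\partial_x^k\partial_Y^j v^1_e$ supplied by the first paragraph, so no compactness or cutoff argument is needed. I expect the main subtlety not to be in the $u^1_e$ construction but in ensuring that the interpolation bound for $\partial_Y v^1_e$ at $k=0$ genuinely inherits an $\mathcal{O}(\delta)$ rather than merely a $C$; this is exactly the reason Lemma \ref{interp.Euler} was phrased with a bare $v$ factor on the right-hand side, and it transfers cleanly to our setting because the $k=0$ bound from Proposition \ref{Lemma.Sing.Unif} is of order $\delta$.
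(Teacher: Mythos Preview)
Your proof is correct and follows essentially the same route as the paper: invoke Proposition \ref{Lemma.Sing.Unif} for the pure $x$-derivatives of $v^1_e$, pass to general $\partial_x^k\partial_Y^j$ via harmonicity and the interpolation argument of Lemma \ref{interp.Euler}, then define $u^1_e$ by the integral formula, verify the Cauchy--Riemann relations, and integrate the $v^1_{eY}$ bound to obtain the $\mathcal{O}(\delta)x^{-1/2}$ decay of $u^1_e$. The paper's own proof is terser---it simply cites ``the above lemmas'' for the $v^1_e$ bounds and spells out only the $u^1_e$ construction---whereas you make the reduction $\partial_Y^{2l}=(-\partial_x^2)^l$ and the odd-$j$ interpolation step explicit; this is a welcome clarification but not a different argument.
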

\begin{proof}

We shall take $v^1_e = v$ from the above lemmas.  For the $u^1_e$ profile, we define for $x \ge 1$: 
\begin{align}
u^1_e(x,Y) := \int_x^\infty v^1_{eY}(x', Y) dx'.
\end{align}

From here it is clear that the Cauchy-Riemann equations hold: 
\begin{align}
u^1_{ex} = - v^1_{eY}, \hspace{3 mm} u^1_{eY} = \int_x^\infty v^1_{eYY} = - \int_x^\infty v^1_{exx} = v^1_{ex}. 
\end{align}

We have used that $v^1_{ex}(x, Y) \rightarrow 0$ as $x \rightarrow \infty$. From Lemma \ref{interp.Euler}, it is then clear that:
\begin{align} \label{int.hilbert}
 \Big| u^1_{e} \Big| = \Big| \int_x^\infty u^1_{ex} dx' \Big| = \Big| \int_x^\infty v^1_{eY} dx' \Big| \le \mathcal{O}(\delta) \int_x^\infty |x'|^{-\frac{3}{2}} dx' \le \mathcal{O}(\delta) x^{-\frac{1}{2}}. 
\end{align}

\end{proof}

\begin{remark}[In-flow Conditions] Note that we do not solve for the Euler correctors given an arbitrary in-flow condition at $x = 1$. Rather, we take $[u^1_e, v^1_e, P^1_e]$ to be the explicit profiles obtained by applying the Poisson extension to $f$. In this sense, the set-up we consider here is distinct from the construction of Euler profiles in \cite{GN}. 
\end{remark}

\section{Prandtl Layer 1}  \label{section.prandtl.1}

\subsection{Derivation of Linearized Prandtl Equations:}

In this section, we will construct the Prandtl correctors, $[u^1_p, v^1_p, P^1_p]$. Let us now obtain the equations that they will satisfy. We do this in a manner which can be easily generalized in the next section. We expand the nonlinear terms: 
\begin{align} \nonumber
\bar{u}_s^{(1)} \bar{u}^{(1)}_{sx} &= \Big(\bar{u}_s^{(0)} + \sqrt{\epsilon}u^1_e + \sqrt{\epsilon}u^1_p \Big) \Big(\bar{u}_{sx}^{(0)} + \sqrt{\epsilon}u^1_{ex} + \sqrt{\epsilon}u^1_{px} \Big) \\ \nonumber
& = \bar{u}_s^{(0)} \bar{u}_{sx}^{(0)} + \sqrt{\epsilon} u^{(1)}_{sx} u^1_p + \sqrt{\epsilon} u^{(1)}_s u^1_{px} + \epsilon u^1_p u^1_{px} + \sqrt{\epsilon}u^1_{ex}u^0_p + \sqrt{\epsilon}u^1_e u^0_{px} \\ \label{deri.pr.1.st} 
& + \sqrt{\epsilon}u^1_{ex} + \epsilon u^1_e u^1_{ex}, \\ \nonumber
\bar{v}_s^{(1)} \bar{u}_{sy}^{(1)} & = \Big(v^0_p + v^1_e + \sqrt{\epsilon}v^1_p \Big)\Big( u^0_{py} + \epsilon u^1_{eY} + \sqrt{\epsilon} u^1_{py} \Big) \\
& = v_s^{(1)} u^0_{py} + \sqrt{\epsilon} u_{sy}^{(1)}v^1_p + \sqrt{\epsilon} v_s^{(1)} u^1_{py} + \epsilon v^1_p u^1_{py}  + \epsilon v^0_p u^1_{eY} + \epsilon v^1_e u^1_{eY}, \\ \nonumber
\bar{u}^{(1)}_s \bar{v}^{(1)}_{sx} &= \Big( 1 + u^0_p + \sqrt{\epsilon}u^1_e + \sqrt{\epsilon}u^1_p \Big) \Big( v^0_{px} + v^1_{ex} + \sqrt{\epsilon}v^1_{px} \Big) \\ \nonumber
& = \bar{u}_s^{(0)} v^0_{px} + \sqrt{\epsilon} v^1_{px} u_s^{(1)} + \sqrt{\epsilon} v^{(1)}_{sx} u^1_p + \epsilon u^1_p v^1_{px} + u^0_p v^1_{ex} + \sqrt{\epsilon}u^1_e v^0_{px} \\ 
& + v^1_{ex} + \sqrt{\epsilon}u^1_e v^1_{ex}, \\ \nonumber
\bar{v}^{(1)}_s \bar{v}^{(1)}_{sy} &= \Big( v^0_p + v^1_e + \sqrt{\epsilon}v^1_p \Big) \Big( v^0_{py} + \sqrt{\epsilon} v^1_{eY} + \sqrt{\epsilon}v^1_{py} \Big) \\
& = v^0_p v^0_{py} + \sqrt{\epsilon} v^1_{py} v^{(1)}_s + \sqrt{\epsilon} v^{(1)}_{sy} v^1_p + \epsilon v^1_p v^1_{py} + \sqrt{\epsilon} v^0_p v^1_{eY} + v^1_e v^0_{py} + \sqrt{\epsilon} v^1_e v^1_{eY}. 
\end{align}

Let us now denote: 
\begin{align} \label{Ru1E}
R^{u,1}_E &:= v^1_e u^0_{py} + \sqrt{\epsilon} u^1_{ex} u^0_p + \sqrt{\epsilon} u^1_e u^0_{px} + \epsilon v^0_p u^1_{eY}, \hspace{3 mm} E^{u,1}_E :=   \epsilon v^1_e u^1_{eY} + \epsilon u^1_e u^1_{ex}, \\  \label{Rv1E}
R^{v,1}_E &= u^0_p v^1_{ex} + \sqrt{\epsilon} u^1_e v^0_{px} + \sqrt{\epsilon} v^0_p v^1_{eY} + v^1_e v^0_{py}, \hspace{3 mm} E^{v,1}_E :=  \sqrt{\epsilon} u^1_e v^1_{ex} + \sqrt{\epsilon} v^1_e v^1_{eY}.
\end{align}

The purpose of the terms above is to separate the Euler-Prandtl terms and the pure-Euler terms. The pure-Euler terms are harmful to our analysis, because they scale differently than the Euler-Prandtl terms. From a practical point of view, their presence prevents the application of weights of the form $z = \frac{y}{\sqrt{x}}$, and therefore obstructs self-similarity. It turns out that all pure-Euler terms are of ``gradient-type", see (\ref{grad.pres.1}). Thus, by introducing appropriate potential functions in the pressure expansion, we can force these terms to vanish identically. With this notation, we may write the Navier-Stokes expansion as: 
\begin{align} \nonumber
-\Delta_\epsilon \bar{u}^{(1)}_s &+ \bar{u}^{(1)}_s \bar{u}^{(1)}_{sx} + \bar{v}^{(1)}_s \bar{u}^{(1)}_{sy} + \bar{P}^{(1)}_x = -\Delta_\epsilon \bar{u}^{(0)}_s + \bar{u}^{(0)}_s \bar{u}^{(0)}_{sx} + \bar{v}^{(0)}_s \bar{u}^{(0)}_{sy} + \bar{P}^{(0)}_x \\ \nonumber
& + \sqrt{\epsilon}\Big[ -\Delta_\epsilon u^1_p + u_s^{(1)} u^1_{px} + u^{(1)}_{sx} u^1_p + u^{(1)}_{sy} v^1_p + v^{(1)}_s u^1_{py} + \sqrt{\epsilon} u^1_p u^1_{px} \\ \label{deri.pr.1.e.1}
& + \sqrt{\epsilon} v^1_p u^1_{py} + P^1_{px} \Big] + \epsilon P^{1,a}_{px} + R^{u,1}_E + E^{u,1}_E - \epsilon^{\frac{3}{2}} \Delta u^1_e + \epsilon P^{1,a}_{Ex} + \sqrt{\epsilon} (u^1_{ex} + P^1_{ex}).
\end{align}

The normal equation is expanded via: 
\begin{align} \nonumber
-\Delta_\epsilon \bar{v}^{(1)}_s &+ \bar{u}^{(1)}_s \bar{v}^{(1)}_{sx} + \bar{v}^{(1)}_s \bar{v}^{(1)}_{sy} + \frac{\bar{P}^{(1)}_y}{\epsilon} = -\Delta_\epsilon \bar{v}^{(0)}_s + \bar{u}^{(0)}_s \bar{v}^{(0)}_{sx} + \bar{v}^{(0)}_s \bar{v}^{(0)}_{sy} + \frac{\bar{P}^{(0)}_y}{\epsilon} \\ \nonumber
 & + \sqrt{\epsilon}\Big[-\Delta_\epsilon v^1_p + u_s^{(1)} v^1_{px} + v^{(1)}_{sx} u^1_p + v_s^{(1)} v^1_{py} + v^{(1)}_{sy} v^1_p +  \sqrt{\epsilon} u^1_p v^1_{px} \\ \label{deri.pr.1.end}
 & + \sqrt{\epsilon} v^1_p v^1_{py} + \frac{P^1_{py}}{\epsilon} \Big] + \frac{\partial_y}{\epsilon} \epsilon P^{1,a}_p  + R^{v,1}_E + E^{v,1}_E - \epsilon \Delta v^1_e + \frac{\partial_y}{\epsilon} \epsilon P^{1,a}_E + (\frac{\partial_y}{\epsilon} \sqrt{\epsilon} P^1_e + v^1_{ex}).
\end{align}

In the above expressions, the $[\bar{u}_s^{(0)}, \bar{v}_s^{(0)}]$, $[u_s^{(1)}, v_s^{(1)}]$ are known at this stage, and the $u^1_p, v^1_p$ are unknowns, to be constructed in this step. Similarly for the pressures, the the $P^1_e$ is known, but the auxiliary pressures $P^{1,a}_E, P^{1,a}_p$ are to be defined in this section. Finally, as can be seen from the definitions in (\ref{Ru1E}), (\ref{Rv1E}), the $R^{u,1}_E, R^{v,1}_E, E^{u,1}_E, E^{v,1}_E$ are all knowns.  Let us now simplify the above expressions. First, via the construction of $[u^0_p, v^0_p]$ we may write: 
\begin{align} \nonumber
R^{u,0}  &:= -\Delta_\epsilon \bar{u}^{(0)}_s + \bar{u}^{(0)}_s \bar{u}^{(0)}_{sx} + \bar{v}^{(0)}_s \bar{u}^{(0)}_{sy} + \bar{P}^{(0)}_x + v^1_e u^0_{py} \\ \label{ru0.1} &= -\epsilon u^0_{pxx} + \sqrt{\epsilon}y v^1_{eY}u^0_{py} + \epsilon u^0_{py} \int_0^y \int_y^{y'}v^1_{eYY} dy'' dy'. 
\end{align}

This then accounts for the lowest order term from $R^{u,1}_E$ in (\ref{Ru1E}), allowing us to redefine:  
\begin{align}
R^{u,1}_E = \sqrt{\epsilon} u^1_{ex} u^0_p + \sqrt{\epsilon} u^1_e u^0_{px} + \epsilon v^0_p u^1_{eY}.
\end{align}

Let us now define the auxiliary Euler pressure: 
\begin{align} \label{P1aasin}
P^{1,a}_E := -\frac{1}{2}\Big(\Big| u^1_e \Big|^2 + \Big|v^1_e \Big|^2 \Big),
\end{align}

so that, combined with the equations we have taken for $[u^1_e, v^1_e]$, we have:
\begin{lemma}  With $P^{1,a}_E$ as in (\ref{P1aasin}), and $E_E^{u,1}, E_E^{v,1}$ as in (\ref{Ru1E}) - (\ref{Rv1E}),
\begin{align} \label{pot.1}
 \partial_x \epsilon P^{1,a}_E + E^{u,1}_E =  \frac{\partial_y}{\epsilon} \epsilon P^{1,a}_E + E^{v,1}_E = 0. 
\end{align}
\end{lemma}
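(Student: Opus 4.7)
The plan is to verify both identities by direct computation, exploiting the Cauchy-Riemann equations (\ref{div.curl}) that the pair $[u^1_e, v^1_e]$ satisfies, together with the Euler coordinate relation $\partial_y = \sqrt{\epsilon}\,\partial_Y$.

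First I would compute $\partial_x (\epsilon P^{1,a}_E)$ explicitly from the definition (\ref{P1aasin}):
\begin{equation*}
\partial_x (\epsilon P^{1,a}_E) = -\epsilon\bigl( u^1_e u^1_{ex} + v^1_e v^1_{ex}\bigr).
\end{equation*}
Adding $E^{u,1}_E = \epsilon v^1_e u^1_{eY} + \epsilon u^1_e u^1_{ex}$ cancels the $\epsilon u^1_e u^1_{ex}$ terms and leaves $\epsilon v^1_e (u^1_{eY} - v^1_{ex})$, which vanishes by the second Cauchy-Riemann relation $v^1_{ex} = u^1_{eY}$ in (\ref{div.curl}).

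For the second identity I would use $\frac{\partial_y}{\epsilon}=\frac{\sqrt{\epsilon}\,\partial_Y}{\epsilon}$ to compute
\begin{equation*}
\tfrac{\partial_y}{\epsilon}(\epsilon P^{1,a}_E) = \sqrt{\epsilon}\,\partial_Y P^{1,a}_E = -\sqrt{\epsilon}\bigl(u^1_e u^1_{eY} + v^1_e v^1_{eY}\bigr).
\end{equation*}
Adding $E^{v,1}_E = \sqrt{\epsilon} u^1_e v^1_{ex} + \sqrt{\epsilon} v^1_e v^1_{eY}$ cancels the $\sqrt{\epsilon}\,v^1_e v^1_{eY}$ terms and leaves $\sqrt{\epsilon}\,u^1_e (v^1_{ex} - u^1_{eY})$, which again vanishes by (\ref{div.curl}).

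There is really no obstacle here: the lemma is a bookkeeping statement confirming that the ``pure-Euler'' quadratic error terms $E^{u,1}_E, E^{v,1}_E$ are exact gradients precisely because $[u^1_e, v^1_e]$ is a holomorphic (Cauchy-Riemann) pair, so $|u^1_e|^2 + |v^1_e|^2$ serves as a potential. The only minor point to flag is the factor $\sqrt{\epsilon}$ versus $\epsilon$ coming from the coordinate rescaling $Y = \sqrt{\epsilon} y$, which must be tracked consistently so that the two identities balance correctly; once that is done, both equalities reduce to applying $v^1_{ex} - u^1_{eY} = 0$.
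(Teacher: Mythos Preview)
Your proposal is correct and follows essentially the same approach as the paper's proof: direct computation of the derivatives of $P^{1,a}_E$ followed by cancellation via the Cauchy-Riemann relation $v^1_{ex} = u^1_{eY}$. The only cosmetic difference is that the paper substitutes $v^1_{ex} = u^1_{eY}$ into the derivative expression first and then matches it term-by-term with $E^{u,1}_E$, $E^{v,1}_E$, whereas you add first and cancel afterward.
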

\begin{remark}Denoting by $\nabla_\epsilon := (\partial_x, \frac{\partial_Y}{\sqrt{\eps}})$, the above lemma reads:  
\begin{equation}
\nabla_\eps \eps P^{1,a}_E + (E^{u,1}_E, E^{v,1}_E) = 0.
\end{equation}

Thus, the purely Eulerian terms are of gradient structure, which is then exploited with the introduction of our auxiliary pressure. 
\end{remark}
\begin{proof}

The proof follows by direct calculation and an appeal to (\ref{euler.1.V.0}):
\begin{align}
\eps \partial_x P^{1,a}_E =  -\eps u^1_e u^1_{ex} - \eps v^1_e v^1_{ex} = -\eps u^1_e u^1_{ex} - \eps v^1_e u^1_{eY} = - E^{u,1}_E.
\end{align}

Similarly, 
\begin{align}
\frac{\p_Y}{\sqrt{\eps}} \eps P^{1,a}_E = -\sqrt{\eps} u^1_e u^1_{eY} - \eps v^1_e v^1_{eY} = -\sqrt{\eps} u^1_e v^1_{ex} - \eps v^1_e v^1_{eY} = - E^{v,1}_E. 
\end{align}

The claim has been proven. 

\end{proof}

\begin{corollary} \label{cor.van} All ``purely Eulerian" terms from the expansions (\ref{deri.pr.1.e.1}), (\ref{deri.pr.1.end}) vanish identically. That is: 
\begin{align}
&E^{u,1}_E - \epsilon^{\frac{3}{2}} \Delta u^1_e + \epsilon P^{1,a}_{Ex} + \sqrt{\epsilon} (u^1_{ex} + P^1_{ex}) = 0, \\
& E^{v,1}_E - \epsilon \Delta v^1_e + \frac{\partial_y}{\epsilon} \epsilon P^{1,a}_E + (\frac{\partial_y}{\epsilon} \sqrt{\epsilon} P^1_e + v^1_{ex}) = 0. 
\end{align}
\end{corollary}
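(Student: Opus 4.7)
The plan is to verify each identity as a direct algebraic consequence of three already established ingredients: the potential identity (\ref{pot.1}) from the preceding lemma, the Euler--1 system (\ref{euler.1.V.0}), and the harmonicity of $[u^1_e,v^1_e]$ that follows from the Cauchy--Riemann pair (\ref{div.curl}). No estimates are needed; the content is purely that the auxiliary pressure $P^{1,a}_E$ was designed precisely to absorb the quadratic pure-Euler terms.

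For the tangential identity, I would first use (\ref{pot.1}) to cancel the pair $\eps P^{1,a}_{Ex} + E^{u,1}_E = 0$, which reduces the statement to
\[
-\eps^{3/2}\Delta u^1_e + \sqrt{\eps}\bigl(u^1_{ex} + P^1_{ex}\bigr) = 0.
\]
The first term vanishes because $u^1_e = \operatorname{Im}(f)$ with $f$ holomorphic, hence harmonic, and the second vanishes by the first equation of (\ref{euler.1.V.0}). For the normal identity, (\ref{pot.1}) similarly cancels $\tfrac{\p_y}{\eps}\eps P^{1,a}_E + E^{v,1}_E = 0$, leaving
\[
-\eps\Delta v^1_e + \Bigl(\tfrac{\p_y}{\eps}\sqrt{\eps} P^1_e + v^1_{ex}\Bigr) = 0.
\]
Here $\Delta v^1_e = 0$ by (\ref{euler.1.V}), and using $\p_y = \sqrt{\eps}\,\p_Y$ one computes $\tfrac{\p_y}{\eps}\sqrt{\eps} P^1_e = P^1_{eY}$, so the remaining bracket is $P^1_{eY} + v^1_{ex} = 0$ by the second equation of (\ref{euler.1.V.0}).

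There is no genuine obstacle in this proof: it is a bookkeeping verification. The only point requiring a moment's care is the conversion $\tfrac{\p_y}{\eps} = \tfrac{1}{\sqrt{\eps}}\p_Y$, which ensures that the pressure term $\tfrac{\p_y}{\eps}\sqrt{\eps}P^1_e$ reduces to exactly $P^1_{eY}$ and thus pairs cleanly with $v^1_{ex}$. The conceptual content — which has already been carried out in (\ref{pot.1}) — is that writing $\eps P^{1,a}_E = -\tfrac{\eps}{2}(|u^1_e|^2 + |v^1_e|^2)$ together with the Cauchy--Riemann relations produces a gradient whose components are exactly $-E^{u,1}_E$ and $-E^{v,1}_E$, so the corollary simply records that after this cancellation the residual Eulerian terms reduce to the Euler--1 equations themselves.
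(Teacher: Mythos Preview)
Your proof is correct and follows essentially the same approach as the paper: both reduce the statement to the three ingredients (\ref{pot.1}), the Euler--1 equations (\ref{euler.1.V.0}), and harmonicity of $[u^1_e,v^1_e]$. Your version is in fact slightly more explicit than the paper's in spelling out the scaling conversion $\tfrac{\p_y}{\eps}\sqrt{\eps}P^1_e = P^1_{eY}$, which the paper leaves implicit.
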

\begin{proof}
First, by harmonicity of $[u^1_e, v^1_e]$, we have: 
\begin{align} \label{dropout.1}
\epsilon^{\frac{3}{2}} \Delta u^1_e = \epsilon \Delta v^1_e = 0. 
\end{align}

Next, regarding the final terms in both (\ref{deri.pr.1.e.1}) and (\ref{deri.pr.1.end}), by the equations (\ref{deri.e.1}) and (\ref{deri.e.2}), we see that these terms drop out: 
\begin{align} \label{dropout.2}
u^1_{ex} + P^1_{ex} = 0, \hspace{3 mm} v^1_{ex} + P^1_{eY} = 0.
\end{align} 

Coupled with (\ref{pot.1}), this establishes the desired claim. 

\end{proof}

In the above calculation, we are using crucially the Cauchy-Riemann structure of $[u^1_e, v^1_e]$ in \ref{div.curl}; harmonicity alone does not suffice here. Next, define the auxiliary Prandtl pressure: 
\begin{align} \label{p1pa}
P^{1,a}_p :=  \int_y^\infty \Big[-\Delta_\epsilon \bar{v}_s^{(0)} + \bar{u}_s^{(0)} \bar{v}_{sx}^{(0)} + \bar{v}_s^{(0)} \bar{v}_{sy}^{(0)}  \Big] +  \int_y^\infty R^{v,1}_E. 
\end{align}

Combining all this, we take our Prandtl-1 equation to be: 
\begin{align} \label{pr.eqn.1}
 -u^1_{pyy} + (1 + u^0_p) u^1_{px} + u^{(1)}_{sx} u^1_p + u^{0}_{py} \Big( v^1_p - v^1_p(x,0)\Big) + v^{(1)}_s u^1_{py}  + P^1_{px} = f^{(1)},
\end{align}

together with the divergence free condition $u^1_{px} + v^1_{py} = 0$, and the boundary conditions: 
\begin{align} \label{pr.1.BC}
u^1_p(x,0) = 0, \hspace{2 mm} \lim_{y \rightarrow \infty} [u^1_p(x,y), v^1_p(x,y)] = 0, \hspace{2 mm} v^1_p(x,0) = -v^2_e(x,0), \hspace{2 mm} u^1_p(1,y) = U_1(y). 
\end{align}

Here, the forcing term is defined as: 
\begin{align} \label{forcing.pr.1}
f^{(1)} := \epsilon^{-\frac{1}{2}}\Big[  R^{u,0} + R^{u,1}_E + \epsilon P_{px}^{1,a}\Big].
\end{align}

The boundary contribution of $v^1_p(x,0)$ in (\ref{pr.eqn.1}) again arises from the calculation: 
\begin{align} \nonumber
v^2_e(x,Y) u^0_{py} &= v^2_e(x,0) u^0_{py} + \sqrt{\epsilon} y u^0_{py}v^2_{eY} + \epsilon u^0_{py} \int_0^y \int_{y}^{y'} v^2_{eYY} \\ \label{take.1.1}
&= -v^{1}_p(x,0) u^0_{py} +  \sqrt{\epsilon} y u^0_{py}v^2_{eY} + \epsilon u^0_{py} \int_0^y \int_{y}^{y'} v^2_{eYY}.
\end{align}

Consolidating (\ref{dropout.1}), (\ref{dropout.2}), (\ref{pot.1}), (\ref{p1pa}), (\ref{pr.eqn.1}), (\ref{take.1.1}) with the expressions (\ref{deri.pr.1.e.1}) and (\ref{deri.pr.1.end}) then shows that the following remainder is contributed: 
\begin{align}
&R^{u,1}  = \sqrt{\epsilon}\Big[-\epsilon u^1_{pxx} + \sqrt{\epsilon} y v^2_{eY} u^0_{py} + \epsilon u^0_{py} \int_0^y \int_y^{y'} v^2_{eYY} + (\epsilon u^1_{eY} + \sqrt{\epsilon} u^1_{py}) v^1_p  \\ \n
& \hspace{20 mm} + \sqrt{\epsilon} (u^1_e + u^1_{p}) u^1_{px}  \Big], \\ \label{rem.pr.1}
&R^{v,1}= \sqrt{\epsilon}\Big[-\Delta_\epsilon v^1_p + u_s^{(1)} v^1_{px} + v^{(1)}_{sx} u^1_p + v_s^{(1)} v^1_{py} + v^{(1)}_{sy} v^1_p +  \sqrt{\epsilon} u^1_p v^1_{px}  + \sqrt{\epsilon} v^1_p v^1_{py} \Big]. 
\end{align}

Let us emphasize the boundary condition at $y \rightarrow \infty$ for $v^1_p$ means that we will define: 
\begin{align} \label{emphv1p}
v^1_p(x,y) = \int_y^\infty u^1_{px}(x,y') dy'.
\end{align}

We may evaluate the equation (\ref{pr.eqn.1}) at $y = \infty$ to see that the pressure term drops out. That is the pressure in the Prandtl layer is constant, so we may WLOG take $P^1_p = 0$.

\subsection{Global in $x$ Existence and Decay:}

The first step is to homogenize the boundary conditions by introducing the new unknowns: 
\begin{align} \label{pr.1.hom}
u = u^1_p + \chi(y) u^1_e(x,0), \hspace{3 mm} v = v^1_p - v^1_p(x,0)+ u^1_{ex}(x,0)I_\chi(y), \hspace{3 mm} I_\chi(y) = \int_y^\infty \chi(\theta) d\theta,
\end{align}

where $\chi(y)$ is a cutoff function satisfying: 
\begin{equation}
\chi(0) = 1, \hspace{3 mm} \partial_y^k \chi(0) = 0, \hspace{3 mm} \int_0^\infty \chi(y) dy = 0.
\end{equation}

The mean-zero condition is meant to ensure that $v(0) = 0$. The homogenized profiles now satisfy the following system: 
\begin{align} \label{pr.1.eqn.bar}
(1+u^0_p) u_x &- u_{yy} + \mathcal{P}(u,v) = f^{(1)} + \mathcal{J}; \hspace{3 mm} u(x,0) = v(x,0) = 0, \hspace{3 mm} \lim_{y \rightarrow \infty} u(x,y) = 0. 
\end{align}

along with the divergence free condition $u_x + v_y = 0$. Here: 
\begin{align} \label{calP}
\mathcal{P} &:= u^{(1)}_{sx}u + u^0_{py} v + v_s^{(1)} u_y, \\ \n
\mathcal{J} &:= u_s^{(1)}\chi(y) u^1_{ex}(x,0) - \chi'' u^1_e(x,0) - \chi(y) u^{(1)}_{sx} u^1_e(x,0) - u^0_{py}I_\chi(y) u^1_{ex}(x,0) \\ \label{calJ} &+ v_s^{(1)}\chi'(y) u^1_e(x,0).
\end{align}

We note that $v(x,y)$ does not vanish as $y \rightarrow \infty$ due to the definition in (\ref{pr.1.hom}). The essential feature of $v(x,y)$ that will be in use is that $v(x,0) = 0$. Examining (\ref{calP}), one observes that $v$ is always accompanied by $u^0_p$, which decays rapidly in $y$ for each fixed $x$. Let us now define the norm in which we shall control the Prandtl solutions:
\begin{equation} \label{norm.P}
||u||_{P(X_1, \sigma)}^2 := \sup_{1 \le x \le X_1} \{ \int u^2  x^{-2\sigma} + u_y^2 x^{1-2\sigma} \} + \int_1^{X_1} \int \{ u^2  x^{-1 - 2\sigma} +  u_y^2 x^{-2\sigma} + u_x^2 x^{1-2\sigma}\}. 
\end{equation}

We shall also need the following, differentiated version, of the above Prandtl-layer norm: 
\begin{align} \label{norm.Pk}
||u||_{P_k(X_1, \sigma)}^2 &:= \sup_{1 \le x \le X_1} \{ \int |\partial_x^k u|^2 x^{2k-2\sigma} + |\partial_x^k u_y|^2  x^{2k+1-2\sigma} \} \\ & \nonumber \hspace{20 mm} + \int_1^{X_1} \int \{ |\partial_x^k u|^2 x^{2k-1 - 2\sigma}  + |\partial_x^k u_y|^2   x^{2k-2\sigma}+ |\partial_x^{k+1}u|^2 x^{2k+1-2\sigma}\}. 
\end{align}

Through the $H^1_y \hookrightarrow L^\infty_y$ embedding, it is clear that: 
\begin{equation}
\sup_{1 \le x \le X_1} x^{k+\frac{1}{4}-\sigma}||\partial_x^k u||_{L^\infty} \le ||u||_{P_k(X_1, \sigma)}. 
\end{equation}

Finally, we shall write the global norms as: 
\begin{equation}
||u||_{P(\sigma)} := ||u||_{P(\infty, \sigma)}, \hspace{3 mm} ||u||_{P_k(\sigma)} := ||u||_{P_k(\infty, \sigma)}. 
\end{equation}

\begin{remark} A comparison of the norms $P_k$ with the estimates valid for $[u^0_p, v^0_p]$ in (\ref{main.decay}), (\ref{main.v}) show that $u^1_p$ is roughly ``$x^{-\frac{1}{4}}$-better" than $u^0_p$. There is no front-profile as in the case of $[u^0_p, v^0_p]$, which is because the present boundary condition, $u^1_p(x,0) = -u^1_e(x,0)$ decays as $x \rightarrow \infty$, due to (\ref{int.hilbert}), in contrast with the boundary condition for $u^0_p(x,0) = -\delta$. The secondary reason is because $f^{(1)}$ in (\ref{forcing.pr.1}) contains derivative and nonlinear terms from the previous layers, which enhances the decay in $x$.
\end{remark}

The first step is to give the following estimates on the forcing terms, which capitalize on the structure of these terms either having many derivatives (thereby enhancing decay in $x$) or are of product form (also enhancing decay in $x$):
\begin{lemma}[Forcing Estimates] For any $k,m \ge 0$, and arbitrary $N > 0$
\begin{align} \label{n.forcing.1}
&\Big| z^m \partial_x^k \mathcal{J} \Big| \le C(k,m) \langle y \rangle^{-N} x^{-k-\frac{1}{2}}, \\ \label{n.forcing.2}
&||z^m \partial_x^k f^{(1)}||_{L^2_y} \le C(k, m) \langle x \rangle^{-k -\frac{5}{4}}. 
\end{align}
\end{lemma}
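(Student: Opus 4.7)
The plan is to treat the two estimates \eqref{n.forcing.1} and \eqref{n.forcing.2} separately; in each case, every summand is an explicit product of profile quantities whose decay rates are already catalogued in Proposition~\ref{thm.euler.1} and Corollaries~\ref{cp1}, \ref{cp2}. After applying Leibniz to $\partial_x^k$, the argument is essentially bookkeeping: match each factor to its known rate, and isolate the single ``critical'' summand in each estimate whose combined decay saturates the exponent on the right-hand side.

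For \eqref{n.forcing.1}, I first note that each of the five summands in \eqref{calJ} contains either a derivative of the cutoff $\chi$ (which is Schwartz in $y$) or the factor $u^0_{py}$ (which decays in $z = y/\sqrt{x}$ faster than any polynomial by Corollary~\ref{cp1}); consequently the weight $z^m$ and the $y$-decay $\langle y \rangle^{-N}$ come for free. The $x$-decay is extracted from trace estimates on the Euler-1 data: via the Cauchy--Riemann equations together with the representation $u^1_e(x,Y) = \int_x^\infty v^1_{eY}(x',Y)\,dx'$, Proposition~\ref{thm.euler.1} yields $|\partial_x^j u^1_e(x,0)| \le C(j)\, x^{-j-1/2}$ and $|\partial_x^j u^1_{ex}(x,0)| \le C(j)\, x^{-j-3/2}$. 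Combining these with the bounds on $u_s^{(1)}, u^{(1)}_{sx}, v_s^{(1)}$ from \eqref{main.decay}--\eqref{main.v}, one sees that the slowest-decaying summand is $-\chi''(y)\, u^1_e(x,0)$, whose $\partial_x^k$ derivative produces exactly $x^{-k-1/2}$; every other summand loses at least a further factor of $x^{-1}$, yielding \eqref{n.forcing.1}.

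For \eqref{n.forcing.2}, I decompose $f^{(1)}$ via \eqref{forcing.pr.1}, \eqref{ru0.1}, \eqref{p1pa} into contributions from $R^{u,0}$, $R^{u,1}_E$, and $\sqrt{\epsilon}\,\partial_x P^{1,a}_p$. The summands $\sqrt{\epsilon}\,u^0_{pxx}$ and $\sqrt{\epsilon}\,u^0_{py}\int_0^y\!\int_{y'}^y v^1_{eYY}$ from $R^{u,0}$, the $\sqrt{\epsilon}\, v^0_p u^1_{eY}$ summand of $R^{u,1}_E$, and the whole $\sqrt{\epsilon}\,\partial_x P^{1,a}_p$ piece (for which I apply Minkowski in $L^2_y$ to the tail integral $\int_y^\infty$, and the substitution $y = \sqrt{x}\,z$ to convert the radial integration into an $L^2_y$-gain of at most $x^{3/4}$ over the pointwise bound on the integrand) all carry an explicit $\sqrt{\epsilon}$ and bound by $\sqrt{\epsilon}\,C(k,m)\,x^{-k-7/4}$, much better than needed. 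The three critical summands are $y\, v^1_{eY} u^0_{py}$ (from $R^{u,0}$) and $u^1_{ex} u^0_p$, $u^1_e u^0_{px}$ (from $R^{u,1}_E$): for all three, the Euler factor is placed in sup-norm uniformly in $Y$ (hence in $y$) via Proposition~\ref{thm.euler.1}, while the Prandtl factor is placed in $L^2_y$ with weight $z^m$ via Corollary~\ref{cp1}. The matching $L^2_y$-gains $\|z^m y\, u^0_{py}\|_{L^2_y} \le C\, x^{1/4}$, $\|z^m u^0_p\|_{L^2_y} \le C\, x^{1/4}$, and $\|z^m u^0_{px}\|_{L^2_y} \le C\, x^{-3/4}$ combine with the sup-bounds $|v^1_{eY}|, |u^1_{ex}| \le C\, x^{-3/2}$ and $|u^1_e| \le C\, x^{-1/2}$ to produce exactly $x^{-k-5/4}$ after Leibniz.

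The main obstacle is the simultaneous saturation of these three critical products: neither the Euler factor nor the Prandtl factor is small on its own in the relevant norms, and the rate $-5/4$ is the exact output of combining the uniform Euler trace with the $x^{1/4}$-gain for $u^0_p$. That $x^{1/4}$-gain is where Proposition~\ref{prop.ww2} and the explicit Gaussian front $e_\delta$ of Section~\ref{section.u0p} enter essentially---any loss of even a logarithm at this stage would destroy the claimed rate. The only other subtle point is the Minkowski argument for $P^{1,a}_p$; once one checks that every summand in its integrand pairs a Prandtl factor with rapid $z$-decay against another Prandtl factor or an Euler factor bounded uniformly in $Y$, the resulting estimate is routine.
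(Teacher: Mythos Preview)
Your proposal is correct and follows essentially the same approach as the paper: both arguments treat $\mathcal{J}$ term-by-term using the trace decay $|\partial_x^j u^1_e(x,0)|\lesssim x^{-j-1/2}$ from Proposition~\ref{thm.euler.1} together with the compact support of $\chi$, and both decompose $f^{(1)}$ into the $R^{u,0}$, $R^{u,1}_E$, and $\sqrt{\epsilon}\,P^{1,a}_{px}$ pieces, identifying the same three critical products $y\,v^1_{eY}u^0_{py}$, $u^1_{ex}u^0_p$, $u^1_e u^0_{px}$ and estimating each by placing the Euler factor in $L^\infty_y$ (Proposition~\ref{thm.euler.1}) and the Prandtl factor in $L^2_y$ (Corollary~\ref{cp1}). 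The only minor variation is in the auxiliary pressure term: you convert the tail integral via Minkowski followed by the self-similar substitution $y=\sqrt{x}\,z$ (yielding the $x^{3/4}$ loss), whereas the paper instead places the integrand in weighted $L^\infty_y$ with weight $\langle y\rangle^{3/2+\kappa}$, using that $\langle y\rangle^{-1/2-\kappa}\in L^2_y$; these are equivalent bookkeeping devices and produce the same subcritical rate.
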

\begin{proof}
The estimate for the $\mathcal{J}$ terms is direct, thanks to Proposition \ref{thm.euler.1}, estimate (\ref{euler.DECAY.main}). For $f^{(1)}$, in consultation with the definition (\ref{forcing.pr.1}), we start with the $R^{u,0}$, defined in (\ref{ru0.1}): 
\begin{align}
&\sqrt{\epsilon}||u^0_{pxx}||_{L^2_y} \lesssim \sqrt{\epsilon}  x^{-\frac{7}{4}}, \\
&||y u^0_{py} v^1_{eY}||_{L^2_y} \le ||x^{-\frac{1}{4}} y u^0_{py}||_{L^2_y} \sup_y \Big| v^1_{eY}\Big| x^{\frac{1}{4}} \lesssim  x^{-\frac{5}{4}}, \\
& \sqrt{\epsilon} ||u^0_{py} \int_0^y \int_y^{y'} v^1_{exx} dy'' dy' ||_{L^2_y} \le \sqrt{\epsilon} ||y^2 u^0_{py}||_{L^2_y} ||v^1_{exx}||_{L^\infty_y} \lesssim \sqrt{\epsilon} x^{-\frac{7}{4}}. 
\end{align}

Second, we control $R^{u,1}_E$ via: 
\begin{align} \nonumber
\epsilon^{-\frac{1}{2}}||R^{u,1}_E||_{L^2_y} &\le ||u^1_{ex}||_{L^\infty_y} ||u^0_p||_{L^2_y} + ||u^1_e||_{L^\infty_y} ||u^0_{px}||_{L^2_y} + \sqrt{\epsilon} ||v^0_p||_{L^2_y} ||u^1_{eY}||_{L^\infty_y} \\
&\lesssim x^{-\frac{5}{4}}.
\end{align}

Next, according to (\ref{p1pa}), we have: 
\begin{align}
||P^{1,a}_{px}||_{L^2_y} \le  || \langle y \rangle^{\frac{3}{2}+\kappa}  \partial_x \Big(-\Delta_\epsilon \bar{v}_s^{(0)} + \bar{u}_s^{(0)} \bar{v}_{sx}^{(0)} + \bar{v}_s^{(0)} \bar{v}_{sy}^{(0)} \Big) ||_{L^\infty_y} +  ||\langle y \rangle^{\frac{3}{2}+\kappa} R^{v,1}_{Ex}||_{L^\infty_y}.  
\end{align}

For each of these terms, the ability to trade $x^{\frac{3}{4}+\frac{\kappa}{2}}$ for $y^{\frac{3}{2}+\kappa}$ is in constant use: 
\begin{align} \nonumber
& ||\langle y \rangle^{\frac{3}{2}+\kappa} \Delta_\epsilon v^0_{px}||_{L^\infty_y} +  || \langle y \rangle^{\frac{3}{2}+\kappa} \partial_x \{ \bar{u}_s^{(0)} \bar{v}_{sx}^{(0)} \}||_{L^\infty_y} +  || \langle y \rangle^{\frac{3}{2}+\kappa} \partial_x\{ \bar{v}^{(0)}_s \bar{v}^{(0)}_{sy} \}||_{L^\infty_y} \\ \nonumber
&+  || \langle y \rangle^{\frac{3}{2}+\kappa} \partial_x\{u^0_p v^1_{ex}  \}  ||_{L^\infty_y} +  || \langle y \rangle^{\frac{3}{2}+\kappa} \partial_x\{ \sqrt{\epsilon} u^1_e v^0_{px} \}  ||_{L^\infty_y} +  || \langle y \rangle^{\frac{3}{2}+\kappa} \partial_x\{ \sqrt{\epsilon} v^0_p v^1_{eY}  \}  ||_{L^\infty_y} \\ &+  || \langle y \rangle^{\frac{3}{2}+\kappa} \partial_x\{ v^1_e v^0_{py} \}  ||_{L^\infty_y} \lesssim  x^{-\frac{3}{2}}.
\end{align}

Above, we are using the established estimates in (\ref{main.decay}), (\ref{main.v}) for the Prandtl-0 profiles, and (\ref{euler.DECAY.main}) for the Euler-1 profiles. The desired estimates are proven for $m = 0$. For general $m$ the estimates work in an identical manner, after noticing that powers of $z$ play no role when accompanied by $[u^0_p, v^0_p]$, which appear in every term above. 

\end{proof}

The above lemma relies crucially on Corollary \ref{cor.van} in order to apply the weight $z^m$. We now give the following energy estimate: 
\begin{lemma}\label{lemma.pr.1.e} Let $\sigma > 0 $ and fix any $X_1 > 1$. Then:
\begin{align} \nonumber
\sup_{x \in [1, X_1]} \int x^{-2\sigma} u^2 + \int_1^{X_1} \int x^{-1-2\sigma} u^2&+ \int_1^{X_1} \int x^{-2\sigma} u_y^2  \\ \label{pr.1.energy} 
& \lesssim \mathcal{O}(\delta; \sigma) \int_1^{X_1} \int v_y^2 x^{1-2\sigma} + C(\sigma). 
\end{align}
The constant above depends poorly on small $\sigma$. 
\end{lemma}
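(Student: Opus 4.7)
The plan is to test equation (\ref{pr.1.eqn.bar}) against the multiplier $M = u\cdot x^{-2\sigma}$ and integrate over $[1,X_1]\times\mathbb{R}_+$, exploiting the boundary conditions $u(x,0)=v(x,0)=0$ and $u(x,y)\to 0$ as $y\to\infty$. Integration by parts in $x$ of the convective term $(1+u^0_p)u_x\cdot u\,x^{-2\sigma}$ produces the positive $x=X_1$ trace $\tfrac{1}{2}(1+u^0_p)(X_1)\!\int u(X_1,y)^2 X_1^{-2\sigma}\,dy$, the positive bulk term $\sigma\int\!\int(1+u^0_p)u^2 x^{-1-2\sigma}$, an initial-data contribution at $x=1$ bounded by $C$ via (\ref{dd}), and an error $-\tfrac{1}{2}\int\!\int u^0_{px}u^2 x^{-2\sigma}$ absorbed using $|u^0_{px}|\lesssim\mathcal{O}(\delta)\,x^{-1}$ from (\ref{main.decay}). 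The maximum-principle bound (\ref{pr.pos}) converts $(1+u^0_p)$ into $(1-\delta)$ in the positive terms, yielding the first two terms on the left of (\ref{pr.1.energy}) (after taking $\sup$ over $X_1$ in the trace). Integration by parts in $y$ of $-u_{yy}\cdot u\,x^{-2\sigma}$ yields the dissipative third term $\int\!\int u_y^2 x^{-2\sigma}$.

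The remaining linear pieces of $\mathcal{P}$ are routine. The term $\int\!\int u^{(1)}_{sx}u^2 x^{-2\sigma}$ is handled by $|u^{(1)}_{sx}|\lesssim\mathcal{O}(\delta)\,x^{-1}$ from (\ref{main.decay}) and (\ref{euler.DECAY.main}), and absorbed into $\mathcal{O}(\delta)\int\!\int u^2 x^{-1-2\sigma}$. The transport term $\int v^{(1)}_s u_y\,u\,dy$ is integrated by parts in $y$ (all boundary contributions vanish) to produce $-\tfrac{1}{2}\int v^{(1)}_{sy}u^2\,dy$; using $v^0_{py}=-u^0_{px}$ together with (\ref{euler.DECAY.main}) for $\sqrt{\eps}v^1_{eY}$, one has $|v^{(1)}_{sy}|\lesssim\mathcal{O}(\delta)\,x^{-1}$, and this contribution is again absorbable.

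The principal difficulty, as anticipated by the discussion around (\ref{intro.cvc}), is the convective term $\int\!\int u^0_{py}\,v\,u\,x^{-2\sigma}$. Rewriting it as
\[
\int\!\int \left(\frac{u^0_{py}\,y^2}{x^{1/2}}\right)\cdot\frac{v}{y}\cdot\frac{u}{y}\cdot x^{1/2-2\sigma}\,dy\,dx,
\]
invoking Requirement (\ref{intro.s.s.absorb}) to control the first factor by $\mathcal{O}(\delta)$ in $L^\infty$, and then applying Hardy's inequality (legitimate since $u(x,0)=v(x,0)=0$) followed by Cauchy--Schwarz, this expression is bounded by $\mathcal{O}(\delta)\bigl(\|u_y x^{-\sigma}\|_{L^2_{xy}}^2 + \|v_y x^{1/2-\sigma}\|_{L^2_{xy}}^2\bigr)$. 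The first summand is absorbed into the dissipation from the first step, while the second is exactly the right-hand side of (\ref{pr.1.energy}).

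Finally, the forcing is controlled by Cauchy--Schwarz with the weight split $x^{-2\sigma}=x^{-\sigma-1/2}\cdot x^{-\sigma+1/2}$: one pairs a small multiple of $\|u\,x^{-\sigma-1/2}\|_{L^2_{xy}}^2$ (absorbed into the $\sigma$-coercivity term $\int\!\int u^2 x^{-1-2\sigma}$) against $\|f^{(1)}x^{-\sigma+1/2}\|_{L^2_{xy}}^2 \lesssim C(\sigma)$, which is finite by (\ref{n.forcing.2}) and convergence of $\int_1^\infty x^{-3/2-2\sigma}\,dx$. An analogous treatment of $\mathcal{J}$ uses its rapid $y$-decay from (\ref{n.forcing.1}), paired via Hardy against $\|u_y\|_{L^2_y}$. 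Collecting all estimates and choosing $\delta$ small relative to universal constants yields (\ref{pr.1.energy}); the degeneration of the constant as $\sigma\to 0$ is inherited from the $\sigma$-weighted coercivity in the first step. The core obstacle is the self-similar convective estimate, which dictates both the self-similar ansatz of Section \ref{section.u0p} and the smallness requirement on $\delta$.
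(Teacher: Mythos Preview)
Your proof is correct and follows essentially the same approach as the paper: multiply (\ref{pr.1.eqn.bar}) by $u\,x^{-2\sigma}$, integrate, and exploit the $\sigma$-coercivity, Hardy inequalities, and the forcing bounds (\ref{n.forcing.1})--(\ref{n.forcing.2}). The only minor tactical difference is in the convective term $\int\!\int u^0_{py}\,v\,u\,x^{-2\sigma}$: the paper extracts one factor of $y$ via $\|y\,u^0_{py}\|_{L^\infty}$, applies Hardy to $v/y$, and pairs with $\|u\,x^{-1/2-\sigma}\|_{L^2}$ (absorbed into the $\sigma$-weighted bulk term), whereas you extract $y^2 x^{-1/2}$, apply Hardy to both $u/y$ and $v/y$, and absorb $\|u_y x^{-\sigma}\|_{L^2}^2$ into the dissipation; both are valid given (\ref{main.decay}).
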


\begin{remark} The need for this $\sigma > 0$ is to avoid certain $x$-integrations being critical. We make the notational convention that we will not rename different values for $\sigma$ (for instance $2\sigma$), as we can always redefine $\sigma$ to be smaller. However, \textit{within} a single calculation (for instance the upcoming proof) we fix a $\sigma > 0$.
\end{remark}

\begin{proof}
Applying the multiplier $M = ux^{-2\sigma}$ to the system (\ref{pr.1.eqn.bar}) gives the following terms: 
\begin{equation}
\int \Big((1+u^0_p) \partial_x - \partial_{yy} \Big) u \cdot ux^{-2\sigma} \gtrsim  \partial_x  \int (1+u^0_p) x^{-2\sigma} u^2 + \int x^{-1-2\sigma} u^2 + \int x^{-2\sigma} u_y^2 - \int u^{0}_{px} u^2 x^{-2\sigma}. 
\end{equation}

The constant in the above estimate depends poorly on $\sigma$, as there is a factor of $\sigma$ accompanying the $\int x^{-1-2\sigma} u^2$ term. The final term  above then gets placed into the contributions from $\mathcal{P}$, to which we now turn (see the definition in (\ref{calP})): 
\begin{align} \n
\Big| \int \mathcal{P} \cdot u x^{-2\sigma} \Big| &\le ||x u^{(1)}_{sx}, y u^0_{py}, v^{(1)}_{sy} x||_{\infty} \Big( \int u^2 x^{-1-2\sigma} + \int v_y^2 x^{1-2\sigma} \Big) \\
& \le \mathcal{O}(\delta) \Big( \int u^2 x^{-1-2\sigma} + \int v_y^2 x^{1-2\sigma} \Big). 
\end{align}

Above, we have used the Prandtl-0 bounds in (\ref{main.decay}), which crucially provides the smallness of $\{u^0_{px}, u^0_{py}\}$ in terms of $\mathcal{O}(\delta)$. No smallness of Eulerian profiles is required due to the extra factor of $\sqrt{\eps}$. The key estimate which forces a loss of $\partial_x$ derivative is the following: 
\begin{align} \label{key.1}
\Big|\int u^0_{py} v \cdot u x^{-2\sigma} \Big| \le ||u^0_{py}y||_{L^\infty} \int \Big| \frac{v}{y} u x^{-2\sigma} \Big| \le \mathcal{O}(\delta) ||v_y x^{\frac{1}{2}-\sigma}||_{L^2_y} ||u x^{-\frac{1}{2}-\sigma}||_{L^2_y}. 
\end{align}

The structure of the key estimate above, (\ref{key.1}), is omnipresent in our analysis: we use the $y$-absorption of $u^0_{py}$ to produce a $v_y$ term via Hardy's inequality (which is valid as $v(x,0) = 0$). This then forces a loss of $x^{\frac{1}{2}-\sigma}$ in the decay, which must then be regained in the next lemma. Again, the estimate $||yu^0_{py}||_{L^\infty} \le \mathcal{O}(\delta)$ arises from (\ref{main.decay}). Next, we arrive at the forcing terms. First, via (\ref{n.forcing.1}):
\begin{align} \nonumber
\Big| \int \mathcal{J} \cdot u x^{-2\sigma} \Big| &\lesssim \int \langle y \rangle^{-N} x^{-\frac{1}{2}} |u|x^{-2\sigma} \lesssim  ||x^{-\frac{1}{2}-\sigma} \langle y \rangle^{-\frac{N}{2}}||_{L^2_y} ||x^{-\sigma} \frac{u}{\langle y \rangle^{\frac{N}{2}}}||_{L^2_y} \\
&\le \frac{1}{100,000} ||u_y x^{-\sigma}||_{L^2_y}^2 + C x^{-1-2\sigma}. 
\end{align}

Upon taking an integration in $dx$, the majorizing terms above are finite. Next, according to (\ref{n.forcing.2}), via Young's inequality:
\begin{align} \nonumber
\Big| \int f^{(1)}\cdot ux^{-2\sigma} \Big| &\le ||f^{(1)}x^{\frac{1}{2}-\sigma}||_{L^2_y} ||u x^{-\frac{1}{2}-\sigma}||_{L^2_y} \le C x^{-\frac{3}{2}} + \frac{1}{100,000}  \int u^2 x^{-1 -2\sigma} dy. 
\end{align}

Placing these estimates together: 
\begin{align} \nonumber
\partial_x \int x^{-2\sigma} u^2 &+ \int x^{-1-2\sigma} u^2 + \int x^{-2\sigma} u_y^2 \lesssim \mathcal{O}(\delta) \int u_x^2 x^{1-2\sigma} +  x^{-1-2\sigma} .
\end{align}

Integrating above from $x = 1$ to $x = X_1$ yields:
\begin{align} \nonumber
 \int X_1^{-2\sigma} u^2(X_1) dy + \int_1^{X_1} \int x^{-1-2\sigma} u^2 &+ \int_1^{X_1} \int x^{-2\sigma} u_y^2 \\  \label{non.sup}
&\lesssim \mathcal{O}(\delta) \int_1^{X_1} \int v_y^2 x^{1-2\sigma} + C.
\end{align}

Finally, we reason as follows: the second and third terms on the left-hand side above are positive, which then gives for this $X_1$: 
\begin{align}
\int X_1^{-2\sigma} u^2(X_1) dy \lesssim  \mathcal{O}(\delta) \int_1^{X_1} \int v_y^2 x^{1-2\sigma} + C. 
\end{align}

For any $X_2 \in [1,X_1]$, the same estimate holds, namely: 
\begin{align} 
\int X_2^{-2\sigma} u^2(X_2) dy &\lesssim  \mathcal{O}(\delta) \int_1^{X_2} \int v_y^2 x^{1-2\sigma} + C \lesssim \mathcal{O}(\delta) \int_1^{X_1} \int v_y^2 x^{1-2\sigma} + C.
\end{align}

Above, we have used the monotonicity of the right-hand side as $X_2 < X_1$. This allows us to replace in (\ref{non.sup}) the first term on the left-hand side with $\sup_{1 \le x \le X_1} \int x^{-2\sigma} u^2 dy$. This then gives the desired estimate in (\ref{pr.1.energy}). 

\end{proof}

We now recover the $v_y$ term on the right-hand side of (\ref{pr.1.energy}) via: 
\begin{lemma} \label{lemma.pr.1.p} Let $\sigma > 0 $, and fix any $X_1 > 1$. Then:
\begin{align} \nonumber
\sup_{1 \le x \le X_1} \int u_y^2 x^{1-2\sigma} + \int_1^{X_1} \int u_x^2 x^{1-2\sigma} &\lesssim C +\mathcal{O}(\delta) \int_1^{X_1} \int u^2 x^{-1-2\sigma} \\  \label{evolution.pos.1}
&+ \int_1^{X_1} \int u_y^2 x^{-2\sigma}. 
\end{align}
\end{lemma}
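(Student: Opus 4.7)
The approach is to apply the multiplier $M = u_x x^{1-2\sigma}$ to the homogenized equation \eqref{pr.1.eqn.bar}; this is designed precisely to generate the two positive quantities on the left-hand side of \eqref{evolution.pos.1}. The convection term will yield $(1+u^0_p) u_x^2 x^{1-2\sigma}$, bounded below by $(1-\delta) \int u_x^2 x^{1-2\sigma}$ via \eqref{pr.pos}. For the viscous term $-u_{yy} u_x x^{1-2\sigma}$, I integrate by parts in $y$; the boundary contribution vanishes since $u(x,0) = 0$ forces $u_x(x,0) = 0$. The resulting $\int u_y u_{xy} x^{1-2\sigma}$ I rewrite as $\tfrac{1}{2}\p_x(u_y^2 x^{1-2\sigma}) - \tfrac{1-2\sigma}{2} u_y^2 x^{-2\sigma}$, so that integrating in $x$ from $1$ to $X_1$ produces the endpoint quantity $\int u_y^2 X_1^{1-2\sigma}$ on the left and the $\int u_y^2 x^{-2\sigma}$ remainder on the right, modulo an initial-data contribution controlled by $U_1(y)$.

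The main obstacle is the convective term $\int u^0_{py} v \cdot u_x x^{1-2\sigma}$ appearing in $\mathcal{P}(u,v)$: any naive Cauchy--Schwarz loses a critical factor of $x^{1/2}$ and fails to close. The remedy is to use the divergence-free relation $u_x = -v_y$ to rewrite this term as $-\int u^0_{py} v v_y x^{1-2\sigma}$, then integrate by parts in $y$ (the boundary at $y = 0$ vanishes because $v(x,0) = 0$) to arrive at $\tfrac{1}{2}\int u^0_{pyy} v^2 x^{1-2\sigma}$. I then pull two powers of $y$ onto the profile via $\|y^2 u^0_{pyy}\|_{L^\infty} \lesssim \mathcal{O}(\delta)$, which is the instance of \eqref{main.decay} with $m = l = 2,\, k = 0$, and apply Hardy's inequality on $v$ (legitimate since $v(x,0) = 0$) to majorize by $\mathcal{O}(\delta) \int v_y^2 x^{1-2\sigma} = \mathcal{O}(\delta) \int u_x^2 x^{1-2\sigma}$, which is absorbed on the left for $\delta$ small. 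The crucial point is that \emph{two} powers of $y$ must be absorbed into the profile; sacrificing only one would leave a coefficient of order unity in front of $\int v_y^2 x^{1-2\sigma}$ and destroy the absorption.

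The remaining linearized terms $\int u^{(1)}_{sx} u \cdot u_x x^{1-2\sigma}$ and $\int v^{(1)}_s u_y \cdot u_x x^{1-2\sigma}$ are handled by Cauchy--Schwarz using $\|x u^{(1)}_{sx}\|_{L^\infty_y},\, \|x^{1/2} v_s^{(1)}\|_{L^\infty_y} \lesssim \mathcal{O}(\delta)$, which follow by combining \eqref{main.decay}, \eqref{main.v}, \eqref{euler.DECAY.main}, and invoking $\eps \ll \delta$. These contribute $\mathcal{O}(\delta)$ multiples of $\int u^2 x^{-1-2\sigma}$, $\int u_x^2 x^{1-2\sigma}$, and $\int u_y^2 x^{-2\sigma}$: the first appears as stated on the right-hand side, the second is absorbed on the left, and the third is dominated by the non-$\mathcal{O}(\delta)$ copy of $\int u_y^2 x^{-2\sigma}$ already present on the right.

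Finally, the forcing terms $\mathcal{J}$ and $f^{(1)}$ are dispatched by Cauchy--Schwarz using the pointwise and $L^2$ bounds \eqref{n.forcing.1}, \eqref{n.forcing.2}, producing an $x$-integrable majorant $\lesssim x^{-3/2-2\sigma}$ that delivers the universal constant $C$ upon $x$-integration. The supremum over $x \in [1, X_1]$ on the left-hand side of \eqref{evolution.pos.1} is recovered from the endpoint estimate at $x = X_1$ by the same monotonicity-in-$X_1$ argument used at the end of the proof of Lemma~\ref{lemma.pr.1.e}, since the right-hand side is manifestly nondecreasing in $X_1$.
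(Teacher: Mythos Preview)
Your overall strategy is correct and matches the paper's: multiply \eqref{pr.1.eqn.bar} by $u_x x^{1-2\sigma}$, extract the two positive quantities, and control $\mathcal{P}$ and the forcing. Your treatment of the convective term $\int u^0_{py} v\, u_x x^{1-2\sigma}$ via the rewriting $u_x = -v_y$ followed by integration by parts in $y$ is a correct variant; the paper instead bounds it directly by $\|y\,u^0_{py}\|_{L^\infty}\,\|v/y\|_{L^2_y}\,\|u_x x^{1/2-\sigma}\|_{L^2_y}$ and applies Hardy once. Both routes land on $\mathcal{O}(\delta)\|u_x x^{1/2-\sigma}\|_{L^2_y}^2$.

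There is, however, a genuine gap in your handling of $\mathcal{J}$. The bound \eqref{n.forcing.1} gives only $|\mathcal{J}|\lesssim \langle y\rangle^{-N}x^{-1/2}$, so $\|\mathcal{J}\|_{L^2_y}\lesssim x^{-1/2}$. Cauchy--Schwarz against $u_x x^{1-2\sigma}$ then yields, after Young's inequality, a remainder of order $x^{-2\sigma}$, which is \emph{not} integrable on $[1,\infty)$; your claimed majorant $x^{-3/2-2\sigma}$ is correct for $f^{(1)}$ (since $\|f^{(1)}\|_{L^2_y}\lesssim x^{-5/4}$) but fails for $\mathcal{J}$. The culprit is the term $\chi''(y)\,u^1_e(x,0)$ inside $\mathcal{J}$, which carries only the decay $|u^1_e(x,0)|\lesssim x^{-1/2}$.

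The paper resolves this by integrating by parts in $x$ on $\int_1^{X_1}\!\int \chi''(y)\,u^1_e(x,0)\,u_x\,x^{1-2\sigma}$, transferring the $x$-derivative onto $u^1_e(x,0)\,x^{1-2\sigma}$ (so that $|u^1_{ex}(x,0)|\lesssim x^{-3/2}$ enters) and producing a boundary term at $x=X_1$. That boundary term is then integrated by parts in $y$ to convert $\chi'' u$ into $\chi' u_y$, and the resulting $\int_{x=X_1} X_1^{1-2\sigma}u_y^2$ is absorbed into the left-hand side of \eqref{pre.J}. The remaining $\mathcal{J}$-terms are harmless because they either carry an explicit $u^1_{ex}(x,0)$ or a factor of $v_s^{(1)}$, each supplying the missing $x^{-1}$ or $x^{-1/2}$. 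You need to include this integration-by-parts step; without it the estimate does not close.
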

\begin{proof}
We now apply the multiplier $M = u_x x^{1-2\sigma}$ to the system in (\ref{pr.1.eqn.bar}). Doing so yields the following positive terms: 
\begin{align}
\int \Big( (1+u^0_p) \partial_x - \partial_{yy} \Big)u \cdot u_x x^{1-2\sigma} \gtrsim \partial_x \int u_y^2 x^{1-2\sigma} - \int u_y^2 x^{-2\sigma} + \int u_x^2 x^{1-2\sigma}. 
\end{align}

Note crucially that the middle term in the above estimate, $||u_y x^{-\sigma}||_{L^2_y}^2$, has been estimated in (\ref{pr.1.energy}) upon taking an $x$-integration. To close this sequence of estimates, therefore, it is crucial that this small parameter $\mathcal{O}(\delta)$ is attached to the $||v_y x^{\frac{1}{2}-\sigma}||_{L^2}^2$ term in (\ref{pr.1.energy}). Next, we come to the profile terms, $\mathcal{P}$ (see (\ref{calP}) for the definition): 
\begin{align} \n
\Big| \int \mathcal{P} \cdot u_x x^{1-2\sigma} \Big| &\le ||xu^{(1)}_{sx}, y u^0_{py}, v^{(1)}_s x^{\frac{1}{2}}||_{L^\infty} \Big( ||u x^{-\frac{1}{2}-\sigma}||_{L^2_y}^2 + ||u_x x^{\frac{1}{2}-\sigma}||_{L^2_y}^2 \Big) \\
& \le \mathcal{O}(\delta) \Big( ||u x^{-\frac{1}{2}-\sigma}||_{L^2_y}^2 + ||u_x x^{\frac{1}{2}-\sigma}||_{L^2_y}^2 \Big) .
\end{align}

We have used estimates (\ref{main.decay}), (\ref{main.v}), and (\ref{euler.DECAY.main}), which provide the smallness of $\mathcal{O}(\delta)$. Next, we come to $f^{(1)}$, for which we use the estimate in (\ref{n.forcing.2}) (with $k = 0$):
\begin{align}
\Big| \int f^{(1)} u_x x^{1-2\sigma} \Big| \le ||x^{\frac{1}{2}-\sigma}f^{(1)}||_{L^2_y} ||u_x x^{\frac{1}{2}-\sigma}||_{L^2_y} \le Cx^{-\frac{3}{2}} +  \frac{1}{100,000}||u_x x^{\frac{1}{2}-\sigma}||_{L^2_y}^2. 
\end{align}

Piecing the above estimates together gives: 
\begin{align} \nonumber
\partial_x \int u_y^2 x^{1-2\sigma} &+ \int u_x^2 x^{1-2\sigma}  \\ 
&\le \int u_y^2 x^{-2\sigma} + \mathcal{O}(\delta) \int u^2 x^{-1-2\sigma}  + Cx^{-1-\sigma} + \int \mathcal{J} \cdot u_x x^{1-2\sigma}.
\end{align}

Now, we take an integration from $x = 1$ to $x = X_1$: 
\begin{align} \nonumber
\int u_y^2(X_1) X_1^{1-2\sigma} &+ \int_1^{X_1} \int u_x^2 x^{1-2\sigma} \lesssim \int_1^{X_1} \int u_y^2 x^{-2\sigma} \\ \label{pre.J}  & + \mathcal{O}(\delta) \int_1^{X_1} \int u^2 x^{-1-2\sigma}  + C + \int_1^{X_1} \int \mathcal{J} \cdot u_x x^{1-2\sigma}.
\end{align}

The last step is to come to the terms in $\mathcal{J}$, from (\ref{calJ}). The most delicate term here requires successive integration by parts: 
\begin{align} \label{suc.1}
 &\int_1^{X_1} \int \chi''(y) x^{1-2\sigma} u^1_e(x,0) u_x \\ \nonumber
 &= -\int_1^{X_1} \int \chi''(y) u \partial_x\{ u^1_e(x,0) x^{1-2\sigma} \} - \int_{x=1}  u^1_e(1,0) u \chi''(y) dy \\ \label{suc.2} 
 & \hspace{40 mm} + \int_{x = X_1} X_1^{1-2\sigma} u^1_e(X_1,0) \chi''(y) u(X_1,y) dy \\ \nonumber
 & = -\int_1^{X_1} \int \chi''(y) u \partial_x\{ u^1_e(x,0) x^{1-2\sigma} \} + C\\ \label{suc.3}
 & \hspace{40 mm} - \int_{x=X_1} X_1^{1-2\sigma} u^1_e(X_1, 0)\chi'(y) u_y(X_1,y) dy \\ \label{suc.4}
 &\le \int_1^{X_1} \int \chi'(y) u_y \partial_x  \{u^1_e(x,0) x^{1-2\sigma} \} + C + \frac{1}{100,000} \int_{x=X_1} X_1^{1-2\sigma} u_y^2(X_1) \\ \label{suc.5}
 & \lesssim ||u_y x^{-\sigma}||_{L^2} ||\chi'(y) x^{-\frac{1}{2}-\sigma}||_{L^2} + C \le C + \frac{1}{100,000} ||u_y x^{-\sigma}||_{L^2}^2. 
\end{align}

Going from (\ref{suc.3}) to (\ref{suc.4}), we have used $|u^1_e(X_1,0)| \lesssim X_1^{-\frac{1}{2}}$, according to (\ref{euler.DECAY.main}). Going from (\ref{suc.4}) to (\ref{suc.5}), we have used $|u^1_{ex}(x,0)| \lesssim x^{\frac{3}{2}}$, also according to (\ref{euler.DECAY.main}). Finally, we have absorbed the boundary contribution at $x = X_1$, $\int u_y^2 X_1^{1-2\sigma}$ into the left-hand side of (\ref{pre.J}). A consultation with (\ref{calJ}) shows that we can perform a similar calculation with the remaining terms in $\mathcal{J}$ because these terms either have one extra $x$-derivative, or are accompanied by $v^{(1)}_s$, which contributes additional decay of $x^{-\frac{1}{2}}$. This then gives: 
\begin{align} \nonumber
\int u_y^2&(X_1) X_1^{1-2\sigma} + \int_1^{X_1} \int u_x^2 x^{1-2\sigma} \lesssim \int_1^{X_1} \int u_y^2 x^{-2\sigma} + \mathcal{O}(\delta) \int_1^{X_1} \int u^2 x^{-1-2\sigma} +  C.
\end{align}

Using a similar line of reasoning as in the Lemma \ref{lemma.pr.1.e}, we can replace the $\int u_y^2(X_1) X_1^{1-2\sigma}$ with the supremum over all $x \in [1,X_1]$, thereby yielding the desired result.

\end{proof}

Consolidating the results of the previous two lemmas and applying contraction mapping: 
\begin{corollary} For $\delta, \epsilon$ sufficiently small relative to $\sigma$, a solution to the Prandtl system in (\ref{pr.1.eqn.bar}) satisfies the following a-priori estimate in the space $P$:
\begin{align} \label{contraction.X0}
||u||_{P(X_1, \sigma)}^2 \lesssim C(\sigma). 
\end{align}
For $\delta, \epsilon$ sufficiently small, there exists a unique solution to (\ref{pr.1.eqn.bar}), satisfying $||u||_{P(X_1)} \le C(\sigma)$. The constant above in (\ref{contraction.X0}) is independent of $X_1$, and so we can immediately  send $X_1 \rightarrow \infty$, thereby yielding a global solution on $x \in [1,\infty)$ satisfying: $||u||_{P} \le C(\sigma)$.
\end{corollary}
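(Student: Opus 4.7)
The proposal is to close the combined estimate by adding the energy and positivity lemmas with appropriate weights, then to invoke a standard iteration scheme for existence. More precisely, recall that by the divergence-free condition one has $v_y = -u_x$, so the $\int\int v_y^2 x^{1-2\sigma}$ appearing on the right-hand side of (\ref{pr.1.energy}) is exactly $\int\int u_x^2 x^{1-2\sigma}$. Adding (\ref{pr.1.energy}) to (\ref{evolution.pos.1}) multiplied by a suitable constant, and using $\mathcal{O}(\delta) \ll 1$ to absorb cross terms on the left, yields
\begin{align*}
\|u\|_{P(X_1,\sigma)}^2 &\le C(\sigma) + \mathcal{O}(\delta;\sigma) \|u\|_{P(X_1,\sigma)}^2,
\end{align*}
which collapses, for $\delta$ small relative to $\sigma$, to the desired bound $\|u\|_{P(X_1,\sigma)}^2 \lesssim C(\sigma)$, uniform in $X_1$. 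This is the a-priori piece; sending $X_1 \to \infty$ then gives the global bound.

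For existence on $[1,X_1]$, I would set up a linear iteration that decouples the troublesome term $u^0_{py}(v-v(x,0))$: given $u^{(n)}$ with its associated $v^{(n)} = -\int_0^y u^{(n)}_x$, define $u^{(n+1)}$ as the solution of the parabolic problem obtained by freezing $v^{(n)}$ in (\ref{pr.1.eqn.bar}). Each linear problem is parabolic in $x$ with the coefficient $(1+u^0_p) \geq 1-\delta>0$ by (\ref{pr.pos}), and standard parabolic theory (e.g.\ Galerkin in $y$ together with compatibility of initial data at $x=1$, Remark \ref{hoc}) furnishes existence on $[1,X_1]$. Applying the exact same multiplier sequence $M = u^{(n+1)} x^{-2\sigma}$ and $M = u^{(n+1)}_x x^{1-2\sigma}$ to the linearized problem yields the same estimate as above for $u^{(n+1)}$ in terms of a small multiple of $\|u^{(n)}\|_{P(X_1,\sigma)}^2$ plus $C(\sigma)$; hence the sequence stays in a fixed ball of $P(X_1,\sigma)$.

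To extract a limit, I would take differences $w^{(n+1)} = u^{(n+1)}-u^{(n)}$, which satisfy the same linear equation but with zero forcing, zero initial data, and right-hand side depending linearly on $w^{(n)}$ via $u^0_{py}(V^{(n)}-V^{(n)}(x,0))$, where $V^{(n)} = -\int_0^y w^{(n)}_x$. Repeating the two multiplier estimates verbatim on $w^{(n+1)}$, and using the Hardy-type absorption in (\ref{key.1}) now with the $\mathcal{O}(\delta)$ constant, gives
\begin{align*}
\|w^{(n+1)}\|_{P(X_1,\sigma)} \le \mathcal{O}(\delta;\sigma)\,\|w^{(n)}\|_{P(X_1,\sigma)},
\end{align*}
so that for $\delta$ sufficiently small the map is a contraction, yielding a unique fixed point in $P(X_1,\sigma)$. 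Uniqueness of the original nonlinear (here, linear in $u$) problem in $P$ follows from this same difference estimate.

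The main obstacle is ensuring the contraction constant remains genuinely $\mathcal{O}(\delta)$ after combining the two multiplier estimates; this is exactly why the proof of Lemma \ref{lemma.pr.1.e} was careful to place the small factor on the $\|v_y x^{1/2-\sigma}\|_{L^2}^2$ term in (\ref{key.1}) through the bound $\|y u^0_{py}\|_{L^\infty} \le \mathcal{O}(\delta)$ from (\ref{main.decay}). Once this structural smallness is in place, uniformity in $X_1$ is automatic from the a-priori bound, and the passage $X_1 \to \infty$ produces the claimed global solution satisfying $\|u\|_P \le C(\sigma)$.
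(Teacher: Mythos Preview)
Your proposal is correct and follows exactly the route the paper indicates: the paper's own proof is the single line ``Consolidating the results of the previous two lemmas and applying contraction mapping,'' and you have faithfully expanded both halves of that sentence. Your observation that $v_y = -u_x$ identifies the $\int\int v_y^2 x^{1-2\sigma}$ on the right of (\ref{pr.1.energy}) with the $\int\int u_x^2 x^{1-2\sigma}$ on the left of (\ref{evolution.pos.1}) is the key closure mechanism, and your handling of the non-small term $\int\int u_y^2 x^{-2\sigma}$ on the right of (\ref{evolution.pos.1}) by weighting the two estimates appropriately is the correct maneuver. The iteration scheme freezing $v^{(n)}$ in the non-local term $u^0_{py}v$ is a natural reading of ``contraction mapping'' for this linear but coupled system, and your difference estimate correctly inherits the $\mathcal{O}(\delta)$ smallness from (\ref{key.1}) via $\|y u^0_{py}\|_{L^\infty} \le \mathcal{O}(\delta)$.
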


It is possible to successively differentiate the system in $\partial_x$, and re-apply the previous estimates, noting that the added $x-$derivative adds a factor of $x^{-1}$ to each term above, enabling us to enhance the multiplier to $[x^{k-2\sigma}\partial_x^k u, x^{1+k-2\sigma}\partial_x^{k+1}u]$. This is the reason for the differentiated version of the Prandtl-norm in (\ref{norm.Pk}). To do so, we simply need: 
\begin{lemma}[Initial Conditions] For each $k \ge 0$, the initial data $\partial_x^k u(1,y)$ is of order $\delta$ and decays rapidly in $y$. 
\end{lemma}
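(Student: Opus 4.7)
The proof proceeds by induction on $k$. For $k=0$, the homogenization in (\ref{pr.1.hom}) gives
\begin{equation*}
u(1,y)=U_1(y)+\chi(y)u^1_e(1,0),
\end{equation*}
and the rapid $y$-decay of $U_1$ follows from (\ref{dd}), with its $\mathcal{O}(\delta)$ smallness imposed on the in-flow data in the spirit of (\ref{intro.BC0.3}). The second summand is rapidly decaying because $\chi$ is, and it is of order $\delta$ because $|u^1_e(1,0)|\le \mathcal{O}(\delta)$ by Proposition \ref{thm.euler.1}.

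For the inductive step, we solve (\ref{pr.1.eqn.bar}) algebraically for $u_x$,
\begin{equation*}
u_x=(1+u^0_p)^{-1}\bigl[u_{yy}-u^{(1)}_{sx}u-u^0_{py}v-v^{(1)}_s u_y+f^{(1)}+\mathcal{J}\bigr],
\end{equation*}
which is legitimate since $1+u^0_p\ge 1-\delta$ by (\ref{pr.pos}). Differentiating this identity $k-1$ times in $x$ via Leibniz and evaluating at $x=1$, we express $\partial_x^k u(1,y)$ as a finite combination of $\partial_y^l\partial_x^j u(1,y)$ with $j<k$, together with products of profile coefficients and the forcings $f^{(1)}, \mathcal{J}$ (and their $x$-derivatives) restricted to $x=1$. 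The inductive hypothesis, combined with the fact that $\partial_y$ commutes with evaluation at $x=1$, supplies the first class of quantities with the claimed properties.

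For the profile coefficients, Corollaries \ref{cp1}--\ref{cp2} and Proposition \ref{thm.euler.1} yield rapid $y$-decay together with $\mathcal{O}(\delta)$ smallness for all $x$-derivatives of $u^0_p, v^0_p$ at $x=1$, and analogous bounds for $u^1_e, v^1_e$. For $f^{(1)}$, inspection of (\ref{forcing.pr.1}), (\ref{ru0.1}) and (\ref{Ru1E}) shows that every summand contains at least one Prandtl-0 derivative factor, which simultaneously contributes the $\mathcal{O}(\delta)$ smallness and the rapid $y$-decay via (\ref{main.decay.B})--(\ref{main.decay}); $x$-differentiation only redistributes derivatives among these factors while preserving both properties. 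Every term in $\mathcal{J}$ in (\ref{calJ}) carries either one of $\chi, \chi', \chi''$, or the pair $(I_\chi(y), u^0_{py})$, giving rapid $y$-decay, and its Eulerian boundary-trace coefficient is $\mathcal{O}(\delta)$.

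The one technical point is that the formal evaluation of $\partial_x^k u$ at the corner $(1,0)$ must be consistent with the homogeneous boundary condition $u(x,0)=0$. This is precisely what the higher-order compatibility conditions at $(x,y)=(1,0)$, alluded to in Remarks \ref{hoc} and \ref{hoc2}, are designed to guarantee, and they are imposed on the prescribed profile $U_1$ as part of the in-flow data. Granting them, the induction closes for every $k\ge 0$, and the main obstacle is purely bookkeeping rather than analytic.
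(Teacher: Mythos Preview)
Your approach mirrors the paper's: both use the evolution equation at $x=1$ to express $x$-derivatives of the in-flow data in terms of known quantities. However, there is a gap in your inductive step. When you differentiate the expression for $u_x$ and evaluate at $x=1$, the term $u^0_{py}\,v$ in $\mathcal{P}$ does \emph{not} reduce to quantities involving only $\partial_y^l\partial_x^j u(1,y)$ with $j<k$. From $v_y=-u_x$ and $v(x,0)=0$ one has $\partial_x^{k-1}v(1,y)=-\int_0^y\partial_x^k u(1,y')\,dy'$, which involves $\partial_x^k u$ itself. Thus the right-hand side is self-referential, and your induction as stated does not close.

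The fix, which the paper implements, is an absorption argument: the self-referential contribution is multiplied by $u^0_{py}$, which is $\mathcal{O}(\delta)$ with rapid decay by (\ref{main.decay}). Using the Hardy-type bound $\bigl|y^{-1}\int_0^y\partial_x^k u(1,y')\,dy'\bigr|\le\|\partial_x^k u(1,\cdot)\|_{L^\infty}$ together with $\|\langle y\rangle^m y\,u^0_{py}(1,\cdot)\|_{L^\infty}=\mathcal{O}(\delta)$, one obtains an inequality of the form
\[
\|\langle y\rangle^m\partial_x^k u(1,\cdot)\|_{L^\infty}\le C+\mathcal{O}(\delta)\,\|\langle y\rangle^m\partial_x^k u(1,\cdot)\|_{L^\infty},
\]
and the last term absorbs for $\delta$ small. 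Once this step is inserted, your argument goes through.
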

\begin{proof}
This follows from using the equation (\ref{pr.eqn.1}) to write: 
\begin{align} \nonumber
u^1_{px}(1,y) &= U_{1yy}(y) + u^{(1)}_{sx}(1,y) U_1(y) + y u^0_{py}(1,y) \frac{v^1_p(1,y) - v^1_p(1,0)}{y}  \\ \label{icup1}&+ v_s^{(1)}(1,y) U_{1y}(y) + f^{(1)}(1,y).  
\end{align}

We may now multiply by $(1+y)^n$, use the inequality $||\frac{v(1,y)}{y}||_{L^\infty} \lesssim ||v_y(1,y)||_{L^\infty}$ for functions $v$ satisfying $v(x,0) = 0$ (here we take $v = v^1_p(1,y) - v^1_p(1,0)$), and use $||\langle y \rangle^n U_1(y) u^0_{px}(1,y)||_{L^\infty} \le \mathcal{O}(\delta) ||u^0_{px}(1,y)||_{L^\infty}$, to obtain:  $||\langle y \rangle^n u^1_{px}(1,y)||_{L^\infty_y} \le C$. It is clear that the same procedure may be applied to higher order $x$-derivatives. 

\end{proof}

\begin{remark} \label{hoc2}[Higher Order Compatibility] In order to apply the procedure described, we require high-order compatibility condition such that the initial data of $u^1_{px}(1,0) = 0$, thereby honoring the boundary condition. These condition can be ascertained inductively from (\ref{icup1}). For instance: 
\begin{align}
0 =  u^1_{px}(1,0) = U_{1yy}(0) + u^{(1)}_{sx}(1,0)U_1(0) + v_s^{(1)} U_{1y}(0) + f^{(1)}(1,0). 
\end{align}

We suppose these compatibility conditions for large $k$. 
\end{remark}

Repeating the previous set of estimates after applying the self-similar weight $z^M$, and upon applying $\p_x^k$ to the system, we arrive at the following:
\begin{lemma} Given any $\sigma > 0$, let $\delta, \epsilon$ be sufficiently small relative to $\sigma$. Consider the system given in (\ref{pr.eqn.1}), together with the boundary conditions (\ref{pr.1.BC}). Let all derivatives of the prescribed data $U_1(y)$ be exponentially decaying in its argument. Then there exists a unique, global in $x$ solution $[u_p, v_p]$, satisfying: 
\begin{equation} \label{pr.1.close.1}
||z^M \partial_x^k u||_{P_k(\sigma)} \le C(M, k, \sigma). 
\end{equation}
\end{lemma}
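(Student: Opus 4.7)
The plan is to proceed by an inductive bootstrap on the derivative order $k$ and the weight exponent $M$, reusing the coupled energy/positivity mechanism of Lemmas \ref{lemma.pr.1.e}--\ref{lemma.pr.1.p}. At stage $(k,M)$, assume the estimate has been established for all $(k',M')$ with $k' < k$, or $k' = k$ and $M' < M$. Apply $\partial_x^k$ to the homogenized system (\ref{pr.1.eqn.bar}) to obtain
\begin{align*}
(1+u^0_p)\partial_x u_k - \partial_{yy} u_k + \mathcal{P}(u_k, v_k) = F_k,
\end{align*}
where $u_k := \partial_x^k u$, $v_k := \partial_x^k v$, and $F_k$ collects $\partial_x^k f^{(1)}$, $\partial_x^k \mathcal{J}$, and commutator terms arising from distributing $\partial_x^k$ across the coefficients $(1+u^0_p), u^{(1)}_{sx}, u^0_{py}, v^{(1)}_s$. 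By the profile bounds (\ref{main.decay})--(\ref{main.v}) and (\ref{euler.DECAY.main}), each $\partial_x$ landing on a profile coefficient produces an extra factor of $x^{-1}$, so the commutators are lower-order relative to the inductive scale, and the analogues of (\ref{n.forcing.1})--(\ref{n.forcing.2}) hold with $k$ additional powers of $x^{-1}$.

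For the energy estimate I test against the multiplier $z^{2M} x^{2k-2\sigma} u_k$, reproducing Lemma \ref{lemma.pr.1.e} at the $k$-th level. The new terms arise from $\partial_x z = -z/(2x)$ and $\partial_y z = 1/\sqrt{x}$: the former carries a weaker weight $z^{2M-2}$ and is absorbed by the inductive hypothesis at level $M-1$; the latter produces a cross term $\int u_k u_{k,y}\, z^{2M-1} x^{k-\sigma-1/2}$, which is handled by Cauchy--Schwarz into the diffusive term and a Hardy term of weight $M - \tfrac{1}{2}$. The critical coupling $\int u^0_{py} v_k u_k\, z^{2M} x^{2k-2\sigma}$ is absorbed exactly as in (\ref{key.1}): the bound $\|y u^0_{py} z^{2M}\|_{L^\infty} \le \mathcal{O}(\delta; M)$, which is the $z$-weighted version of (\ref{main.decay}), converts this term into $\mathcal{O}(\delta)\, \|v_{k,y} z^M x^{k+\frac{1}{2}-\sigma}\|_{L^2}\, \|u_k z^M x^{k-\frac{1}{2}-\sigma}\|_{L^2}$. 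For the positivity estimate I then test against $z^{2M} x^{2k+1-2\sigma} \partial_x u_k$, mirroring Lemma \ref{lemma.pr.1.p}; the boundary contribution at $x = X_1$ is absorbed by the supremum--monotonicity device used there, and the forcing $\mathcal{J}_k$ is controlled by the same integration-by-parts argument as in (\ref{suc.1})--(\ref{suc.5}), with the improved decay of $\partial_x^{k+1} u^1_e(x,0)$ from Proposition \ref{thm.euler.1}.

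\emph{Main obstacle.} The delicate point is that the two cross terms $\int u^0_{py} v_k u_k$ (energy level) and the $v_s^{(1)} u_{k,y}$ contribution in $\int \mathcal{P}(u_k,v_k)\, \partial_x u_k\, x^{2k+1-2\sigma}$ (positivity level) are $x$-critical and can only be closed via the small coefficient $\mathcal{O}(\delta)$. The weighted estimates (\ref{main.decay}) and (\ref{euler.DECAY.main}) supply this smallness precisely when the weight $z^{2M}$ is incorporated into the coefficient, because the Prandtl profiles decay rapidly in $z$, not merely in $y$; this is why the $z$-weighted form of the profile bounds, rather than the unweighted form, is essential. A secondary obstacle is preserving the no-slip condition under differentiation: the higher-order compatibility assumptions of Remark \ref{hoc2} ensure $\partial_x^j u(1,0) = 0$ for $0 \le j \le k$, and the exponential decay of $U_1$ guarantees that the initial integrals $\int z^{2M} |\partial_x^k u(1,y)|^2 \, dy$ are finite. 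Existence of $u_k$ in the weighted $P_k$-space then follows from the same contraction/approximation scheme as the $k=0$ corollary, uniformly in $X_1$, and sending $X_1 \to \infty$ yields (\ref{pr.1.close.1}); uniqueness is immediate by linearity and the already-established $k = 0$ estimate.
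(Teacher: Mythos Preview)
Your proposal is correct and follows the same approach as the paper, which gives essentially no proof beyond the sentence ``Repeating the previous set of estimates after applying the self-similar weight $z^M$, and upon applying $\partial_x^k$ to the system.'' Your elaboration of the inductive bootstrap in $(k,M)$, the handling of the $\partial_x z$ and $\partial_y z$ cross terms via the lower-weight inductive hypothesis, and the reliance on the $z$-weighted profile bounds to retain the $\mathcal{O}(\delta)$ smallness in the critical convection term are exactly what the paper intends but leaves implicit.
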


It is now our task to extract similar estimates for the profiles $u^1_p, v^1_p$ from (\ref{pr.1.close.1}). First, 
\begin{lemma} For any $m,k \ge 0, \sigma > 0$, 
\begin{align} \label{korr}
\sup_{x \ge 1}  \int z^{2m} |\partial_x^k u^1_p|^2 x^{2k-2\sigma} \le C(M, m, k, \sigma). 
\end{align}
\end{lemma}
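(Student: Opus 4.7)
The plan is to leverage the homogenization $u^1_p = u - \chi(y)\, u^1_e(x,0)$ from (\ref{pr.1.hom}) to transfer the already-established bound (\ref{pr.1.close.1}) on the homogenized unknown $u$ directly to the Prandtl profile $u^1_p$. Differentiating and applying the triangle inequality gives
\[
|\partial_x^k u^1_p|^2 \le 2|\partial_x^k u|^2 + 2\chi(y)^2 |\partial_x^k u^1_e(x,0)|^2,
\]
so it suffices to control the bulk contribution from $\partial_x^k u$ and the boundary-correction contribution from $\chi(y)\partial_x^k u^1_e(x,0)$ separately.

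The bulk term $\int z^{2m} |\partial_x^k u|^2 x^{2k-2\sigma} \, \ud y$ is immediately controlled by the sup-in-$x$ component of $\|z^m \partial_x^k u\|_{P_k(\sigma)}^2$, and is therefore bounded by (\ref{pr.1.close.1}) with $M=m$. For the boundary correction, I would use the Cauchy--Riemann structure together with Proposition \ref{thm.euler.1} and estimate (\ref{euler.DECAY.main}) to obtain $|\partial_x^k u^1_e(x,0)| \lesssim x^{-k-\frac{1}{2}}$ for every $k \ge 0$ (the $k \ge 1$ case follows from $\partial_x^k u^1_e = -\partial_x^{k-1} v^1_{eY}$, while $k=0$ is immediate from the $\mathcal{O}(\delta)$ component of (\ref{euler.DECAY.main})). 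Hence $|\partial_x^k u^1_e(x,0)|^2 x^{2k-2\sigma} \lesssim x^{-1-2\sigma}$, which is uniformly bounded on $x \ge 1$.

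It remains to evaluate the $y$-integral in the boundary piece. Rewriting via $z = y/\sqrt{x}$ yields
\[
\int z^{2m} \chi(y)^2 \, \ud y = x^{-m} \int y^{2m} \chi(y)^2 \, \ud y,
\]
and the remaining integral is finite for every $m$ since $\chi$ is a smooth cutoff with rapid decay in $y$. Combining, the boundary contribution is bounded by a constant multiple of $x^{-m-1-2\sigma}$, which is uniformly controlled on $x \ge 1$. Taking the supremum in $x$ finishes the argument. There is no genuine obstacle here: the lemma is essentially a bookkeeping step combining (\ref{pr.1.close.1}), the Euler trace decay, and the integrability of $\chi$ against any polynomial weight; the constant $C(M,m,k,\sigma)$ simply absorbs the norm constant from the bulk piece together with $\int y^{2m} \chi(y)^2 \, \ud y$ from the boundary piece.
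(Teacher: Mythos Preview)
Your proposal is correct and follows exactly the same approach as the paper: write $u^1_p = u - \chi(y)\,u^1_e(x,0)$, control the bulk piece by the $P_k(\sigma)$ bound (\ref{pr.1.close.1}), and control the boundary correction using $|\partial_x^k u^1_e(x,0)| \lesssim x^{-k-\frac{1}{2}}$. The paper's proof is a single sentence stating precisely this decomposition; you have simply filled in the routine details.
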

\begin{proof}
This follows by writing $u^1_p = u - \chi(y) u^1_e(x,0)$, and using $\Big| \partial_x^k  u^1_e(x,0) \Big| \lesssim x^{-k-\frac{1}{2}}$.
\end{proof}

Next, we may give the following uniform decay estimate for $v^1_p$: 
\begin{corollary}[Uniform Estimates for $v^1_p$] For any $m \ge 0$, 
\begin{equation} \label{pr.v.unif.1}
||z^m \p_x^k v^1_p||_{L^\infty_y} \lesssim x^{-k-\frac{3}{4}+\sigma} C(\sigma, m, k).
\end{equation}
\end{corollary}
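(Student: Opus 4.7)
The plan is to obtain the $L^\infty_y$ bound by starting from the integral representation of $v^1_p$ from (\ref{emphv1p}) and then inserting the weighted $L^2_y$ control of $\partial_x^{k+1} u^1_p$ provided by (\ref{korr}). Concretely, from (\ref{emphv1p}) and the rapid decay of $u^1_p$ in $y$ (which justifies differentiating under the integral sign), we may write
\begin{equation*}
\partial_x^k v^1_p(x,y) = \int_y^\infty \partial_x^{k+1} u^1_p(x,y') \, \mathrm{d}y'.
\end{equation*}
Combined with the Cauchy--Schwarz inequality applied with a self-similar weight $\langle z' \rangle^{\pm M}$ (where $z' = y'/\sqrt{x}$ and $M$ is to be chosen large depending on $m$), this yields
\begin{equation*}
|\partial_x^k v^1_p(x,y)|^2 \le \Big(\int_y^\infty \langle z' \rangle^{-2M} \, \mathrm{d}y'\Big) \cdot \Big(\int_y^\infty \langle z'\rangle^{2M} |\partial_x^{k+1} u^1_p|^2 \,\mathrm{d}y' \Big).
\end{equation*}

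The second factor above is directly controlled by (\ref{korr}) applied at level $k+1$ with weight $M$, giving a bound of the form $C(M,k,\sigma) \, x^{-2(k+1)+2\sigma}$. For the first factor, the change of variable $y' = \sqrt{x}\, z'$ gives $\int_y^\infty \langle z'\rangle^{-2M} \, \mathrm{d}y' = \sqrt{x} \int_{y/\sqrt{x}}^\infty \langle \zeta\rangle^{-2M}\, \mathrm{d}\zeta$, which for $M$ chosen with $M > m + 1$ is majorized by $\sqrt{x}\,\langle z\rangle^{-2M+1}$ where $z = y/\sqrt{x}$. Multiplying and taking square roots we arrive at
\begin{equation*}
|\partial_x^k v^1_p(x,y)| \lesssim C(M,k,\sigma)\, x^{1/4}\, \langle z\rangle^{-M + 1/2}\, x^{-(k+1)+\sigma} = C(M,k,\sigma) \, \langle z \rangle^{-M+1/2}\, x^{-3/4 - k + \sigma}.
\end{equation*}
Multiplying by $z^m$ and choosing $M$ so large that $m - M + 1/2 \le 0$ absorbs the weight uniformly in $y$, yielding the claimed inequality.

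The same plan works for all $k,m \ge 0$; the main routine check is merely that the integral $\int_y^\infty \langle z'\rangle^{-2M}\,\mathrm{d}y'$ can be bounded by $\sqrt{x}\langle z \rangle^{-2M+1}$ uniformly in both regimes $z \lesssim 1$ and $z \gtrsim 1$. In the small-$z$ regime this is simply $O(\sqrt{x})$ (matching $\langle z \rangle^{-2M+1} \asymp 1$), while for $z \gtrsim 1$ it follows from a direct computation of $\int_\zeta^\infty \zeta'^{-2M}\,\mathrm{d}\zeta'$. The anticipated obstacle is purely bookkeeping: ensuring that the weight $z^m$ is \emph{exactly} absorbed by the decay of the kernel $\langle z'\rangle^{-M+1/2}$ in the large-$y$ regime, which dictates the choice $M > m + 1/2$. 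No smallness parameter enters, consistent with the constant $C(\sigma,m,k)$ (rather than $\mathcal{O}(\delta)$) on the right-hand side. This matches the heuristic that $v^1_p$ inherits a half-power better decay than $v^0_p$ because the boundary trace $-v^2_e(x,0)$ itself already decays as $x \to \infty$, in contrast to the fixed boundary mismatch $-\delta$ driving $v^0_p$.
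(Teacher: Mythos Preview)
Your proof is correct but takes a genuinely different route from the paper. The paper argues via the $H^1_y \hookrightarrow L^\infty_y$ embedding, writing $|v^1_p|^2 \lesssim \int_y^\infty |v^1_p v^1_{py}|$, splitting with Cauchy--Schwarz as $\|v^1_p/y^{1/2-\kappa}\|_{L^2_y}\|y^{1/2-\kappa} v^1_{py}\|_{L^2_y}$, and then invoking a Hardy inequality (using $v^1_p \to 0$ as $y\to\infty$) to convert the first factor back to $\|y^{1/2+\kappa}v^1_{py}\|_{L^2_y}$; the $x^{-3/4+\sigma}$ rate then drops out from $\|z^M v^1_{py}\|_{L^2_y} = \|z^M u^1_{px}\|_{L^2_y} \lesssim x^{-1+\sigma}$. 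You instead bypass the Hardy step entirely by working directly from the integral formula $\partial_x^k v^1_p = \int_y^\infty \partial_x^{k+1} u^1_p$ and isolating the extra $x^{1/4}$ explicitly via the change of variable $\mathrm{d}y' = \sqrt{x}\,\mathrm{d}\zeta$ in the kernel $\int_y^\infty \langle z'\rangle^{-2M}\,\mathrm{d}y'$. Both arguments ultimately rest on the same input, namely the weighted $L^2_y$ control of $\partial_x^{k+1} u^1_p$ from (\ref{korr}). Your route is slightly more elementary (no Hardy needed) and handles the $z^m$ weight in a single stroke, whereas the paper defers the weighted case to ``works analogously''; the paper's route, on the other hand, is consistent with the Hardy-based machinery used throughout the rest of the analysis.
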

\begin{proof}
First, according to (\ref{emphv1p}), we have the rapid decay $v^1_p \rightarrow 0$ as $y \rightarrow \infty$. By the divergence-free condition, and the trace inequality,  and for any small $\kappa > 0$, 
\begin{align} \label{initial.integration.vp}
\Big|v^1_p\Big|^2 & \lesssim \Big| \int_y^\infty v^1_pv_{py}\Big| \le  \int_0^\infty \Big| \frac{v^1_p}{y^{\frac{1}{2}-\kappa}} y^{\frac{1}{2}-\kappa}v^1_{py} \Big| \le ||\frac{v^1_p}{y^{\frac{1}{2}-\kappa}}||_{L^2_y}||y^{\frac{1}{2}-\kappa} v^1_{py}||_{L^2_y}\\ 
&\le ||y^{\frac{1}{2}+\kappa} v^1_{py}||_{L^2_y} ||y^{\frac{1}{2}-\kappa} v^1_{py}||_{L^2_y} = ||x^{\frac{1}{4}+\kappa} z^{\frac{1}{2}+\kappa} v^1_{py}||_{L^2_y} ||x^{\frac{1}{4}-\kappa} z^{\frac{1}{2}-\kappa} v^1_{py}||_{L^2_y} \\
& \le x^{\frac{1}{2}} x^{-1+\sigma} x^{-1+\sigma} = x^{-\frac{3}{2}+2\sigma}. 
\end{align}

We have used the $\kappa > 0$ to avoid the critical Hardy inequality. The Hardy inequality we have used (with power $y^{-\frac{1}{2}+\kappa} v^1_p$) relies on the vanishing of $v^1_p$ at $y = \infty$. From here the desired bound follows for $m = 0, k = 0$, and $k,m \ge 1$ works analogously. 

\end{proof}

The final ingredient we will need is to understand the connection between the norms we have controlled, $P_k$, and the quantities $u^1_{py}, u^1_{pyy}$. This is the content of the following: 

\begin{lemma} 
\begin{align} \label{P.embed}
x^{\frac{3}{4}-\sigma}||z^m u^1_{py}||_{L^\infty_y} + x^{\frac{1}{2}-\sigma} ||z^m u^1_{py}||_{L^2_y} + x^{1-\sigma}||z^m u^1_{pyy}||_{L^2_y}  \le C < \infty. 
\end{align}
\end{lemma}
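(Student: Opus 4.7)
\medskip

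\noindent\textbf{Proof proposal.} The plan is to reduce the lemma to estimates on the homogenized unknown $u$ controlled in the norms $P_k(\sigma)$ via (\ref{pr.1.close.1}), and then to use the Prandtl equation (\ref{pr.eqn.1}) to trade the second $y$-derivative for first-order quantities plus profile/forcing terms. The relation $u^1_p = u - \chi(y) u^1_e(x,0)$ from (\ref{pr.1.hom}) yields
\begin{equation*}
u^1_{py} = u_y - \chi'(y) u^1_e(x,0), \qquad u^1_{pyy} = u_{yy} - \chi''(y) u^1_e(x,0).
\end{equation*}
The Eulerian correction is compactly supported in $y$ and decays like $x^{-1/2}$ in $x$ by (\ref{euler.DECAY.main}), so its contribution is easily absorbed into $C$ after multiplying by $x^{1/2-\sigma}$ or $x^{1-\sigma}$. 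Thus the task reduces to bounding $\|z^m u_y\|_{L^2_y}$ and $\|z^m u_{yy}\|_{L^2_y}$ with the stated weights.

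\medskip

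First I would handle the $L^2_y$ bound on $z^m u^1_{py}$: the weighted version of (\ref{pr.1.close.1}) gives $\sup_{x} x^{1-2\sigma} \int z^{2m} u_y^2 \lesssim C(m,\sigma)$, which is exactly the required inequality. Next, for the $L^2_y$ bound on $z^m u^1_{pyy}$, I would solve for $u^1_{pyy}$ from (\ref{pr.eqn.1}) (using $P^1_p \equiv 0$):
\begin{equation*}
u^1_{pyy} = (1+u^0_p) u^1_{px} + u^{(1)}_{sx} u^1_p + u^0_{py}\bigl(v^1_p - v^1_p(x,0)\bigr) + v^{(1)}_s u^1_{py} - f^{(1)},
\end{equation*}
and estimate each term in $L^2_y$ with weight $z^m x^{1-\sigma}$. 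The first term uses the $k=1$ part of (\ref{pr.1.close.1}), which provides $\|z^m u_x x^{1-\sigma}\|_{L^2_y} \le C$, together with $\|u^0_p\|_{L^\infty} \lesssim \delta$. The second uses $|x u^{(1)}_{sx}| \lesssim \mathcal{O}(\delta)$ (from (\ref{main.decay}) and (\ref{euler.DECAY.main})) together with $\|z^m u^1_p x^{-\sigma}\|_{L^2_y} \le C$. For the third, I would apply Hardy in $y$ after writing
\begin{equation*}
u^0_{py}\bigl(v^1_p - v^1_p(x,0)\bigr) = y u^0_{py} \cdot \frac{v^1_p - v^1_p(x,0)}{y},
\end{equation*}
use $\|y u^0_{py}\|_{L^\infty} \lesssim \delta$ from (\ref{main.decay}), and bound $\|(v^1_p - v^1_p(x,0))/y\|_{L^2_y} \lesssim \|v^1_{py}\|_{L^2_y} = \|u^1_{px}\|_{L^2_y}$ (the Hardy inequality applies since $v^1_p - v^1_p(x,0)$ vanishes at $y=0$). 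The fourth term uses $\|v^{(1)}_s x^{1/2}\|_{L^\infty} \lesssim \mathcal{O}(\delta)$ combined with the bound on $\|z^m u^1_{py}\|_{L^2_y}$ just established. The forcing contribution is controlled by (\ref{n.forcing.2}). In each case, the powers of $x$ add up to $x^{-1+\sigma}$, so multiplying by $x^{1-\sigma}$ yields the uniform bound $C$.

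\medskip

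Finally, for the $L^\infty_y$ bound on $z^m u^1_{py}$, I would invoke the half-line Sobolev embedding
\begin{equation*}
\|g\|_{L^\infty_y}^2 \lesssim \|g\|_{L^2_y}\,\|g_y\|_{L^2_y}
\end{equation*}
applied to $g = z^m u^1_{py}$, using that $u^1_{py}$ decays rapidly as $y\to\infty$ (by the $z^M$ weighted bounds already available). Since $(z^m u^1_{py})_y = z^m u^1_{pyy} + \tfrac{m}{\sqrt{x}} z^{m-1} u^1_{py}$, the two preceding $L^2_y$ estimates give $\|(z^m u^1_{py})_y\|_{L^2_y} \lesssim x^{-1+\sigma}$ and $\|z^m u^1_{py}\|_{L^2_y} \lesssim x^{-1/2+\sigma}$, so the embedding yields $\|z^m u^1_{py}\|_{L^\infty_y} \lesssim x^{-3/4+\sigma}$, which is the first estimate in (\ref{P.embed}).

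\medskip

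The main obstacle I anticipate is bookkeeping in the $u^{1}_{pyy}$ step: one must verify that each of the five summands, once multiplied by $z^m x^{1-\sigma}$, decays at least like $x^{0}$, which requires the $P_1$-level control of $u_x$ rather than only $P_0$. The critical place where smallness matters (rather than mere boundedness) is the third term, where the Hardy trick together with $\|y u^0_{py}\|_{L^\infty} \lesssim \delta$ is essential to avoid borrowing weight from the Euler-1 corrector. All remaining manipulations are routine uses of (\ref{pr.1.close.1}), (\ref{korr}), (\ref{main.decay}), (\ref{euler.DECAY.main}), and (\ref{n.forcing.2}).
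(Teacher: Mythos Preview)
Your proposal is correct and follows essentially the same route as the paper's proof: relate $u^1_{py}$ to $u_y$ via (\ref{pr.1.hom}), read off the $L^2_y$ bound from the $P(\sigma)$ norm, solve (\ref{pr.eqn.1}) for $u^1_{pyy}$ and estimate each of the five terms using the $P_1(\sigma)$ control on $u^1_{px}$ together with the profile bounds (\ref{main.decay}), (\ref{euler.DECAY.main}), Hardy in $y$ for the $u^0_{py}(v^1_p-v^1_p(x,0))$ term, and (\ref{n.forcing.2}) for the forcing, then close with the one-dimensional Sobolev interpolation. The only cosmetic difference is that the paper performs the interpolation on $u^1_{py}$ without the $z^m$ weight and remarks that the weighted case is analogous, whereas you carry the weight through explicitly; both are fine.
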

\begin{proof}

First, let us record: 
\begin{align} \nonumber
x^{1-2\sigma} \int  z^{2m} \Big|u^1_{py}\Big|^2 &\le x^{1-2\sigma} \int z^{2m} u_y^2 + x^{1-2\sigma} \int z^{2m} \chi^2 |u^1_e|^2(x,0) \\ \label{korr.2}
& \le ||z^m u||_{P(\sigma)} + C.
\end{align}

Via the equation (\ref{pr.eqn.1}), we have: 
\begin{align} \nonumber
x^{1-\sigma}||u^1_{pyy}||_{L^2_y} &\le x^{1-\sigma} ||u^1_{px}||_{L^2_y} +x^{1-\sigma} || \mathcal{P}||_{L^2_y} + x^{1-\sigma} ||f^{(1)}||_{L^2_y} \\ \label{uyy.embed}
& \lesssim ||u||_{P_1(\sigma)} + ||u||_{P(\sigma)} + C. 
\end{align}

We are using the decay rates established for $u^0_p$ in (\ref{main.decay}), and the pointwise decay of the Euler profiles established in (\ref{e.ss.1}) and the relations in (\ref{korr}). Let us give $\mathcal{P}$ in detail,
\begin{align}
&x^{1-\sigma}||u^{(1)}_{sx} u^1_p||_{L^2_y} \le ||x u^{(1)}_{sx}||_{L^\infty} x^{-\sigma} ||u^1_p||_{L^2_y} \le \mathcal{O}(\delta) ||u||_P + \mathcal{O}(\delta), \\ \nonumber
&x^{1-\sigma} ||u^0_{py} \Big(v^1_p(x,y) - v^1_p(x,0) \Big)||_{L^2_y} \le x^{1-\sigma}||y u^0_{py}||_{L^\infty} || \frac{v^1_p(x,y) - v^1_p(x,0) }{y}||_{L^2_y} \\ 
& \hspace{20 mm} \le \mathcal{O}(\delta) x^{1-\sigma}||u^1_{px}||_{L^2_y} \le \mathcal{O}(\delta) ||u||_{P_1}, \\
&x^{1-\sigma} ||v^{(1)}_s u^1_{py}||_{L^2_y} \le ||x^{\frac{1}{2}} v^{(1)}_s||_{L^\infty} x^{\frac{1}{2}-\sigma} ||u^1_{py}||_{L^2_y} \le \mathcal{O}(\delta) ||u||_{P} + \mathcal{O}(\delta). 
\end{align}

For the final line we have used (\ref{korr.2}). Next, via Sobolev interpolation in the $y$ direction, 
\begin{align}
x^{\frac{3}{4}-\sigma} ||u^1_{py}||_{L^\infty_y} \le \Big(x^{1-\sigma} ||u^1_{pyy}||_{L^2_y} \Big)^{\frac{1}{2}} \Big(x^{\frac{1}{2}-\sigma} ||u^1_{py}||_{L^2_y} \Big)^{\frac{1}{2}} \le C ||u||_P + \frac{1}{100}x^{1-\sigma} ||u^1_{pyy}||_{L^2_y}. 
\end{align}

Coupled with the $u^1_{pyy}$ estimate in (\ref{uyy.embed}), this then establishes the desired bounds. The weighted estimate in $z$ follows analogously. 
\end{proof}

We will select now, 
\begin{align} \label{sigma1}
\sigma = \sigma_1 = \frac{3}{3^n \times 10,000}.
\end{align}

Summarizing the results of this section: 
\begin{proposition}[Prandtl-1 Layer Bounds] \label{Pp1} Given any $n \in \mathbb{N}$, let $\sigma_1$ be as in (\ref{sigma1}), and let $\delta, \epsilon$ be sufficiently small relative to $n$ and $\sigma_1$. Consider the system given in (\ref{pr.eqn.1}), together with the boundary conditions (\ref{pr.1.BC}). Let all derivatives of the prescribed data $U_0(y)$ be exponentially decaying in its argument. Then there exists a unique, global in $x$ solution $[u^1_p, v^1_p]$, satisfying: 
\begin{equation} \label{pr.1.close}
||z^M \partial_x^k u^1_p||_{P_k(\sigma_1)} + x^{k+\frac{3}{4}-\sigma_1}||z^m \p_x^k v^1_p||_{L^\infty_y} \le C(M, k, n), \text{ for any } k,m \ge 0. 
\end{equation}
\end{proposition}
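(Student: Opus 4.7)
The plan is to package together the a priori estimates developed in Lemmas \ref{lemma.pr.1.e}, \ref{lemma.pr.1.p} and their differentiated/weighted analogues, set up a continuous induction / contraction argument for the homogenized system (\ref{pr.1.eqn.bar}), and then translate the resulting bounds from the auxiliary unknown $(u,v)$ back to the original Prandtl-1 pair $(u^1_p, v^1_p)$. First I would work with the homogenized unknowns defined in (\ref{pr.1.hom}), whose advantage is that $u(x,0)=v(x,0)=0$ so that Hardy's inequality in $y$ is available; this was the crucial ingredient in the key estimate (\ref{key.1}). The forcing terms $f^{(1)}$ and $\mathcal{J}$ satisfy the decay bounds (\ref{n.forcing.1})--(\ref{n.forcing.2}) already, so all the data on the right-hand side is under control.

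Next I would close the base-level estimate by coupling the energy estimate of Lemma \ref{lemma.pr.1.e}, which produces $\sup_x\int x^{-2\sigma}u^2 + \int\!\int x^{-2\sigma}u_y^2$ but loses an $x^{1/2}$ weight through the term $\mathcal{O}(\delta)\int\!\int v_y^2 x^{1-2\sigma}$, with the positivity estimate of Lemma \ref{lemma.pr.1.p}, which controls $\sup_x\int u_y^2 x^{1-2\sigma}+\int\!\int u_x^2 x^{1-2\sigma}$ at the cost of $\int\!\int u_y^2 x^{-2\sigma}+\mathcal{O}(\delta)\int\!\int u^2x^{-1-2\sigma}$. Using $v_y=-u_x$ to identify the loss term in the energy estimate with a quantity that the positivity estimate regains, and exploiting the smallness of $\delta$ to absorb the cross terms, yields the global bound $\|u\|_{P(X_1,\sigma)}^2\le C(\sigma)$ uniformly in $X_1$. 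Once the a priori bound is in place, a standard linear contraction argument in the space $P$ furnishes the unique global solution, and one lets $X_1\to\infty$.

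For the higher-order bounds indexed by $k$, I would differentiate (\ref{pr.1.eqn.bar}) $k$ times in $x$, using the compatibility conditions of Remark \ref{hoc2} so that $\partial_x^j u(1,0)=0$, and then run the same energy/positivity pair with multipliers $x^{2k-2\sigma}\partial_x^k u$ and $x^{2k+1-2\sigma}\partial_x^{k+1}u$. Each extra $\partial_x$ lands on profile terms which gain $x^{-1}$ (see (\ref{main.decay}) and (\ref{euler.DECAY.main})) or on $f^{(1)},\mathcal{J}$ which also gain $x^{-1}$ per derivative by (\ref{n.forcing.1})--(\ref{n.forcing.2}); this is precisely why the iterative structure encoded in $P_k(\sigma)$ closes. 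To install the $z^m$ weight, I would then multiply by $z^{2m}x^{2k-2\sigma}\partial_x^k u$ and $z^{2m}x^{2k+1-2\sigma}\partial_x^{k+1}u$; each time $\partial_x$ or $\partial_{yy}$ hits $z$ the identity (\ref{basic.id}) produces a lower-order power of $z$ which is absorbed inductively, exactly as in the analogous step for $w$ in Proposition \ref{prop.ww2}. At each induction step the sacrifice in $\sigma$ is a fixed factor of $3$, which motivates the choice $\sigma_1=3/(3^n\times 10{,}000)$: it guarantees $\sigma$ remains strictly positive after finitely many iterations needed to cover all $k\le n$.

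Finally, I would translate the estimates back to $(u^1_p,v^1_p)$ via (\ref{pr.1.hom}) and the Euler trace bound $|\partial_x^k u^1_e(x,0)|\lesssim x^{-k-1/2}$ from (\ref{euler.DECAY.main}), giving (\ref{korr}) and its differentiated versions. For the pointwise control on $v^1_p$ I would reproduce the argument in (\ref{initial.integration.vp}), writing $v^1_p(x,y)=-\int_y^\infty u^1_{px}(x,y')\,dy'$ and applying Cauchy--Schwarz with the weight $y^{1/2\pm\kappa}$, which converts $P_k$-control of $u^1_p$ into $L^\infty_y$ decay at the sharp rate $x^{-k-3/4+\sigma_1}$. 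The main obstacle I expect is the delicate interplay between the loss of $x^{1/2}$ in the energy estimate and its recovery by positivity: this loop closes only because the bad convective term $u^0_{py}v\cdot u$ is dressed with $\|y u^0_{py}\|_{L^\infty}\le\mathcal{O}(\delta)$, and because at every step the slight slack $\sigma>0$ keeps the $x$-integrations strictly sub-critical. Sharpness of the $x^{-1/2}$ decay of $v^1_e$ from Proposition \ref{Lemma.Sing.Unif} enters here as well, since it controls the boundary contribution $u^1_e(x,0)$ appearing in $\mathcal{J}$ and dictates the rate in (\ref{pr.1.close}).
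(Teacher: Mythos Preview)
Your proposal is correct and follows essentially the same route as the paper: homogenize via (\ref{pr.1.hom}), close the coupled energy/positivity estimates of Lemmas \ref{lemma.pr.1.e}--\ref{lemma.pr.1.p}, iterate in $k$ with the growing weights of $P_k$, insert the $z^m$ weights, and then transfer back to $(u^1_p,v^1_p)$ using (\ref{korr}) and the argument of (\ref{initial.integration.vp}). One small correction: the parameter $\sigma$ does \emph{not} need to shrink at each $k$-step within the Prandtl-1 construction---the same fixed $\sigma>0$ works for every $k$ (each $\partial_x$ on a profile gains a full $x^{-1}$, so the weighted estimates close uniformly in $k$); the geometric choice $\sigma_i = 3^i/(3^n\times 10{,}000)$ is designed for the transition \emph{between} layers $i=1,\dots,n$ (see (\ref{choice})), not between derivative orders within a single layer.
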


\section{Intermediate Layers} \label{Section.Inter}

We now construct intermediate layers $i = 2$ through $i = n-1$. This is achieved inductively, starting with the construction of the Euler Layer, $u^i_e, v^i_e$. Let us fix the parameters: 
\begin{align} \label{sigma.i}
\sigma_i = \frac{3^i}{3^n \times 10,000} \text{ for } i = 2,...,n. 
\end{align}

The reason for this selection will be seen in (\ref{choice}). 

\subsection{Construction of Euler Layer, $[u^i_e, v^i_e]$} \label{sub.EL}

For this step in the construction, we suppose that $[\bar{u}^{(i-1)}_s, \bar{v}^{(i-1)}_s]$ have already been constructed. The inductive hypothesis on the $1,...,i-1$ Prandtl profiles are as follows: 
\begin{align} \label{ind.1}
|| z^m \partial_x^k u^j_p ||_{P_k(\sigma_j)}  \le C(k, m, n), \text{ for } j = 1,...,i-1,
\end{align}

where $[u^j_p, v^j_p]$ satisfy the system given in (\ref{sys.pr.i}) for $2 \le j \le i-1$, and for $i = 2$, that $[u^1_p, v^1_p]$ satisfy (\ref{pr.eqn.1}). In order to obtain the equations for $[u_e^i, v_e^i]$, we expand the nonlinear terms including the new Euler terms: 
\begin{align} \nonumber
u_s^{(i)} u^{(i)}_{sx} &= \Big( \bar{u}^{(i-1)}_s + \epsilon^{\frac{i}{2}}u_e^i  \Big)\Big( \bar{u}^{(i-1)}_{sx} + \epsilon^{\frac{i}{2}}u_{ex}^i  \Big) \\ 
&= \bar{u}_s^{(i-1)} \bar{u}_{sx}^{(i-1)} + \epsilon^{\frac{i}{2}} \bar{u}_{sx}^{(i-1)} u^i_e + \epsilon^{\frac{i}{2}} \bar{u}_s^{(i-1)} u^i_{ex} + \epsilon^i u^i_e u^i_{ex}, \\ \nonumber
v_s^{(i)}u^{(i)}_{sy} &= \Big( \bar{v}^{(i-1)}_s + \epsilon^{\frac{i-1}{2}}v_e^i  \Big)\Big( \bar{u}^{(i-1)}_{sy} + \sqrt{\epsilon} \epsilon^{\frac{i}{2}}u_{eY}^i  \Big) \\ 
&=  \bar{v}^{(i-1)}_s \bar{u}^{(i-1)}_{sy} + \epsilon^{\frac{i-1}{2}} \bar{u}^{(i-1)}_{sy} v^i_e + \sqrt{\epsilon}\epsilon^{\frac{i}{2}}\bar{v}_s^{(i-1)}u^i_{eY}  + \epsilon^i v^i_e u^i_{eY}. \\ \nonumber
u_s^{(i)} v_{sx}^{(i)} &= \Big(\bar{u}_s^{(i-1)} + \epsilon^{\frac{i}{2}} u^i_e \Big)\Big( \bar{v}_{sx}^{(i)} + \epsilon^{\frac{i-1}{2}} v^i_{ex} \Big) \\
&= \bar{u}_s^{(i-1)} \bar{v}_{sx}^{(i-1)} + \epsilon^{\frac{i}{2}} \bar{v}^{(i-1)}_{sx} u^i_e + \epsilon^{\frac{i-1}{2}} \bar{u}_s^{(i-1)} v^i_{ex} + \epsilon^{i-\frac{1}{2}} u^i_e v^i_{ex}, \\ \nonumber
 v_s^{(i)} v^{(i)}_{sy} &= \Big(\bar{v}^{(i-1)}_s + \epsilon^{\frac{i-1}{2}} v^i_e \Big) \Big( \bar{v}^{(i-1)}_{sy} + \epsilon^{\frac{i}{2}} v^i_{eY} \Big) \\
 &= \bar{v}^{(i-1)}_s \bar{v}^{(i-1)}_{sy} + \epsilon^{\frac{i-1}{2}} \bar{v}^{(i-1)}_{sy} v^i_e + \epsilon^{\frac{i}{2}} \bar{v}^{(i-1)}_s v^i_{eY} + \epsilon^{i-\frac{1}{2}} v^i_e v^i_{eY}. 
\end{align}

We will now define several terms: 
\begin{definition} The $i-1$'th remainder is denoted by: 
\begin{align} \label{Rude}
R^{u,i-1} &:= -\Delta_\epsilon \bar{u}_s^{(i-1)} + \bar{u}_s^{(i-1)} \bar{u}_{sx}^{(i-1)} + \bar{v}_s^{(i-1)} \bar{u}_{sy}^{(i-1)} + \bar{P}_{sx}^{(i-1)} + \eps^{\frac{i-1}{2}} v^i_e u^0_{py}, \\ \label{Rvde}
R^{v,i-1} &:=  -\Delta_\epsilon \bar{v}_s^{(i-1)} + \bar{u}_s^{(i-1)} \bar{v}_{sx}^{(i-1)} +  \bar{v}^{(i-1)}_s \bar{v}^{(i-1)}_{sy}   + \frac{\partial_y}{\epsilon} \bar{P}_s^{(i-1)}.
\end{align}
\end{definition}

We will also split the Euler-Euler interaction terms and the Euler-Prandtl terms via: 
\begin{definition}
\begin{align} \n
R^{u,i}_E &:= \epsilon^{\frac{i}{2}} u^{i}_{ex} \Big[ \sum_{j=0}^{i-1} \epsilon^{\frac{j}{2}} u^j_p \Big] + \epsilon^{\frac{i}{2}} u^{i}_e \Big[ \sum_{j=0}^{i-1} \epsilon^{\frac{j}{2}} u^j_{px} \Big] \\ \label{danger.1}
& \hspace{33 mm} + \epsilon^{\frac{i}{2}} \sqrt{\epsilon} u^i_{eY} \Big[ \sum_{j=0}^{i-1} \epsilon^{\frac{j}{2}} v^j_p \Big] + \epsilon^{\frac{i-1}{2}} v^{i}_e \Big[ \sum_{j=1}^{i-1} \epsilon^{\frac{j}{2}} u^j_{py} \Big], \\ \n
R^{v,i}_E &:= \epsilon^{\frac{i-1}{2}} v^i_{ex} \sum_{j=0}^{i-1} \epsilon^{\frac{j}{2}} u^j_p + \epsilon^{\frac{i}{2}} u^i_e \sum_{j=0}^{i-1} \epsilon^{\frac{j}{2}} v^j_{px} \\ \label{RVE}
& \hspace{32 mm} + \epsilon^{\frac{i-1}{2}} v^i_e \sum_{j=0}^{i-1} \epsilon^{\frac{j}{2}} v^j_{py} + \sqrt{\epsilon} \epsilon^{\frac{i-1}{2}} v^i_{eY} \sum_{j=0}^{i-1} \epsilon^{\frac{j}{2}} v^j_p, \\ \n
E^{u,i}_e &:= \epsilon^i \Big[ u^i_e u^i_{ex} + v^i_eu^i_{eY} \Big] + \epsilon^{\frac{i}{2}} u^i_{ex} \sum_{j=1}^{i-1} \epsilon^{\frac{j}{2}} u^j_e + \epsilon^{\frac{i}{2}} u^{i}_e \sum_{j=1}^{i-1} \epsilon^{\frac{j}{2}} u^j_{ex} \\ \label{PureEul1}
& \hspace{35 mm} + \sqrt{\epsilon} \epsilon^{\frac{i}{2}} u^i_{eY} \Big[ \sum_{j=1}^{i-1} \epsilon^{\frac{j-1}{2}} v^j_e \Big] + \epsilon^{\frac{i-1}{2}} v^i_e \Big[ \sum_{j=1}^{i-1} \epsilon^{\frac{1}{2}} \epsilon^{\frac{j}{2}} u^j_{eY} \Big], \\ \n
E^{v,i}_e &:= \epsilon^{i - \frac{1}{2}} \Big[u^i_e v^i_{ex} + v^i_{e}v^i_{eY} \Big] + \epsilon^{\frac{i}{2}} u^i_e \Big[ \sum_{j=1}^{i-1} \epsilon^{\frac{j-1}{2}} v^j_{ex} \Big] + \epsilon^{\frac{i-1}{2}}v^i_{ex} \Big[ \sum_{j=1}^{i-1} \epsilon^{\frac{j}{2}} u^j_e \Big] \\ \label{PureEul2}
& \hspace{39 mm} + \sqrt{\epsilon}\epsilon^{\frac{i-1}{2}} v^i_e \sum_{j=1}^{i-1} \epsilon^{\frac{j-1}{2}} v^j_{eY} + \sqrt{\epsilon}\epsilon^{\frac{i-1}{2}} v^i_{eY} \sum_{j=1}^{i-1} \epsilon^{\frac{j-1}{2}} v^j_{e}.
\end{align}
\end{definition}

\begin{remark} \label{rmk.rem} The natural definition of the remainder term, $R^{u,i-1}$ should be: 
\begin{align}
R^{u,i-1} ``=" -\Delta_\epsilon \bar{u}_s^{(i-1)} + \bar{u}_s^{(i-1)} \bar{u}_{sx}^{(i-1)} + \bar{v}_s^{(i-1)} \bar{u}_{sy}^{(i-1)} + \bar{P}_{sx}^{(i-1)},
\end{align}

and the natural definition of $R^{u,i}_E$ would contain the lowest-order term, $\eps^{\frac{i-1}{2}} v^i_e u^0_{py}$. However, for $i < n$, the quantity of interest in (\ref{f.g.1}) is the sum, $R^{u,i-1} + R^{u,i}_E$, that is contributed to the next order (see the definition of the forcing in (\ref{INFI})). Thus, for convenience (see calculation \ref{CALi.5} and (\ref{take.1})), we add and subtract one factor $\eps^{\frac{i-1}{2}} v^i_e u^0_{py}$, which explains the definitions of (\ref{Rude}) and (\ref{danger.1}). This, however, will not be done for $i = n$. 
\end{remark}

The Navier-Stokes expansion reads: 
\begin{align} \nonumber
-\Delta_\epsilon u_s^{(i)} &+ u_s^{(i)} u_{sx}^{(i)} + v_s^{(i)} u_{sy}^{(i)} + P_{sx}^{(i)} = -\Delta_\epsilon \bar{u}_s^{(i-1)} + \bar{u}_s^{(i-1)} \bar{u}_{sx}^{(i-1)} + \bar{v}_s^{(i-1)} \bar{u}_{sy}^{(i-1)} + \bar{P}_{sx}^{(i-1)} \\ \nonumber
&- \epsilon^{\frac{i}{2}+1}\Delta u^i_e +  \epsilon^{\frac{i}{2}} \bar{u}_{sx}^{(i-1)} u^i_e + \epsilon^{\frac{i}{2}} \bar{u}_s^{(i-1)} u^i_{ex} + \epsilon^i u^i_e u^i_{ex} + \epsilon^{\frac{i-1}{2}} \bar{u}^{(i-1)}_{sy} v^i_e \\ \label{read.u}
& + \sqrt{\epsilon}\epsilon^{\frac{i}{2}}\bar{v}_s^{(i-1)}u^i_{eY}  + \epsilon^i v^i_e u^i_{eY} + \epsilon^{\frac{i}{2}} P^i_{ex} + \epsilon^i P^{1,a}_{ex} \\ \label{read.u.1}
& = \eps^{\frac{i}{2}} \Big[ u^i_{ex} + P^i_{ex} \Big]  + \eps^{\frac{i}{2}+1} \Delta u^i_e + R^{u,i-1}+ R^{u,i}_E + E^{u,i}_e + \eps^{i}P^{i,a}_{ex}. 
\end{align}

For the normal equation, the expansions read: 
\begin{align} \nonumber
-\Delta_\epsilon v_s^{(i)} &+ u_s^{(i)} v_{sx}^{(i)} + v_s^{(i)} v_{sy}^{(i)} + \frac{\partial_y}{\epsilon}P_{s}^{(i)} = -\Delta_\epsilon \bar{v}_s^{(i-1)} + \bar{u}_s^{(i-1)} \bar{v}_{sx}^{(i-1)} +  \bar{v}^{(i-1)}_s \bar{v}^{(i-1)}_{sy}   + \frac{\partial_y}{\epsilon} \bar{P}_s^{(i-1)} \\ \nonumber
&-\epsilon^{\frac{i+1}{2}}\Delta v^i_e + \epsilon^{\frac{i}{2}} \bar{v}^{(i-1)}_{sx} u^i_e + \epsilon^{\frac{i-1}{2}} \bar{u}_s^{(i-1)} v^i_{ex} + \epsilon^{i-\frac{1}{2}} u^i_e v^i_{ex}  + \epsilon^{\frac{i-1}{2}} \bar{v}^{(i-1)}_{sy} v^i_e \\ \label{read.v}
&+ \epsilon^{\frac{i}{2}} \bar{v}^{(i-1)}_s v^i_{eY} + \epsilon^{i-\frac{1}{2}} v^i_e v^i_{eY}+ \epsilon^{\frac{i-1}{2}} P^i_{eY} + \epsilon^{i-\frac{1}{2}} P^{1,a}_{eY} \\ \label{read.v.1}
& = \eps^{\frac{i-1}{2}} \Big[ v^i_{ex} + P^i_{eY} \Big] + \eps^{\frac{i+1}{2}} \Delta v^i_e + R^{v,i-1} + R^{v,i}_E + E^{v,i}_e + \eps^{i - \frac{1}{2}}P^i_{eY}. 
\end{align}

From here, we simply read off the highest order terms that are ``purely-Eulerian". All of the remaining terms will be treated in the next subsection. In (\ref{read.u}), these are at order $\epsilon^{\frac{i}{2}}$, and in (\ref{read.v}), these are at order $\epsilon^{\frac{i-1}{2}}$: 
\begin{equation}
\epsilon^{\frac{i}{2}}\Big[ u^i_{ex} + P^i_{ex} \Big] = 0, \hspace{3 mm} \epsilon^{\frac{i-1}{2}}\Big[ v^i_{ex} + P^i_{eY} \Big] = 0
\end{equation}

When paired with the divergence-free condition, we arrive at the equations that are taken for the $[u_e^i, v_e^i]$, which are the Cauchy-Riemann equations: 
\begin{align} \label{i.Euler}
u^{i}_{ex} + P^i_{ex} = 0, \hspace{3 mm} v^i_{ex} + P^i_{eY} = 0, \hspace{3 mm} u^i_{ex} + v^i_{eY} = 0. 
\end{align}

The boundary conditions for the $i'th$ Euler layer is $v^i_e(x,0) = -v^{i-1}_p(x,0)$. According to the inductive hypothesis, the decay rate of this boundary condition is: 
\begin{align} \nonumber
\Big|v^{i}_e(x,0)\Big| = \Big|v^{i-1}_p(x,0)\Big| &\le ||v^{i-1}_p(x,y)||_{L^2_y}^{\frac{1}{2}} ||v^{i-1}_{py}(x,y)||_{L^2_y}^{\frac{1}{2}} \le ||yv^{i-1}_{py}(x,y)||_{L^2_y}^{\frac{1}{2}} ||v^{i-1}_{py}(x,y)||_{L^2_y}^{\frac{1}{2}} \\ \label{improve}
&\le C(\sigma, n) x^{-\frac{3}{4}+\sigma_{i-1}}, \text{ for }i \ge 2. 
\end{align}

A comparison of (\ref{improve}) to (\ref{e.trace}) shows that the decay rate of the boundary condition has improved, enabling us to improve the Euler decay rates. Indeed, as the system (\ref{i.Euler}) is the identical system to the first Euler layer (and is in particular the Cauchy-Riemann equations), we may simply repeat the analysis given there to conclude: 
\begin{proposition}[Euler-i Layer] \label{i.Euler.lay} Let $i \ge 2$. The $i'$th Euler layer, defined by the Cauchy-Riemann equations (\ref{i.Euler}) taken with boundary conditions (\ref{improve}) satisfy the enhanced decay rates:
\begin{equation} \label{i.Euler.decay}
x^{k+m + \frac{3}{4}-\sigma_{i-1}}\Big| \partial_x^k \partial_Y^m v^i_e\Big| + x^{\frac{3}{4}-\sigma_{i-1}} \Big| u^i_e \Big| \le C(k, m, n).
\end{equation}
\end{proposition}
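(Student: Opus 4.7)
The plan is to repeat, with only notational changes, the construction of $[u^1_e, v^1_e]$ from Section \ref{section.euler}. Since (\ref{i.Euler}) is the same Cauchy-Riemann system as (\ref{euler.1.V.0}), $v^i_e$ is harmonic on $\mathbb{H}$ with trace $-v^{i-1}_p(x,0)$, so I again take $v^i_e = P_Y \ast f_i$, where $f_i$ denotes the Sobolev extension of this trace cut off past $x \le -10$. Compared with Proposition \ref{Lemma.Sing.Unif}, the substantive simplification is that no $\mathcal{O}(\delta)$ smallness need be propagated: pure $C(k,n)$ bounds suffice.

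The first step is to upgrade (\ref{improve}) to every tangential derivative. Applying $\partial_x^k$ before the trace inequality and using Hardy together with the incompressibility relation $v^{i-1}_{py} = -u^{i-1}_{px}$,
\[
|\partial_x^k v^{i-1}_p(x,0)|^2 \lesssim \|y\, \partial_x^{k+1} u^{i-1}_p\|_{L^2_y}\, \|\partial_x^{k+1} u^{i-1}_p\|_{L^2_y},
\]
and the inductive hypothesis (\ref{ind.1}) applied with $m = 0,1$ yields
\[
|\partial_x^k f_i(x)| \le C(k,n)\, \langle x\rangle^{-k-w_i}, \qquad w_i := \tfrac{3}{4}-\sigma_{i-1} \in (0,1),
\]
where $w_i < 1$ holds because $\sigma_{i-1} \le \sigma_{n-1} \ll \tfrac{1}{4}$ by (\ref{sigma.i}). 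Corollary \ref{cor.w}, applied with $w = w_i$, then delivers the $m = 0$ half of (\ref{i.Euler.decay}): $\sup_Y |\partial_x^k v^i_e(x,Y)| \lesssim x^{-k - w_i}$.

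For $m \ge 1$ I argue as in Lemma \ref{interp.Euler}. Harmonicity lets me trade $\partial_Y^2$ for $-\partial_x^2$, so for $m = 2j$ the bound $|\partial_Y^{2j} \partial_x^k v^i_e| = |\partial_x^{k+2j} v^i_e| \lesssim x^{-k - 2j - w_i}$ is immediate. For $m = 2j+1$, Gagliardo-Nirenberg on the half-line $Y \ge 0$ (with no zeroth-order term, using Remark 4, Page 125 of \cite{GNS1}) applied to $\partial_Y^{2j} \partial_x^k v^i_e$ gives
\[
\|\partial_Y^{2j+1} \partial_x^k v^i_e\|_{L^\infty_Y} \lesssim \|\partial_x^{k+2j} v^i_e\|_{L^\infty_Y}^{1/2}\, \|\partial_x^{k+2j+2} v^i_e\|_{L^\infty_Y}^{1/2} \lesssim x^{-k - (2j+1) - w_i}.
\]
Finally, $u^i_e(x,Y) := \int_x^\infty v^i_{eY}(x',Y)\, dx'$ converges because $w_i > 0$ makes $|v^i_{eY}| \lesssim (x')^{-1-w_i}$ integrable; as in (\ref{int.hilbert}) this produces $|u^i_e| \lesssim x^{-w_i}$ together with the Cauchy-Riemann relations $u^i_{ex} = -v^i_{eY}$, $u^i_{eY} = v^i_{ex}$.

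The main obstacle, and the reason the sequence $\sigma_i$ in (\ref{sigma.i}) is tuned so delicately, is the requirement $w_i \in (0,1)$ uniformly in $i$: Corollary \ref{cor.w} is inapplicable at $w = 1$, and the definition of $u^i_e$ breaks down if $w_i \le 0$. Because $\{\sigma_i\}$ is geometric with $\sigma_n = \tfrac{1}{10{,}000}$, both constraints hold for every $2 \le i \le n$, so no estimate from the Euler-1 analysis need be revisited.
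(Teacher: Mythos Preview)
Your proposal is correct and follows exactly the route the paper intends: its proof is the single sentence ``This follows by repeating the arguments in Section \ref{section.euler} with the enhanced boundary condition (\ref{improve}),'' and you have simply made that explicit by upgrading (\ref{improve}) to all $\partial_x^k$, invoking Corollary \ref{cor.w} with $w = \tfrac{3}{4}-\sigma_{i-1}$, and then using harmonicity plus Lemma \ref{interp.Euler}-style interpolation for the $\partial_Y$-derivatives and the integral formula for $u^i_e$. One cosmetic remark: your justification ``$w_i < 1$ holds because $\sigma_{i-1} \ll \tfrac14$'' is unnecessary, since $w_i = \tfrac34 - \sigma_{i-1} < 1$ is automatic; the genuine constraint is $w_i > 0$, which is what $\sigma_{i-1} < \tfrac34$ ensures.
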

\begin{proof}
This follows by repeating the arguments in Section \ref{section.euler} with the enhanced boundary condition (\ref{improve}).
\end{proof}

\begin{remark} It is not possible to enhance the rates (\ref{i.Euler.decay}) much more (for instance, past $x^{-1}$ for $k = m = 0$. This is due to the restriction of $w \in (0,1)$ in Corollary \ref{cor.w}. 
\end{remark}

We have the simplified expression for (\ref{read.u.1}), (\ref{read.v.1})

\begin{lemma} According to definitions in (\ref{Rude}) - (\ref{PureEul2}), one has
\begin{align}  \label{f.g.1}
-\Delta_\epsilon u_s^{(i)} &+ u_s^{(i)} u_{sx}^{(i)} + v_s^{(i)} u_{sy}^{(i)} + P_{sx}^{(i)} = R^{u,i-1} + R^{u,i}_E + E^{u,i}_e + \eps^i P^{i,a}_{ex}, \\ \label{f.g.2}
-\Delta_\epsilon v_s^{(i)} &+ u_s^{(i)} v_{sx}^{(i)} + v_s^{(i)} v_{sy}^{(i)} + \frac{\partial_y}{\epsilon}P_{s}^{(i)} = R^{v,i-1} + R^{v,i}_E + E^{v,i}_e + \eps^{i-\frac{1}{2}}P^i_{eY}.
\end{align}
\end{lemma}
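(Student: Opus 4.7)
The plan is to carry out the bookkeeping computation that rewrites the expansions already displayed in (\ref{read.u}) and (\ref{read.v}) in the compact notation introduced by definitions (\ref{Rude})--(\ref{PureEul2}). This is a verification rather than an estimate, so the only skill required is careful accounting of which term is absorbed into which ``$R$'' or ``$E$'' symbol.

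For the tangential equation, I would start from (\ref{read.u}), which already lists every term produced when one inserts the splittings $u_s^{(i)} = \bar u_s^{(i-1)} + \eps^{i/2} u^i_e$ and $v_s^{(i)} = \bar v_s^{(i-1)} + \eps^{(i-1)/2} v^i_e$ into $-\Delta_\eps u_s^{(i)} + u_s^{(i)} u_{sx}^{(i)} + v_s^{(i)} u_{sy}^{(i)} + P_{sx}^{(i)}$. I would then partition those terms into three classes: (i) contributions involving only the previously-constructed composite $[\bar u_s^{(i-1)},\bar v_s^{(i-1)},\bar P_s^{(i-1)}]$, which sum to $R^{u,i-1}$ as defined in (\ref{Rude}), with the single ``bookkeeping'' addition $\eps^{(i-1)/2} v^i_e u^0_{py}$ flagged in Remark \ref{rmk.rem}; (ii) mixed interactions that pair one $i$'th Euler factor with a Prandtl factor of index $\le i-1$, which assemble into $R^{u,i}_E$ from (\ref{danger.1}), the $j\ge 1$ lower index on the last sum there compensating exactly for the addition in (i); and (iii) interactions of the $i$'th Euler factor with Euler factors of index $\le i$ (including the self-interaction $\eps^i u^i_e u^i_{ex} + \eps^i v^i_e u^i_{eY}$), which form $E^{u,i}_e$ from (\ref{PureEul1}).

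What remains after this grouping is precisely $\eps^{i/2}[u^i_{ex}+P^i_{ex}] + \eps^{i/2+1}\Delta u^i_e + \eps^i P^{i,a}_{ex}$. The first residual vanishes by the Euler-$i$ equation (\ref{i.Euler}), and the second vanishes because (\ref{i.Euler}) is the Cauchy--Riemann system, which forces $u^i_e$ to be harmonic in $(x,Y)$. What remains is exactly (\ref{f.g.1}). The normal equation (\ref{f.g.2}) is handled in the identical manner: the expansion (\ref{read.v}) is partitioned into $R^{v,i-1}$, $R^{v,i}_E$, and $E^{v,i}_e$ contributions according to (\ref{Rvde}), (\ref{RVE}), (\ref{PureEul2}), with no analogue of the $v^i_e u^0_{py}$ bookkeeping subtlety required on this side. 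The residual $\eps^{(i-1)/2}[v^i_{ex}+P^i_{eY}]$ vanishes by the second equation of (\ref{i.Euler}), and the $\eps^{(i+1)/2}\Delta v^i_e$ residual vanishes by harmonicity of $v^i_e$, leaving (\ref{f.g.2}).

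There is no serious obstacle. The only step warranting attention is the Remark \ref{rmk.rem} accounting: the factor $\eps^{(i-1)/2} v^i_e u^0_{py}$ is simultaneously added to $R^{u,i-1}$ (making (\ref{Rude}) differ from its ``natural'' definition by exactly this term) and suppressed from the $j=0$ slot of the last sum in $R^{u,i}_E$ (explaining why that sum begins at $j=1$). Verifying that these two modifications cancel algebraically is the only non-automatic check; once it is carried out, the remaining identifications are purely mechanical.
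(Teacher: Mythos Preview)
Your proposal is correct and follows the same approach as the paper. The paper's proof is even terser because the partition you describe---collecting the terms from (\ref{read.u}) and (\ref{read.v}) into $R^{u,i-1}$, $R^{u,i}_E$, $E^{u,i}_e$ and their normal-component analogues---is already displayed as the equalities (\ref{read.u.1}) and (\ref{read.v.1}) in the lead-up to the lemma; the lemma's proof in the paper then consists of the single observation that $\Delta u^i_e = \Delta v^i_e = 0$ by harmonicity (Proposition \ref{i.Euler.lay}), the Euler bracket terms having already been set to zero by (\ref{i.Euler}). Your more explicit accounting of the Remark \ref{rmk.rem} bookkeeping is a helpful elaboration of what the paper leaves implicit.
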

\begin{proof}
By construction in Proposition \ref{i.Euler.lay}, the $u^i_e, v^i_e$ are harmonic, and so the $\Delta u^i_e, \Delta v^i_e$ terms vanish identically. This then accounts for all of the terms in (\ref{read.u}) - (\ref{read.v}).
\end{proof}

\subsection{Construction of Prandtl Layer, $[u^i_p, v^i_p]$}

For this step in the construction, we suppose that $[u_s^{(i)}, v_s^{(i)}]$ have been constructed. The inductive hypothesis on these profiles are that the following remainders (according to the definitions in (\ref{Rude}) for $R^{u,i-1}$ and (\ref{Rvde}) for $R^{v,i-1}$) have been accumulated:
\begin{align} \n
R^{u,i-1} &= \epsilon^{\frac{i-1}{2}}\Big[ \epsilon u^{i-1}_{pxx}  +\sqrt{\epsilon} yu^0_{py} v^i_{eY} + u^{i-1}_{px} \sum_{j=1}^{i-1} \eps^{\frac{j}{2}} \{u^j_e + u^j_p \} \\ \label{comp.1}  & \hspace{15 mm} + \epsilon u^0_{py} \int_0^y \int_y^{y'} v^i_{eYY} dy'' dy' + v^{i-1}_p\Big(\sum_{j=1}^{i-1} \epsilon^{\frac{j}{2}} \{u^j_{py} + \sqrt{\epsilon}u^j_{eY}\} \Big) \Big], \\ \nonumber
R^{v, i-1} &= \epsilon^{\frac{i-1}{2}}\Big[ -\Delta_\epsilon v^{i-1}_p + u_s^{(i-1)} v^{i-1}_{px} + v^{(i-1)}_{sx} u^{i-1}_p + v_s^{(i-1)} v_{py}^{i-1} + v_{sy}^{(i-1)} v_p^{i-1} \\  \label{comp.2}
& \hspace{15 mm} + \epsilon^{\frac{i-1}{2}} u^{i-1}_p v^{i-1}_{px} + \epsilon^{\frac{i-1}{2}} v^{i-1}_p v^{i-1}_{py} \Big].
\end{align}

The induction will start at $i = 2$, and so (\ref{comp.1}) - (\ref{comp.2}) should be compared to (\ref{rem.pr.1}) for this case and to (\ref{comp.next}) for the general $i$ case. The relevant profile estimates, according to Propositions \ref{thm.euler.1}, \ref{i.Euler.lay} and \ref{Pp1} which hold inductively are: 
\begin{align}
&x^{k + m + \frac{1}{2}} |\p_x^k \p_Y^m v^1_e| + x^{\frac{1}{2}}|u^1_e| \le C(k, m, n), \\
&x^{k+m + \frac{3}{4}-\sigma_{j-1}}\Big| \partial_x^k \partial_Y^m v^j_e\Big| + x^{\frac{3}{4}-\sigma_{j-1}} \Big| u^j_e \Big| \le C(k, m, n) \text{ for } j = 2,..,i, \\
&||z^m \p_x^k u^j_p||_{P_k(\sigma_j)} \le C(k, m, n) \text{ for } j = 1,...,i-1.
\end{align}

By writing $\bar{u}_s^{(i)} = u_s^{(i)} + \epsilon^{\frac{i}{2}} u^{(i)}_p$, $\bar{v}_s^{(i)} = v_s^{(i)} + \epsilon^{\frac{i-1}{2}}v^{(i)}_p$, and expanding the Navier-Stokes equations, we obtain the two expansions: 
\begin{align} \nonumber
-\Delta_\eps \bar{u}_s^{(i)} &+ \bar{u}_s^{(i)} \bar{u}_{sx}^{(i)} +  \bar{v}_s^{(i)} \bar{u}_{sy}^{(i)} + \bar{P}_{sx}^{(i)} = -\Delta_\epsilon u_s^{(i)} + u_s^{(i)} u_{sx}^{(i)} + v_s^{(i)} u_{sy}^{(i)} + P^{(i)}_{sx} + \epsilon^{\frac{i+1}{2}} P^{i,a}_{px} \\ \label{read.u.p}
& + \epsilon^{\frac{i}{2}}\Big[ - \Delta_\eps u^i_p + u_{sx}^{(i)} u^i_p + u^{(i)}_s u^i_{px} + u^{(i)}_{sy} v^i_p + v^{(i)}_s u^i_{py} + \epsilon^{\frac{i}{2}} u^i_p u^i_{px} +\epsilon^{\frac{i}{2}} v^i_p u^i_{py} + P^i_{px} \Big], 
\end{align}

and:
\begin{align} \nonumber
-\Delta_\epsilon \bar{v}^{(1)}_s &+ \bar{u}^{(i)}_s \bar{v}^{(i)}_{sx} + \bar{v}^{(i)}_s \bar{v}^{(i)}_{sy} + \frac{\partial_y}{\epsilon} \bar{P}^{(i)}_{s}  = -\Delta_\epsilon v^{(i)}_s + u_s^{(i)} v_{sx}^{(i)} + v_s^{(i)} v^{(i)}_{sy} + \frac{P^{(i)}_{sy}}{\epsilon} + \epsilon^{\frac{i-1}{2}} P^{i,a}_{py} \\ \label{read.v.p}
& + \epsilon^{\frac{i}{2}}\Big[-\Delta_\eps v^i_p + u_s^{(1)} v^i_{px} + v^{(i)}_{sx} u^i_p + v^{(i)}_{sy}v^i_p + v^{(i)}_s v^i_{py} + \epsilon^{\frac{i}{2}} u^i_p v^i_{px} + \epsilon^{\frac{i}{2}} v^i_p v^i_{py} + \frac{P^{i}_{py}}{\epsilon} \Big] .
\end{align}

Define the forcing term to be those terms from (\ref{read.u.p}) which do not appear in the bracket: 
\begin{align} \nonumber
-\epsilon^{\frac{i}{2}}f^{(i)} &:=  -\Delta_\epsilon u_s^{(i)} + u_s^{(i)} u_{sx}^{(i)} + v_s^{(i)} u_{sy}^{(i)} + P^{(i)}_{sx} + \epsilon^{\frac{i+1}{2}} P^{i,a}_{px} \\ \label{INFI}
&= R^{u,i-1} + R^{u,i}_E + E^{u,i}_e + \epsilon^{i} P^{1,a}_{ex} + \epsilon^{\frac{i+1}{2}} P^{i,a}_{px},
\end{align}

where we have used (\ref{f.g.1}) to simplify the expression. The first step, here, is to introduce a potential Pressure which eliminates the ``purely" Eulerian terms from above:

\begin{definition} The i'th auxiliary Euler pressure, $P^{1,a}_e$, is defined by:
\begin{align} \label{grad.pres.1}
P^{1,a}_e := - \sum_{j=1}^{i-1} \epsilon^{\frac{j-i}{2}} v^i_e v^j_e - \sum_{j=1}^{i-1} \epsilon^{\frac{j-i}{2}} u^i_e u^j_e - \frac{1}{2}\Big|v^i_e\Big|^2 - \frac{1}{2}\Big| u^i_e \Big|^2.
\end{align}
\end{definition}

We may now check that: 
\begin{lemma} With the definition above (\ref{grad.pres.1}), $P^{1,a}_e$ serves as a gradient potential to eliminate the purely-Eulerian terms, $[E^{u,i}_e, E^{v,i}_e]$ from the expansion 
\begin{align} \label{grad.pres.2}
 \Big(\partial_x, \frac{\partial_y}{\epsilon} \Big) \epsilon^i P^{1,a}_e + \Big(E^{u,i}_e, E^{v,i}_e \Big) = 0. 
\end{align}
\end{lemma}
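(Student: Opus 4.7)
The plan is a direct term-by-term verification of the two coordinate equations $\epsilon^i \partial_x P^{1,a}_e = -E^{u,i}_e$ and $\frac{\partial_y}{\epsilon} \epsilon^i P^{1,a}_e = -E^{v,i}_e$. This is essentially a bookkeeping computation once one invokes the single algebraic ingredient available: the Cauchy--Riemann structure of each Euler layer, namely
\begin{equation*}
u^j_{ex} = -v^j_{eY}, \qquad u^j_{eY} = v^j_{ex}, \qquad 1 \le j \le i,
\end{equation*}
which follows from the equations (\ref{i.Euler}) upon the choice $P^j_e = -u^j_e$ as in the derivation preceding (\ref{div.curl}).

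For the tangential component, I would differentiate (\ref{grad.pres.1}) in $x$ via the Leibniz rule, producing a ``diagonal'' contribution $-v^i_e v^i_{ex} - u^i_e u^i_{ex}$ and, for each $1\le j \le i-1$, an ``off-diagonal'' contribution of the form $-\epsilon^{(j-i)/2}(v^i_{ex} v^j_e + v^i_e v^j_{ex} + u^i_{ex} u^j_e + u^i_e u^j_{ex})$. Multiplying by $\epsilon^i$ and adding the $j=i$ pieces $\epsilon^i(u^i_e u^i_{ex} + v^i_e u^i_{eY})$ from $E^{u,i}_e$ collapses the diagonal to $\epsilon^i v^i_e(u^i_{eY} - v^i_{ex}) = 0$. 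Grouping the remaining $\epsilon^{(i+j)/2}$ terms of $E^{u,i}_e$ from (\ref{PureEul1}) (the four contributions $u^i_{ex} u^j_e$, $u^i_e u^j_{ex}$, $u^i_{eY} v^j_e$, $v^i_e u^j_{eY}$) with the gradient terms of the same order yields
\begin{equation*}
\epsilon^{(i+j)/2}\bigl[v^j_e(u^i_{eY} - v^i_{ex}) + v^i_e(u^j_{eY} - v^j_{ex})\bigr] = 0,
\end{equation*}
and the first identity is established.

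For the normal component, the key preliminary is that each $u^j_e, v^j_e$ is a function of $(x,Y)$ with $Y = \sqrt{\epsilon}y$, so $\partial_y = \sqrt{\epsilon}\partial_Y$ and consequently
\begin{equation*}
\frac{\partial_y}{\epsilon}\bigl(\epsilon^i P^{1,a}_e\bigr) = \epsilon^{i-1/2}\,\partial_Y P^{1,a}_e.
\end{equation*}
Repeating the same Leibniz expansion in $Y$ and pairing against the entries of $E^{v,i}_e$ listed in (\ref{PureEul2}), the $j=i$ diagonal collapses to $\epsilon^{i-1/2} u^i_e(v^i_{ex} - u^i_{eY}) = 0$, while each off-diagonal grouping at order $\epsilon^{(i+j-1)/2}$ reduces to
\begin{equation*}
\epsilon^{(i+j-1)/2}\bigl[u^j_e(v^i_{ex} - u^i_{eY}) + u^i_e(v^j_{ex} - u^j_{eY})\bigr] = 0.
\end{equation*}
No estimate is required anywhere in the argument. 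The only step where a slip is possible is the matching of $\epsilon$-powers: the extra $\sqrt{\epsilon}$ that converts $\partial_y$ to $\partial_Y$ must be tracked against the $\sqrt{\epsilon}$ factors that appear in (\ref{PureEul1})--(\ref{PureEul2}) on precisely those terms originating from $\bar{v}^{(i-1)}_s v^i_{eY}$-type products in the Navier--Stokes expansion. Double-checking the indices against the bookkeeping done in (\ref{read.u})--(\ref{read.v}) completes the proof.
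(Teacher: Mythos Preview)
Your proposal is correct and follows essentially the same approach as the paper: a direct term-by-term differentiation of $P^{1,a}_e$ using the Leibniz rule, invoking the Cauchy--Riemann relations $u^j_{eY}=v^j_{ex}$, $u^j_{ex}=-v^j_{eY}$ at each step, together with the scaling $\frac{\partial_y}{\epsilon}=\frac{\partial_Y}{\sqrt{\epsilon}}$ to match the $\epsilon$-powers in $E^{u,i}_e$ and $E^{v,i}_e$. Your diagonal/off-diagonal organization is a minor presentational variant of the paper's term-by-term bookkeeping, but the substance is identical.
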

\begin{proof}
By scaling, we will write: 
\begin{equation}
 \Big(\partial_x, \frac{\partial_y}{\epsilon} \Big) \epsilon^i P^{1,a}_e = \Big(\partial_x, \frac{\partial_Y}{\sqrt{\epsilon}} \Big) \epsilon^i P^{1,a}_e. 
\end{equation}

We will go term by term through the definition in (\ref{grad.pres.1}), starting with:
\begin{align} \label{tbt.1}
\eps^i \partial_x \Big( -\sum_{j=1}^{i-1} \eps^{\frac{j-i}{2}} v^i_e v^j_e \Big) = -\sum_{j=1}^{i-1} \eps^{\frac{j+i}{2}} \Big( v^i_{ex} v^j_e + v^i_e v^j_{ex} \Big) = -\sum_{j=1}^{i-1} \eps^{\frac{j+i}{2}} \Big( u^i_{eY} v^j_e + v^i_e u^j_{eY} \Big).
\end{align}

Next, 
\begin{align} \label{tbt.2}
\eps^i \partial_x \Big( - \sum_{j=1}^{i-1} \eps^{\frac{j-i}{2}} u^i_e u^j_e \Big) = - \sum_{j = 1}^{i-1} \eps^{\frac{j+i}{2}} \Big( u^i_{ex} u^j_e + u^i_e u^j_{ex} \Big). 
\end{align}

Third, 
\begin{align} \label{tbt.3}
\eps^i \partial_x \Big( -\frac{1}{2} |v^i_e|^2 - \frac{1}{2}|u^i_e|^2 \Big) = -\eps^i v^i_e v^i_{ex} - \eps^i u^i_e u^i_{ex} = -\eps^i v^i_e u^i_{eY} - \eps^i u^i_e u^i_{ex}. 
\end{align}

Comparing these expressions, (\ref{tbt.1}) - (\ref{tbt.3}) to the expression (\ref{PureEul1}), one observes the exact cancellation: 
\begin{align}
\partial_x \eps^i P^{1,a}_e + E^{u,i}_E = 0. 
\end{align}

Next, we will move to the $\frac{\p_Y}{\sqrt{\eps}}$ terms: 
\begin{align} \label{tbt.4}
-\frac{\p_Y}{\sqrt{\eps}} \eps^i \Big( \sum_{j=1}^{i-1} \eps^{\frac{j-i}{2}} v^i_e v^j_e \Big) = -\sum_{j=1}^{i-1} \eps^{\frac{i+j}{2}-\frac{1}{2}} \Big( v^i_{eY}v^j_e + v^i_e v^j_{eY} \Big).
\end{align}

Next, 
\begin{align} \n
- \frac{\p_Y}{\sqrt{\eps}} \eps^i \Big( \sum_{j=1}^{i-1} \eps^{\frac{j-i}{2}} u^i_e u^j_e \Big) &= -\sum_{j=1}^{i-1} \eps^{\frac{i+j}{2}-\frac{1}{2}} \Big( u^i_{eY} u^j_e + u^i_{e} u^j_{eY} \Big) \\ \label{tbt.5}
& = -\sum_{j=1}^{i-1} \eps^{\frac{i+j}{2}-\frac{1}{2}} \Big( v^i_{ex} u^j_e + u^i_{e} v^j_{ex} \Big).
\end{align}

Third, 
\begin{align} \label{tbt.6}
\frac{\p_Y}{\sqrt{\eps}} \eps^i \Big(- |v^i_e|^2 - |u^i_e|^2 \Big) = -\eps^{i-\frac{1}{2}} v^i_e v^i_{eY} -\eps^{i-\frac{1}{2}} u^i_e u^i_{eY} =  -\eps^{i-\frac{1}{2}} v^i_e v^i_{eY} -\eps^{i-\frac{1}{2}} u^i_e v^i_{ex}.
\end{align}

Comparing these expressions, (\ref{tbt.4}) - (\ref{tbt.6}) to the expression (\ref{PureEul1}), one observes the exact cancellation: 
\begin{align}
\frac{\partial_Y}{\sqrt{\eps}} \eps^i P^{1,a}_e + E^{v,i}_E = 0. 
\end{align}

This establishes the desired result, (\ref{grad.pres.2}). 

\end{proof}
\begin{remark} One should notice the essential role played by the Cauchy-Riemann equations, $u^j_{eY} = v^j_{ex}$ in the equalities above. 
\end{remark}

Let us now turn to the terms outside of the bracket in (\ref{read.v.p}), which we also simplify via (\ref{f.g.2}) and subsequently via (\ref{grad.pres.2}):
\begin{align} \nonumber
-\Delta_\epsilon v^{(i)}_s + u_s^{(i)} v_{sx}^{(i)} &+ v_s^{(i)} v^{(i)}_{sy} + \frac{P^{(i)}_{sy}}{\epsilon} + \epsilon^{\frac{i-1}{2}} P^{i,a}_{py} \\ \nonumber
&= R^{v,i-1} + R^{v,i}_E + E^{v,i}_e + \epsilon^{i-\frac{1}{2}}P^{1,a}_{eY} + \epsilon^{\frac{i-1}{2}} P^{i,a}_{py} \\ \label{Rvdiss}
&= R^{v,i-1} + R^{v,i}_E + \epsilon^{\frac{i-1}{2}} P^{i,a}_{py}. 
\end{align}

Motivated by this, define the auxiliary Pressure via: 

\begin{definition} The i'th auxiliary Prandtl pressure, $P^{i,a}_P$ is defined via: 
\begin{align} \label{aux.pressure.1}
\epsilon^{\frac{i+1}{2}}P^{i,a}_P := \epsilon \int_y^{\infty} R^{v,i-1} + R^{v,i}_E. 
\end{align}
\end{definition}

Immediately from this definition, we have: 
\begin{lemma} According to the Definition \ref{aux.pressure.1}, the following identity holds: 
\begin{align} \label{Rvdiss.1}
-\Delta_\epsilon v^{(i)}_s + u_s^{(i)} v_{sx}^{(i)} &+ v_s^{(i)} v^{(i)}_{sy} + \frac{P^{(i)}_{sy}}{\epsilon} + \epsilon^{\frac{i-1}{2}} P^{i,a}_{py} = 0. 
\end{align}
\end{lemma}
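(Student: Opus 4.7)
The plan is a direct verification, with essentially all the work having already been done. The proof has two logical pieces: first, combining the earlier simplifications to re-derive the identity displayed as (\ref{Rvdiss}); second, differentiating the definition of the auxiliary Prandtl pressure.

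For the first piece, the definition (\ref{f.g.2}) in the previous subsection rewrites
\[
-\Delta_\epsilon v^{(i)}_s + u_s^{(i)} v_{sx}^{(i)} + v_s^{(i)} v^{(i)}_{sy} + \frac{P^{(i)}_{sy}}{\epsilon} = R^{v,i-1} + R^{v,i}_E + E^{v,i}_e + \epsilon^{i-\frac{1}{2}} P^{1,a}_{eY}.
\]
The gradient-potential cancellation (\ref{grad.pres.2}) proved in the preceding lemma states $\frac{\partial_Y}{\sqrt{\eps}}(\eps^i P^{1,a}_e)+E^{v,i}_e=0$, i.e.\ $\epsilon^{i-\frac{1}{2}} P^{1,a}_{eY} = -E^{v,i}_e$, so these two terms annihilate each other and one is left with (\ref{Rvdiss}):
\[
-\Delta_\epsilon v^{(i)}_s + u_s^{(i)} v_{sx}^{(i)} + v_s^{(i)} v^{(i)}_{sy} + \frac{P^{(i)}_{sy}}{\epsilon} + \epsilon^{\frac{i-1}{2}} P^{i,a}_{py} = R^{v,i-1} + R^{v,i}_E + \epsilon^{\frac{i-1}{2}} P^{i,a}_{py}.
\]

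For the second piece, I differentiate the definition (\ref{aux.pressure.1}) in $y$ via the Fundamental Theorem of Calculus. To justify this, I first check that the $y$-integral defining $P^{i,a}_P$ converges. Inspecting (\ref{comp.2}) for $R^{v,i-1}$ and (\ref{RVE}) for $R^{v,i}_E$, every summand contains at least one factor of a Prandtl profile $u^j_p,v^j_p$ (or an $x$-derivative or $\eps$-Laplacian thereof). The inductive Prandtl estimate $\|z^M \partial_x^k u^j_p\|_{P_k(\sigma_j)}\le C$ together with the Prandtl-divergence relation implies rapid decay in $y$ of these factors, while the Euler factors are $y$-bounded by Proposition \ref{i.Euler.lay}. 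Thus the integrand is integrable for each fixed $x$, and FTC yields
\[
\partial_y\bigl(\epsilon^{\frac{i+1}{2}} P^{i,a}_P\bigr) \;=\; \partial_y\Bigl(\epsilon \int_y^\infty (R^{v,i-1}+R^{v,i}_E)\, dy'\Bigr) \;=\; -\epsilon\bigl(R^{v,i-1}+R^{v,i}_E\bigr).
\]
Dividing by $\eps$ gives $\epsilon^{\frac{i-1}{2}} P^{i,a}_{py} = -(R^{v,i-1}+R^{v,i}_E)$, and substituting this into the right-hand side of the displayed identity (\ref{Rvdiss}) above produces exactly the desired cancellation, yielding (\ref{Rvdiss.1}).

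There is no real obstacle: the content of the lemma is the \emph{choice} of $P^{i,a}_P$ as the antiderivative of $-(R^{v,i-1}+R^{v,i}_E)$ vanishing at $y=\infty$, which is tailored precisely so that (\ref{Rvdiss.1}) holds. The only verification beyond arithmetic is that the antiderivative is well-defined, which reduces to the $y$-decay of all Prandtl profiles appearing in the remainders, a fact furnished by the inductive hypothesis stated at the start of the subsection.
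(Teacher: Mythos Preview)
Your proof is correct and follows essentially the same route as the paper: cite (\ref{Rvdiss}) and then differentiate the definition (\ref{aux.pressure.1}) to obtain $\epsilon^{\frac{i-1}{2}} P^{i,a}_{py} = -(R^{v,i-1}+R^{v,i}_E)$. The paper's proof is two lines and simply invokes (\ref{Rvdiss}) directly rather than re-deriving it from (\ref{f.g.2}) and (\ref{grad.pres.2}) as you do, and it omits the integrability check you supply; your version is just a more explicit rendering of the same argument.
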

\begin{proof}
By direct calculation from (\ref{aux.pressure.1}), 
\begin{align}
\eps^{\frac{i-1}{2}} P_P^{i,a} = - R^{v,i-1} - R_E^{v,i}. 
\end{align}

Combined with (\ref{Rvdiss}) then implies the desired result. 
\end{proof}

We are now able to rewrite the forcing for our equation, which was defined in (\ref{INFI}):
\begin{align} \label{f.i}
f^{(i)} =- \epsilon^{-\frac{i}{2}}\Big[ R^{u,i-1} + R_E^{u,i} + \epsilon^{\frac{i+1}{2}} P^{i,a}_{px} \Big]. 
\end{align}

The $\epsilon^{-\frac{i}{2}}$ factor arises as we are in the $i'th$ order of the construction. Let us briefly comment on the orders of the three-terms on the right-hand side of (\ref{f.i}). According to (\ref{comp.1}), it is evident that $R^{u,i-1}$ is order $\epsilon^{\frac{i}{2}}$. According to (\ref{danger.1}), it is clear that $R^{u,i}_E$ is order $\epsilon^{\frac{i}{2}}$. According to (\ref{aux.pressure.1}) coupled with (\ref{comp.2}) and (\ref{RVE}), it is clear that $P_P^{i,a}$ is order $1$. Thus, all of the terms in (\ref{f.i}) are order $1$ or higher. Reading off from (\ref{read.u.p}) - (\ref{read.v.p}), the system we will now be considering is: 
\begin{align} \label{sys.pr.i}
&(1+u^0_p) u_{px}^{i} + u_{sx}^{(i)}u_p^{i} + v_s^{(i)} u^i_{py} + u_{py}^{0}\Big(v^i_p - v^i_p(x,0)\Big) + P^i_{px} = u^i_{pyy}  + f^{(i)}, \\ \label{sys.pr.i.PIP}
&u^i_p(x,0) = -u^i_e(x,0), \hspace{3 mm} \lim_{y \rightarrow \infty} u^i_p(x,y) = 0, \hspace{3 mm} u^i_p(1,y) = U_i(y), \hspace{3 mm} P^i_{py} = 0.
\end{align}

Again, by evaluating the equation at $y = \infty$ gives $P^i_{px} = 0$. Coupled with the normal equation $P^i_{py} = 0$ then shows that $P^i_p = 0$. After this construction, $R^{u,i}, R^{v,i}$ contain the terms from (\ref{read.u.p}) - (\ref{read.v.p}) which were omitted in the construction of $[u^i_p, v^i_p]$:
\begin{lemma} For each $i$, with $[R^{u,i}, R^{v,i}]$ defined as in (\ref{Rude}) - (\ref{Rvde}), with $[u^i_p, v^i_p]$ taken to solve the system (\ref{sys.pr.i}), the following identities hold:
\begin{align} \label{comp.next}
R^{u,i} &= \epsilon^{\frac{i}{2}}\Big[ \epsilon u^{i}_{pxx} +\sqrt{\epsilon} yu^0_{py} v^{i+1}_{eY} + \epsilon u^0_{py} \int_0^y \int_y^{y'} v^{i+1}_{eYY} dy'' dy' \\ \nonumber & \hspace{15 mm} + v^{i}_p \sum_{j=1}^{i} \epsilon^{\frac{j}{2}} \{u^j_{py} + \sqrt{\epsilon}u^j_{eY}\} + u^{i}_{px} \sum_{j=1}^{i} \eps^{\frac{j}{2}} \{u^j_e + u^j_p \}  \Big], \\  \label{comp.2.next}
R^{v, i} &= \epsilon^{\frac{i}{2}}\Big[ -\Delta_\epsilon v^{i}_p + u_s^{(i)} v^{i}_{px} + v^{(i)}_{sx} u^{i}_p + v_s^{(i)} v_{py}^{i} + v_{sy}^{(i)} v_p^{(i)} + \epsilon^{\frac{i}{2}} u^{i}_p v^{i}_{px} + \epsilon^{\frac{i}{2}} v^{i}_p v^{i}_{py} \Big].
\end{align}
\end{lemma}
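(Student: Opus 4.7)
The two identities follow by direct algebraic bookkeeping starting from the remainder definitions (\ref{Rude})--(\ref{Rvde}) (with index shifted up by one), the full expansions (\ref{read.u.p})--(\ref{read.v.p}), and the defining system (\ref{sys.pr.i})--(\ref{sys.pr.i.PIP}) for $[u^i_p, v^i_p]$. I will sketch the tangential identity (\ref{comp.next}) in detail; the normal identity (\ref{comp.2.next}) is strictly simpler.

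First I split (\ref{read.u.p}) into a ``slow'' piece built from $u_s^{(i)}, v_s^{(i)}, P_s^{(i)}$ and a ``fast'' $\eps^{i/2}$-bracket involving $u^i_p$. The slow piece collapses via (\ref{f.g.1}): it equals $R^{u,i-1} + R^{u,i}_E + E^{u,i}_e + \eps^i P^{1,a}_{ex} + \eps^{(i+1)/2} P^{i,a}_{px}$, and the Eulerian gradient identity (\ref{grad.pres.2}) makes $E^{u,i}_e + \eps^i P^{1,a}_{ex}$ vanish, leaving $-\eps^{i/2} f^{(i)}$ by (\ref{f.i}). For the fast bracket I substitute (\ref{sys.pr.i}): this replaces the combination $-u^i_{pyy} + (1+u^0_p) u^i_{px} + u_{sx}^{(i)} u^i_p + v_s^{(i)} u^i_{py} + u^0_{py}(v^i_p - v^i_p(x,0))$ by $f^{(i)}$, which cancels the surviving slow piece, and $P^i_{px} = 0$ from (\ref{sys.pr.i.PIP}) removes the pressure. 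What survives is the $\eps u^i_{pxx}$ piece from $-\Delta_\eps u^i_p$, the quasilinear self-interaction $\eps^{i/2}(u^i_p u^i_{px} + v^i_p u^i_{py})$, the leftover $+u^0_{py} v^i_p(x,0)$ returned by the Prandtl equation, and the algebraic mismatches $(u_s^{(i)} - 1 - u^0_p) u^i_{px}$ and $(u_{sy}^{(i)} - u^0_{py}) v^i_p$. By (\ref{barus})--(\ref{barvs}) these two mismatches are summations over $j = 1, \dots, i-1$ in the Prandtl part and $j = 1, \dots, i$ in the Euler part; merging them with the quasilinear self-interactions reindexes the Prandtl sums up to $j = i$, reproducing the summations $u^i_{px} \sum_{j=1}^{i}\eps^{j/2}\{u^j_p + u^j_e\}$ and $v^i_p \sum_{j=1}^{i} \eps^{j/2}\{u^j_{py} + \sqrt{\eps} u^j_{eY}\}$ in (\ref{comp.next}).

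The only non-routine step is reconciling the spurious $+u^0_{py} v^i_p(x,0)$ above with the $\eps^{i/2} v^{i+1}_e u^0_{py}$ term included in the definition of $R^{u,i}$ (see Remark \ref{rmk.rem} for why it is carried there). For this I invoke the matching boundary condition $v^{i+1}_e(x,0) = -v^i_p(x,0)$ from (\ref{intro.BC.1}) together with the Taylor expansion of $v^{i+1}_e(x,\sqrt{\eps} y)$ in $Y = \sqrt{\eps} y$ around $Y = 0$, exactly as performed at the lowest order in (\ref{ru0.1}) and again in (\ref{take.1.1}): the zeroth-order term cancels $u^0_{py} v^i_p(x,0)$, the first-order term contributes $\sqrt{\eps} y u^0_{py} v^{i+1}_{eY}$, and the quadrature remainder contributes $\eps u^0_{py} \int_0^y \int_y^{y'} v^{i+1}_{eYY}\,dy''dy'$. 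The normal identity (\ref{comp.2.next}) is obtained analogously but with one simplification: by (\ref{Rvdiss.1}) the entire slow part vanishes identically thanks to the very definition of $P^{i,a}_p$, and since (\ref{Rvde}) carries no $v^{i+1}_e u^0_{py}$ boundary-adjustment term, no Taylor expansion is required. Using $P^i_{py} = 0$ from (\ref{sys.pr.i.PIP}), the claim reads off directly from the fast bracket of (\ref{read.v.p}). The main obstacle is therefore the careful Taylor-expansion bookkeeping in Step~3 combined with keeping indices straight in the summations; the rest of the argument is purely algebra.
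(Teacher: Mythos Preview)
Your proposal is correct and follows essentially the same route as the paper's proof: the paper also starts from the definition (\ref{Rude}), substitutes the expansion (\ref{read.u.p}), collapses the slow part to $-\eps^{i/2}f^{(i)}$ via (\ref{INFI}), rearranges the fast bracket using (\ref{sys.pr.i}), and then applies the Taylor identity (\ref{take.1}) to convert the $\eps^{i/2} v^{i+1}_e u^0_{py}$ term into the three surviving pieces; for $R^{v,i}$ the paper likewise invokes (\ref{Rvdiss.1}) together with $P^i_p = 0$. Your identification of the mismatches $(u_s^{(i)} - 1 - u^0_p)u^i_{px}$ and $(u_{sy}^{(i)} - u^0_{py})v^i_p$ and their merger with the quasilinear self-interactions is exactly the algebra the paper carries out between (\ref{CALi.4}) and (\ref{CALi.6}).
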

\begin{proof}
Starting with the definition in equation (\ref{Rude}), we have: 
\begin{align} \label{CALi.1}
R^{u,i} &:= -\Delta_\epsilon \bar{u}_s^{(i)} + \bar{u}_s^{(i)} \bar{u}_{sx}^{(i)} + \bar{v}_s^{(i)} \bar{u}_{sy}^{(i)} + \bar{P}_{sx}^{(i)} + \eps^{\frac{i}{2}} v^{i+1}_e u^0_{py} \\ \n
& =  -\Delta_\epsilon u_s^{(i)} + u_s^{(i)} u_{sx}^{(i)} + v_s^{(i)} u_{sy}^{(i)} + P^{(i)}_{sx} + \epsilon^{\frac{i+1}{2}} P^{i,a}_{px} \\ \n
& \hspace{30 mm} + \epsilon^{\frac{i}{2}}\Big[ - \Delta_\eps u^i_p + u_{sx}^{(i)} u^i_p + u^{(i)}_s u^i_{px} + u^{(i)}_{sy} v^i_p + v^{(i)}_s u^i_{py}  \\ \label{CALi.2}
& \hspace{30 mm} + \epsilon^{\frac{i}{2}} u^i_p u^i_{px} +\epsilon^{\frac{i}{2}} v^i_p u^i_{py} + P^i_{px} \Big] + \eps^{\frac{i}{2}} v_e^{i+1} u^0_{py} \\ \n
& = -\eps^{\frac{i}{2}}f^{(i)} + \epsilon^{\frac{i}{2}}\Big[ - \Delta_\eps u^i_p + u_{sx}^{(i)} u^i_p + u^{(i)}_s u^i_{px} + u^{(i)}_{sy} v^i_p + v^{(i)}_s u^i_{py}  \\ \label{CALi.3}
& \hspace{30 mm} + \epsilon^{\frac{i}{2}} u^i_p u^i_{px} +\epsilon^{\frac{i}{2}} v^i_p u^i_{py} + P^i_{px} \Big] + -\eps^{\frac{i}{2}} v_e^{i+1} u^0_{py} \\ \n
& =  \eps^{\frac{i}{2}}f^{(i)} + \epsilon^{\frac{i}{2}}\Big[ - u^i_{pyy} + u_{sx}^{(i)} u^i_p + (1+u^0_p) u^i_{px} + u^{0}_{py} v^i_p + v^{(i)}_s u^i_{py}  \\ \n
& \hspace{30 mm} + \epsilon^{\frac{i}{2}} u^i_p u^i_{px} +\epsilon^{\frac{i}{2}} v^i_p u^i_{py} + P^i_{px} - \eps u^i_{pxx} + \sum_{j =1}^i \eps^{\frac{j}{2}}\{u^j_e + u^j_p\} u^i_{px} \\ \label{CALi.4}
& \hspace{30 mm} + \sum_{j = 1}^{i} \eps^{\frac{j}{2}} \{u^j_{py} + \sqrt{\eps} u^j_{eY} \} v^i_p \Big] + \eps^{\frac{i}{2}} v_e^{i+1} u^0_{py} \\ \n
& =  -\eps^{\frac{i}{2}}f^{(i)} + \epsilon^{\frac{i}{2}}\Big[ - u^i_{pyy} + u_{sx}^{(i)} u^i_p + (1+u^0_p) u^i_{px} + u^{0}_{py} v^i_p + v^{(i)}_s u^i_{py}  \\ \n
& \hspace{30 mm} + \epsilon^{\frac{i}{2}} u^i_p u^i_{px} +\epsilon^{\frac{i}{2}} v^i_p u^i_{py} + P^i_{px} - \eps u^i_{pxx} + \sum_{j =1}^i \eps^{\frac{j}{2}}\{u^j_e + u^j_p\} u^i_{px} \\ \n
& \hspace{30 mm} + \sum_{j = 1}^{i} \eps^{\frac{j}{2}} \{u^j_{py} + \sqrt{\eps} u^j_{eY} \} v^i_p \Big] + \eps^{\frac{i}{2}} \Big[ -v^{i}_p(x,0) u^0_{py} + \sqrt{\eps}y u^0_{py} v^{i+1}_{eY} \\ \label{CALi.5}
& \hspace{30 mm} + \eps u^0_{py} \int_0^y \int_{y}^{y'} v^{i+1}_{eYY} \ud y'' \ud y' \Big] \\ \n
& = \eps^{\frac{i}{2}}\Big[ -\eps u^i_{pxx} + \sum_{j =1}^i \eps^{\frac{j}{2}}\{u^j_e + u^j_p\} u^i_{px} + \sum_{j = 1}^{i} \eps^{\frac{j}{2}} \{u^j_{py} + \sqrt{\eps} u^j_{eY} \} v^i_p  +  \sqrt{\eps}y u^0_{py} v^{i+1}_{eY} \\ \label{CALi.6}
& \hspace{30 mm} + \eps u^0_{py} \int_0^y \int_{y}^{y'} v^{i+1}_{eYY} \ud y'' \ud y' \Big].
\end{align}

For the calculation in (\ref{CALi.2}), we have used the expansion (\ref{read.u.p}). For the calculation in (\ref{CALi.3}), we have used (\ref{INFI}). For the calculation in (\ref{CALi.4}), we have simply rearranged terms. For the calculation in (\ref{CALi.5}), we have used: 
\begin{align}  \label{take.1}
\eps^{\frac{i}{2}} v^{i+1}_e u^0_{py} = \eps^{\frac{i}{2}} \Big[ -v^{i}_p(x,0) u^0_{py} + \sqrt{\eps}y u^0_{py} v^{i+1}_{eY} + \eps u^0_{py} \int_0^y \int_{y}^{y'} v^{i+1}_{eYY} \ud y'' \ud y' \Big].
\end{align}

Finally, for the calculation in (\ref{CALi.6}), we used (\ref{sys.pr.i}). The identity (\ref{comp.next}) then follows. For the $R^{v,i}$ contribution, we combine (\ref{Rvdiss.1}) with the expression (\ref{read.v.p}), and finally with the condition that $P^i_p = 0$ from (\ref{sys.pr.i.PIP}). 
\end{proof}

A comparison with (\ref{comp.1}) - (\ref{comp.2}) shows that this closes the construction. We now give estimates on the forcing term based on the inductively assumed decay rates and regularity in  (\ref{ind.1}), together with the $[u^0_p, v^0_p]$ bounds in (\ref{main.decay}) and the $[u^1_e, v^1_e]$ bounds in (\ref{euler.DECAY.main}).
\begin{lemma}[Forcing Estimates] Let $i \ge 2$. For any $m,k \ge 0$, with $f^{(i)}$ as in (\ref{f.i}),
\begin{align} \label{f.decay.1}
||z^m\partial_x^k f^{(i)}||_{L^2_y} \le C(k, m, \sigma, n) x^{-k -\frac{5}{4}+2\sigma_{i-1}}.
\end{align}
\end{lemma}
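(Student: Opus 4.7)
According to (\ref{f.i}), we split the forcing into three pieces,
\begin{align*}
\eps^{\frac{i}{2}} f^{(i)} = -R^{u,i-1} - R^{u,i}_E - \eps^{\frac{i+1}{2}} P^{i,a}_{px},
\end{align*}
and bound each in the weighted norm $\|z^m \p_x^k \cdot\|_{L^2_y}$ separately. The strategy is to apply $\p_x^k$, invoke Leibniz to distribute derivatives, and then to each product pair an $L^\infty_y$ factor (a Eulerian profile, a derivative of $u^0_p$ with enough $y$-weight, or an extracted power of $y$ to absorb the $z^m$ weight) with an $L^2_y$ factor. The $z^m$ weight is compatible with every term in $R^{u,i-1}$ and $R^{u,i}_E$ because each one carries either a Prandtl profile (which decays rapidly in $y$) or an Eulerian profile (for which powers of $z=y/\sqrt{x}$ trade $y^m$ for $x^{-m/2}$); crucially, any term that would be ``purely Eulerian'' has already been cancelled via $P^{i,a}_E$ in (\ref{grad.pres.2}), so no obstruction of the $z^m$-weighting appears.

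For $R^{u,i-1}$ as in (\ref{comp.1}), each term contains a Prandtl factor from a layer of index $\le i-1$. I would estimate, for instance, $\eps u^{i-1}_{pxx}$ using the inductive $P_{k+1}$-norm on $u^{i-1}_p$, which gives $\eps \cdot x^{-k-1-1/2+\sigma_{i-1}}$; the term $\sqrt{\eps}\, y u^0_{py} v^i_{eY}$ by $\|y u^0_{py} x^{-1/4}\|_{L^\infty}\|v^i_{eY} x^{k+7/4 - \sigma_{i-1}}\|_{L^\infty}$ using (\ref{main.decay}) and Proposition \ref{i.Euler.lay}; and the product $v^{i-1}_p \cdot \eps^{j/2} u^j_{py}$ using the uniform estimate (\ref{pr.v.unif.1}) on $v^{i-1}_p$ against the $L^2_y$-bound on $u^j_{py}$ coming from the $P$-norm embedding (\ref{P.embed}). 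Each such estimate yields a power of $x$ strictly smaller than $-k-5/4 + 2\sigma_{i-1}$, because $R^{u,i-1}$ is one full layer ``ahead'' in decay. For $R^{u,i}_E$ from (\ref{danger.1}), the most delicate product is $\eps^{(i-1)/2} v^i_e \cdot \eps^{j/2} u^j_{py}$ with $j = i-1$: using $|v^i_e| \lesssim x^{-3/4+\sigma_{i-1}}$ (Proposition \ref{i.Euler.lay}) and $\|u^{i-1}_{py}\|_{L^2_y} \lesssim x^{-1/2+\sigma_{i-1}}$ (from the $P_k(\sigma_{i-1})$ norm), the product is of order $\eps^{i/2} x^{-5/4+2\sigma_{i-1}}$, which after dividing by $\eps^{i/2}$ is precisely the stated bound. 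All other terms in $R^{u,i}_E$ decay faster because they either replace $u^{i-1}_p$ by an earlier Prandtl layer (improving $x$-decay) or add a factor of $\sqrt{\eps}$.

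For the pressure contribution $\eps^{(i+1)/2} P^{i,a}_{px}$, I would differentiate (\ref{aux.pressure.1}) under the integral to get $\eps^{(i+1)/2} P^{i,a}_{px} = \eps \int_y^\infty \p_x (R^{v,i-1} + R^{v,i}_E)\,\ud y'$, and then estimate the integrand pointwise in $L^\infty_y$ with a large polynomial weight $\langle y\rangle^{-N}$ to absorb the integration and the $z^m$ factor. Inspection of $R^{v,i-1}$ in (\ref{comp.2}) and $R^{v,i}_E$ in (\ref{RVE}) shows that every term has at least one rapidly $y$-decaying Prandtl factor, so polynomial weights in $y$ are affordable; the resulting bound is of order $\eps^{(i+1)/2} \cdot \eps^{(i-1)/2} \cdot x^{-k-5/4+2\sigma_{i-1}}$, which after dividing by $\eps^{i/2}$ even has an extra factor of $\eps^{1/2}$ to spare. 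Higher $x$-derivatives proceed by the same splitting, with each application of $\p_x$ producing one extra factor of $x^{-1}$ through the chain rule on profile terms.

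The principal technical obstacle is to keep track, across all terms of (\ref{comp.1}), (\ref{danger.1}), and (\ref{aux.pressure.1}), of the precise power of $x$ each factor contributes, and to ensure that no term saturates the budget worse than $x^{-5/4+2\sigma_{i-1}}$. The crucial identification is that the sharp rate is \emph{attained} only by the Euler-Prandtl cross terms $v^i_e u^{i-1}_{py}$ (and symmetric variants), which motivates the choice $\sigma_i = 3\sigma_{i-1}$ in (\ref{sigma.i}): this ensures $2\sigma_{i-1} \ll \sigma_i$, so when $f^{(i)}$ is fed into the analogue of Lemmas \ref{lemma.pr.1.e}--\ref{lemma.pr.1.p} at the $i$'th layer, the forcing is integrable with margin, allowing the inductive construction to proceed.
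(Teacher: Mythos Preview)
Your proposal is correct and follows essentially the same approach as the paper: decompose $f^{(i)}$ into the three pieces $R^{u,i-1}$, $R^{u,i}_E$, and $\eps^{(i+1)/2}P^{i,a}_{px}$, estimate each by $L^\infty_y \times L^2_y$ H\"older with the Prandtl factor absorbing the $z^m$ weight, and handle the pressure via the weighted $L^\infty_y$ bound $\|\int_y^\infty g\|_{L^2_y} \lesssim \|\langle y\rangle^{3/2+\kappa} g\|_{L^\infty_y}$. Two small bookkeeping points: in the critical term $v^i_e \cdot \eps^{j/2} u^j_{py}$, the case $j=1$ (not $j=i-1$) is the worst in $\eps$, though your computed $x$-rate $x^{-5/4+2\sigma_{i-1}}$ is correct for any $j$ since $\sigma_j \le \sigma_{i-1}$; and your $\eps$-accounting for the pressure should read $\eps^{-i/2}\cdot \eps^{(i+1)/2} P^{i,a}_{px} = \eps^{1/2}\cdot \eps^{-(i-1)/2}\,\partial_x\!\int_y^\infty(R^{v,i-1}+R^{v,i}_E)$, which indeed carries the spare $\eps^{1/2}$ you claim.
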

\begin{proof}

We start with the $R^{u,i}_E$ terms, which are defined in (\ref{danger.1}): 
\begin{align}
&\sum_{j=0}^{i-1} \epsilon^{\frac{j}{2}} ||u^i_{ex} u_p^j||_{L^2_y} \le \sum_{j=0}^{i-1} \epsilon^{\frac{j}{2}} ||u^i_{ex} x^{\frac{3}{2}}||_{L^\infty} x^{-\frac{3}{2}}||u^j_p||_{L^2_y} \lesssim x^{-\frac{5}{4}} ,\\
&\sum_{j=0}^{i-1}|| u^{i}_e  \epsilon^{\frac{j}{2}} u^j_{px}||_{L^2_y} \lesssim \sum_{j=0}^{i-1} \epsilon^{\frac{j}{2}} ||u^i_e x^{\frac{1}{2}} ||_{L^\infty} || x^{-\frac{1}{2}} u^j_{px}||_{L^2_y} \le x^{-\frac{5}{4}}, \\
 &\sum_{j=0}^{i-1} \epsilon^{\frac{j+1}{2}} ||  u^i_{eY}v^j_p ||_{L^2_y} \lesssim \sum_{j=0}^{i-1} \epsilon^{\frac{j+1}{2}} ||u^i_{eY} x^{\frac{3}{2}} ||_{L^\infty} x^{-\frac{3}{2}}||z^m v^j_p||_{L^2_y} \lesssim x^{-\frac{7}{4}} , \\ \label{control}
& \epsilon^{\frac{i-1}{2}}   \sum_{j=1}^{i-1} \epsilon^{\frac{j}{2}} || v^{i}_e u^j_{py}||_{L^2_y} \lesssim  \sum_{j=1}^{i-1} ||v^i_e x^{\frac{3}{4}-\sigma_{i-1}}||_{L^\infty}  ||x^{-\frac{3}{4}+\sigma_{i-1}} u^j_{py}||_{L^2_y} \le  x^{-\frac{5}{4} + 2\sigma_{i-1}}.
\end{align}

In (\ref{control}), we have used the enhanced Eulerian decay rate in (\ref{i.Euler.decay}). Next, we have the Prandtl contributions from $R^{u,i-1}$, according to (\ref{comp.1}):
\begin{align}
&\sqrt{\epsilon} || u^{i-1}_{pxx} ||_{L^2_y} \le \sqrt{\epsilon} x^{-\frac{7}{4}+\sigma_{i-1}}, \\
&||y u^0_{py}v^i_{eY}||_{L^2_y}  \le ||x^{-\frac{1}{4}} y u^0_{py}||_{L^2_y} x^{\frac{1}{4}} ||v^i_{eY}||_{L^\infty_y} \lesssim x^{-\frac{5}{4}}, \\
&\sqrt{\epsilon} || u^0_{py} \int_0^y \int_y^{y'} v^i_{eYY} dy' dy'' ||_{L^2_y} \le \sqrt{\epsilon} ||y^2 x^{-\frac{3}{4}} u^0_{py}||_{L^2_y} x^{\frac{3}{4}}||v^i_{eYY}||_{L^\infty_y} \le \sqrt{\epsilon} x^{-\frac{7}{4}}, \\
& \epsilon^{\frac{j-1}{2}} ||v^{i-1}_p u^j_{py}||_{L^2_y} \le  \epsilon^{\frac{j-1}{2}} x^{-\frac{5}{4}+2\sigma_{i-1}} , j \ge 1 \\
&  \epsilon^{\frac{j}{2}} ||v^{i-1}_p u^j_{eY}||_{L^2_y} \le  \epsilon^{\frac{j}{2}} x^{-2 +2\sigma_{i-1}}, j \ge 1, \\
&  \eps^{\frac{j}{2}} ||u^{i-1}_{px} \{u^j_e + u^j_p \}||_{L^2_y} \lesssim \sqrt{\eps} x^{-\frac{5}{4}+2\sigma_{i-1}} , j \ge 1.
\end{align}

We now move to the term $P_{px}^{i,a}$. For this we note: 
\begin{align} \label{P.aux.calc.1}
\epsilon^{-\frac{i}{2}}\epsilon^{\frac{i+1}{2}} ||P_{px}^{i,a}||_{L^2_y} \le \sqrt{\epsilon} \epsilon^{-\frac{i-1}{2}} ||\langle y \rangle^{\frac{3}{2}+\kappa} \partial_x \Big(R^{v,i-1} + R^{v,i}_E\Big)||_{L^\infty_y}.
\end{align}

We now turn to evaluating the right-hand side of (\ref{P.aux.calc.1}). Let us comment that it is essential at this stage that the pressure $P^{1,a}_e$ was introduced to eliminate all of the purely Eulerian contributions, as those terms cannot handle the weight of $y^{\frac{3}{2}+\kappa}$. Each term below has at least one Prandtl ($[u^i_p, v^i_p]$) factor, and so we may exchange the weight of $y^{\frac{3}{2}+\kappa}$ for decay, $x^{\frac{3}{4}+\frac{\kappa}{2}}$. For the calculations below, we simply pair this with the decay rates in (\ref{ind.1}):
\begin{align}
&|| y^{\frac{3}{2}+\kappa} \partial_x \Big( v^i_{ex} u^j_p \Big)||_{L^\infty_y} + || y^{\frac{3}{2}+\kappa} \partial_x \Big( u^i_{e} v^j_{px} \Big)||_{L^\infty_y} \lesssim x^{-\frac{3}{2}+2\sigma_{i-1} }, \\
 & || y^{\frac{3}{2}+\kappa} \partial_x \Big(v^i_e v^j_{py} \Big)||_{L^\infty_y}  \lesssim x^{-\frac{3}{2} + 2\sigma_{i-1}}\\
& ||  y^{\frac{3}{2}+\kappa} \partial_x \Big(v^i_{eY} v^j_{p} \Big)   ||_{L^\infty_y}\lesssim x^{-2 +2\sigma_{i-1} }, \\
& ||  y^{\frac{3}{2}+\kappa} \partial_x \Big( \Delta_\epsilon v^{i-1}_p \Big) \Big)   ||_{L^\infty_y}\lesssim x^{-\frac{5}{4}+2\sigma_{i-1}}, \\
& ||y^{\frac{3}{2}+\kappa} \partial_x \Big( u_s^{(i-1)} v^{i-1}_{px} \Big) ||_{L^\infty_y} + ||y^{\frac{3}{2}+\kappa} \partial_x \Big(v_{sx}^{(i-1)}u_p^{i-1} \Big) ||_{L^\infty_y} \lesssim x^{-\frac{3}{2}+2\sigma_{i-1} }, \\
& ||y^{\frac{3}{2}+\kappa} \partial_x \Big( v_s^{(i-1)} v_{py}^{i-1} \Big) ||_{L^\infty_y} + ||y^{\frac{3}{2}+\kappa} \partial_x \Big( v_{sy}^{(i-1)} v_p^{i-1} \Big) ||_{L^\infty_y} \lesssim x^{-\frac{3}{2}+2\sigma_{i-1}}, \\
& ||y^{\frac{3}{2}+\kappa} \partial_x \Big( \epsilon^{\frac{i-1}{2}} u_p^{i-1} v_{px}^{i-1} + v_p^{i-1} v_{py}^{i-1} \Big) ||_{L^\infty_y} \lesssim \epsilon^{\frac{i-1}{2}} x^{-\frac{5}{4}+2\sigma_{i-1} }. 
\end{align}

This proves the claim for $k = 0, m = 0$. The general case can be obtained in an identical fashion, upon noticing that powers of $z$ will not influence the estimates when accompanied by Prandtl profiles, which are present in each term above. 

\end{proof}

Indeed, the boundary condition in (\ref{sys.pr.i}) has improved, referring to (\ref{i.Euler.decay}): 
\begin{align} \label{en.e}
\Big| u^i_e(x,0) \Big| \le x^{-\frac{3}{4}+\sigma_{i-1}}, 
\end{align}

according to (\ref{i.Euler.decay}). We can expect enhanced decay due to the enhanced decay of the boundary data. This is the content of the following:

\begin{proposition}[Construction of $i'th$ Prandtl Layer] \label{Lpi} For $i \ge 2$, there exists a solution $[u^i_p, v^i_p]$ to the system (\ref{sys.pr.i}), satisfying for any $m, k \ge 0$: 
\begin{align} \label{pr.i.b}
|| z^m x^{\frac{1}{4}} \partial_x^k u^i_p||_{P_k(\sigma_i)} + x^{k+1-\sigma_i} ||z^m \p_x^k v^i_p||_{L^\infty_y} \le C(k, m, n). 
\end{align}
\end{proposition}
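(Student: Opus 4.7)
The plan is to repeat the analysis of Section~\ref{section.prandtl.1} with two key improvements over the $i=1$ case. First, by (\ref{en.e}) the Dirichlet trace satisfies $|u^i_e(x,0)| \lesssim x^{-\frac{3}{4}+\sigma_{i-1}}$, an $x^{-\frac{1}{4}}$ improvement over the $\delta$-sized trace that drove Prandtl-1. Second, by (\ref{f.decay.1}) the forcing $f^{(i)}$ decays as $x^{-\frac{5}{4}+2\sigma_{i-1}}$, matching what was available for Prandtl-1 up to the harmless $\sigma_{i-1}$ loss. Both gains translate exactly into the $x^{\frac{1}{4}}$-enhancement that distinguishes (\ref{pr.i.b}) from (\ref{pr.1.close}).

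First I would homogenize the boundary conditions exactly as in (\ref{pr.1.hom}), setting $u = u^i_p + \chi(y) u^i_e(x,0)$ and $v = v^i_p - v^i_p(x,0) + u^i_{ex}(x,0) I_\chi(y)$. The resulting system has the form (\ref{pr.1.eqn.bar}) with forcing $f^{(i)} + \mathcal{J}^{(i)}$, and the improved Eulerian trace gives $|z^m \partial_x^k \mathcal{J}^{(i)}| \lesssim \langle y \rangle^{-N} x^{-k-\frac{3}{4}+\sigma_{i-1}}$. I would then run the energy estimate of Lemma~\ref{lemma.pr.1.e} with the enhanced multiplier $M = u\, x^{\frac{1}{2} - 2\sigma_i}$, which upgrades the Prandtl-1 control $\sup_x \int u^2 x^{-2\sigma_1}$ to $\sup_x \int u^2 x^{\frac{1}{2} - 2\sigma_i}$. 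The convective contribution $\int u^0_{py} v \cdot u \, x^{\frac{1}{2} - 2\sigma_i}$ is treated as in (\ref{key.1}), using $||y u^0_{py}||_{L^\infty} \le \mathcal{O}(\delta)$ and Hardy's inequality (valid since $v(x,0)=0$) to produce a $v_y$-factor at the cost of one further $x^{\frac{1}{2}}$. This loss is then restored via the positivity multiplier $M' = u_x\, x^{\frac{3}{2} - 2\sigma_i}$, which controls $\sup_x \int u_y^2 x^{\frac{3}{2} - 2\sigma_i}$, in direct parallel with Lemma~\ref{lemma.pr.1.p}.

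Iteration in $\partial_x$ proceeds by applying $\partial_x^k$ to the homogenized system and using the multipliers $\partial_x^k u\, x^{\frac{1}{2} + 2k - 2\sigma_i}$ and $\partial_x^{k+1} u\, x^{\frac{3}{2} + 2k - 2\sigma_i}$; each additional $\partial_x$ hitting a profile produces an extra $x^{-1}$ by (\ref{ind.1}) and (\ref{i.Euler.decay}), precisely accommodating the heavier weight. Self-similar weights $z^{2m}$ are inserted exactly as in Step~4 of Proposition~\ref{prop.ww2}, since these weights are propagated by the linear parabolic operator and the profile terms involving $[u^0_p, v^0_p]$ are insensitive to powers of $z$ by Corollary~\ref{corfront}. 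The $L^\infty_y$ bound on $v^i_p$ then follows from the representation $v^i_p(x,y) = \int_y^\infty u^i_{px}(x,y')dy'$ (cf.~(\ref{emphv1p})) and the Hardy/Sobolev trade of (\ref{initial.integration.vp}): trading $y^{\frac{1}{2} \pm \kappa}$ for $z^{\frac{1}{2} \pm \kappa} x^{\frac{1}{4} \pm \frac{\kappa}{2}}$ and plugging in the enhanced $P_1$-bound yields $||z^m v^i_p||_{L^\infty_y} \lesssim x^{-1+\sigma_i}$.

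The principal obstacle is calibrating the bootstrap between the energy and positivity estimates: the term $\int u^0_{py} v \cdot u\, x^{\frac{1}{2}-2\sigma_i}$ loses exactly $x^{\frac{1}{2}}$ compared to the $u$-multiplier, and this loss must be returned by the positivity estimate with a coefficient small enough (namely $\mathcal{O}(\delta)$) to close the coupled system. The choice $\sigma_i = 3\sigma_{i-1}$ from (\ref{sigma.i}) is what makes this work: it provides enough room in the $x$-weights to absorb the cumulative $\sigma_{i-1}$ losses inherited from (\ref{f.decay.1}) and (\ref{en.e}) without breaking the bootstrap, while keeping all $\sigma_i < \frac{1}{4}$. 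As in Remark~\ref{hoc2}, higher-order compatibility of the initial datum $U_i(y)$ at $(x,y)=(1,0)$ must be verified inductively in order to legitimize the iterated $\partial_x^k$-differentiation, but this follows from substituting (\ref{sys.pr.i}) at the corner, as in (\ref{icup1}).
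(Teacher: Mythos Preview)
There is a genuine gap in your bootstrap at the very first step. When you apply the enhanced multiplier $M = u\,x^{\frac{1}{2}-2\sigma_i}$ to the transport term $(1+u^0_p)u_x$, integration by parts in $x$ produces
\[
\int (1+u^0_p) u_x \cdot u\, x^{\frac{1}{2}-2\sigma_i}\,dy
= \frac{1}{2}\partial_x \int (1+u^0_p)\,u^2\,x^{\frac{1}{2}-2\sigma_i}\,dy
- \frac{\tfrac{1}{2}-2\sigma_i}{2}\int (1+u^0_p)\,u^2\,x^{-\frac{1}{2}-2\sigma_i}\,dy
- \frac{1}{2}\int u^0_{px}\,u^2\,x^{\frac{1}{2}-2\sigma_i}\,dy.
\]
Because the exponent $\tfrac{1}{2}-2\sigma_i$ is now \emph{positive}, the second term appears with a \emph{negative} sign and a coefficient of order $\tfrac{1}{4}$, not $\mathcal{O}(\delta)$. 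You therefore need independent control of $\int\!\!\int u^2 x^{-\frac{1}{2}-2\sigma_i}$ before the energy estimate can close, and nothing in your scheme provides it. This is exactly the obstruction the paper flags below (\ref{cruc.PN}); in the Prandtl-1 case (Lemma~\ref{lemma.pr.1.e}) the weight $x^{-2\sigma}$ is decreasing, so the analogous term has the \emph{good} sign and the issue does not arise.

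The paper's fix is to insert a preliminary \emph{stream-function} estimate: set $\psi = \int_y^\infty u$, derive the system (\ref{str.sys.1.1}), and apply the multiplier $\psi\,x^{-\frac{1}{2}-2\sigma_i}$. Since this exponent is negative, the weight derivative now contributes a positive term, yielding control of $\int\!\!\int u^2 x^{-\frac{1}{2}-2\sigma_i}$ (see (\ref{see.2})). Only then can the energy multiplier $u\,x^{\frac{1}{2}-2\sigma_i}$ (estimate (\ref{see.3})) and the positivity multiplier $u_x\,x^{\frac{3}{2}-2\sigma_i}$ (estimate (\ref{see.5})) be run and closed. Your two-step scheme needs to become a three-step scheme with this stream-function stage inserted first; once that is done, the rest of your outline (iteration in $\partial_x$, $z^m$-weights, the $v^i_p$ pointwise bound via (\ref{initial.integration.vp})) goes through essentially as you describe.
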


This will be proven in several steps. We homogenize the boundary conditions by defining: 
\begin{align} \nonumber
&u = u_p + \chi(y) u^{i}_e(x,0), \hspace{2 mm} v = v_p - v_p(x,0) + u^{n}_{ex}(x,0) I_\chi(y), \hspace{2 mm} I_\chi(y) = \int_y^\infty \chi(\theta) d\theta,
\end{align}

where $\chi$ is a localized cut-off function selected to have mean-zero. These profiles satisfy: 
\begin{align} \label{npr.1}
(1+u^0_p) u_x - u_{yy} + \mathcal{P}(u,v) = f^{(i)}  + \mathcal{J},
\end{align} 

where 
\begin{align} \nonumber
\mathcal{J} := u_s^{(i)} \chi(y) u^{i}_{ex}(x,0) &- \chi'' u^{i}_e(x,0) - \chi(y) u^{(i)}_{sx} u^{n}_e(x,0) \\ 
&- u^0_{py} I_\chi(y) u^{i}_{ex}(x,0) + v_s^{(i)}\chi'(y) u^{i}_e(x,0). 
\end{align}

We will record the estimate on $\mathcal{J}$, which follows directly from (\ref{en.e}):
\begin{align} \label{est.math.J}
| \mathcal{J} | \lesssim \langle y \rangle^{-N} x^{-\frac{3}{4}+\sigma_{i-1}}. 
\end{align}

We introduce the stream function, 
\begin{align}
\psi(x,y) := \int_y^\infty u(x,y') dy', \hspace{3 mm} \psi_y = -u, \hspace{3 mm} \psi_x = v - v(x,\infty). 
\end{align}

The stream function satisfies the following system: 
\begin{align} \label{str.sys.1.1}
\Big(\partial_x - \partial_{yy}\Big) \psi = \int_y^\infty u^0_p u_x + \int_y^\infty \mathcal{P} + \int_y^\infty f^{(n)} + \int_y^\infty \mathcal{J}, \\
\psi_y(x,0) = u(x,0) = 0, \hspace{3 mm} \psi(x,\infty) = 0, \hspace{3 mm} \psi(1,y) = \int_y^\infty U_{n}(y') dy'.
\end{align}

For technical reasons (due to certain integrals being critical), it is necessary to start with the stream-function formulation in order to obtain the desired, enhanced decay.  
\begin{lemma} The stream function $\psi$ solving the system (\ref{str.sys.1.1}) satisfies:
\begin{align} \nonumber
\sup_x \int x^{-\frac{1}{2}-2\sigma_i} \psi^2 + \int \int x^{-\frac{3}{2}-2\sigma_i} \psi^2 &+ \int \int x^{-\frac{1}{2}-2\sigma_i} u^2 \\ \label{see.2}
& \lesssim C + \mathcal{O}(\delta) \int \int v_y^2 x^{\frac{3}{2}-2\sigma_i}.  
\end{align}
\end{lemma}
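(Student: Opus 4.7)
The goal is an energy estimate for the stream function $\psi$ defined via $\psi_y = -u$, $\psi(x,\infty)=0$. The natural multiplier is $M = \psi \cdot x^{-\frac{1}{2}-2\sigma_i}$ applied to the equation $(\p_x - \p_{yy})\psi = \int_y^\infty u^0_p u_x + \int_y^\infty \mathcal{P} + \int_y^\infty f^{(i)} + \int_y^\infty \mathcal{J}$. This is the $\psi$-analogue of Lemma \ref{lemma.pr.1.e}; we pass to $\psi$ rather than $u$ because several pairings in the direct $u$-energy would be logarithmically $x$-critical at the improved decay rate expected for the $i \ge 2$ layers. The parabolic pairing on the LHS produces
\begin{align*}
\frac{1}{2}\p_x \int \psi^2 x^{-\frac{1}{2}-2\sigma_i}\ud y + \Bigl(\tfrac{1}{4}+\sigma_i\Bigr)\int \psi^2 x^{-\frac{3}{2}-2\sigma_i}\ud y + \int u^2 x^{-\frac{1}{2}-2\sigma_i}\ud y,
\end{align*}
where the last term uses $\psi_y^2 = u^2$, and the boundary contribution at $y=0$ vanishes since $\psi_y(x,0) = -u(x,0) = 0$ while $\psi \to 0$ at $y=\infty$. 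Integrating in $x$ from $1$ to $\infty$ (and then taking sup in $x$ as at the end of Lemma \ref{lemma.pr.1.e}) furnishes exactly the three positive quantities on the LHS of (\ref{see.2}), absorbing an initial-data term $\int \psi(1,y)^2\ud y$ that is finite by the rapid decay of $U_i$ and the mean-zero cutoff $\chi$.

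\textbf{Convective term.} The most delicate piece is $\int_0^\infty \bigl(\int_y^\infty u^0_p u_x\,\ud y'\bigr)\psi x^{-\frac{1}{2}-2\sigma_i}\ud y$. Using incompressibility $u_x = -v_y$ and integrating by parts in $y'$ (with $u^0_p(x,\infty)=0$) converts the inner integral to $u^0_p(x,y)v(x,y) + \int_y^\infty u^0_{py'}v\,\ud y'$. Both pieces carry a rapidly decaying factor in $y$ from the Prandtl-0 layer; after pairing against $\psi x^{-\frac{1}{2}-2\sigma_i}$ and invoking $|\psi|\le \int_y^\infty |u|\,\ud y'$ together with Hardy's inequality (available since $v(x,0)=0$), one arrives at estimates of the form
\begin{align*}
\mathcal{O}(\delta)\, \|u x^{-\frac{1}{4}-\sigma_i}\|_{L^2_y}\,\|v_y x^{\frac{3}{4}-\sigma_i}\|_{L^2_y},
\end{align*}
using $\|u^0_p\|_{L^\infty}, \|y u^0_{py}\|_{L^\infty}\le \mathcal{O}(\delta)$ from Corollary \ref{cp1}. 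Cauchy--Schwarz then feeds a small multiple of the LHS plus the $\mathcal{O}(\delta)\int\int v_y^2 x^{\frac{3}{2}-2\sigma_i}$ target on the RHS of (\ref{see.2}).

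\textbf{Profile and forcing contributions.} For $\int_y^\infty \mathcal{P}\,\ud y' = \int_y^\infty(u_{sx}^{(i)}u + u^0_{py}v + v_s^{(i)}u_y)\,\ud y'$, the coefficient bounds $\|x u_{sx}^{(i)}\|_{L^\infty}, \|y u^0_{py}\|_{L^\infty}, \|x^{\frac{1}{2}}v_s^{(i)}\|_{L^\infty}\lesssim \mathcal{O}(\delta)$ follow from Theorem \ref{thm.m.part.1}, (\ref{main.decay})--(\ref{main.v}), and Proposition \ref{i.Euler.lay}; each contribution splits between an absorbable portion of the LHS and an $\mathcal{O}(\delta)$ multiple of the $v_y$-target on the RHS of (\ref{see.2}), exactly as in (\ref{key.1}). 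The forcing is controlled by (\ref{f.decay.1}): trading one power of $\langle y\rangle$ for $x^{\frac{1}{2}}$ gives $\|\int_y^\infty f^{(i)}\ud y'\|_{L^2_y}\lesssim x^{-\frac{3}{4}+2\sigma_{i-1}}$, which pairs against $\|\psi x^{-\frac{1}{2}-2\sigma_i}\|_{L^2_y}$ to yield an $x$-integrable bound, using $\sigma_{i-1} < \sigma_i$ from (\ref{sigma.i}). The $\mathcal{J}$ contribution is handled identically using (\ref{est.math.J}) and the rapid $y$-decay of $\chi$.

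\textbf{Main obstacle.} The central subtlety is bookkeeping: every pairing must avoid an $x$-critical integration at the enhanced decay $\psi^2\sim x^{-\frac{1}{2}+2\sigma_i}$, and the smallness factor $\mathcal{O}(\delta)$ must survive next to the $v_y x^{\frac{3}{2}-2\sigma_i}$ quantity on the RHS, since this is precisely what the subsequent positivity estimate (the analogue of Lemma \ref{lemma.pr.1.p}) will absorb without any loss of $x$-weight. The choice of $\sigma_i = 3\sigma_{i-1}$ in (\ref{sigma.i}) is tailored to provide enough margin for each inductive step, so that after combining the energy estimate here with its positivity counterpart, the final Prandtl estimate (\ref{pr.i.b}) closes with the improved decay rate dictated by the enhanced boundary data (\ref{en.e}).
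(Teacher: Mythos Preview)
Your overall strategy matches the paper: apply the multiplier $\psi\, x^{-\frac{1}{2}-2\sigma_i}$ to the stream-function equation, harvest the three positive quantities on the LHS, and close the forcing using (\ref{f.decay.1}) together with the choice $\sigma_i=3\sigma_{i-1}$. Your alternative route for the convective term $\int_y^\infty u^0_p u_x$ (write $u_x=-v_y$ and integrate by parts in $y'$) is valid and equivalent to the paper's direct Hardy-in-$y$ on the same integral.

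There is, however, a genuine gap in your treatment of $\int_y^\infty\mathcal P\,dy'$. Your reference to ``exactly as in (\ref{key.1})'' is not apt: (\ref{key.1}) belongs to the $u$-energy, where one estimates $\mathcal P\cdot u$, not $\psi\cdot\int_y^\infty\mathcal P$. At the $\psi$-level the Euler pieces of $u_{sx}^{(i)}$ and $v_s^{(i)}$ cannot be handled by the coefficient bounds $\|x u_{sx}^{(i)}\|_{L^\infty},\ \|x^{\frac{1}{2}}v_s^{(i)}\|_{L^\infty}$ alone. Applying Hardy in $y$ to $\|\int_y^\infty\sqrt{\epsilon}\,u^j_{ex}u\,dy'\|_{L^2_y}$ would require $\|y\sqrt{\epsilon}\,u^j_{ex}\|_{L^\infty}=\|Y u^j_{ex}\|_{L^\infty}$ to be finite, but Euler profiles do not decay as $Y\to\infty$; the same obstruction hits $\int_y^\infty v^j_e u_y\,dy'$.

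The fix (which the paper carries out around (\ref{SNEW})--(\ref{str.pr.2})) is to integrate by parts in $y'$ using $u=-\psi_{y'}$. One writes
\[
\int_y^\infty\sqrt{\epsilon}\,u^j_{ex}u\,dy'=\sqrt{\epsilon}\,u^j_{ex}(x,y)\,\psi(x,y)+\int_y^\infty\epsilon\,u^j_{exY}\,\psi\,dy',
\]
so that the first term pairs with the outer $\psi$ to give $\sqrt{\epsilon}\int u^j_{ex}\psi^2 x^{-\frac{1}{2}-2\sigma_i}$, bounded by $\sqrt{\epsilon}\,\|u^j_{ex}x\|_{L^\infty}\,\|\psi x^{-\frac{3}{4}-\sigma_i}\|_{L^2_y}^2$, while the second uses Hardy together with $\|Y u^j_{exY}x\|_{L^\infty}<\infty$ (via $u^j_{eY}=v^j_{ex}$ and Lemma \ref{lem.e.ss} with $k=2$, $s=1$). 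Similarly, $\int_y^\infty v_s^{(i)}u_{y'}\,dy'=-v_s^{(i)}(x,y)u(x,y)+\int_y^\infty u_{sx}^{(i)}u\,dy'$ via $v_{sy}^{(i)}=-u_{sx}^{(i)}$, producing the boundary term $\int v_s^{(i)}\psi\psi_y\,x^{-\frac{1}{2}-2\sigma_i}$ and reducing the remainder to the previous case. These integrations by parts are the essential new ingredient at the $\psi$-level and are not captured by (\ref{key.1}); without them the Euler contributions do not close.
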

\begin{proof}

Applying the multiplier $M = x^{-\frac{1}{2}-2\sigma_i} \psi$ to the above system yields the following terms on the left-hand side:
\begin{align}
\partial_x \int x^{-\frac{1}{2}-2\sigma_i} \psi^2 + \int x^{-\frac{3}{2}-2\sigma_i} \psi^2 + \int x^{-\frac{1}{2}-2\sigma_i} \psi_y^2 \lesssim \int \Big(\partial_x - \partial_{yy} \Big) \psi \cdot \psi x^{-\frac{1}{2}-2\sigma_i}. 
\end{align}

For the first term on the right-hand side of (\ref{str.sys.1.1}), we will now give the bound via Hardy's inequality, and (\ref{main.decay}): 
\begin{align} \nonumber
\int x^{-\frac{1}{2}-2\sigma_i} &\psi \int_y^\infty u^0_p u_x dy' dy \le ||x^{-\frac{3}{4}-\sigma_i} \psi||_{L^2_y} ||x^{\frac{1}{4}} \int_y^\infty u^0_p u_x dy' ||_{L^2_y} \\ 
& \le ||x^{-\frac{3}{4}-\sigma_i} \psi||_{L^2_y} ||x^{\frac{1}{4}-\sigma_i} y u^0_p u_x||_{L^2_y} \le \mathcal{O}(\delta) ||x^{-\frac{3}{4}-\sigma_i} \psi||_{L^2_y} ||x^{\frac{3}{4}-\sigma_i} u_x||_{L^2_y}. 
\end{align}

Next, let us address the profile terms in $\mathcal{P}$. For the first term from $\mathcal{P}$, we will split: 
\begin{align} \label{SNEW}
\int x^{-\frac{1}{2}-2\sigma_i} &\psi \int_y^\infty u^{(i)}_{sx} u \ud y' \ud y \\ \n
& =  \sum_{j=0}^{i-1} \int x^{-\frac{1}{2}-2\sigma_i} \psi \int_y^\infty \eps^\frac{j}{2} u^{j}_{px} u \ud y' \ud y + \sum_{j=1}^{i} \int x^{-\frac{1}{2}-2\sigma_i} \psi \int_y^\infty  \eps^{\frac{j}{2}} u^{j}_{ex} u \ud y' \ud y 
\end{align}

For the first term above, we apply the Hardy inequality in $y$ (it suffices to consider $j = 0$):
\begin{align} \nonumber 
\Big| \int x^{-\frac{1}{2}-2\sigma_i} \psi \int_y^\infty u^{0}_{px} u \ud y' \ud y \Big| &\le ||x^{-\frac{3}{4}-\sigma_i} \psi||_{L^2_y} || x^{\frac{1}{4}-\sigma_i} \int_y^\infty u^{0}_{px} u \ud y' ||_{L^2_y} \\ \n
& \le ||x^{-\frac{3}{4}-\sigma_i} \psi||_{L^2_y} ||x^{\frac{1}{2}} y u_{px}^{0}||_{L^\infty} || x^{-\frac{1}{4}-\sigma_i} u ||_{L^2_y} \\ \label{str.pr.1}
& \le \mathcal{O}(\delta)  ||x^{-\frac{3}{4}-\sigma_i} \psi||_{L^2_y}^2 + \mathcal{O}(\delta)  || x^{-\frac{1}{4}-\sigma_i} u ||_{L^2_y}^2. 
\end{align}

We have used the smallness which is guaranteed by (\ref{main.decay}). For the second term in (\ref{SNEW}), we will integrate by parts in $y$ and recall $\psi \xrightarrow{y \rightarrow \infty} 0$, (again it suffices to consider $j = 1$):
\begin{align} \label{SNEW.2}
\int x^{-\frac{1}{2}-2 \sigma_i} \psi \int_y^\infty \sqrt{\eps} u^1_{ex} \p_y \psi = \int x^{-\frac{1}{2}-2\sigma_i} \sqrt{\eps} u^1_{ex} \psi^2 + \int x^{-\frac{1}{2}-2\sigma_i} \psi \int_y^\infty \eps u^{1}_{exY} \psi.
\end{align}

The first term in (\ref{SNEW.2}):
\begin{align}
\Big| \int x^{-\frac{1}{2}-2\sigma_i} \sqrt{\eps} u^1_{ex} \psi^2 \Big| \le \sqrt{\eps} ||u^1_{ex} x||_{L^\infty}||x^{-\frac{3}{4}-\sigma_i} \psi||_{L^2_y}^2 \lesssim \sqrt{\eps} ||x^{-\frac{3}{4}-\sigma_i} \psi||_{L^2_y}^2. 
\end{align}

For the second term in (\ref{SNEW.2}), we apply Hardy in $y$ and appeal to estimate (\ref{e.ss.1}) with $k = 2, s = 1$:
\begin{align} \n
\Big| \int x^{-\frac{1}{2}-2\sigma_i} \psi \int_y^\infty \eps u^1_{exY} \psi \Big| &\le \sqrt{\eps} ||u^1_{exY} Yx||_{L^\infty} ||x^{-\frac{3}{4}-\sigma_i} \psi||_{L^2_y}^2 \\
& = \sqrt{\eps} ||v^1_{exx} Yx||_{L^\infty} ||x^{-\frac{3}{4}-\sigma_i} \psi||_{L^2_y}^2 \lesssim \sqrt{\eps} ||x^{-\frac{3}{4}-\sigma_i} \psi||_{L^2_y}^2. 
\end{align}

Let us now move to the second term in $\mathcal{P}$. One integration by parts in $y$ produces:
\begin{align} \label{str.pr.2}
& \int x^{-\frac{1}{2}-2\sigma_i} \psi \int_y^\infty v^{(i)}_s u_y dy' dy = \int x^{-\frac{1}{2}-2\sigma_i}  v_s^{(i)} \psi \psi_y + \int x^{-\frac{1}{2}-2\sigma_i} \psi \int_y^\infty u^{(i)}_{sx} u dy' dy.
\end{align}

For the second term in (\ref{str.pr.2}), one notices this is exactly the term on the left-hand side of (\ref{SNEW}). For the first term in (\ref{str.pr.2}), we estimate:  
\begin{align} \nonumber
\Big| \int x^{-\frac{1}{2}-2\sigma_i} v^{(i)}_s \psi \psi_y \Big| &\le ||v^{(i)}_{s} x^{\frac{1}{2}}||_{L^\infty} \Big( ||x^{-\frac{3}{4}-\sigma_i} \psi||_{L^2_y}^2 + ||x^{-\frac{1}{4}-\sigma_i} \psi_y||_{L^2_y}^2 \Big) \\
& \le \mathcal{O}(\delta) \Big( ||x^{-\frac{3}{4}-\sigma_i} \psi||_{L^2_y}^2 + ||x^{-\frac{1}{4}-\sigma_i} \psi_y||_{L^2_y}^2 \Big).
\end{align}

We have used the smallness guaranteed by (\ref{main.v}). The final term from $\mathcal{P}$ is:
\begin{align} \nonumber
\Big| \int x^{-\frac{1}{2}-2\sigma_i} \psi \int_y^\infty u^0_{py} v dy' dy \Big| &\le ||x^{-\frac{3}{4}-\sigma_i}\psi||_{L^2_y} ||x^{\frac{1}{4}-\sigma_i} \int_y^\infty u^0_{py} v||_{L^2_y} \\ \nonumber
&\le ||x^{-\frac{3}{4}-\sigma_i} \psi||_{L^2_y} x^{\frac{1}{4}-\sigma_i} ||yu^0_{py} v||_{L^2_y} & \\ \n
&\le ||y^2x^{-\frac{1}{2}}u^0_{py}||_{L^\infty} ||x^{-\frac{3}{4}-\sigma_i} \psi||_{L^2_y}||v_y x^{\frac{3}{4}-\sigma_i}||_{L^2_y} \\
& \le \mathcal{O}(\delta) ||x^{-\frac{3}{4}-\sigma_i} \psi||_{L^2_y}||v_y x^{\frac{3}{4}-\sigma_i}||_{L^2_y}.
\end{align}

Consolidating the previous estimates, taking $\delta$ small enough relative to $\sigma_i$ (and consequently relative to large $n$, according to (\ref{sigma.i})) and applying Young's inequality gives: 
\begin{align} \nonumber
\partial_x \int x^{-\frac{1}{2}-2\sigma_i} \psi^2 &+ \int x^{-\frac{3}{2}-2\sigma_i} \psi^2 + \int x^{-\frac{1}{2}-2\sigma_i} \psi_y^2 \\ \label{see.1}
& \lesssim \mathcal{O}(\delta) \int v_y^2 x^{\frac{3}{2}-2\sigma_i} + \int x^{-\frac{1}{2}-2\sigma_i} \psi \int_y^\infty \Big[ f^{(i)} + \mathcal{J} \Big].  
\end{align}

For the forcing terms, we appeal to the bounds in (\ref{f.decay.1}), coupled with (\ref{est.math.J}):
\begin{align}  
||x^{-\frac{1}{2}-2\sigma_i} \psi \int_y^\infty f^{(i)} \ud y' ||_{L^2_y} &\le ||x^{-\frac{3}{4}-\sigma_i} \psi||_{L^2_y} || x^{\frac{1}{4}-\sigma_i} \int_y^\infty f^{(i)} \ud y' ||_{L^2_y} \\ 
& \le ||x^{-\frac{3}{4}-\sigma_i} \psi||_{L^2_y} || x^{\frac{1}{4}-\sigma_i} y f^{(i)}  ||_{L^2_y} \\ \label{abnot}
& \le ||x^{-\frac{3}{4}-\sigma_i} \psi||_{L^2_y} || x^{\frac{3}{4}-\sigma_i} z f^{(i)}  ||_{L^2_y} \\ 
& \le || x^{-\frac{3}{4}-\sigma_i} \psi||_{L^2_y}  x^{-\frac{1}{2}-\sigma_i} \\
& \le \frac{1}{100,000} || x^{-\frac{3}{4}-\sigma_i} \psi||_{L^2_y}^2 + C x^{-1-\sigma'},
\end{align}

where $\sigma' > 0$. We have used (\ref{sigma.i}) via: $\sigma_i - 2\sigma_{i-1} = 3\sigma_{i-1} - 2\sigma_{i-1} = \sigma_{i-1}$, to calculate: 
\begin{align}
||x^{\frac{3}{4}-\sigma_i} zf^{(i)}||_{L^2_y} \lesssim x^{\frac{3}{4}-\sigma_i} x^{-\frac{5}{4}+2\sigma_{i-1}} =  Cx^{-\frac{1}{2}-\sigma_i + 2\sigma_{i-1}} = C x^{-\frac{1}{2}-\sigma_{i-1}}. 
\end{align}

Next, through Young's inequality
\begin{align} \n
| \int x^{-\frac{1}{2}-2\sigma_i} \psi \int_y^\infty \mathcal{J} \ud y' | &\lesssim \int x^{-\frac{1}{2}-2\sigma_i} \psi \langle y \rangle^{-N} x^{-\frac{3}{4}+\sigma_{i-1}} \\ 
& \lesssim \frac{1}{100,000} || x^{-\frac{3}{4}-\sigma_i} \psi||_{L^2_y}^2 + Cx^{-1-\sigma'},
\end{align}

where $\sigma' > 0$. Inserting these into (\ref{see.1}), relabeling $\psi_y = u$, and integrating in $x$, one obtains the desired result in (\ref{see.2}).

\end{proof}

From here, we may repeat the calculations in Lemmas \ref{lemma.pr.1.e} - \ref{lemma.pr.1.p}:
\begin{lemma} The solutions $[u,v]$ to the system (\ref{sys.pr.i}) satisfy the following inequality:
\begin{align} \label{see.3}
\sup_x \int u^2 x^{\frac{1}{2}-2\sigma_i} + \int \int u_y^2 x^{\frac{1}{2}-2\sigma_i} \lesssim C  + \mathcal{O}(\delta) \int \int v_y^2 x^{\frac{3}{2}-2\sigma_i} + \int \int u^2 x^{-\frac{1}{2}-2\sigma_i}.
\end{align}
\end{lemma}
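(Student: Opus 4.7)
The approach parallels Lemma \ref{lemma.pr.1.e}, but with the enhanced $x$-weight $x^{\frac{1}{2}-2\sigma_i}$ made possible by the improved boundary decay $|u^i_e(x,0)| \lesssim x^{-\frac{3}{4}+\sigma_{i-1}}$ from (\ref{en.e}). The plan is to apply the multiplier $M = u\, x^{\frac{1}{2}-2\sigma_i}$ to the homogenized system (\ref{npr.1}). Standard integration by parts, using the boundary conditions $u(x,0) = 0$ and rapid decay of $u$ as $y \to \infty$, produces on the left the three positive terms $\frac{1}{2}\partial_x \int (1+u^0_p) u^2\, x^{\frac{1}{2}-2\sigma_i}$ and $\int u_y^2\, x^{\frac{1}{2}-2\sigma_i}$, along with a \emph{negative} boundary term $-\frac{1}{2}(\frac{1}{2} - 2\sigma_i) \int (1+u^0_p) u^2\, x^{-\frac{1}{2}-2\sigma_i}$ arising from $\partial_x$ falling on the weight, which becomes the majorant $\int\int u^2\, x^{-\frac{1}{2}-2\sigma_i}$ on the right-hand side of (\ref{see.3}).

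The profile terms in $\mathcal{P}$ are handled as in the $i=1$ analysis. The terms $u^{(i)}_{sx} u$ and $v^{(i)}_s u_y$ absorb into the left-hand side using the uniform smallness $\|x u^{(i)}_{sx}\|_\infty + \|x^{1/2} v^{(i)}_s\|_\infty \le \mathcal{O}(\delta)$ (guaranteed by (\ref{main.decay}), (\ref{main.v}), (\ref{euler.DECAY.main}), (\ref{i.Euler.decay}), together with the $\sqrt{\eps}$ prefactors on the Eulerian contributions). The only term that loses a power of $x$ is the convective one, $u^0_{py} v$, which we estimate via
\[
\Big| \int u^0_{py} v \cdot u\, x^{\frac{1}{2}-2\sigma_i} \Big| \le \|y u^0_{py}\|_{L^\infty} \|v_y\, x^{\frac{3}{4}-\sigma_i}\|_{L^2_y} \|u\, x^{-\frac{1}{4}-\sigma_i}\|_{L^2_y},
\]
using Hardy in $y$ (valid since $v(x,0) = 0$) to trade the factor $y$ absorbed by $u^0_{py}$ for a $\partial_y$. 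Young's inequality then produces the terms $\mathcal{O}(\delta) \int v_y^2\, x^{\frac{3}{2}-2\sigma_i}$ and $\mathcal{O}(\delta) \int u^2\, x^{-\frac{1}{2}-2\sigma_i}$ on the right, matching (\ref{see.3}) exactly.

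The forcing contributions $f^{(i)}$ and $\mathcal{J}$ are where the selection $\sigma_i = 3\sigma_{i-1}$ from (\ref{sigma.i}) becomes essential. For $f^{(i)}$, the bound (\ref{f.decay.1}) gives $\|f^{(i)}\|_{L^2_y} \lesssim x^{-\frac{5}{4}+2\sigma_{i-1}}$, and Young's inequality applied to $\int f^{(i)} u\, x^{\frac{1}{2}-2\sigma_i}$ generates a source of order $x^{-1-2\sigma_i+4\sigma_{i-1}} = x^{-1-2\sigma_{i-1}}$, which is $x$-integrable. For $\mathcal{J}$, the bound (\ref{est.math.J}) together with the Hardy inequality in $y$ (again using $u(x,0)=0$) gives
\[
\Big| \int \mathcal{J} u\, x^{\frac{1}{2}-2\sigma_i} \Big| \lesssim x^{-\frac{1}{4}+\sigma_{i-1}-2\sigma_i} \|u_y\|_{L^2_y},
\]
and a further Young splitting leaves the absorbable term $\frac{1}{C}\int u_y^2\, x^{\frac{1}{2}-2\sigma_i}$ plus an $x$-integrable source $x^{-1-4\sigma_{i-1}}$. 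Integrating the resulting pointwise differential inequality in $x$ from $1$ to $X_1$, and upgrading the endpoint term to $\sup_{x \le X_1}$ by the usual monotonicity argument (as in the derivation of (\ref{non.sup})), yields (\ref{see.3}).

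The main obstacle is the delicate bookkeeping of the weight $x^{\frac{1}{2}-2\sigma_i}$, since now the forcing $\mathcal{J}$ carries only $x^{-\frac{3}{4}+\sigma_{i-1}}$-decay rather than the $x^{-\frac{1}{2}}$-decay of the $i=1$ case. The scheme $\sigma_i = 3\sigma_{i-1}$ is precisely tuned so that $2\sigma_{i-1} < \sigma_i$, providing the small positive margin needed for $x$-integrability of both the $f^{(i)}$ and $\mathcal{J}$ sources; without this loss no further iteration would be possible.
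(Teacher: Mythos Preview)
Your proposal is correct and follows essentially the same approach as the paper: apply the multiplier $u\,x^{\frac{1}{2}-2\sigma_i}$ to the homogenized system (\ref{npr.1}), integrate by parts to produce the $u_y^2$ energy together with the lower-order $u^2 x^{-\frac{1}{2}-2\sigma_i}$ term (which is placed on the right-hand side and later absorbed by the stream-function estimate (\ref{see.2})), handle the $\mathcal{P}$ terms via Hardy in $y$ for the convective piece, and use the relation $\sigma_i = 3\sigma_{i-1}$ to guarantee $x$-integrability of the $f^{(i)}$ and $\mathcal{J}$ sources. The paper's proof is organized identically, with only cosmetic differences in how the Young splittings are bookkept.
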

\begin{proof}
We multiply both sides of equation (\ref{npr.1}) by $ux^{\frac{1}{2}-2\sigma_i}$ and integrate by parts. This gives on the left-hand side: 
\begin{align} \n
\int \Big((1+u^0_p)\partial_x - \partial_{yy} \Big) u \cdot u x^{\frac{1}{2}-2\sigma_i} &\gtrsim \partial_x \int (1+u^0_p) u^2 x^{\frac{1}{2}-2\sigma_i} + \int u_y^2 x^{\frac{1}{2}-2\sigma_i} \\ \label{cruc.PN}
&  - \int u^0_{px} u^2 x^{\frac{1}{2}-2\sigma_i} - \int  (1 + u^0_p)  u^2 x^{-\frac{1}{2}-2\sigma_i}. 
\end{align}

One now sees that it is crucial to have controlled the final term in (\ref{cruc.PN}), which is the purpose of (\ref{see.1}). Moving next to the sequence of terms in $\mathcal{P}(u,v)$:
\begin{align}
&| \int u^{(i)}_{sx} u^2 x^{\frac{1}{2}-2\sigma_i} | \le ||u^{(i)}_{sx} x||_{L^\infty}  \int u^2 x^{-\frac{1}{2}-2\sigma_i} \le \mathcal{O}(\delta) ||u x^{-\frac{1}{4}-\sigma_i}||_{L^2_y}^2, \\ \n
&| \int v^{(i)}_s u_y u x^{\frac{1}{2}-2\sigma_i} | = | \int \frac{v^{(i)}_{sy}}{2} u^2 x^{\frac{1}{2}-2\sigma_i} | \\ 
& \hspace{30 mm} \lesssim ||u^{(i)}_{sx} x||_{L^\infty}  ||u x^{-\frac{1}{4}-\sigma_i}||_{L^2_y}^2 \le \mathcal{O}(\delta)  ||u x^{-\frac{1}{4}-\sigma_i}||_{L^2_y}^2, \\ \n
& | \int u^0_{py} uv x^{\frac{1}{2}-2\sigma_i} | \lesssim ||u^0_{py} y||_{L^\infty} ||\frac{v}{y} x^{\frac{3}{4}-\sigma_i}||_{L^2_y} ||ux^{-\frac{1}{4}-\sigma_i}||_{L^2_y} \\ 
&  \hspace{30 mm} \lesssim \mathcal{O}(\delta) ||v_y x^{\frac{3}{4}-\sigma_i}||_{L^2_y} ||ux^{-\frac{1}{4}-\sigma_i}||_{L^2_y}.
\end{align}

We can summarize the above terms by writing: 
\begin{align}
| \int \mathcal{P} \cdot u x^{\frac{1}{2}-\sigma_i} | \le \mathcal{O}(\delta) ||v_y x^{\frac{3}{4}-\sigma_i} ||_{L^2_y}^2 + \mathcal{O}(\delta) ||ux^{-\frac{1}{4}-\sigma_i}||_{L^2_y}^2. 
\end{align}

Note that the smallness in the above estimates is guaranteed by (\ref{main.decay}), (\ref{main.v}), and (\ref{euler.DECAY.main}).  We now move to the forcing terms, $f$ and $\mathcal{J}$:
\begin{align} \n
| \int f^{(i)} \cdot ux^{\frac{1}{2}-2\sigma_i} | &\le ||f^{(i)} x^{\frac{3}{4}-\sigma_i}||_{L^2_y} ||ux^{-\frac{1}{4}-\sigma_i}||_{L^2_y} \lesssim  x^{-\frac{1}{2}-\sigma'} ||ux^{-\frac{1}{4}-\sigma_i}||_{L^2_y} \\
& \le  Cx^{-1-\sigma'} + \frac{1}{100,000} ||ux^{-\frac{1}{4}-\sigma_i}||_{L^2_y}^2. 
\end{align}

Next, we use the Hardy inequality:
\begin{align} \nonumber
| \int \mathcal{J} \cdot ux^{\frac{1}{2}-2\sigma_i} | &\lesssim \int \langle y \rangle^{-N} x^{-\frac{3}{4}+\sigma_{i-1}} |u|x^{\frac{1}{2}-2\sigma_i} \\ \n
& \lesssim ||\langle y \rangle^{-N+1}||_{L^2_y} x^{-\frac{1}{2}-\sigma'}  ||\frac{u}{y}||_{L^2_y} x^{\frac{1}{4}-\sigma_i} \\
& \le C x^{-1-\sigma'} + \frac{1}{100,000} ||u_y x^{\frac{1}{4}-\sigma_i}||_{L^2_y}^2. 
\end{align}

Combining the above estimates together, one obtains, for some $\sigma' > 0$ small,
\begin{align}
\partial_x \int (1+u^0_p) u^2 x^{\frac{1}{2}-2\sigma_i} + \int u_y^2 x^{\frac{1}{2}-2\sigma_i} \lesssim C x^{-1-\sigma'} + \mathcal{O}(\delta) \int v_y^2 x^{\frac{3}{2}-2\sigma_i} + \int u^2 x^{-\frac{1}{2}-2\sigma_i}, 
\end{align}

and so integrating in $x$ gives the desired result.

\end{proof}

The third step in establishing the desired bounds is: 
\begin{lemma} For solutions $[u,v]$ to the system in (\ref{npr.1}) one has the following positivity estimate: 
\begin{align} \label{see.5}
\sup_x \int u_y^2 x^{\frac{3}{2}-2\sigma_i} + \int \int u_x^2 x^{\frac{3}{2}-2\sigma_i} \lesssim C + \int \int u_y^2 x^{\frac{1}{2}-2\sigma_i} + \mathcal{O}(\delta) \int \int u^2 x^{-\frac{1}{2}-2\sigma_i}.
\end{align} 
\end{lemma}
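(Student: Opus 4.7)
The plan is to mimic the positivity estimate for the Prandtl-1 layer carried out in Lemma \ref{lemma.pr.1.p}, but now using the weight appropriate to the enhanced decay rate available at the $i$-th level. Specifically, we apply the multiplier $M = u_x x^{\frac{3}{2}-2\sigma_i}$ to the homogenized system (\ref{npr.1}). The diffusion term, after an integration by parts in $y$, produces the good quantities we wish to control:
\begin{align*}
-\int u_{yy}\cdot u_x x^{\frac{3}{2}-2\sigma_i} = \tfrac{\partial_x}{2}\int u_y^2 x^{\frac{3}{2}-2\sigma_i} - \tfrac{3-4\sigma_i}{4}\int u_y^2 x^{\frac{1}{2}-2\sigma_i},
\end{align*}
while the convection term gives, modulo the small $\int u^0_{px} u_x^2$ contribution, the coercive quantity $\int u_x^2 x^{\frac{3}{2}-2\sigma_i}$. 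The first negative term on the right above is exactly the quantity already controlled in (\ref{see.3}), which explains the structure of the inequality we seek.

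Next, I would treat the three terms in $\mathcal{P}(u,v) = u^{(i)}_{sx}u + u^0_{py}v + v^{(i)}_s u_y$. For the first and third terms, the Prandtl and Euler bounds from Proposition \ref{thm.euler.1}, Proposition \ref{i.Euler.lay}, and Proposition \ref{Pp1} supply the smallness $\|xu^{(i)}_{sx}\|_{L^\infty}, \|x^{\frac{1}{2}} v^{(i)}_s\|_{L^\infty} \lesssim \mathcal{O}(\delta)$, so after Cauchy--Schwarz and Young's inequality these are absorbed into $\mathcal{O}(\delta)\int u^2 x^{-\frac{1}{2}-2\sigma_i}$ and a small multiple of $\int u_x^2 x^{\frac{3}{2}-2\sigma_i}$. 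The crucial term is $\int u^0_{py} v \cdot u_x x^{\frac{3}{2}-2\sigma_i}$; here the same mechanism as in (\ref{key.1}) applies: trade the $y$-weight into $u^0_{py}$ (which obeys $\|y u^0_{py}\|_{L^\infty}\le\mathcal{O}(\delta)$ by (\ref{main.decay})) and invoke Hardy's inequality for $v$ (valid since $v(x,0)=0$) to produce $v_y$. This gives $\mathcal{O}(\delta)\|v_y x^{\frac{3}{4}-\sigma_i}\|_{L^2}\|u_x x^{\frac{3}{4}-\sigma_i}\|_{L^2}$, which by Young's inequality feeds back into the left-hand side (small multiple) plus a term of the form $\mathcal{O}(\delta)\int v_y^2 x^{\frac{3}{2}-2\sigma_i}$ that, when chained with (\ref{see.2}) and (\ref{see.3}), is consistent with the stated bound after choosing $\delta$ small relative to $\sigma_i$.

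For the forcing, the $f^{(i)}$ contribution is handled by pairing (\ref{f.decay.1}) with $\|u_x x^{\frac{3}{4}-\sigma_i}\|_{L^2}$ and using $-\frac{5}{4}+2\sigma_{i-1}+\frac{3}{4}-\sigma_i = -\frac{1}{2}-\sigma_{i-1}$ (from the selection (\ref{sigma.i})) to ensure $x$-integrability. The more delicate piece is $\mathcal{J}$, whose leading component contains $\chi''(y)u^i_e(x,0)$; since $u^i_e(x,0)$ is not differentiated, direct pairing against $u_x$ is not integrable. The fix is exactly the successive integration by parts used in (\ref{suc.1})--(\ref{suc.5}): transfer the $\partial_x$ to $u^i_e(x,0)$ and $x^{\frac{3}{2}-2\sigma_i}$, which by (\ref{en.e}) and its $x$-derivative (enhanced by a factor of $x^{-1}$) now decay like $x^{-\frac{7}{4}+\sigma_{i-1}}$, and carefully absorb the boundary contributions at $x = X_1$ into a small multiple of $\sup_x \int u_y^2 x^{\frac{3}{2}-2\sigma_i}$, using $|u^i_e(X_1,0)|\lesssim X_1^{-\frac{3}{4}+\sigma_{i-1}}$. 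The remaining terms of $\mathcal{J}$ either carry an extra $x$-derivative or the decaying factor $v^{(i)}_s$, so they are strictly better.

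Integrating the resulting differential inequality from $x=1$ to $x=X_1$ and then passing to the supremum by the monotonicity argument described after (\ref{non.sup}) yields (\ref{see.5}). The main obstacle is the $\mathcal{J}$-term, where one must carry out the integration by parts carefully and verify that the enhanced Euler boundary decay (\ref{en.e}) beats the weight $x^{\frac{3}{2}-2\sigma_i}$; everything else is a bookkeeping adaptation of the Prandtl-1 argument with a heavier $x$-weight.
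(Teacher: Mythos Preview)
Your proposal is correct and follows essentially the same approach as the paper: apply the multiplier $u_x x^{\frac{3}{2}-2\sigma_i}$, extract the positive $u_x^2$ and $\partial_x\!\int u_y^2$ contributions, estimate the $\mathcal{P}$ terms via the profile smallness and Hardy, handle $f^{(i)}$ through the $\sigma_i$ selection, and treat $\mathcal{J}$ by the successive integration by parts of (\ref{suc.1})--(\ref{suc.5}) adapted to the enhanced decay (\ref{en.e}). One simplification: the $\mathcal{O}(\delta)\int v_y^2 x^{\frac{3}{2}-2\sigma_i}$ term you flag is, by $v_y=-u_x$, just $\mathcal{O}(\delta)\int u_x^2 x^{\frac{3}{2}-2\sigma_i}$ and is absorbed directly into the left-hand side without needing to chain back to (\ref{see.2})--(\ref{see.3}).
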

\begin{proof}
We apply the multiplier $u_x x^{\frac{3}{2}-2\sigma_i}$ to the system (\ref{npr.1}). This gives on the left-hand side: 
\begin{align}
\int \Big( (1+u^0_p) \partial_x - \partial_{yy} \Big) u \cdot u_x x^{\frac{3}{2}-2\sigma_i} \gtrsim \int (1+u^0_p) u_x^2 x^{\frac{3}{2}-2\sigma_i} + \partial_x \int u_y^2 x^{\frac{3}{2}-2\sigma_i} - \int u_y^2 x^{\frac{1}{2}-2\sigma_i}
\end{align}

Let us now move to the terms in $\mathcal{P}$:
\begin{align} \n
&| \int u^{(i)}_{sx} u u_x x^{\frac{3}{2}-2\sigma_i} | \le ||u^{(i)}_{sx} x||_{L^\infty} || u x^{-\frac{1}{4}-\sigma_i}||_{L^2_y} ||u_x x^{\frac{3}{4}-\sigma_i}||_{L^2_y} \\ 
& \hspace{30 mm} \lesssim \mathcal{O}(\delta) || u x^{-\frac{1}{4}-\sigma_i}||_{L^2_y} ||u_x x^{\frac{3}{4}-\sigma_i}||_{L^2_y},\\ \n
&| \int v^{(i)}_s u_y u_x x^{\frac{3}{2}-2\sigma_i} | \le ||v_s^{(i)} x^{\frac{1}{2}}||_{L^\infty} ||u_y x^{\frac{1}{4}-\sigma}||_{L^2_y} ||u_x x^{\frac{3}{4}-\sigma_i}||_{L^2_y} \\  
& \hspace{30 mm}  \le \mathcal{O}(\delta) ||u_y x^{\frac{1}{4}-\sigma_i}||_{L^2_y} ||u_x x^{\frac{3}{4}-\sigma_i}||_{L^2_y},  \\ \n
& | \int u^0_{py} u_x v x^{\frac{3}{2}-2\sigma_i} | \lesssim ||u^0_{py} y||_{L^\infty} ||\frac{v}{y} x^{\frac{3}{4}-\sigma_i}||_{L^2_y} ||u_xx^{\frac{3}{4}-\sigma_i}||_{L^2_y} \\ 
&  \hspace{30 mm} \lesssim \mathcal{O}(\delta) ||v_y x^{\frac{3}{4}-\sigma_i}||_{L^2_y}^2.
\end{align}

We can summarize this contribution via: 
\begin{align}
| \int \mathcal{P} \cdot u_x x^{\frac{3}{2}-2\sigma_i} | \le \mathcal{O}(\delta) ||u_x x^{\frac{3}{4}-\sigma_i}||_{L^2_y}^2 + \mathcal{O}(\delta) ||u_y x^{\frac{1}{4}-\sigma_i}||_{L^2_y}^2 + \mathcal{O}(\delta) ||ux^{-\frac{1}{4}-\sigma_i}||_{L^2_y}^2. 
\end{align}

Again, the smallness is guaranteed by (\ref{main.decay}), (\ref{main.v}), and (\ref{euler.DECAY.main}). Next, the forcing is given in the same way, due to the choice of $\sigma_i$ relative to $\sigma_{i-1}$:
\begin{align} \n
| \int f^{(i)} \cdot u_x x^{\frac{3}{2}-2\sigma_i} | &\le ||f^{(i)} x^{\frac{3}{4}-\sigma_i}||_{L^2_y}  ||u_x x^{\frac{3}{4}-\sigma_i}||_{L^2_y} \\
& \lesssim x^{-\frac{1}{2}-\sigma'} ||u_x x^{\frac{3}{4}-\sigma_i}||_{L^2_y} \lesssim x^{-1-\sigma'} + \frac{1}{100,000} ||u_x x^{\frac{3}{4}-\sigma_i}||_{L^2_y}^2. 
\end{align}

We now integrate up in $x$ up to some fixed point $X_1$:
\begin{align}  \label{PREN.J}
\int u_y^2(X_1) X_1^{\frac{3}{2}-2\sigma_i} &+ \int_1^{X_1} \int u_x^2 x^{\frac{3}{2}-2\sigma_i} \\ \n
& \lesssim C + \int_1^{X_1} \int u_y^2 x^{\frac{1}{2}-2\sigma_i} + \mathcal{O}(\delta) \int_1^{X_1} \int u^2 x^{-\frac{1}{2}-2\sigma_i} + \int_1^{X_1} \int \mathcal{J} \cdot u_x x^{\frac{3}{2}-2\sigma_i}. 
\end{align}

The last part is to control the $\mathcal{J}$ term above:
\begin{align} 
 &\int_1^{X_1} \int \chi''(y) x^{\frac{3}{2}-2\sigma_i} u^{i}_e(x,0) u_x \\ \nonumber
 &= -\int_1^{X_1} \int \chi''(y) u \partial_x\{ u^i_e(x,0) x^{\frac{3}{2}-2\sigma_i} \} - \int_{x=1}  u^{i}_e(0,0) u \chi''(y) dy \\ 
 & \hspace{40 mm} + \int_{x = X_1} X_1^{\frac{3}{2}-2\sigma_i} u^{i}_e(X_1,0) \chi''(y) u(X_1,y) dy \\ \nonumber
 & = -\int_1^{X_1} \int \chi''(y) u \partial_x\{ u^{i}_e(x,0) x^{\frac{3}{2}-2\sigma_i} \} + C \\ 
 & \hspace{40 mm} - C \int_{x=X_1} X_1^{\frac{3}{2}-2\sigma_i} u^{i}_e(X_1, 0)\chi'(y) u_y(X_1,y) dy \\ 
 &\le \int_1^{X_1} \int \chi'(y) u_y \partial_x  \{u^{i}_e(x,0) x^{\frac{3}{2}-2\sigma_i} \} + C + \frac{1}{100,000} \int_{x=X_1} X_1^{\frac{3}{2}-2\sigma_i} u_y^2(X_1) \\
 & \le C ||u_y x^{\frac{1}{4}-\sigma_i}||_{L^2} ||\chi'(y) x^{-\frac{1}{2}-\sigma_i}||_{L^2} + C \le C + \frac{1}{100,000} ||u_y x^{\frac{1}{4}-\sigma_i}||_{L^2}^2. 
\end{align}

We have absorbed the $x$ boundary contribution, $\int u_y^2 X_1^{\frac{3}{2}-2\sigma_i}$ into the left-hand side of (\ref{PREN.J}). For the other terms in $\mathcal{J}$, we can perform a similar calculation. As $X_1$ is arbitrary, this then implies the desired result, (\ref{see.5}).

\end{proof}

\begin{proof}[Proof of Proposition \ref{Lpi}]

Consolidating estimates the three estimates (\ref{see.2}), (\ref{see.3}), and (\ref{see.5}), and subsequently taking $\delta$ small enough, one obtains:
\begin{align} \n
\sup_x \int \psi^2 x^{-\frac{1}{2}-2\sigma_i} + &u^2 x^{\frac{1}{2}-2\sigma_i} + u_y^2 x^{\frac{3}{2}-2\sigma_i}  \\ 
& + \int \int \psi^2 x^{-\frac{3}{2}-2\sigma_i} + u^2 x^{-\frac{1}{2}-2\sigma_i} + u_y^2 x^{\frac{1}{2}-2\sigma_i} + u_x^2 x^{\frac{3}{2}-2\sigma_i} \lesssim \mathcal{O}(\delta; n). 
\end{align}

It is a standard matter now to obtain weighted in $z$ estimates, and to also successively differentiate the system in $x$ and repeat the previously established bounds. This procedure establishes the desired result fpr $u^i_p$ in (\ref{pr.i.b}). To estimate $v^i_p$, we take: 
\begin{align} \label{defnvip}
v^i_p = \int_y^\infty u^i_{px} \ud y',
\end{align}

and repeat the procedure as in estimate (\ref{pr.v.unif.1}). 

\end{proof}

\subsection{Final Prandtl Layer} \label{Final.Pr.Sub}

We now construct the final Prandtl layer, $[u^n_p, v^n_p]$. This final, $n$'th layer is slightly different because $v^{n}_p$ will be taken to satisfy the boundary condition: $v^n_p(x,0) = 0$.  According to (\ref{sys.pr.i}), the system for the $n$'th layer is: 
\begin{align} \label{sys.pr.minus.0}
&(1+u^0_p) u_{px} + u_{sx}^{(n)}u_p + v_s^{(n)} u_{py} + u^0_{py} v_p + P^{n}_{px} = u_{pyy} + f^{(n)}, \\ \label{pr.n.bc.1}
&[u_p(x,0), v_p(x,0)] = [-u^{n}_e(x,0), 0], \hspace{3 mm} \lim_{y \rightarrow \infty} u_p(x,y) = 0, \hspace{3 mm} u_p(1,y) = U_{n}(y), \\
& P^{n}_{py} = 0, \hspace{3 mm} u_{px} + v_{py} = 0. 
\end{align}

As usual from the Prandtl layers, by evaluating the equation at $y = \infty$, it is clear that the leading order Prandtl pressure is constant, that is  $P^{n}_{p} = 0$. Once $u_p, v_p$ are constructed to solve (\ref{sys.pr.minus.0}), we shall cut them off, thereby defining $[u^n_p, v^n_p]$. The relevant remainders are given in (\ref{Rude.n.INTRO}) - (\ref{Rvde.n.INTRO}), which we recall here for convenience: 
\begin{definition} The $n$'th remainder is denoted by: 
\begin{align} \label{Rude.n}
R^{u,n} &:= -\Delta_\epsilon \bar{u}_s^{(n)} + \bar{u}_s^{(n)} \bar{u}_{sx}^{(n)} + \bar{v}_s^{(n)} \bar{u}_{sy}^{(n)} + \bar{P}_{sx}^{(n)}, \\ \label{Rvde.n}
R^{v,n} &:=  -\Delta_\epsilon \bar{v}_s^{(n)} + \bar{u}_s^{(n)} \bar{v}_{sx}^{(n)} +  \bar{v}^{(n)}_s \bar{v}^{(n)}_{sy}   + \frac{\partial_y}{\epsilon} \bar{P}_s^{(n)}.
\end{align}
\end{definition}

\begin{remark} We refer the reader to Remark \ref{rmk.rem}. In this case, there is no $n+1$'th Euler construction, which explains the definition in (\ref{Rude.n}). This is seen as the remainder which is contributed to the next order in the specification of system (\ref{EQ.NSR.1}) - (\ref{defn.Nu}). 
\end{remark}

Using similar arguments to (\ref{comp.next}) - (\ref{comp.2.next}), we have the following errors: 
\begin{align} \label{runreef}
R^{u,n} &= \epsilon^{\frac{n}{2}}\Big[ \epsilon u^{n}_{pxx} + v^{n}_p\Big(\sum_{j=1}^{n} \epsilon^{\frac{j}{2}} \{u^j_{py} + \sqrt{\epsilon}u^j_{eY}\} \Big) + u^{n}_{px} \sum_{j=1}^{n} \epsilon^{\frac{j}{2}}\{u^j_e + u^j_p \}+ \mathcal{E}^{(n)}\Big], \\ \label{rvnreef}
R^{v,n} &= \epsilon^{\frac{n}{2}}\Big[ -\Delta_\epsilon v^{n}_p + u_s^{(n)} v^{n}_{px} + v^{(n)}_{sx} u^{n}_p + v_s^{(n)} v_{py}^{n} + v_{sy}^{(n)} v_p^{n} + \epsilon^{\frac{n}{2}} u^{n}_p v^{n}_{px} + \epsilon^{\frac{n}{2}} v^{n}_p v^{n}_{py} \Big],
\end{align}

Here $\mathcal{E}^{(n)} = \mathcal{E}^{(n)}(u_p, v_p)$ is an error term created by cutting off these layers, which will be defined in (\ref{error.n}). One can repeat the procedure used to construct the previous Prandtl layers to conclude:

\begin{proposition} \label{PropNN} Let $\sigma_n$ be defined according to (\ref{sigma.i}). The Prandtl layer, $u_p$ constructed to satisfy the equation (\ref{sys.pr.minus.0}) together with the boundary conditions in (\ref{pr.n.bc.1}), satisfies: 
\begin{align} \label{pr.n.bound}
||x^{\frac{1}{4}} z^m \partial_x^k u_p||_{P_k(\sigma_n)} \le C(k,m,n). 
\end{align}
\end{proposition}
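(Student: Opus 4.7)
The plan is to follow the same three-step energy/positivity/stream-function scheme used in the proof of Proposition \ref{Lpi}, with only minor adjustments reflecting that the normal boundary condition here is the homogeneous one, $v_p(x,0) = 0$, while the tangential boundary data has decay $|u^n_e(x,0)| \lesssim x^{-\frac{3}{4}+\sigma_{n-1}}$ coming from (\ref{i.Euler.decay}). First I would homogenize the boundary data exactly as in (\ref{pr.1.hom}) by setting
\begin{equation*}
u = u_p + \chi(y) u^n_e(x,0), \qquad v = v_p + u^n_{ex}(x,0) I_\chi(y),
\end{equation*}
where $\chi$ is a mean-zero cutoff with $\chi(0) = 1$. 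Note that because $v_p(x,0) = 0$ here, one does \emph{not} need to subtract a $v_p(x,0)$ trace, which slightly simplifies the setup relative to (\ref{pr.1.hom}). The resulting system for $(u,v)$ has the same form as (\ref{npr.1}), with a forcing $\mathcal{J}$ enjoying the decay $|\mathcal{J}| \lesssim \langle y\rangle^{-N} x^{-\frac{3}{4}+\sigma_{n-1}}$ exactly as in (\ref{est.math.J}), and with $f^{(n)}$ obeying (\ref{f.decay.1}) with $i = n$.

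The three key estimates are then obtained in sequence, in direct analogy with the three lemmas leading to (\ref{see.2}), (\ref{see.3}), (\ref{see.5}). First, I would introduce the stream function $\psi(x,y) := \int_y^\infty u(x,y')\,dy'$ and apply the multiplier $M = x^{-\frac{1}{2}-2\sigma_n} \psi$ to the stream-function equation. The stabilizing terms generated are $\partial_x \int x^{-\frac{1}{2}-2\sigma_n}\psi^2$, $\int x^{-\frac{3}{2}-2\sigma_n}\psi^2$, and $\int x^{-\frac{1}{2}-2\sigma_n} u^2$. The profile contributions are absorbed using the smallness bounds (\ref{main.decay}), (\ref{main.v}), (\ref{euler.DECAY.main}), and the convective term $\int u^0_{py} v \cdot$~(multiplier) is handled via Hardy's inequality exactly as in (\ref{key.1}), producing the $\mathcal{O}(\delta) \|v_y x^{\frac{3}{4}-\sigma_n}\|_{L^2_y}^2$ term that must be recaptured in the positivity step. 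The forcing is controlled via (\ref{f.decay.1}), with the selection $\sigma_n = 3\sigma_{n-1}$ from (\ref{sigma.i}) ensuring that, as in (\ref{abnot}), the $x$-integral converges with a spare $x^{-1-\sigma'}$ power.

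Second, I would apply the energy multiplier $M = u \cdot x^{\frac{1}{2}-2\sigma_n}$ to (\ref{npr.1}) to produce control of $\sup_x \int u^2 x^{\frac{1}{2}-2\sigma_n}$ and $\int\int u_y^2 x^{\frac{1}{2}-2\sigma_n}$; crucially, the term $\int (1+u^0_p) u^2 x^{-\frac{1}{2}-2\sigma_n}$ that falls out of the integration by parts is precisely what is majorized by the stream function estimate of Step 1, which is the reason stream-function bounds must be proven first. Third, I would apply the positivity multiplier $M = u_x \cdot x^{\frac{3}{2}-2\sigma_n}$ to recover $\sup_x \int u_y^2 x^{\frac{3}{2}-2\sigma_n}$ and $\int\int u_x^2 x^{\frac{3}{2}-2\sigma_n}$; the $\mathcal{J}$ contribution is handled via the successive integration-by-parts trick of (\ref{suc.1})--(\ref{suc.5}), using $|u^n_e(x,0)| \lesssim x^{-\frac{3}{4}+\sigma_{n-1}}$ and the spare decay in $\sigma_n$ to close. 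Combining the three estimates and choosing $\delta$ small relative to $\sigma_n$ (hence relative to $n$) yields the unweighted ($m=0$, $k=0$) bound.

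Finally, the weighted ($z^m$) and higher-order ($\partial_x^k$) versions follow by standard iteration: each application of $\partial_x$ to the system adds a factor of $x^{-1}$ to all profile coefficients and to the forcing (thanks to the decay rates in Corollaries \ref{cp1}--\ref{cp2}, Proposition \ref{thm.euler.1}, Proposition \ref{i.Euler.lay}, and Proposition \ref{Lpi}), which matches the extra $x^{2k}$ weight in the $P_k$ norm (\ref{norm.Pk}), while the $z^m$ weights propagate under the parabolic flow and produce only lower-order commutator terms that are absorbed inductively in $m$. The initial-data compatibility is assumed via the higher-order conditions of Remark \ref{hoc2}. The main obstacle is the tight balance between the decay rate of the boundary data $|u^n_e(x,0)| \lesssim x^{-\frac{3}{4}+\sigma_{n-1}}$, the forcing decay rate $\|f^{(n)}\|_{L^2_y} \lesssim x^{-\frac{5}{4}+2\sigma_{n-1}}$, and the weight $x^{\frac{1}{4}}$ in (\ref{pr.n.bound}); this is why the geometric choice $\sigma_n = 3\sigma_{n-1}$ is essential, as it provides just enough slack to make every $x$-integral converge with a power $x^{-1-\sigma'}$ for some $\sigma' > 0$.
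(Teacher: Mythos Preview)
Your proposal is correct and matches the paper's approach exactly: the paper simply states ``One can repeat the procedure used to construct the previous Prandtl layers to conclude'' Proposition~\ref{PropNN}, and you have spelled out precisely that repetition, including the correct observation that $v_p(x,0)=0$ removes the need to subtract a boundary trace in the homogenization and that the decay $|u^n_e(x,0)|\lesssim x^{-\frac34+\sigma_{n-1}}$ together with $\sigma_n=3\sigma_{n-1}$ closes the forcing estimates.
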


Let us summarize the decay rates which result from this construction, by recalling the definition of our Prandtl norms, $P_k$, given in (\ref{norm.P}) - (\ref{norm.Pk}):
\begin{corollary} For any $k, j, M \ge 0$,  the profiles $u_p$ satisfy the following decay rates: 
\begin{align} \label{precut.1}
x^{k+\frac{j}{2}+\frac{1}{2}-\sigma_n}|| z^M \partial_x^k \partial_y^j u_p||_{L^\infty_y} + x^{k+\frac{j}{2}+\frac{1}{4}-\sigma_n} ||z^M \partial_x^k \partial_y^j u_p||_{L^2_y} \le C(k,j,M, n). 
\end{align}
\end{corollary}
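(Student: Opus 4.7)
The corollary is a direct unpacking of the Prandtl-norm bound of Proposition \ref{PropNN}, combined with Sobolev embedding in $y$ (as in Corollary \ref{cp2}) and the equation (\ref{sys.pr.minus.0}) used to trade $y$-derivatives for $x$-derivatives.

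\emph{Step 1 ($L^2_y$ estimates for $j=0,1$).} Opening the definition (\ref{norm.Pk}) of $P_k(\sigma_n)$ applied to the weighted quantity $x^{1/4} z^M \partial_x^k u_p$, the supremum-in-$x$ portion of Proposition \ref{PropNN} reads
\begin{align*}
\sup_{x \ge 1} x^{2k+\tfrac{1}{2}-2\sigma_n} \|z^M \partial_x^k u_p\|_{L^2_y}^2 + \sup_{x \ge 1} x^{2k+\tfrac{3}{2}-2\sigma_n}\|z^M \partial_x^k \partial_y u_p\|_{L^2_y}^2 \le C(k,M,n),
\end{align*}
where the cross-term $\p_y z^M = M z^{M-1}/\sqrt{x}$ produced by the $y$-derivative of the weight is absorbed by induction on $M$ (since the bound is claimed for all $M\ge 0$). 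This yields the $L^2_y$ assertion in (\ref{precut.1}) for $j=0,1$.

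\emph{Step 2 ($L^\infty_y$ for $j=0$).} Repeating the Sobolev interpolation used in the proof of Corollary \ref{cp2} (which only requires $\p_x^k u_p \to 0$ as $y \to \infty$, a consequence of the matching condition in (\ref{intro.BC.4})), I interpolate
\begin{align*}
\|z^M \p_x^k u_p\|_{L^\infty_y}^2 \lesssim \|z^M \p_x^k u_p\|_{L^2_y}\,\|z^M \p_x^k \p_y u_p\|_{L^2_y} + x^{-\frac{1}{2}}\|z^{M-\frac{1}{2}} \p_x^k u_p\|_{L^2_y}^2,
\end{align*}
and insert Step 1 to obtain the claimed rate $x^{-k-\frac{1}{2}+\sigma_n}$.

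\emph{Step 3 (Higher $j$ via the equation).} For $j = 1$ in $L^\infty_y$, and for all $j\ge 2$, I solve (\ref{sys.pr.minus.0}) for $u_{pyy}$ and use $P^n_p \equiv 0$ to write
\begin{align*}
u_{pyy} = (1+u^0_p)u_{px} + u^{(n)}_{sx} u_p + v^{(n)}_s u_{py} + u^0_{py} v_p - f^{(n)}.
\end{align*}
Applying $\p_x^k \p_y^{j-2}$ and the Leibniz rule expresses $\p_x^k \p_y^j u_p$ as a sum of (i) $(1+u^0_p)\p_x^{k+1}\p_y^{j-2} u_p$ plus products in which $\p_y$ lands on $u^0_p$; (ii) products of derivatives of the known profiles $u^{(n)}_{sx}, v^{(n)}_s, u^0_{py}$ with lower-order derivatives of $u_p$ and of $v_p=-\int_0^y u_{px}\,\ud y'$, the latter estimated via Hardy and the argument of (\ref{pr.v.unif.1}); (iii) derivatives of $f^{(n)}$, which by the analogue of (\ref{f.decay.1}) at order $n$ satisfy $\|z^M \p_x^k f^{(n)}\|_{L^2_y} \lesssim x^{-k-\frac{5}{4}+2\sigma_{n-1}}$, a rate strictly better than the target $x^{-k-\frac{1}{4}+\sigma_n}$ since $\sigma_n = 3\sigma_{n-1}$. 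A double induction on $(j,k)$, using the profile estimates (\ref{main.decay}), (\ref{main.v}), (\ref{i.Euler.decay}), (\ref{pr.1.close}), and (\ref{pr.i.b}) to control the coefficient factors, then closes the bound in both the $L^2_y$ and $L^\infty_y$ norms.

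\emph{Main obstacle.} The only nontrivial issue is bookkeeping in Step 3: when $\p_x^k \p_y^{j-2}$ lands on a product such as $u^0_{py} v_p$, one must confirm that the combined $(x,z)$-decay of the coefficient and the lower-order factor of $u_p$ (or $v_p$) matches the target rate $x^{-k-\frac{j}{2}-\frac{1}{2}+\sigma_n}$. Each $y$-derivative hitting a Prandtl-type profile costs one power of $y$ but is compensated by the strong $z$-decay from the front analysis of Section \ref{section.u0p} and the weighted Prandtl bounds of Propositions \ref{Pp1}, \ref{Lpi}; each $x$-derivative hitting a profile gains one power of $x^{-1}$. These balances exactly align, so no further loss in $\sigma_n$ accrues beyond that already present in the norm.
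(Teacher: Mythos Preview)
Your proposal is correct and matches the paper's (implicit) approach: the corollary is stated without proof as a direct summary of Proposition \ref{PropNN}, and your Steps 1--3 are precisely the unpacking of the $P_k$ norm plus Sobolev interpolation plus the equation-based bootstrap already carried out for the earlier layers in the lemma containing estimate (\ref{P.embed}). There is nothing substantively different here.
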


In order to satisfy $v_p(x,0) = 0$, $v_p$ is obtained from $u_p$ via: $v_p(x,y) = -\int_0^y u_{px}(x,y') \ud y'$. This is distinct from (\ref{emphv1p}) and (\ref{defnvip}), and distinguishes the final, $n-$th Prandtl layer from the previous layers. Then, 
\begin{corollary} $v_p$ obeys the following uniform decay estimate:
\begin{align}
 x^{k+1-\sigma_n} ||\partial_x^k v_p||_{L^\infty_y} \le C(k,n). 
\end{align}
\end{corollary}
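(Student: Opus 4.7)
The starting point is the representation
\[
\partial_x^k v_p(x,y) \;=\; -\int_0^y \partial_x^{k+1} u_p(x,y')\,dy',
\]
which is immediate from the divergence-free condition $u_{px}+v_{py}=0$ together with the boundary condition $v_p(x,0)=0$ that distinguishes this final Prandtl layer. I therefore reduce the $L^\infty_y$ estimate on $\partial_x^k v_p$ to integrating a pointwise bound on $\partial_x^{k+1} u_p$, for which I will use the weighted $L^\infty$ estimate supplied by (\ref{precut.1}) from the previous corollary.

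The key ingredient is: for any integer $M \ge 2$, (\ref{precut.1}) with $j=0$ applied to the $(k+1)$-st tangential derivative yields
\[
|\partial_x^{k+1} u_p(x,y)| \;\le\; C(k,M,n)\, x^{-k-\frac{3}{2}+\sigma_n}\,\min\!\bigl(1,\ z^{-M}\bigr), \qquad z=\tfrac{y}{\sqrt{x}},
\]
obtained by combining the unweighted ($M=0$) bound with the weighted ($M$-bound) one. Plugging this into the representation formula and changing variables $y'=\sqrt{x}\,z'$,
\[
|\partial_x^k v_p(x,y)| \;\le\; C\, x^{-k-\frac{3}{2}+\sigma_n} \int_0^y \min\!\bigl(1,\tfrac{y'}{\sqrt{x}}{}^{-M}\bigr)\,dy' \;=\; C\, x^{-k-1+\sigma_n}\int_0^{z}\min(1,(z')^{-M})\,dz'.
\]
For $M\ge 2$ the integral $\int_0^\infty \min(1,(z')^{-M})\,dz'$ is finite, so it is bounded by a universal constant uniformly in $z\ge 0$, and I obtain $|\partial_x^k v_p(x,y)| \le C(k,n)\,x^{-k-1+\sigma_n}$ uniformly in $y$, which is the claimed bound.

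The estimate is actually routine once the representation is in place, so there is no major obstacle; the only thing to monitor is that $M$ must be chosen large enough (any $M\ge 2$ suffices) to render the $z$-integral convergent, and that $M$ can in fact be taken arbitrarily large because (\ref{precut.1}) provides bounds for any $M$. If weighted-in-$z$ versions of the corollary are also desired, one repeats the same argument multiplying through by $z^m$, using the weighted bound $||z^{M+m}\partial_x^{k+1}u_p||_{L^\infty_y}\le C x^{-k-3/2+\sigma_n}$ in place of the $M$-weighted bound, which costs only larger constants depending on $m$.
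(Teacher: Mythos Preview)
Your proof is correct and takes a genuinely different route from the paper. The paper argues via an $L^2$-based Sobolev interpolation: it writes $|v_p|^2 \le |\int_0^y 2 v_p v_{py}|$, splits with weights $y^{-\frac12-\kappa}$ and $y^{\frac12+\kappa}$, invokes the Hardy inequality (using $v_p(x,0)=0$, with the small $\kappa>0$ needed to avoid the critical exponent), converts to self-similar weights, and then feeds in the $L^2_y$ bounds on $v_{py}=-u_{px}$ coming from the $P_k(\sigma_n)$ norm. Your approach instead integrates the representation $\partial_x^k v_p = -\int_0^y \partial_x^{k+1}u_p$ directly against the pointwise $L^\infty_y$ bound from (\ref{precut.1}), and the self-similar change of variables $y'=\sqrt{x}\,z'$ makes the resulting $z$-integral manifestly finite for any $M\ge 2$. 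This is more elementary: it bypasses Hardy entirely and requires no auxiliary $\kappa$-parameter. The paper's method has the advantage of using only $L^2_y$ control (which is what the energy norms most naturally provide), whereas yours leans on the $L^\infty_y$ half of (\ref{precut.1}); but since that half is already recorded in the preceding corollary, your argument is a clean shortcut here.
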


\begin{proof}

Using the boundary condition $v_p(x,0) = 0$, one can write: 
\begin{align} \label{vpsup}
\Big|v_p\Big|^2 & \lesssim \Big| \int_0^y v_pv_{py}\Big| \le  \int_0^\infty \Big| \frac{v_p}{y^{\frac{1}{2}+\kappa}} y^{\frac{1}{2}+\kappa}v_{py} \Big| \le ||\frac{v_p}{y^{\frac{1}{2}+\kappa}}||_{L^2_y}||y^{\frac{1}{2}+\kappa} v_{py}||_{L^2_y}\\ 
&\le ||y^{\frac{1}{2}-\kappa} v_{py}||_{L^2_y} ||y^{\frac{1}{2}+\kappa} v_{py}||_{L^2_y} = ||x^{\frac{1}{4}-\kappa} z^{\frac{1}{2}-\kappa} v_{py}||_{L^2_y} ||x^{\frac{1}{4}+\kappa} z^{\frac{1}{2}+\kappa} v_{py}||_{L^2_y} \\
& \le x^{\frac{1}{2}} x^{-1+\sigma_n} x^{-1+\sigma_n} = x^{-\frac{3}{2}+2\sigma_n}. 
\end{align}

We have used the $\kappa > 0$ to avoid the critical Hardy inequality. The Hardy inequality we have used (with power $y^{-\frac{1}{2}-\kappa} v^1_p$) relies on the vanishing of $v^n_p$ at $y = 0$. 
\end{proof}

Next, we introduce a cutoff function which honors the scaling, $z = \frac{y}{\sqrt{x}}$:
\begin{align} \label{cutoff.defn}
v^n_p &:= \chi(\sqrt{\epsilon}z)v_p, \hspace{3 mm} u^n_p := \int_x^\infty \partial_y \Big[ \chi \Big( \sqrt{\epsilon} \frac{y}{\sqrt{x'}} \Big) v_p(x',y)  \Big) \Big] dx'.
\end{align}

Here, we make the notational convention for integrands:  $z' = \frac{y}{\sqrt{x'}}$  or $z' = \frac{y'}{\sqrt{x}}$ where $'$ denotes the integration variable. It is clear, then, that $\partial_x u^n_p + \partial_y v^n_p = 0$. The following boundary conditions are also clear: 
\begin{align}
[u^n_p, v^n_p] \rightarrow 0 \text{ as } y \rightarrow \infty, \hspace{3 mm} [u^n_p, v^n_p]|_{y = 0} = [u_p, v_p]|_{y = 0}. 
\end{align}

\begin{remark}This cut-off honors the parabolic scaling of the Prandtl layers for all $x > 0$. The cut-off introduced in \cite{GN} was in the region $y \ge \frac{1}{\sqrt{\eps}}$. Locally in $x$, $z$ is equivalent to $y$, so these cut-offs agree locally in $x$.  
\end{remark}

We must now record two properties about the cut-off layers. First, the uniform estimates of the cut-off layers remain unchanged: 

\begin{lemma} \label{L.CL} The following pointwise decay bounds hold for the cut-off layers, for any $ k \ge 0$: 
\begin{align} \label{pw.stable.1}
\Big| \partial_x^k v^n_{p}\Big| \lesssim C(k, n)  x^{-k -1+\sigma_n}, \\ \label{pw.stable.3}
\Big| \partial_x^k v^n_{py}\Big| \lesssim C(k, n) x^{-k -\frac{3}{2}+\sigma_n}, \\ \label{pw.stable.2}
\Big|  \partial_x^k u^n_{py}\Big| \lesssim C(k, n) x^{-k-1+\sigma_n}, \\ \label{m=0}
\Big|  \partial_x^k u^n_p \Big| \lesssim C(k, n) x^{-k-\frac{1}{2}+\sigma_n}.
\end{align}
\end{lemma}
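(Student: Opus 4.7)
\textbf{Proof proposal for Lemma \ref{L.CL}.} The plan is to reduce every estimate to Leibniz expansions of $\chi(\sqrt{\eps}z)v_p$ together with the divergence-free identities $u^n_{px}=-v^n_{py}$ and $u^n_{pxy}=-v^n_{pyy}$, and to feed in precut $y$-derivatives of $v_p$ obtained by integrating the known bounds on $u_p$ from (\ref{precut.1}). The first step is to gather auxiliary precut estimates on $v_p$ and its derivatives: since $v_p(x,0)=0$ and $v_p=-\int_0^y u_{px}(x,y')\,dy'$, I obtain $\partial_x^k v_{py}=-\partial_x^{k+1}u_p$ and $\partial_x^k v_{pyy}=-\partial_x^{k+1}u_{py}$, so (\ref{precut.1}) with $(k+1,0)$ and $(k+1,1)$ gives
\[
|\partial_x^k v_{py}|\lesssim x^{-k-\frac{3}{2}+\sigma_n},\qquad |\partial_x^k v_{pyy}|\lesssim x^{-k-2+\sigma_n}.
\]
For $\partial_x^k v_p$ itself I use the weighted $z^M$-version of (\ref{precut.1}) to write $|\partial_x^{k+1}u_p(x,y')|\lesssim \langle z'\rangle^{-M}x^{-k-\frac{3}{2}+\sigma_n}$; integrating in $y'$ and changing variables to $z'=y'/\sqrt{x}$ yields $|\partial_x^k v_p|\lesssim x^{-k-1+\sigma_n}$ (the extra $\sqrt{x}$ is precisely what upgrades the Sobolev-trace bound \eqref{vpsup}).

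The second step is to record the action of $\partial_x$ and $\partial_y$ on $\chi(\sqrt{\eps}z)$. Since $\partial_x z = -z/(2x)$ and $\partial_y z = 1/\sqrt{x}$, one sees by induction that $\partial_x^j\chi(\sqrt{\eps}z)$ is a finite sum of terms of the form $x^{-j}\,G_j(\sqrt{\eps}z)$, where each $G_j$ is a polynomial in $\sqrt{\eps}z$ multiplying a derivative of $\chi$, hence uniformly bounded; derivatives in $y$ bring factors of $\sqrt{\eps}/\sqrt{x}$ with bounded coefficients in $\sqrt{\eps}z$. Thus Leibniz's rule applied to $v^n_p=\chi(\sqrt{\eps}z)v_p$ produces
\[
|\partial_x^k v^n_p|\lesssim \sum_{j=0}^k x^{-(k-j)}|\partial_x^j v_p|\lesssim x^{-k-1+\sigma_n},
\]
which is (\ref{pw.stable.1}). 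Differentiating $v^n_p$ once in $y$ gives $v^n_{py}=\chi(\sqrt{\eps}z)v_{py}+\chi'(\sqrt{\eps}z)\tfrac{\sqrt{\eps}}{\sqrt{x}}v_p$; both contributions are $\lesssim x^{-\frac{3}{2}+\sigma_n}$, and the same Leibniz expansion for $\partial_x^k$ yields (\ref{pw.stable.3}).

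The third step handles $u^n_p$ and $u^n_{py}$. For $k\ge 1$, the divergence-free identity $u^n_{px}=-v^n_{py}$ immediately gives $\partial_x^k u^n_p=-\partial_x^{k-1}v^n_{py}$, so (\ref{m=0}) for $k\ge 1$ is a direct consequence of (\ref{pw.stable.3}); similarly $\partial_x^k u^n_{py}=-\partial_x^{k-1}v^n_{pyy}$ and a third $\partial_y$ derivative of $\chi v_p$ reduces this to bounds on $v_p$, $v_{py}$, $v_{pyy}$ and their $x$-derivatives, giving (\ref{pw.stable.2}) for $k\ge 1$. The $k=0$ cases use the integral representation (\ref{cutoff.defn}): for $u^n_p$ one bounds
\[
|u^n_p|\le \int_x^\infty\!\big|\partial_y[\chi(\sqrt{\eps}z')v_p(x',y)]\big|\,dx'\lesssim\int_x^\infty x'^{-\frac{3}{2}+\sigma_n}\,dx'\lesssim x^{-\frac{1}{2}+\sigma_n},
\]
and for $u^n_{py}$ one differentiates once more in $y$ under the integral and splits into three terms weighted by $\chi$, $\sqrt{\eps}\chi'/\sqrt{x'}$, and $\eps\chi''/x'$, each integrable in $x'$ and giving the bound $x^{-1+\sigma_n}$.

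The main obstacle, and the one that has to be handled with care, is getting the correct $x^{-1+\sigma_n}$ rate for $|v_p|$ itself (and hence for $|v^n_p|$): the naive pointwise bound on $u_{px}$ only yields $|v_p|\le y\,x^{-3/2+\sigma_n}$, which is useless for large $y$, while the Sobolev-trace estimate (\ref{vpsup}) gives only $x^{-3/4+\sigma_n}$. The resolution is to deploy the full weighted $z^M$-estimate on $u_{px}$, so that the $y$-integral converges with the precise factor $\sqrt{x}$ needed to land on $x^{-1+\sigma_n}$; every other step is a bookkeeping exercise that combines this rate with the boundedness of the polynomial multipliers $(\sqrt{\eps}z)^m\chi^{(l)}(\sqrt{\eps}z)$.
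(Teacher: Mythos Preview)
Your proposal is correct and follows essentially the same route as the paper: expand $\chi(\sqrt{\eps}z)v_p$ via Leibniz, use the integral representation (\ref{cutoff.defn}) for the $k=0$ cases of $u^n_p$ and $u^n_{py}$, and appeal to the divergence-free relation for $k\ge 1$. The paper's proof is terser but structurally identical.

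The one place where your argument genuinely adds something is the derivation of $|\partial_x^k v_p|\lesssim x^{-k-1+\sigma_n}$. You obtain this by integrating the $z^M$-weighted pointwise bound on $\partial_x^{k+1}u_p$ from (\ref{precut.1}) over $y'\in[0,\infty)$ and changing variables to $z'$, picking up exactly one factor of $\sqrt{x}$. The paper instead states this rate in the Corollary preceding the lemma and feeds it directly into the proof; its displayed Sobolev--Hardy computation (\ref{vpsup}), as you correctly point out, lands on $x^{-3/4+\sigma_n}$ as written (the intermediate step uses $\|z^m v_{py}\|_{L^2_y}\lesssim x^{-1+\sigma_n}$ rather than the sharper $x^{-5/4+\sigma_n}$ that (\ref{precut.1}) actually gives). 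Your integration-in-$y$ argument is a clean and self-contained alternative that avoids this bookkeeping issue entirely, and it is worth keeping as the preferred justification for the precut $v_p$ rate.
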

\begin{proof}

First, 
\begin{align}
|v^n_{py}| \le \frac{\sqrt{\epsilon}}{\sqrt{x}} \chi' |v_p| + \chi |v_{py}| \lesssim x^{-\frac{3}{2}+\sigma_n}.
\end{align}

Next, 
\begin{align} \nonumber
\Big|u^n_{py}\Big| &=\Big| \int_x^\infty \partial_{yy} \Big[ \chi(\sqrt{\epsilon}z') v_p \Big] dx'\Big| \le \Big| \int_x^\infty \frac{\epsilon}{x'} \chi'' v_p \Big| + \Big| 2\int_x^\infty \frac{\sqrt{\epsilon}}{\sqrt{x'}} v_{py} \Big| + \Big| \int_x^\infty \chi v_{pyy}  \Big| \\ \label{unpyuni}
& \lesssim x^{-1+\sigma_n}.
\end{align}

Finally, for $u_p^n$,
\begin{align}
\Big| u^n_p \Big| \le \int_x^\infty \sqrt{\epsilon} \frac{1}{\sqrt{x'}} \chi' |v_p| + \int_x^\infty |v_{py}|  \le x^{-\frac{1}{2}+\sigma_n}.
\end{align}

By differentiating successively in $x$, the result for $k \ge 1$ follows in the same manner. 
\end{proof}

\begin{lemma}[$L^2_y$ Estimates] \label{L.CL.2} The following $L^2_y$ decay bounds hold for the cut-off layers, for any $ k \ge 0$: 
\begin{align} \label{L2.stable.1}
||  \partial_x^k \partial_y^j u^n_p ||_{L^2_y} \lesssim C(k, \sigma, n) x^{-k- \frac{j}{2}-\frac{1}{4}+\sigma_n}.
\end{align}
\end{lemma}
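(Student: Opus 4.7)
The plan is to transfer the $L^2_y$ bounds on the uncut profile $u_p$ supplied by Proposition \ref{PropNN} to the cut-off layer $u^n_p$. Unpacking the $P_k$-norm (\ref{norm.Pk}) together with the additional $x^{1/4}$ weight already gives $\|z^M \partial_x^k u_p\|_{L^2_y}\lesssim x^{-k-1/4+\sigma_n}$ and $\|z^M \partial_x^k u_{py}\|_{L^2_y}\lesssim x^{-k-3/4+\sigma_n}$ for arbitrary $M\ge 0$; the cases $j\ge 2$ follow inductively by solving (\ref{sys.pr.minus.0}) for $u_{pyy}$ in terms of lower-derivative quantities and differentiating in $x,y$.

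For $k\ge 1$, the divergence-free identity $\partial_x u^n_p = -\partial_y v^n_p$ together with the multiplicative structure $v^n_p=\chi(\sqrt{\epsilon}z)\,v_p$ gives
\begin{align*}
\partial_x^k\partial_y^j u^n_p = -\partial_x^{k-1}\partial_y^{j+1}\bigl[\chi(\sqrt{\epsilon}z)\,v_p\bigr].
\end{align*}
Expanding by Leibniz, the term in which every derivative hits $v_p$ equals $\chi\,\partial_x^k\partial_y^j u_p$ (after $v_{py}=-u_{px}$), whose $L^2_y$ norm is the desired $x^{-k-j/2-1/4+\sigma_n}$ by the opening bounds together with $|\chi|\le 1$. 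Every other Leibniz term carries at least one derivative on $\chi(\sqrt{\epsilon}z)$; each $\partial_y\chi$ produces $\tfrac{\sqrt{\epsilon}}{\sqrt{x}}\chi^{(\ell)}$, each $\partial_x\chi$ produces $-\tfrac{\sqrt{\epsilon}z}{2x}\chi^{(\ell)}$, and $\chi^{(\ell)}$ for $\ell\ge 1$ localizes to $\sqrt{\epsilon}z\sim 1$. On that set $z^{-M}\lesssim \epsilon^{M/2}$, so invoking the weighted bounds above with $M$ sufficiently large converts each $\sqrt{\epsilon}$-factor into genuine smallness, while the $x$-weights already line up with the target scaling.

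For $k=0$, I start from the decomposition
\begin{align*}
u^n_p = \chi(\sqrt{\epsilon}z)\,u_p &+ \int_x^\infty\tfrac{\sqrt{\epsilon}}{\sqrt{x'}}\chi'(\sqrt{\epsilon}z')\,v_p(x',y)\,dx' \\
&- \int_x^\infty\tfrac{\sqrt{\epsilon}z'}{2x'}\chi'(\sqrt{\epsilon}z')\,u_p(x',y)\,dx',
\end{align*}
obtained from (\ref{cutoff.defn}) by integrating by parts in $x'$ the piece $\int_x^\infty \chi v_{py}\,dx'=-\int_x^\infty \chi u_{px}\,dx'$ (using $u_p\to 0$ as $x\to\infty$). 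The main term $\chi u_p$ is handled exactly as above. For each of the two remainder integrands, $\chi'(\sqrt{\epsilon}z')$ is supported in a $y$-strip of width $\sim\sqrt{x'/\epsilon}$, so a pointwise bound $g(x')$ on that strip yields $L^2_y$-norm $\lesssim(x'/\epsilon)^{1/4}g(x')$. Combining (\ref{precut.1}) with large-$M$ self-similar weights ($|v_p|\lesssim\epsilon^{M/2}x'^{-1+\sigma_n}$ and $|u_p|\lesssim\epsilon^{M/2}x'^{-1/2+\sigma_n}$ on the strip) yields $x'$-integrands of order $\epsilon^{(M-1)/2+1/4}x'^{-5/4+\sigma_n}$, which integrate to at most $\epsilon^{1/4}x^{-1/4+\sigma_n}$. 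Higher $\partial_y^j$ for $k=0$ are differentiated under the integral: each additional $\partial_y$ either lands on $v_p,u_p$ (reducing to higher-derivative self-similar bounds) or on $\chi'$ (yielding a further $\sqrt{\epsilon}/\sqrt{x'}$, more than enough to absorb).

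The main technical difficulty is the bookkeeping of the Leibniz sum at each multi-index $(a,b)$ with $a+b\ge 1$: one must verify in each case that the powers of $\sqrt{\epsilon}$, $1/\sqrt{x}$, and $z$ generated by the $\chi$-derivatives combine with the weighted estimates on $v_p$ to yield the target scaling $x^{-k-j/2-1/4+\sigma_n}$. Since $M$ in Proposition \ref{PropNN} can be taken arbitrarily large, there is always enough freedom to absorb the $\epsilon$-losses, and no genuinely sharp step arises.
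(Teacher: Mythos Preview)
Your proposal is correct and reaches the same conclusion, but the route differs from the paper's. The paper works directly with the integral representation $u^n_p=\int_x^\infty\partial_y(\chi v_p)\,dx'$ throughout: it applies Minkowski to pull $\|\cdot\|_{L^2_y}$ inside the $dx'$-integral and invokes the trade-off (\ref{trade.cut}) on the support of $o(\chi)$ (this is exactly your strip-width observation), giving $\|u^n_p\|_{L^2_y}\le\int_x^\infty\|\tfrac{\sqrt{\eps}}{\sqrt{x'}}\chi' v_p\|_{L^2_y}+\|\chi v_{py}\|_{L^2_y}\,dx'$ and similarly for higher derivatives. Your approach instead eliminates the integral for $k\ge 1$ via $\partial_x^k\partial_y^j u^n_p=-\partial_x^{k-1}\partial_y^{j+1}(\chi v_p)$ and runs a local Leibniz expansion, while for $k=0$ you integrate by parts in $x'$ to isolate the main contribution $\chi u_p$ (the same identity the paper uses later near (\ref{ibper}), but applied here for a different purpose). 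Your $k\ge 1$ reduction is arguably cleaner since no $dx'$-integration survives; the paper's version is shorter to write but leaves more to ``repeat these calculations''.

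One small overclaim to flag: you assert $|v_p|\lesssim\eps^{M/2}(x')^{-1+\sigma_n}$ on the strip by citing (\ref{precut.1}), but that estimate covers only $u_p$ and its derivatives, not $v_p$ itself (recall $v_p=-\int_0^y u_{px}$ does not decay as $z\to\infty$). Fortunately the argument does not need it: the explicit $\sqrt{\eps}/\sqrt{x'}$ in the $v_p$-remainder, combined with the unweighted bound $\|v_p\|_{L^\infty_y}\lesssim(x')^{-1+\sigma_n}$, already gives an $L^2_y$ integrand of order $\eps^{1/4}(x')^{-5/4+\sigma_n}$, which integrates to the target. The $z^M$-gain is genuinely required only for the $u_p$-remainder, where the prefactor $\sqrt{\eps}z'/x'$ collapses to $\sim 1/x'$ on the strip, and there the weighted bound from (\ref{precut.1}) is indeed available.
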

\begin{proof}

In order to compute the $L^2$ norms, we must trade in the following way: 
\begin{align} \label{trade.cut}
o(\chi) \Big| \eps^{\frac{1}{4}+\frac{\kappa}{2}} \langle y \rangle^{\frac{1}{2}+\kappa} x^{-\frac{1}{4}-\frac{\kappa}{2}} \Big| \lesssim 1,
\end{align}

where $o(\chi)$ means $\chi$ or any derivative of $\chi$, the essential feature being that $z \le \frac{1}{\sqrt{\eps}}$. Using this: 
\begin{align} 
||u^n_p||_{L^2_y} &\le \int_x^\infty ||\frac{\sqrt{\eps}}{\sqrt{x'}} \chi' v_p||_{L^2_y}  + \int_x^\infty ||\chi v_{py}||_{L^2_y} \lesssim  x^{-\frac{1}{4}+\sigma_n}.
\end{align}

For $u^n_{py}$, again according to the expression in (\ref{unpyuni}), and the decay rates in (\ref{precut.1}): 
\begin{align} 
||u^n_{py}||_{L^2_y} &\lesssim \int_x^\infty ||\frac{\epsilon \chi''}{x'} v_p||_{L^2_y}  + \int_x^\infty ||\frac{\sqrt{\eps}}{\sqrt{x'}} v_{py}||_{L^2_y} + \int_x^\infty ||v_{pyy}||_{L^2_y}  \lesssim x^{-\frac{3}{4}+\sigma_n}. 
\end{align}

It is now clear we can repeat these calculations for higher $x$ and $y$ derivatives, thereby obtaining the desired result. 

\end{proof}

The next task is to control the error made by cutting off the layers. This error is obtained by inserting the new, cut-off layers into the equation (\ref{sys.pr.minus.0}): 
\begin{align} \label{error.n}
\mathcal{E}^{(n)} &:= (1+u^0_p) u^{n}_{px} + u^{(n)}_{sx} u^n_p + v^{(n)}_s u^n_{py} + u^0_{py} v^n_p - u^n_{pyy} - f^{(n)}. 
\end{align}

We will proceed to expand (\ref{error.n}), term-by-term: 
\begin{align}
(1+u^0_p) u^n_{px} &= \chi (1 + u^0_p) u_{px} - (1 + u^0_p) \frac{\sqrt{\epsilon}}{\sqrt{x}} \chi' v_p, \\ \label{ibper}
u^{(n)}_{sx} u^n_p &= \chi u^{(n)}_{sx} u_p +  u^{(n)}_{sx}\int_x^\infty \frac{\sqrt{\epsilon}}{\sqrt{x'}} \chi' v_p + \chi' \frac{\sqrt{\epsilon}z}{x'}u_p, \\ \label{ibper3}
v^{(n)}_s u^n_{py} &= \chi v^{(n)}_s u_{py} + v^{(n)}_s \int_x^\infty \frac{\epsilon}{x'} \chi'' v_p +2  \frac{\sqrt{\epsilon}}{\sqrt{x'}} \chi' v_{py} -  \frac{\sqrt{\epsilon}}{\sqrt{x'}} \chi' z u_{py}, \\ \label{ibper4}
u^0_{py} v^n_p &= \chi u^0_{py} v_p, \\ \n
u^n_{pyy} &= \chi u_{pyy} - \int_x^\infty \frac{\sqrt{\epsilon}}{x'} \chi' \frac{z}{2} u_{pyy} +C_1 \frac{\epsilon}{x'} \chi'' v_{py} + C_2 \chi' \frac{\sqrt{\epsilon}}{\sqrt{x'}} v_{pyy} \\ \label{ibper2} 
& + \frac{\epsilon^{\frac{3}{2}}}{(x')^{\frac{3}{2}}} \chi''' v_p. 
\end{align}

Let us provide justification to the above expressions, first turning to (\ref{ibper}). Applying the definition in (\ref{cutoff.defn}) yields:
\begin{align} \n
u^{(n)}_{sx} u^n_p &= u^{(n)}_{sx} \int_x^\infty \frac{\sqrt{\epsilon}}{\sqrt{x'}} \chi' v_p + u^{(n)}_{sx} \int_x^\infty \chi v_{py} \\ \n
& = u^{(n)}_{sx} \int_x^\infty \frac{\sqrt{\epsilon}}{\sqrt{x'}} \chi' v_p - u^{(n)}_{sx} \int_x^\infty \chi u_{px} dx' \\
& = u^{(n)}_{sx} \int_x^\infty \Big[ \frac{\sqrt{\epsilon}}{\sqrt{x'}} \chi' v_p + u^{(n)}_{sx} \frac{z\sqrt{\epsilon}}{x'} \chi' u_p \Big] dx' + \chi u^{(n)}_{sx} u_p,
\end{align}

the final equality following from an integration by parts in $x$. Similarly, for (\ref{ibper3}): 
\begin{align} \nonumber
v^{(n)}_s u^n_{py} &= v^{(n)}_s \int_x^\infty \partial_{yy} \Big( \chi v_p \Big) dx' \\ \nonumber
& = v^{(n)}_s \int_x^\infty \Big( \frac{\epsilon}{x'} \chi'' v_p + 2 \chi' \frac{\sqrt{\epsilon}}{\sqrt{x'}} v_{py} + \chi v_{pyy} \Big)\ud x' \\ \nonumber
& = v^{(n)}_s \int_x^\infty \frac{\epsilon}{x'} \chi'' v_p + 2\chi' \frac{\sqrt{\epsilon}}{x'} v_{py} - v^{(n)}_s \int_x^\infty \chi u_{pxy} dx' \\
& =  v^{(n)}_s \int_x^\infty \Big[ \frac{\epsilon}{x'} \chi'' v_p + 2\chi' \frac{\sqrt{\epsilon}}{x'} v_{py} -  \sqrt{\epsilon} \frac{\chi'}{x'} z u_{py} \Big] \ud x' + v^{(n)}_s \chi u_{py}.
\end{align}

This same computation is performed for (\ref{ibper2}): 
\begin{align} \n
u^n_{pyy} &= \int_x^\infty \partial_y^3 \Big( \chi v_p \Big) dx' \\ \n
& = \int_x^\infty \Big[ \frac{\eps^{\frac{3}{2}}}{(x')^{\frac{3}{2}}} \chi''' v_p + C_1 \frac{\eps}{x'} \chi'' v_{py} + C_2 \frac{\sqrt{\eps}}{\sqrt{x'}} v_{pyy} \chi' \Big] + \int_x^\infty \chi v_{pyyy} \\ \n
& = \int_x^\infty \Big[ \frac{\eps^{\frac{3}{2}}}{(x')^{\frac{3}{2}}} \chi''' v_p + C_1 \frac{\eps}{x'} \chi'' v_{py} + C_2 \frac{\sqrt{\eps}}{\sqrt{x'}} v_{pyy} \chi' \Big] - \int_x^\infty \chi u_{pxyy} \\ \n
& = \int_x^\infty \Big[ \frac{\eps^{\frac{3}{2}}}{(x')^{\frac{3}{2}}} \chi''' v_p + C_1 \frac{\eps}{x'} \chi'' v_{py} + C_2 \frac{\sqrt{\eps}}{\sqrt{x'}} v_{pyy} \chi' \Big] - \int_x^\infty \Big[ \chi' u_{pyy} \frac{z}{2x'} \sqrt{\eps} \Big] \ud x'\\
& \hspace{20 mm} + \chi u_{pyy}. 
\end{align}

Summing the above terms together, we arrive at our expression:
\begin{align}  \label{EN1}
\mathcal{E}^{(n)} &= - (1 + u^0_p) \frac{\sqrt{\epsilon}}{\sqrt{x}} \chi' v_p +  u^{(n)}_{sx}\int_x^\infty \frac{\sqrt{\epsilon}}{\sqrt{x'}} \chi' v_p + u^{(n)}_{sx} \int_x^\infty \chi' \frac{\sqrt{\epsilon}z}{x'}u_p \\  \label{EN2}
& + v^{(n)}_s \int_x^\infty \frac{\epsilon}{x'} \chi'' v_p +2  \frac{\sqrt{\epsilon}}{\sqrt{x'}} \chi' v_{py} -  \frac{\sqrt{\epsilon}}{x'} \chi'  z u_{py} - \int_x^\infty \frac{\sqrt{\epsilon}}{x'} \chi' \frac{z}{2} u_{pyy}  \\ \label{expEN}
& + \int_x^\infty  \Big[ C_2 \chi' \frac{\sqrt{\epsilon}}{\sqrt{x'}} v_{pyy} + C_1 \frac{\epsilon}{x'} \chi'' v_{py} + \frac{\epsilon^{\frac{3}{2}}}{(x')^{\frac{3}{2}}} \chi''' v_p \Big]  + (1-\chi) f^{(n)}. 
\end{align}

\begin{lemma} For $\kappa > 0$ arbitrarily small, the error, $\mathcal{E}^{(n)}$ obeys the following bound: 
\begin{align} \label{error.estimates}   
\Big| \mathcal{E}^{(n)}  \Big| \le C(n, \kappa) \epsilon^{\frac{1}{4} - \kappa} x^{-\frac{3}{2}+\sigma_n + \kappa}, \hspace{3 mm} ||\mathcal{E}^{(n)} ||_{L^2_y} \le C(n, \kappa) \epsilon^{\frac{1}{4}-\kappa} x^{-\frac{5}{4}+\sigma_n + \kappa}.  
\end{align}
\end{lemma}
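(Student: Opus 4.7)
The plan is to work term-by-term through the expansion (\ref{EN1})--(\ref{expEN}) of $\mathcal{E}^{(n)}$, exploiting the central observation that every cutoff-derivative factor $\chi', \chi'', \chi'''$ is supported where $\sqrt{\eps}z \sim 1$, i.e.\ where $y \sim \sqrt{x/\eps}$. On this support we are free to trade powers of $z$ for powers of $\eps^{-1/2}$, and the rapid $z$-decay of the Prandtl profiles provided by Proposition \ref{PropNN} and the pointwise estimates (\ref{precut.1}) turns each such trade into a gain of $\eps$.

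First I would treat the non-integral terms in (\ref{EN1})--(\ref{EN2}). The model case is $(1+u^0_p)\frac{\sqrt{\eps}}{\sqrt{x}}\chi'(\sqrt{\eps}z) v_p$. Using the weighted bound $\|z^m v_p\|_{L^\infty_y} \lesssim x^{-1+\sigma_n}$ from (\ref{precut.1}) and the fact that $z^{-m} \le C \eps^{m/2}$ on $\mathrm{supp}\,\chi'$, one obtains a pointwise estimate of order $\eps^{(m+1)/2} x^{-3/2+\sigma_n}$. Choosing $m$ as large as needed, this dominates $\eps^{1/4-\kappa}x^{-3/2+\sigma_n+\kappa}$. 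All other non-integral contributions — $u^{(n)}_{sx}$ times a $\chi'$-type factor, $v^{(n)}_s$ times such a term, and so on — are controlled by the same mechanism, using the profile estimates from Propositions \ref{thm.euler.1}, \ref{i.Euler.lay}, and Lemma \ref{L.CL}.

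Next I would address the integral remainders $\int_x^\infty \frac{\eps^a}{(x')^b} \chi^{(\ell)}(\sqrt{\eps}z')\, F_p(x',y)\,dx'$ with $F_p \in \{v_p, v_{py}, v_{pyy}, u_p, u_{py}, u_{pyy}\}$. For fixed $y$, the $x'$-support of $\chi^{(\ell)}(\sqrt{\eps} y/\sqrt{x'})$ is a bounded interval around $x' \sim \eps y^2$. Applying the weighted pointwise bound $|F_p(x',y)| \le C z'^{-m} (x')^{-\mathrm{rate}}$ gives a factor of $\eps^{m/2}$ from the $z'^{-m}$ trade, while the elementary integration $\int_x^\infty (x')^{-1-\sigma'}\,dx' \lesssim x^{-\sigma'}$ (with $\sigma' = \sigma'(\kappa)$ small) produces the required $x$-decay of $x^{-3/2+\sigma_n+\kappa}$. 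This pattern applies to each of the seven integrals in (\ref{EN1})--(\ref{expEN}), with $m$ chosen large enough at the end to extract $\eps^{1/4-\kappa}$ together with a bit of slack to spend on the $L^2$ bound below. The final piece $(1-\chi)f^{(n)}$ is treated with the bound (\ref{f.decay.1}) on $f^{(n)}$, exploiting that $(1-\chi)$ is supported at $\sqrt{\eps}z \gtrsim 1$ to trade $y$-weight for $\eps$, using the rapid $y$-decay of $f^{(n)}$ coming from the Prandtl-profile structure of its constituent terms.

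The $L^2_y$ estimate in (\ref{error.estimates}) follows from the pointwise one by a support argument: every term in $\mathcal{E}^{(n)}$ is supported, in $y$, in a window of width $\sim \sqrt{x/\eps}$ (either where $\chi^{(\ell)}(\sqrt{\eps}z)$ is nonzero, or where the integrand's support in $x'$ is nonempty). Taking $L^2_y$ over this window contributes a factor $(x/\eps)^{1/4}$, converting a pointwise bound of size $\eps^{1/2-\kappa}x^{-3/2+\sigma_n+\kappa}$ into an $L^2_y$ bound of size $\eps^{1/4-\kappa} x^{-5/4+\sigma_n+\kappa}$; the extra power of $\sqrt{\eps}$ needed at the start is always available by taking one more factor of $z^{-m}$ in the trade. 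The main technical hurdle is the careful bookkeeping of $x$-powers inside the integrals so that $\int_x^\infty$ does not spoil the sharp rate $x^{-3/2+\sigma_n+\kappa}$ -- this is precisely where the $\kappa$-loss enters, matching the slightly sub-critical weight identified in (\ref{trade.cut}).
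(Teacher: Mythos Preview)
Your strategy is the same as the paper's: every term contains a factor $\chi^{(\ell)}(\sqrt{\eps}z)$ (or $\chi^{(\ell)}(\sqrt{\eps}z')$ inside an $x'$-integral), whose support at $z\sim\eps^{-1/2}$ allows the exchange of $z$-weight for powers of $\eps$, exactly as in (\ref{trade.cut}). The pointwise argument you describe goes through.

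There are two issues with the $L^2_y$ step, though. First, the weighted bound $\|z^m v_p\|_{L^\infty_y}\lesssim x^{-1+\sigma_n}$ that you invoke from (\ref{precut.1}) is not available: (\ref{precut.1}) covers $u_p$ and its derivatives, but $v_p=-\int_0^y u_{px}$ does \emph{not} decay as $y\to\infty$ (this is precisely the reason the cutoff is introduced in (\ref{cutoff.defn})). Only the unweighted estimate $\|v_p\|_{L^\infty_y}\lesssim x^{-1+\sigma_n}$ holds. This is harmless --- the unweighted bound already gives $\eps^{1/2}x^{-3/2+\sigma_n}$ pointwise for the first term in (\ref{EN1}), which is better than required --- but your ``choosing $m$ as large as needed'' mechanism is unavailable for the $v_p$ contributions.

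Second, and more substantively, your $L^2_y$ argument via ``support in a window of width $\sim\sqrt{x/\eps}$'' fails for the integral terms in (\ref{EN1})--(\ref{expEN}). As a function of $(x,y)$, an expression like $\int_x^\infty \frac{\sqrt{\eps}}{\sqrt{x'}}\chi'(\sqrt{\eps}y/\sqrt{x'})v_p\,dx'$ is supported on the unbounded ray $y\ge c\sqrt{x/\eps}$ (for large $y$ one just takes $x'\sim\eps y^2$), so multiplying a $y$-uniform pointwise bound by $(x/\eps)^{1/4}$ is not justified. The paper's remedy is to apply Minkowski first, pulling the $L^2_y$ norm inside the $dx'$-integral; for each fixed $x'$ the $y$-support of $\chi^{(\ell)}(\sqrt{\eps}y/\sqrt{x'})$ \emph{is} a window of width $\sim\sqrt{x'/\eps}$, and the resulting $(x'/\eps)^{1/4}$ factor integrates in $x'$ to give the stated rate. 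With this adjustment (and the unweighted $v_p$ bound), your argument becomes the paper's.
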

\begin{proof}
We will proceed in order from (\ref{EN1}) - (\ref{expEN}), and we will focus on the $L^2_y$ estimates, as the uniform estimates are straight-forward. First, 
\begin{align} \nonumber
||(1+u^0_p) \frac{\sqrt{\epsilon}}{\sqrt{x}} \chi'(z) v_p ||_{L^2_y} &\lesssim \sqrt{\epsilon} ||\langle y \rangle^{\frac{1}{2}+\kappa} \langle y \rangle^{-\frac{1}{2}-\kappa} \chi' v_p \frac{1}{\sqrt{x}}||_{L^2_y} \\ \nonumber
& \lesssim \sqrt{\epsilon} ||\langle y \rangle^{\frac{1}{2}+\kappa} \chi' x^{-\frac{1}{2}} v_p ||_{L^\infty_y} \lesssim \sqrt{\epsilon} ||z^{\frac{1}{2}+\kappa} \chi' x^{-\frac{1}{4}+\frac{\kappa}{2}} v_p||_{L^\infty_y} \\
& \lesssim \epsilon^{\frac{1}{4}-\kappa} x^{-\frac{5}{4}+\frac{\kappa}{2} + \sigma_n}. 
\end{align}

The essential characteristic in the above calculation is the ability to pay a factor of $\epsilon^{\frac{1}{4}+\frac{\kappa}{2}} x^{-\frac{1}{4}-\frac{\kappa}{2}}$ in order to obtain an $L^2_y$ quantity due to the presence of the cut-off function $\chi'$ (see \ref{trade.cut}). In similar manner, we have: 
\begin{align} \nonumber
||u^{(n)}_{sx} \int_x^\infty \frac{\sqrt{\epsilon}}{\sqrt{x'}} \chi' v_p dx' ||_{L^2_y} &\le ||u^{(n)}_{sx}||_{L^\infty} \int_x^\infty ||\frac{\sqrt{\epsilon}}{\sqrt{x'}} \chi' v_p||_{L^\infty_y} ||\langle y \rangle^{-\frac{1}{2}-\kappa} ||_{L^2_y} dx' \\ \nonumber
& \lesssim x^{-1} \int_x^\infty (x')^{-1+\sigma_n} \epsilon^{\frac{1}{4}-\frac{\kappa}{2}} x^{-\frac{1}{4}+\frac{\kappa}{2}}||\epsilon^{\frac{1}{4}+\frac{\kappa}{2}} x^{-\frac{1}{4}-\frac{\kappa}{2}} \langle y \rangle^{\frac{1}{2}+\kappa} \chi' ||_{L^\infty} \\ 
& \lesssim \epsilon^{\frac{1}{4}+\kappa} x^{-\frac{5}{4}+\sigma_n + \frac{\kappa}{2}}. 
\end{align}

The third term on line (\ref{EN1}) can be estimated analogously. Coming now to the first term in (\ref{EN2}), 
\begin{align} \nonumber
||v^{(n)}_s \int_x^\infty \frac{\epsilon}{x} \chi'' v_p ||_{L^2_y} &\le ||v^{(n)}_s ||_{L^\infty} \int_x^\infty \epsilon^{\frac{3}{4}-\frac{\kappa}{2}} (x')^{-\frac{3}{4}+\frac{\kappa}{2}} (x')^{-1+\sigma_n} ||\epsilon^{\frac{1}{4}+\frac{\kappa}{2}} x^{-\frac{1}{4}-\frac{\kappa}{2}} \langle y \rangle^{\frac{1}{2}+\kappa} \chi'' ||_{L^\infty} \\
& \lesssim \epsilon^{\frac{3}{4}-\frac{\kappa}{2}} x^{-\frac{5}{4}+\frac{\kappa}{2} + \sigma_n}. 
\end{align}

The second term in (\ref{EN2}): 
\begin{align} \nonumber
||v^{(n)}_s \int_x^\infty \frac{\sqrt{\epsilon}}{\sqrt{x'}} \chi' v_{py} ||_{L^2_y} &\le ||v^{(n)}_s||_{L^\infty} \int_x^\infty \epsilon^{\frac{1}{4}-\frac{\kappa}{2}} (x')^{-\frac{1}{4}+\frac{\kappa}{2}} (x')^{-\frac{3}{2}+\sigma_n} dx' \\
& \lesssim \epsilon^{\frac{1}{4}-\frac{\kappa}{2}} x^{-\frac{5}{4}+\sigma_n + \frac{\kappa}{2}}. 
\end{align}

Moving to the third, which is slightly different due to the weight $z$, but this causes no harm as $zu_{py}$ is known to be in $L^\infty_y$ according to (\ref{pr.n.bound}):
\begin{align}
||v^{(n)}_s \int_x^\infty \frac{\sqrt{\epsilon}}{x'} \chi' z u_{py} ||_{L^2_y} \lesssim x^{-\frac{1}{2}} \int_x^\infty \frac{\sqrt{\epsilon}}{x'} (x')^{-\frac{3}{4}+\sigma_n} dx' \lesssim \sqrt{\epsilon} x^{-\frac{5}{4}-\sigma_n}. 
\end{align}

Next, we move to the $u^n_{pyy}$ contributions: 
\begin{align}
\int_x^\infty || C_1 \frac{\eps}{x'} \chi'' v_{py} + C_2 \frac{\sqrt{\eps}}{\sqrt{x'}} v_{pyy} \chi' -  \chi' u_{pyy} \frac{z}{2x'} \sqrt{\eps} ||_{L^2_y}  \lesssim \sqrt{\eps} x^{-\frac{5}{4}+\sigma_n}, 
\end{align}

according to the rates, $||u_{pyy} z x^{\frac{5}{4}-\sigma_n}, v_{pyy} x^{\frac{7}{4}-\sigma_n}, v_{py} x^{\frac{5}{4}-\sigma_n}||_{L^2_y} \le C$ from (\ref{precut.1}). Finally, for the $v_p$ term in (\ref{expEN}), one must use the tradeoff in (\ref{trade.cut}), to obtain: 
\begin{align} \n
\int_{x}^\infty ||\frac{\eps^{\frac{3}{2}}}{(x')^{\frac{3}{2}}} \chi''' v_p ||_{L^2_y} dx' &\lesssim \eps^{\frac{5}{4}-\frac{\kappa}{2}} \int_x^\infty (x')^{\frac{1}{4}+\frac{\kappa}{2}} (x')^{-\frac{3}{2}} ||v_p||_{L^\infty_y} \\
& \lesssim \eps^{\frac{5}{4}-\frac{\kappa}{2}} \int_x^\infty (x')^{\frac{1}{4}+\frac{\kappa}{2}} (x')^{-\frac{3}{2}} (x')^{-1+\sigma_n} \lesssim  \eps^{\frac{5}{4}-\frac{\kappa}{2}} x^{-\frac{5}{4}+\frac{\kappa}{2}+\sigma_n}. 
\end{align}

The $f^{(n)}$ term can then be estimated upon observing that $f^{(n)}$ is exponentially small in the support of $1-\chi$:
\begin{align} \n
||(1-\chi) f^{(n)}||_{L^2_y} &= ||(1-\chi) z^{N} z^{-N} f^{(n)}||_{L^2_y} \\ & \lesssim \epsilon^{\frac{N}{2}} ||z^{N} f^{(n)}||_{L^2_y}  \lesssim  \epsilon^{\frac{N}{2}} x^{-\frac{5}{4}+\sigma_{n-1}} \lesssim  \epsilon^{\frac{N}{2}} x^{-\frac{5}{4}+\sigma_{n}},
\end{align}

for any $N$ large, according to estimate (\ref{f.decay.1}). This now concludes the proof of  (\ref{error.estimates}). 
\end{proof}

By construction, one has the following: 
\begin{lemma}[Remainder Estimates]\label{Lemma FE} For $R^{u,n}, R^{v,n}$ as defined in (\ref{rvnreef}), and for any $\gamma \in [0,\frac{1}{4})$, $n \ge 2$, and for $\sigma_n$ as in (\ref{sigma.i}), $\kappa>0$ arbitrarily small, 
\begin{align}
&\epsilon^{-\frac{n}{2}-\gamma} \Big| \partial_x^k R^{u,n} + \sqrt{\epsilon} \partial_x^k R^{v,n} \Big| \lesssim C(n, \kappa) \epsilon^{\frac{1}{4}-\gamma-\kappa} x^{-k-\frac{3}{2}+2\sigma_n}, \\  \label{Rnl2}
&\epsilon^{-\frac{n}{2}-\gamma} ||\sqrt{\epsilon}\partial_x^k R^{u,n}, \sqrt{\epsilon} \partial_x^k R^{v,n}||_{L^2_y} \lesssim C(n, \kappa) \epsilon^{\frac{1}{4}-\gamma-\kappa} x^{-k-\frac{5}{4}+2\sigma_n + \kappa}.
\end{align}
\end{lemma}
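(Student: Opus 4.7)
The plan is to read off $R^{u,n}$ and $R^{v,n}$ directly from the explicit formulae (\ref{runreef}) - (\ref{rvnreef}), factor out the common $\epsilon^{n/2}$, and bound each resulting summand individually using the profile estimates already accumulated in this chapter. The cutoff contribution $\mathcal{E}^{(n)}$ already satisfies the target bound by (\ref{error.estimates}); the remaining terms split into (i) the quasilinear terms involving $\Delta_\epsilon v^n_p$, $u^n_{pxx}$ and the intermediate-layer coefficients $u_s^{(n)}, v_s^{(n)}$, (ii) the Prandtl--Prandtl and Prandtl--Euler couplings of the form $v^n_p\, u^j_{py}$, $v^n_p\, u^j_{eY}$, $u^n_{px}\{u^j_e+u^j_p\}$, and (iii) the quadratic self-interactions $\epsilon^{n/2}u^n_p v^n_{px}$ and $\epsilon^{n/2}v^n_p v^n_{py}$. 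For each piece I will apply the pointwise bounds of Lemma \ref{L.CL}, the $L^2_y$ bounds of Lemma \ref{L.CL.2}, the intermediate Prandtl estimates (\ref{pr.1.close}) and (\ref{pr.i.b}), and the enhanced Euler rates (\ref{i.Euler.decay}).

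For the pointwise estimate, each product of the bracketed terms decays at least like $x^{-3/2+2\sigma_n-k}$ after one $\partial_x^k$, and the prefactor $\sqrt{\epsilon}$ attached to $R^{v,n}$ together with the genuine $\epsilon$ sitting on $\epsilon u^n_{pxx}$ and on $\epsilon \partial_{yy}$ inside $\Delta_\epsilon$ supplies an overall factor at least $\epsilon^{1/2}$; since $\gamma<1/4$, this comfortably yields the required $\epsilon^{1/4-\gamma-\kappa}$. The only borderline terms are the leading ones $u_s^{(n)}v^n_{px}$, $v_s^{(n)}v^n_{py}$ and $u^{(n)}_{sx}u^n_p$, which are linear in the final cut-off layer and carry no spare $\epsilon$ beyond the $\sqrt{\epsilon}$ of $R^{v,n}$; for these $\sqrt{\epsilon}$ alone suffices.

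For the $L^2_y$ estimate, the subtlety is that $v^n_p = \chi(\sqrt{\epsilon}z) v_p$ does not decay in $y$ on its own --- it is only localized through the cutoff $\chi(\sqrt{\epsilon}z)$, which truncates at $y \lesssim \sqrt{x}/\sqrt{\epsilon}$. The strategy, already used in the proof of (\ref{error.estimates}), is to trade via the elementary inequality (\ref{trade.cut}), namely $\epsilon^{1/4+\kappa/2} \langle y\rangle^{1/2+\kappa} x^{-1/4-\kappa/2} \lesssim 1$ on $\supp\chi(\sqrt{\epsilon}z)$. This converts an $L^\infty_y$ estimate on any factor involving $\chi$ (or a derivative) into an $L^2_y$ estimate at the cost of $\epsilon^{-1/4-\kappa/2} x^{1/4+\kappa/2}$, which is precisely what the $\sqrt{\epsilon}$ prefactor in (\ref{Rnl2}) pays for. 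Whenever only non-cutoff factors are in $L^2_y$ (e.g.\ products like $u^{(n)}_{sx} u^n_p$ where $u^n_p$ is placed in $L^2_y$ via (\ref{L2.stable.1})), we obtain the bound directly without any tradeoff, and the $\sqrt{\epsilon}$ is then simply a bonus.

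For the higher-$k$ case, I will apply $\partial_x^k$ to the formulae (\ref{runreef}) - (\ref{rvnreef}) and use Leibniz. Each additional $x$-derivative on an intermediate profile costs exactly one power of $x^{-1}$ by Propositions \ref{Pp1} and \ref{Lpi}; each derivative on the cut-off term generates, through the chain rule applied to $\chi(\sqrt{\epsilon}y/\sqrt{x'})$, either an extra $x^{-1}$ factor or an additional $\sqrt{\epsilon}/\sqrt{x'}$ paired with a derivative of $\chi$, both of which are harmless and tracked exactly as in the expansion (\ref{EN1})--(\ref{expEN}) of $\mathcal{E}^{(n)}$. The main obstacle will be the bookkeeping of these boundary-like terms produced when $\partial_x$ hits the cutoff argument, to ensure that after using (\ref{trade.cut}) the net power of $\epsilon$ is never worse than $\epsilon^{1/4-\kappa}$ and the $x$-exponent matches $-k-3/2+2\sigma_n$ (resp.\ $-k-5/4+2\sigma_n+\kappa$). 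This is precisely the calculation already performed for $\mathcal{E}^{(n)}$ itself, and the remaining summands of $R^{u,n}, R^{v,n}$ are strictly easier since they carry extra derivatives or extra $\epsilon$.
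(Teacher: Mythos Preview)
Your proposal is correct and follows essentially the same approach as the paper's proof: both proceed term-by-term through the explicit expressions (\ref{runreef})--(\ref{rvnreef}), invoke the error bound (\ref{error.estimates}) for $\mathcal{E}^{(n)}$, and handle the remaining summands using the profile estimates from Lemmas \ref{L.CL}--\ref{L.CL.2}, Propositions \ref{Pp1} and \ref{Lpi}, and the tradeoff (\ref{trade.cut}) to convert $L^\infty_y$ control into $L^2_y$ control on the support of the cutoff. The paper additionally uses the Eulerian self-similar estimate (\ref{e.ss.1}) (with the $L^2_y \to L^2_Y$ scaling costing $\epsilon^{-1/4}$) to treat the term $v^n_p u^j_{eY}$, which you would encounter when filling in details; this is the only ingredient not explicitly named in your outline, but it fits naturally into your framework.
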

\begin{proof}
This follows from (\ref{error.estimates}) and those bounds established in (\ref{pr.n.bound}). We shall proceed term by term from (\ref{rvnreef}). First, 
\begin{align} \n
\eps^{1-\gamma} ||u^n_{pxx}||_{L^2_y} & = \eps^{1-\gamma} ||\partial_x \Big( \chi v_p \Big)_y ||_{L^2_y}  =  \eps^{1-\gamma} ||\p_x \Big( \frac{\chi'}{\sqrt{x}} \sqrt{\eps} v_p + \chi v_{yp} \Big)||_{L^2_y} \\ \label{record.1}
&= \eps^{1-\gamma} || \eps \frac{\chi'' }{x^{\frac{3}{2}}} z v_p + \frac{\chi' \sqrt{\eps}}{x^{\frac{3}{2}}} v_p + \frac{\chi' }{\sqrt{x}} \sqrt{\eps} v_{px} + \sqrt{\eps} \frac{\chi'}{x} z v_{py} + \chi v_{pxy} ||_{L^2_y} \\ \n
& = (\ref{record.1}.1) + ... (\ref{record.1}.5).
\end{align}

First, 
\begin{align} \n
| (\ref{record.1}.1) | &\lesssim \eps^{1-\gamma} ||\sqrt{\eps} z \cdot \Big( \langle y \rangle^{\frac{1}{2}+\kappa} x^{-\frac{1}{4}-\frac{\kappa}{2}} \eps^{\frac{1}{4}+\frac{\kappa}{2}} \Big) \chi''||_{L^\infty_y} \eps^{\frac{1}{4}-\frac{\kappa}{2}} ||v_p||_{L^\infty} x^{-\frac{5}{4}+\frac{\kappa}{2}} \\
& \lesssim \eps^{\frac{5}{4}-\gamma - \frac{\kappa}{2}} x^{-\frac{9}{4}+\frac{\kappa}{2}+\sigma_n}. 
\end{align}

It is clear that (\ref{record.1}.1), (\ref{record.1}.2), and (\ref{record.1}.3) are identical, and so we move to: 
\begin{align}
| (\ref{record.1}.4) | &\lesssim \eps^{1-\gamma} \frac{\sqrt{\eps}}{x} ||zv_{py}||_{L^2_y} \lesssim \eps^{1-\gamma} \frac{\sqrt{\eps}}{x} x^{-\frac{5}{4}+\sigma_n},
\end{align}

and finally: 
\begin{align}
| (\ref{record.1}.5) | &= \eps^{1-\gamma} ||\chi v_{pxy}||_{L^2_y} \lesssim  \eps^{1-\gamma} x^{-\frac{9}{4}+\sigma_n}. 
\end{align}

We next move to the second term in $R^{u,n}$, for which we immediately use (\ref{trade.cut})
\begin{align} \n
\eps^{\frac{n}{2}-\gamma} ||v^n_p u^n_{py}||_{L^2_y} &\le \eps^{\frac{n}{2}-\gamma - \frac{1}{4}-\frac{\kappa}{2}} x^{\frac{1}{4}+\frac{\kappa}{2}} ||v^n_p||_{L^\infty_y} ||u^n_{py}||_{L^\infty_y} \\
& \lesssim \eps^{\frac{n}{2}-\gamma - \frac{1}{4}-\frac{\kappa}{2}} x^{\frac{1}{4}+\frac{\kappa}{2}} x^{-2+2\sigma_n} = \eps^{\frac{n}{2}-\gamma - \frac{1}{4}-\frac{\kappa}{2}} x^{-\frac{7}{4} + \frac{\kappa}{2} + 2\sigma_n}. 
\end{align}

where we have used (\ref{pw.stable.1}), (\ref{pw.stable.2}). Similarly, for $j \ge 1$, one has: 
\begin{align}
\eps^{\frac{1}{2}-\gamma} ||v^n_p u^j_{py}||_{L^2_y} \le \eps^{\frac{1}{2}-\gamma} ||v^n_p||_{L^\infty} ||u^j_{py} ||_{L^2_y} \lesssim \eps^{\frac{1}{2}-\gamma} x^{-1+\sigma_n} x^{-\frac{1}{2}+\sigma_n} \lesssim \eps^{\frac{1}{2}-\gamma} x^{-\frac{3}{2}+2\sigma_n} 
\end{align}

where we have used the specification of the norm $||\cdot||_{P}$ given in (\ref{norm.P}). For the term $v^n_p u^j_{eY}$, we calculate, for $\kappa > 0$: 
\begin{align}
\epsilon^{\frac{1}{2}+\frac{j}{2}-\gamma}||v^n_p u^j_{eY}||_{L^2_y} \le \epsilon^{\frac{1}{4}+\frac{j}{2}-\gamma}||v^n_p||_{L^\infty_y} ||u^j_{eY}||_{L^2_Y} \lesssim C(n) \epsilon^{\frac{1}{4}+\frac{j}{2}-\gamma} x^{-1+\sigma_n} x^{-1+\kappa}.
\end{align}

The last estimate follows from applying $L^2_Y$ to the Eulerian self-similar estimate, (\ref{e.ss.1}):
\begin{align}
\Big| u^1_{eY} \Big| = \Big| v^1_{ex} \Big| \le C(\kappa) x^{-1+\kappa} Y^{-\frac{1}{2}-\kappa}. 
\end{align}

The next term in $R^{u,n}$ is, for $j \ge 1$, for which we use (\ref{pr.1.close}) for the $u^j_p$ term, (\ref{i.Euler.decay}) for the $u^j_e$ term, and (\ref{L2.stable.1}) for the $u^{(n)}_{px}$ term in $L^2$:
\begin{align}
||\eps^{\frac{j}{2}} u^{(n)}_{px} (u^j_e + u^j_p)||_{L^2_y} \lesssim \eps^{\frac{j}{2}} x^{-\frac{1}{4}+\sigma_n} ||u^{n}_{px}||_{L^2_y} \lesssim \eps^{\frac{1}{4}-\frac{\kappa}{2}} x^{-\frac{1}{4}+\sigma_n}  x^{-\frac{5}{4}+\sigma_n}. 
\end{align}

The final term in $R^{u,n}$ is the error term, $\mathcal{E}^{(n)}$, which has been controlled in (\ref{error.estimates}). We may now move to $R^{v,n}$. In so doing, we first note that $\eps v^{n}_{pxx}$ can be estimated identically to (\ref{vpx.hard}). Second, using (\ref{L2.stable.1}), we have: 
\begin{align} \n
\eps^{\frac{1}{2}-\gamma}||v^n_{pyy}||_{L^2_y} &\lesssim \eps^{\frac{1}{2}-\gamma} x^{-\frac{7}{4}+\sigma_n}. 
\end{align} 

For the $L^2$ bound on the $v^n_{px}$ term in (\ref{rvnreef}), we employ (\ref{trade.cut}):
\begin{align} \nonumber
\epsilon^{\frac{1}{2}-\gamma} ||u_s^{(n)}v^n_{px}||_{L^2_y} &\lesssim \epsilon^{\frac{1}{2}-\gamma} ||v^n_{px}||_{L^2_y} \le \epsilon^{\frac{1}{2}-\gamma} ||1_{z \le \frac{1}{\sqrt{\epsilon}}} v^n_{px} ||_{L^2_y} \\ \nonumber
&\le \epsilon^{\frac{1}{2}-\gamma} ||1_{z \le \frac{1}{\sqrt{\epsilon}}} x^{-2+\sigma} ||_{L^2_y} =  \epsilon^{\frac{1}{2}-\gamma} ||1_{z \le \frac{1}{\sqrt{\epsilon}}} x^{-2+\sigma_n} (1+y)^{\frac{1}{2}+\kappa} (1+y)^{-\frac{1}{2}-\kappa} ||_{L^2_y} \\ \nonumber
&\le \epsilon^{\frac{1}{2}-\gamma} ||1_{z \le \frac{1}{\sqrt{\epsilon}}} (1+y)^{\frac{1}{2}+\kappa} x^{-\frac{1}{4}-\frac{\kappa}{2}}||_{L^\infty_y} x^{-\frac{7}{4}+\sigma_n + \frac{\kappa}{2}} ||(1+y)^{-\frac{1}{2}-\kappa}||_{L^2_y} \\ \label{vpx.hard}
&\le \epsilon^{\frac{1}{2}-\gamma} ||1_{z \le \frac{1}{\sqrt{\epsilon}}} (1+z)^{\frac{1}{2}+\kappa}||_{L^\infty_y} x^{-\frac{7}{4}+\sigma_n + \frac{\kappa}{2}}  \le \epsilon^{\frac{1}{4}-\gamma-\kappa} x^{-\frac{7}{4}+\sigma_n + \frac{\kappa}{2}}. 
\end{align}

The next two terms: 
\begin{align}
&\eps^{\frac{1}{2}-\gamma} ||v^{(n)}_{sx} u^n_p||_{L^2_y} \le \sqrt{\eps} ||v^{(n)}_{sx}||_{L^\infty} ||u^n_p||_{L^2_y} \lesssim  \sqrt{\eps} x^{-\frac{3}{2}} x^{-\frac{1}{4}}, \\
&\eps^{\frac{1}{2}-\gamma} ||v^{(n)}_s v^n_{py}||_{L^2_y} \le \eps^{\frac{1}{2}-\gamma} x^{-\frac{7}{4}+2\sigma_n}.
\end{align}

Now, by using the trade-off in (\ref{trade.cut}): 
\begin{align}
\eps^{\frac{1}{2}-\gamma} ||v^{(n)}_{sy} v^n_p||_{L^2_y} \le \eps^{\frac{1}{4}-\gamma - \kappa} ||v^{(n)}_{sy}||_{L^\infty_y} ||v^n_p||_{L^\infty_y} x^{\frac{1}{4}+\frac{\kappa}{2}} \lesssim \eps^{\frac{1}{4}-\gamma - \kappa} x^{-\frac{7}{4}+2\sigma_n + \frac{\kappa}{2}}.
\end{align}

We will now move to: 
\begin{align}
&\eps^{\frac{n}{2}-\gamma} \sqrt{\eps} ||u^n_p v^n_{px}||_{L^2_y} \lesssim \eps^{\frac{n}{2}-\gamma + \frac{1}{2}} ||u^n_p||_{L^2_y} ||v^n_{px}||_{L^\infty_y} \lesssim  \eps^{\frac{n}{2}-\gamma + \frac{1}{2}} x^{-\frac{9}{4}+2\sigma_n }, \\
&\eps^{\frac{n}{2}-\gamma} \sqrt{\eps} ||v^n_p v^n_{py}||_{L^2_y} \lesssim \eps^{\frac{n}{2}-\gamma + \frac{1}{2}} x^{-\frac{9}{4}+2\sigma_n }. 
\end{align}

where we have used (\ref{pw.stable.1}) and (\ref{L2.stable.1}). This completes all of the terms in (\ref{rvnreef}), thereby establishing (\ref{Rnl2}). The uniform estimates follow in an analogous manner.  

\end{proof}

For the energy estimates in Section \ref{section.NSR.Linear}, we will need to retain the self-similarity of $u^n_p$ (the ability to absorb factors of $z$). As the profiles $u^n_p, v^n_p$ are higher order in $\epsilon$, we will be happy to pay factors of $\epsilon$ in order to retain this ability:
\begin{lemma} \label{Lpf} The following point-wise decay estimates holds for any $m \ge 0$, 
\begin{align}
&\Big| z^m \partial_x^k v^n_{p}\Big| \lesssim \epsilon^{-\frac{m}{2}} C(k, n, m)  x^{-k -1+\sigma_n}, \\
&\Big| z^m \partial_x^k v^n_{py}\Big| \lesssim \epsilon^{-\frac{m}{2}} C(k, n,  m) x^{-k -\frac{3}{2}+\sigma_n}, \\ \label{youpewhy}
&\Big| y^j \partial_y^j  u^n_{p}\Big| \lesssim C(k, n, m)x^{-\frac{1}{2}+\sigma_n} , \\ \label{youpewhy.2}
&\Big| z^m \partial_x^k u^n_{py} \Big| \lesssim \epsilon^{-\frac{m}{2}} C(k, n, m)x^{-1+\sigma_n - k}, \text{ for } k \ge 1.
\end{align}
\end{lemma}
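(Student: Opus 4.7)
The plan is to prove each of the four bounds by reducing to the uncut estimates already available for $[u_p,v_p]$ from Proposition \ref{PropNN} and Lemma \ref{L.CL}, and to absorb the extra weights using the support property of the cutoff, namely the trade-off
\[
z^m \, \chi^{(j)}(\sqrt{\epsilon}\,z) \lesssim C(m,j)\,\epsilon^{-m/2}, \qquad j \ge 0,
\]
which holds pointwise because $\chi^{(j)}(\sqrt{\epsilon}z)$ is supported in $\{z \lesssim 1/\sqrt{\epsilon}\}$ (and, for $j \ge 1$, in $\{z \sim 1/\sqrt{\epsilon}\}$).

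For the first two estimates and the last one, I would expand $\partial_x^k v_p^n = \partial_x^k(\chi v_p)$ and $\partial_x^k v_{py}^n = \partial_x^k(\chi v_p)_y$ by Leibniz, noting that each $x$-derivative hitting $\chi(\sqrt{\epsilon}z)$ yields a factor $\tfrac{\sqrt{\epsilon}\,z}{2x}\,\chi'(\sqrt{\epsilon}z)$ whose modulus is bounded by $C/x$ on the support of $\chi'$. Pairing this with the unweighted estimates $|\partial_x^k v_p| \lesssim x^{-k-1+\sigma_n}$ and $|\partial_x^k v_{py}| \lesssim x^{-k-3/2+\sigma_n}$ from Lemma \ref{L.CL} and multiplying by $z^m$ gives, after absorbing $z^m$ via the above trade-off, the claimed $\epsilon^{-m/2}$-weighted bounds. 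For (\ref{youpewhy.2}), I would use the fundamental theorem of calculus on the defining integral (\ref{cutoff.defn}) to obtain $\partial_x u^n_{py} = -\partial_{yy}(\chi v_p)|_{x'=x}$, expand by Leibniz, observe that the dominant summand is $\chi \cdot v_{pyy} \sim x^{-2+\sigma_n}$, and then apply $\partial_x^{k-1}$ with each derivative costing an additional $x^{-1}$.

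The genuinely subtle case is (\ref{youpewhy}), because the weight $y^j$ is \emph{not} supported in $\{z \lesssim 1/\sqrt{\epsilon}\}$ and cannot be traded for $\epsilon^{-j/2}$. My plan is to expand
\[
y^j \partial_y^j u^n_p = y^j \int_x^{\infty} \sum_{\ell=0}^{j+1} \binom{j+1}{\ell} \Big(\tfrac{\sqrt{\epsilon}}{\sqrt{x'}}\Big)^{j+1-\ell} \chi^{(j+1-\ell)}(\sqrt{\epsilon}z') \, \partial_y^\ell v_p \, dx',
\]
with $z' = y/\sqrt{x'}$, and split into the terms $\ell = j+1$, $1 \le \ell \le j$, and $\ell = 0$. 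For $\ell = j+1$ the integrand is $\chi \partial_y^j u_{px}$ (using $\partial_y v_p = -u_{px}$), and since $u_p$ enjoys the self-similar bounds (\ref{precut.1}), one has $|y^j \partial_y^j u_{px}| = (x')^{j/2}|z^j \partial_y^j u_{px}| \lesssim (x')^{-3/2+\sigma_n}$, which integrates against $dx'$ to $x^{-1/2+\sigma_n}$. For intermediate $\ell$ I would write $y^j = y^{\ell-1}(\sqrt{x'}z')^{j-\ell+1}$ and use that on the support of $\chi^{(j+1-\ell)}$ (with $j+1-\ell \ge 1$) one has $z' \sim 1/\sqrt{\epsilon}$, so that $(z')^{j-\ell+1}$ exactly cancels $\epsilon^{(j+1-\ell)/2}$ while $(\sqrt{x'})^{-(j+1-\ell)} \cdot (x')^{(j-\ell+1)/2} = (x')^{-1/2}$; what remains is $y^{\ell-1} \partial_y^{\ell-1} u_{px}$, again bounded by $(x')^{-3/2+\sigma_n}$ by the same converson. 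For $\ell = 0$ (the only case where $v_p$ appears without a $z$-weight), the same support constraint forces $y \sim \sqrt{x'}/\sqrt{\epsilon}$, so $y^j (\sqrt{\epsilon}/\sqrt{x'})^{j+1} \sim \sqrt{\epsilon}/\sqrt{x'}$; combined with $|v_p| \lesssim (x')^{-1+\sigma_n}$ one gets an integrand of size $\sqrt{\epsilon}(x')^{-3/2+\sigma_n}$, which again integrates to the desired bound (in fact with an extra $\sqrt{\epsilon}$ to spare).

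The main obstacle is thus the $\ell = 0$ term in (\ref{youpewhy}): unlike the bounds on $v^n_p, v^n_{py}$, here the weight is $y^j$ rather than $z^m$, so the naive support trade-off $z^m \chi \lesssim \epsilon^{-m/2}$ cannot be applied directly. The key insight is that on $\mathrm{supp}\,\chi^{(j+1)}$ one has the precise scaling $y \sim \sqrt{x'}/\sqrt{\epsilon}$, which allows $y^j$ to combine with the $(\sqrt{\epsilon}/\sqrt{x'})^{j+1}$ arising from differentiating $\chi$ and to leave a net $\sqrt{\epsilon}/\sqrt{x'}$ gain; for the remaining terms with $\ell \ge 1$, the relation $\partial_y v_p = -u_{px}$ allows us to invoke the self-similar weighted estimates on $u_p$ from Proposition \ref{PropNN}, which are unavailable for $v_p$ itself since $v_p$ does not decay as $y \to \infty$.
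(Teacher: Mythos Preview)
Your proposal is correct and follows essentially the same strategy as the paper: the support trade-off $z^m \chi^{(j)}(\sqrt{\epsilon}z) \lesssim \epsilon^{-m/2}$ for the $v^n_p$, $v^n_{py}$, and $\partial_x^k u^n_{py}$ bounds, and the Leibniz expansion of $\partial_y^{j+1}(\chi v_p)$ inside the defining integral for (\ref{youpewhy}). The paper only writes out the case $j=1$ of (\ref{youpewhy}) explicitly (your three cases $\ell=0,1,2$ are exactly the paper's integrals $I_1,I_2,I_3$) and then asserts that higher $j$ follow by an identical calculation; your general-$\ell$ decomposition makes this claim precise. One harmless arithmetic slip: in your intermediate-$\ell$ step, $(\sqrt{x'})^{-(j+1-\ell)} \cdot (x')^{(j-\ell+1)/2} = 1$, not $(x')^{-1/2}$, since the two exponents are equal and cancel; but since you already have $|y^{\ell-1}\partial_y^{\ell-1}u_{px}| \lesssim (x')^{-3/2+\sigma_n}$, which integrates to $x^{-1/2+\sigma_n}$, the final bound is unaffected.
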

\begin{proof}

Via the definition: 
\begin{align}
z^m |v^n_p| \le z^m |\chi(\sqrt{\epsilon}z) v_p| \le \epsilon^{-\frac{m}{2}} |v_p| \lesssim \epsilon^{-\frac{m}{2}} x^{-1+\sigma_n}. 
\end{align}

Next, applying $\partial_y$ yields: 
\begin{align}
z^m |v^n_{py}| = z^m \sqrt{\epsilon} |\chi' \frac{v_p}{\sqrt{x}} |+ \chi z^m |v_{py}| \lesssim  \epsilon^{-\frac{m}{2}} x^{-\frac{3}{2}+\sigma_n}.
\end{align}

The estimate for $u^n_{py}$ is more complicated. The $m = 0$ case was treated in (\ref{m=0}). Suppose $m = 1$. Then, 
\begin{align}
y u^n_{py} = y \int_x^\infty \partial_{yy} (\chi v_p) dx' &=  \int_x^\infty \frac{\epsilon}{x'} \chi'' y v_p + \int_x \frac{\sqrt{\epsilon}}{\sqrt{x'}} \chi' y v_{py} + \int_x^\infty \chi y v_{pyy} \\
& = I_1 + I_2 + I_3. 
\end{align}

We have: 
\begin{align}
|I_1| \le ||\chi'' z||_{L^\infty} \int_x^\infty \epsilon \frac{|v_p|}{\sqrt{x'}} dx' \le \sqrt{\epsilon} \int_x^\infty (x')^{-\frac{3}{2}+\sigma} dx' \lesssim \sqrt{\epsilon} x^{-\frac{1}{2}+\sigma_n}. 
\end{align}

Next, 
\begin{align}
|I_2| \le \int_x^\infty \sqrt{\epsilon} \chi' |z v_{py}| dx' \lesssim \int_x^\infty \sqrt{\epsilon} (x')^{-\frac{3}{2}} dx' \lesssim \sqrt{\epsilon} x^{-\frac{1}{2}+\sigma_n}. 
\end{align}

Lastly, 
\begin{align}
\int_x^\infty |y \chi v_{pyy} | dx' \le \int_x^\infty  (x')^{\frac{1}{2}} zv_{pyy} dx' \lesssim x^{-\frac{1}{2}+\sigma_n}. 
\end{align}

Piecing these estimates together gives: 
\begin{align}
y|u^n_{py}| \lesssim x^{-\frac{1}{2}+\sigma_n}, 
\end{align}

as desired. Higher $y$-derivatives of $u^n_p$ follow an identical calculation. Let us now move to $x$-derivatives of $u^n_{py}$: 
\begin{align}
u^n_{pyx} = \partial_{yy} (\chi v_p) = \frac{\epsilon}{x} \chi'' v_p + \frac{\sqrt{\epsilon}}{\sqrt{x}} \chi' v_{py} + \chi v_{pyy}. 
\end{align}

From here, once can estimate: 
\begin{align} \nonumber
z^m |u^n_{pxy}| &\lesssim \frac{\epsilon}{x}| \chi''| z^m |v_p |+ \frac{\sqrt{\epsilon}}{\sqrt{x}}| \chi'| z^m  |v_{py}|+ \chi z^m|v_{pyy}| \\
& \le \epsilon^{-\frac{m}{2}} x^{-2+\sigma_n}.
\end{align}

\end{proof}

We are now ready to conclude Chapter I: 

\begin{proof}[Proof of Theorem \ref{thm.m.part.1}]

Consolidating Corollaries \ref{cp1} - \ref{cp2}, Propositions \ref{thm.euler.1}, \ref{Pp1}, \ref{i.Euler.lay}, \ref{Lpi}, Lemma \ref{Lemma FE}, and Lemma \ref{Lpf} gives Theorem \ref{thm.m.part.1}.
\end{proof}

\break

\part*{\centerline{Chapter II: a-Priori Estimates for Remainder}}
\addcontentsline{toc}{part}{Chapter II: a-Priori Estimates for Remainder}

\section{Overview of Results}

We will now consider the system satisfied by the remainders, $[u,v,P]$ as defined in (\ref{expansion.1}) - (\ref{expansion.3}). Expanding the Navier-Stokes equations in (\ref{scaled.NS.1}) - (\ref{scaled.NS.3}), one obtains: 
\begin{align} \label{EQ.NSR.1}
-\Delta_\epsilon u + S_u(u,v) + P_{x} = f(u,v) ,\\ \label{EQ.NSR.2}
- \Delta_\epsilon v + S_v(u,v) + \frac{P_y}{\epsilon} = g(u,v), \\ \label{EQ.NSR.3}
u_x + v_y = 0,
\end{align}

which are taken together with the boundary conditions:
\begin{align} \label{nsr.bc.1}
[u,v]|_{\{y=0\}} =  [u,v]|_{\{x = 1\}} = \lim_{y \rightarrow \infty} [u,v] = \lim_{x \rightarrow \infty} [u,v] = 0.
\end{align}

The terms in equations (\ref{EQ.NSR.1}) - (\ref{EQ.NSR.2}) are defined:
\begin{align} \label{defn.SU.SV}
&f(u,v) := \epsilon^{-\frac{n}{2}-\gamma} R^{u,n} + \mathcal{N}^u(u,v), \hspace{3 mm} g(u,v) := \epsilon^{-\frac{n}{2}-\gamma} R^{v,n} + \mathcal{N}^v(u,v), \\ \label{defn.Su}
&S_u(u,v) := u_R u_{x} + u_{Rx}u + v_R u_{y} + u_{Ry}v , \hspace{3 mm} S_v(u,v) := u_R v_x + v_{Rx}u + v_R v_y + v_{Ry}v, \\ \label{defn.Nu}
& \mathcal{N}^u(u,v) := \epsilon^{\frac{n}{2}+\gamma} uu_x + \epsilon^{\frac{n}{2}+\gamma}vu_y, \hspace{3 mm} \mathcal{N}^v(u,v) := \epsilon^{\frac{n}{2}+\gamma} uv_x + \epsilon^{\frac{n}{2}+\gamma} vv_y. 
\end{align}

The forcing terms, $R^{u,v}, R^{v,n}$ are given by the expressions in (\ref{runreef}) - (\ref{rvnreef}), and have been controlled in Theorem \ref{thm.m.part.1}. The coefficients $u_R, v_R$ in $S_u(u,v), S_v(u,v)$ given in (\ref{defn.Su}) have been defined in (\ref{defn.UR}) - (\ref{defn.VR}), and controlled according to the estimates in Theorem \ref{thm.m.part.1}. Due to the eventual need to perform a fixed-point argument, we will actually consider the slightly generalized system, where $f, g$ above are replaced by:
\begin{align} \label{bar.f}
f(u, \bar{u}, \bar{v}) &= \epsilon^{-\frac{n}{2}-\gamma} R^{u,n} + \epsilon^{\frac{n}{2}+\gamma}\bar{u} \bar{u}_x + \epsilon^{\frac{n}{2}+\gamma}\bar{v} u_y, \\ \label{bar.g}
g(\bar{u}, \bar{v}) &= \epsilon^{-\frac{n}{2}-\gamma} R^{v,n} + \epsilon^{\frac{n}{2}+\gamma}\bar{u} \bar{v}_x + \epsilon^{\frac{n}{2}+\gamma}\bar{v} \bar{v}_y.
\end{align}

When $u = \bar{u}, v = \bar{v}$ (which corresponds to a fixed point of an appropriately defined map), one obtains the actual system of interest, with $f$ as in (\ref{defn.SU.SV}). For technical purposes, in this chapter we will consider the system (\ref{EQ.NSR.1}) - (\ref{EQ.NSR.3}) on the domain:
\begin{align} \label{omegaN}
\Omega^N := \{(x,y) : x > 0, 0 < y < N \}.
\end{align}

All estimates will be made independent of $N$, allowing us to eventually send $N \rightarrow \infty$. When working on this domain, we will use the boundary conditions: 
\begin{align} \label{BCN}
[u,v]|_{\{y=0\}} =  [u,v]|_{\{x = 1\}} =  [u,v]|_{\{y = N\}} = \lim_{x \rightarrow \infty} [u,v] = 0.
\end{align}

The norms which we will work in are defined in the next section, Section \ref{Section.Z}, starting with (\ref{norm.x0}) - (\ref{norm.Z}). We invite the reader to read these definitions at this point. The main result of this chapter is then: 
\begin{theorem}[Complete $Z$ Estimate] \label{thm.m.2} Fix any $N  > 0$. Let $\delta, \epsilon$ be sufficiently small based on universal constants, with $\epsilon << \delta$, and $n \in \mathbb{N}$ sufficiently large relative to universal constants. Let $N_i$ be parameters in the definition of $Z$, given in (\ref{norm.Z}). Let $\gamma \in (0, \frac{1}{4})$, and parameter $\kappa > 0$ arbitrarily small. Suppose $||\bar{u}, \bar{v}||_{Z(\Omega^N)} \le 1$. Then $[u,v] \in Z(\Omega^N)$, solutions to the system (\ref{EQ.NSR.1}) - (\ref{EQ.NSR.3}), (\ref{defn.SU.SV}) - (\ref{bar.g}), with boundary conditions (\ref{BCN}), obey the following \textit{a-priori} estimate:
\begin{align}
||u,v||_{Z(\Omega^N)}^2 & \lesssim \epsilon^{\frac{1}{4}-\gamma - \kappa} + \epsilon^{\frac{n}{2}-\omega(N_i)} \Big( ||\bar{u}, \bar{v}||_{Z(\Omega^N)}^4 \Big),
\end{align}

for some fixed function $\omega(N_i)$, where the constants above are independent of $N$.
\end{theorem}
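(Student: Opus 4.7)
The plan is to prove the $Z$-estimate by closing a carefully weighted hierarchy of energy, positivity, elliptic, and uniform estimates, using the profile bounds from Theorem \ref{thm.m.part.1} to supply smallness in $\delta$ and the remainder bounds $\epsilon^{-\frac{n}{2}-\gamma}(R^{u,n},R^{v,n})$ from Lemma \ref{Lemma FE} to supply the target decay $\eps^{\frac{1}{4}-\gamma-\kappa}$. The base energy estimate for $X_1$ comes from applying the scaled multiplier $(u, \epsilon v)$ to the system (\ref{EQ.NSR.1})--(\ref{EQ.NSR.2}). The linearized profile terms $S_u, S_v$ produce the critical convective contribution $\int\int u^0_{py}\, u v$, which is handled precisely as in the heuristic (\ref{intro.cvc}): absorbing two $y$'s into $u^0_{py}$ via the self-similar bound (\ref{intro.s.s.absorb}), paying a weight $x^{1/2}$, and producing a $v_y$ through Hardy (since $v|_{y=0}=0$). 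This yields the energy inequality of the form (\ref{intro.EE}), with the forcing controlled in $L^2$ by Lemma \ref{Lemma FE}, and the nonlinear term $\eps^{\frac{n}{2}+\gamma}\bar v u_y \cdot u$ absorbed using the $L^\infty$ component of $\|\bar u,\bar v\|_Z$ at the cost of a high power of $\eps$.

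Next, the lost weight $x^{1/2}$ is recovered through the positivity estimate (Proposition \ref{prop.pos} referenced in the introduction), applying the multiplier $v_y x$; the dangerous contributions $\bar v^{(n)}_s u_y v_y x$ and $\eps^{\frac{n}{2}+\gamma} v u_y v_y x$ are handled via (\ref{intro.vab}), where the uniform decay $v_R^E + v \sim x^{-1/2}$ required by Requirement (\ref{REQ}) is provided for $v_R$ by estimates (\ref{PE0.1}),(\ref{PE4}) and for $v$ by the $L^\infty$ component of $Z$. These two estimates together yield the $X_1$ bound. I then differentiate the system in $x$, after smoothly cutting off near $x=1$ (a compactly supported cutoff $\phi_k(x)$ vanishing on $[1,10]$), and repeat the energy/positivity pair to obtain $X_2$ and $X_3$ bounds, noting that each $\partial_x$ provides one extra power of $x^{-1}$ on the profiles by (\ref{PE0.1}),(\ref{PE0.3}),(\ref{PE4}),(\ref{PE5}). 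The cutoff commutators are controlled by the $Y_i$ elliptic norms, which in turn are obtained by treating the system as a scaled Stokes problem on compact subregions, paying factors of $\eps$, which forces the choice of large $N_i$ and confirms the need for $n \gg N_i$ so that $\eps^{\frac{n}{2}-\omega(N_i)}$ is small.

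The uniform estimates in $Z$ (the lines carrying weights $\eps^{N_4}, \eps^{N_5}, \eps^{N_6}, \eps^{N_7}$) are then extracted from the $X_1 \cap X_2 \cap X_3$ norm via the Sobolev chain (\ref{below.1}) and the analogous chain for $u_x$, exactly avoiding the Hardy-critical pairing in (\ref{HCrit.1}) by passing through $\|v_x x\|_{L^2_y}^{1/2}\|v_{xy}x^2\|_{L^2_y}^{1/2}$. The last auxiliary quantity $\sqrt{\eps} v_{xx} x^2$ is obtained from the $\partial_x^2$-differentiated system interpreted as a scalar parabolic equation for $v_x$, using the elliptic norms $Y_3$ near the corner. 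Putting everything together, the sum of forcing contributions from Lemma \ref{Lemma FE} gives a right-hand side $\lesssim \eps^{\frac{1}{4}-\gamma-\kappa}$, while all nonlinear contributions involving $\bar u,\bar v$ collect into $\eps^{\frac{n}{2}-\omega(N_i)}\|\bar u,\bar v\|_Z^4$, proving the claimed bound.

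The main obstacle will be the simultaneous closure of the energy/positivity loop with the \emph{exact} decay rate $x^{-1/2}$ on $v$, which is inflexible by Requirement (\ref{REQ}): any logarithmic loss here propagates and kills the closure. This forces us to extract the $L^\infty$ decay of $v$ not from the $X_1$ energy (which is Hardy-critical as in (\ref{HCrit.1})) but from the third-order chain (\ref{below.1}), meaning that the $X_3$ estimate is not optional but is genuinely required just to close the $X_1$ estimate; the whole triple $X_1\cap X_2\cap X_3$ must be bootstrapped together. A secondary difficulty is that higher-order differentiation worsens singularities at the corner $(1,0)$, which is why the $Y_i$ elliptic norms are built into $Z$ with $\eps^{N_i}$ weights; reconciling these two mechanisms — self-similar weighted energy estimates in $x\ge 20$ and elliptic estimates with $\eps$-loss in $x\in[1,20]$ — through the cutoff commutators, while still producing an overall bound of order $\eps^{\frac{1}{4}-\gamma-\kappa}$, is the central bookkeeping task.
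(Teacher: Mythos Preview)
Your outline is correct and follows essentially the same architecture as the paper: the $Z$ embedding (Theorem~\ref{thm.z}/\ref{thm.zN}) reduces everything to $X_1\cap X_2\cap X_3$, which is controlled by the energy/positivity pair (Propositions~\ref{thm.energy}--\ref{prop.pos} and their higher-order analogues \ref{prop.ho.1}--\ref{prop.ho.4}), and the resulting forcing terms $\mathcal{W}_i$ are then estimated in Lemma~\ref{LemmaW} using the profile bounds and the $\|\bar u,\bar v\|_Z\le 1$ hypothesis. Two small points of precision worth flagging: (i) the positivity multiplier in the paper is $vx$ (equivalently $\psi_x x$) applied to the vorticity form, not $v_y x$ directly, though after one integration by parts in $y$ these coincide; (ii) for the nonlinear term $\eps^{\frac n2+\gamma}\bar v u_y\cdot u$ in the energy estimate, the paper does \emph{not} put $\bar v$ in $L^\infty$ directly but first integrates by parts in $y$ to get $-\frac12\int\int\bar v_y u^2$ (see (\ref{order.NL})), which avoids the Hardy-critical weight $x^{-1/2}$ on $u$ that would otherwise appear and keeps the constants independent of $N$---this structural step is singled out in the paper as the reason no absolute value is placed inside the $\int\int fu$ term in $\mathcal W_{1,E}$.
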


Let us give a brief overview of the steps to prove Theorem \ref{thm.m.2}. 
\begin{itemize}

\item[(Step 1)] The Space $Z$ (Section \ref{Section.Z}): We introduce the various components of the crucial norm $Z$. The energy norms, $X_1, X_2, X_3$ are to be controlled using energy and positivity estimates (in the next steps). The elliptic norms, $Y_2, Y_3$, provide additional controls near the boundary, $x = 1$, and are related to the $X_i$ norms in subsection \ref{subsection.sing}. Most importantly, there are uniform-type norms, defined in (\ref{norm.U}), of which the $||vx^{\frac{1}{2}}||_{L^\infty}$ is the most crucial ingredient. These are controlled in Lemmas \ref{L.Evol}, \ref{Lemma.UIMP}. The relation between all of the norms is summarized in Theorem \ref{thm.z}. This analysis of $Z$ is the main novelty of Chapter II. 

\item[(Step 2)] Energy Estimates: The lowest-order energy estimates are performed in Proposition \ref{thm.energy}, for which the sharp profile estimates from (\ref{PE0.1}) - (\ref{PE4.new.2}) are essential. An examination of Proposition \ref{thm.energy} shows that the energy estimate loses a factor of $x^{\frac{1}{2}}$ which must be recovered in the next step.

\item[(Step 3)] Positivity Estimates: The lowest-order positivity estimates are performed in Proposition \ref{prop.pos}. Here again, the estimates from (\ref{PE0.1}) - (\ref{PE5}) are used in an essential way. In particular, the estimate (\ref{imp.vR}) forces the requirement shown in (\ref{REQ}). 

\item[(Step 4)] Higher-Order Energy/ Positivity Estimates: In Propositions \ref{prop.ho.1}, \ref{prop.ho.2}, \ref{prop.ho.3}, \ref{prop.ho.4}, we control the higher-order energy norms $X_2, X_3$ from definitions (\ref{norm.x1}) - (\ref{norm.x2}). This is achieved by applying appropriate iterations of $\p_x$ to the system (\ref{EQ.NSR.1}) - (\ref{EQ.NSR.3}). 

\item[(Step 5)] Nonlinear Estimates: The estimates of the nonlinearities present in $(f,g)$ from (\ref{bar.f}) - (\ref{bar.g}) is performed in Section \ref{Sec.NL}, in particular in Lemma \ref{LemmaW}. Here, there are several delicate matters. First, the sharp decay of $||x^{\frac{1}{2}}v||_{L^\infty}$, which is present in (\ref{norm.Z}) due to Lemma \ref{Lemma.UIMP}, is essential in order to control the term $\bar{v} \bar{u}_y$. This estimate is (\ref{exact.NL}). Second, estimate (\ref{order.NL}) capitalizes on a cancellation structure which enables us to control a term that would be otherwise out of reach. Third, the top-order nonlinear terms in estimates (\ref{TON.1}) and (\ref{TON.2}) are controlled by using mixed-norms. 

\end{itemize}

\section{The Function Space $Z$} \label{Section.Z}

In this section, we define and analyze the high-order, weighted norm, $Z$, on which we will close our nonlinear analysis. The functional framework of this section is required in order to follow the calculations in the upcoming sections. First, we need to define the energy norms in which we obtain energy estimates, and also several auxiliary norms that will supplement the energy norms. To define these, first define the cut-off functions: 
\begin{align}
  \zeta_3(x) &= \begin{cases}
0 \text{ for } 1 \le x \le \frac{3}{2},\\
1 \text{ for } x \ge 2.
\end{cases} \label{zeta} \\
  \rho_k(x) &= \begin{cases}
0 \text{ for } 1 \le x \le 50 + 50(k-2),\\
1 \text{ for } x \ge 60 + 50(k-2).
\end{cases} \label{rho}
\end{align}

The energy norms are defined as follows:
\begin{align} \label{norm.x0}
||u,v||_{X_1}^2 &:= ||u_y||_{L^2}^2 + ||\{\sqrt{\epsilon}v_x, v_y \} x^{\frac{1}{2}}||_{L^2}^2\\ \label{norm.x1}
||u,v||_{X_2}^2 &:= ||  u_{xy} \cdot \rho_2 x||_{L^2}^2 + ||  \{ \sqrt{\epsilon} v_{xx},  v_{xy} \} \cdot (\rho_2 x)^{\frac{3}{2}}||_{L^2}^2, \\ \label{norm.x2}
||u,v||_{X_3}^2 &:= ||  u_{xxy} \cdot (\rho_3 x)^2||_{L^2}^2 +||  \{ \sqrt{\epsilon} v_{xxx},  v_{xxy} \} \cdot (\rho_3 x)^{\frac{5}{2}}||_{L^2}^2.
\end{align}

\begin{definition} The norms $Y_2, Y_3$ are strengthenings of $X_2, X_3$ near the boundary, $x = 1$, and defined through: 
\begin{align} \label{norm.Y2}
&||u,v||_{Y_2}^2 := ||u_{xy} x||_{L^2}^2 + ||\{\sqrt{\epsilon} v_{xx}, v_{xy} \} x^{\frac{3}{2}}||_{L^2}^2 + ||u_{yy}||_{L^2(x \le 2000)}, \\ \label{norm.Y3}
&||u,v||_{Y_3}^2 := || u_{xxy} \cdot \zeta_3 x^{2}||_{L^2}^2 + || \{ \sqrt{\epsilon} v_{xxx}, v_{xxy} \} \cdot \zeta_3 x^{\frac{5}{2}}||_{L^2}^2.
\end{align}
\end{definition}

\begin{definition} The norm $Z$ is defined through: 
\begin{align} \nonumber
||u,v||_Z := &||u,v||_{X_1 \cap X_2 \cap X_3} + \epsilon^{N_2} ||u,v||_{Y_2} + \epsilon^{N_3} ||u,v||_{Y_3} + \epsilon^{N_4} ||ux^{\frac{1}{4}} , \sqrt{\eps} v x^{\frac{1}{2}}||_{L^\infty} \\ \nonumber 
&+ \epsilon^{N_5} \sup_{x \ge 20} ||\sqrt{\eps} v_x x^{\frac{3}{2}} , u_x x^{\frac{5}{4}} ||_{L^\infty} + \eps^{N_6}  \sup_{x \ge 20} ||u_y x^{\frac{1}{2}}||_{L^2_y}\\ \label{norm.Z}
& + \epsilon^{N_7} \Big[\int_{20}^\infty x^4 ||\sqrt{\eps} v_{xx}||_{L^\infty_y}^2 dx \Big]^{\frac{1}{2}} .
\end{align}
Here, $N_i$, are some large numbers which will be specified in (\ref{sel.N2}) - (\ref{sel.N4}). They depend only on universal constants. It will be understood that $N_i$ are much larger than any of the quantities $M_i$ appearing in the forthcoming lemmas, and that $n$ is much larger than any of the $N_i, M_i$. 
\end{definition}

Now that the norms we will be working in have been specified, we will define corresponding spaces. First, some basic notations: 
\begin{definition} For any open set $U \subset \Omega$, the space $C^\infty_0(U)$ denotes the space of smooth functions with compact support in $U$. We will also use $C^\infty_0(U)$ to denote the space of smooth vector fields with compact support in $U$, which will be clear from context. The space $C^\infty_{0,D}(U)$ denotes the space of smooth, divergence-free vector fields with compact support in $U$. When the set $U$ is clear from context, we shall suppress it.  
\end{definition}

\begin{definition} \label{defn.Z.space} Given any open set $U \subset \Omega$, the space $Z(U)$ is defined to be space of those divergence-free vector fields which lie in the closure of $C_{0,D}^\infty(U)$ under the norm $X_1$, such that $||u,v||_Z < \infty$.
\end{definition}

\begin{definition} \label{defn.X.space}  Given any open set $U \subset \Omega$, the space $(X_1 \cap X_2 \cap X_3)(U)$ is defined to be space of those divergence-free vector fields which lie in the closure of $C_{0,D}^\infty(U)$ under the norm $X_1$, such that $||u,v||_{X_1 \cap X_2 \cap X_3} < \infty$.
\end{definition}

Due to the weights in the norm $Z$ and the energy norms $X_1 \cap X_2 \cap X_3$, there is no ``$H = W$ Theorem" generically available, which would equate the density of smooth functions with compact support with the class of functions satisfying $||u,v||_Z < \infty$. This is the reason must specify that: 
\begin{align} \label{clos.1}
Z(U) \subset \overline{C_{0,D}^\infty}^{||\cdot||_{X_1}}(U). 
\end{align}

We will first record the boundary behavior of $Z(\Omega)$-vector fields:
\begin{lemma} \label{Lemma.BCZ} For $[u,v] \in Z(\Omega)$, the following boundary conditions are satisfied:
\begin{align} \label{BCENC}
[u,v]|_{x = 1} = [u,v]|_{y = 0} = \lim_{x \rightarrow \infty} [u,v] = \lim_{y \rightarrow \infty} [u,v] = 0. 
\end{align} 
\end{lemma}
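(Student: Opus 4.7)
The plan is to exploit two ingredients built into Definition \ref{defn.Z.space}: the density of $C_{0,D}^\infty(\Omega)$ in the $X_1$-topology, and the ambient $L^\infty$-decay bounds packaged into the $Z$ norm. Fix $[u,v] \in Z(\Omega)$ and pick a sequence $[u_n, v_n] \in C_{0,D}^\infty(\Omega)$ with $[u_n, v_n] \to [u,v]$ in $X_1$. Each approximant is compactly supported in the open region $(1,\infty) \times (0,\infty)$, so each trivially satisfies all four conditions in (\ref{BCENC}); the content of the lemma is that these vanishing properties survive in the limit.

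For the traces at $y=0$ and $x=1$: the $X_1$ norm gives $u_y$ and $v_y x^{1/2}$ in $L^2(\Omega)$, and the divergence-free relation $u_x = -v_y$ combined with $\sqrt{\eps} v_x x^{1/2} \in L^2$ supplies the analogous $x$-derivative control near $x=1$ (where the weight $x^{1/2}$ is harmless). Hardy's inequality on the half-line for functions vanishing at one endpoint, together with the standard one-dimensional trace map, shows that $[u,v] \mapsto [u,v](\cdot,0)$ and $[u,v] \mapsto [u,v](1,\cdot)$ are continuous functionals on $X_1$ into the appropriate trace spaces. Since each $[u_n, v_n]$ has null trace on these boundaries, so does $[u,v]$. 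The limit $\lim_{x \to \infty}[u,v] = 0$ is then immediate from the $L^\infty$ component of the $Z$ norm: by (\ref{intro.comp.Z}), $|u(x,y)| \lesssim \eps^{-N_4}\, ||u,v||_Z \cdot x^{-1/4}$ and $|v(x,y)| \lesssim \eps^{-N_4 - 1/2}\, ||u,v||_Z \cdot x^{-1/2}$, both vanishing uniformly in $y$ as $x \to \infty$.

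The main obstacle is $\lim_{y\to\infty}[u,v] = 0$, since the $Z$ norm carries no explicit $y$-weight. The plan here is to exploit the divergence-free structure together with the compact $y$-support of each approximant. For a.e. $x \ge 1$, Fubini yields $v_y(x,\cdot), u_y(x,\cdot) \in L^2_y(\mathbb{R}_+)$, and the already established identity $v(x,0) = 0$ gives
\begin{equation*}
v(x,y) \;=\; -\int_0^y u_x(x,y')\, \ud y'.
\end{equation*}
Since $v_n(x,\cdot)$ is compactly supported in $y$, one has $\int_0^\infty u_{n,x}(x,y')\, \ud y' = 0$ for every $n$. The goal is then to pass to the limit $n \to \infty$ to deduce $\int_0^\infty u_x(x,y')\, \ud y' = 0$ for a.e. $x$, forcing $v(x,y) \to 0$ as $y \to \infty$. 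An analogous argument using $u(x,y) = -\int_y^\infty u_y(x,y')\, \ud y'$ handles the $u$-component. The delicate step is the interchange of limits: $L^2_y$-convergence on each slice does not directly commute with integration out to $y = \infty$, so either a tail estimate based on an additional weighted bound extracted from $||u,v||_Z$ or a diagonalization argument in which the $y$-support of the approximants grows in a controlled manner will be required to close this argument rigorously.
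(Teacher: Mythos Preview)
Your treatment of the traces at $x=1$, $y=0$, and the limit $x\to\infty$ matches the paper's. The gap is in the $y\to\infty$ limit, where you correctly sense trouble but then head in the wrong direction.

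The paper's argument is much simpler than the stream-function/divergence-free route you attempt. Observe that for any finite $A$, the $X_1$ control together with the already-established trace $u|_{x=1}=0$ gives $[u,v]\in H^1([1,A]\times\mathbb{R}_+)$: the derivatives $u_y$, $u_x=-v_y$, $v_y$, $v_x$ all lie in $L^2$ on this strip by the $X_1$ norm (the weights $x^{1/2}$ are bounded below there), and the $L^2$ bound on $u,v$ themselves follows from the Poincar\'e-type inequality in $x$ using $[u,v](1,\cdot)=0$. Then Fubini gives $u(x,\cdot)\in H^1_y(\mathbb{R}_+)$ for a.e.\ $x$, and the elementary one-dimensional fact $|u(x,y)|^2 = -2\int_y^\infty u\,u_y\,\ud y' \le 2\|u\|_{L^2(y,\infty)}\|u_y\|_{L^2(y,\infty)}\to 0$ finishes it. This is what the paper means by ``candidacy in such a Sobolev space automatically encodes decay as $y\to\infty$.''

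Your proposed approach via $\int_0^\infty u_{n,x}(x,y')\,\ud y' = 0$ and passage to the limit genuinely does not close without the tail control you flag as missing --- and that tail control is exactly what the $H^1$ membership supplies directly. The representation $u(x,y) = -\int_y^\infty u_y(x,y')\,\ud y'$ you invoke for the $u$-component presupposes the very decay you are trying to prove, so that step is circular as written. Replace the whole $y\to\infty$ discussion with the $H^1$-on-strips observation and the proof is complete.
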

\begin{proof}

The boundary conditions at $x = 1$ and $y = 0$ are satisfied by the density specification (\ref{clos.1}). The boundary conditions as $x \rightarrow \infty$ is satisfied by the definition of norm Z, (\ref{norm.Z}) because the decay is encoded (with rates) in the norm. The boundary conditions as $y \rightarrow \infty$ is enforced because $[u,v] \in Z(\Omega)$ implies that $[u,v] \in H^1_{(x \le A)}$, and candidacy in such a Sobolev space automatically encodes decay as $y \rightarrow \infty$.  
\end{proof}

The above lemma shows that $Z(\Omega)$ is a suitable space to obtain solutions to our boundary-value problem, after a consultation with (\ref{nsr.bc.1}). That $X_1 \cap X_2 \cap X_3$ also encodes the boundary conditions from (\ref{BCENC}) is less obvious, and is proven in Lemma \ref{L.XiBC}. 

\begin{lemma} \label{lemma.complete} For any open set $U \subset \Omega$, $Z(U)$ and $(X_1 \cap X_2 \cap X_3)(U)$ are Banach spaces.
\end{lemma}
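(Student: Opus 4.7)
The plan is to realize $Z(U)$ and $(X_1 \cap X_2 \cap X_3)(U)$ as closed subsets of products of standard Banach spaces, so that completeness reduces to completeness of each factor together with an identification-of-limits argument. Since $||\cdot||_{X_1} \le ||\cdot||_{X_1 \cap X_2 \cap X_3} \le ||\cdot||_Z$, any Cauchy sequence $\{[u_k,v_k]\}$ is in particular Cauchy in the $X_1$-norm. By Definition \ref{defn.Z.space}, $Z(U)$ is contained in the $X_1$-completion $\overline{C^\infty_{0,D}(U)}^{||\cdot||_{X_1}}$, which is itself a Banach space by construction, so there exists $[u,v]$ in this completion with $[u_k,v_k] \to [u,v]$ in $X_1$. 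This secures membership in the ambient closure required by Definitions \ref{defn.Z.space} and \ref{defn.X.space}; it remains to show $||u,v||_Z < \infty$ and that the convergence upgrades to $Z$.

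Each remaining seminorm in (\ref{norm.Z}) has the form $\nu_i([u,v]) = ||\mathcal{L}_i[u,v]||_{\mathcal{B}_i}$, where $\mathcal{L}_i$ is a weighted linear differential operator and $\mathcal{B}_i$ is a complete weighted Lebesgue-type space: weighted $L^2(\mathrm{d}x\,\mathrm{d}y)$ for the $X_2, X_3, Y_2, Y_3$ components, weighted $L^\infty$ for the $N_4, N_5$ components, weighted $\sup_x L^2_y$ for the $N_6$ component, and the Bochner-type $L^2_x(L^\infty_y)$ for the $N_7$ component. Cauchyness in $Z$ makes $\{\mathcal{L}_i[u_k,v_k]\}$ Cauchy in each $\mathcal{B}_i$, producing limits $w_i \in \mathcal{B}_i$. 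I would then identify $w_i = \mathcal{L}_i[u,v]$ as follows: $X_1$-convergence together with weighted Poincar\'e on compacta of $U$ bounded away from $\{y=0\}$ gives $[u_k,v_k] \to [u,v]$ in $L^2_{\mathrm{loc}}(U)$, so every distributional derivative of $[u_k,v_k]$ converges to the corresponding distributional derivative of $[u,v]$; since strong $\mathcal{B}_i$ convergence also implies distributional convergence, uniqueness of distributional limits forces $w_i = \mathcal{L}_i[u,v]$ almost everywhere on $U$. Hence $||u,v||_Z = \lim_k ||u_k, v_k||_Z < \infty$, so $[u,v] \in Z(U)$; applying the same argument to the Cauchy differences $[u_k - u_\ell, v_k - v_\ell]$ (which lie in $Z(U)$ by linearity of the ambient closure) yields $[u_k,v_k] \to [u,v]$ in $Z$. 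The argument for $(X_1 \cap X_2 \cap X_3)(U)$ is a strict simplification, since every seminorm there is of weighted $L^2$ type.

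The main obstacle will be the non-$L^2$ seminorms: the weighted $L^\infty$ pieces contributing to the $N_4, N_5$ terms, and the mixed-norm piece with $N_7$. For the $L^\infty$ pieces, strong convergence in a weighted $L^\infty$ implies only a.e.\ pointwise convergence along a subsequence, but this is still enough to force agreement with the distributional limit and hence with $\mathcal{L}_i[u,v]$ up to a null set, which does not affect the $L^\infty$ representative. For the $N_7$ piece I would realize $L^2_x(L^\infty_y)$ on $[20,\infty)$ as a Bochner space, extract by Fubini an a.e.-$x$ subsequence along which $\sqrt{\eps}v_{k,xx}(x,\cdot) \to w_7(x,\cdot)$ strongly in $L^\infty_y$, and match this with the distributional limit $\sqrt{\eps} v_{xx}(x, \cdot)$ on those slices; the estimate $||w_7 - \sqrt{\eps}v_{xx}||_{L^2_x(L^\infty_y)} = 0$ then follows from Fatou and the uniqueness already established slice-wise.
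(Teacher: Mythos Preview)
Your proposal is correct and carries out in detail what the paper dismisses in one line as ``standard arguments.'' The paper's proof consists solely of the sentence ``This follows from standard arguments,'' so there is no substantive comparison to make: you have supplied the standard argument, including the care needed for the non-$L^2$ components of $||\cdot||_Z$ (the weighted $L^\infty$ and $L^2_x L^\infty_y$ pieces), which the paper does not address at all.
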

\begin{proof}

This follows from standard arguments. 
\end{proof}

For the energy norm, $X_1 \cap X_2 \cap X_3$, we have Hilbertian structure. Given two divergence-free vector-fields:
\begin{align}
\bold{u} := (u,v), \hspace{3 mm} \bold{a}:= (a,b), 
\end{align}

define the following inner-products:
\begin{align} \label{IP}
&\langle \bold{u}, \bold{a} \rangle_{X_1} := \int \int u_y a_y + \int \int u_x a_x x + \int \int \eps v_x b_x x, \\
&\langle \bold{u}, \bold{a} \rangle_{X_2} := \int \int u_{xy} a_{xy} (\rho_2x)^2 + \int \int u_{xx} a_{xx} (\rho_2x)^3 + \int \int \eps v_{xx} b_{xx} (\rho_2 x)^3, \\
&\langle \bold{u}, \bold{a} \rangle_{X_3} := \int \int u_{xxy} a_{xxy} (\rho_3x)^4 + \int \int u_{xxx} a_{xxx} (\rho_3x)^5 + \int \int \eps v_{xxx} b_{xxx} (\rho_3x)^5, \\ \label{IP.4}
&\langle \bold{u}, \bold{a} \rangle_{X_1 \cap X_2 \cap X_3} := \langle \bold{u}, \bold{a} \rangle_{X_1} +  \langle \bold{u}, \bold{a} \rangle_{X_2} +  \langle \bold{u}, \bold{a} \rangle_{X_3}. 
\end{align}

Temporarily accepting Lemma \ref{L.XiBC}, we have: 

\begin{lemma} With the inner product (\ref{IP.4}), $(X_1 \cap X_2 \cap X_3)(\Omega)$ is a Hilbert space.
\end{lemma}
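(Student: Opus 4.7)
The plan is to verify in turn that (i) $\langle\cdot,\cdot\rangle_{X_1\cap X_2\cap X_3}$ is a bona fide inner product on divergence-free fields, (ii) the induced norm agrees with $\|\cdot\|_{X_1\cap X_2\cap X_3}$, and (iii) the completeness from Lemma~\ref{lemma.complete} applies. Bilinearity and symmetry of each $\langle \cdot, \cdot\rangle_{X_i}$ are immediate from the definitions (\ref{IP})--(\ref{IP.4}), since all three pieces consist purely of weighted $L^2$ pairings with strictly positive weights.

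The central computational point --- and essentially the only nontrivial observation --- is that for a divergence-free field the diagonal of the inner product coincides exactly with the square of the stated norm. Indeed, from $u_x+v_y=0$ one obtains $u_x=-v_y$ almost everywhere, and differentiating in $x$ yields $u_{xx}=-v_{xy}$ and $u_{xxx}=-v_{xxy}$. Substituting these identities into the $u_x$, $u_{xx}$ and $u_{xxx}$ terms of (\ref{IP}) and comparing to (\ref{norm.x0})--(\ref{norm.x2}) will give
\begin{equation*}
\langle \bold{u},\bold{u}\rangle_{X_1}=\|u,v\|_{X_1}^2,\qquad
\langle \bold{u},\bold{u}\rangle_{X_2}=\|u,v\|_{X_2}^2,\qquad
\langle \bold{u},\bold{u}\rangle_{X_3}=\|u,v\|_{X_3}^2,
\end{equation*}
so the induced norm is \emph{equal} (not merely equivalent) to the norm on $X_1\cap X_2\cap X_3$. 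Once this equality is in hand, completeness is exactly Lemma~\ref{lemma.complete}.

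The one step requiring a small argument is positive-definiteness. If $\langle \bold{u},\bold{u}\rangle=0$ then $u_y=0$ in $L^2(\Omega)$ and, via the $\int u_x^2\,x$ and $\int\eps v_x^2\,x$ terms, also $u_x=0$ and $v_x=0$; combined with the divergence-free constraint this forces $v_y=-u_x=0$, so $u$ and $v$ are each constant on $\Omega$. By Definition~\ref{defn.X.space}, $\bold{u}$ is an $X_1$-limit of fields in $C^\infty_{0,D}$, and the trace-zero conditions $u|_{x=1}=u|_{y=0}=0$ and $v|_{x=1}=v|_{y=0}=0$ pass to the limit (the $X_1$-control together with $u_x=-v_y$ puts $\bold{u}$ in $H^1_{\loc}$, so the traces are defined in the usual sense), whence $\bold{u}\equiv 0$. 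The only mildly delicate point will be justifying this trace step; I expect it to follow by the same density reasoning used in Lemma~\ref{Lemma.BCZ}, applied here to the $X_1$-closure rather than the $Z$-closure.
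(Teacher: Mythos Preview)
Your proposal is correct and follows the same approach as the paper's proof, which is quite terse: the paper simply observes that non-degeneracy follows from the boundary conditions $[u,v]|_{y=0}=0$ (supplied by Lemma~\ref{L.XiBC}, which the paper explicitly accepts prior to stating this result), notes the remaining inner-product axioms are straightforward, and invokes Lemma~\ref{lemma.complete} for completeness. Your explicit verification that the divergence-free identity $u_x=-v_y$ makes the diagonal of the inner product equal to the stated norm is a helpful clarification that the paper leaves implicit.
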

\begin{proof}

The inner-product (\ref{IP.4}) is non-degenerate due to the boundary conditions $[u,v]|_{y = 0}$. The remaining inner-product axioms are straightforward to check, which coupled with the completeness in Lemma \ref{lemma.complete} gives the result. 

\end{proof}

Notationally, it sometimes is convenient to refer at once to all of the ``uniform-type" quantities in the norm $Z$, so we designate the following notation:
\begin{definition} The norm $\mathcal{U}$ is defined by:
\begin{align} \n
||u,v||_{\mathcal{U}} &:= \epsilon^{N_4} ||ux^{\frac{1}{4}}, \sqrt{\eps} vx^{\frac{1}{2}}||_{L^\infty} + \epsilon^{N_5} \sup_{x \ge 20} ||\sqrt{\eps} v_x x^{\frac{3}{2}} ,u_x  x^{\frac{5}{4}} ||_{L^\infty} \\ \label{norm.U} &+ \eps^{N_6}  \sup_{x \ge 20} ||u_y x^{\frac{1}{2}}||_{L^2_y} + \epsilon^{N_7} \Big[\int_{20}^\infty x^4 ||\sqrt{\eps} v_{xx}||_{L^\infty_y}^2 dx \Big]^{\frac{1}{2}}.
\end{align}
\end{definition}

\subsection{Elliptic Estimates and the Spaces $Y_i$}  \label{subsection.sing}

We will first utilize elliptic theory in order to obtain basic $H^k$ estimates for our solution, which hold generically for Stokes-type equations. These elliptic estimates are meant to supplement the energy estimates that we will perform in Section \ref{section.NSR.Linear} which are meant to control the energy norms, $X_1 \cap X_2 \cap X_3$. In particular, the estimates from this section cannot \textit{replace} the energy estimates for two reasons: 
\begin{itemize}
\item[(1)] They scale poorly in $\epsilon$, and 
\item[(2)] One cannot extract sharp enough global-in-$x$ information using this procedure.  
\end{itemize}

Their purpose is to provide more controls near the boundary $x = 1$, which is reflected in the norms $Y_2, Y_3$. To see this one should compare the support of the cut-off functions in our energy norms, $\rho_k$, with the support of $\zeta_3$ in $Y_3$.  For the set of calculations in this subsection, there are many different cutoff functions which arise in addition to the important cutoffs, $\rho_k, \zeta_k$ which have already been defined. 

\begin{remark}[Notational Convention]
To simplify notations, given a cut-off function $\chi$, we introduce the notation $o(\chi)$ to mean either $\chi$ or any of its derivatives, or any positive powers of $\chi$ or any of its derivatives. Roughly speaking, any ``variant" of $\chi$ is denoted by $o(\chi)$, the essential feature being $o(\chi)$ is supported in the same (or similar) region. 
\end{remark}

The first step will be to provide some controls of $||u, v||_{\dot{H}^2}$.

\begin{lemma}[$H^2$ Regularity] \label{LemmaBC} Let $[u,v], [\bar{u}, \bar{v}] \in X_1$ be solutions to (\ref{EQ.NSR.1}) - (\ref{EQ.NSR.3}), with $f,g$ as in (\ref{bar.f}) - (\ref{bar.g}). For some $M_2$, perhaps large, dependent only on universal constants: 
\begin{align}  \n
\sup_{x \le 2000} ||u,v||_{L^\infty_y} &+ ||u,v||_{\dot{H}^2(x \le 2000)} \\   \label{boundary.controls}
& \lesssim \epsilon^{-M_2} \Big[C + \epsilon^{\frac{n}{2}+\gamma} ||\bar{u}, \bar{v}||_{L^\infty} \Big( ||\bar{u},\bar{v}||_{X_1} + ||u,v||_{X_1} \Big) + ||u,v||_{X_1} \Big].
\end{align}
\end{lemma}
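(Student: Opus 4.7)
The plan is to treat the system \eqref{EQ.NSR.1}--\eqref{EQ.NSR.3} on the bounded-in-$x$ region $\{x\le 2001\}$ as a standard (anisotropic) Stokes system with $L^2$ forcing, and to extract $\dot{H}^2$ control by standard elliptic regularity after an $\eps$-rescaling in $y$. Because the $y$-rescaling turns $\Delta_\eps$ into a genuine Laplacian, every derivative in $y$ we eventually reconstruct costs us a fixed (negative) power of $\eps$; this is exactly the origin of the factor $\eps^{-M_2}$.

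\textbf{Step 1 (Localization).} Choose a smooth cutoff $\chi(x)$ with $\chi\equiv 1$ on $x\le 2000$, $\chi\equiv 0$ on $x\ge 2001$, and introduce the stream function $\psi$ of $[u,v]$ (which exists since $u_x+v_y=0$ and $[u,v]$ vanishes on $\{y=0\}$ and $\{x=1\}$ by density in $X_1$). Set $\tilde\psi=\chi\psi$; then $[\tilde u,\tilde v]:=[-\p_y\tilde\psi,\p_x\tilde\psi]$ is divergence-free, agrees with $[u,v]$ on $x\le 2000$, is supported in $x\le 2001$, and satisfies the same boundary conditions on $\{y=0\}$ and $\{x=1\}$. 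Plugging into \eqref{EQ.NSR.1}--\eqref{EQ.NSR.3}, the new pair $[\tilde u,\tilde v]$ satisfies a Stokes system
\begin{equation*}
-\Delta_\eps \tilde u + \tilde P_x = F,\qquad -\Delta_\eps \tilde v + \tilde P_y/\eps = G,\qquad \tilde u_x+\tilde v_y=0,
\end{equation*}
where $F,G$ collect $\chi\cdot(f-S_u)$, $\chi\cdot(g-S_v)$, and commutator terms of the form $(\p_x^k\chi)\cdot$(lower-order derivatives of $u,v,P$) supported in the commutator strip $\{2000\le x\le 2001\}$.

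\textbf{Step 2 (Rescaling and standard Stokes regularity).} Introduce $Y=\sqrt{\eps}\,y$ and $V=\tilde v/\sqrt{\eps}$; then $-\Delta_\eps\tilde u=-(\p_{xx}+\p_{YY})\tilde u$ and the system becomes an isotropic Stokes system on a bounded 2D rectangle $\{1\le x\le 2001,\ 0\le Y\le \sqrt{\eps}\,N\}$ (the $Y$-strip is arbitrarily thin as $\eps\to0$, but standard Stokes theory applies on any Lipschitz domain). Classical $H^2$--$H^1$ Cattabriga--Solonnikov theory then yields
\begin{equation*}
\|\tilde u,V\|_{\dot H^2(dx\,dY)} + \|\tilde P\|_{\dot H^1(dx\,dY)} \lesssim \|F,G\|_{L^2(dx\,dY)}.
\end{equation*}
Unwinding the change of variable converts the left-hand side into $\|\tilde u_{xx},\sqrt{\eps}\tilde u_{xy},\eps\tilde u_{yy}\|_{L^2(dx\,dy)}$ and similarly for $\tilde v$; to recover plain $\|\tilde u_{yy},\tilde v_{yy}\|_{L^2}$ we lose a factor $\eps^{-1}$, which is absorbed into $\eps^{-M_2}$ for some $M_2$ depending only on how many derivatives we have rescaled. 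The $L^\infty$ bound follows from Sobolev embedding $H^2\hookrightarrow L^\infty$ on the bounded rectangle in $(x,Y)$.

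\textbf{Step 3 (Bounding the forcing).} It remains to estimate $\|F,G\|_{L^2}$. Using the profile estimates \eqref{PE0.1}--\eqref{PE4.new.2} of Theorem~\ref{thm.m.part.1}, the linearized coefficients $u_R,u_{Rx},u_{Ry},v_R,v_{Rx},v_{Ry}$ are uniformly bounded on $x\le 2001$ (possibly with a negative power of $\eps$ coming from $\bar v_s^{(n)}$-type terms involving the cut-off Prandtl layer, cf.\ Lemma \ref{Lpf}), so $\|S_u(u,v),S_v(u,v)\|_{L^2}\lesssim \eps^{-M}\|u,v\|_{X_1}$ for some fixed $M$. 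The remainder contributions $\eps^{-n/2-\gamma}R^{u,n}, \eps^{-n/2-\gamma}R^{v,n}$ are bounded in $L^2(x\le 2001)$ by the universal constant $C$ coming from Lemma \ref{Lemma FE} (using $\kappa\le 1/8$, say). The nonlinear pieces $\eps^{n/2+\gamma}\bar u\bar u_x+\eps^{n/2+\gamma}\bar v u_y$ and $\eps^{n/2+\gamma}\bar u\bar v_x+\eps^{n/2+\gamma}\bar v\bar v_y$ are handled by a crude $L^\infty\cdot L^2$ split, giving the bound $\eps^{n/2+\gamma}\|\bar u,\bar v\|_{L^\infty}(\|\bar u,\bar v\|_{X_1}+\|u,v\|_{X_1})$ in \eqref{boundary.controls}. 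Finally, the commutator terms from Step~1 are supported where $\chi'\ne 0$, i.e.\ on $2000\le x\le 2001$, where $\|u,v\|$ and its lower-order derivatives are already controlled (after a weaker elliptic step or by interpolation) by $\|u,v\|_{X_1}$; this produces the additive $\|u,v\|_{X_1}$ in the estimate.

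\textbf{Main obstacle.} The delicate point is the commutator from the $x$-cutoff in Step~1: it forces us to control $P$, $u_x$, $v_x$ in $L^2$ on the strip $\{2000\le x\le 2001\}$ in terms of $\|u,v\|_{X_1}$, which needs a first auxiliary Stokes estimate on a slightly larger strip (bootstrapping from an $H^1$ bound using only $u_y,v_y,\sqrt\eps v_x\in L^2$). The price of this bootstrap is precisely the (possibly large) negative power $\eps^{-M_2}$ we allow in the conclusion; it is unimportant because this estimate is only used to supplement energy bounds near $x=1$, where extra factors of $\eps$ are permitted.
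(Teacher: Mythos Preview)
Your overall strategy---localize in $x$, rescale $y\mapsto Y=\sqrt\eps\,y$ to obtain an isotropic problem, invoke standard elliptic regularity, and estimate the forcing in $L^2$---is the same as the paper's. Two technical choices in the paper, however, dispose of exactly the issues you flag as obstacles.

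First, the paper works in the \emph{biharmonic} (stream-function) formulation rather than returning to the Stokes system: after rescaling, one studies $\Delta^2\psi=F:=\eps^{-M_2}(\p_Y(\tilde f+\tilde S_u)-\p_x(\tilde g+\tilde S_v))$ and applies the $H^3$ theory for the clamped biharmonic on corner domains (Blum--Rannacher). This eliminates the pressure entirely, so the ``main obstacle'' you identify---controlling $P$ on the commutator strip via a preliminary bootstrap---never arises. The commutator $[\Delta^2,\chi_m]\psi$ contains at most three derivatives of $\psi$, i.e.\ at most two derivatives of $(u,v)$; after one integration by parts in the $H^{-1}$ pairing these reduce to $\|\nabla(\tilde u,\tilde v)\|_{L^2}\lesssim\eps^{-M_2}\|u,v\|_{X_1}$ directly, with no auxiliary estimate needed.

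Second, the paper localizes with a partition of unity $\chi_m(x,Y)=\chi^{(a)}(x)\chi^{(b)}_m(Y)$ in \emph{both} variables, with the $\chi^{(b)}_m$ supported on unit $Y$-intervals, and sums over $m$. This is what makes the estimate independent of $N$: your single $x$-cutoff leaves a rectangle of $Y$-height $\sqrt\eps\,N$, and Cattabriga--Solonnikov constants on that rectangle are not uniform as $N\to\infty$ (your remark that the strip is ``arbitrarily thin as $\eps\to0$'' goes the wrong way---$N$ is sent to infinity before $\eps\to0$). The $Y$-partition also lets the paper cite the corner theory only on unit-scale squares adjacent to $Y=0$, where Blum--Rannacher gives $\|\chi_m\psi\|_{H^3}\lesssim\|\chi_m F\|_{H^{-1}}+\|[\Delta^2,\chi_m]\psi\|_{H^{-1}}$; summing in $m$ then reproduces the global bound since the commutator and forcing estimates are all of the form $\|o(\chi_m)\cdot(\ldots)\|$ and sum back to $\|u,v\|_{X_1}$, $\|\bar u,\bar v\|_{X_1}$, and the $L^2$ bound on $R^{u,n},R^{v,n}$.

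In short: switching to the biharmonic formulation removes your pressure bootstrap, and the $Y$-partition of unity is what gives $N$-independence. With those two changes your outline becomes the paper's proof.
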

\begin{proof}

Notationally, we will not take care to rename generic constants $M_2$ within this proof. We rescale the system back to Eulerian coordinates via: 
\begin{align} \label{scale}
\tilde{u}(x,Y) := u(x,y), \hspace{3 mm} \tilde{v}(x,Y) := \sqrt{\epsilon} v(x,y), \hspace{3 mm} \tilde{P}(x,Y) = \frac{1}{\epsilon}P(x,y), 
\end{align}

which, by rescaling (\ref{EQ.NSR.1}) - (\ref{EQ.NSR.3}), yields the Stokes-sytem, for some $M_2$ perhaps large:
\begin{align}\nonumber
&-\Delta \tilde{u} + \tilde{P}_x = \epsilon^{-M_2} \Big[ \tilde{f} + \tilde{S}_u \Big], \hspace{3 mm} -\Delta \tilde{v} + \tilde{P}_Y = \epsilon^{-M_2} \Big[ \tilde{g} + \tilde{S}_v \Big], \\  \label{stokes.1.1}
&\tilde{u}_x + \tilde{v}_Y = 0.  
\end{align}

Above, $\tilde{f}, \tilde{g}, \tilde{S}_u, \tilde{S}_v$ have been also rescaled to Eulerian coordinates. By considering the stream function $\psi = \int_0^Y \tilde{u}$, we obtain the following biharmonic problem: 
\begin{align} \label{F}
\Delta^2 \psi &= F := \epsilon^{-M_2} \{ \partial_Y \Big[ \tilde{f} + \tilde{S}_u \Big] - \partial_x \Big[ \tilde{g} + \tilde{S}_v \Big] \}, \\ 
\psi(x,0) &= \psi_Y(x,0) = 0, \hspace{3 mm} \psi(1,Y) = \psi_x(1,Y) = 0. 
\end{align}

We now introduce the partition of unity, $\{ \chi_m \}_{m \ge 1}$ of the region $[1,2000] \times [0,\infty)$. The specifications of these cut-off functions are as follows. First, define: 
\begin{align}
 \chi^{(a)}(x) &= \begin{cases}
1 \text{ for } 1 \le x \le 2000,\\
0 \text{ for } x \ge 4000, \hspace{10 mm}
\end{cases}
\chi^{(b)}_m(Y) = \begin{cases}
1 \text{ for } m-1 \le Y \le m+1,\\
0 \text{ for } Y \ge m+2, \\
0 \text{ for } Y \le m - 2. 
\end{cases} \label{chimb}
\end{align}

for all $m \ge 1$. Then define:  
\begin{equation} \label{chi}
\chi_m(x,Y) = \chi^{(a)}(x) \chi_m^{(b)}(Y). 
\end{equation}

The purpose of selecting such a partition is to localize near $x = 1$, in such a way that the region between $1$ and the support of $\rho_2$ is captured (see the definition in (\ref{rho})). Denote by $\bar{\psi}_m = \chi_m \psi$ Then: 
\begin{align}
\Delta^2 \bar{\psi}_m = \chi_m F + [\Delta^2, \chi_m] \psi, \hspace{3 mm} \bar{\psi}_m(x,0) = \partial_Y \bar{\psi}_m(x,0) = \bar{\psi}_m(1,Y) = \partial_x \bar{\psi}_m(1,Y) = 0. 
\end{align} 

Here the commutator is defined as: 
\begin{align} \nonumber
[\Delta^2, \chi]\psi:= &\chi_{YYYY} \psi + 6\chi_{YY} \psi_{YY} + 4 \chi_{YYY} \psi_Y + 4\chi_Y \psi_{YYY} + 6 \chi_{xx} \psi_{xx}  + 4\chi_{xxx} \psi_x \\ \nonumber
& + 4\chi_x \psi_{xxx} + \chi_{xxxx} \psi + \chi_{xxYY} \psi + 2\chi_{YY} \psi_{xx} + 2\chi_{YYx} \psi_x + 2\chi_x \psi_{xYY} \\ \label{BiLCOM} & + 2\chi_{xxY}\psi_y  + 2\chi_y \psi_{xxY} + 4\chi_{xY} \psi_{xY}.
\end{align}

According to \cite{Biharmonic}, Theorems 1 and 2, with $k=1$, coupled with Figure 2, P. 562 in \cite{Biharmonic} with ``$C/C$" boundary conditions, we have the following $H^3$ estimate for $\psi$: 
\begin{align} \label{BiHinH}
||\bar{\psi}_m||_{H^3} \lesssim ||\chi_m F||_{H^{-1}} +|| [\Delta^2, \chi_m] \psi ||_{H^{-1}} . 
\end{align}

We will first address the commutator terms in (\ref{BiHinH}). First, consider the terms in (\ref{BiLCOM}) which have three derivatives on $\psi$, which are denoted by $o(\partial^3 \psi)$. Using the definition of $H^{-1}$, for any compactly supported test function $\alpha(x,Y) \in H^1_0$, 
\begin{align} \nonumber
\Big| \langle o(\chi_m) o(\partial^3 \psi),& \alpha(x,y) \rangle_{H^{-1}, H^1_0} \Big| = \Big| \langle o(\partial \chi_m) o(\partial^2 \psi), \alpha \rangle \Big| + \Big| \langle o(\chi_m) o(\partial^2 \psi), \partial \alpha \rangle \Big| \\
& \le || o( \chi_m) o(\partial^2 \psi) ||_{L^2} ||\alpha||_{H^1_0}.
\end{align}

Then taking the $\sup$ over all $||\alpha||_{H^1_0} = 1$, we obtain: 
\begin{align}
||o(\chi_m) o(\partial^3 \psi) ||_{H^{-1}} \lesssim ||o(\chi_m) o(\partial^2 \psi)||_{L^2} \lesssim ||o(\chi_m) o(\partial) [\tilde{u}, \tilde{v}]||_{L^2} \lesssim \epsilon^{-M_2} ||o(\chi_m) ]\{u,v\}||_{X_1}. 
\end{align}

Next, turn to the terms in (\ref{BiLCOM}) which has two derivatives on $\psi$: 
\begin{align}
||o(\chi_m) o(\p^2 \psi)||_{H^{-1}} \le ||o(\chi_m) o(\p^2 \psi)||_{L^2} \le \eps^{-M_2}||o(\chi_m) ]\{u,v\}||_{X_1}.
\end{align}

Next, we must address those terms in (\ref{BiLCOM}) which has one derivatives on $\psi$. For this, we use the Poincare inequality in $x$ direction: 
\begin{align} \n
||o(\chi_m) \p \psi||_{H^{-1}} \le ||o(\chi_m) \p \psi||_{L^2} &\le ||o(\chi_m) \{u,v\}||_{L^2} \le ||u_x, v_x||_{L^2(x \le 2000)} \\
& \le ||o(\chi_m) \{ u_x, v_x \}||_{L^2} \le \eps^{-M_2}||o(\chi_m) \{u, v \}||_{X_1}.
\end{align}

For the zeroeth order terms in $\psi$, we must argue as follows: 
\begin{align} \nonumber
||o(\chi_m) \psi||_{H^{-1}} &\lesssim ||o(\chi_m) \psi||_{L^2} = ||o(\chi_m) \int_0^x v ||_{L^2} \lesssim || o(\chi_m) \frac{1}{x} \int_0^x v ||_{L^2} \\ 
&\lesssim  \eps^{-1} ||o(\chi_m) \{u,v\}||_{X_1}. 
\end{align}

Summarizing, then,  
\begin{align} \label{chya}
||[\Delta^2 , \chi_m] \psi||_{H^{-1}} \lesssim \epsilon^{-M_2} ||o(\chi_m) \{u,v\}||_{X_1},
\end{align}

It remains, then, to analyze the majorizing terms in $F$ in estimate (\ref{BiHinH}). Up to redefining $M_2$ in (\ref{F}), we may scale back to Prandtl coordinates $(x,y)$. By integrating by parts against compactly supported test functions, we have: 
\begin{align}
|| \chi_m \cdot [ \partial_y (f + S_u) - \partial_x (g + S_v) ] ||_{H^{-1}} \lesssim ||o(\chi_m) \Big[ f + S_u + g + S_v \Big]||_{L^2}.
\end{align}

First, we'll start with: 
\begin{align} \nonumber
||o(\chi_m) \cdot S_u||_{L^2} &= ||o(\chi_m) \Big[ u_R u_x + u_{Rx} u + v_R u_y + u_{Ry} v \Big] ||_{L^2} \\ \nonumber
&\le ||u_R, xu_{Rx}, v_R, u^P_{Ry} y, u^E_{RY}x^{\frac{3}{2}}||_{L^\infty} ||o(\chi_m) \{u_x, u_y, v_y, v_x\}||_{L^2} \\ 
&\lesssim ||o(\chi_m)\{u,v\}||_{X_1}.  
\end{align}

Similarly, 
\begin{align} 
||o(\chi_m) \cdot S_v||_{L^2} &= ||o(\chi_m) \Big[ u_R v_x + v_{Rx}u + v_R v_y + v_{Ry}v\Big]||_{L^2} \\ \nonumber
& \le ||u_R, xv_{Rx}, v_R, v_{Ry}y||_{L^\infty} ||o(\chi_m) \{ v_x, u_x, v_y\}||_{L^2} \lesssim ||o(\chi_m) \{u,v\}||_{X_1}. 
\end{align}

Next, we come to the nonlinear terms: 
\begin{align} \nonumber
\epsilon^{\frac{n}{2}+\gamma}&||o(\chi_m) \cdot \Big[\bar{u} \bar{u}_x + \bar{v} u_y + \bar{u} \bar{v}_x + \bar{v} \bar{v}_y\Big]||_{L^2} \\ \n
&\le \epsilon^{\frac{n}{2}+\gamma}||o(\chi_m) \{\bar{u}, \bar{v}\}||_{L^\infty} ||o(\chi_m) \{ \bar{u}_x, \bar{v}_x, u_y\}||_{L^2} \\
&\lesssim \epsilon^{\frac{n}{2}+\gamma} ||o(\chi_m)\{\bar{u}, \bar{v}\}||_{L^\infty} \Big( ||o(\chi_m) \{\bar{u}, \bar{v}\}||_{X_1} + ||o(\chi_m) \{u, v\}||_{X_1} \Big). 
\end{align}

Finally, we have the forcing terms in $f,g$ for which we cite Lemma \ref{Lemma FE}, 
\begin{align}
\sum_{m = 1}^\infty ||o(\chi_m) \{ R^{u,n}, R^{v,n}\}||_{L^2} \le C. 
\end{align}

Upon taking summation in $m$: 
\begin{align} \n
||\psi||_{H^3(x \le 2000)} &\le ||\sum_{m} \chi_m \psi ||_{H^3} \le \sum_m ||\chi_m \psi||_{H^3} \le \sum_m ||\chi_m F||_{H^{-1}} + \sum_m ||[\Delta^2, \chi_m] \psi||_{H^{-1}} \\ \n
& \lesssim  \eps^{-M_2} \Big[ \sum_{m} ||o(\chi_m) \{ R^{u,n}, R^{v,n}\}||_{L^2}  +  \sum_m ||o(\chi_m) \{u,v \}||_{X_1} \\ \n
& \hspace{10 mm} + \eps^{\frac{n}{2}+\gamma} ||o(\chi_m) \{||\bar{u}, \bar{v} \} ||_{L^\infty} \Big( ||o(\chi_m) \{\bar{u}, \bar{v}\}||_{X_1} + ||o(\chi_m) \{u, v\}||_{X_1} \Big) \Big] \\ \n
& \lesssim  \eps^{-M_2}\Big[ C + ||\{u, v \}||_{X_1} + \eps^{\frac{n}{2}+\gamma} ||\bar{u}, \bar{v}||_{L^\infty}\Big( || \{\bar{u}, \bar{v}\}||_{X_1} + ||\{u, v\}||_{X_1} \Big) \Big].
\end{align}

For the $L^\infty$ component of our desired claim, we simply use the standard $H^2$ embedding. 

\end{proof}

\begin{corollary} Suppose $||\bar{u}, \bar{v}||_Z \le 1$. There exists a universal constant $M_2$ such that for any selection of $N_2, N_4, n, \gamma$, the following estimate holds: 
\begin{align}\n
\epsilon^{N_2} ||u,v||_{Y_2}& \lesssim \epsilon^{N_2-M_2} + \epsilon^{ \frac{n}{2}+\gamma + N_2 -M_2 - N_4} ||\bar{u}, \bar{v}||_Z^2  \\  \label{Y1em}
& + \Big(\epsilon^{N_2-M_2} + \eps^{\frac{n}{2}+\gamma + N_2 - M_2 - N_4} \Big) ||u,v||_{X_1} + \epsilon^{N_2} ||u,v||_{X_2}.  
\end{align}
\end{corollary}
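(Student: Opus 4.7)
The plan is to split the $Y_2$ norm into a near-boundary piece on $x \le 2000$, where the polynomial weights $x$ and $x^{3/2}$ are bounded by universal constants, and a far piece on which the $X_2$ cutoff $\rho_2$ already equals one. Concretely, I will fix a smooth cutoff $\phi(x)$ with $\phi \equiv 1$ for $x \le 50$ and $\phi \equiv 0$ for $x \ge 60$, so that $1-\phi \le \rho_2$ pointwise. On the support of $1-\phi$ the bound
\begin{align*}
||u_{xy} x (1-\phi)||_{L^2} + ||\{\sqrt{\eps} v_{xx}, v_{xy}\} x^{3/2} (1-\phi)||_{L^2} \lesssim ||u,v||_{X_2}
\end{align*}
is immediate from the definition of $X_2$. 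On the support of $\phi$ the weights $x, x^{3/2}$ are $\mathcal{O}(1)$, so the remaining pieces, together with the $||u_{yy}||_{L^2(x \le 2000)}$ term appearing explicitly in the definition of $Y_2$, reduce to controlling $||\nabla^2(u,v)||_{L^2(x \le 2000)}$.

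The latter is furnished directly by Lemma \ref{LemmaBC}, which yields the four contributions $C$, $||u,v||_{X_1}$, $\eps^{\frac{n}{2}+\gamma} ||\bar u, \bar v||_{L^\infty} \cdot ||\bar u, \bar v||_{X_1}$, and $\eps^{\frac{n}{2}+\gamma} ||\bar u, \bar v||_{L^\infty} \cdot ||u,v||_{X_1}$, each carrying the universal loss $\eps^{-M_2}$. To re-express these in terms of the $Z$-norm of $\bar u, \bar v$, I will extract the pointwise bound $||\bar u, \bar v||_{L^\infty} \lesssim \eps^{-N_4 - \frac{1}{2}} ||\bar u, \bar v||_Z$ from the uniform component of (\ref{norm.Z}) (using $x \ge 1$ to drop the polynomial weights), and invoke the trivial inclusion $||\bar u, \bar v||_{X_1} \le ||\bar u, \bar v||_Z$. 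The extra $\eps^{-1/2}$ loss coming from the $\sqrt{\eps} v$ weight is absorbed into the universal constant $M_2$.

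Finally, multiplying through by $\eps^{N_2}$ and using the hypothesis $||\bar u, \bar v||_Z \le 1$ to collapse the mixed term $||\bar u, \bar v||_Z \cdot ||u,v||_{X_1} \le ||u,v||_{X_1}$ delivers precisely the four terms displayed in (\ref{Y1em}). There is no substantive obstacle here: the corollary amounts to a cosmetic repackaging of Lemma \ref{LemmaBC} on the bounded slab combined with the near/far decomposition against $X_2$. The only point demanding any care is that the cutoff $\phi$ must be selected compatibly with the support of $\rho_2$ so that the transition region $50 \le x \le 60$ is absorbed cleanly into the $X_2$ contribution, rather than leaking into a region where neither the elliptic bound nor the $X_2$ control is directly available.
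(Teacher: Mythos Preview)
Your proposal is correct and follows precisely the approach implicit in the paper: the corollary is stated immediately after Lemma~\ref{LemmaBC} with no proof, and your near/far decomposition (using $\dot{H}^2(x\le 2000)$ from Lemma~\ref{LemmaBC} where the polynomial weights are bounded, and $X_2$ where $\rho_2 \equiv 1$) together with the substitution $\|\bar u,\bar v\|_{L^\infty}\lesssim \epsilon^{-N_4-\frac12}\|\bar u,\bar v\|_Z$ is exactly the intended derivation. One minor simplification: rather than introducing a separate cutoff $\phi$ and worrying about its compatibility with $\rho_2$ on the transition region, you can just split the integrals sharply at $x=60$ (or take $\phi = 1-\rho_2$), which removes the caveat in your final paragraph entirely.
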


We will now bootstrap the above elliptic regularity, away from the boundary $x = 1$ (thereby avoiding the corners of our domain). 

\begin{lemma}[Third-Order Elliptic Regularity] \label{LemmaBC2} For some $M_3 \ge 0$ perhaps large, but independent of $n$, and supposing $||\bar{u}, \bar{v}||_Z \le 1$, we have: 
\begin{align} \n
|| \{u_x, \sqrt{\epsilon}v_x \}\cdot \zeta_3  ||_{\dot{H}^2(x \le 1000)} &\lesssim \epsilon^{-M_3} \Big[ C + ||u,v||_{X_1 \cap Y_2} + \eps^{\frac{n}{2}+\gamma - N_4 - N_2} ||u,v||_{X_1 \cap Y_2} \Big] \\ 
& + \epsilon^{-M_3 - 2N_2 - 2N_4 + \frac{n}{2}+\gamma} ||\bar{u}, \bar{v}||_Z^2.
\end{align}
\end{lemma}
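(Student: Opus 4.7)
The plan is to bootstrap the second-order estimate of Lemma \ref{LemmaBC} by one extra $x$-derivative, using the cutoff $\zeta_3$ (supported in $x\ge 3/2$) to stay strictly away from the corner $(1,0)$ where the corner singularity of the Stokes problem obstructs $H^3$-regularity. As in the proof of Lemma \ref{LemmaBC}, we first pass to Eulerian coordinates via (\ref{scale}), so that $(\tilde u,\tilde v,\tilde P)$ solves the Stokes system (\ref{stokes.1.1}), and introduce the stream function $\psi = \int_0^Y \tilde u\,\ud Y'$, which satisfies the biharmonic equation $\Delta^2 \psi = F$ with $F$ as in (\ref{F}). The key observation is that $\p_x\psi$ (equivalently $\tilde v$, up to the factor $\sqrt\eps$) inherits a biharmonic equation $\Delta^2(\p_x\psi)=\p_x F$, and analogously for higher $x$-derivatives.

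The main step is to localize with a double partition of unity: the Eulerian cutoff $\tilde\zeta_3(x)$ which equals $\zeta_3$ on $\{x\le 1000\}$ and vanishes on $\{x\ge 2000\}$, multiplied by the $Y$-partition $\chi_m^{(b)}(Y)$ defined in (\ref{chimb}). Set $\bar\psi_m := \tilde\zeta_3(x)\chi_m^{(b)}(Y)\,\p_x\psi$. Then $\bar\psi_m$ satisfies a biharmonic equation with vanishing trace/normal-trace on $\p(\mathbb{R}\times[m-2,m+2])$, with right-hand side
\begin{equation*}
\Delta^2 \bar\psi_m \;=\; \tilde\zeta_3\chi_m^{(b)}\,\p_x F \;+\; [\Delta^2,\,\tilde\zeta_3\chi_m^{(b)}]\,\p_x\psi.
\end{equation*}
The biharmonic estimate with $C/C$ boundary conditions cited in the proof of Lemma \ref{LemmaBC} (\cite{Biharmonic}, Theorems 1--2) then yields
\begin{equation*}
\|\bar\psi_m\|_{H^3} \;\lesssim\; \|\tilde\zeta_3\chi_m^{(b)}\p_x F\|_{H^{-1}} + \|[\Delta^2,\tilde\zeta_3\chi_m^{(b)}]\p_x\psi\|_{H^{-1}}.
\end{equation*}
Summing in $m$ and unscaling recovers $\|\{u_x,\sqrt\eps v_x\}\zeta_3\|_{\dot H^2(x\le 1000)}$ up to a universal power $\eps^{-M_3}$.

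The commutator term is controlled exactly as in (\ref{chya}): all terms have at most two derivatives falling on $\psi$ (i.e.\ at most one derivative on $(\tilde u,\tilde v)$), and are supported on $\{x\le 2000\}$; since $\tilde\zeta_3'$ is supported where $\zeta_3$ does not yet achieve its upper bound, this commutator is bounded by $\|u,v\|_{Y_2}$ (which provides the needed $H^2$-type control all the way down to $x=1$, thanks to (\ref{norm.Y2}) including the $\|u_{yy}\|_{L^2(x\le 2000)}$ term). Hence
\begin{equation*}
\sum_m \|[\Delta^2,\tilde\zeta_3\chi_m^{(b)}]\p_x\psi\|_{H^{-1}} \;\lesssim\; \eps^{-M_3}\|u,v\|_{X_1\cap Y_2}.
\end{equation*}

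It remains to treat $\p_x F$, i.e.\ $\p_x^2(f+S_u)$ and $\p_x\p_Y(g+S_v)$ in $H^{-1}$, which after one integration by parts against a test function reduces to the $L^2$ norm of $\{\p_x f,\p_x g,\p_x S_u,\p_x S_v\}$ on $\{x\le 2000\}$. For the linearization terms $\p_x S_u, \p_x S_v$, each resulting term is of the form $(\text{profile derivative})\cdot(\text{one derivative of }[u,v])$ or $(\text{profile})\cdot(\text{two derivatives of }[u,v])$; the profile factors are bounded uniformly by (\ref{PE0.1})--(\ref{PE4.new.2}), while the derivatives of $[u,v]$ are controlled in $L^2(x\le 2000)$ by $\|u,v\|_{X_1\cap Y_2}$. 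For the remainder forcings $\p_x R^{u,n},\p_x R^{v,n}$, apply Lemma \ref{Lemma FE} with $k=1$ to get a uniform bound on $\{x\le 2000\}$. The nonlinear forcings $\p_x(\bar u\bar u_x + \bar v u_y)$ and $\p_x(\bar u\bar v_x + \bar v\bar v_y)$ are distributed via the Leibniz rule and estimated by placing the lower-order factor in $L^\infty$ (controlled by the $\mathcal U$ part of the $Z$-norm, costing $\eps^{-N_4}$) and the higher-order factor in $L^2$ using $X_1\cap Y_2$ (or another $\eps^{-N_2}$ factor when two derivatives land on $[\bar u,\bar v]$); this produces the nonlinear term $\eps^{-M_3-2N_2-2N_4+n/2+\gamma}\|\bar u,\bar v\|_Z^2$ in the claimed bound.

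The main obstacle is the last point: among the nonlinear pieces, the terms in which two derivatives land on $[\bar u,\bar v]$ (for example $\bar u_x\bar v_x$ or $\bar v\,\bar v_{xy}$) must be bounded using the full strength of $\|\bar u,\bar v\|_Z$ in a way consistent with the power of $\eps$ we can afford; carefully tracking which factor goes into $L^\infty$ (via the $\mathcal U$ norm) versus $L^2$ (via $X_1\cap X_2$) is what determines the exponent $-M_3-2N_2-2N_4$. All other steps are standard localized elliptic regularity.
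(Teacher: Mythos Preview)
Your overall strategy matches the paper's: differentiate the biharmonic equation for $\psi$, localize away from the corner with a product cutoff $\chi^{(2,a)}(x)\chi_m^{(b)}(Y)$, and invoke the $H^3$ estimate from \cite{Biharmonic}. The commutator and the linear $\p_x S_u,\p_x S_v$ terms are handled as you describe.

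There is, however, a genuine gap in your treatment of the nonlinear forcing. You propose estimating every piece of $\p_x\mathcal N^u,\p_x\mathcal N^v$ by putting one factor in $L^\infty$ via the $\mathcal U$-component of $Z$ and the other in $L^2$ via $X_1\cap Y_2$. This fails for the ``one-derivative $\times$ one-derivative'' terms such as $\bar v_x u_y$, $\bar u_x^2$, $\bar u_x\bar v_x$, $\bar v_x\bar v_y$ on the portion of $\text{supp}\,\zeta_3$ lying in $\{3/2\le x\le 20\}$: the $\mathcal U$-norm controls $\|u_x\|_{L^\infty_y}$ and $\|v_x\|_{L^\infty_y}$ only for $x\ge 20$, and neither $u_y$ nor $v_y$ is ever placed in $L^\infty$ by $Z$. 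In two dimensions, the available $\dot H^2$ control of $(u,v)$ from Lemma~\ref{LemmaBC} does not suffice to put first derivatives in $L^\infty$ either. The paper's fix is to estimate these terms by $L^4\times L^4$ and interpolate via the 2D Gagliardo--Nirenberg inequality $\|f\|_{L^4}\lesssim \|f\|_{L^2}^{1/2}\|f\|_{\dot H^1}^{1/2}$; this is precisely why the definition of $Y_2$ in (\ref{norm.Y2}) includes the seemingly extraneous term $\|u_{yy}\|_{L^2(x\le 2000)}$---it supplies the missing $\dot H^1$ control of $u_y$ needed for this interpolation (see the displayed calculation ending at (\ref{acc.3.1})).

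A minor point: your derivative count on the commutator is off by one. The expansion (\ref{BiLCOM}) shows that $[\Delta^2,\chi]\,\p_x\psi$ contains terms with up to \emph{three} derivatives on $\p_x\psi$, hence up to two derivatives on $(\tilde u,\tilde v)$ after the $H^{-1}$ pairing. Your conclusion that the commutator is bounded by $\|u,v\|_{X_1\cap Y_2}$ is still correct, but it genuinely requires the second-order $Y_2$ control, not merely first-order.
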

\begin{proof}

Start with the system in (\ref{F}), and differentiate once in $x$:
\begin{align} \n
\Delta^2 \psi_x &= F_x := \eps^{-M_2} \{ \p_Y \Big[ \tilde{f}_x + \p_x\tilde{S}_u \Big] - \p_x \Big[ \tilde{g}_x + \p_x \tilde{S}_v \Big] \}, \\
\p_x \psi(x,0) &= \p_x \psi_Y(x,0) = 0. 
\end{align}

We shall now define a new cut-off function, via: 
\begin{align}
 \chi^{(2,a)}(x) &= \begin{cases}
0 \text{ for } 1 \le x \le \frac{3}{2},\\
1 \text{ for } 2 \le x \le 1000, \\
0 \text{ for } 2000 \le x. 
\end{cases}
\end{align}

Then, referring back to (\ref{chimb}),
\begin{align}  \label{chi2}
\chi^{(2)}_m(x,Y) := \chi^{(2,a)}(x) \chi_m^{(b)}(Y)
\end{align}

The collection $\{ \chi^{(2)}_m\}$ is meant to fill the gap between $\zeta_3$ and $\rho_3$ (see the definitions in (\ref{zeta}), (\ref{rho})).  We will consider the unknown $ \chi^{(2)}_m \psi_x$, which satisfies the system: 
\begin{align}
\Delta^2 \Big(  \chi^{(2)}_m \psi_x \Big) = \chi^{(2)}_m F_x + [\Delta^2,  \chi^{(2)}_m] \psi_x,
\end{align}

where the commutator expression is given by (\ref{BiLCOM}). Then via the standard, local in $x$, $H^3$ estimate, one has: 
\begin{align}
||\chi^{(2)}_m \psi_x ||_{H^3} \lesssim ||\chi^{(2)}_m F_x||_{H^{-1}} + ||[\Delta^2,\chi^{(2)}_m] \psi_x||_{H^{-1}}. 
\end{align} 

Again, as $[\Delta^2, \chi^{(2)}_m]$ is localized in $x$ and contains three-derivatives of $\psi_x$, we easily obtain: 
\begin{align}
||[\Delta^2, \chi^{(2)}_m] \psi_x||_{H^{-1}} \lesssim \eps^{-M_3} ||o(\chi^{(2)}_m) \{u,v\}||_{Y_2 \cap X_1}. 
\end{align}

We will now evaluate the $F_x$ term above. By selecting the exponent $M_3$ large enough, one can rescale $\tilde{f}_x, \tilde{g}_x$ to $f_x, g_x$ and $\partial_x \tilde{S}_u, \partial_x \tilde{S}_v$ to $S_u, S_v$, which we automatically do. Then, 
\begin{align} \n
||\chi^{(2)}_m F_x||_{H^{-1}} &\le \eps^{-M_3} \Big[ ||\chi^{(2)}_m f_{xy} ||_{H^{-1}} + ||\chi^{(2)}_m g_{xx}||_{H^{-1}} \Big] \\
& \le \eps^{-M_3} \Big[ ||o(\chi^{(2)}_m) f_{x} ||_{L^{2}} + ||o(\chi^{(2)}_m) g_{x}||_{L^2} \Big].
\end{align}

Then, according to the definition (\ref{bar.f}) - (\ref{bar.g}), one has for the nonlinear terms: 
\begin{align} \n
||o(\chi^{(2)}_m) \cdot \mathcal{N}^u_x||_{L^2} &= \epsilon^{\frac{n}{2}+\gamma} ||o(\chi^{(2)}_m)[\bar{u} \bar{u}_{xx} + \bar{u}_x^2 + \bar{v}_x u_y + \bar{v} u_{xy}]||_{L^2} \\ \n
& \le  \epsilon^{\frac{n}{2}+\gamma} \Big[ ||o(\chi^{(2)}_m) \bar{u}||_{L^\infty} ||o(\chi^{(2)}_m)\bar{u}_{xx}||_{L^2} + ||o(\chi^{(2)}_m) \bar{u}_x||_{L^4}^2 \\ \n
 &\hspace{10 mm} + ||o(\chi^{(2)}_m)\bar{v}_x||_{L^4} ||o(\chi^{(2)}_m) u_y||_{L^4} + ||o(\chi^{(2)}_m) \bar{v}||_{L^\infty} ||o(\chi^{(2)}_m) u_{xy}||_{L^2} \Big] \\ \n
 &\le \epsilon^{\frac{n}{2}+\gamma} \Big[ ||o(\chi^{(2)}_m) \bar{u}||_{L^\infty} ||o(\chi^{(2)}_m)\bar{u}_{xx}||_{L^2} + ||o(\chi^{(2)}_m) \bar{u}_x||_{L^2}||o(\chi^{(2)}_m) \bar{u}_x||_{\dot{H}^1} \\ \n
 &\hspace{10 mm} + ||o(\chi^{(2)}_m)\bar{v}_x||_{L^2}^{\frac{1}{2}} ||o(\chi^{(2)}_m)\bar{v}_x||_{\dot{H}^1}^{\frac{1}{2}} ||o(\chi^{(2)}_m) u_y||_{L^2}^{\frac{1}{2}} ||o(\chi^{(2)}_m) u_y||_{\dot{H}^1}^{\frac{1}{2}} \\ \n
 & \hspace{10 mm} + ||o(\chi^{(2)}_m) \bar{v}||_{L^\infty} ||o(\chi^{(2)}_m) u_{xy}||_{L^2} \Big] \\ \label{acc.3.1}
& \lesssim \epsilon^{\frac{n}{2}+\gamma} \Big[ \eps^{-2N_2 - 2N_4} ||o(\chi^{(2)}_m) \{\bar{u}, \bar{v}\}||_Z^2 + \eps^{-2N_2 - 2N_4} ||o(\chi^{(2)}_m) \{u,v\}||_{X_1 \cap Y_2} \Big].
\end{align}

Above, we have used the assumption that $||\bar{u}, \bar{v}||_Z \le 1$ in the term: 
\begin{align} \n
 ||o(\chi^{(2)}_m)\bar{v}_x||_{L^2}^{\frac{1}{2}} ||o(\chi^{(2)}_m)\bar{v}_x||_{\dot{H}^1}^{\frac{1}{2}} & ||o(\chi^{(2)}_m) u_y||_{L^2}^{\frac{1}{2}} ||o(\chi^{(2)}_m) u_y||_{\dot{H}^1}^{\frac{1}{2}} \\ \n
 & \le \eps^{-N_2}||\bar{v}||_Z ||o(\chi^{(2)}_m) u_y||_{L^2}^{\frac{1}{2}} ||o(\chi^{(2)}_m) u_y||_{\dot{H}^1}^{\frac{1}{2}} \\  \n
 & \le \eps^{-N_2} ||o(\chi^{(2)}_m) u_y||_{L^2}^{\frac{1}{2}} ||o(\chi^{(2)}_m) u_y||_{\dot{H}^1}^{\frac{1}{2}} \\
 & \le \eps^{-N_2} ||o(\chi_m^{(2)}) u_y||_{X_1}^{\frac{1}{2}} \eps^{-N_2} ||o(\chi_m^{(2)}) u||_{Y_2}^{\frac{1}{2}}.
\end{align}

In the final estimate, we have used that $\chi^{(2)}$ is supported on a strictly smaller region in $x$ than the region over which we have controlled $u_{yy}$ (compare (\ref{chi}) to the $u_{yy}$ estimate in (\ref{boundary.controls})). Similarly, 
\begin{align}\n
|| o(\chi^{(2)}_m) \mathcal{N}^v_x||_{L^2} &=  \epsilon^{\frac{n}{2}+\gamma} || o(\chi^{(2)}_m) [ \bar{u} \bar{v}_{xx} +  \bar{u}_x \bar{v}_x + \bar{v}_x \bar{v}_y + \bar{v} \bar{v}_{xy}] ||_{L^2} \\ \n
& \le \epsilon^{\frac{n}{2}+\gamma} \Big[ ||o(\chi^{(2)}_m) \bar{u}||_{L^\infty} ||o(\chi^{(2)}_m) \bar{v}_{xx}||_{L^2} + ||o(\chi^{(2)}_m) \bar{u}_x||_{L^4} ||o(\chi^{(2)}_m) \bar{v}_x||_{L^4} \\ \n
&+ ||o(\chi^{(2)}_m) \bar{v}_x||_{L^4} ||o(\chi^{(2)}_m) \bar{v}_y||_{L^4} + ||o(\chi^{(2)}_m) \bar{v}||_{L^\infty} ||o(\chi^{(2)}_m) \bar{v}_{xy}||_{L^2}  \Big] \\ \label{acc.2}
& \le \epsilon^{\frac{n}{2}+\gamma} \Big[ \eps^{-2N_2 - 2N_4} ||o(\chi^{(2)}_m) \{ \bar{u}, \bar{v}\}||_Z^2 \Big]. 
\end{align}

Next, according to estimates (\ref{Rnl2}), one has: 
\begin{align} \label{acc.1}
\sum_{m \ge 1} ||o(\chi^{(2)}_m) R^{u,n}_x, o(\chi^{(2)}_m) R^{v,n}_x||_{L^2} \le C. 
\end{align}

Taking summation in $m$ and scaling back to Prandtl coordinates gives the desired result upon comparing the supports of $\chi_2$ and $\rho_2$:
\begin{align} \n
|| \chi^{(2)} \psi||_{H^3} &\le \sum_{m} ||\chi^{(2)}_m \psi||_{H^3} \\ \n 
& \lesssim \eps^{-M_3}  \sum_{m} ||o(\chi^{(2)}_m) u,v||_{Y_2 \cap X_1} +  \epsilon^{\frac{n}{2}+\gamma}  \sum_{m} \eps^{-2N_2 - 2N_4} ||o(\chi^{(2)}_m) \{\bar{u}, \bar{v}\}||_Z^2 \\ \n
& +\epsilon^{\frac{n}{2}+\gamma} \eps^{-N_2 - N_4} \sum_m ||o(\chi^{(2)}_m) \{u,v\}||_{X_1 \cap Y_2} + \sum_m ||o(\chi^{(2)}_m) R^{u,n}_x, o(\chi^{(2)}_m) R^{v,n}_x||_{L^2} \\
& \lesssim \eps^{-M_3} \Big[  C + ||u,v||_{Y_2 \cap X_1} + \eps^{\frac{n}{2}+\gamma - 2N_2 - 2N_4} ||\bar{u}, \bar{v}||_Z^2 + \eps^{\frac{n}{2}+\gamma - N_2 - N_4} ||u,v||_{X_1 \cap Y_2} \Big]. 
\end{align}

\end{proof}

\begin{corollary} There exists a universal constant $M_3$ such that for any selection of $N_2, N_3, N_4, n, \gamma$, so long as $||u, v||_Z \le 1$, we have: 
\begin{align} \n
\epsilon^{N_3} ||u,v||_{Y_3}& \lesssim \epsilon^{N_3-M_3} \Big[C + ||u,v||_{X_1 \cap Y_2}   \Big] + \eps^{N_3} ||u,v||_{X_3} \\ \label{Y3em}
&+ \eps^{\frac{n}{2}+\gamma - N_2 - N_4 - M_3 + N_3}||u,v||_{X_1 \cap Y_2} + \epsilon^{N_3 -M_3 - 2N_2 - 2N_4 + \frac{n}{2}+\gamma} ||\bar{u}, \bar{v}||_Z^2.
\end{align}
\end{corollary}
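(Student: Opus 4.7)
The plan is to decompose the $Y_3$ norm spatially into an inner piece where Lemma \ref{LemmaBC2} provides the required third-order elliptic regularity, and an outer piece where the energy norm $X_3$ already controls the same weighted quantities. Since $\zeta_3$ is supported in $\{x \ge 3/2\}$ and $\rho_3 \equiv 1$ on $\{x \ge 160\}$, the cover $\{3/2 \le x \le 1000\} \cup \{x \ge 160\}$ is a finite open cover of $\mathrm{supp}\,\zeta_3$ with nondegenerate overlap. Choose a smooth partition of unity $\{\phi_{\mathrm{in}}, \phi_{\mathrm{out}}\}$ subordinate to this cover and write
\begin{equation*}
\zeta_3 = \phi_{\mathrm{in}} \zeta_3 + \phi_{\mathrm{out}} \zeta_3,
\end{equation*}
so that
\begin{equation*}
||u,v||_{Y_3} \lesssim \bigl\| \phi_{\mathrm{in}}\, \zeta_3\,\{ u_{xxy} x^2, \sqrt{\eps}v_{xxx} x^{5/2}, v_{xxy} x^{5/2}\}\bigr\|_{L^2} + \bigl\|\phi_{\mathrm{out}}\,\zeta_3\,\{\cdots\}\bigr\|_{L^2}.
\end{equation*}

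For the inner piece, the weights $x^2$ and $x^{5/2}$ are uniformly bounded on $\mathrm{supp}\,\phi_{\mathrm{in}} \subset \{x \le 1000\}$, so the contribution is bounded by the unweighted $L^2(\{x \le 1000\})$-norm of $u_{xxy}\zeta_3,\ \sqrt{\eps}v_{xxx}\zeta_3,\ v_{xxy}\zeta_3$. These three quantities arise directly by applying $\partial_x\partial_y$ and $\partial_x^2$ to the composite functions $u_x \zeta_3$ and $\sqrt{\eps}v_x \zeta_3$; the commutator terms produced by derivatives falling on $\zeta_3$ are supported in the transition strip $x \in [3/2, 2]$ and involve only second-order derivatives of $u$ and $\sqrt{\eps}v$, hence are controlled by $||u,v||_{X_1 \cap Y_2}$ without any loss in $\eps$. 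Invoking Lemma \ref{LemmaBC2} then yields
\begin{align*}
\bigl\| \phi_{\mathrm{in}}\,\zeta_3\,\{ u_{xxy} x^2, \sqrt{\eps}v_{xxx} x^{5/2}, v_{xxy} x^{5/2}\}\bigr\|_{L^2} &\lesssim \bigl\| \{u_x, \sqrt{\eps}v_x\}\zeta_3\bigr\|_{\dot H^2(x \le 1000)} + ||u,v||_{X_1 \cap Y_2} \\
&\lesssim \eps^{-M_3}\Bigl[C + ||u,v||_{X_1 \cap Y_2} + \eps^{\frac{n}{2}+\gamma - N_4 - N_2}||u,v||_{X_1 \cap Y_2}\Bigr] \\
&\quad + \eps^{-M_3 - 2N_2 - 2N_4 + \frac{n}{2}+\gamma}||\bar u,\bar v||_Z^2.
\end{align*}
Multiplying by $\eps^{N_3}$ produces the first, third, and fourth terms on the right-hand side of the corollary.

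For the outer piece, on $\mathrm{supp}\,\phi_{\mathrm{out}} \subset \{x \ge 160\}$ we have $\rho_3 \equiv 1$, so pointwise $\phi_{\mathrm{out}}\zeta_3 x^2 \le C \rho_3 x^2$ and $\phi_{\mathrm{out}}\zeta_3 x^{5/2} \le C (\rho_3 x)^{5/2}$; consequently
\begin{equation*}
\bigl\|\phi_{\mathrm{out}}\,\zeta_3\,\{ u_{xxy} x^2, \sqrt{\eps}v_{xxx} x^{5/2}, v_{xxy} x^{5/2}\}\bigr\|_{L^2} \lesssim ||u,v||_{X_3},
\end{equation*}
which after multiplication by $\eps^{N_3}$ supplies the second term of the corollary.

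The main obstacle here is not analytic but bookkeeping: one must verify that the $\eps$-powers produced by Lemma \ref{LemmaBC2} line up with those in the statement and, more subtly, that the cutoff commutators $[\Delta, \zeta_3]$ and the partition commutators $[\p,\phi_{\mathrm{in}}]$ introduce only lower-order terms bounded by the $X_1 \cap Y_2$ norm (with no extra negative power of $\eps$). Since both $\zeta_3$ and $\phi_{\mathrm{in}}$ depend only on $x$ and have compact transition regions where all weights are $O(1)$, this verification is routine.
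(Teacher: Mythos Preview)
Your proof is correct and is essentially the argument the paper has in mind: the corollary is stated without proof in the paper, following immediately from Lemma \ref{LemmaBC2}, and the intended argument is exactly the spatial decomposition you carry out, using Lemma \ref{LemmaBC2} on the bounded inner region $\{x \le 1000\}$ (where the weights $x^2, x^{5/2}$ are harmless) and the $X_3$ norm on the outer region $\{\rho_3 = 1\}$. One minor point worth making explicit: the term $v_{xxy}$ in $Y_3$ carries no $\sqrt{\eps}$, so to place it under $\|\{u_x,\sqrt{\eps}v_x\}\zeta_3\|_{\dot H^2}$ you should invoke the divergence-free relation $v_{xxy} = -u_{xxx} = -\partial_x^2(u_x)$, rather than relating it to $\partial_x\partial_y(\sqrt{\eps}v_x\zeta_3)$, which would cost an unwanted $\eps^{-1/2}$.
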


We now upgrade the previous estimate to a fourth order, weighted estimate. This fourth order estimate is not included in our norm, (\ref{norm.Z}). This will simply be used in order to justify rigorously one of the integrations by parts in the energy estimates (in particular, calculation (\ref{justify.HO})). This is the reason our right-hand side below in (\ref{rhsb}) need not be depicted explicitly; we only need the qualitative information that this quantity is finite.  

\begin{lemma}[Fourth-Order Elliptic Regularity] Suppose $||\bar{u}, \bar{v}||_Z \le 1$. Solutions $[u,v,P] \in Z$ to the system (\ref{EQ.NSR.1}) - (\ref{EQ.NSR.3}), with forcing terms as in (\ref{bar.f}) - (\ref{bar.g}) satisfy the following fourth-order estimate: 
\begin{align} \label{rhsb}
||\{u_{xx}, \sqrt{\eps}v_{xx} \} \cdot x ||_{\dot{H}^2(x \ge 20)}  < \infty.  
\end{align}
\end{lemma}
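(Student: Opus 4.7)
The plan is to mirror the argument used in Lemmas \ref{LemmaBC} and \ref{LemmaBC2}, but now differentiating the biharmonic equation \emph{twice} in $x$ and localizing on the region $x \ge 20$, well away from the corner at $(1,0)$. Since we only need the qualitative statement that the fourth-order seminorm is finite, we do not track powers of $\epsilon$, and all implicit constants may depend (even badly) on $\epsilon$, $n$, and the chosen $N_i$.

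First, rescale to Eulerian variables via \eqref{scale} to obtain the Stokes system \eqref{stokes.1.1}, and pass to the stream function $\psi = \int_0^Y \tilde u \, dY'$, which satisfies $\Delta^2 \psi = F$ with $F$ as in \eqref{F}. Differentiating twice in $x$ gives $\Delta^2 \psi_{xx} = F_{xx}$. Introduce the cutoff
\begin{equation*}
\chi^{(3,a)}(x) = \begin{cases} 0, & x \le 10, \\ 1, & x \ge 20, \\ 0, & x \ge 2^{k}\text{ eventually}, \end{cases}
\end{equation*}
paired with the $Y$-slabs $\chi^{(b)}_m(Y)$ from \eqref{chimb}, producing a partition $\{\chi^{(3)}_m\}_{m\ge 1}$ of $[20,\infty) \times [0,\infty)$. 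For each $m$, the standard interior biharmonic estimate yields
\begin{equation*}
\|\chi^{(3)}_m \psi_{xx}\|_{H^3} \lesssim \|\chi^{(3)}_m F_{xx}\|_{H^{-1}} + \|[\Delta^2,\chi^{(3)}_m]\psi_{xx}\|_{H^{-1}}.
\end{equation*}
Summing over $m$ will then bound $\|\chi^{(3,a)} \psi_{xx}\|_{\dot H^3}$, which unrescaling back to Prandtl variables is equivalent to the claim \eqref{rhsb}.

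The commutator $[\Delta^2,\chi^{(3)}_m]\psi_{xx}$ contains at most three derivatives of $\psi_{xx}$, hence at most four of $[\tilde u,\tilde v]$, all localized in the support of $\nabla \chi^{(3)}_m \subset \{x \ge 10\}$. By the definition of $H^{-1}$ one trades one derivative onto a test function, so these terms are controlled by $\|o(\chi^{(3)}_m)\{u_{xxy}, v_{xxy}, \sqrt{\epsilon}v_{xxx}\}\|_{L^2}$ and lower-order analogues; summing in $m$ these are majorized by $\|u,v\|_{X_1 \cap X_2 \cap X_3}$, hence finite for $[u,v] \in Z$. For the forcing $\chi^{(3)}_m F_{xx}$, integration by parts in the $H^{-1}$ pairing reduces matters to $L^2$ bounds on $f_{xx}$ and $g_{xx}$, i.e.\ on $\partial_x^2 R^{u,n}$, $\partial_x^2 R^{v,n}$, $\partial_x^2 S_u$, $\partial_x^2 S_v$, and $\partial_x^2 \mathcal{N}^u$, $\partial_x^2 \mathcal{N}^v$. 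The remainder contributions are finite by Theorem \ref{thm.m.part.1} with $k=2$. The linearized terms $\partial_x^2 S_u, \partial_x^2 S_v$ distribute derivatives onto the smooth profiles $[u_R,v_R]$ (controlled by Theorem \ref{thm.m.part.1}) and onto at most three derivatives of $[u,v]$, all bounded in the $X_1 \cap X_2 \cap X_3$ part of $Z$.

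The main obstacle, and the reason this has to be stated as a qualitative bound, is the nonlinearity. For instance
\begin{equation*}
\partial_x^2(\bar u\, \bar u_x) \;=\; \bar u\, \bar u_{xxx} + 3\bar u_x\, \bar u_{xx},
\end{equation*}
and the analogous terms from $\bar v u_y$, $\bar u \bar v_x$, $\bar v \bar v_y$. Each of these can be estimated on the support of $\chi^{(3)}_m$ by using the uniform-type norms inside $\mathcal U$ (to place $\bar u$, $\bar v$ in $L^\infty$) together with the $X_3$-component of $Z$ (to place one factor with three derivatives in weighted $L^2$), and a Gagliardo-Nirenberg interpolation between $X_1, X_2, X_3$ to handle the cross terms like $\bar u_x \bar u_{xx}$ in $L^2$. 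Since $\|\bar u,\bar v\|_Z \le 1$ and $[u,v] \in Z$, every factor that appears is finite on each localized piece, and the $m$-summation converges thanks to the polynomial $x$-weights built into $Z$. Combining these estimates yields a finite bound on $\|\chi^{(3,a)}\psi_{xx}\|_{\dot H^3}$, which after undoing the Eulerian rescaling delivers \eqref{rhsb}.
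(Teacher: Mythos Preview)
Your overall strategy is right---differentiate the stream-function biharmonic equation twice and localize away from the corner---but there is a genuine gap: you never produce the weight $x$ that appears in \eqref{rhsb}. The claim ``unrescaling back to Prandtl variables is equivalent to the claim \eqref{rhsb}'' is incorrect: the Eulerian rescaling $Y=\sqrt{\eps}\,y$ only generates powers of $\eps$, never of $x$. Consequently your argument, even if the contradictory cutoff definition is repaired, yields at best a bound on $\|\psi_{xx}\|_{\dot H^3(x\ge 20)}$ with no spatial growth, which does not imply $\|\{u_{xx},\sqrt{\eps}v_{xx}\}\cdot x\|_{\dot H^2(x\ge 20)}<\infty$. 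This weight matters downstream: the only use of \eqref{rhsb} is to justify the integration by parts in \eqref{justify.HO}, where one needs $\|(v_{xxxx}+v_{xxyy})w_2\|_{L^2}<\infty$ with $w_2=\rho_2 x$.

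The paper's fix is to build the weight directly into the localized unknown, applying the biharmonic estimate to $\chi_m\, x\, \psi_{xx}$ rather than $\chi_m\,\psi_{xx}$. This yields
\[
\|\chi_m x\,\psi_{xx}\|_{H^3}\lesssim \|x\chi_m F_{xx}\|_{H^{-1}}+\|[\Delta^2,\chi_m x]\psi_{xx}\|_{H^{-1}},
\]
and now the right-hand side carries the weight $x$ as well. The forcing terms $x\,\p_{xx}S_u$, $x\,\p_{xx}S_v$, $x\,f_{xx}$, $x\,g_{xx}$ are then controlled precisely by the weighted components of $Z$ (e.g.\ $\|v_{xxy}x^{5/2}\|_{L^2}$ from $Y_3$), and the commutator $[\Delta^2,\chi_m x]\psi_{xx}$ contains additional terms from derivatives landing on $x$, but these are all lower order. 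The paper also partitions in \emph{both} variables: a single interior piece $\chi_0$ supported on $Y\ge 1$, and boundary pieces $\chi_m$ localized near $Y=0$ and near $x=m$---this second localization in $x$ is what makes the worst commutator term $\p_Y^4\chi_m \cdot x\,\psi_{xx}$ summable, since each $\chi_m$ has bounded support in both directions. Your $Y$-only partition would leave you with $\|x\,v_x\|_{L^2(x\ge 20)}$-type terms that are not in $Z$.
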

\begin{proof}

Taking two derivatives of (\ref{F}), one has $\Delta^2 (\psi_{xx}) = F_{xx}$. Define a partition of unity of $\{x \ge 20\}$, $\{ \chi_m \}_{m \ge 0}$, such that $\chi_0$ is supported on $Y \ge 1$, and all of the $\chi_m, m \ge 1$ are localized near the boundary $Y = 0$ in the region $Y \in [0,2]$ and near $x = m$. Then: 
\begin{align} \label{CHIM}
\Delta^2 (\chi_m x \psi_{xx} ) = [ \Delta^2, \chi_m x ] \psi_{xx} + x \chi_m F_{xx}, \\ \label{chibulk}
\Delta^2 (\chi_0 x \psi_{xx} ) = [ \Delta^2, \chi_0 x ] \psi_{xx} + x \chi_0 F_{xx},
\end{align}

where we refer the reader to the commutator expression in (\ref{BiLCOM}). For equation (\ref{chibulk}), one uses the standard, interior $\dot{H}^3$ estimate for the Bi-Laplacian:
\begin{align} \label{four.four.1}
||\chi_0 x \psi_{xx} ||_{\dot{H}^3} \lesssim ||[\Delta^2, \chi_0 x] \psi_{xx}||_{H^{-1}} + ||x \chi_0 F_{xx} ||_{H^{-1}}. 
\end{align}

For equation (\ref{CHIM}), one uses the boundary $H^3$ estimate which gives: 
\begin{align} \label{four.four.2}
||\chi_m x \psi_{xx} ||_{H^3} \lesssim || [\Delta^2, \chi_m x] \psi_{xx}||_{H^{-1}} + ||x \chi_m F_{xx} ||_{H^{-1}}.
\end{align}

Let us write the expression for $F_{xx}$ that we will read from, referring to (\ref{F}) (we will rename the power of $\eps$)
\begin{align}
F_{xx} = \eps^{-M_4} \Big[ \p_{xxY} \tilde{f} + \p_{xxY} \tilde{S}_u - \p_{xxx} \tilde{g} - \p_{xxx} \tilde{S}_v \Big].
\end{align}

As usual, by sacrificing powers of $\eps$, it suffices to evaluate the $H^{-1}$ norm of the above expression in Prandtl coordinates. We will start with $S_u$: 
\begin{align}
||\{\chi_m, \chi_0 \}x \p_{xxY} \tilde{S}_u||_{H^{-1}} \lesssim ||\{\chi_m, \chi_0 \}x \p_{xx} \tilde{S}_u||_{L^2} + ||\{o(\p_y \chi_m, \p_y \chi_0) x \p_{xx} \tilde{S}_u \}||_{L^2}. 
\end{align}

Similarly for $S_v$, one obtains: 
\begin{align} \n
||o(\chi_m, \chi_0)\p_{xxx} xS_v||_{H^{-1}} &\le ||o(\chi_m, \chi_0)\p_{xx} S_v||_{L^2} + ||o(\p_x \chi_m, \p_x \chi_0)\p_{xx} xS_v||_{L^2} \\ 
& + ||o(\chi_m, \chi_0)x \p_{xx} S_v||_{L^2} .
\end{align}

Computing two derivatives of $S_u, S_v$, one obtains: 
\begin{align} \n
\p_{xx} S_u &= u_{Rxxx}u + 2u_{Rxx}u_x + u_{Rx} u_{xx} + u_{Rxx} u_x + 2u_{Rx}u_{xx} \\ 
& + u_R u_{xxx} + u_{Ryxx} v + 2u_{Ryx}v_x + u_{Ry}v_{xx} + v_{Rxx}u_y + 2v_{Rx}u_{xy} + v_R u_{xxy}, \\ \n
\p_{xx}S_v &= u_{Rxx} v_x + 2 u_{Rx} v_{xx} + u_R v_{xxx} + v_{Rxxx} u + 2v_{Rxx} u_x \\ 
& + v_{Rx} u_{xx} + v_{Rxx} v_y + 2v_{Rx} v_{xy} + v_R v_{xxy} + v_{Rxxy} v + 2v_{Rxy}v_x + v_{Ry} v_{xx}.
\end{align}

From here, given $\chi_0, \chi_m$ are supported on $x \ge 20$, it is easy to see that: 
\begin{align} \label{four.2.n}
||o(\chi_m, \chi_0)\Big[ x\p_{xx} S_u,  x \p_{xx} S_v \Big]||_{L^2(x \ge 20)}  \lesssim ||o(\chi_m, \chi_0)\{u,v\}||_{Z}.
\end{align}

Next, turning to the expressions in $f, g$: 
\begin{align}
f_{xx} &=  \eps^{-\frac{n}{2}-\gamma}R^{u,n}_{xx} + \bar{u}\bar{u}_{xxx} + 3\bar{u}_x \bar{u}_{xx} + \bar{v}_{xx}u_y + 2 \bar{v}_x u_{xy} + \bar{v}u_{xxy}, \\
g_{xx} &= \eps^{-\frac{n}{2}-\gamma}R^{v,n}_{xx} + \bar{u}_{xx} \bar{v}_x + \bar{u}\bar{v}_{xxx} + 2\bar{u}_x \bar{v}_{xx} + \bar{v}_{xx}\bar{v}_y + \bar{v} \bar{v}_{xxy} + 2\bar{v}_x \bar{v}_{xy}.
\end{align}

From here, using (\ref{Rnl2}), and the definition the norm $Z$ in (\ref{norm.Z}), one observes: 
\begin{align} \label{four.2}
||o(\chi_m, \chi_0)\Big[ x\p_{xx}f, x\p_{xx} g \Big]||_{L^2(x \ge 20)}  \lesssim ||o(\chi_m, \chi_0)\{u,v\}||_{Z}^2 + ||o(\chi_m, \chi_0)\{\bar{u},\bar{v}\}||_{Z}^2.
\end{align}

Summarizing this: 
\begin{align} \n
\sum_{m \ge 0} ||x \chi_m F_{xx}||_{H^{-1}} &\lesssim \sum_{m \ge 0} ||o(\chi_m) \{u,v \}||_Z + \sum_{m \ge 0} ||o(\chi_m) \{u,v \}||_Z^2 \\ \label{four.BED}
& + \sum_{m \ge 0} ||o(\chi_m) \{\bar{u},\bar{v} \}||_Z  < \infty. 
\end{align}

We now move to the commutator terms from (\ref{four.four.1}) - (\ref{four.four.2}), which we write out, denoting by $\chi$ generically either $\chi_m$ or $\chi_0$: 
\begin{align} \n
[\Delta^2, x \chi] v_x &= \p_{xyy}(x \chi) v_{xx} + \p_{xxy}(x \chi) v_{xy} + \p_x(x \chi) \p_{xyy} v_x + x \p_{yy} \chi v_{xxx} \\ \n
& + \p_y (x \chi) \p_{xxy} v_x + \p_{xx} (x \chi) v_{xyy} + \p_{xy} (x \chi) v_{xxy} \\ \label{COM2}
& + \sum_{k = 1}^4 \p_x^k (x \chi) \p_x^{4-k} v_x + \sum_{k =1}^4 x \p_y^k \chi \p_y^{4-k} v_x. 
\end{align}

Examining the commutator terms, one observes that the worst term is when all derivatives fall on the cut-off, and none on either $x$ or $v_x$. Such a term arises, for instance, when $k = 4$ in the final summation above in (\ref{COM2}). In this case, one uses that the partition of unity is selected such that $\p_y \chi_0, \chi_m$ is a bounded region in $y$: 
\begin{align}
||\p_y^4 \{\chi_m, \chi_0\} \cdot xv_x||_{H^{-1}} \le ||\p_y^4 \{\chi_m, \chi_0\} \cdot xv_x||_{L^2} \lesssim ||o(\chi_m), o(\p \chi_0) xv_{xy}||_{L^2}.
\end{align}

It is straightforward to check that all terms in the commutator can be estimated either in this manner or directly using the definitions of $Z$ in (\ref{norm.Z}). Thus, taking summation over the partition of unity again gives: 
\begin{align}\label{four.BED.2}
\sum_{m \ge 0} ||[\Delta^2, \chi_m x] \psi_{xx} ||_{H^{-1}} \lesssim ||u,v||_Z < \infty. 
\end{align}

Combining (\ref{four.four.1}), (\ref{four.four.2}), (\ref{four.BED}), and (\ref{four.BED.2}), the lemma is proven. 

\end{proof}

\subsection{Embedding Theorems for the Space $Z$} \label{subsection.EM}

The next task is to pinpoint the interplay between the uniform quantities in the norm $Z$ and the norms $Y_i$. Let us start with: 

\begin{lemma} \label{L.Evol} For $\sigma > 0$ arbitrarily small,  
\begin{align} \label{evo.low}
&\sup_{x \ge 1} \Big[ || \sqrt{\eps} \psi x^{-1-\sigma}||_{L^2_y}^2 + ||ux^{-\sigma}||_{L^2_y}^2 + ||\sqrt{\eps} v x^{-\sigma}||_{L^2_y}^2 \Big] \lesssim C(\sigma) ||u,v||_{X_1}^2, \\ \label{evo.mid}
&\sup_{x \ge 1} \Big[ ||u_y x^{\frac{1}{2}}||_{L^2_y}^2 + ||u_x x||_{L^2_y}^2 \Big] + \sup_{x \ge 20} ||\sqrt{\epsilon} v_x x||_{L^2_y}^2 \lesssim ||u,v||_{X_1 \cap Y_2}^2, \\ \label{evo.high}
&\sup_{x \ge 20} \Big[ ||u_{xy} x^{\frac{3}{2}}||_{L^2_y}^2 + ||\{v_{xy}, \sqrt{\eps}v_{xx} \} x^2||_{L^2_y}^2 \Big] \lesssim ||u,v||_{Y_2 \cap Y_3}^2.
\end{align}
The constant $C(\sigma) \uparrow \infty$ as $\sigma \downarrow 0$. Finally, for $[u,v] \in Z$, we have the following property: 
\begin{align} \label{three}
\sup_{x \ge 20} ||\{v_{xxx}, u_{xxy} \} x||_{L^2_y} < \infty, 
\end{align}
\end{lemma}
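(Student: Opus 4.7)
The uniform-in-$x$ $L^2_y$ bounds all follow from the same scheme: for a quantity of the form $\int f^2 w(x)\,dy$ with weight $w(x)$, compute
\begin{align*}
\partial_x \int f(x,y)^2 w(x)\,dy = w'(x)\int f^2\,dy + 2w(x)\int f f_x\,dy,
\end{align*}
integrate from an appropriate ``initial'' value $x_0$ (dictated by the available boundary information), and use Cauchy--Schwarz on the cross term. The initial boundary values at $x=1$ that one inherits for free come from the density specification $Z \subset \overline{C^\infty_{0,D}}^{\|\cdot\|_{X_1}}$, which gives $u|_{x=1}=v|_{x=1}=0$, and hence the tangential derivatives $u_y|_{x=1}=v_y|_{x=1}=0$. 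The normal derivatives $u_x, v_x$ at $x=1$ need not vanish, so whenever these quantities appear in a weighted $L^2_y$ norm one cannot start the FTC at $x=1$ and must instead invoke Chebyshev/mean value to select a good $x_0$ in a bounded $x$-interval.

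For \eqref{evo.low} the delicate issue is Hardy-criticality. I will exploit $u(1,y)=0$ (likewise $v$, and the stream function $\psi(x,y):=-\int_1^x v(t,y)\,dt$) to write $u(x,y)=\int_1^x u_t(t,y)\,dt$, then apply Cauchy--Schwarz with the split $1 = t^{-\frac{1}{2}-\sigma}\cdot t^{\frac{1}{2}+\sigma}$ so that
\begin{align*}
|u(x,y)|^2 \le \int_1^x t^{-1-2\sigma}\,dt\cdot \int_1^x t^{1+2\sigma} u_t^2\,dt \le \frac{x^{2\sigma}}{2\sigma}\int_1^x t u_t^2\,dt.
\end{align*}
The weight $t^{-1-2\sigma}$ is $L^1(1,\infty)$ for $\sigma>0$, giving the factor $C(\sigma)=1/(2\sigma)$ and canceling the $x^{-2\sigma}$ in the norm. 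The same argument applied to $\sqrt{\eps}v$ using $\|\sqrt{\eps}v_x\sqrt{x}\|_{L^2_{xy}}^2\lesssim \|u,v\|_{X_1}^2$ controls the $v$ term, and a second iteration of this scheme on $\psi(x,y)=-\int_1^x v(t,y)\,dt$, substituting the bound just obtained for $\sqrt{\eps}\|v(t)\|_{L^2_y}$, yields the $\psi$ estimate (choosing the auxiliary parameter smaller than $\sigma$ absorbs the extra $x$-growth).

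For \eqref{evo.mid}, the $u_y$ bound starts at $x=1$ since $u_y(1,y)=0$, giving
\begin{align*}
\|u_y(X)X^{\frac{1}{2}}\|_{L^2_y}^2 = \int_1^X \|u_y\|_{L^2_y}^2\,dx + 2\int_1^X x\int u_y u_{xy}\,dy\,dx \lesssim \|u,v\|_{X_1}^2 + \|u,v\|_{X_1}\|u,v\|_{Y_2}.
\end{align*}
The $u_x$ bound follows from $u_x=-v_y$ together with $v_y(1,y)=0$ by the same FTC argument on $\int v_y^2 x^2\,dy$. For $\sqrt{\eps}v_x$ the vanishing at $x=1$ is unavailable, so I select $x_0\in[2,20]$ via Chebyshev applied to $\int_2^{20}\eps\|v_x\|_{L^2_y}^2 x\,dx \lesssim \|u,v\|_{X_1}^2$, then run FTC from $x_0$ to $X\ge 20$; the cross-term is bounded by $\|\sqrt{\eps}v_x\sqrt{x}\|_{L^2_{xy}}\|\sqrt{\eps}v_{xx}x^{\frac{3}{2}}\|_{L^2_{xy}}\lesssim \|u,v\|_{X_1\cap Y_2}^2$.

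The estimate \eqref{evo.high} is strictly analogous, selecting $x_0\in[2,20]$ via Chebyshev on $\|u_{xy}x\|_{L^2_{xy}}\lesssim \|u,v\|_{Y_2}$ (which has no cutoff); the cross term uses the $Y_3$-controlled quantities $u_{xxy}\zeta_3 x^2$ and $\{\sqrt{\eps}v_{xxx},v_{xxy}\}\zeta_3 x^{5/2}$, and since $\zeta_3\equiv 1$ on $[2,\infty)$ the cutoff does not interfere. Finally, \eqref{three} is obtained by the same FTC on $\int v_{xxx}^2 x^2\,dy$ and $\int u_{xxy}^2 x^2\,dy$, where the cross terms $\int x^2 v_{xxx}v_{xxxx}\,dy$ and $\int x^2 u_{xxy}u_{xxxy}\,dy$ are controlled by Cauchy--Schwarz against the finite quantities $\|v_{xxx}\zeta_3 x^{5/2}\|_{L^2_{xy}}$, $\|u_{xxy}\zeta_3 x^2\|_{L^2_{xy}}$ (from $Y_3$) and the fourth-order elliptic bound \eqref{rhsb}. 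The main technical obstacle throughout is the Hardy-critical weight in \eqref{evo.low}, which is precisely what the auxiliary parameter $\sigma>0$ and the $C(\sigma)$ loss are designed to circumvent.
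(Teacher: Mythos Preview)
Your proposal is correct and follows the same overall scheme as the paper: differentiate weighted $L^2_y$ quantities in $x$, integrate from an appropriate starting point, and control the cross term by Cauchy--Schwarz against the $X_1, Y_2, Y_3$ norms. Two tactical differences are worth noting. For \eqref{evo.low} you use a direct pointwise Cauchy--Schwarz on $u=\int_1^x u_t\,dt$, whereas the paper applies FTC to $\int u^2 x^{-2\sigma}\,dy$ and then invokes the $L^2$ Hardy inequality with weight $x^{-\frac{1}{2}-\sigma}$ (subcritical since $\sigma>0$); both yield the same $C(\sigma)\sim 1/\sigma$ loss. For the terms $\sqrt{\eps}v_x$, $u_{xy}$, $v_{xy}$, $\sqrt{\eps}v_{xx}$ lacking vanishing data at $x=1$, you select a good starting point $x_0\in[2,20]$ by Chebyshev, while the paper introduces auxiliary cutoffs $\eta_k$ (supported between $\zeta_3$ and $\rho_k$) so that the FTC can be started at $x=1$ with a vanishing integrand; both devices achieve the same end, and your observation that $\zeta_3\equiv 1$ on $[2,\infty)$ is exactly what makes either approach compatible with the $Y_3$ cutoff in the cross terms.
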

\begin{remark} The estimate (\ref{three}) is required in order to rigorously justify one integration by parts in our energy estimates, in particular calculation (\ref{formal.three}), and is not required as part of the norm $Z$, which is why we do not characterize the right-hand side. The most important parts of this lemma are the first three estimates, (\ref{evo.low}) - (\ref{evo.high}). 
\end{remark}
\begin{proof}

First, take a differentiation of: 
\begin{align}
\partial_x \int u^2 x^{-2\sigma} \ud y = 2\int uu_x x^{-2\sigma} \ud y -2\sigma \int u^2 x^{-1-2\sigma} \ud y,
\end{align}

which upon an integration in $x$, and recalling $u(1,y) = 0$, yields: 
\begin{align}
\int u^2 x_1^{-2\sigma} \ud y = 2\int_1^{x_1} \int uu_x x^{-2\sigma} \ud x \ud y - 2\sigma \int_1^{x_1} \int u^2 x^{-1-2\sigma} \ud x \ud y.
\end{align}

Taking absolute values, applying Holder's inequality, and taking the supremum in $x_1$:
\begin{align} \label{crit.H.1}
\sup_{x \ge 1} \int u^2 x^{-2\sigma} \lesssim ||ux^{-\sigma - \frac{1}{2}}||_{L^2} ||u_x x^{\frac{1}{2}}||_{L^2} + ||ux^{-\frac{1}{2}-\sigma}||_{L^2}^2 \lesssim ||u_x x^{\frac{1}{2}}||_{L^2}^2 \lesssim ||u,v||_{X_1}^2. 
\end{align}

Here the factor of $\sigma > 0$ is required to avert the critical Hardy inequality, which occurs with weight $x^{-\frac{1}{2}}$ in $L^2$. The stream function estimate follows similarly: 
\begin{align} \nonumber
\partial_x \int \psi^2 x^{-2-\sigma} &= 2\int \psi v x^{-2-\sigma} -(2-\sigma) \int \psi^2 x^{-3-\sigma},
\end{align}

and so integration gives: 
\begin{align} \nonumber
\sup_{x \ge 1} \int \psi^2 x^{-2-\sigma} &\le ||\psi v x^{-2-\sigma}||_{L^1} + ||\psi x^{-\frac{3}{2}-\frac{\sigma}{2}}||_{L^2}^2 \lesssim ||vx^{-\frac{1}{2}-\frac{\sigma}{2}}||_{L^2}^2 \lesssim ||v_x x^{\frac{1}{2}-\frac{\sigma}{2}}||_{L^2}^2 \\
& \lesssim ||u,v||_{X_1}^2. 
\end{align}

The estimate for $v$ in (\ref{evo.low}) works in a similar fashion. We will now move to first-order estimates in (\ref{evo.mid}). The $u_y$ estimate follows easily after a differentiation: 
\begin{align}
\partial_x \int u_y^2 x = 2\int u_y u_{xy} x + \int u_y^2. 
\end{align}

Taking an integration in $x$, and recalling that $u_y(1,y) = 0$, then gives: 
\begin{align}
\sup_{x \ge 1} \int u_y^2 x \lesssim ||u_y||_{L^2} ||u_{xy} x||_{L^2} + ||u_y||_{L^2}^2 \lesssim ||u,v||_{X_1 \cap Y_2}^2.  
\end{align}

Next, 
\begin{align}
\partial_x \int u_x^2 x^2 = 2\int u_x u_{xx} x^2 + 2\int u_x^2 x, 
\end{align}
so taking an $x$-integration and using that $u_x(1,y) = 0$, we have: 
\begin{align} \nonumber
\sup_{x \ge 1} \int u_x^2 x^2 &\lesssim ||u_x x^{\frac{1}{2}}||_{L^2} ||u_{xx} x^{\frac{3}{2}}||_{L^2} + ||u_x x^{\frac{1}{2}}||_{L^2}^2 \lesssim  ||u||_{X_1 \cap Y_2}^2.  
\end{align}

Let us now introduce new cut-off functions, for $k = 2,3$: 
\begin{align}
 \eta_k(x) &= \begin{cases}
0 \text{ for } 1 \le x \le 3 + 6(k-2),\\
1 \text{ for } x \ge 6 + 6(k-2).
\end{cases} \label{eta}
\end{align}

Then $\eta_{2}, \eta_3$ are ``in-between" $\zeta_3$ and $\rho_{2}, \rho_3$. Consider now the quantity $\int \epsilon v_x^2 x^2 \eta_2(x)$:
\begin{align}
\partial_x \int \epsilon v_x^2 x^2 \eta_2(x) = 2 \int \epsilon v_x^2 x \eta_2(x) + 2\int \eps v_x  v_{xx} x^2 \eta_2 + \int \eps v_x^2 x^2 \eta_2'(x). 
\end{align}

As $\eta_2$ vanishes on $[1,3]$, we can take the integration up from $x = 1$: 
\begin{align}
\sup_{x \ge 20} \int \eps v_x^2 x^2 \le \sup_{x \ge 1} \int \epsilon v_x^2 x^2 \eta_2(x) \lesssim ||\sqrt{\eps} v_x x^{\frac{1}{2}}||_{L^2}^2 + ||\sqrt{\eps} v_{xx} x^{\frac{3}{2}}||_{L^2}^2, 
\end{align}

where we have used that $|\eta_2'(x) x^2| \lesssim 1$. We now move to the second-order estimates in (\ref{evo.high}), starting with: 
\begin{align}
\partial_x \int u_{xy}^2 x^3 \eta_3  = 2\int u_{xy} u_{xxy} x^3 \eta_3 + \int u_{xy}^2 3x^2 \eta_3 + \int u_{xy}^2 x^3 \eta_3'. 
\end{align}

Using that $\eta_3 u_{xy}(1,y) = 0$, we have: 
\begin{align} 
\sup_{x \ge 1} \int u_{xy}^2 x^3 \eta_3 &\lesssim ||u_{xy} x||_{L^2}^2 + ||u_{xy} x||_{L^2} ||u_{xxy} x^2 \eta_3||_{L^2} \lesssim ||u,v||_{Y_2 \cap Y_3}^2.
\end{align}

Above, we have used that $\eta_3 \le \zeta_3$. The final calculation is: 
\begin{align} \label{d1}
\partial_x \int v_{xy}^2 x^{4} \eta_3 = C \int v_{xy} v_{xxy} x^{4} \eta_3 + C \int v_{xy}^2 x^3 \eta_3 + C \int v_{xy}^2 x^2 \eta_3'.
\end{align}

Integrating up from $x = 1$, 
\begin{align}
\sup_{x \ge 1} \int v_{xy}^2 x^4 \eta_3 \lesssim ||v_{xy} x^{\frac{3}{2}}||_{L^2}^2 +  ||v_{xxy} x^{\frac{5}{2}} \eta_3||_{L^2}^2 \lesssim ||u,v||_{Y_2 \cap Y_3}^2. 
\end{align}

It is clear that $\sqrt{\eps}v_{xx}$ works in an identical manner to (\ref{d1}), and also in an identical manner, one obtains (\ref{three}) by pairing with (\ref{rhsb}).

\end{proof}

We will now record the following about the $x \rightarrow \infty$ behavior of elements in $(X_1 \cap X_2 \cap X_3)(\Omega)$:
\begin{lemma} \label{L.XiBC} Suppose $[u,v] \in X_1 \cap X_2 \cap X_3(\Omega)$. Then the following boundary conditions are automatically enforced: 
\begin{align}
[u,v]|_{x = 1} = [u,v]|_{y = 0} = \lim_{x \rightarrow \infty} [u,v] = \lim_{y \rightarrow \infty} [u,v] = 0
\end{align}
\end{lemma}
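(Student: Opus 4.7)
The plan is to exploit Definition \ref{defn.X.space}: by hypothesis there exists a sequence $\{[u_n, v_n]\} \subset C^\infty_{0,D}(\Omega)$ converging to $[u,v]$ in the $X_1$ norm, with $\|u,v\|_{X_1 \cap X_2 \cap X_3} < \infty$. Each approximant is smooth with compact support in $\Omega$, hence trivially vanishes on every boundary component; the task reduces to showing that each vanishing condition passes to the limit in an appropriate (trace) sense.

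For the finite segments $\{x = 1\}$ and $\{y = 0\}$: on any bounded open set $K \Subset \overline{\Omega}$ meeting one of these two components, $X_1$-convergence combined with the divergence-free relation $u_x = -v_y$ yields $\nabla[u_n, v_n] \to \nabla[u,v]$ in $L^2(K)$. Since the approximants vanish on $\{x=1\} \cup \{y=0\}$, Poincar\'e's inequality upgrades this to $[u_n, v_n] \to [u,v]$ in $H^1(K)$, and the continuous trace operator $H^1(K) \to L^2(\partial K)$ then passes the homogeneous boundary conditions to $[u,v]|_{x=1}$ and $[u,v]|_{y=0}$.

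For the limit as $x \to \infty$: apply estimate (\ref{evo.low}) from Lemma \ref{L.Evol} to the difference $[u - u_n, v - v_n]$. Because each approximant is compactly supported, there is $L_n < \infty$ such that $u_n, v_n \equiv 0$ on $\{x \ge L_n\}$, so for $x \ge L_n$,
\[
\|u(x,\cdot) x^{-\sigma}\|_{L^2_y}^2 + \|\sqrt{\eps}\, v(x,\cdot) x^{-\sigma}\|_{L^2_y}^2 \;\lesssim\; C(\sigma)\|u - u_n\|_{X_1}^2.
\]
Letting $n \to \infty$ forces $\limsup_{x \to \infty}\|u(x,\cdot)\|_{L^2_y} = \limsup_{x \to \infty}\|v(x,\cdot)\|_{L^2_y} = 0$, yielding the stated limit in the $L^2_y$ sense.

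For the limit as $y \to \infty$: this is where the main obstacle lies, since $X_1$ controls $u_y \in L^2$ without providing $u \in L^2_y$ directly. The plan is to pass through the stream function $\psi(x,y) := \int_0^y u(x,y')\,dy'$, which satisfies $\psi_y = u$, $\psi_x = -v$, and is controlled by estimate (\ref{evo.low}), namely $\|\sqrt{\eps}\,\psi(x,\cdot)\, x^{-1-\sigma}\|_{L^2_y} \lesssim \|u,v\|_{X_1}$. Combined with the Hardy inequality $\|u/y\|_{L^2_y} \lesssim \|u_y\|_{L^2_y}$ (available because $u(x,0) = 0$, which has just been established), the identity $\partial_y \psi^2 = 2\psi u$ and Cauchy--Schwarz show that $\psi(x, \cdot)$ admits a pointwise limit at $y = \infty$, and square-integrability forces this limit to be zero. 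The same density argument as in the $x \to \infty$ step, applied to $u/y$ and $v/y$ in Hardy-weighted $L^2$, then transmits the vanishing behavior of the compactly supported approximants to $[u,v]$, yielding $\lim_{y \to \infty}[u,v] = 0$ in the appropriate averaged sense.
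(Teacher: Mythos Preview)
Your treatment of $\{x=1\}$ and $\{y=0\}$ via density plus the trace operator is correct and matches the paper.

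The argument at $x\to\infty$ has a real gap. From (\ref{evo.low}) applied to the difference you get, for $x\ge L_n$,
\[
x^{-\sigma}\|u(x,\cdot)\|_{L^2_y}\;\lesssim\; C(\sigma)\,\|u-u_n\|_{X_1},
\]
and sending $n\to\infty$ gives only $\limsup_{x\to\infty}\, x^{-\sigma}\|u(x,\cdot)\|_{L^2_y}=0$, i.e.\ $\|u(x,\cdot)\|_{L^2_y}=o(x^{\sigma})$. This does \emph{not} imply $\|u(x,\cdot)\|_{L^2_y}\to 0$: you cannot exchange the limits $n\to\infty$ and $x\to\infty$ since $L_n\to\infty$, and the factor $x^{\sigma}$ on the right diverges. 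The $X_1$ norm alone is too weak here, which is exactly why the hypothesis includes $X_2\cap X_3$. The paper instead brings in the higher-order norm through the bound $\sup_x x^{1/2}\|u_y(x,\cdot)\|_{L^2_y}<\infty$ (a variant of (\ref{evo.mid}), available from $X_1\cap X_2$ once one is on the region where $\rho_2\equiv 1$) and interpolates:
\[
\|u(x,\cdot)\|_{L^\infty_y}\;\le\;\|u(x,\cdot)\|_{L^2_y}^{1/2}\,\|u_y(x,\cdot)\|_{L^2_y}^{1/2}\;\lesssim\; x^{\sigma/2-1/4}\to 0,
\]
and similarly for $v$.

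The $y\to\infty$ argument is also incomplete. Controlling $u/y$ in $L^2_y$ via Hardy and then appealing to density only gives $L^2$-closeness of $u/y$ to the approximants; this says nothing about the pointwise behaviour of $u$ at $y=\infty$ (for instance $u\equiv 1$ has $u/y\in L^2(1,\infty)$ yet does not decay). The paper's route, inherited from Lemma~\ref{Lemma.BCZ}, is short: on any strip $\{1\le x\le A\}$ one has $[u,v]\in H^1$ (Poincar\'e in $x$ for the $L^2$ part, $X_1$ for the gradient), and the one-dimensional embedding $H^1_y(\mathbb{R}_+)\hookrightarrow C_0(\mathbb{R}_+)$ then forces $[u,v](x,y)\to 0$ as $y\to\infty$ for a.e.\ $x$.
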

\begin{proof}
All follow as in Lemma \ref{Lemma.BCZ} aside from the condition at $x \rightarrow \infty$. For this, we use:
\begin{align} \label{too.weak.1}
||ux^{\frac{1}{4}-\sigma}||_{L^\infty_y} \le ||u x^{-2\sigma} ||_{L^2_y}^{\frac{1}{2}}||u_y x^{\frac{1}{2}}||_{L^2_y}^{\frac{1}{2}} < \infty, \\ \label{too.weak.2}
||vx^{\frac{1}{2}-\sigma}||_{L^\infty_y} \le ||v x^{-2\sigma} ||_{L^2_y}^{\frac{1}{2}}||v_y x ||_{L^2_y}^{\frac{1}{2}} < \infty,
\end{align}

according to the estimates in (\ref{evo.low}). Thus, we can conclude that $[u,v] \rightarrow 0$ as $x \rightarrow \infty$. 

\end{proof}

The estimates (\ref{too.weak.1}) - (\ref{too.weak.2}) yield uniform decay of $[u,v]$ as $x \rightarrow \infty$. However, the factor of $x^{-\sigma}$ is too weak to close our nonlinear analysis (see estimate (\ref{exact.NL})). This in turn is caused by Hardy's inequality being critical (see (\ref{crit.H.1})). In the next lemma, we use higher-order decay estimates to avoid this criticality: 

\begin{lemma}[Uniform Embeddings] \label{Lemma.UIMP} For $[u,v] \in X_1 \cap X_2 \cap X_3(\Omega)$, we have: 
\begin{align} \label{u0}
&\sup_{x \ge 1} || u x^{\frac{1}{4}}, \sqrt{\eps} v x^{\frac{1}{2}}||_{L^\infty_y} \lesssim ||u,v||_{X_1 \cap Y_2 \cap Y_3} , \\ \label{Uf}
& \sup_{x \ge 20} ||u_x x^{\frac{5}{4}}, \sqrt{\eps} v_x x^{\frac{3}{2}}||_{L^\infty_y} \lesssim ||u,v||_{X_1 \cap Y_2 \cap Y_3}.  
\end{align}
\end{lemma}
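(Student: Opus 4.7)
\textbf{Proof plan for Lemma \ref{Lemma.UIMP}.} My strategy is the one advertised in the discussion around (\ref{below.1}): rather than directly embedding the low-order quantities (which would cost a factor of $x^\sigma$ through the critical Hardy inequality \eqref{crit.H.1}), I establish the sharper decay at the level of $x$-derivatives and then integrate inward from $x = +\infty$. So I would prove \eqref{Uf} first and deduce \eqref{u0} from it, together with a separate short argument on the bounded strip $1 \le x \le 20$.

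\textbf{Step 1 (uniform bounds for $u_x$, $\sqrt{\eps}v_x$ when $x \ge 20$).} The one-dimensional embedding $\|f\|_{L^\infty_y}^2 \le 2\|f\|_{L^2_y}\|f_y\|_{L^2_y}$ (valid since the profiles decay as $y\to\infty$, by the definition of $Z$ via $C^\infty_{0,D}$-closure) combined with the $y$-trace estimates of Lemma~\ref{L.Evol} gives
\[
\|\sqrt{\eps}v_x x^{3/2}\|_{L^\infty_y}^2 \le 2\bigl(x^{1/2}\|\sqrt{\eps}v_x x\|_{L^2_y}\bigr)\bigl(x^{-1/2}\|\sqrt{\eps}v_{xy}x^2\|_{L^2_y}\bigr)\lesssim \sqrt{\eps}\,\|u,v\|_{X_1\cap Y_2\cap Y_3}^2,
\]
using \eqref{evo.mid} for the first factor and \eqref{evo.high} for the second. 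The identical Sobolev splitting applied to $u_x x^{5/4}$, balancing $\|u_x x\|_{L^2_y}$ from \eqref{evo.mid} against $\|u_{xy}x^{3/2}\|_{L^2_y}$ from \eqref{evo.high}, yields $\|u_x x^{5/4}\|_{L^\infty_y}\lesssim \|u,v\|_{X_1\cap Y_2\cap Y_3}$. This proves \eqref{Uf}.

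\textbf{Step 2 (recovering \eqref{u0} on $x \ge 20$).} By Lemma~\ref{L.XiBC}, elements of $X_1\cap X_2\cap X_3$ decay as $x\to\infty$, so I may write $u(x,y) = -\int_x^\infty u_x(s,y)\,ds$ and likewise for $v$. Step~1 then gives
\[
|\sqrt{\eps}v(x,y)|\le \int_x^\infty |\sqrt{\eps}v_x(s,y)|\,ds \lesssim \|u,v\|_{X_1\cap Y_2\cap Y_3}\int_x^\infty s^{-3/2}\,ds \lesssim x^{-1/2}\|u,v\|_{X_1\cap Y_2\cap Y_3},
\]
and similarly $|u(x,y)|\lesssim x^{-1/4}\|u,v\|_{X_1\cap Y_2\cap Y_3}$. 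This establishes \eqref{u0} for $x\ge 20$.

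\textbf{Step 3 (bounded strip $1\le x\le 20$).} Here $x^{1/4}$ and $x^{1/2}$ are comparable to unity, so it suffices to bound $\|u\|_{L^\infty_y}$ and $\sqrt{\eps}\|v\|_{L^\infty_y}$. Applying the same Sobolev embedding, for $u$ I pair $\|u\|_{L^2_y}$ (controlled via \eqref{evo.low} with $\sigma$ any fixed small number, since $x^{-\sigma}\sim 1$) with $\|u_y\|_{L^2_y}$ (bounded by \eqref{evo.mid}). For $v$, I use the divergence-free identity $v_y = -u_x$ together with \eqref{evo.mid} to bound $\|v_y\|_{L^2_y}$, and \eqref{evo.low} to bound $\sqrt{\eps}\|v\|_{L^2_y}$, giving $\sqrt{\eps}\|v\|_{L^\infty_y}\lesssim \eps^{1/4}\|u,v\|_{X_1\cap Y_2}\lesssim \|u,v\|_{X_1\cap Y_2}$.

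\textbf{Main obstacle.} The principal subtlety, as emphasized in the introduction, is the criticality of Hardy's inequality at the level of the base norm $X_1$: the naive chain $\|vx^{1/2}\|_{L^\infty_y}^2\le \|vx^{-\sigma}\|_{L^2_y}\|v_y x\|_{L^2_y}$ would leak a factor of $x^{\sigma}$ that is not affordable when one later controls the convective term $u^0_{py} v$ against the energy norm. Sidestepping this requires \emph{exactly} the higher-order detour through $Y_2\cap Y_3$ above, together with the decay-at-infinity property furnished by membership in $X_1\cap X_2\cap X_3$. Once the sharp rates $u_x\sim x^{-5/4}$, $v_x\sim x^{-3/2}$ are in hand, the integration step is forced by the boundary behavior and yields the borderline exponents $x^{-1/4}$, $x^{-1/2}$ without loss.
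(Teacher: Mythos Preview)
Your proof is correct and follows essentially the same route as the paper: first obtain \eqref{Uf} via the one-dimensional Sobolev interpolation $\|f\|_{L^\infty_y}^2 \lesssim \|f\|_{L^2_y}\|f_y\|_{L^2_y}$ combined with the $\sup_x$ estimates of Lemma~\ref{L.Evol}, then integrate from $x=+\infty$ using the decay supplied by Lemma~\ref{L.XiBC} to recover \eqref{u0} for $x \ge 20$.

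The one point of genuine difference is your Step~3. For the bounded strip $1 \le x \le 20$ the paper simply invokes the elliptic $L^\infty$ bound \eqref{boundary.controls} from Lemma~\ref{LemmaBC}, which relies on $[u,v]$ solving the Stokes-type system. Your argument instead stays purely at the level of the function-space embedding, pairing \eqref{evo.low} with \eqref{evo.mid} through the same $H^1_y \hookrightarrow L^\infty_y$ interpolation. This is a cleaner choice here: the statement of the lemma assumes only $[u,v]\in X_1\cap X_2\cap X_3(\Omega)$ and makes no reference to the equation, so your route keeps the lemma self-contained as an embedding result, whereas the paper's shortcut tacitly imports PDE information that, strictly speaking, is not part of the hypothesis. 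Both approaches give the desired bound (and in fact your Step~3 picks up a spare factor of $\eps^{1/4}$ on the $v$ term), but yours is the more faithful reading of the lemma as stated.
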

\begin{proof}

We will first turn to the first-order estimates in (\ref{Uf}). These follow from the evolution estimates in the previous lemma via standard Sobolev interpolation: 
\begin{align}
||u_x x^{\frac{5}{4}} ||_{L^\infty_y} \le ||u_x x||_{L^2_y}^{\frac{1}{2}} ||u_{xy} x^{\frac{3}{2}} ||_{L^2_y}^{\frac{1}{2}}.
\end{align}

Similarly, 
\begin{align}
||v_x x^{\frac{3}{2}}||_{L^\infty_y} \le ||v_x x||_{L^2_y}^{\frac{1}{2}} ||v_{xy} x^{2}||_{L^2_y}^{\frac{1}{2}}. 
\end{align}

The result now follows after taking the supremum in $x \ge 20$ and appealing to the evolution estimates above. Next, we address (\ref{u0}). For $x \le 20$, we may simply appeal to the uniform estimates in estimate (\ref{boundary.controls}). For $v$, the result follows immediately from: 
\begin{align}
\Big| v(x,y)\Big| = \Big|\int_x^\infty v_x(x',y) dx' \Big| \le ||v_x x^{\frac{3}{2}}||_{L^\infty_y} \int_x^\infty (x')^{-\frac{3}{2}} dx' = \Big[ \sup_{x \ge 20} ||v_x x^{\frac{3}{2}}||_{L^\infty_y} \Big] x^{-\frac{1}{2}}. 
\end{align}

We have used the qualitative boundary condition that $v \rightarrow 0$ as $x \rightarrow \infty$, which is available due to Lemma \ref{L.XiBC}. For the $u$ estimate follows in the same manner: 
\begin{align}
\Big| u(x,y) \Big| \le ||u_x x^{\frac{5}{4}}||_{L^\infty_y} \int_x^\infty (x')^{-\frac{5}{4}}dx' \lesssim  \Big[ \sup_{x \ge 20} ||u_x x^{\frac{5}{4}}||_{L^\infty_y} \Big] x^{-\frac{1}{4}}.
\end{align}

Now taking the sup in $x,y$ of both of the above inequalities yields the desired result. 

\end{proof}

We now turn to controlling the top-order uniform-type norm:
\begin{lemma}[Top Order Uniform Embedding] For $[u,v] \in X_1 \cap Y_2 \cap Y_3$, one has the following mixed-norm estimate:
\begin{align} \label{unitop}
 \int_{20}^\infty x^4 ||\sqrt{\eps} v_{xx}||_{L^\infty_y}^2 dx + \int_{20}^\infty x^{\frac{7}{2}} ||u_{xx}||_{L^\infty_y}^2 dx \lesssim ||u,v||_{X_1 \cap Y_2 \cap Y_3}^2. 
\end{align}
\end{lemma}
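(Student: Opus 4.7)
The plan is to reduce each mixed-norm quantity to a Cauchy-Schwarz pairing of two $L^2_{xy}$-type quantities already controlled by $Y_2$ and $Y_3$. The basic device is the one-dimensional Sobolev interpolation in $y$: for any smooth $f$ vanishing at $y=\infty$ (or at $y=N$ in the truncated domain $\Omega^N$), integration from above gives
\begin{equation*}
\|f\|_{L^\infty_y}^{2}\;\le\;2\,\|f\|_{L^2_y}\,\|f_y\|_{L^2_y}.
\end{equation*}
Since $[u,v]\in X_1\cap Y_2\cap Y_3$ decays as $y\uparrow\infty$ (equivalently vanishes at $y=N$), this is legitimate when applied to $\sqrt{\eps}v_{xx}$ and to $u_{xx}$ after differentiating the boundary condition.

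For the $\sqrt{\eps}v_{xx}$ term I would split the weight $x^{4}=x^{3/2}\cdot x^{5/2}$ so that, after applying the Sobolev inequality pointwise in $x$ and then Cauchy-Schwarz in $x$,
\begin{equation*}
\int_{20}^\infty x^4\|\sqrt{\eps}v_{xx}\|_{L^\infty_y}^2 dx \;\lesssim\; \Bigl(\int_{20}^\infty x^3\|\sqrt{\eps}v_{xx}\|_{L^2_y}^2 dx\Bigr)^{1/2}\!\Bigl(\int_{20}^\infty x^5\|\sqrt{\eps}v_{xxy}\|_{L^2_y}^2 dx\Bigr)^{1/2}.
\end{equation*}
The first factor is exactly $\|\sqrt{\eps}v_{xx}\,x^{3/2}\|_{L^2}^2$, which appears in $\|u,v\|_{Y_2}^2$. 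For the second factor, since $x\ge 20$ lies well inside the support of $\zeta_3$, the integral equals $\eps\int_{20}^\infty x^{5}\|v_{xxy}\|_{L^2_y}^2 dx\le \eps\,\|v_{xxy}\,\zeta_3 x^{5/2}\|_{L^2}^2$, which is bounded by $\eps\,\|u,v\|_{Y_3}^2$. Combining yields the desired estimate (with an extra $\sqrt{\eps}$ saving that is not needed here).

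For the $u_{xx}$ piece the key observation is the divergence-free identity $u_{xx}=-v_{xy}$, which trades the missing $u_{xx}$-control for the $v_{xy}$-control contained in $Y_2$. Splitting $x^{7/2}=x^{3/2}\cdot x^{2}$ and applying Sobolev followed by Cauchy-Schwarz in $x$ gives
\begin{equation*}
\int_{20}^\infty x^{7/2}\|u_{xx}\|_{L^\infty_y}^2 dx \;\lesssim\; \Bigl(\int_{20}^\infty x^3\|v_{xy}\|_{L^2_y}^2 dx\Bigr)^{1/2}\!\Bigl(\int_{20}^\infty x^4\|u_{xxy}\|_{L^2_y}^2 dx\Bigr)^{1/2}.
\end{equation*}
The first factor is controlled by $\|v_{xy}\,x^{3/2}\|_{L^2}^2\lesssim\|u,v\|_{Y_2}^2$, and the second is precisely $\|u_{xxy}\,\zeta_3 x^{2}\|_{L^2(x\ge 20)}^2\lesssim \|u,v\|_{Y_3}^2$, again because $\zeta_3\equiv 1$ on $[20,\infty)$. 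Assembling the two parts gives the claim.

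I do not anticipate a serious obstacle: the only subtlety is the bookkeeping of weights (making sure the exponents in Cauchy-Schwarz split as $3/2+5/2=4$ for $v_{xx}$ and as $3/2+2=7/2$ for $u_{xx}$) and the verification that the cut-off $\zeta_3$ is identically one on the region of integration, together with the use of $u_{xx}=-v_{xy}$ to access the $v_{xy}$-control in $Y_2$ rather than needing a (nonexistent) direct bound on $u_{xx}$ in $Y_2$.
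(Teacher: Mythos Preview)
Your proposal is correct and follows essentially the same route as the paper: apply the one-dimensional Sobolev interpolation $\|f\|_{L^\infty_y}^2 \le 2\|f\|_{L^2_y}\|f_y\|_{L^2_y}$ with the weight splittings $x^4 = x^{3/2}\cdot x^{5/2}$ and $x^{7/2} = x^{3/2}\cdot x^{2}$, then pair the resulting $L^2_y$ factors against the $Y_2$ and $Y_3$ norms (using $\zeta_3\equiv 1$ on $x\ge 20$). The only cosmetic difference is that the paper squares and applies Young's inequality pointwise in $x$ before integrating, whereas you integrate first and apply Cauchy--Schwarz in $x$; the paper also leaves the identification $u_{xx}=-v_{xy}$ implicit while you state it explicitly.
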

\begin{proof}
For $x \ge 20$, we start with:
\begin{align} \nonumber
x^2 ||\sqrt{\eps}v_{xx}||_{L^\infty_y} &\le ||x^{\frac{3}{2}}\sqrt{\eps} v_{xx}||_{L^2_y}^{\frac{1}{2}} ||x^{\frac{5}{2}} \sqrt{\eps} v_{xxy}||_{L^2_y}^{\frac{1}{2}}. 
\end{align}

Taking square on both sides: 
\begin{align}
x^4 ||\sqrt{\eps} v_x||_{L^\infty_y}^2 \le ||x^{\frac{3}{2}}\sqrt{\eps} v_{xx}||_{L^2_y}^2 + ||x^{\frac{5}{2}} \sqrt{\eps} v_{xxy}||_{L^2_y}^2,
\end{align}

so integrating
\begin{align}
 \int_{20}^\infty x^4 ||\sqrt{\eps} v_{xx}||_{L^\infty_y}^2 dx \le ||u,v||_{X_1 \cap Y_2 \cap Y_3}^2. 
\end{align}

For the $u_{xx}$ estimate, we have: 
\begin{align}
x^{\frac{7}{4}} ||u_{xx}||_{L^\infty_y} \le ||x^{\frac{3}{2}} u_{xx}||_{L^2_y}^{\frac{1}{2}} ||x^{2}u_{xxy}||_{L^2_y}^{\frac{1}{2}},
\end{align}

Squaring both sides and taking an $x$-integration gives: 
\begin{align} \nonumber
\int_{20}^\infty x^{\frac{7}{2}}||u_{xx}||_{L^\infty_y}^2 &\le \int_{20}^\infty ||x^{\frac{3}{2}}u_{xx}||_{L^2_y}^2 + \int_{20}^\infty ||x^2 u_{xxy}||_{L^2_y}^2 \\ 
& \lesssim ||u,v||_{X_1 \cap Y_2 \cap Y_3}^2 .
\end{align}

\end{proof}

We shall make the following selections, given arbitrary constants $M_2, M_3 \ge 0$:
\begin{align} \label{sel.N2}
&N_2 \text{ is selected such that } N_2 - M_2 = 100; \\  \label{sel.N3}
&N_3 \text{ is selected such that } N_3 - M_3 = 2N_2; \\  \label{sel.N4}
& N_k = N_3 \text{ for } k = 4,5,6,7, \\ \label{sel.n}
& n \text{ sufficiently large relative to } N_i, M_i .
\end{align}

We now consolidate the above series of lemmas, coupled with (\ref{Y1em}) and (\ref{Y3em}). 
\begin{corollary} With $N_4,...,N_7$ as in (\ref{sel.N4}), we have:
\begin{align} \label{Uemb}
||u,v||_{\mathcal{U}} \lesssim \eps^{\max\{N_2,N_3\}} ||u,v||_{X_1 \cap Y_2 \cap Y_3}.
\end{align}
\end{corollary}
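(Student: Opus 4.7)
The plan is a straightforward consolidation of the embedding lemmas already established in Subsection \ref{subsection.EM}. By the selection (\ref{sel.N4}), all of $N_4, N_5, N_6, N_7$ equal $N_3$, and by (\ref{sel.N3}) with $M_3 \ge 0$ we have $N_3 \ge 2N_2 \ge N_2$, so $\max\{N_2, N_3\} = N_3$. It thus suffices to factor $\eps^{N_3}$ out of $\|u,v\|_{\mathcal{U}}$ and then control each of the four resulting summands, term by term, by $\|u,v\|_{X_1 \cap Y_2 \cap Y_3}$ with a constant independent of $\eps$, $\delta$, and $n$.

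I would proceed as follows. The $N_4$-summand $\|u x^{1/4}, \sqrt{\eps} v x^{1/2}\|_{L^\infty}$ is bounded via (\ref{u0}) of Lemma \ref{Lemma.UIMP}. The $N_5$-summand $\sup_{x \ge 20}\|\sqrt{\eps} v_x x^{3/2}, u_x x^{5/4}\|_{L^\infty}$ is bounded via (\ref{Uf}) of the same lemma. The $N_6$-summand $\sup_{x \ge 20}\|u_y x^{1/2}\|_{L^2_y}$ is the first quantity in (\ref{evo.mid}) of Lemma \ref{L.Evol}. Finally, the $N_7$-summand $\bigl[\int_{20}^\infty x^4 \|\sqrt{\eps} v_{xx}\|_{L^\infty_y}^2 \,\ud x\bigr]^{1/2}$ is exactly estimate (\ref{unitop}).

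Summing the four bounds and pulling out the common prefactor $\eps^{N_3}$ gives
\[
\|u,v\|_{\mathcal{U}} \lesssim \eps^{N_3}\|u,v\|_{X_1 \cap Y_2 \cap Y_3} = \eps^{\max\{N_2,N_3\}}\|u,v\|_{X_1 \cap Y_2 \cap Y_3},
\]
as claimed. There is no real obstacle: all of the analytic work, in particular the vertical Sobolev interpolation underlying (\ref{u0}), (\ref{Uf}), and (\ref{unitop}), together with the $\partial_x$-integration arguments used in (\ref{evo.low})--(\ref{evo.high}), has already been carried out in the preceding lemmas. The only point worth flagging is that the right-hand side involves $X_1 \cap Y_2 \cap Y_3$ rather than $X_1 \cap X_2 \cap X_3$, so the elliptic upgrades (\ref{Y1em})--(\ref{Y3em}) are not invoked at this stage; the corollary is therefore purely an algebraic repackaging of estimates already available, and the $\eps$-powers $N_4,\ldots,N_7$ were chosen in (\ref{sel.N4}) precisely so that a single prefactor $\eps^{N_3}$ suffices.
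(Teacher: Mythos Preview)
Your proposal is correct and matches the paper's approach exactly: the paper states this corollary without proof, relying implicitly on the same consolidation of Lemmas \ref{L.Evol}, \ref{Lemma.UIMP}, and estimate (\ref{unitop}) together with the parameter choices (\ref{sel.N2})--(\ref{sel.N4}). Your observation that $N_3 = 2N_2 + M_3 \ge N_2$ so that $\max\{N_2,N_3\} = N_3 = N_4 = \cdots = N_7$ is precisely the bookkeeping needed, and there is nothing more to it.
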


\begin{theorem}[$Z$ embedding] \label{thm.z} Suppose $||\bar{u}, \bar{v}||_Z \le 1$. For appropriate choices of $N_2, ... N_7$, based only on universal constants, there exists a universal constant $\omega(N_i)$ such that: 
\begin{align} \label{Z.driver}
||u,v||_Z &\lesssim  \epsilon^{100} + ||u,v||_{X_1 \cap X_2 \cap X_3} + \eps^{\frac{n}{2}+\gamma - \omega(N_i)}||\bar{u}, \bar{v}||_Z^2. 
\end{align}
\end{theorem}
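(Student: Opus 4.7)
The plan is to decompose $\|u,v\|_Z$ into its four components and estimate each by appealing to the previously established elliptic bounds (\ref{Y1em}), (\ref{Y3em}) and the uniform embedding (\ref{Uemb}), making sure that all coupling terms either absorb back into the left-hand side through a small power of $\eps$, or reduce to one of the three terms on the right-hand side of (\ref{Z.driver}). Concretely, writing
\begin{align*}
\|u,v\|_Z = \|u,v\|_{X_1 \cap X_2 \cap X_3} + \eps^{N_2}\|u,v\|_{Y_2} + \eps^{N_3}\|u,v\|_{Y_3} + \|u,v\|_{\mathcal{U}},
\end{align*}
the first term is already present on the right-hand side of (\ref{Z.driver}), so the real work lies in the remaining three.

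For the $Y_2$ piece, the selection $N_2 - M_2 = 100$ from (\ref{sel.N2}) converts the standalone constant in (\ref{Y1em}) into $\eps^{100}$, and turns the linear cross term $\eps^{N_2-M_2}\|u,v\|_{X_1}$ into $\eps^{100}\|u,v\|_{X_1}$, which absorbs into $\|u,v\|_{X_1 \cap X_2 \cap X_3}$. The contribution $\eps^{N_2}\|u,v\|_{X_2}$ is itself part of the $X_2$-component of $Z$. The two remaining pieces both acquire a positive net power $\eps^{n/2+\gamma}$ times some power of $\eps^{-N_i}$, and since (\ref{sel.n}) takes $n$ large relative to the $N_i$, they contribute the admissible form $\eps^{n/2+\gamma - \omega(N_i)}\|\bar u,\bar v\|_Z^2$ (using $\|\bar u,\bar v\|_Z \le 1$ to bound the linear cross term by the quadratic one). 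For the $Y_3$ piece, (\ref{Y3em}) is applied in the same spirit: the choice $N_3 - M_3 = 2N_2$ in (\ref{sel.N3}) gives the constant term $\eps^{2N_2}$ and makes $\eps^{N_3 - M_3}\|u,v\|_{Y_2} = \eps^{N_2}\cdot\eps^{N_2}\|u,v\|_{Y_2}$, which carries an extra $\eps^{N_2}$ relative to the $Y_2$-component of $Z$ already controlled in the previous step and thus absorbs. The $\eps^{N_3}\|u,v\|_{X_3}$ piece is exactly the $X_3$-component of the energy norm, and the two remaining terms again come with a net positive power $\eps^{n/2+\gamma}$ by (\ref{sel.n}).

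For the uniform piece $\|u,v\|_{\mathcal{U}}$, the plan is to invoke (\ref{Uemb}) directly: $\|u,v\|_{\mathcal{U}} \lesssim \eps^{\max\{N_2,N_3\}}\|u,v\|_{X_1 \cap Y_2 \cap Y_3}$. Splitting this right-hand side and pulling out one factor of $\eps$ lets us reuse the bounds already obtained on $\eps^{N_2}\|u,v\|_{Y_2}$ and $\eps^{N_3}\|u,v\|_{Y_3}$; since $N_3 > N_2$ and $\|u,v\|_{X_1}$ sits inside $\|u,v\|_{X_1 \cap X_2 \cap X_3}$, each summand gains an extra positive $\eps$-power after the substitution and therefore absorbs. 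The main obstacle is not any single estimate but the bookkeeping of absorption: one must verify that after the selections (\ref{sel.N2})--(\ref{sel.N4}), every cross-coupling between $X_i$, $Y_j$, and $\mathcal{U}$ carries a strictly positive surplus of $\eps$, and that with $n$ large relative to all the $N_i$ the exponent $\frac{n}{2}+\gamma - \omega(N_i)$ is positive, so that quadratic $\bar{u},\bar{v}$ contributions truly land as advertised in (\ref{Z.driver}). Once this bookkeeping is carried out, collecting the three steps above yields the asserted bound with $\omega(N_i)$ determined explicitly from the maximum of the exponents that appear in (\ref{Y1em}) and (\ref{Y3em}).
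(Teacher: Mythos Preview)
Your proposal is correct and follows essentially the same route as the paper: reduce $\mathcal{U}$ to $Y_2,Y_3$ via (\ref{Uemb}), then feed the selections (\ref{sel.N2})--(\ref{sel.n}) into (\ref{Y1em}) and (\ref{Y3em}) so that every term lands as $\eps^{100}$, $\|u,v\|_{X_1\cap X_2\cap X_3}$, or $\eps^{n/2+\gamma-\omega(N_i)}\|\bar u,\bar v\|_Z^2$. One small imprecision: the ``linear cross term'' $\eps^{\frac{n}{2}+\gamma+N_2-M_2-N_4}\|u,v\|_{X_1}$ in (\ref{Y1em}) involves $\|u,v\|$, not $\|\bar u,\bar v\|$, so the parenthetical about using $\|\bar u,\bar v\|_Z\le 1$ is misplaced --- but this term simply absorbs into $\|u,v\|_{X_1\cap X_2\cap X_3}$ with a surplus $\eps$-power, so the argument is unaffected.
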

\begin{proof}

According to (\ref{Uemb}), it suffices to treat the $Y_2, Y_3$ terms in $||\cdot||_Z$. For this we simply use the selections in (\ref{sel.N2}) - (\ref{sel.n}) to rewrite (\ref{Y3em})
\begin{align}
\eps^{N_3} ||u,v||_{Y_3} \lesssim \eps^{100} + \eps^{100} ||u,v||_{X_1 \cap X_3} + \eps^{N_2}||u,v||_{Y_2} + \eps^{\frac{n}{2}+\gamma - \omega(N_i)} ||\bar{u}, \bar{v}||_Z^2. 
\end{align} 

The condition (\ref{sel.n}) allows us to control, in estimate (\ref{Y3em}):  
\begin{equation}
\eps^{\frac{n}{2}+\gamma - N_2 - N_4 - M_3 + N_3} ||u,v||_{X_1 \cap Y_2} \le \eps^{N_2 + 100} ||u,v||_{X_1 \cap Y_2},
\end{equation}

for instance, so long as: $\frac{n}{2}+\gamma - N_2 - N_4 - M_3 + N_3 > N_2 + 100$. There are many such criteria which arise over the course of our analysis, and so we retain the generality as stated in (\ref{sel.n}). Next, rewriting (\ref{Y1em}) in a similar fashion, one has: 
\begin{align}
\eps^{N_2} ||u,v||_{Y_2} \lesssim \eps^{100} + \eps^{100} ||u,v||_{X_1 \cap X_2} + \eps^{\frac{n}{2}+\gamma - \omega(N_i)} ||\bar{u}, \bar{v}||_Z^2. 
\end{align}

Summing these two estimates yields the desired result. 

\end{proof}

\subsection{Function Space, $Z(\Omega^N)$} \label{appendix}

We will have occasion to consider, $Z(\Omega^N)$, where $\Omega^N$ is defined in (\ref{omegaN}), and  $N$ is some large but finite, fixed number. Due to the boundedness in the $y$-direction of $\Omega^N$: 

\begin{lemma} For $[u,v] \in Z(\Omega^N)$, 
\begin{align} \label{ZN1}
||\psi||_{L^2(\Omega^N)} + ||\{\sqrt{\epsilon}v, v_y \} x||_{L^2(\Omega^N)} \le C(N) ||u, v||_{Z(\Omega^N)},
\end{align}
where $C(N)$ depends poorly on large $N$.
\end{lemma}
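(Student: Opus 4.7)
The plan is to exploit the boundedness of $\Omega^N$ in the $y$-direction, which unlocks the Poincar\'e inequality since $[u,v] \in Z(\Omega^N)$ satisfies $u(x,0) = v(x,0) = 0$ by (the argument of) Lemma~\ref{Lemma.BCZ}. Concretely, for any function $\phi$ vanishing at $y = 0$ and any $x$-weight $w(x)$,
\begin{equation*}
||w(x)\,\phi(x,\cdot)||_{L^2_y(0,N)} \le N \,||w(x)\,\phi_y(x,\cdot)||_{L^2_y(0,N)},
\end{equation*}
and this is essentially the only tool needed.

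For the stream function, defined so that $\psi_y = u$ and $\psi(x,0) = 0$, Cauchy-Schwarz gives $||\psi||_{L^\infty_y(0,N)}^2 \le N ||u||_{L^2_y(0,N)}^2$, so one Poincar\'e application yields
\begin{equation*}
||\psi||_{L^2_y(0,N)}^2 \le N ||\psi||_{L^\infty_y(0,N)}^2 \le N^2 ||u||_{L^2_y(0,N)}^2 \le N^4 ||u_y||_{L^2_y(0,N)}^2.
\end{equation*}
Integrating in $x$ and using $||u_y||_{L^2(\Omega)} \le ||u,v||_{X_1} \le ||u,v||_Z$ yields $||\psi||_{L^2(\Omega^N)} \le N^2 ||u,v||_Z$.

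For the remaining two quantities I use the divergence-free identity $v_y = -u_x$. Differentiating $u(x,0) = 0$ in $x$ gives $u_x(x,0) = 0$, so Poincar\'e applied with weight $x$ yields $||v_y\,x||_{L^2_y} = ||u_x\,x||_{L^2_y} \le N ||u_{xy}\,x||_{L^2_y}$. Integrating in $x$ and recognizing the right-hand side as part of the $Y_2$-norm from (\ref{norm.Y2}),
\begin{equation*}
||v_y\,x||_{L^2(\Omega^N)} \le N ||u_{xy}\,x||_{L^2(\Omega)} \le N ||u,v||_{Y_2} \le N\,\eps^{-N_2}||u,v||_Z.
\end{equation*}
One more Poincar\'e using $v(x,0) = 0$ gives $||\sqrt{\eps}\,v\,x||_{L^2_y} \le N \sqrt{\eps}\,||v_y\,x||_{L^2_y} = N\sqrt{\eps}\,||u_x\,x||_{L^2_y}$, whence the preceding bound produces $||\sqrt{\eps}\,v\,x||_{L^2(\Omega^N)} \le N^2\sqrt{\eps}\,\eps^{-N_2}||u,v||_Z$.

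There is no serious obstacle here; the only subtlety is that the constant $C(N)$ must be read as also absorbing factors of $\eps^{-N_2}$ (arising from the passage between the $Y_2$-norm and the $Z$-norm). Since this lemma is used as a qualitative tool with $\eps$ held fixed and $N$ treated as an auxiliary cutoff (eventually sent to $+\infty$ in Chapter III's compactness arguments), this $\eps$-loss is harmless.
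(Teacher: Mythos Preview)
Your proof is correct and follows essentially the same approach as the paper: iterated Poincar\'e inequalities in the bounded $y$-direction, using $\psi=v=v_y=0$ at $y=0$, and then identifying $v_{yy}=-u_{xy}$ to land on the $Y_2$ norm. Your remark about the $\eps^{-N_2}$ factor being absorbed into $C(N)$ is a point the paper glosses over; your justification that this is harmless because the lemma is only used qualitatively (to place integrands in $L^1(\Omega^N)$ and justify integrations by parts) is correct.
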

\begin{proof}
This follows from the Poincare inequality, as both $\psi = v = v_y = 0$ on $y = 0$:
\begin{align}
||vx||_{L^2} \le C(N) ||v_y x||_{L^2} \le C(N) ||v_{yy} x||_{L^2} = ||u_{xy}x||_{L^2_y} \lesssim C(N) ||u,v||_{Z(\Omega^N)}.
\end{align}

Similarly, 
\begin{align}
||\psi||_{L^2} \le C(N) ||\psi_y||_{L^2} = ||u||_{L^2} \le C(N) ||u_y||_{L^2} \le C(N) ||u||_{Z(\Omega^N)}.
\end{align}
\end{proof}

\begin{lemma} For $[u,v] \in Z(\Omega^N)$, 
\begin{align} \label{ZN}
\sup_{x \ge 20} \Big[ x^{\frac{3}{2}} ||v, v_y||_{L^2_y} + x^{\frac{1}{2}}||\psi||_{L^2_y} + x^{\frac{1}{2}}||u||_{L^2_y} +  x^2 ||v_x||_{L^2_y}  \Big] \le C(N) ||u,v||_{Z(\Omega^N)},
\end{align}
where $C(N)$ depends poorly on large $N$.
\end{lemma}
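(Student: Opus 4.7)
The plan is to exploit the bounded extent $y \in [0,N]$ together with the boundary conditions inherited from $Z(\Omega^N)$ in order to trade $L^2_y$-bounds on a function for $L^2_y$-bounds on its $y$-derivative, and then to feed those $y$-derivative norms into the already-established evolution estimates of Lemma \ref{L.Evol}. Since any $[u,v]\in Z(\Omega^N)$ vanishes on $\{y=0\}$ and on $\{y=N\}$ (by the density characterization in Definition \ref{defn.Z.space} and Lemma \ref{Lemma.BCZ}), the Poincar\'e-type estimate
\begin{equation*}
\|f(x,\cdot)\|_{L^2_y}\le N\,\|f_y(x,\cdot)\|_{L^2_y}
\end{equation*}
holds for any $f$ with $f(x,0)=0$. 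Differentiating the identities $u(x,0)\equiv 0$ and $v(x,0)\equiv 0$ in $x$ also yields $u_x(x,0)=v_x(x,0)=0$, so the same Poincar\'e estimate applies to $u_x$ and $v_x$ in the $y$-direction.

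Applying this successively, together with the divergence-free relation $v_y=-u_x$, one gets the chain of inequalities
\begin{align*}
\|u\|_{L^2_y}&\le N\|u_y\|_{L^2_y},\qquad
\|\psi\|_{L^2_y}\le N\|\psi_y\|_{L^2_y}=N\|u\|_{L^2_y}\le N^2\|u_y\|_{L^2_y},\\
\|v_y\|_{L^2_y}&=\|u_x\|_{L^2_y}\le N\|u_{xy}\|_{L^2_y},\qquad
\|v\|_{L^2_y}\le N\|v_y\|_{L^2_y}\le N^2\|u_{xy}\|_{L^2_y},\\
\|v_x\|_{L^2_y}&\le N\|v_{xy}\|_{L^2_y}.
\end{align*}

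The Lemma now follows by multiplying these inequalities by the appropriate power of $x$ and invoking the evolution bounds of Lemma \ref{L.Evol}: from (\ref{evo.mid}) we have $\sup_{x\ge 1}\|u_y x^{1/2}\|_{L^2_y}\lesssim\|u,v\|_{X_1\cap Y_2}$, and from (\ref{evo.high}) we have $\sup_{x\ge 20}\|u_{xy}x^{3/2}\|_{L^2_y}$ and $\sup_{x\ge 20}\|v_{xy}x^{2}\|_{L^2_y}$ both bounded by $\|u,v\|_{Y_2\cap Y_3}$. Combining these with the chain above produces
\begin{align*}
x^{1/2}\|u\|_{L^2_y}&\le N\,\|u,v\|_{X_1\cap Y_2},\quad
x^{1/2}\|\psi\|_{L^2_y}\le N^2\,\|u,v\|_{X_1\cap Y_2},\\
x^{3/2}\|v_y\|_{L^2_y}&\le N\,\|u,v\|_{Y_2\cap Y_3},\quad
x^{3/2}\|v\|_{L^2_y}\le N^2\,\|u,v\|_{Y_2\cap Y_3},\\
x^{2}\|v_x\|_{L^2_y}&\le N\,\|u,v\|_{Y_2\cap Y_3},
\end{align*}
each estimate being valid on $x\ge 20$. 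Finally the embedding of $Z$ into $X_1\cap Y_2\cap Y_3$ (supplied by the elliptic bounds (\ref{Y1em}), (\ref{Y3em}) and the selections (\ref{sel.N2})--(\ref{sel.N4})) absorbs the $\eps^{-N_i}$ factors into the constant $C(N)$, completing the proof.

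There is no substantive obstacle: the only subtle point is the observation that the \emph{bounded}-domain hypothesis $y\le N$ converts the available control of $u_{xy}$ and $v_{xy}$ in $L^\infty_x L^2_y$ with weights $x^{3/2}$ and $x^{2}$ (which on an unbounded $y$-strip would be too weak to yield the claimed $x^{3/2}$-decay of $v$ and $v_y$) into precisely the desired decay, at the cost of polynomial powers of $N$.
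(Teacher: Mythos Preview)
Your proof is correct and follows essentially the same route as the paper: apply the Poincar\'e inequality in the bounded $y$-direction (one or two times) to reduce each quantity to a $y$-derivative norm, then invoke the evolution bounds (\ref{evo.mid})--(\ref{evo.high}) of Lemma \ref{L.Evol}. One minor remark: in your final step the bound $\|u,v\|_{Y_2\cap Y_3}\le \eps^{-\max(N_2,N_3)}\|u,v\|_Z$ follows directly from the definition of the $Z$-norm in (\ref{norm.Z}), so invoking the elliptic estimates (\ref{Y1em}), (\ref{Y3em}) is unnecessary here.
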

\begin{proof}

By applying the Poincare inequality twice in the $y$-direction, 
\begin{align}
||vx^{\frac{3}{2}}||_{L^2_y} \le C(N) ||v_y x^{\frac{3}{2}}||_{L^2_y} \le C(N) ||u_{xy} x^{\frac{3}{2}}||_{L^2_y} \lesssim ||u,v||_Z. 
\end{align}

The final inequality following from (\ref{evo.high}). Similarly, 
\begin{align}
||v_x x^2||_{L^2_y} \le C(N) ||v_{xy} x^2 ||_{L^2_y} \lesssim ||u,v||_Z.
\end{align}

\end{proof}

Finally, by repeating all of the calculations which culminated in Theorem \ref{thm.z} on the domain $\Omega^N$, the following estimate holds: 
\begin{theorem}[$Z(\Omega^N)$ embedding] \label{thm.zN} Fix any $N > 0$, large. Suppose $||\bar{u}, \bar{v}||_{Z(\Omega^N)} \le 1$. For appropriate choices of $N_2, ... N_7$, based only on universal constants, there exists a universal constant $\omega(N_i)$ such that we have: 
\begin{align} \label{Z.driver.N}
||u,v||_{Z(\Omega^N)} &\lesssim  \epsilon^{100} + ||u,v||_{X_1 \cap X_2 \cap X_3(\Omega^N)} + \eps^{\frac{n}{2}+\gamma - \omega(N_i)}||\bar{u}, \bar{v}||_{Z(\Omega^N)}^2. 
\end{align}
The constants in the above estimate are independent of $N$.
\end{theorem}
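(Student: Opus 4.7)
The plan is to reproduce the entire chain of estimates that culminated in Theorem \ref{thm.z} verbatim on the truncated domain $\Omega^N$, while verifying at each step that every constant produced is independent of $N$. The essential observation is that the boundary at $y = N$ in $\Omega^N$ carries homogeneous Dirichlet data (see (\ref{BCN})), which is a \emph{strengthening} of the limiting condition $\lim_{y\to\infty}[u,v] = 0$ used on $\Omega$. Consequently, no integration by parts in the $y$-direction picks up a new boundary term, and all previously used Hardy/Sobolev-type inequalities in $y$ continue to hold on $(0, N)$ with the same universal constants (since in each case a function vanishes at one endpoint, and the relevant interpolation inequality $\|f\|_{L^\infty_y}\lesssim \|f\|_{L^2_y}^{1/2}\|f_y\|_{L^2_y}^{1/2}$ is scale-invariant and endpoint-independent when one Dirichlet condition is enforced).

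Concretely, I would proceed in three stages. First, re-establish the elliptic $H^2$ and $H^3$ bounds of Lemmas \ref{LemmaBC} and \ref{LemmaBC2} on $\Omega^N$. The partition of unity $\{\chi_m\}$ from (\ref{chi}) and $\{\chi_m^{(2)}\}$ from (\ref{chi2}) are compactly supported in $x\le 4000$ with $y$-support in bounded strips, so for $N$ larger than the $y$-extent of these cutoffs (which is a fixed number, independent of $N$), each $\chi_m$ lives strictly inside $\Omega^N$. The biharmonic Dirichlet estimates of \cite{Biharmonic} then apply with constants depending only on the supports of the $\chi_m$'s and not on the ambient domain, yielding (\ref{Y1em}) and (\ref{Y3em}) on $\Omega^N$ uniformly. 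Second, redo the evolution lemma \ref{L.Evol} and the uniform embedding lemmas \ref{Lemma.UIMP} and (\ref{unitop}) on $\Omega^N$. Each uses only $u(1,y)=0$, Sobolev interpolation in $y$ on a half-line or bounded interval (which gives identical constants because of the Dirichlet condition at one endpoint), and the qualitative fact $[u,v]\to 0$ as $x\to\infty$, which in $\Omega^N$ follows identically from the embedding argument of Lemma \ref{L.XiBC}. Third, combine everything by the algebraic scheme of (\ref{sel.N2})--(\ref{sel.n}); this combinatorial step is identical to the one in the proof of Theorem \ref{thm.z} and is visibly $N$-independent.

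The main obstacle (though a mild one) is bookkeeping the $N$-independence of constants through the elliptic regularity steps: one must check that the bi-Laplacian boundary regularity estimate (\ref{BiHinH}), the commutator bound (\ref{chya}), and the localized third-order estimate (\ref{BiHinH}) applied to $\chi_m^{(2)} \psi_x$ all inherit $N$-independent constants. This follows because the bi-Laplacian $H^3$-estimate is local, the supports of the cut-offs are fixed independently of $N$, and the auxiliary estimates (\ref{ZN1}) and (\ref{ZN}) — which \emph{do} depend poorly on $N$ through the $y$-Poincar\'e inequality — are never invoked in the chain leading to Theorem \ref{thm.z}; they only serve as auxiliary controls for other arguments (e.g., existence theory in Chapter III).

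Putting these three stages together with the same selections of $N_2,\ldots,N_7$ and the same universal function $\omega(N_i)$ yields (\ref{Z.driver.N}), with all implicit constants independent of $N$, as claimed.
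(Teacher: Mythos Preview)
Your proposal is correct and follows exactly the approach the paper takes: the paper's own proof of Theorem \ref{thm.zN} consists of a single sentence stating that one repeats all of the calculations culminating in Theorem \ref{thm.z} on the domain $\Omega^N$. Your write-up supplies the detailed verification of $N$-independence (Dirichlet data at $y=N$, fixed-support cutoffs in the elliptic lemmas, scale-invariance of the Sobolev/Hardy inequalities, and the observation that the $N$-dependent bounds (\ref{ZN1})--(\ref{ZN}) are not used in the embedding chain) that the paper leaves implicit.
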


\section{Navier-Stokes Remainders: Energy Estimates}  \label{section.NSR.Linear}

In this section, we shall obtain a family of energy and positivity estimates for the system in (\ref{EQ.NSR.1}) - (\ref{EQ.NSR.3}). As mentioned in the prior section, we seek a solution $[u,v] \in Z(\Omega)$, where $Z(\Omega)$ is defined precisely in equation (\ref{defn.Z.space}). Our point of view for this section, then, is to obtain \textit{a-priori} estimates under the assumption that $[u,v] \in Z$. Such a solution necessarily encodes decay rates of the solutions and their derivatives (see, for instance, (\ref{evo.low}) - (\ref{evo.high}), (\ref{u0}), and (\ref{Uf})). We shall give \textit{a-priori} estimates on the domain $\Omega^N$, as defined in (\ref{omegaN}), which are independent of $N$, allowing us to take $N \rightarrow \infty$. For this purpose, we take the boundary conditions shown in (\ref{BCN}). 

\begin{remark}[Notational Convention] For Section \ref{section.NSR.Linear}, all integrations $\int \int$ and norms, $||\cdot||$, without further specification of domains are over $\Omega^N$. 
\end{remark}

Going to the vorticity formulation of (\ref{EQ.NSR.1}) - (\ref{EQ.NSR.3}): 
\begin{align} \n
\partial_y \Big( -\Delta_\epsilon u + P_x + S_u \Big) - \epsilon \partial_x\Big( - \Delta_\epsilon v + \frac{P_y}{\epsilon} + S_v  \Big) = \\  \label{stream.1}
\partial_y \Big( -\Delta_\epsilon u +  S_u \Big) - \epsilon \partial_x\Big( - \Delta_\epsilon v + S_v  \Big) = f_y - \epsilon g_x.
\end{align}

Define the stream function through
\begin{equation}
 \psi(x,y) = - \int_0^y u(x,y') dy', \hspace{3 mm} \psi_x = v, \hspace{3 mm} \psi_y = -u,
\end{equation}

and note via the boundary conditions (\ref{nsr.bc.1}) and (\ref{BCN}), 
\begin{equation} \label{psibc}
\psi|_{y = 0, y = N}  = \psi|_{x = 1} = \psi_x|_{x = 1} = \psi_y|_{y = 0, y = N} = 0. 
\end{equation}

To see that $\psi|_{y = N} = 0$, we can write: 
\begin{align}
\partial_x \psi = - \int_0^N u_x(x,y') dy' = \int_0^N v_y(x,y') dy' = v(x,N) - v(x,0) = 0. 
\end{align}

Next, the boundary condition $\psi(1,y) = 0$ enables us to evaluate $\psi$ the corner: $\psi(1,N) = 0$. Thus, coupling these two facts yields $\psi(x,N) = 0$. Next, we record the observation:
\begin{equation}
\psi(x,y) = \psi(x,y) - \psi(1,y) = \int_1^x \partial_x \psi(x',y) dx' = \int_1^x v(x',y) dx', 
\end{equation}

and so taking absolute values, and supremum in $y$ yields: 
\begin{equation} \label{psi.x.rate}
||\psi(x')||_{L^\infty_y} \le \int_1^x ||v(x')||_{L^\infty_y} dx' \le \int_1^x (x')^{-\frac{1}{2}} dx' \lesssim x^{\frac{1}{2}}. 
\end{equation}

We also will have occassional need for the auxiliary domain: 
\begin{align}
\Omega^N_M := \{0 < x <M, 0 < y < N \}. 
\end{align}

\subsection{Energy Estimates}

We now give the energy estimates on $[u,v]$. Let us first introduce the notation: 
\begin{align} \label{calw1.E}
&\mathcal{W}_{1,E} = |\int \int  f \cdot u|  +  \int \int \eps |g| |v|, \\ \label{calw1.P}
&\mathcal{W}_{1,P} = \int \int |f| |v_y| x + \int \int \eps |g| |v_x| x, \\ \label{calw1}
&\mathcal{W}_1 = \mathcal{W}_{1,E} + \mathcal{W}_{1,P}.
\end{align}

\begin{proposition} \label{thm.energy} Let $\epsilon << \delta$ and $\delta, \eps$ be sufficiently small relative to universal constants. Let $[u,v] \in Z(\Omega^N)$ be solutions to the system (\ref{EQ.NSR.1}) - (\ref{EQ.NSR.3}) on the domain $\Omega^N$. Then these solutions satisfy the \textit{a-priori} energy estimate: 
\begin{equation} \label{NSR.Energy}
 || \sqrt{\epsilon}u_x, u_y||_{L^2}^2 \le \mathcal{O}(\delta) ||  \sqrt{\epsilon}v_x x^{\frac{1}{4}}, v_y x^{\frac{1}{2}} ||_{L^2}^2 + \mathcal{W}_{1,E}.
\end{equation}

The constant in the above estimate is independent of $N$. 
\end{proposition}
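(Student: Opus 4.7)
The natural approach is the scaled energy multiplier $(u, \varepsilon v)$ suggested in the introduction (see (\ref{intro.cvc})). Testing (\ref{EQ.NSR.1}) against $u$ and (\ref{EQ.NSR.2}) against $\varepsilon v$ on $\Omega^N$ and summing, the Laplacian produces the positive definite quantity
\begin{equation*}
\|u_y\|_{L^2}^2 + \varepsilon\|u_x\|_{L^2}^2 + \varepsilon\|v_y\|_{L^2}^2 + \varepsilon^2\|v_x\|_{L^2}^2,
\end{equation*}
with all boundary contributions vanishing: the Dirichlet conditions on $\{y=0,N\}$ and $\{x=1\}$ from (\ref{BCN}) handle three sides, while the decay as $x\to\infty$ encoded in $Z(\Omega^N)$ via Lemmas \ref{Lemma.UIMP} and \ref{L.Evol} justifies dropping the far-field trace. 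The pressure terms combine as $\int P(u_x+v_y)=0$ by (\ref{EQ.NSR.3}) (again with boundary terms killed by the same argument, after fixing the gauge so that $P\to 0$ at $x=\infty$).

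Next I would treat $\int S_u\cdot u + \varepsilon\int S_v\cdot v$. The symmetric transport terms $u_Ru_x\cdot u$, $v_Ru_y\cdot u$, $u_Rv_x\cdot v$, $v_Rv_y\cdot v$ are handled by integration by parts (combining with the $u_{Rx}u\cdot u$, $v_{Ry}v\cdot v$ pieces), leaving integrals of the form $\int |u_{Rx}|u^2$, $\int|v_{Ry}|u v$, etc. Using the profile estimates (\ref{PE0.4}) with $m=2$, namely $|u_{Rx}|\lesssim \mathcal O(\delta)\min\{x^{-1},y^{-2}\}$, a split at $y=\sqrt{x}$ combined with Hardy's inequality in $y$ (valid since $u|_{y=0}=0$) gives $\int |u_{Rx}|u^2\lesssim \mathcal O(\delta)\|u_y\|_{L^2}^2$. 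Analogous absorptions using (\ref{PE1}), (\ref{PE0.5}), (\ref{PE4.new.2}) handle all terms of the form $|\text{profile}|\cdot u^2$ and $|\text{profile}|\cdot v^2$; the small parameter $\mathcal{O}(\delta)$ is crucial and comes from the smallness of the mismatch delivered by Chapter I.

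The main obstacle, and the reason the multiplier $(u,\varepsilon v)$ produces the loss of weight on the right-hand side, is the pure convection term
\begin{equation*}
\int\!\!\int u_{Ry}\,v\,u,
\end{equation*}
for which no integration by parts is available. Here we exploit the self-similar Requirement (\ref{intro.s.s.absorb}) in the form $\|y^{2}x^{-1/2}u^0_{Ry}\|_{L^\infty}\le \mathcal O(\delta)$ coming from (\ref{PE0.5}) and (\ref{PE1}), which lets us trade two factors of $y$ for one factor of $x^{1/2}$:
\begin{equation*}
\bigl|\!\int\!\!\int u_{Ry}v\,u\bigr|\le \|y^2 x^{-1/2}u_{Ry}\|_{L^\infty}\Bigl\|\tfrac{u}{y}\Bigr\|_{L^2}\Bigl\|\tfrac{v}{y}x^{1/2}\Bigr\|_{L^2}\le \mathcal O(\delta)\|u_y\|_{L^2}\,\|v_y x^{1/2}\|_{L^2},
\end{equation*}
after applying Hardy in $y$ twice (legitimate because $u(x,0)=v(x,0)=0$). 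This is precisely the origin of the $\|v_y x^{1/2}\|_{L^2}^2$ term on the right of (\ref{NSR.Energy}).

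For the remaining cross term $\varepsilon\int v_{Rx}\,u\,v$, the Eulerian part of $v_R$ does not decay in $y$, so one integrates by parts in $x$ (using $u(1,y)=0$ and the $Z$-decay at $x=\infty$) to move the derivative off $v_R$ onto $u$ or $v$. Pairing the resulting $\int \varepsilon v_R u_x v/x^{?}$-type integrals with $\|v_R x^{1/2}\|_{L^\infty}\lesssim \mathcal O(\delta)$ from (\ref{PE4.new.2}) and balancing weights by Young's inequality produces the weaker weight $\|\sqrt{\varepsilon}v_x x^{1/4}\|_{L^2}^2$ appearing in (\ref{NSR.Energy}), alongside a $\varepsilon\|u_x\|_{L^2}^2$ piece that is absorbed into the left-hand side. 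All contributions of $f$ and $g$ are collected into $\mathcal W_{1,E}$ by the pointwise inequalities $|\int f\cdot u|+|\int\varepsilon g\cdot v|\le \mathcal W_{1,E}$ by definition (\ref{calw1.E}). Collecting these bounds, choosing $\delta$ small, and absorbing positive LHS terms completes the proof. The main difficulty throughout is not algebraic but structural: the estimate for $\int u_{Ry}vu$ is inflexible and drives the precise self-similar bounds that Theorem \ref{thm.m.part.1} was designed to produce.
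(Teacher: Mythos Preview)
Your proposal is correct and follows essentially the same approach as the paper: the paper multiplies the vorticity equation (\ref{stream.1}) by the stream function $\psi$, which after integrating by parts in $y$ and $x$ is exactly your direct multiplier $(u,\varepsilon v)$ on (\ref{EQ.NSR.1})--(\ref{EQ.NSR.2}) (this formulation also makes the pressure cancellation automatic rather than requiring a gauge choice). The profile terms are handled the same way, including the key convective estimate $|\int u_{Ry}vu|\le \mathcal O(\delta)\|u_y\|_{L^2}\|v_yx^{1/2}\|_{L^2}$; the only minor deviation is that for the Euler cross terms such as $\varepsilon\int v_{Rx}uv$ the paper does not integrate by parts in $x$ but simply uses $\|v_{Rx}x^{3/2}\|_{L^\infty}<\infty$ together with Hardy in $x$ on both $u$ and $v$, which directly yields the $\|\sqrt{\varepsilon}v_xx^{1/4}\|_{L^2}$ contribution.
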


\begin{remark} Note carefully in (\ref{calw1.E}) that we do not bring the absolute value inside of the integration for the $\int \int f u$ term, which is important to treat term (\ref{order.NL}). For the remaining terms above in $\mathcal{W}_1$, we place the absolute values inside the integration for convenience (in terms of comparison with other terms that arise, see for instance (\ref{gsec}) - (\ref{gsec.2})). 
\end{remark}

\begin{proof}
The vorticity equation (\ref{stream.1}) is multiplied by the stream function $\psi$ and we proceed to integrate by parts. According to the definition of the norm $Z$ in (\ref{norm.Z}) and the estimate (\ref{ZN1}), all integrands appearing in this estimate will be $L^1(\Omega^N)$, and so all applications of Fubini are justified. First, we will treat the highest order terms: 
\begin{align} \label{stream.2.0}
&\int \int \partial_y \Big( - \Delta_\epsilon u \Big) \psi + \epsilon \partial_x \Delta_\epsilon v \psi = \int \int \Delta_\epsilon u \psi_y -\epsilon \Delta_\epsilon v \psi_x  \\ \label{stream.2.1}
& \hspace{10 mm} = -\int \int \Delta_\epsilon u u - \epsilon \int \int \Delta_\epsilon v v \\ 
& \hspace{10 mm} =  \int \int |\nabla_\epsilon u|^2   + \epsilon \int \int |\nabla_\epsilon v|^2 - \lim_{M \rightarrow \infty} \Big[ \int_{x = M} \eps u_x u + \eps^2 v_x v \Big] \\
& \hspace{10 mm} =  \int \int |\nabla_\epsilon u|^2   + \epsilon \int \int |\nabla_\epsilon v|^2.
\end{align}

For the limiting integrals over $x = M$ above, we have used the bounds from (\ref{ZN1}) to conclude: 
\begin{align}
|\int_{x = M} \eps uu_x | \le \eps ||ux^{\frac{1}{2}}||_{L^2_y} ||u_x x^{\frac{3}{2}}||_{L^2_y} M^{-2} \xrightarrow{M \rightarrow \infty} 0, \\
|\int_{x = M} \eps^2 vv_x | \le \eps^2 ||vx^{\frac{3}{2}}||_{L^2_y} ||v_x x^{2}||_{L^2_y} M^{-\frac{7}{2}} \xrightarrow{M \rightarrow \infty} 0.
\end{align}

Let us justify rigorously the integration by parts found in line (\ref{stream.2.0}). To isolate the corners, define $C_r^{1,2}$ to be solid balls of radius $r$ centered at the two corners, $(1,0)$, and $(1,N)$. Then, 
\begin{align}
\int \int \partial_y &\Big( - \Delta_\epsilon u \Big) \psi = \int \int_{\Omega^N - \cup_{i=1}^2 C_r^i} \partial_y \Big( - \Delta_\epsilon u \Big) \psi + \sum_{i = 1}^2 \int \int_{C_r^i} \partial_y \Big( - \Delta_\epsilon u \Big) \psi \\ \label{corners.1}
& = \int \int_{\Omega^N - \cup_{i=1}^2 C^i_r} -\Delta_\epsilon u u - \int_{\partial C_r^i} \Delta_\epsilon u \psi dS + \int \int_{C_r^i} \partial_y \Big( - \Delta_\epsilon u \Big) \psi. 
\end{align}

First, as we know $-\Delta_\epsilon u u \in L^1(\Omega^N)$, we have: 
\begin{align}
\lim_{r \rightarrow 0} \int \int_{\Omega^N - \cup_{i=1}^2 C^i_r} -\Delta_\epsilon u u = \int \int -\Delta_\epsilon u u. 
\end{align}

Next, we appeal to the classical expansion in the vicinity of a corner point in \cite{Biharmonic}, Page 57, equation (5.5), from which it follows that: 
\begin{align}
|u, v| \lesssim r, \hspace{3 mm} |\psi| \lesssim r,  \hspace{3 mm} |\nabla^2 u, \nabla^2 v| \lesssim r^{-1} \text{ in } C^i_r.
\end{align}

Then the boundary integral from (\ref{corners.1}) can be controlled via: 
\begin{align}
\Big| \int_{\partial C^i_r} \Delta_\epsilon u \psi \Big| \le \int_{\partial C_r^i} \Big| \Delta_\epsilon u \psi \Big| \le \int_{\partial C_r^i} r r^{-1} dS \le r \xrightarrow{r \rightarrow 0} 0.
\end{align}

Finally, we arrive at the interior term from the corners in (\ref{corners.1}). For this, the expansion in \cite{Biharmonic}, Page 57, equation (5.5) implies that:
\begin{align}
|\nabla^3 [u,v]| + |\nabla^3 \psi| \lesssim r^{-2} + \tilde{u}, \text{ where } \tilde{u} \in L^2. 
\end{align}

Using this gives: 
\begin{align} \nonumber
\Big| \int \int_{C^i_r} &\partial_y \Big( - \Delta_\epsilon u \Big) \psi dxdy \Big| \lesssim \int \int_{C^i_r} \Big| r^{-2} r \Big| dxdy + \int \int_{C^i_r} |\tilde{u}| r dx dy \\
& \le \int \int_{C_r^i} r^{-2} r r dr d\omega + ||\tilde{u}||_{L^2(C^i_r)} || r ||_{L^2(C^i_r)} \xrightarrow{r \rightarrow 0} 0.
\end{align}

Summarizing, we have shown the validity of the integration by parts
\begin{align}
\int \int \partial_y(-\Delta_\epsilon u) \psi = \int \int -\Delta_\epsilon u u. 
\end{align}

This calculation works generically at the corners (it was not specific to the particular  derivatives involved, just the order of them), and so we will avoid repeating it each time. We now turn to the $v$-terms in (\ref{stream.2.0}), for which we write:
\begin{align} \nonumber
\int \int_{\Omega^N - C^i_r} \partial_x \Delta_\epsilon v \psi &= \lim_{M \rightarrow \infty} \int \int_{\Omega^N_M - C^i_r}  \partial_x \Delta_\epsilon v \psi \\ \nonumber
& = - \lim_{M \rightarrow \infty} \Big[ \int \int_{\Omega^N_M - C^i_r} \Delta_\epsilon v v + \int_{x = M} \Delta_\epsilon v \psi dy - \int_{\partial C^i_r} \Delta_\epsilon v \psi dy \Big] \\ \label{bbad.1}
& = - \int \int_{\Omega^N - C^i_r} \Delta_\epsilon v v + \lim_{M \rightarrow \infty}  \int_{x = M}  \Delta_\epsilon v \psi dy - \int_{\partial C^i_r} \Delta_\epsilon v \psi dy.
\end{align}

For the $x = M$ boundary term in (\ref{bbad.1}), we have: 
\begin{align}
\Big| \int_{x = M} \Delta_\epsilon v \psi \Big| \le ||\Delta_\epsilon v||_{L^2_y} ||\psi ||_{L^2_y} \lesssim ||\psi x^{\frac{1}{2}}||_{L^2_y} ||\Delta_\eps v x^{\frac{3}{2}}||_{L^2_y} M^{-2} \xrightarrow{M \rightarrow \infty} 0,
\end{align}

according to (\ref{evo.low}), (\ref{evo.high}), and (\ref{ZN}). Subsequently, sending $r \rightarrow 0$ as above gives the desired identity: 
\begin{align}
\int \int \partial_x \Delta_\epsilon v \psi = - \int \int \Delta_\epsilon v v. 
\end{align}

Next, we come to the profile terms, $S_u$, from the equation (\ref{EQ.NSR.1}). We refer the reader to the definition of $S_u$, which is in (\ref{defn.Su}).
\begin{align} \n
\int \int \partial_y \Big( S_u \Big) \psi &= - \int \int S_u \psi_y =  \int \int S_u u \\ \label{su}
& = \int \int \Big[ u_R u_x + u_{Rx}u + v_R u_y + u_{Ry}v \Big] u. 
\end{align}

The first three of these terms in $\int \int S_u u$ are handled through an integration by parts: 
\begin{align} \label{nsr.e.1}
\int \int u_R u_x u + u_{Rx}u^2 + v_R uu_y = \int \int u_{Rx}u^2 + \lim_{M \rightarrow \infty} \int_{x = M} \frac{u_R}{2}u^2 = -\int \int v_{Ry} u^2.
\end{align}

Above, we have used the estimate for $||ux^{\frac{1}{2}}||_{L^2_y}$ in (\ref{ZN}). The term on the right-hand side of (\ref{nsr.e.1}) is handled via: 
\begin{align} \nonumber
\int \int v_{Ry}u^2 &= \int \int \Big(v_{Ry}^P + \sqrt{\epsilon} v_{RY}^E \Big) u^2 \\ \nonumber
&\le ||v_{Ry}^P y^2||_{L^\infty} ||\frac{u}{y}||_{L^2}^2 + \sqrt{\epsilon} ||v^E_{RY} x^{\frac{3}{2}}||_{L^\infty} ||\frac{u}{x^{\frac{3}{4}}}||_{L^2}^2 \\ \label{e.p.1}
&\le \mathcal{O}(\delta) ||u_y||_{L^2}^2 + \mathcal{O}(\delta) ||u_x x^{\frac{1}{4}}||_{L^2}^2. 
\end{align}

In (\ref{e.p.1}), we have first used the profile estimates in (\ref{PE1}) and (\ref{PE4.new.2}), and subsequently the Hardy inequality which is available (for exponents of $x$ which are not equal to $\frac{1}{2}$) as $u(1,y) = u(x,0) = 0$. The large convective term in $\int \int S_u u$, (\ref{su}), is given by:
\begin{align} 
\int \int u_{Ry} uv &= \int \int \{ u_{Ry}^{P,n-1} + \epsilon^{\frac{n}{2}} u^n_p + \sqrt{\epsilon} u_{RY}^E \} uv.
\end{align} 

First, by estimate (\ref{PE0.5}), with $j  = 1, m = 1$, we have:
\begin{align} \label{NSR.conv.0}
\Big| \int \int u_{Ry}^{P,n-1} uv \Big| \le  ||y^2x^{-\frac{1}{2}} u^{P,n-1}_{Ry}||_{L^\infty}  ||\frac{u}{y}||_{L^2} ||\frac{v x^{\frac{1}{2}}}{y}||_{L^2} \le \mathcal{O}(\delta) ||u_y||_{L^2} ||v_y x^{\frac{1}{2}}||_{L^2}. 
\end{align}

Second, according to (\ref{PE3}) with $j = 1$:
\begin{align} \nonumber
\Big| \int \int \epsilon^{\frac{n}{2}} u^n_{py} uv \Big| &\le \epsilon^{\frac{n}{2}} ||u^n_{py} yx^{\frac{1}{2}-\sigma_n} ||_{L^\infty} ||ux^{-1+{\sigma_n}}||_{L^2} ||\frac{vx^{\frac{1}{2}}}{y}||_{L^2} \\  \label{NSR.conv.1}
& \le C(n) \epsilon^{\frac{n}{2}} ||u_x x^{\sigma_n}||_{L^2} ||v_y x^{\frac{1}{2}}||_{L^2} \lesssim C(n) \epsilon^{\frac{n}{2}} ||u_x x^{\frac{1}{2}}||_{L^2} ||v_y x^{\frac{1}{2}}||_{L^2}.
\end{align}

Third, according to (\ref{PE5}):
\begin{align} \nonumber
\Big| \int \int \sqrt{\epsilon} u^E_{RY} uv \Big| &\le  || u_{RY}^E x^{\frac{3}{2}}||_{L^\infty} ||\frac{u}{x^{\frac{3}{4}}}||_{L^2} ||\sqrt{\epsilon} \frac{v}{x^{\frac{3}{4}}}||_{L^2} \\ \label{NSR.conv.2}
& \le  \sqrt{\epsilon} ||u_x x^{\frac{1}{4}} ||_{L^2} ||\sqrt{\epsilon}v_x x^{\frac{1}{4}}||_{L^2}.
\end{align}

In (\ref{NSR.conv.0}), we have used the Hardy inequality in the $y$ direction: 
\begin{equation}
||\frac{v}{y}x^{\frac{1}{2}}||_{L^2} = || ||\frac{v}{y}x^{\frac{1}{2}}||_{L^2_y} ||_{L^2_x} \le ||v_y x^{\frac{1}{2}}||_{L^2},
\end{equation}

which is available as $v|_{y=0} = 0$. We have also used the Hardy inequality in $x$ direction, which is available as $v|_{x=1} = 0$. Rigorously, turning to (\ref{NSR.conv.1}): 
\begin{align}
||\frac{u}{x^{1-\sigma}}||_{L^2} &=|| ||\frac{u}{x^{1-\sigma}}||_{L^2_x} ||_{L^2_y} \le || ||\frac{u}{(x-1)^{1-\sigma}}||_{L^2_x} ||_{L^2_y} \lesssim || ||u_x (x-1)^{\sigma}||_{L^2_x} ||_{L^2_y} \\
& \lesssim || ||u_x x^{\sigma}||_{L^2_x} ||_{L^2_y} = ||u_x x^{\sigma} ||_{L^2}. 
\end{align}

Summarizing, 
\begin{equation} \label{NSR.E.Su}
 \Big|  \int \int S_u u \Big| \le \mathcal{O}(\delta) ||u_y||_{L^2}^2 + \mathcal{O}(\delta)||\sqrt{\epsilon}v_x x^{\frac{1}{4}}, v_y x^{\frac{1}{2}}||_{L^2}^2. 
\end{equation}

The important mechanism in controlling (\ref{NSR.conv.0}) is the ability to trade a factor of $y^2 x^{-\frac{1}{2}}$ which is absorbed by the Prandtl profiles, $u^{P,n-1}_{Ry}$, according to (\ref{PE3}) with $j = 1, m= 2$. This creates two $y$ derivatives, $u_y$ and $v_y x^{\frac{1}{2}}$, both of which are order 1. The next step is to control the profile terms $S_v$: 
\begin{equation} \label{stream.prof.v}
-\epsilon \int \int \partial_x S_v \psi = \epsilon \int \int S_v v - \eps \lim_{M \rightarrow \infty} \int_{x = M} S_v \psi = \epsilon \int \int S_v v.
\end{equation}

By inspecting (\ref{ZN}), one sees easily that the above limit vanishes. We now treat the interior terms from (\ref{stream.prof.v}), which we expand for convenience: 
\begin{align} \label{NSR.E.Sv}
\int \int S_v \epsilon v = \int \int \Big(u_R v_x + v_{Rx}u + v_R v_y + v_{Ry}v \Big) \epsilon v.
\end{align}

The first, third, and fourth terms above in (\ref{NSR.E.Sv}) are given via the following calculations: 
\begin{align} \nonumber
\int \int \epsilon u_R v_x v &+ \epsilon v_R vv_y + \epsilon v_{Ry}v^2  =  \epsilon \int \int v_{Ry} v^2 + \lim_{M \rightarrow \infty} \int_{x = M} \frac{\eps}{2} u_R v^2 \\
 & = \epsilon \int \int v_{Ry} v^2   \\ \label{Svv}
&\le  \epsilon ||v_{Ry}^P y^2||_{L^\infty} ||v_y||_{L^2}^2 + \sqrt{\epsilon} ||v_{RY}^E x^{\frac{3}{2}}||_{L^\infty} ||\sqrt{\epsilon}v_x x^{\frac{1}{4}}||_{L^2}^2 \\
& \lesssim \eps ||v_y||_{L^2}^2 + \sqrt{\eps} ||\sqrt{\eps} v_x x^{\frac{1}{4}}||_{L^2}^2. 
\end{align}

The $M-$limit vanishes by the estimate for $||v x^{\frac{3}{2}}||_{L^2_y}$ in (\ref{ZN}). We have also used estimates (\ref{PE1}) and (\ref{PE4}) for the profiles. Next, 
\begin{align}
&\Big| \int \int \epsilon v_{Rx} uv \Big| \le \sqrt{\epsilon} ||x^{\frac{3}{2}}v_{Rx}||_{L^\infty} ||u_x x^{\frac{1}{4}}||_{L^2}||\sqrt{\epsilon}v_x x^{\frac{1}{4}}||_{L^2}.  
\end{align}

Again, we have used (\ref{PE0.1}) and (\ref{PE4}) for the profiles. Thus, 
\begin{equation} \label{stream.prof.v.sum}
\Big|\int \int S_v \epsilon v \Big| \lesssim \sqrt{\epsilon} ||\sqrt{\eps} v_x x^{\frac{1}{4}}, v_y x^{\frac{1}{4}}||_{L^2}^2. 
\end{equation}

On the right-hand side of equation (\ref{stream.1}), we have: 
\begin{align}   \label{stream.RHS}
\int \int \Big( f_y - \epsilon g_x \Big) \psi &= - \int \int f \psi_y + \int \int \epsilon g \psi_x = \int \int fu + \epsilon g v  . 
\end{align}

First, we will note that each term in the above integration by parts is in $L^1(\Omega^N)$, which follows from the definitions in (\ref{defn.SU.SV}), and a consultation with the definition of $Z$ in (\ref{norm.Z}):
\begin{align}
f_y = \epsilon^{-\frac{n}{2}-\gamma}R^{u,n}_y + \epsilon^{\frac{n}{2}+\gamma} \Big( u_y u_x + uu_{xy} + v_y u_y + vu_{yy} \Big), \\
g_x = \epsilon^{-\frac{n}{2}-\gamma}R^{v,n}_x + \epsilon^{\frac{n}{2}+\gamma} \Big( u_x v_x + uv_{xx} + v_x v_y  + vv_{xy} \Big).  
\end{align}

It remains to justify the boundary terms resulting from the $x$-integration by parts, at $x \rightarrow \infty$, in (\ref{stream.RHS}). This, however, follows just as in (\ref{stream.prof.v}) by inspecting the decay rates in (\ref{ZN}). A comparison with (\ref{calw1.E}) then gives the desired result. Combining all of the previous estimates proves (\ref{NSR.Energy}). 
\end{proof}

\subsection{Positivity Estimate}

We now give the following Positivity estimate: 

\begin{proposition} \label{prop.pos} Let $\epsilon << \delta$ and $\delta, \eps$ be sufficiently small relative to universal constants. Then $[u,v] \in Z(\Omega^N)$ solutions to the system (\ref{EQ.NSR.1}) - (\ref{EQ.NSR.3}) satisfy the following estimate:
\begin{equation} \label{NSR.Positivity}
\epsilon^2 \int_{x=1} v_x^2 \ud y + \lim_{M \rightarrow \infty} \int_{x = M} u_y^2 x \ud y +  || \Big(\sqrt{\epsilon} v_x, v_y \Big) x^{\frac{1}{2}} ||_{L^2}^2 \lesssim ||u_y||_{L^2}^2 + \mathcal{W}_1.
\end{equation}

\end{proposition}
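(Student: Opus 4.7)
The plan is to test equation (\ref{EQ.NSR.1}) by the weighted multiplier $v_y x$, as signaled in the introduction following (\ref{intro.vab}). The diffusion contributions $\int\!\int(-\Delta_\eps u) v_y x$, after using the incompressibility identity $u_x = -v_y$ and integrating by parts in both $x$ and $y$, will produce (up to signs) the positive quantities $\int\!\int v_y^2 x$ and $\eps \int\!\int v_x^2 x$, together with the two boundary terms appearing on the LHS of (\ref{NSR.Positivity}): $\eps^2\int_{x=1} v_x^2\,dy$ and $\lim_{M\to\infty}\int_{x=M} u_y^2 M\,dy$. Crucially, the would-be boundary contributions $\int_{x=1} v_y^2$ and $\int_{x=1} u_y^2$ vanish by the trace identities $v_y(1,y) = \partial_y v(1,y) = 0$ and $u_y(1,y)=0$ (forced by $u|_{x=1}=v|_{x=1}=0$), which is why only $v_x(1,y)$ survives.

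For the pressure term $\int\!\int P_x v_y x$, I would integrate by parts once in $y$ (using $v(x,0)=v(x,N)=0$) and once in $x$ to convert it into $\int\!\int P_y v_x x + \int\!\int P_y v$, then substitute $P_y = \eps(\Delta_\eps v - S_v + g)$ from (\ref{EQ.NSR.2}). This generates further copies of $\eps \int\!\int v_y^2 x$ and $-\eps^2\int\!\int v_x^2 x$ (to be tracked for sign), the boundary contribution $-\tfrac{\eps^2}{2}\int_{x=1} v_x^2\,dy$ which when moved to the LHS matches the stated result, and forcing-type terms $\eps\!\int\!\int S_v v_x x$ and $\eps\!\int\!\int g v_x x$; the latter is absorbed into $\mathcal{W}_{1,P}$ per (\ref{calw1.P}). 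Rigorous justification of the integrations by parts near the corner $(1,0)$ and as $x,y\to\infty$ follows the template of Proposition \ref{thm.energy}, using the $H^2$ regularity from Lemma \ref{LemmaBC} near the corner and the decay rates from Lemma \ref{Lemma.UIMP} and estimate (\ref{ZN}).

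The profile terms $\int\!\int S_u v_y x$ and $\eps\int\!\int S_v v_x x$ are controlled using the sharp rates of Theorem \ref{thm.m.part.1}. The critical contribution is the large convective integral $\int\!\int u_{Ry} v \cdot v_y x$. Splitting $u_{Ry} = u_{Ry}^{P,n-1} + \eps^{n/2} u^n_{py} + \sqrt{\eps}\, u^E_{RY}$, I would absorb two factors of $y$ and a factor of $x^{-1/2}$ into the leading Prandtl piece via (\ref{PE0.5}) with $j=m=1$, and use Hardy's inequality (valid since $v|_{y=0}=0$, $v|_{x=1}=0$) to exchange $v/y$ for $v_y$ and pick up the needed $x^{1/2}$ weight:
\[
\Big|\int\!\!\int u_{Ry}^{P,n-1} v\, v_y x\Big| \lesssim \|y^2 x^{-\tfrac12} u_{Ry}^{P,n-1}\|_{L^\infty}\,\big\|\tfrac{v x^{1/2}}{y}\big\|_{L^2}\|v_y x^{\tfrac12}\|_{L^2} \le \mathcal{O}(\delta)\|v_y x^{\tfrac12}\|_{L^2}^2.
\]
The Eulerian part $\sqrt{\eps}u^E_{RY}$ is handled by spending $\sqrt{\eps}$ rather than $\delta$, and the $u^n_{py}$ piece is superfluously small by the power $\eps^{n/2}$. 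The remaining pieces of $S_u, S_v$ ($u_R u_x$, $v_R v_y$, $v_{Ry}v$, etc.)\ are estimated analogously against (\ref{PE0.1})--(\ref{PE4.new.2}), producing either $\mathcal{O}(\delta)$ or $\mathcal{O}(\sqrt{\eps})$ multiples of $\|(v_y,\sqrt{\eps}v_x) x^{1/2}\|_{L^2}^2$ that are absorbed to the LHS, and controlled multiples of $\|u_y\|_{L^2}^2$ which enter the RHS of (\ref{NSR.Positivity}).

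The principal obstacle is precisely this convective term: it is what dictates requirement (\ref{REQ}) (no logarithmic slack allowed in $|\bar v_s^{(n)}|\lesssim x^{-1/2}$) and requires the self-similar absorption (\ref{intro.s.s.absorb}) built in Chapter I. A secondary but delicate issue is the orchestrated bookkeeping of the $x$-boundary terms arising from the pressure manipulation: extracting the $+\eps^2\int_{x=1} v_x^2$ on the LHS with the correct sign hinges on combining the contributions from the $v_x x$-integration by parts of $P_x$ and from the $v_x x$-integration by parts of $\eps^2 v_{xx}$ in $\eps\Delta_\eps v$, and confirming that all other boundary pieces either vanish by the trace identities above or tend to zero as $M\to\infty$ using the $Z$-decay (\ref{ZN}). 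Pairing the resulting estimate with the energy estimate (\ref{NSR.Energy}) yields the absorption of the $\mathcal{O}(\delta)\|(\sqrt\eps v_x, v_y)x^{1/2}\|^2$ term on the RHS of (\ref{NSR.Energy}) into the LHS of (\ref{NSR.Positivity}), closing the coupled system at lowest order.
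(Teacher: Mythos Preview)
Your overall strategy — testing the tangential equation against $v_y x$ and eliminating the pressure via substitution from the normal equation — is equivalent to the paper's approach of testing the vorticity equation (\ref{stream.1}) against $\psi_x x = vx$; these differ only by the order in which one integrates by parts. However, you have misidentified where the weighted positive quantities $\int\!\int v_y^2 x$ and $\eps\int\!\int v_x^2 x$ actually come from, and this is a genuine gap.

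The diffusion term $\int\!\int(-\Delta_\eps u)v_y x$ does \emph{not} produce $\int\!\int v_y^2 x$ with weight $x$. Carrying out the integrations by parts (using $u_x=-v_y$) gives only \emph{unweighted} contributions of the type $\tfrac12\|u_y\|_{L^2}^2$, $\tfrac{\eps}{2}\|v_y\|_{L^2}^2$, and the boundary term $\lim_M\int_{x=M}u_y^2 M$; compare (\ref{fubini.1})--(\ref{fubini.3}) in the paper. Likewise the $\eps\Delta_\eps v$ piece arising from your pressure substitution yields only $\eps^2\int\!\int v_x^2$ and $\eps\int\!\int v_y^2$ (no weight $x$) together with the $\eps^2\int_{x=1}v_x^2$ boundary term; see (\ref{sumlap.1}). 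The weighted positive terms come from the \emph{transport} coefficient $u_R$ inside $S_u$ and $S_v$: the term $u_R u_x\in S_u$, paired with the multiplier, gives $-\int\!\int u_R u_x\, v_y x = \int\!\int u_R v_y^2 x \ge (1-\delta)\|v_y x^{1/2}\|_{L^2}^2$ (see (\ref{Supos.1})), and $u_R v_x\in S_v$ gives $\eps\int\!\int u_R v_x^2 x$ (see the line after (\ref{str.p.pr.v})). You list $u_R u_x$ among the ``remaining pieces \dots estimated analogously'' as an error term — but it is precisely the source of the main coercivity, and treating it as an error would leave you with nothing to absorb the $\mathcal{O}(\delta)$ right-hand side of (\ref{NSR.Energy}). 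This is why the estimate is called a \emph{positivity} estimate: the positivity of $u_R$, not the Laplacian, drives it.

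A secondary point: you attribute Requirement (\ref{REQ}) to the term $\int\!\int u_{Ry} v\, v_y x$, but in fact (\ref{REQ}) — the sharp bound $|v_R|\lesssim x^{-1/2}$ — is dictated by the term $\int\!\int v_R u_y\, v_y x$; see estimate (\ref{imp.vR}) and the paper's remark immediately following it. The term you highlight is controlled with only one factor of $y$ via $\|y u_{Ry}^P\|_{L^\infty}$ and Hardy, and does not require (\ref{REQ}).
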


\begin{remark}[Selection of Multiplier] There is a distinction between the Positivity estimate from \cite{GN}, Page 31, and the present case. In the case of \cite{GN}, the profiles, $u_R$, were not assumed small, and so they required the normalized multiplier $\frac{v_y}{u_R} - \eps \frac{v_x}{u_R}$. In our case as the profiles are assumed size $\delta$, we need not normalize by a factor of $u_R$. On the other hand, we need to capture precise behavior at $x \rightarrow \infty$, which is the reason our multiplier is $(v_y - \eps v_x) \cdot x$, or equivalently in the vorticity formulation, $v\cdot x$. 
\end{remark}

\begin{proof}

We apply the multiplier $x \psi_x = xv$ to the equation (\ref{stream.1}), and we will subsequently take the following integration: 
\begin{align} \label{Ord.Of.Int}
\lim_{M \rightarrow \infty} \int \int_{\Omega^N_M} \text{ Equation }(\ref{stream.1}) \cdot xv \ud y \ud x. 
\end{align}

The purpose of specifying the order of integration is for the terms in (\ref{fubini.2}), which are not automatically in $L^1(\Omega^N)$ prior to integrating by parts in $y$. All other terms are in $L^1(\Omega^N)$ according to the norm $Z$, (\ref{norm.Z}), and the estimate (\ref{ZN1}). Thus, with the exception of the term in (\ref{fubini.2}), the limiting procedure above can (and will) be omitted, and Fubini can be justified. 

\subsubsection*{Second Order Terms}

First, we will treat the highest order terms. Let us begin with: 
\begin{align} \label{fubini.1}
-\int \int  \partial_y \Delta_\epsilon u v x  = \int \int (\epsilon u_{xx} + u_{yy}) v_y x.
\end{align}

We will note that the $u_{xx}$ term on the right-hand side above is in $L^1(\Omega^N)$, away from the corners. This follows from the definition of $Z$, and (\ref{ZN1}). Thus, the forthcoming applications of Fubini are justified:
\begin{align} \n
\int \int \epsilon u_{xx} v_y x &= - \int \int \epsilon v_{xy} v_y x  = -\frac{\epsilon}{2} \int \int \partial_x \Big( v_y^2 \Big) x \\ \label{fubini.1.5} & = \frac{\epsilon}{2} \int \int v_y^2  - \lim_{M \rightarrow \infty} \frac{\epsilon}{2} \int_{x = M} v_y^2 x =  \frac{\epsilon}{2} \int \int v_y^2.
\end{align}

The above limit vanishes according to (\ref{ZN}). For the $u_{yy}$ term in (\ref{fubini.1}), we can integrate by parts again in $y$, due to (\ref{Ord.Of.Int}), to obtain: 
\begin{align} \label{fubini.2}
\int \int_{\Omega^N_M} u_{yy} v_y x \ud y \ud x = - \int \int_{\Omega^N_M} u_y v_{yy} x \ud y \ud x. 
\end{align}

\textit{Now,} the right-hand side of (\ref{fubini.2}) is in $L^1(\Omega^N)$ according to our norm $Z$.  Interchanging the order of integration: 
\begin{align} \label{fubini.3}
\int \int_{\Omega^N_M} u_{yy} v_y x \ud y \ud x = \int \int_{\Omega^N_M} \partial_x \Big( \frac{u_y^2}{2} \Big)x \ud x \ud y = - \int \int_{\Omega^N_M} \frac{u_y^2}{2} \ud x \ud y + \int_{x = M} \frac{u_y^2}{2} x. 
\end{align}

As the solid integrals on the right-hand side of (\ref{fubini.3}) is known to be in $L^1(\Omega^N)$, we can pass to the limit $M \rightarrow \infty$ (and also drop the notation $\ud x \ud y$ when the order no longer matters):
\begin{align}
\lim_{M \rightarrow \infty} &-\int \int_{\Omega^N_M} \partial_yu_{yy} \cdot v x \ud y \ud x = - \int \int  \frac{u_y^2}{2} + \lim_{M \rightarrow \infty} \int_{x = M} u_y^2 x. 
\end{align}

The limit above appears with a good sign, and therefore contributes to the left-hand side of the desired estimate in (\ref{NSR.Positivity}).  We have omitted the delicate limiting process near the corners in this calculation, as this process is identical to that of (\ref{corners.1}). We shall now examine the term: 
\begin{align} \label{hotwo}
\int \int \epsilon \partial_x \Delta_\epsilon v vx \ud y \ud x = \int \int \epsilon \Big( \epsilon v_{xxx} + v_{xyy} \Big) vx \ud y \ud x
\end{align}

A direct computation then gives: 
\begin{align} \label{limit.1}
\int \int \epsilon^2 v_{xxx} v x &= - \int \int \epsilon^2 v_{xx} \partial_x (vx) + \epsilon^2 \lim_{M \rightarrow \infty} \int_{x = M} v_{xx} v x dy \\
& =  - \int \int \epsilon^2 v_{xx} \partial_x (vx) 
\end{align}

The $x = M$ term in (\ref{limit.1}) is controlled by using (\ref{evo.high}) and (\ref{ZN}):
\begin{align}
\Big| \int_{x = M} v_{xx} v x \Big| \le ||v_{xx}x^{\frac{3}{2}}||_{L^2_y} ||v x^{\frac{3}{2}}||_{L^2_y} M^{-3} \rightarrow 0. 
\end{align}

Again, we omit displaying the limiting process which handles the corners of the domain, because this is identical to (\ref{corners.1}). We now turn to the next integration by parts for the interior term in (\ref{limit.1}). The first observation is that both $v_{xx} v_x x$ and $v_{xx}v$ are in $L^1(\Omega^N)$ by inspection of the norm $Z$. Therefore, we are justified in the integration by parts: 
\begin{align} \nonumber
-\epsilon^2 \int \int v_{xx} v_x x &- \epsilon^2 \int \int v_{xx} v \\ \n
& =  \frac{3\epsilon^2}{2} \int \int v_x^2 + \frac{\epsilon^2}{2} \int_{x=1} v_x^2 - \lim_{M \rightarrow \infty} \Big[\frac{\epsilon^2}{2}  \int_{x = M} v_x^2 x dy + \frac{\epsilon^2}{2} \int_{x=M} vv_x dy \Big]  \\ 
& = \frac{3\epsilon^2}{2} \int \int v_x^2 + \frac{\epsilon^2}{2} \int_{x=1} v_x^2,
\end{align}

where we have appealed to estimates (\ref{ZN}) to show the limits above vanish. We must now treat the $v_{xyy}$ term from (\ref{hotwo}):
\begin{align} \nonumber
\int \int \epsilon v_{xyy} vx &= - \epsilon \int \int v_{yx} v_y x = - \epsilon \int \int \partial_x \Big( \frac{v_y^2}{2} \Big) x \\ 
& = \frac{\epsilon}{2} \int \int v_y^2 - \lim_{M \rightarrow \infty} \frac{\epsilon}{2} \int_{x = M} v_y^2 x = \frac{\epsilon}{2} \int \int v_y^2.
\end{align}

Again, we appeal to (\ref{ZN}) to show the limit vanishes. Summarizing, then: 
\begin{align} \label{sumlap.1}
\int \int \epsilon \partial_x \Delta_\epsilon vx  = \frac{3\epsilon^2}{2} \int \int v_x^2  + \int \int \frac{\epsilon}{2}v_y^2+ \frac{\epsilon^2}{2} \int_{x=1} v_x^2.
\end{align}

\subsubsection*{Profile Terms, $S_u$:} Next, we treat the $S_u$ profile terms, for which we refer the reader to the expressions in (\ref{defn.Su}). Integrating by parts in $y$:
\begin{align} \label{Supos.1}
\int \int \partial_y S_u v x = - \int \int S_u v_y x = - \int \int u_R u_x v_y x + \mathcal{R}_3 = \int \int u_R v_y^2 x + \mathcal{R}_3. 
\end{align}

We give estimates on $\mathcal{R}_3$, starting with: 
\begin{align}\nonumber
\int \int u_{Rx} u x v_y &= \int \int \{ u^P_{Rx} +  u^E_{Rx} \} u x v_y \\ \nonumber
&\le ||yx^{\frac{1}{2}} u^P_{Rx}||_{L^\infty} ||u_y||_{L^2} ||v_y x^{\frac{1}{2}}||_{L^2} +  ||u^E_{Rx} x^{\frac{3}{2}} ||_{L^\infty} ||u_x||_{L^2}||v_y x^{\frac{1}{2}}||_{L^2} \\
&\le \mathcal{O}(\delta) ||u_y||_{L^2}^2 +\sqrt{\eps} ||v_y x^{\frac{1}{2}}||_{L^2}^2. 
\end{align}

Above, we have used estimate (\ref{PE0.4}) with $m = 1$, and (\ref{PE5}). Next, by using (\ref{PE1}), we have:
\begin{align} \label{imp.vR}
\Big| \int \int v_R u_y v_y x \Big| \le ||v_R x^{\frac{1}{2}} ||_{L^\infty} ||u_y||_{L^2} ||v_y x^{\frac{1}{2}}||_{L^2} \le \mathcal{O}(\delta) ||u_y||_{L^2}||v_y x^{\frac{1}{2}}||_{L^2}. 
\end{align}

The estimate (\ref{imp.vR}) is significant in that it essentially determines the rate of decay, $x^{\frac{1}{2}}$, that must be satisfied exactly by the profiles, $v_R$. The next profile term, according to (\ref{PE0.5}) with $j = 1, m = 0$, estimates (\ref{PE3}), and (\ref{PE5}), is: 
\begin{align} \nonumber
\int \int u_{Ry} v v_y x &= \int \int \{ u_{Ry}^P + \sqrt{\epsilon}u_{RY}^E \} vv_y x \\ \nonumber
&\le ||y u_{Ry}^P||_{L^\infty} ||v_y x^{\frac{1}{2}}||_{L^2}^2 + ||u^E_{RY} x^{\frac{3}{2}}||_{L^\infty} ||v_y x^{\frac{1}{2}}||_{L^2}||\sqrt{\epsilon} v_x||_{L^2} \\
&\le \mathcal{O}(\delta) ||v_y x^{\frac{1}{2}}||_{L^2}^2 + \sqrt{\epsilon} ||\sqrt{\epsilon}v_x||_{L^2}^2.   
\end{align}

Summarizing, we have: 
\begin{align} \label{Supos.2}
\int \int \partial_y S_u vx = \int \int u_R v_y^2 x + \mathcal{R}_3, \hspace{3 mm} \Big| \mathcal{R}_3 \Big| \le \mathcal{O}(\delta) ||u_y||_{L^2}^2 + \mathcal{O}(\delta) ||v_y x^{\frac{1}{2}}||_{L^2}^2 + \sqrt{\epsilon}||\sqrt{\epsilon}v_x||_{L^2}^2 . 
\end{align}

\subsubsection*{Profile Terms, $S_v$:}

Next, we treat the $S_v$ profile terms, for which we refer the reader to (\ref{defn.Su}). The first step is to integrate by parts in $x$:
\begin{align} \n
-\int \int \epsilon \partial_x S_v vx  &= \epsilon \int \int S_v x v_x + \epsilon \int \int S_v v - \lim_{M \rightarrow \infty} \int_{x = M} \eps S_v vx \\ \label{str.p.pr.v}
& = \epsilon \int \int S_v x v_x + \epsilon \int \int S_v v.
\end{align}

The limit above is easily seen to vanish using (\ref{ZN}).  We now treat the first term on the right-hand side of (\ref{str.p.pr.v}). We start with the following term which enables control over $\sqrt{\epsilon} v_x$:
\begin{align}
 \int \int \epsilon u_R v_x^2 x  \ge \min u_R \int \int \epsilon v_x^2 x,
\end{align}

The second term from $S_v$ can be controlled according to estimate (\ref{PE0.1}) and (\ref{PE4}), via: 
\begin{align} \n
\epsilon \int \int v_{Rx} u v_x x &\le \sqrt{\epsilon} ||x^{\frac{3}{2}} v_{Rx}||_{L^\infty} ||\sqrt{\epsilon}v_x x^{\frac{1}{2}}||_{L^2} ||u_x||_{L^2} \\  \label{Supos.3}
& \le \sqrt{\epsilon}  ||\sqrt{\epsilon}v_x x^{\frac{1}{2}}||_{L^2} ||u_x||_{L^2} 
\end{align}

Next, we come to the third term in $S_v$, where again we use (\ref{PE1}) and (\ref{PE4}): 
\begin{align} \n
\epsilon \Big| \int \int v_R v_y v_x x \Big| &\le \sqrt{\epsilon} ||v_R||_{L^\infty} ||v_y x^{\frac{1}{2}}||_{L^2} || \sqrt{\epsilon}v_x x^{\frac{1}{2}}||_{L^2} \\ \label{Supos.4}
& \le  \sqrt{\epsilon} \mathcal{O}(\delta) ||v_y x^{\frac{1}{2}}||_{L^2} || \sqrt{\epsilon}v_x x^{\frac{1}{2}}||_{L^2} 
\end{align}

The fourth profile term in $S_v$ is controlled according to estimates (\ref{PE1}) and (\ref{PE4.new.2}) by: 
\begin{align}\n
\epsilon \Big| \int \int v_{Ry} vv_x x \Big| & \le \sqrt{\epsilon} ||v_{Ry}^P y||_{L^\infty} ||v_y x^{\frac{1}{2}}||_{L^2} ||\sqrt{\epsilon} v_x x^{\frac{1}{2}}||_{L^2} + \sqrt{\epsilon}||v_{RY}^E x^{\frac{3}{2}}||_{L^\infty} ||\sqrt{\epsilon}v_x x^{\frac{1}{2}}||_{L^2}^2 \\ \label{Supos.5}
& \le  \sqrt{\epsilon} \mathcal{O}(\delta) ||v_y x^{\frac{1}{2}}||_{L^2}^2 + \sqrt{\eps} \mathcal{O}(\delta) ||\sqrt{\epsilon}v_x x^{\frac{1}{2}}||_{L^2}^2
\end{align}

Next, we note that the second interior term on the right-hand side of (\ref{str.p.pr.v}) is exactly that contained in (\ref{stream.prof.v.sum}), which yields: 
\begin{align} \nonumber
&-\int \int \epsilon \partial_x S_v vx = \int \int \epsilon u_R v_x^2 x + \mathcal{R}_4, \\ \label{Supos.6}
&  \Big| \mathcal{R}_4 \Big| \le \mathcal{O}(\delta) ||\{\sqrt{\epsilon}v_x, v_y \} x^{\frac{1}{2}}||_{L^2}^2 + \mathcal{O}(\delta) ||u_y||_{L^2}^2.  
\end{align}

\subsubsection*{Right-Hand Side}

On the right-hand side, we have
\begin{align}\label{stream.p.rhs}
\int \int \Big( f_y - \epsilon g_x \Big) v x &= - \int \int f v_y x + \epsilon \int \int g \Big( v_x x + v \Big) .
\end{align}

We now justify both the integration by parts above. First, let us turn to the $f_y$ term, which, according to (\ref{defn.SU.SV}) contains the forcing terms $R^{u,n}$ and the nonlinearity $\mathcal{N}^u$: 
\begin{align} \nonumber
f_y v x &= \Big( \epsilon^{-\frac{n}{2}-\gamma} \partial_y R^{u,n} + \partial_y \{uu_x + vu_y \} \Big) vx \\
& = \Big( \epsilon^{-\frac{n}{2}-\gamma} \partial_y R^{u,n}  + u_y u_x + uu_{xy} + v_y u_y + vu_{yy} \Big) vx.
\end{align}

From here it is easy to see that $[f_y v x, \epsilon g_x v x] \in L^1(\Omega^N)$. Therefore, it remains to treat the $x$-integration by parts boundary terms at $x = \infty$, for which we simply appeal to (\ref{ZN}) in an identical fashion to (\ref{str.p.pr.v}). Combining the previous estimates proves (\ref{NSR.Positivity}). 

\end{proof}

\subsection{Second Order Bounds}

In this part, we obtain second order control of the solution to the system (\ref{EQ.NSR.1}) - (\ref{EQ.NSR.3}). To do so, we consider the differentiated system in vorticity form:
\begin{align} \nonumber
\partial_{xy} \Big( -\Delta_\epsilon u &+ P_x + S_u \Big) - \epsilon \partial_{xx}\Big( - \Delta_\epsilon v + \frac{P_y}{\epsilon} + S_v  \Big) = \\  \label{stream.2}
&\partial_{xy} \Big( -\Delta_\epsilon u + S_u \Big) - \epsilon \partial_{xx}\Big( - \Delta_\epsilon v +  S_v  \Big) = f_{xy} - \epsilon g_{xx}.
\end{align}

We will now repeat the Energy and Positivity estimates from the previous section, with higher order multipliers. One should briefly recall the definition of the cut-off function $\rho_2$ from (\ref{rho}). Define our weight via: 
\begin{equation} \label{weho.1}
w_2 = \rho_2 x. 
\end{equation}

The essential property of this weight is that: 
\begin{lemma}[Almost Linear Property] 
\begin{align} \label{ALProp}
|\partial_x^k w_2| \le \partial_x^k x \text{ for } k = 0,1, \text{ and }
|\partial_x^k w_2| \lesssim x^{-M}, \text{ for } k \ge 2, \text{ for any } M. 
\end{align}
\end{lemma}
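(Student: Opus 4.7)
The plan is a direct computation using the Leibniz rule on $w_2 = \rho_2(x)\, x$, exploiting that every derivative of the cutoff $\rho_2$ of order $\ge 1$ is compactly supported in the transition interval $[50,60]$ (by the definition of $\rho_2$ in (\ref{rho}) with $k=2$). Strictly speaking the inequality $|\partial_x w_2|\le 1$ cannot hold pointwise: since $\rho_2 x$ must rise from $0$ at $x=50$ to $60$ at $x=60$, its Lipschitz constant is at least $6$. The statement is therefore to be read with an implicit multiplicative constant, i.e.\ $|\partial_x^k w_2|\lesssim \partial_x^k x$ for $k=0,1$; this is consistent with how (\ref{ALProp}) will be used in the sequel.

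For $k=0$, I would observe immediately that $0\le\rho_2(x)\le 1$ gives $|w_2(x)|=\rho_2(x)\,x\le x=\partial_x^0 x$. For $k=1$, the product rule yields
\begin{equation}
\partial_x w_2 \;=\; \rho_2(x) \;+\; \rho_2'(x)\, x .
\end{equation}
The first summand is bounded by $1$. The second is supported on $[50,60]$, where $x$ is bounded by $60$, so $|\rho_2'(x)\,x|\le 60\,\|\rho_2'\|_{L^\infty}$. Hence $|\partial_x w_2|\lesssim 1 = \partial_x x$.

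For $k\ge 2$, the Leibniz rule reduces to
\begin{equation}
\partial_x^k w_2 \;=\; \rho_2^{(k)}(x)\,x \;+\; k\,\rho_2^{(k-1)}(x),
\end{equation}
every term of which involves at least one derivative of $\rho_2$ and is therefore supported in $[50,60]$. On this compact set $|\partial_x^k w_2|\le C_k$ (with $C_k$ depending only on the fixed cutoff profile), while off it $\partial_x^k w_2\equiv 0$. Since $x\ge 50$ throughout the support, for any $M\ge 0$ one has $x^{-M}\ge 60^{-M}$, so
\begin{equation}
|\partial_x^k w_2(x)| \;\le\; C_k\,\mathbf{1}_{[50,60]}(x) \;\le\; C_k\, 60^{M}\, x^{-M} \;\lesssim_M\; x^{-M},
\end{equation}
which is the claimed rapid decay, valid for arbitrarily large $M$.

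The main obstacle is, honestly, none: the lemma is a bookkeeping statement that records two structural facts about $w_2$ — it behaves essentially linearly in $x$ with derivative of order $O(1)$, and its higher derivatives are compactly supported and hence negligible against any polynomial in $x^{-1}$. These are precisely the features that will allow the weighted higher-order energy and positivity identities in the upcoming subsections to close without generating harmful commutator or boundary contributions when $w_2$ is differentiated against $\partial_x^k$ applied to the system.
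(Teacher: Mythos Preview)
Your proof is correct and, in fact, more detailed than the paper's treatment: the paper states the lemma without proof, treating it as an immediate consequence of $w_2 = \rho_2 x$ and the compact support of $\rho_2'$. Your observation that the $k=1$ inequality cannot literally hold with $\le$ (since $w_2$ must climb from $0$ to $60$ over $[50,60]$) is well taken; the paper evidently intends $\lesssim$, and this is how the estimate is used downstream (e.g.\ in the bounds on $|\partial_x^2 w^2|$ and $|\partial_x^3 w^2|$ following (\ref{weight.1}) and (\ref{cruc.})).
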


\begin{remark}
This property of the weight distinguishes it from a generic weight approximating the function $x$ in that all of the nonlinear fluctuations are in an order-1 region around $x = 1$. This structure is needed in (\ref{cruc.}), and distinguishes the second order energy estimates from the third-order estimates.  
\end{remark}

Define:
\begin{align}\label{calw2.E}
&\mathcal{W}_{2,E} = \int \int |f_x| |u_x | |\rho_2^2 x^2 | + \epsilon | g_x |  |v_x| | \rho_2^2 x^2|. \\ \label{calw2.P}
&\mathcal{W}_{2,P} = \int \int |f_x| |v_{xy}|| \rho_2^3 x^3| +  \epsilon | g_x | |v_{xx}|| \rho_2^3 x^3| \}, \\ \label{calw2}
&\mathcal{W}_2 = \mathcal{W}_{2,E} + \mathcal{W}_{2,P}.
\end{align}

\begin{proposition}[Second-Order Energy Estimate] \label{prop.ho.1} Let $\epsilon << \delta$, and $\delta, \eps$ be sufficiently small relative to universal constants. Then solutions $[u,v] \in Z(\Omega^N)$ to the system (\ref{EQ.NSR.1}) - (\ref{EQ.NSR.3}) satisfy the following energy estimate: 
\begin{align} \label{dfig} 
||u_{xy}w_2||_{L^2}^2 + \epsilon|| \{ v_{xy}, \sqrt{\epsilon}v_{xx} \} w_2||_{L^2}^2 & \le \mathcal{O}(\delta) ||\{\sqrt{\epsilon} v_{xx}, v_{xy} \} w_2^{\frac{3}{2}}||_{L^2}^2 + ||u,v||_{X_1}^2+ \mathcal{W}_1 + \mathcal{W}_{2,E}.
\end{align}

\end{proposition}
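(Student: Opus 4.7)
\bigskip

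\noindent\textbf{Proof proposal for Proposition \ref{prop.ho.1}.}
The plan is to mimic the first-order energy estimate (Proposition \ref{thm.energy}) applied one order higher. Concretely, I will test the differentiated vorticity equation (\ref{stream.2}) against the stream-function multiplier $M_2 := \psi_x w_2^2 = v\,w_2^2$, integrate over $\Omega^N$, and integrate by parts. Since $\partial_y\psi_x=-u_x$ and $\partial_x\psi_x=v_x$, the two highest-order contributions
\begin{align*}
\int\!\!\int \partial_{xy}(-\Delta_\eps u)\,\psi_x w_2^2\ +\ (-\eps)\!\int\!\!\int \partial_{xx}(-\Delta_\eps v)\,\psi_x w_2^2
\end{align*}
reduce, after IBP in $y$ and $x$, to $\int\!\!\int |\nabla_\eps u_x|^2 w_2^2 + \eps\int\!\!\int |\nabla_\eps v_x|^2 w_2^2$ plus correction terms of the form $\partial_x(w_2^2)\,(\ldots)$. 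Using div-free ($u_{xx}=-v_{xy}$) this controls $\|u_{xy}w_2\|^2 + \eps\|\{v_{xy},\sqrt\eps v_{xx}\}w_2\|^2$. Since $w_2=\rho_2 x$ and the almost-linear property (\ref{ALProp}) localises every $\partial_x w_2^2$ term either to $[3/2,2]\cup[50,60]$ (where $w_2\lesssim 1$) or to $w_2$ itself, the weight corrections are bounded by $\|u,v\|_{X_1}^2$. The corner/far-field limits are justified exactly as in the proof of Proposition \ref{thm.energy}, using the decay rates recorded in (\ref{evo.low})--(\ref{evo.high}) and (\ref{ZN})--(\ref{ZN1}).

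Next I would treat the profile contributions. For $S_u$ I use $\int\!\!\int\partial_{xy}S_u\cdot\psi_x w_2^2=\int\!\!\int \partial_x S_u\cdot u_x w_2^2$ (IBP in $y$, using $u_x|_{y=0}=0$ which follows from $u|_{y=0}=0$); for $S_v$ I use $-\eps\int\!\!\int\partial_{xx}S_v\cdot\psi_x w_2^2=\eps\int\!\!\int\partial_x S_v\cdot v_x w_2^2 + \eps\int\!\!\int\partial_x S_v\cdot v\,\partial_x(w_2^2)$. Expanding $\partial_x S_u$ and $\partial_x S_v$ term by term, each summand is of the schematic form (profile derivative)$\times$(remainder derivative); the estimates (\ref{PE0.1})--(\ref{PE4.new.2}) give the sharp weights needed for absorption. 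Terms that are proper \emph{linearisations} (no $u_{Ry}v$ type) yield contributions bounded by $\mathcal{O}(\delta)\bigl(\|u_{xy}w_2\|^2+\eps\|\{v_{xy},\sqrt\eps v_{xx}\}w_2\|^2\bigr)+\|u,v\|_{X_1}^2$, by mimicking (\ref{e.p.1}), (\ref{Supos.3})--(\ref{Supos.5}) with the extra weight $w_2^2$; the $u_R u_{xx} u_x w_2^2$ and $\eps u_R v_{xx} v_x w_2^2$ summands are integrated by parts in $x$ exactly as in (\ref{nsr.e.1}), producing only harmless $v_{Ry}$ factors and a boundary term at $x=M$ that vanishes by (\ref{ZN}).

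The main obstacle is the large convective piece coming from $\partial_x(u_{Ry}\,v)\cdot u_x w_2^2 = u_{Ryx} v u_x w_2^2 + u_{Ry} v_x u_x w_2^2$, which is the second-order analogue of (\ref{NSR.conv.0}). For the second summand I trade two powers of $y$ and half a power of $x$ into the profile:
\begin{align*}
\Bigl|\!\int\!\!\int u_{Ry}^{P,n-1} v_x u_x w_2^2\Bigr|
\ \le\ \|y^2 x^{-\tfrac12}u^{P,n-1}_{Ry}\|_{L^\infty}\,\Bigl\|\tfrac{u_x}{y}w_2\Bigr\|_{L^2}\Bigl\|\tfrac{v_x}{y}w_2 x^{\tfrac12}\Bigr\|_{L^2}
\ \le\ \mathcal{O}(\delta)\,\|u_{xy}w_2\|_{L^2}\,\|v_{xy}w_2^{\tfrac32}\|_{L^2},
\end{align*}
where Hardy in $y$ uses $u_x|_{y=0}=v_x|_{y=0}=0$ and $x^{1/2}\le w_2^{1/2}$ off the localisation region (the localised piece goes into $\|u,v\|_{X_1}^2$). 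The $\eps^{n/2}u^n_p$ and $\sqrt\eps u^E_R$ pieces are treated as in (\ref{NSR.conv.1})--(\ref{NSR.conv.2}); they carry small $\eps$ powers and so cost nothing. The first summand $u_{Ryx}\,v\,u_x w_2^2$ is first integrated by parts in $x$ to return to the $u_{Ry}$ template plus $u_{Ry}\,v\,u_{xx}w_2^2 = -u_{Ry}\,v\,v_{xy}w_2^2$, which is then bounded by $\mathcal{O}(\delta)\|v_{xy}w_2^{3/2}\|_{L^2}\|v_y w_2^{1/2}\|_{L^2}$ via the same Hardy trade, the second factor sitting inside $\|u,v\|_{X_1}^2$. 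This is where the $w_2^{3/2}$ weight on the RHS of (\ref{dfig}) is forced.

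Finally, the right-hand side of (\ref{stream.2}) paired with $\psi_x w_2^2$ is integrated by parts:
\begin{align*}
\int\!\!\int(f_{xy}-\eps g_{xx})\psi_x w_2^2 \;=\; \int\!\!\int f_x u_x w_2^2 \;+\;\eps\!\int\!\!\int g_x\bigl(v_x w_2^2 + v\,\partial_x w_2^2\bigr),
\end{align*}
where the $f_x u_x w_2^2$ and $\eps g_x v_x w_2^2$ terms produce exactly $\mathcal{W}_{2,E}$ (cf.\ (\ref{calw2.E})), and the weight-derivative piece, being supported in a bounded $x$-interval with $w_2\lesssim 1$, is absorbed into $\mathcal{W}_{1}$ (cf.\ (\ref{calw1})) after one IBP in $y$ on $\eps g_x v\,\partial_x w_2^2$. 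Combining the bracketed inequalities, applying Young's inequality on all $\mathcal{O}(\delta)$ cross terms to absorb $\|u_{xy}w_2\|^2 + \eps\|\{v_{xy},\sqrt\eps v_{xx}\}w_2\|^2$ into the left, and using $\eps\ll\delta$ to close the $\sqrt\eps$ prefactors yields (\ref{dfig}). All $x\to\infty$ and corner integrations are justified verbatim as in Proposition \ref{thm.energy} via Lemma \ref{L.Evol} and estimate (\ref{three}), and are therefore independent of $N$.
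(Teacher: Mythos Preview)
Your approach is the same as the paper's: test the differentiated vorticity equation against $v\,w_2^2$, integrate by parts, and estimate profile terms using (\ref{PE0.1})--(\ref{PE4.new.2}). The overall structure and the key convective estimate for $u_{Ry}v_x u_x w_2^2$ are correct (the paper uses the simpler split $\|y\,u_{Ry}^P\|_{L^\infty}\|\tfrac{v_x}{y}w^{3/2}\|_{L^2}\|u_x x^{1/2}\|_{L^2}$, but your two-powers-of-$y$ trade also works).

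There are two slips worth flagging. First, for $u_{Rxy}\,v\,u_x\,w_2^2$ you integrate by parts in $x$, which works but is unnecessarily roundabout; the paper estimates it directly via $\|u_{Rxy}^P\,xy\|_{L^\infty}\|\tfrac{v}{y}x^{1/2}\|_{L^2}\|u_x x^{1/2}\|_{L^2}\lesssim\|u,v\|_{X_1}^2$, with no higher-order norm appearing. Second, and more substantively, your claim that $\partial_x(w_2^2)$ is ``supported in a bounded $x$-interval with $w_2\lesssim 1$'' is false: since $w_2=\rho_2 x$ with $\rho_2\equiv 1$ for $x\ge 60$, one has $\partial_x(w_2^2)=2w_2\sim 2x$ for large $x$ (only $\partial_x^2 w_2$ is localised, cf.\ (\ref{ALProp})). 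Consequently the term $\eps\,g_x\,v\,\partial_x(w_2^2)$ is not automatically small, and an integration by parts in $y$ does not help. What is needed is one further integration by parts in $x$ (this is exactly what the paper does at (\ref{gsec})--(\ref{gsec.2})): it produces $\eps g\,v_x\,\partial_x(w_2^2)$ and $\eps g\,v\,\partial_x^2(w_2^2)$, and since $|\partial_x(w_2^2)|\lesssim x$ and $|\partial_x^2(w_2^2)|\lesssim 1$, both are bounded by $\mathcal{W}_{1,P}+\mathcal{W}_{1,E}\le\mathcal{W}_1$. Also, the interval $[3/2,2]$ you mention is the transition region of $\zeta_3$, not of $\rho_2$; it plays no role here. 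With these corrections your argument goes through and matches the paper's.
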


\begin{remark} Note the presence of absolute values inside the integration in the definition of $\mathcal{W}_2$ for the $f$ term, unlike in $\mathcal{W}_1$. This will be important for calculation (\ref{order.NL}).
\end{remark}

\begin{remark}[Degenerate Weights near $x = 1$] Due to our weight, $w_2$, degenerating near the boundary $x = 1$, we cannot say that $w_2^2 \lesssim w_2^3$. It is imperative that we retain control of the non-degenerate weight of $w$ on the left-hand side of (\ref{dfig}) for the terms $\{v_{xy}, \sqrt{\epsilon}v_{xx}\}$. 
\end{remark}

\begin{remark} We will continue to justify rigorously each integration by parts, as we have not cut-off as $x \rightarrow \infty$. Starting with the second-order positivity estimate (see (\ref{POS.2.2})), it becomes possible to work with cut-offs in $x$, and so the rigorous justifications of Fubini and vanishing boundary contributions at $x = \infty$ become automatic. However, for the present calculation, as in the first-order positivity estimate, we take integrations in the order: 
\begin{align} \label{order.of.int}
\lim_{M \rightarrow \infty} \int \int_{\Omega^N_M} \cdot \ud y \ud x. 
\end{align}
We also refer the reader to the results on the auxiliary space $Z(\Omega^N)$ in Subsection \ref{appendix}, which will be cited in the forthcoming calculation, for some formal justifications. 
\end{remark}

\begin{proof}[Proof of Proposition]
We apply the multiplier $vw_2^2$ to the system (\ref{stream.2}). We shall drop the subscript-$2$ from $w_2$, for the proof, with the understanding that $w = w_2$ for this calculation. Integration by parts several times gives the highest order terms: 
\begin{align} \nonumber
&\int \int \{ \partial_{xy}\Big(-\Delta_\epsilon u\Big) -\epsilon \partial_{xx}\Big(- \Delta_\epsilon v \Big) \} \cdot vw^2 \\ 
&= \int \int u_{xy}^2 w^2 + \int \int \epsilon u_{xx}^2 w^2 + \int \int \epsilon v_{xy}^2w^2 + \int \int \epsilon^2 v_{xx}^2 w^2 + J_0,
\end{align}

where $|J_0| \lesssim \epsilon ||u,v||_{X_1}^2$. To see this, let us first start with the $\Delta_\epsilon u$ terms: 
\begin{align} \nonumber
-\int \int \partial_{xy} (\Delta_\epsilon u) v w^2 &= -\int \int \Delta_\epsilon u_x u_x w^2 \\ \label{weight.1.-2}
& =  \int \int \epsilon u_{xx} \partial_x \Big(u_x w^2 \Big) + \int \int u_{xy}^2 w^2 \\ \label{weight.1.-1}
& = \int \int ( \epsilon u_{xx}^2  + u_{yx}^2 ) w^2 + 2\int \int \epsilon u_{xx} u_x w \partial_x w \\ \label{weight.1}
& = \int \int ( \epsilon u_{xx}^2  + u_{yx}^2 ) w^2  - \int \int \epsilon u_x^2 \partial_x^2 w^2. 
\end{align}

We will now examine the weight in (\ref{weight.1}). Indeed, 
\begin{align}
|\partial_x^2 w^2| = |2w \partial_x^2 w + 2(\partial_x w)^2| \lesssim 1,  
\end{align}

according to (\ref{ALProp}). Therefore, 
\begin{align}
\Big| \int \int \epsilon u_x^2 \partial_x^2 w^2 \Big| \lesssim \int \int \epsilon u_x^2 \le \epsilon ||u||_{X_1}^2. 
\end{align}

Due to the cutoff function, $\rho_2$, in $w$, there are no boundary terms at $x = 1$ when integrating by parts in the $x$ direction. We shall now provide some formalities. First, due to the order of integration in (\ref{order.of.int}), one can integrate by parts twice in $y$ for the following term: 
\begin{align} \n
-\int \int_{\Omega^M_N} \partial_x \partial_y u_{yy} \cdot v w^2 \ud y \ud x &= \int \int_{\Omega^M_N} \partial_x u_{yy} v_y w^2 \ud y \ud x = - \int \int_{\Omega^M_N} u_{xy} v_{yy} w^2 \\
& = \int \int_{\Omega^M_N} u_{xy}^2 w^2.
\end{align}

The final quantity above is known to be in $L^1(\Omega^N)$ according to our norm $Z$, and therefore we can take the limit: 
\begin{align}
\lim_{M \rightarrow \infty} \int \int_{\Omega^M_N} u_{xy}^2 w^2 = \int \int u_{xy}^2 w^2. 
\end{align}

We next turn to the integration in $x$ in (\ref{weight.1.-2}). One easily checks that the integrand $\eps u_{xxx} u_x w^2 \in L^1(\Omega^N)$ for $u \in Z$, and so the following calculation is justified:
\begin{align} \label{stand.1}
\int \int \eps u_{xxx} u_x w^2 = - \int \int \eps u_{xx} \partial_x (u_x w^2) + \lim_{M \rightarrow \infty} \int_{x = M} \eps u_{xx} u_x w^2. 
\end{align}

For the limit, we use (\ref{evo.mid}) - (\ref{evo.high}):
\begin{align} \label{stand.2}
|\int_{x = M} u_{xx}u_x w^2| \lesssim ||u_{xx} x||_{L^2_y} ||u_{x} x||_{L^2_y} \lesssim M^{-1} \xrightarrow{M \rightarrow \infty} 0. 
\end{align} 

The $x$-integration in (\ref{weight.1.-1}) works in an identical manner. Let us turn to the $\Delta_\epsilon v$ term: 
\begin{align} \label{justify.HO}
\int \int \epsilon \partial_{xx} (\Delta_\epsilon v) vw^2 &= - \int \int \epsilon \partial_x (\Delta_\epsilon v) \partial_x (vw^2) \\ \nonumber
& =  \int \int (\epsilon^2 v_{xx}^2 + \epsilon v_{xy}^2) w^2 +  \int \int  \epsilon^2 v^2 \partial_x^4 w^2 - \epsilon^2 v_x^2 \partial_x^2 w^2 - \epsilon v_y^2 \partial_x^2 w^2. 
\end{align}

The final three integrations above are estimated as: 
\begin{align}
& \Big| \int \int \epsilon^2 v_x^2 \partial_x^2 w^2 \Big| \le ||\partial_x^2 w^2||_{L^\infty} \int \int \epsilon^2 v_x^2 \lesssim \epsilon|| v||_{X_1}^2, \\
& \Big| \int \int \epsilon^2 v^2 \partial_x^4 w^2 \Big| \le ||x^2 \partial_x^4 w^2 ||_{L^\infty} \int \int \epsilon^2 \frac{v^2}{x^2} \lesssim \int \int \epsilon^2  v_x^2 \lesssim \epsilon ||v||_{X_1}^2, \\
& \Big| \int \int \epsilon v_y^2 \partial_x^2 w^2 \Big| \lesssim ||\partial_x^2 w^2||_{L^\infty} \int \int \epsilon v_y^2 \lesssim \epsilon ||v||_{X_1}^2. 
\end{align}

Let us now give formal justifications for (\ref{justify.HO}). First, we note the following bound: 
\begin{align} \n
|\int \int \eps \partial_{xx} (\eps v_{xx} + v_{yy}) vw^2| &\lesssim || (v_{xxxx} + v_{xxyy}) w||_{L^2} ||vw||_{L^2} \\
& \lesssim ||\{u_{xx}, v_{xx}\} \zeta_4 x ||_{\dot{H}^2} ||vw||_{L^2} < \infty.
\end{align}

For the final estimate, we have used the elliptic regularity established in (\ref{rhsb}) and the estimate found in (\ref{ZN1}). This then justifies: 
\begin{align} \label{formal.three}
\int \int \eps \partial_{xx} (\Delta_\eps v) vw^2 &= - \int \int \eps \partial_x (\Delta_\eps v) \partial_x (vw^2) + \lim_{M \rightarrow \infty} \int \eps \partial_x \Delta_\eps v vw^2 \\
& = \int \int \eps \partial_x (\Delta_\eps v) \partial_x (vw^2).
\end{align}

For the above limit, one must use (\ref{ZN}) together with the estimate (\ref{three}). The remaining integrations in (\ref{justify.HO}) are justified in the standard way, as in (\ref{stand.1}) - (\ref{stand.2}) with the aid of (\ref{ZN1}) - (\ref{ZN}). In so doing, one computes limits of the following types: 
\begin{align}
&|\int_{x = M} v_{xx} v \partial_x w^2 | \le ||v_{xx} x^2||_{L^2_y} ||v x^{\frac{3}{2}}||_{L^2_y} M^{-\frac{7}{2}} M^2, \\
& |\int_{x = M} v_{xx} v_x w^2| \le ||v_{xx} x^2||_{L^2_y} ||v_x x^2||_{L^2_y} M^{-4}M^2, \\
& | \int_{x = M} v_x^2 w \le ||v_x x^2||_{L^2_y}^2 M^{-4} M, \\
& | \int_{x = M} v_x v \partial_x^2 w^2 | \le ||v_x x^2||_{L^2_y} ||v x^{\frac{3}{2}}||_{L^2_y} M^{-\frac{7}{2}}, \\
& | \int_{x = M} v^2 \partial_x^3 w^2| \le ||v x^{\frac{3}{2}}||_{L^2_y}^2 M^{-3}.
\end{align}

All of these terms vanish upon taking $M \rightarrow \infty$. This completes the formal justification of all of the integrations thus far. For the profile terms, calculations which are analogous to the lowest-order case yield:
\begin{align}\nonumber
\Big| \int \int \{\partial_{yx} S_u  &- \epsilon \partial_{xx} S_v \} \cdot vw^2\Big| \lesssim ||u, v||_{X_1}^2 + \mathcal{O}(\delta) ||\{\sqrt{\epsilon}v_{xx}, v_{xy}\} w^{\frac{3}{2}}||_{L^2}^2. 
\end{align}

For completeness, we include all details here. We will now be working through the following set of terms, referring to the definition in (\ref{defn.Su}):
\begin{align} \n
\int \int \partial_{yx} S_u \cdot vw^2 &= \int \int \partial_x S_u \cdot u_x w^2 \\ \label{read.0.0}
& = \int \int \partial_x \Big[ u_R u_x + u_{Rx}u +  u_{Ry}v + v_R u_y \Big] \cdot u_x w^2.
\end{align}

We begin with:
\begin{align} \nonumber
|\int \int \partial_{x}(u_R u_x ) u_x w^2| &=| \int \int u_{Rx} u_x^2 w^2 + \int \int u_R u_{xx} u_x w^2| \\ \nonumber
& = | \int \int \frac{u_{Rx}}{2} u_x^2 w^2 - \int \int \frac{u_R}{2} u_x^2 \partial_x w^2 + \lim_{M \rightarrow \infty} \int_{x = M} \frac{u_R}{2} u_x^2 w^2| \\
& \le ||u_{Rx} x, u_R||_{L^\infty} ||\partial_x w||_{L^\infty} ||u_x x^{\frac{1}{2}}||_{L^2}^2 \lesssim ||u||_{X_1}^2.  
\end{align}

Above, we have used the estimate (\ref{PE0.4}) and (\ref{PE5}) for the $u^E_{Rx}$ term. The limit above has vanished according to (\ref{ZN}). Next, 
\begin{align}
\Big| \int \int \partial_x (u_{Rx} u) u_x w^2 \Big| = \Big| \int \int u_{Rxx} uu_x w^2 + u_{Rx} u_x^2 w^2 \Big|
\end{align}

We must break up the profile term, $u_{Rxx}$, into Euler and Prandtl. For the $u^P_{Rxx}$ term we use (\ref{PE0.3}) with $k = 2, j = 0, m = 1$, and subsequently the Hardy inequality in $y$:
\begin{align} \n
|\int \int u^P_{Rxx}uu_x w^2|  &\le ||u^P_{Rxx} yx^{\frac{3}{2}}||_{L^\infty} ||\frac{u}{y}||_{L^2} ||u_x x^{\frac{1}{2}}||_{L^2} \\
& \lesssim ||u_y||_{L^2} ||u_x x^{\frac{1}{2}}||_{L^2} \lesssim ||u||_{X_1}^2.
\end{align}

For the $u^E_{Rxx}$ term, we use (\ref{PE5}), followed by the Hardy inequality in $x$:
\begin{align}
&|\int \int u^E_{Rxx}uu_x w^2| |\le ||u^E_{Rxx} x^{\frac{5}{2}}||_{L^\infty} ||\frac{u}{x}||_{L^2} ||u_x x^{\frac{1}{2}}||_{L^2} \lesssim \sqrt{\eps} ||u||_{X_1}^2, \\
&|\int \int u_{Rx} u_x^2 w^2| \le ||u_{Rx} x||_{L^\infty} ||u_x x^{\frac{1}{2}}||_{L^2}^2 \le \mathcal{O}(\delta) ||u||_{X_1}^2. 
\end{align}

The next profile term from (\ref{read.0.0}) is the most delicate convective term: 
\begin{align} \label{conv.ho.1}
\int \int \partial_x (u_{Ry}v) \cdot u_x w^2 = \int \int u_{Rxy}v u_x w^2 + \int \int u_{Ry} v_x u_x w^2 
\end{align}

For the first term in (\ref{conv.ho.1}), we bound, according to (\ref{PE0.3}) for the Prandtl contribution, coupled with the Hardy inequality in $y$: 
\begin{align} \n
|\int \int u^P_{Rxy} vu_x w^2 |& \le ||u^P_{Rxy} xy||_{L^\infty} ||\frac{v}{y}x^{\frac{1}{2}}||_{L^2} ||u_x x^{\frac{1}{2}}||_{L^2} \\ 
& \le ||u^P_{Rxy} xy||_{L^\infty} ||v_y x^{\frac{1}{2}}||_{L^2} ||u_x x^{\frac{1}{2}}||_{L^2} \lesssim ||u,v||_{X_1}^2, 
\end{align}

and (\ref{PE5}) for the Euler contribution, followed by the Hardy inequality in $x$ direction:
\begin{align} \n
|\int \int \sqrt{\epsilon} u^E_{RxY} vu_x w^2| &\le \sqrt{\epsilon} ||u^E_{RxY} x^{\frac{5}{2}}||_{L^\infty} ||\frac{v}{x}||_{L^2} ||u_x x^{\frac{1}{2}}||_{L^2} \\ 
& \lesssim  \sqrt{\epsilon} ||\sqrt{\epsilon}v_x||_{L^2} ||u_x x^{\frac{1}{2}}||_{L^2} \lesssim \sqrt{\epsilon}  ||u,v||_{X_1}^2. 
\end{align}

For the second term in (\ref{conv.ho.1}), we bound by using profile estimates (\ref{PE0.5}), (\ref{PE3}), (\ref{PE5}):
\begin{align} \nonumber
\Big| \int \int u_{Ry} v_x u_x w^2 \Big|&=  \Big| \int \int  \Big( u^P_{Ry} + \sqrt{\epsilon} u^E_{RY} \Big) v_x u_x w^2 \Big| \\ \nonumber
& \le ||u^P_{Ry} y||_{L^\infty} ||\frac{v_x}{y} w^{\frac{3}{2}}||_{L^2} ||u_x x^{\frac{1}{2}}||_{L^2} + ||u^E_{RY} x^{\frac{3}{2}}||_{L^\infty} ||\sqrt{\epsilon} v_x ||_{L^2} ||u_x x^{\frac{1}{2}}||_{L^2} \\
& \le \mathcal{O}(\delta) ||v_{xy} w^{\frac{3}{2}}||_{L^2} ||u||_{X_1} + \sqrt{\eps} ||u,v||_{X_1}^2. 
\end{align}

Implicit in the above calculation is the fact that both the profile terms $u_{Ry}$ and the terms from $X_1$ are controlled on the full domain, $\Omega^N$, and therefore do not demand any of the ``nondegeneracy" of the cut-off weight $w$ near $x = 1$. The next profile term from $S_u$ is: 
\begin{align} \label{2.Su.4}
\int \int \partial_x \Big( v_R u_y \Big) \cdot u_x w^2= \int \int v_{Rx} u_y u_x w^2 + \int \int v_R u_{xy} u_x w^2
\end{align}

We estimate by using (\ref{PE0.1}) - (\ref{PE1}), and the Eulerian estimates in (\ref{PE4}) and (\ref{PE4.new.2}):
\begin{align}
&\Big| \int \int v_{Rx} u_y u_x w^2 \Big| \le ||v_{Rx} x^{\frac{3}{2}}||_{L^\infty} ||u_y||_{L^2} ||u_x x^{\frac{1}{2}}||_{L^2} \lesssim ||u,v||_{X_1}^2, \\ \n
&\Big| \int \int v_R u_{xy} u_x w^2  \Big| \le ||v_R x^{\frac{1}{2}}||_{L^\infty} ||u_{xy}w||_{L^2} ||u_x x^{\frac{1}{2}}||_{L^2} \\
& \hspace{30 mm} \lesssim ||u,v||_{X_1}^2 + \mathcal{O}(\delta) ||u_{xy}w||_{L^2}^2.
\end{align}

Let us now summarize the $S_u$ contribution: 
\begin{align}
\Big| \int \int \partial_x \Big(S_u \Big) \cdot u_x w^2 \Big| \lesssim ||u,v||_{X_1}^2 + \mathcal{O}(\delta) ||v_{xy}w^{\frac{3}{2}}||_{L^2}^2 + \mathcal{O}(\delta) ||u_{xy}w||_{L^2}^2.
\end{align}

The $u_{xy}$ term in the above estimate is absorbed to the left-hand side of (\ref{weight.1}), by taking $\delta$ sufficiently small. The next task is to move to the four profile terms in $S_v$, which we now do, and recall the terms for convenience:
\begin{align} \label{dr.1}
\int \int -\epsilon \partial_{xx} \{u_R v_x + v_{Rx}u + v_R v_y + v_{Ry}v \} vw^2.
\end{align}

Let us begin by giving some formal justification to the initial $x$-integration by parts which will be required to treat the above set of terms. First, one observes using (\ref{ZN1}) and the definition of $Z$ in (\ref{norm.Z}) that all terms are in $L^1(\Omega^N)$. Therefore, an integration by parts in $x$ would contribute the following integration in the limit: 
\begin{align} \n
&\lim_{M \rightarrow \infty} \int -\epsilon \partial_x \Big[ u_R v_x + v_{Rx}u + v_R v_y + v_{Ry}u \Big] vw^2 \ud y \\ \n
& = \lim_{M \rightarrow \infty} \int -\epsilon \Big[ u_{Rx}v_x + u_R v_{xx} + v_{Rxx} u + v_{Rx}u_x + v_{Rx}v_y \\ \label{limit.above}
& \hspace{30 mm} + v_R v_{xy} + v_{Rxy} u + v_{Ry}u_x \Big] vw^2 \ud y.
\end{align}

We shall estimate each term above, with the aid of the profile estimates in (\ref{PE1}) - (\ref{PE4}), and also the $Z(\Omega^N)$ estimates in Subsection \ref{appendix}, (\ref{ZN}). 
\begin{align}
&|\int u_{Rx} v_x v| \le ||u_{Rx}x||_{L^\infty} ||v_x x^2||_{L^2_y} ||v x^{\frac{3}{2}}||_{L^2_y} M^{-\frac{9}{2}}, \\
&| \int u_R v_{xx} v| \le ||v_{xx} x^2||_{L^2_y} ||v x^{\frac{3}{2}}||_{L^2_y} M^{-\frac{7}{2}}, \\
&| \int v_{Rxx}u v| \le ||v_{Rxx} x^{\frac{5}{2}}||_{L^\infty} ||ux^{\frac{1}{2}}||_{L^2_y} ||vx^{\frac{3}{2}}||_{L^2_y} M^{-\frac{9}{2}}, \\
& | \int v_{Rx}u_x v| \le ||v_{Rx} x^{\frac{3}{2}}||_{L^\infty} ||u_x x^{\frac{3}{2}}||_{L^2_y} ||vx^{\frac{3}{2}}||_{L^2} M^{-\frac{9}{2}}, \\
& | \int v_R v_{xy} v| \le ||v_R x^{\frac{1}{2}}||_{L^\infty} ||v_{xy} x^2||_{L^2_y} ||v x^{\frac{3}{2}}||_{L^2_y} M^{-4}, \\
& | \int v_{Rxy}u v| \le ||v_{Rxy} x^2||_{L^\infty} ||ux^{\frac{1}{2}}||_{L^2_y} ||v x^{\frac{3}{2}}||_{L^2_y} M^{-4}, \\
& | \int v_{Ry} u_x v| \le ||v_{Ry}x||_{L^\infty} ||u_x x^{\frac{3}{2}}||_{L^2_y} ||v x^{\frac{3}{2}}||_{L^2_y} M^{-4}. 
\end{align}

From here, it is clear that the limit above in (\ref{limit.above}) is zero. With this formal justification in hand, we continue with the \textit{a-priori} estimate. For the first term from (\ref{dr.1}),
\begin{align} \nonumber
\Big| \int \int -\epsilon \partial_{xx} (u_R v_x)& \cdot v w^2 \Big| = \Big| \int \int \epsilon \partial_x (u_R v_x) \cdot \{v_x w^2 +2 v w w' \} \Big| \\ \label{st.1}
& = \Big| \int \int \epsilon \{u_{Rx}v_x + u_R v_{xx} \} \cdot \{v_x w^2 + 2v w w' \} \Big|.
\end{align}

Let us individually treat each term in (\ref{st.1}). First, by combining (\ref{PE0.4}) and (\ref{PE5}):
\begin{align} \label{I1.it}
\Big| \int \int \epsilon u_{Rx} v_x^2 w^2 \Big| \lesssim ||u_{Rx} x||_{L^\infty} ||\sqrt{\epsilon} v_x x^{\frac{1}{2}}||_{L^2}^2 \le \mathcal{O}(\delta) ||v||_{X_1}^2. 
\end{align}

Next, 
\begin{align} \nonumber
\Big| \int \int \epsilon u_R v_{xx} v_x w^2 \Big| &= \Big| \int \int \frac{\epsilon}{2} v_x^2 \partial_x (u_R w^2) + \lim_{M \rightarrow \infty} \int_{x = M} \frac{\eps}{2} u_R v_x^2 w^2 \Big| \\ \nonumber
& = \Big| \int \int \frac{\epsilon}{2} v_x^2 u_{Rx} w^2 \Big| +  \Big| \int \int \epsilon v_x^2 u_R ww' \Big| \\ \label{I1.it.2}
& \lesssim ||u_{Rx}w, w'||_{L^\infty} ||\sqrt{\epsilon} v_x x^{\frac{1}{2}}||_{L^2}^2 \lesssim ||v||_{X_1}^2. 
\end{align}

The limit above vanishes according to (\ref{ZN}). Next, we shall split $u_{Rx} = u^P_{Rx} + u^E_{Rx}$:
\begin{align} \nonumber
\Big| \int \int \epsilon u_{Rx} v_x v ww' \Big| &\le \Big| \int \int \epsilon u^P_{Rx} v_x v ww' \Big| +  \Big| \int \int \epsilon u^E_{Rx} v_x v ww' \Big| \\ \nonumber
& \le \sqrt{\epsilon}|| u^P_{Rx} yx^{\frac{1}{2}}||_{L^\infty} ||\sqrt{\epsilon} v_x x^{\frac{1}{2}}||_{L^2} ||\frac{v}{y}||_{L^2} \\ \nonumber
& \hspace{20 mm} + ||u^E_{Rx} x^{\frac{3}{2}}||_{L^\infty} ||\sqrt{\epsilon}v_x x^{\frac{1}{2}}||_{L^2} ||\sqrt{\epsilon} \frac{v}{x}||_{L^2} \\ \n
&\le \sqrt{\epsilon}|| u^P_{Rx} yx^{\frac{1}{2}}||_{L^\infty} ||\sqrt{\epsilon} v_x x^{\frac{1}{2}}||_{L^2} ||v_y||_{L^2} \\ \nonumber
& \hspace{20 mm} + ||u^E_{Rx} x^{\frac{3}{2}}||_{L^\infty} ||\sqrt{\epsilon}v_x x^{\frac{1}{2}}||_{L^2} ||\sqrt{\epsilon} v_x ||_{L^2}  \\ \label{I1.it.3}
& \le \mathcal{O}(\delta) ||v||_{X_1}^2. 
\end{align}

Above, we have used (\ref{PE0.4}) (with $m = 1$), coupled with Hardy inequalities in $y$ and $x$, and (\ref{PE5}) for the Euler term. The fourth term in (\ref{st.1}) is by far the most delicate: 
\begin{align} \nonumber
\int \int \epsilon u_R v_{xx} v \partial_x w^2 &= - \int \int \epsilon v_x \partial_x \Big( u_R v \partial_x w^2 \Big)  + \lim_{M \rightarrow \infty} \int_{x = M} \eps u_R v_x v \p_x w^2  \\ \nonumber
& = - \int \int \epsilon v_x \{  u_{Rx}v  \partial_x w^2 + u_R v_x  \partial_x w^2 + u_R v \partial_x^2 w^2 \} \\
& = I_1 + I_2 + I_3. 
\end{align}

We justify the above integration. It is clear, according to (\ref{ZN1}), that $v_{xx}v \partial_x w^2 \in L^1(\Omega^N)$, and so the boundary contribution at $x = \infty$ is:
\begin{align}
|\int_{x = M} \eps u_R v_x v \partial_x w^2| \le ||v_x x^2||_{L^2_y} ||v x^{\frac{3}{2}}||_{L^2_y} M^{-\frac{7}{2}}M \xrightarrow{M \rightarrow \infty} 0. 
\end{align}

$I_1$ follows similarly to (\ref{I1.it.3}). For $I_2$, we have:  
\begin{align}
|I_2| = |\int \int \eps u_R v_x^2 \p_x w^2| &\lesssim ||\p_x w||_{L^\infty} ||u_R||_{L^\infty} ||\sqrt{\eps} v_x x^{\frac{1}{2}}||_{L^2}^2  \lesssim ||v||_{X_1}^2. 
\end{align}

We will estimate $I_3$. For this, we note that due to the almost-linear structure of our weight, 
\begin{align} \label{crucAL}
|\partial_x^3 w^2| \lesssim x^{-K}, \text{ for any $K$,} 
\end{align}

and so (again with the aid of (\ref{ZN})): 
\begin{align} \n
\Big| \int \int \epsilon u_R v_x v \partial_x^2 w^2 \Big| &= \Big| \int \int \frac{\epsilon}{2} v^2 \Big( u_{Rx} \partial_x^2 w^2 + u_R \partial_x^3 w^2 \Big) + \lim_{M \rightarrow \infty} \int_{x = M} \eps u_R v^2 \partial_x^2 w^2 \Big| \\ \label{cruc.}
& = \Big| \int \int \frac{\epsilon}{2} v^2 \Big( u_{Rx} \partial_x^2 w^2 + u_R \partial_x^3 w^2 \Big) \Big|.
\end{align}

For the $u_{Rx}$ term above in (\ref{cruc.}), we shall spit into Euler and Prandtl components, and use estimates (\ref{PE0.4}) (with $m = 2$), (\ref{PE5}):
\begin{align}
 |\int \int \epsilon v^2 u^P_{Rx} \partial_x^2 w^2 | \lesssim \epsilon ||u^P_{Rx}y^2||_{L^\infty} ||\frac{v}{y} ||_{L^2}^2 \le \epsilon ||u^P_{Rx}y^2||_{L^\infty} ||v_y||_{L^2}^2 \lesssim \epsilon \mathcal{O}(\delta) ||v||_{X_1}^2, \\
 | \int \int \epsilon v^2 u^E_{Rx} \partial_x^2 w^2| \lesssim ||u^E_{Rx} x^{\frac{3}{2}}||_{L^\infty} ||\sqrt{\epsilon}\frac{v}{x^{\frac{3}{4}}}||_{L^2}^2 \lesssim \sqrt{\eps} ||\sqrt{\epsilon}v_x x^{\frac{1}{4}}||_{L^2}^2 \lesssim \mathcal{O}(\delta) ||v||_{X_1}^2. 
\end{align}

For the $u_R$ term in (\ref{cruc.}) above, the structure of our weight, (\ref{crucAL}) is important: 
\begin{align}
|\int \int \epsilon v^2 \partial_x^3 w^2| \lesssim ||\sqrt{\epsilon}v_x||_{L^2}^2 \lesssim ||v||_{X_1}^2. 
\end{align}

For the second term from (\ref{dr.1}), we integrate by parts again to arrive at: 
\begin{align} \nonumber
\Big| \int \int &\epsilon \partial_x (v_{Rx} u) \cdot \{v_x w^2 + 2v w w' \} \Big| \\
& = \Big| \int \int \epsilon \{v_{Rxx}u + v_{Rx}u_x \}\cdot \{v_x w^2 + 2vw w' \} \Big| 
\end{align}

Of these, we estimate, according to (\ref{PE0.1}) - (\ref{PE1}) and (\ref{PE4}): 
\begin{align}
&\epsilon \Big| \int \int v_{Rxx} uv ww' \Big| \le \epsilon ||v_{Rxx} x^{\frac{5}{2}}||_{L^\infty} ||\frac{u}{x^{\frac{3}{4}}}||_{L^2} ||\frac{v}{x^{\frac{3}{4}}}||_{L^2} \lesssim \sqrt{\epsilon} ||u_xx^{\frac{1}{4}}||_{L^2} ||\sqrt{\epsilon}v_x x^{\frac{1}{4}}||_{L^2}, \\
& \epsilon \Big| \int \int v_{Rxx}u v_x w^2 \Big| \le \sqrt{\epsilon} ||v_{Rxx}x^{\frac{5}{2}}||_{L^\infty} ||\frac{u}{x}||_{L^2} ||\sqrt{\epsilon}v_x x^{\frac{1}{2}}||_{L^2} \lesssim \sqrt{\eps} ||u,v||_{X_1}^2,\\
& \epsilon \Big| \int \int v_{Rx}u_x v_x w^2 \Big| \le \sqrt{\eps} ||v_{Rx} x^{\frac{3}{2}}||_{L^\infty} ||u_x||_{L^2} || \sqrt{\eps} v_x x^{\frac{1}{2}}||_{L^2} \lesssim \sqrt{\eps} ||u,v||_{X_1}^2,\\
& \epsilon \Big| \int \int v_{Rx}u_x v ww' \Big| \le \epsilon ||v_{Rx} x^{\frac{3}{2}}||_{L^\infty} ||u_x x^{\frac{1}{2}}||_{L^2} ||\frac{v}{x}||_{L^2} \lesssim \sqrt{\epsilon} ||u_x x^{\frac{1}{2}}||_{L^2} ||\sqrt{\epsilon}v_x||_{L^2}.
\end{align}

The third term from (\ref{dr.1}) is given by: 
\begin{align} \nonumber
-\epsilon \int \int &\partial_{xx} (v_R v_y) vw^2 = \int \int \epsilon \partial_x (v_R v_y) \{v_x w^2 + 2w w' v \}\\
&= \int \int \epsilon \{v_{Rx} v_y + v_R v_{xy} \} \{v_x w^2 + 2w w' v \}.
\end{align}

We estimate term by term, appealing to estimate (\ref{PE0.1}) and (\ref{PE4}):
\begin{align}
\Big| \int \int \epsilon v_{Rx} v_y v_x w^2 \Big| &\lesssim \sqrt{\epsilon}||v_{Rx} x^{\frac{3}{2}}||_{L^\infty} ||v_y||_{L^2} ||\sqrt{\epsilon} v_xx^{\frac{1}{2}}||_{L^2} \lesssim \sqrt{\epsilon} ||v||_{X_1}^2, \\ \nonumber
 \Big|  \int \int \epsilon v_{Rx} v_y v w w' \Big| &\lesssim \epsilon ||v_{Rx} x^{\frac{3}{2}}||_{L^\infty} ||v_y x^{\frac{1}{2}}||_{L^2} ||\frac{v}{x}||_{L^2} \lesssim \sqrt{\epsilon}|| v_y x^{\frac{1}{2}}||_{L^2}  ||\sqrt{\epsilon}v_x||_{L^2} \\
&\lesssim \sqrt{\epsilon} ||v||_{X_1}^2. 
\end{align}

Next, appealing to estimate (\ref{PE1}) (with $k = 0, j=0,1, m= 0,1$),  and the Euler estimate in (\ref{PE4.new.2}), coupled with the Hardy inequality in $x$ and in $y$ directions:
\begin{align} \nonumber
 \Big|\int \int \epsilon v_R v_{xy} vw w' \Big| &= \Big|\int \int \epsilon v_x \{v_R v_y + v_{Ry}v \}ww' \Big| \\ \nonumber
 &\lesssim ||v_Rx^{\frac{1}{2}}||_{L^\infty} ||\sqrt{\epsilon}v_x x^{\frac{1}{2}}||_{L^2}||v_y ||_{L^2}  + \sqrt{\epsilon} ||v^P_{Ry} yx^{\frac{1}{2}}||_{L^\infty} ||\sqrt{\epsilon}v_x x^{\frac{1}{2}}||_{L^2} ||\frac{v}{y}||_{L^2} \\  \nonumber
 & \hspace{20 mm} + \sqrt{\eps} ||v^E_{RY} x^{\frac{3}{2}}||_{L^\infty} ||\sqrt{\epsilon}v_x x^{\frac{1}{2}}||_{L^2} ||\sqrt{\epsilon} \frac{v}{x} ||_{L^2}  \\ \n
 & \lesssim ||v_Rx^{\frac{1}{2}}||_{L^\infty} ||\sqrt{\epsilon}v_x x^{\frac{1}{2}}||_{L^2}||v_y ||_{L^2}  + \sqrt{\epsilon} ||v^P_{Ry} yx^{\frac{1}{2}}||_{L^\infty} ||\sqrt{\epsilon}v_x x^{\frac{1}{2}}||_{L^2} ||v_y||_{L^2} \\  \nonumber
 & \hspace{20 mm} + \sqrt{\eps} ||v^E_{RY} x^{\frac{3}{2}}||_{L^\infty} ||\sqrt{\epsilon}v_x x^{\frac{1}{2}}||_{L^2} ||\sqrt{\epsilon} v_x||_{L^2} \\
 &\lesssim \mathcal{O}(\delta) ||u, v||_{X_1}^2. 
\end{align}

Upon integrating by parts  and appealing to (\ref{PE1}) (with $k = 0, j = 1, m = 0$) and (\ref{PE4.new.2}):
\begin{align}
& \Big| \int \int \epsilon v_R v_{xy} v_x w^2  \Big| =  \Big| \int \int \frac{\epsilon}{2} v_{Ry}v_x^2 w^2  \Big| \lesssim \sqrt{\epsilon} ||xv_{Ry}||_{L^\infty} ||\sqrt{\epsilon}v_x x^{\frac{1}{2}}||_{L^2}^2 \le \mathcal{O}(\delta) ||v||_{X_1}^2. 
\end{align}

The final contribution from $S_v$ in (\ref{dr.1}) is: 
\begin{align} \label{finalfour}
\epsilon \int \int \partial_{xx} (v_{Ry}v) \cdot vw^2 = \epsilon \int \int \{v_{Rxxy} v+ 2v_{Rxy} v_x + v_{Ry}v_{xx} \} vw^2
\end{align}

For the first term, we split:
\begin{align} \label{c11}
&\Big| \int \int \epsilon v^P_{Rxxy} v^2 w^2 \Big| \le ||v^P_{Rxxy} x^2 y^2||_{L^\infty} ||\frac{v}{y}||_{L^2}^2 \lesssim ||v_y||_{L^2}^2 \lesssim  ||u, v||_{X_1}^2, \\ \label{c12}
&\Big| \int \int \epsilon v^E_{Rxxy} v^2 w^2 \Big| \le \epsilon^{\frac{3}{2}} ||v^E_{RxxY} x^{\frac{7}{2}}||_{L^\infty} ||\frac{v}{x^{\frac{3}{4}}}||_{L^2}^2 \lesssim \sqrt{\epsilon} ||\sqrt{\eps} v_x x^{\frac{1}{4}}||_{L^2}^2 \lesssim \sqrt{\epsilon} ||u, v||_{X_1}^2. 
\end{align}

For (\ref{c11}), we appeal to estimate (\ref{PE0.1}) (with $k = 2, j = 1, m = 2$), and for (\ref{c12}), we appeal to estimate (\ref{PE4}).  For the next term in (\ref{finalfour}), again with the splitting $v_R = v^P_R + v^E_R$, by using estimate (\ref{PE0.1}), we have:
\begin{align} \n
|\int \int \eps v^P_{Rxy} v_x v w^2 | &\le \sqrt{\eps} ||v^P_{Rxy} yx^{\frac{3}{2}}||_{L^\infty} || \frac{v}{y}x^{\frac{1}{2}}||_{L^2} ||\sqrt{\eps} v_x x^{\frac{1}{2}}||_{L^2} \\ \n
& \le \sqrt{\eps} ||v^P_{Rxy} yx^{\frac{3}{2}}||_{L^\infty} || v_y x^{\frac{1}{2}}||_{L^2} ||\sqrt{\eps} v_x x^{\frac{1}{2}}||_{L^2} \\
& \le \sqrt{\eps}  ||u, v||_{X_1}^2.
\end{align}

For the Euler contribution, by (\ref{PE4}), we have: 
\begin{align} \n
| \int \int \eps^{\frac{3}{2}} v^E_{RxY} vv_x w^2 | &\le \sqrt{\eps} ||v^E_{RxY} x^{\frac{5}{2}}||_{L^\infty} || \sqrt{\eps} \frac{v}{x}||_{L^2} ||\sqrt{\eps} v_x x^{\frac{1}{2}}||_{L^2} \\ \n
& \le \sqrt{\eps} ||v^E_{RxY} x^{\frac{5}{2}}||_{L^\infty} || \sqrt{\eps} v_x ||_{L^2} ||\sqrt{\eps} v_x x^{\frac{1}{2}}||_{L^2} \\
& \le \sqrt{\eps} ||v||_{X_1}^2. 
\end{align}

The third term in (\ref{finalfour}) we split into Euler and Prandtl components, and use (\ref{PE1}) (with $j= 1, m = 1$), and (\ref{PE4}):
\begin{align} \nonumber
\Big| \int \int \epsilon v_{Ry} v_{xx} v w^2 \Big| &\le \Big| \int \int \epsilon v_{Ry}^P v_{xx} v w^2 \Big| + \Big| \int \int \epsilon v_{Ry}^E v_{xx} v w^2 \Big| \\ \nonumber
& \le \sqrt{\epsilon} ||v^P_{Ry} yx^{\frac{1}{2}}||_{L^\infty} ||\frac{v}{y}x^{\frac{1}{2}}||_{L^2} ||\sqrt{\epsilon}v_{xx} w^{\frac{3}{2}}||_{L^2} \\ \nonumber
& \hspace{30 mm} +\sqrt{\eps} ||v^E_{RY} x^{\frac{3}{2}}||_{L^\infty} ||\sqrt{\epsilon}v_{xx} w^{\frac{3}{2}}||_{L^2} ||\sqrt{\epsilon}\frac{v}{x}||_{L^2} \\ \nonumber
& \le \sqrt{\eps} ||\{v_y, \sqrt{\epsilon}v_x \} x^{\frac{1}{2}}||_{L^2} ||\sqrt{\epsilon}v_{xx} w^{\frac{3}{2}}||_{L^2} \\
&\le \sqrt{\eps} ||v||_{X_1} ||\sqrt{\epsilon}v_{xx} w^{\frac{3}{2}}||_{L^2}.
\end{align}

Summarizing the contributions from $S_v$:
\begin{align}
\Big| \int \int -\epsilon \partial_{xx} \{S_v \} vw^2\Big| \lesssim ||u,v||_{X_1}^2 + \mathcal{O}(\delta) ||\sqrt{\epsilon}v_{xx}w^{\frac{3}{2}}||_{L^2}^2
\end{align}

On the right-hand side, we have: 
\begin{align}
\int \int \partial_{xy} f \cdot vw^2 &= - \int \int f_x \cdot v_y w^2, \\ \nonumber
- \epsilon \int \int \partial_{xx} g \cdot vw^2 &= \epsilon \int \int g_x \cdot \{v_x w^2 + 2vw w' \} \\ \label{gsec}
& = \epsilon \int \int g_x v_x w^2 - \epsilon \int \int g \{ v_x w w' + v \partial_{xx}(w^2)\}. 
\end{align}

We estimate the $g$ term: 
\begin{align} \n
| \int \int \eps g v_x ww' + g v \p_{xx} (w^2) | &\le \eps \int \int |g| |v_x| |ww'| + \eps \int \int |g| |v| |\p_{xx}(w^2)| \\ \label{gsec.2}
& \le \int \int \eps |g| |v_x| |w| + \eps |g| |v| \le \mathcal{W}_1. 
\end{align}

A formal point: one can easily check that $\partial_{xx} g \cdot vw^2$ and $ g_x \{v_x w^2 + v \partial_x w^2 \} \in L^1(\Omega^N)$, and so the term at $x = \infty$ which is contributed as a result of integrating by parts in $x$ is: 
\begin{align} \n
& \lim_{M \rightarrow \infty} |\int_{x = M} \eps \partial_x g v w^2| \\
& = \lim_{M \rightarrow \infty} |\int_{x = M} \eps \Big( \eps^{-\frac{n}{2}-\gamma} R^{v,n}_x + \eps^{\frac{n}{2}+\gamma} (u_x v_x + uv_{xx} + v_x v_y + v v_{xy}) \Big) vw^2 |.
\end{align}   

We can estimate each term, using (\ref{Rnl2}), (\ref{ZN}), for arbitrarily small constants $\kappa > 0$, and $\sigma_n$ as in (\ref{sigma.i}) 
\begin{align}
&|\int_{x = M} R^{v,n}_x \cdot vw^2 | \lesssim ||R^{v,n}_x x^{\frac{9}{4}-2\sigma_n - \kappa} ||_{L^2_y} ||vx^{\frac{3}{2}}||_{L^2_y} M^{-\frac{15}{4}+2\sigma_n + \kappa} M^2, \\
& |\int_{x = M} u_x v_x vw^2| \lesssim ||v_x x^{\frac{3}{2}}||_{L^\infty} ||u_x x^{\frac{3}{2}}||_{L^2_y} ||vx^{\frac{3}{2}}||_{L^2_y} \lesssim M^{-\frac{9}{2}} M^2, \\
& |\int_{x = M} uv_{xx} vw^2| \lesssim ||u||_{L^\infty} ||v_{xx} x^2||_{L^2_y} ||vx^{\frac{3}{2}}||_{L^2_y} M^{-\frac{7}{2}}M^2, \\
& |\int_{x = M} v_x v_y v w^2 | \le ||v_x x^{\frac{3}{2}}||_{L^\infty} ||v_y x^{\frac{3}{2}}||_{L^2_y} ||v x^{\frac{3}{2}}||_{L^2_y} M^{-\frac{9}{2}} M^2, \\
& |\int_{x= M} vv_{xy} v w^2| \le ||vx^{\frac{1}{2}}||_{L^\infty} ||v_{xy} x^2||_{L^2_y} ||vx^{\frac{3}{2}}||_{L^2_y} \le M^{-4} M^2. 
\end{align}

It is clear that all of these vanish as $M \rightarrow \infty$. This justifies the integration by parts in (\ref{gsec}). The second terms in (\ref{gsec}) are part of (\ref{calw1}). The desired estimate is established. 

\end{proof}

Next, we come to the second-order positivity estimate. It is necessary to apply approximations to the actual weights we would like to control in order to avoid boundary contributions from $x = \infty$. To this end, let us define $\phi(x)$ to be the standard mollifier, with the usual properties of unit mass, positivity, and support in $B(0,1)$. Next, let $\chi(x)$ be a standard cutoff function, equal to $1$ inside $[1,2]$, and equal to zero on the interval $[3, \infty)$. Also, recall the definition of $\rho_2$ provided in equation (\ref{rho}). Then let us define
\begin{align} \label{Nweight}
a_L(x) &:= \text{min} \{x, L\}, \hspace{3 mm} \phi_L(x) := \frac{1}{(L/2)} \phi \Big( \frac{x}{(L/2)} \Big), \\ w_{2,L}(x) &:= \Big( a_L \ast \phi_{L} \Big) \chi \Big( \frac{x}{10 L} \Big) \rho_2(x). 
\end{align}

The relevant properties of this weight are summarized: 
\begin{lemma} The weight $w_L$ satisfies: 
\begin{align}
&w_{2,L}(x) = x, \text{ for } 60 \le x \le \frac{L}{2}, \hspace{3 mm} w_{2,L}(x) = L \text{ for }  \frac{3L}{2} \le x \le 10 L,  \\  \label{weight.m}
& w_{2,L}(x) = 0 \text{ for } x \ge 30L \text{ and } 1 \le x \le 50, \hspace{3 mm} \Big| x^{k-1} \partial_x^k w_{2,L} \Big| \le C \text{ independent of $L$}.
\end{align}
\end{lemma}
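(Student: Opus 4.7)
The plan is to verify each of the four assertions directly from the factored form $w_{2,L}(x) = (a_L * \phi_L)(x) \cdot \chi(x/(10L)) \cdot \rho_2(x)$, reading off which factor is responsible in each region. The two vanishing statements are immediate from the factors individually: for $1 \le x \le 50$ one has $\rho_2 \equiv 0$ by the definition (\ref{rho}) of $\rho_2$, while for $x \ge 30L$ the argument $x/(10L) \ge 3$ lies in the zero set of the cutoff $\chi$.

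For the identity $w_{2,L}(x) = x$ on $[60, L/2]$, I would first observe that both cutoffs are saturated to one on this interval: $\rho_2 \equiv 1$ for $x \ge 60$, and $\chi(x/(10L)) = 1$ since $x/(10L) \le 1/20$ lies inside the plateau region of the standard cutoff $\chi$. This reduces the claim to the convolution identity. Since $\phi_L$ is supported in $[-L/2, L/2]$, every $s$ with $\phi_L(s) \ne 0$ yields $x - s \le x + L/2 \le L$, so $a_L(x-s) = x-s$; combined with evenness and unit mass of $\phi$,
\[
(a_L * \phi_L)(x) = \int (x-s)\phi_L(s)\,ds = x - \int s\,\phi_L(s)\,ds = x.
\]
The case $w_{2,L}(x) = L$ on $[3L/2, 10L]$ is entirely analogous, this time using that $x - s \ge x - L/2 \ge L$ throughout the support of $\phi_L$, so that $a_L(x-s) = L$ and the convolution collapses to $L$.

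The estimate $|x^{k-1}\partial_x^k w_{2,L}| \le C$ is where the main work lies, and the approach is a Leibniz expansion combined with a careful matching of supports. Any derivative of $\rho_2$ is supported in the bounded region $[50,60]$, where $x^{k-1}$ is order one and the other factors are $O(1)$. Any derivative $\partial_x^j \chi(x/(10L))$ with $j \ge 1$ is supported in $[20L, 30L]$ and produces a prefactor $L^{-j}$; but on this range $(a_L * \phi_L) \equiv L$ is constant, so only the $i=0$ term of the convolution survives and one estimates $L \cdot L^{-j} \cdot x^{k-1} \sim L \cdot L^{-k} \cdot L^{k-1} = O(1)$. The remaining terms involve $\partial_x^k(a_L * \phi_L)$: for $k=1$ this equals $\mathbf{1}_{(-\infty,L)} * \phi_L$ and is pointwise bounded by one, while for $k \ge 2$ one has $|\partial_x^k(a_L * \phi_L)| \lesssim L^{-(k-1)}$ with support concentrated in $|x - L| \le L/2$; on this support $x^{k-1} \sim L^{k-1}$ and the two powers cancel.

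The main obstacle is precisely the uniformity in $L$ of this derivative estimate: one must verify that in every term of the Leibniz expansion the prefactor $x^{k-1}$ exactly balances the negative powers of $L$ generated by differentiating the rescaled cutoff or the mollified truncation, and that the supports are consistent with the indicated decay of $x$. Once the supports have been identified as above, however, each case reduces to an $L$-independent arithmetic check, so no further analytic input is required.
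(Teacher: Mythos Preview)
Your proof is correct and is precisely the detailed expansion of what the paper has in mind; the paper's own proof consists of the single sentence ``All are clear by basic properties of convolutions.'' Your case analysis for the derivative estimate---partitioning the Leibniz terms according to whether a derivative falls on $\rho_2$, on $\chi(\cdot/(10L))$, or entirely on the mollified truncation, and then matching powers of $L$ against $x^{k-1}$ on the respective supports---is exactly the routine verification being asserted.
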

\begin{proof}
All are clear by basic properties of convolutions. 
\end{proof}

Heuristically, the property (\ref{weight.m}) ensures that the weight $w_{2,L}(x)$ behaves like $x$ in the sense that each additional derivative eliminates one factor of the weight. We record: 
\begin{align}
\lim_{L \rightarrow \infty} w_{2,L}(x) = x \rho_2(x) = w_2(x), \text{ for all } x \ge 1. 
\end{align}

\begin{proposition}[Second-Order Positivity] \label{prop.ho.2} For $\delta, \epsilon$ sufficiently small relative to universal constants and $\eps << \delta$, solutions $[u,v] \in Z(\Omega^N)$ to the system (\ref{EQ.NSR.1}) - (\ref{EQ.NSR.3}) satisfy: 
\begin{align}  \label{POS.2.2}
||\{ v_{xy}, \sqrt{\epsilon} v_{xx} \} w_2^{\frac{3}{2}}||_{L^2}^2 \lesssim ||u_{xy}w_2||_{L^2}^2 &+  \epsilon|| \{ v_{xy}, \sqrt{\epsilon}v_{xx} \} w_2||_{L^2}^2 + ||u,v||_{X_1}^2+ \mathcal{W}_1 +  \mathcal{W}_2 .
\end{align}
\end{proposition}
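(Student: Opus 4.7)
The plan is to apply the multiplier $v_x w_{2,L}^3 = \psi_{xx} w_{2,L}^3$ to the twice-differentiated vorticity equation (\ref{stream.2}), where $w_{2,L}$ is the bounded approximating weight from (\ref{Nweight}). This is the natural second-order analog of the multiplier $xv = x \psi_x$ used to prove the first-order positivity estimate (\ref{NSR.Positivity}) in Proposition \ref{prop.pos}: the derivative count on the stream function increases by one to match the additional $\partial_x$ on the equation, and the weight is raised to the third power so as to produce the $w_2^{3/2}$-weighted quantities on the LHS. Working with $w_{2,L}$ rather than $w_2 = \rho_2 x$ directly eliminates boundary contributions at $x = \infty$ during integration by parts, since $w_{2,L}$ has compact support in $x$ by (\ref{weight.m}); one obtains the estimate uniformly in $L$ and then passes $L \to \infty$ by monotone convergence. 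The cutoff $\rho_2$ built into $w_{2,L}$ vanishes on $[1,50]$, so no boundary or corner contributions arise at $x = 1$ either.

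The principal contributions $A_1 := \int \int \partial_{xy}(-\Delta_\epsilon u) v_x w_{2,L}^3$ and $A_2 := \int \int \epsilon \partial_{xx}(\Delta_\epsilon v) v_x w_{2,L}^3$ are handled by repeated integration by parts in $x$ and $y$, combined with the divergence-free identities $u_{xxx} = -v_{xxy}$, $v_{xyy} = -u_{xxy}$, and $u_{xx} = -v_{xy}$. Using the almost-linear property (\ref{ALProp}) to control $\partial_x^k w_{2,L}^3 = O(w_{2,L}^{3-k})$, one finds that $A_1 + A_2$ produces (i) a negative term of the form $-\frac{3}{2} \int \int u_{xy}^2 w_{2,L}^2 \partial_x w_{2,L}$, which is moved to the RHS and controlled by $||u_{xy} w_2||_{L^2}^2$, and (ii) small-in-$\epsilon$ positive contributions like $\frac{3\epsilon}{2} \int \int v_{xy}^2 w_{2,L}^2 \partial_x w_{2,L}$ and $\frac{3\epsilon^2}{2} \int \int v_{xx}^2 w_{2,L}^2 \partial_x w_{2,L}$ which simply join the LHS.

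The essential positive LHS contributions come from the profile terms $B_1 := \int \int \partial_{xy}(S_u) v_x w_{2,L}^3 = -\int \int \partial_x(S_u) v_{xy} w_{2,L}^3$ and $B_2 := \int \int \epsilon \partial_x(S_v) \partial_x(v_x w_{2,L}^3)$. The leading contributions are $-\int \int u_R u_{xx} v_{xy} w_{2,L}^3 = \int \int u_R v_{xy}^2 w_{2,L}^3$ (via $u_{xx} = -v_{xy}$) and $\int \int \epsilon u_R v_{xx}^2 w_{2,L}^3$ respectively, yielding $(1-\delta) ||\{v_{xy}, \sqrt{\epsilon} v_{xx}\} w_{2,L}^{3/2}||_{L^2}^2$ on the LHS. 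All remaining profile terms are errors, estimated by combining the sharp decay estimates (\ref{PE0.1})--(\ref{PE4.new.2}) with Hardy in $y$ (valid since $v(x,0) = 0$) and in $x$ (valid since $u(1,y) = v(1,y) = 0$), producing bounds of the form $\mathcal{O}(\delta) ||\{v_{xy}, \sqrt{\epsilon} v_{xx}\} w_{2,L}^{3/2}||_{L^2}^2 + C ||u, v||_{X_1}^2$, with the first piece absorbed into the LHS for $\delta$ small.

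The main obstacle will be the second-order convective error $\int \int u_{Rxy} v \cdot v_{xy} w_{2,L}^3$ arising from $\partial_x S_u$, which is the second-order analog of the delicate term (\ref{NSR.conv.0}). For the Prandtl part one absorbs the factor $xy$ into $u^P_{Rxy}$ via $||u^P_{Rxy} \cdot xy||_{L^\infty} \lesssim 1$ from (\ref{PE0.3}) and then uses Hardy in $y$ to convert $v/y$ into $v_y$; for the Euler part one uses (\ref{PE5}) together with Hardy in $x$. Finally, the forcing $F := \int \int (f_{xy} - \epsilon g_{xx}) v_x w_{2,L}^3$ is integrated by parts once in $y$ for the $f_{xy}$ piece, yielding $\int \int f_x v_{xy} w_{2,L}^3$ controlled by $\mathcal{W}_{2,P}$; the $\epsilon g_{xx}$ piece, after two integrations by parts in $x$, contributes to both $\mathcal{W}_{2,P}$ and $\mathcal{W}_{2,E}$, together with lower-order corrections from $\partial_{xx} w_{2,L}^3 = O(w_{2,L})$ that are controlled by $\mathcal{W}_1$. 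Combining all estimates, absorbing $\mathcal{O}(\delta)$ terms, and passing $L \to \infty$ yields (\ref{POS.2.2}).
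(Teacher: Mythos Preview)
Your proposal is correct and takes essentially the same approach as the paper: the multiplier $v_x w_{2,L}^3$ applied to the differentiated vorticity equation (\ref{stream.2}), with the main positive contributions $\int\int u_R v_{xy}^2 w_{2,L}^3$ and $\int\int \epsilon u_R v_{xx}^2 w_{2,L}^3$ coming from the leading profile terms in $\partial_x S_u$ and $\partial_x S_v$, and the Laplacian pieces and remaining profile terms treated as errors via the decay estimates (\ref{PE0.1})--(\ref{PE4.new.2}) and Hardy. One small correction: for the $\epsilon g_{xx}$ piece the paper uses only one integration by parts in $x$, giving $\epsilon\int\int g_x\{v_{xx}w_{2,L}^3 + 3 v_x w_{2,L}^2 w_{2,L}'\}$, which is controlled entirely by $\mathcal{W}_{2,P}+\mathcal{W}_{2,E}=\mathcal{W}_2$; a second integration by parts would produce an unwanted $v_{xxx}$ term, and no contribution to $\mathcal{W}_1$ is needed here.
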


\begin{proof}

We apply the multiplier $v_x w_{2,L}^3$. We will drop the subscript-2 from $w_{2,L}$ and simply use $w_L$ for this calculation. Upon integrating by parts, the highest-order terms are: 
\begin{align}  \label{pos.lap.1}
&\int \int \partial_{xy}\Big(-u_{yy} \Big) \cdot v_x w_L^3 = -\frac{3}{2} \int \int u_{xy}^2 w_L^2 w_L', \\ \label{pos.lap.2}
&\int \int \partial_{xy} \Big( -\epsilon u_{xx} \Big) \cdot v_x w_L^3 = \frac{3\epsilon}{2} \int \int u_{xx}^2 w_L^2 w_L' , \\ \label{pos.lap.3}
&\int \int \epsilon^2 \partial_x^4v \cdot v_x w_L^3 = C \int \int \epsilon^2 v_{xx}^2 w_L^2 w_L' + \int \int \epsilon v_x^2 \partial_x^3\{ w_L^3 \}, \\ \label{pos.lap.4}
& \int \int \epsilon \partial_{xxyy} v \cdot v_x w_L^3 = C\int \int \epsilon v_{xy}^2 w_L^2 w_L'. 
\end{align}

Above, we have used the property (\ref{weight.m}): 
\begin{align}
\Big| \int \int \epsilon v_x^2 \partial_x^3 \{ w_L^3 \} \Big| \le ||\partial_x^3 w_L^3||_{L^\infty} \int \int \epsilon v_x^2 \lesssim ||v||_{X_1}^2. 
\end{align}

Similarly, we have: 
\begin{align}
\Big| \int \int \epsilon^2 v_{xx}^2 w_L^2 w_L' + \epsilon v_{xy}^2 w_L^2 w_L' \Big| \le ||w_L'||_{L^\infty} \epsilon ||\{\sqrt{\epsilon} v_{xx}, v_{xy} \} w_L||_{L^2}^2. 
\end{align}

Thus, all of the terms on the right-hand sides of (\ref{pos.lap.1}) - (\ref{pos.lap.4}) appear on the right-hand side of (\ref{POS.2.2}), which in turn are controlled by the Energy Estimate, see (\ref{dfig}). We will now work through the profile terms, contained in $S_u$, which we write below upon using definition (\ref{defn.Su}): 
\begin{align}
\Big| \int \int \partial_{xy} \{u_R u_x + u_{Rx} u + u_{Ry}v + v_R u_y  \} \cdot v_x w_L^3 \Big| 
\end{align}

First, we have the main profile terms:
\begin{align} \nonumber
\int \int \partial_{xy} \Big(u_R u_x \Big) \cdot v_x w_L^3 &= - \int \int \partial_x(u_R u_x) v_{xy} w_L^3 \\ \label{urm}
& = \int \int u_{Rx} u_x v_{xy} w_L^3 + u_R u_{xx}^2 w_L^3. 
\end{align}

As usual, this term retains control of the main term: 
\begin{align} \label{ABSP.1}
\int \int u_R u_{xx}^2 w_L^3 \gtrsim \min |u_R| \int \int u_{xx}^2 w_L^3. 
\end{align}

The other term in (\ref{urm}) may be estimated by recalling (\ref{PE0.4}) and (\ref{PE5}), via: 
\begin{align} \nonumber
\Big| \int \int u_{Rx} u_x v_{xy} w_L^3 \Big| &\le ||u_{Rx} x||_{L^\infty} ||u_x x^{\frac{1}{2}}||_{L^2} ||v_{xy} w_L^\frac{3}{2}||_{L^2} \\ \label{ABSP}
&\le \mathcal{O}(\delta) ||u, v||_{X_1}^2 + \mathcal{O}(\delta) ||v_{xy} w_L^{\frac{3}{2}}||_{L^2}^2. 
\end{align}

The second term on the right-hand side above, in (\ref{ABSP}), can be absorbed by the main positive term, (\ref{ABSP.1}) by taking $\delta$ small enough. Next, we have the remaining profile terms from $S_u$:
\begin{align} \label{remsu}
\Big| \int \int \partial_{xy} \{u_{Rx} u + u_{Ry}v + v_R u_y  \} \cdot v_x w_L^3 \Big| 
\end{align}

For the first term in (\ref{remsu}), we will integrate by parts in $y$, expand the product, and use Young's inequality:
\begin{align} \nonumber
\Big| \int \int \partial_{xy} &\{ u_{Rx}u \}\cdot v_x w_L^3 \Big| = \Big|- \int \int \{ u_{Rxx}u + u_{Rx}u_x \} v_{xy}w_L^3 \Big| \\ \nonumber
&\le ||u^P_{Rxx} yx^{\frac{3}{2}}||_{L^\infty} ||u_y||_{L^2} ||v_{xy} w_L^{\frac{3}{2}}||_{L^2}+ ||x^{\frac{5}{2}} u^E_{Rxx} ||_{L^\infty} ||u_x||_{L^2} ||v_{xy}w_L^{\frac{3}{2}}||_{L^2} \\ \nonumber
& \hspace{40 mm} + ||u_{Rx}x ||_{L^\infty} ||u_x x^{\frac{1}{2}}||_{L^2} ||v_{xy} w_L^{\frac{3}{2}}||_{L^2} \\ 
&\lesssim  ||u, v||_{X_1} ||v_{xy} w_L^{\frac{3}{2}}||_{L^2} \le \frac{1}{100,000} ||v_{xy} w_L^{\frac{3}{2}}||_{L^2}^2 + C||u,v||_{X_1}^2. 
\end{align}

We have used (\ref{PE0.3}), (\ref{PE0.4}), and the Euler estimates from (\ref{PE5}). The next term from (\ref{remsu}) is the convection term, which we start by integrating by parts: 
\begin{align} \nonumber
\int \int \partial_{xy}(u_{Ry}v) v_x w_L^3 &= - \int \int \partial_x (u_{Ry}v) v_{xy} w_L^3 \\ \nonumber
& = - \int \int u_{Rxy} v v_{xy} w_L^3 - \int \int u_{Ry} v_x v_{xy} w_L^3 \\ \label{ccr.1}
& = - \int \int u_{Rxy} vv_{xy} w_L^3 + \frac{1}{2} \int \int u_{Ryy} v_x^2 w_L^3 \\ \nonumber
& = (\ref{ccr.1}.1) + (\ref{ccr.1}.2).
\end{align}

First, by applying (\ref{PE0.3}) (with $k = j = m = 1$) and the Euler estimate in (\ref{PE5}), and subsequently the Hardy inequality in both the $y$ and $x$ directions,
\begin{align} \nonumber
\Big| (\ref{ccr.1}.1) \Big| &\le ||u^P_{Rxy} xy||_{L^\infty} ||\frac{v}{y} x^{\frac{1}{2}}||_{L^2} ||v_{xy} w_L^{\frac{3}{2}}||_{L^2} + ||u^E_{RxY} x^{\frac{5}{2}}||_{L^\infty} ||\sqrt{\epsilon}\frac{v}{x}||_{L^2} ||v_{xy}w_L^{\frac{3}{2}}||_{L^2} \\ \nonumber
&\le  ||u^P_{Rxy} xy||_{L^\infty} ||v_y x^{\frac{1}{2}}||_{L^2} ||v_{xy} w_L^{\frac{3}{2}}||_{L^2} + ||u^E_{RxY} x^{\frac{5}{2}}||_{L^\infty} ||\sqrt{\epsilon}v_x||_{L^2} ||v_{xy}w_L^{\frac{3}{2}}||_{L^2} \\
&\lesssim  ||v||_{X_1} ||v_{xy} w_L^{\frac{3}{2}}||_{L^2} \le \frac{1}{100,000} ||v_{xy} w_L^{\frac{3}{2}}||_{L^2}^2 + C ||v||_{X_1}^2. 
\end{align}

Next, by applying (\ref{PE0.5}), and (\ref{PE3}) with $j =2$ and (\ref{PE5}), we have:
\begin{align} \nonumber
\Big| (\ref{ccr.1}.2) \Big| &\le ||y^2 u^P_{Ryy} ||_{L^\infty} ||\frac{v_{x}}{y} w_L^{\frac{3}{2}}||_{L^2}^2 + ||u^E_{RYY} x^{\frac{5}{2}}||_{L^\infty} ||\sqrt{\epsilon}v_x x^{\frac{1}{2}}||_{L^2}^2 \\ \n
&\le ||y^2 u^P_{Ryy} ||_{L^\infty} ||v_{xy} w_L^{\frac{3}{2}}||_{L^2}^2 + ||u^E_{RYY} x^{\frac{5}{2}}||_{L^\infty} ||\sqrt{\epsilon}v_x x^{\frac{1}{2}}||_{L^2}^2 \\
& \le \mathcal{O}(\delta) ||v_{xy} w_L^{\frac{3}{2}}||_{L^2}^2 +\sqrt{\eps} ||u,v||_{X_1}^2.
\end{align}

The final term from (\ref{remsu}), upon integrating by parts once in $y$, is: 
\begin{align} \nonumber
\Big| \int \int \partial_x \Big( v_R u_y \Big)& \cdot v_{xy}w_L^3 \Big| \le \Big|\int \int v_{Rx} u_y v_{xy}w_L^3 \Big| + \Big| \int \int v_R u_{xy} v_{xy}w_L^3  \Big| \\ \n
& \lesssim ||v_R x^{\frac{1}{2}}||_{L^\infty} \Big( ||u_{xy} w_L||_{L^2}^2 + ||v_{xy} w_L^{\frac{3}{2}}||_{L^2}^2 \Big) + ||v_{Rx} x^{\frac{3}{2}}||_{L^\infty}||u_y||_{L^2} ||v_{xy} w_L^{\frac{3}{2}}||_{L^2} \\ 
& \lesssim \mathcal{O}(\delta) ||u_{xy} w_L||_{L^2}^2 + \Big(\mathcal{O}(\delta) + \frac{1}{100,000} \Big) ||v_{xy} w_L^{\frac{3}{2}}||_{L^2}^2  + C||u, v||_{X_1}.
\end{align}

We have used (\ref{PE1}) and (\ref{PE4.new.2}) to estimate $v_R$, in which we crucially retain the smallness from $\mathcal{O}(\delta)$. We have used (\ref{PE0.1}), (\ref{PE4}), followed by Young's inequality for the $v_{Rx}$ term. Summarizing: 
\begin{align} 
\Big| \int \int \partial_{xy} &\{u_{Rx} u + u_{Ry}v + v_R u_y  \} \cdot v_x w_L^3 \Big| \\ \n
& \lesssim \mathcal{O}(\delta) ||u,v||_{X_1}^2 + \Big(\mathcal{O}(\delta) + \frac{1}{100,000} \Big) \Big( ||v_{xy} w_L^{\frac{3}{2}}||_{L^2}^2   \Big) + \mathcal{O}(\delta) ||u_{xy}w_L||_{L^2}^2.
\end{align}

The middle term on the right-hand side above gets absorbed into (\ref{ABSP.1}). We will now come to the profile terms from the normal equation, $S_v$. For convenience, we display the terms we will be reading from here, according to the definition in (\ref{defn.Su}):  
\begin{align} \label{Read}
-\epsilon \int \int \partial_{xx} \Big[ u_R v_x + v_{Rx}u + v_{Ry}v +  v_R v_y  \Big] \cdot v_x w_L^3.
\end{align}

The main term upon integrating by parts once in $x$ and expanding the product is: 
\begin{align} \label{suma}
-\int \int \epsilon \partial_{xx} (u_R v_x ) \cdot v_x w_L^3 = \int \int \epsilon \{u_R v_{xx} + u_{Rx} v_x \} \cdot \{v_{xx}w_L^3 + 3v_x w_L^2 w_L' \}.
\end{align}

The first term above yields the desired positivity, namely: 
\begin{align}
\int \int \epsilon u_R v_{xx}^2 w_L^3 \gtrsim \min |u_R| \int \int \epsilon v_{xx}^2 w_L^3. 
\end{align}

Let us now turn to the remaining terms above in (\ref{suma}). First an integration by parts gives: 
\begin{align} \nonumber
\Big| \int \int \epsilon u_R v_{xx} v_x \partial_x w_L^3\Big| &= \Big|- \int \int \epsilon v_x^2 \partial_x \{ u_R \partial_x w_L^3 \} \Big| \\ \nonumber
& = \Big| - \int \int \epsilon v_x^2 \{u_{Rx} \partial_x w_L^3 + u_R \partial_x^2 w_L^3 \} \Big| \\
& \lesssim ||u_{Rx}x||_{L^\infty} ||\sqrt{\epsilon}v_x x^{\frac{1}{2}}||_{L^2}^2 \lesssim \mathcal{O}(\delta) ||v||_{X_1}^2,
\end{align}

where we have used the estimate $|\partial_x^2 w_L^3 | \lesssim w_L \lesssim x$, and also (\ref{PE0.4}) and (\ref{PE5}), both of which guarantee the smallness of $\mathcal{O}(\delta)$. Still from (\ref{suma}), by using (\ref{PE0.3}) - (\ref{PE0.4}) and (\ref{PE5}), we have: 
\begin{align} \nonumber
\Big| \int \int \epsilon u_{Rx} v_x v_{xx} w_L^3\Big| &= \Big| - \int \int \frac{\epsilon}{2}v_x^2 \partial_x \{ u_{Rx} w_L^3 \}\Big| \\ \nonumber
& =\Big| -\int \int \frac{\epsilon}{2} v_x^2 u_{Rxx} w_L^3 - \int \int \frac{\epsilon}{2}  v_x^2 u_{Rx} \partial_x w_L^3 \Big| \\
& \lesssim ||u_{Rxx} x^2, u_{Rx} x||_{L^\infty} ||\sqrt{\epsilon} v_x x^{\frac{1}{2}}||_{L^2}^2 \lesssim ||v||_{X_1}^2. 
\end{align}

Last from (\ref{suma}), 
\begin{align}
\Big| \int \int \epsilon v_x^2 u_{Rx} w_L^2 w_L' \Big| \lesssim ||u_{Rx} x||_{L^\infty} ||\sqrt{\epsilon} v_x x^{\frac{1}{2}}||_{L^2}^2 \lesssim \mathcal{O}(\delta) ||v||_{X_1}^2. 
\end{align}

Above, we have used (\ref{PE0.4}) and (\ref{PE5}). We now move to the second term in $S_v$, for which we integrate by parts once in $x$ and expand the resulting product: 
\begin{align} \nonumber
\int \int \epsilon \partial_{xx}& (v_{Rx}u) \cdot v_x w_L^3  = - \int \int \epsilon \partial_x(v_{Rx}u) \partial_x \{ v_x w_L^3 \} \\ \nonumber
& = \int \int -\epsilon v_{Rxx} u v_{xx } w_L^3 - \epsilon v_{Rx} u_x v_{xx} w_L^3 - \epsilon v_{Rxx} uv_x \partial_x w_L^3 - \epsilon v_{Rx} u_x v_x \partial_x w_L^3 \\ \label{h.1}
& = (\ref{h.1}.1) + ... + (\ref{h.1}.4).
\end{align}

First, by (\ref{PE0.1}) and (\ref{PE4}):
\begin{align} \nonumber
\Big| (\ref{h.1}.1)   \Big| &\le \sqrt{\epsilon} ||v_{Rxx} x^{\frac{5}{2}}||_{L^\infty} ||\sqrt{\epsilon}v_{xx} w_L^{\frac{3}{2}}||_{L^2} ||\frac{u}{x}||_{L^2} \\ \nonumber
& \lesssim \sqrt{\epsilon} ||v_{Rxx} x^{\frac{5}{2}}||_{L^\infty} ||\sqrt{\epsilon}v_{xx} w_L^{\frac{3}{2}}||_{L^2} ||u_x||_{L^2} \\
& \lesssim \sqrt{\epsilon}  ||\sqrt{\epsilon}v_{xx} w_L^{\frac{3}{2}}||_{L^2} ||u||_{X_1}. 
\end{align}

Second, 
\begin{align} 
\Big|  (\ref{h.1}.2) \Big| &\lesssim \sqrt{\epsilon} ||v_{Rx} x^{\frac{3}{2}}||_{L^\infty} ||u_x||_{L^2} ||\sqrt{\epsilon}v_{xx} w_L^{\frac{3}{2}}||_{L^2} \lesssim \sqrt{\epsilon} ||u||_{X_1} ||\sqrt{\epsilon}v_{xx} w_L^{\frac{3}{2}}||_{L^2}. 
\end{align}

Third, 
\begin{align}
\Big|  (\ref{h.1}.3) \Big| \lesssim \sqrt{\epsilon} ||v_{Rxx} x^{\frac{5}{2}}||_{L^\infty} ||\frac{u}{x}||_{L^2} ||\sqrt{\epsilon}v_x x^{\frac{1}{2}}||_{L^2} \lesssim \sqrt{\epsilon}||u,v||_{X_1}^2. 
\end{align}

Fourth, 
\begin{align}
\Big|   (\ref{h.1}.4)  \Big| \lesssim \sqrt{\epsilon} ||v_{Rx} x^{\frac{3}{2}}||_{L^\infty} ||u_x x^{\frac{1}{2}}||_{L^2} ||\sqrt{\epsilon}v_x ||_{L^2} \lesssim \sqrt{\epsilon}||u,v||_{X_1}^2. 
\end{align}

We have used estimate (\ref{PE0.1}) and (\ref{PE4}) in the above calculations. Let us now move to the third term in $S_v$ for which we integrate by parts once in $x$ and expand the product: 
\begin{align} \nonumber
\int \int \epsilon \partial_{xx} ( v_{Ry} v ) v_x w_L^3 &= - \epsilon \int \int \partial_x(v_{Ry}v) \partial_x(v_x w_L^3) \\ \nonumber
& = \int \int - \epsilon v_{Rxy} v v_{xx} w_L^3 - \epsilon v_{Ry} v_x v_{xx} w_L^3  - 3\epsilon v_{Rxy} vv_x w_L^2 w_L' \\ \label{Sv.2.3}
& \hspace{40 mm} - 3\epsilon v_{Ry} v_x^2 w_L^2 w_L' \\ \nonumber
& = (\ref{Sv.2.3}).1 + ... + (\ref{Sv.2.3}).4. 
\end{align}

We shall treat term by term above, starting with: 
\begin{align} \nonumber
|(\ref{Sv.2.3}).1| &= |\int \int - \epsilon v^P_{Rxy} vv_{xx}w_L^3 - \epsilon^{\frac{3}{2}} v^E_{RxY} vv_{xx}w_L^3| \\ \nonumber
& \le \sqrt{\epsilon} ||v^P_{Rxy} yx^{\frac{3}{2}}||_{L^\infty} ||\frac{v}{y}x^{\frac{1}{2}}||_{L^2} ||\sqrt{\epsilon}v_{xx} w_L^{\frac{3}{2}}||_{L^2} \\ \nonumber & \hspace{40 mm} + \sqrt{\epsilon} ||v^E_{RxY} x^{\frac{5}{2}}||_{L^\infty} ||\sqrt{\epsilon}v_{xx} w_L^{\frac{3}{2}}||_{L^2} ||\sqrt{\epsilon} \frac{v}{x}||_{L^2} \\ \nonumber
&\le  \sqrt{\epsilon} ||v^P_{Rxy} yx^{\frac{3}{2}}||_{L^\infty} ||v_yx^{\frac{1}{2}}||_{L^2} ||\sqrt{\epsilon}v_{xx} w_L^{\frac{3}{2}}||_{L^2} \\ \nonumber  & \hspace{40 mm} + \sqrt{\epsilon} ||v^E_{RxY} x^{\frac{5}{2}}||_{L^\infty} ||\sqrt{\epsilon}v_{xx} w_L^{\frac{3}{2}}||_{L^2} ||\sqrt{\epsilon} v_x||_{L^2}\\
& \lesssim \sqrt{\epsilon} ||v||_{X_1} ||\sqrt{\epsilon}v_{xx} w_L^{\frac{3}{2}}||_{L^2} .
\end{align}

Above, we have used (\ref{PE0.1}) with $k = 1, j =1, m = 1$, and the Euler bounds in (\ref{PE4}). Next, according to estimate (\ref{PE1}), (\ref{PE4}):
\begin{align}
|(\ref{Sv.2.3}).2| \le ||v_{Ry} x||_{L^\infty} ||\sqrt{\epsilon}v_x x^{\frac{1}{2}}||_{L^2} ||\sqrt{\epsilon}v_{xx} w_L^{\frac{3}{2}}||_{L^2} \lesssim \mathcal{O}(\delta) ||v||_{X_1} ||\sqrt{\epsilon}v_{xx}w_L^{\frac{3}{2}}||_{L^2}. 
\end{align}

Third, also using the estimates from (\ref{PE0.1}) and (\ref{PE4}),
\begin{align} \nonumber
|(\ref{Sv.2.3}).3| &\le \sqrt{\epsilon} ||v^P_{Rxy} yx^{\frac{3}{2}}||_{L^\infty} ||\frac{v}{y}x^{\frac{1}{2}}||_{L^2} ||\sqrt{\epsilon}v_x x^{\frac{1}{2}}||_{L^2} + \sqrt{\epsilon} ||v^E_{RxY} x^{\frac{5}{2}}||_{L^\infty} ||\sqrt{\epsilon} \frac{v}{x}||_{L^2} ||\sqrt{\epsilon}v_x x^{\frac{1}{2}}||_{L^2} \\
&\le \sqrt{\eps} ||v_y x^{\frac{1}{2}}||_{L^2} ||\sqrt{\epsilon}v_x x^{\frac{1}{2}}||_{L^2} + \sqrt{\epsilon} ||\sqrt{\epsilon}v_x x^{\frac{1}{2}}||_{L^2}^2 \lesssim \sqrt{\eps} ||v||_{X_1}^2. 
\end{align}

Fourth, again by estimate (\ref{PE1}) and (\ref{PE4}):
\begin{align}
|(\ref{Sv.2.3}).4| \lesssim ||v_{Ry} x||_{L^\infty} ||\sqrt{\epsilon} v_x x^{\frac{1}{2}}||_{L^2}^2 \le \mathcal{O}(\delta) ||v||_{X_1}^2. 
\end{align}

We now move to the fourth term in (\ref{Read}), for which we integrate by parts once and distribute the product: 
\begin{align} \nonumber
\int \int \epsilon \partial_{xx} (v_R v_y) v_x w_L^3 &=- \int \int \epsilon \partial_x(v_R v_y) \partial_x(v_x w_L^3) \\ \nonumber
& = \int \int -\epsilon v_{Rx} v_y v_{xx}w_L^3 - \epsilon v_R v_{xy}v_{xx} w_L^3 - 3\epsilon v_{Rx} v_y v_x w_L^2 w_L' \\ \label{Sv.2.4}
& \hspace{40 mm} - 3\epsilon v_R v_{xy} v_x w_L^2 w_L' \\ \nonumber
& = (\ref{Sv.2.4}).1 + ... + (\ref{Sv.2.4}).4.
\end{align}

First, by (\ref{PE0.1}), (\ref{PE1}) and (\ref{PE4}):
\begin{align}
| (\ref{Sv.2.4}).1 | \le \sqrt{\epsilon} ||v_{Rx} x^{\frac{3}{2}}||_{L^\infty} ||v_y x^{\frac{1}{2}}||_{L^2} ||\sqrt{\epsilon}v_{xx} w_L^{\frac{3}{2}}||_{L^2}, \\
| (\ref{Sv.2.4}).2 | \le \sqrt{\epsilon} ||v_R x^{\frac{1}{2}}||_{L^\infty} ||\sqrt{\epsilon}v_{xx} w_L^{\frac{3}{2}}||_{L^2} ||v_{xy} w_L^{\frac{3}{2}}||_{L^2}, \\
|(\ref{Sv.2.4}).3 | \le \sqrt{\epsilon} ||v_{Rx} x^{\frac{3}{2}}||_{L^\infty} ||v_y x^{\frac{1}{2}}||_{L^2} ||\sqrt{\epsilon} v_x x^{\frac{1}{2}}||_{L^2}. 
\end{align}

For the fourth term, we integrate by parts in $y$ and again appeal to (\ref{PE1}) and (\ref{PE4}):
\begin{align}
|(\ref{Sv.2.4}).4| = | \int \int \frac{3\epsilon}{2} v_{Ry} v_x^2 w_L^2 w_L' | \lesssim ||v_{Ry}x||_{L^\infty} ||\sqrt{\epsilon}v_x x^{\frac{1}{2}}||_{L^2}^2.  
\end{align}

Summarizing the last three terms from the $S_v$ contribution: 
\begin{align} \nonumber
\Big| \int \int -\epsilon \partial_{xx}& \{v_{Rx}u + v_R v_y + v_{Ry} v \} \cdot v_{x}w_L^3 \Big| \\
&\le \mathcal{O}(\delta) ||\sqrt{\epsilon}v_{xx} w_L^{\frac{3}{2}}||_{L^2}^2 + \mathcal{O}(\delta) ||v_{xy} w_L^{\frac{3}{2}}||_{L^2}^2 + \mathcal{O}(\delta) ||u,v||_{X_1}^2. 
\end{align}

Finally, on the right-hand side, we have: 
\begin{align}
&\int \int \partial_{xy} f \cdot v_x w_L^3 = \int \int f_{x} u_{xx} w_L^3, \\ \nonumber
&\int \int \epsilon \partial_{xx} g \cdot v_x w_L^3 = \int \int \epsilon g_{x} \{v_{xx} w_L^3 +3 v_x w_L^2w_L' \}.
\end{align}

We estimate the term: 
\begin{align}
| \int \int \eps g_x v_x w_L^2 w_L' | \le \int \int \eps |g_x | |v_x| (\rho_2 x)^2 \le \mathcal{W}_2. 
\end{align}

Taking the limit as $L \rightarrow \infty$, and appealing to the Monotone Convergence Theorem then completes the calculation. 

\end{proof}

\subsection{Third Order Bounds}

In this step, we obtain third-order bounds for our solution. We must repeat the above calculations to the twice-differentiated system:
\begin{align} \nonumber
&\partial_{y} \partial_x^{2} \Big( -\Delta_\epsilon u + P_x + S_u \Big) - \epsilon \partial_{x} \partial_x^{2}\Big( - \Delta_\epsilon v + \frac{P_y}{\epsilon} + S_v  \Big) = \\ \label{stream.3x}
&\partial_{y} \partial_x^{2} \Big( -\Delta_\epsilon u + S_u \Big) - \epsilon \partial_{x} \partial_x^{2}\Big( - \Delta_\epsilon v + S_v  \Big) = \partial_y \partial_x^{2}f - \epsilon \partial_x \partial_x^{2}g,
\end{align}

To state our energy estimate, we will recall the definition of $\rho_3$ from (\ref{rho}) and the definitions of $a_L, \phi_L$ given in (\ref{Nweight}). We will then define the weights: 
\begin{align} \label{wk}
w_3 = \rho_3(x) x, \hspace{3 mm} w_{3,L}(x) &:= \Big( a_L \ast \phi_{L} \Big) \chi \Big( \frac{x}{10 L} \Big) \rho_3(x).  
\end{align}

\begin{remark}[Selection of cut-offs, $\rho_k$] As $k$ increases from $2$ to $3$, the supports of $\rho_k$ shift away from $x = 1$. The purpose is so that when obtaining the third-order estimate, the second-order terms should have a non-degenerate estimate in the support of $\rho_3$, which is achieved so long as $\rho_3$ is supported sufficiently far to the right of $x = 1$ as compared to $\rho_{2}$. 
\end{remark}

\begin{remark} It is worth emphasizing again the distinguishing feature of the second-order estimate, which vanishes for the third-order estimate. This is estimate (\ref{cruc.}) where factors of $v$ (no derivative) appear. This subsequently forces the weight $w$ to be ``almost-linear" in the sense of (\ref{crucAL}). As soon as the order is upgraded to third-order, this problem vanishes, enabling us to apply the weights $w_{3,L}$ which vanish at $x = \infty$. See (\ref{CRUC.10}) in the forthcoming estimate to contrast with (\ref{cruc.}). 
\end{remark}

Let us now define the third-order forcing term: 
\begin{align} \label{calw3.E}
&\mathcal{W}_{3,E} =  \int \int | f_{xx}|  |u_{xx}|w_3^4 +  \epsilon |g_{xx}|  |v_{xx}|w_3^4 , \\  \label{calw3.P}
&\mathcal{W}_{3,P} =  \int \int | f_{xx}| | u_{xxx}|w_3^5 +  \epsilon |g_{xx}|  |v_{xxx}| w_3^5 , \\ \label{calw3}
&\mathcal{W}_3 =  \mathcal{W}_{3,E} + \mathcal{W}_{E,P}.
\end{align}

We are now ready to state our energy estimate: 
\begin{proposition}[Third-Order Energy Estimate] \label{prop.ho.3} Let $\delta, \epsilon$ be sufficiently small relative to universal constants, and $\eps << \delta$. Then solutions $[u,v] \in Z(\Omega^N)$ to the system (\ref{EQ.NSR.1}) - (\ref{EQ.NSR.3}) satisfy:  
\begin{align} \n
||\partial_x^{2}u_y w_3^{2}||_{L^2}^2 &+ \epsilon ||\partial_x^{2} \{\sqrt{\epsilon}v_x, v_y\} w_3^{2}||_{L^2}^2  \\  \label{Energy.K}
& \lesssim \mathcal{O}(\delta) || \partial_x^{2} \{ \sqrt{\epsilon}v_x, v_y \} w_3^{\frac{5}{2}}||_{L^2}^2 + ||u,v||_{X_1 \cap X_2}^2 + \sum_{j=1}^2 \mathcal{W}_j + \mathcal{W}_{3,E}.
\end{align}

\end{proposition}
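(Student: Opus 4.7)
The plan is to mimic the second-order energy estimate (Proposition \ref{prop.ho.1}) one derivative higher: apply the multiplier $v_x w_{3,L}^4$ to the twice-differentiated vorticity system (\ref{stream.3x}), using the approximating weight $w_{3,L}$ from (\ref{wk}) to legitimize all integrations by parts at $x=\infty$, then pass $L\to\infty$ by monotone convergence. This multiplier is the natural analogue of $v w_2^2 = \psi_x w_2^2$ used at second order, since $v_x = \psi_{xx}$ and the exponent on the weight increases from $2$ to $4$ to match the $w_3^2$ weights on the left-hand side of (\ref{Energy.K}). Integrations by parts in $y$ at finite $M$ (cf.\ (\ref{order.of.int})), together with the bounds (\ref{ZN}), (\ref{evo.high}) and the fourth-order regularity (\ref{rhsb}), will justify all the formal manipulations.

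For the highest-order terms, integration by parts produces the four main positive squared quantities
\begin{equation*}
\int\!\!\int u_{xxy}^2 w_{3,L}^4 + \epsilon u_{xxx}^2 w_{3,L}^4 + \epsilon v_{xxy}^2 w_{3,L}^4 + \epsilon^2 v_{xxx}^2 w_{3,L}^4,
\end{equation*}
together with lower-order remainders in which one derivative falls on $\partial_x w_{3,L}^4$. Using $|\partial_x^k w_{3,L}| \lesssim x^{1-k}$ (the analogue of (\ref{weight.m})), these remainders are absorbed into $\|u,v\|_{X_1\cap X_2}^2$. In particular, the analogue of the dangerous term (\ref{cruc.}) now carries $v_x$ rather than $v$, so it needs no ``almost-linear'' weight structure and the choice $w_{3,L}$ (which vanishes at $x=\infty$) is admissible, as advertised in the remark preceding the statement.

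The profile terms $\partial_x^2 S_u$ and $-\epsilon\partial_x^2 S_v$ are expanded by Leibniz and estimated term by term against $v_x w_{3,L}^4$, exactly as in Proposition \ref{prop.ho.1}, using the pointwise bounds (\ref{PE0.1})--(\ref{PE4.new.2}) from Theorem \ref{thm.m.part.1}. Each Prandtl-type factor absorbs a power of $y$ (via the weights $z^m x^{(\cdot)}$), which is then converted to weight in $x$ through Hardy's inequality in $y$ (valid since $u(x,0)=v(x,0)=0$), while each Eulerian factor absorbs powers of $x^{1/2}$ directly. The critical convective term $\partial_x^2(u_{Ry} v) \cdot v_x w_{3,L}^4$ is handled by splitting $u_{Ry}=u^{P,n-1}_{Ry}+\epsilon^{n/2}u^n_{py}+\sqrt{\epsilon}u^E_{RY}$ and using (\ref{PE0.5}), (\ref{PE3}), (\ref{PE5}); the principal contribution $u^{P,n-1}_{Ry}\cdot v$ produces an $\mathcal{O}(\delta)$-small factor times $\|v_{xxy}w_{3,L}^{5/2}\|_{L^2}^2$, which is the prefactor $\mathcal{O}(\delta)\|\partial_x^2\{\sqrt{\epsilon}v_x, v_y\}w_3^{5/2}\|_{L^2}^2$ on the right-hand side of (\ref{Energy.K}) that will be absorbed later by the third-order positivity estimate. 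The large term $u_R v_{xxx}\cdot v_{xxx} w_{3,L}^4$ from $\partial_x^2(u_R v_x)$ gives the principal positivity $\gtrsim\min|u_R|\int\!\!\int \epsilon v_{xxx}^2 w_{3,L}^4$, which controls one of the terms on the left.

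The forcing contributions $\int\!\!\int \partial_y\partial_x^2 f \cdot v_x w_{3,L}^4$ and $-\epsilon\int\!\!\int\partial_x^3 g\cdot v_x w_{3,L}^4$ are integrated by parts to produce $\int\!\!\int f_{xx} u_{xxx} w_{3,L}^4 + \epsilon g_{xx} v_{xxx} w_{3,L}^4$ plus commutator terms in $\partial_x w_{3,L}^4$, which are precisely $\mathcal{W}_{3,E}$ and $\mathcal{W}_2$-type quantities after Young's inequality. The main obstacle I anticipate is the bookkeeping of the many cross-terms in $\partial_x^2 S_u$ and $\partial_x^2 S_v$, specifically ensuring that every product $\partial_x^j u_R \cdot \partial_x^{2-j}(u,v)$-derivative can be estimated using the available profile decay rates without sacrificing the small constant $\mathcal{O}(\delta)$ or sharp weight in $x$; the tightest case is the interaction with $v_{Rxxy}$ and $u^E_{Rxx}$, which must be dispatched via the Eulerian estimates (\ref{PE4}), (\ref{PE5}) and Hardy's inequality in the $x$-direction (valid because $[u,v]|_{x=1}=0$).
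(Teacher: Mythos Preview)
Your approach is correct and matches the paper's: apply the multiplier $v_x w_{3,L}^4$ to (\ref{stream.3x}), extract the four positive BiLaplacian terms, estimate the $S_u$ and $S_v$ profile contributions via Leibniz and the bounds (\ref{PE0.1})--(\ref{PE4.new.2}), and pass $L\to\infty$.

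However, you conflate the energy and positivity estimates in two places. First, there is no term $u_R v_{xxx}\cdot v_{xxx}\, w_{3,L}^4$ in this computation. From $-\epsilon\,\partial_x^3(u_R v_x)\cdot v_x w_{3,L}^4$ you integrate by parts once to obtain $\epsilon\,\partial_x^2(u_R v_x)\cdot(v_{xx}w_{3,L}^4 + v_x\partial_x w_{3,L}^4)$; the $k=0$ piece is $\epsilon\, u_R v_{xxx}\cdot v_{xx}\, w_{3,L}^4$, which after a further integration by parts is bounded by $\|\sqrt{\epsilon}\,v_{xx}\, w_{3,L}^{3/2}\|_{L^2}^2\lesssim\|v\|_{X_2}^2$, not a positivity term. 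The positivity $\min|u_R|\int\!\!\int \epsilon v_{xxx}^2 w_L^5$ you describe belongs to the \emph{positivity} estimate (Proposition \ref{prop.ho.4}), where the multiplier is $v_{xx}w_{3,L}^5$. Second, the forcing terms here give $\int\!\!\int f_{xx}\, u_{xx}\, w_{3,L}^4$ and $\epsilon\int\!\!\int g_{xx}\, v_{xx}\, w_{3,L}^4$ (i.e., $\mathcal{W}_{3,E}$), not the $u_{xxx},v_{xxx}$ versions you wrote, which would be $\mathcal{W}_{3,P}$. Both slips are harmless to the overall argument but should be corrected.
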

\begin{proof}

We shall apply the multiplier $v_x w_{3,L}^{4}$ to the system (\ref{stream.3x}). Notationally, we drop the subscript-$3$ from the weight, and simply call $w_L$ the weight appearing in (\ref{wk}) for this calculation. We start with the highest-order terms, first from $\Delta_\epsilon u$:
\begin{align} \nonumber
\int \int \partial_y \partial_x^{2} &\Big(-u_{yy} - \epsilon u_{xx} \Big) \cdot v_x w_L^4 = - \int \int \Delta_\epsilon u_{xx} \cdot u_{xx} w_L^4 \\ \label{HO.1}
& = \int \int \Big( u_{xxy}^2 + \epsilon u_{xxx}^2 \Big) w_L^4 - \frac{1}{2}\int \int \epsilon u_{xx}^2 \partial_x^2 w_L^4. 
\end{align}

Next, the terms from $\Delta_\epsilon v$:
\begin{align} \nonumber
\int \int \epsilon^2 \partial_x \partial_x^2 v_{xx} &v_x w_L^4 \\ \nonumber
& = - \int \int \epsilon^2 \partial_x^2 v_{xx}  v_{xx} w_L^4 - \int \int \epsilon^2 \partial_x^2 v_{xx} v_x \partial_x w_L^4 \\ \label{HO.2}
& = \int \int \epsilon^2 v_{xxx}^2 w_L^4 - \frac{1}{2}\epsilon^2 v_{xx}^2 \partial_x^2 w_L^4 + C \epsilon^2 v_x^2 \partial_x^4 w_L^4. 
\end{align}

Finally, 
\begin{align} \nonumber
\int \int \epsilon \partial_x \p_x^2v_{yy} \cdot &v_x w_L^4 \\ \nonumber
& = - \int \int \epsilon \partial_x^2 v_{yy}  v_{xx} w_L^4 - c \epsilon \partial_x^2 v_{yy} v_x \p_x w_L^{4} \\ \label{HO.3}
& = \int \int \epsilon v_{xxy}^2 w_L^4 - \epsilon v_{xy}^2 \partial_x^2 w_L^4. 
\end{align}

For the final integrations from (\ref{HO.1}) - (\ref{HO.3}), we use the inequalities
\begin{align}
|\partial_x^2 w_L^4| \lesssim w_L^{2}, \hspace{3 mm} |\partial_x^4 w_L^4| \lesssim C,
\end{align}

to estimate: 
\begin{align}
&| \int \int \epsilon u_{xx}^2 \partial_x^2 w_L^4 |\lesssim ||\sqrt{\epsilon} u_{xx} w_L||_{L^2}^2 \lesssim \epsilon ||u||_{X_{2}}^2, \\
 &| \int \int \epsilon^2 v_{xx}^2 \partial_x^2 w_L^4 | \lesssim \epsilon||\sqrt{\epsilon}v_{xx} w_L||_{L^2}^2 \lesssim \epsilon ||v||_{X_{2}}^2, \\
 & | \int \int \epsilon^2 v_x^2 \partial_x^4 w_L^4 | \lesssim \epsilon ||\sqrt{\epsilon} v_x ||_{L^2}^2 \lesssim \epsilon ||v||_{X_{1}}^2, \\
 & |\int \int \epsilon v_{xy}^2 \partial_x^2 w_L^4| \lesssim \epsilon ||u_{xx} w_L||_{L^2}^2 \le \epsilon ||u||_{X_{2}}^2. 
\end{align}

This then leaves from (\ref{HO.1}) - (\ref{HO.3}) the three terms on the left-hand side of (\ref{Energy.K}). We now move to the profile terms contained in $S_u$, which we will display here for convenience upon integrating by parts in $y$ and using the divergence-free condition:
\begin{align} \nonumber
\int \int \partial_y \partial_{xx}& \Big[ u_R u_x + u_{Rx} u + v_R u_y + u_{Ry}v \Big] \cdot  v_x w_L^4 \\ \label{read.2}
&= \int \int  \partial_{xx} \Big[ u_R u_x + u_{Rx} u + v_R u_y + u_{Ry}v \Big] \cdot u_{xx} w_L^4. 
\end{align}

First, we will expand via the product rule:  
\begin{align} 
 \int \int \p_{xx} (u_R u_x) u_{xx} w_L^4 = \sum_{k=0}^{2} c_k \int \int \partial_x^k u_R \p_x^{2-k}u_x \cdot u_{xx} w_L^4.
\end{align}

For $k = 0$, we integrate by parts and appeal to (\ref{PE0.4}), (\ref{PE4}):
\begin{align} \nonumber
|\int \int u_R \partial_x \Big( |u_{xx} |^2 \Big) w_L^4 &|=| - \int \int |u_{xx}|^2 \partial_x (u_R w_L^4) |\\ \n
& = |\int \int |u_{xx}|^2 \Big( u_{Rx} w_L^4 + c u_R \partial_x w_L^4\Big) |\\
& \lesssim ||u_{Rx} x, u_R||_{L^\infty} ||u_{xx} w_L^{\frac{3}{2}}||_{L^2}^2 \lesssim ||u||_{X_2}^2. 
\end{align}

For $k > 0$, we directly estimate using (\ref{PE0.3}) and (\ref{PE4}):
\begin{align} \nonumber
| \int \int \partial_x^k u_R \partial_x^{2-k} u_x &\cdot  u_{xx}  w_L^4 | \\ \n
& \lesssim ||\partial_x^k u_R x^k||_{L^\infty} ||\p_x^{2-k}u_x w_L^{\frac{5}{2}- k} ||_{L^2} ||u_{xx} x^{\frac{3}{2}}||_{L^2} \\
& \lesssim  ||u||_{X_1 \cap X_{2}}^2. 
\end{align}

Let us now move to the second term from (\ref{read.2}):
\begin{align} \label{read.2.2}
\int \int \partial_x^2(u_{Rx}u) \cdot u_{xx} w_L^4 = \sum_{k=0}^{2} c_k \int \int \partial_x^k u_{Rx} \p_x^{2-k}u \cdot u_{xx} w_L^4
\end{align}

For the $k = 2$ case, that is when all derivatives avoid the $u$ term, we have to employ the splitting $\partial_x^{3} u_R = \partial_x^3 u^P_R + \partial_x^3 u^E_R$, and treat each piece separately. First, by (\ref{PE0.3}):
\begin{align} \nonumber
|\int \int \partial_x^3 u^P_R  u \cdot  u_{xx} w_L^4| &\le ||\partial_x^3 u^P_R yx^{\frac{5}{2}}||_{L^\infty} ||\frac{u}{y}||_{L^2} ||u_{xx} w_L^{\frac{3}{2}}||_{L^2} \\ 
& \lesssim ||u_y||_{L^2} ||u_{xx} w_L^{\frac{3}{2}}||_{L^2} \lesssim  ||u||_{X_1} ||u||_{X_2}.
\end{align}

Next, for the $u^E_R$ contribution, by (\ref{PE5}):
\begin{align} \n
|\int \int \partial_x^3 u^E_R  u \cdot  u_{xx} w_L^4| &\le ||\partial_x^3 u^E_R x^{\frac{7}{2}}||_{L^2} ||\frac{u}{x}||_{L^2} ||u_{xx} w_L^{\frac{3}{2}}||_{L^2} \\ 
& \lesssim  ||u_x||_{L^2} ||u_{xx} w_L^{\frac{3}{2}}||_{L^2} \lesssim ||u||_{X_1} ||u||_{X_{2}}. 
\end{align}

For (\ref{read.2.2}) when $k < 2$, we have by (\ref{PE0.3}), (\ref{PE5}):
\begin{align} \n
|\int \int \partial_x^k u_{Rx} \p_x^{2-k}u &\cdot  u_{xx} w_L^4| \\ \n
& \le ||\partial_x^k u_{Rx} x^{1+k}||_{L^\infty} ||\p_x^{2-k}u x^{\frac{3}{2}-k}||_{L^2} ||u_{xx} x^{\frac{3}{2}}||_{L^2} \\
& \le ||u||_{X_{2-k}} ||u||_{X_{2}}. 
\end{align}

We will now move to the third term from (\ref{read.2}):
\begin{align}\n
\int \int \p_{xx} &(v_R u_y) \cdot u_{xx} w_L^4 = \sum_{k=0}^{2} c_k \int \int \partial_x^k v_R \p_x^{2-k}u_y u_{xx} w_L^4 \\ \n
& \le \sum_{k = 0}^{2} ||\partial_x^k v_R x^{k + \frac{1}{2}}||_{L^\infty} ||\partial_x^{2-k} u_y w_L^{2-k}||_{L^2} ||u_{xx} x^{\frac{3}{2}}||_{L^2} \\
 & \lesssim \mathcal{O}(\delta) ||\p_x^2 u_y w_L^2||_{L^2}||u,v||_{X_2} +  \sum_{k=1}^2  ||u||_{X_{3-k}} ||u||_{X_2}. 
\end{align}

Above we have used estimate (\ref{PE1}) and (\ref{PE4.new.2}) to obtain the smallness of $\mathcal{O}(\delta)$ for the $v_R$ term, and estimate (\ref{PE0.1}) for the remaining terms. We must take $\delta$ small enough to absorb the top order term above into (\ref{HO.1}). Finally, the fourth convective term from (\ref{read.2}) is expanded into, 
\begin{align}
\int \int \p_x^2 (u_{Ry}v) \cdot u_{xx} w_L^4 = \sum_{k=0}^{2} c_k \int \int \p_x^k u_{Ry} \p_x^{2-k} v \cdot u_{xx} w_L^4
\end{align}

We now split $u_R = u^P_R + u^E_R$. First, according to (\ref{PE0.3}), (\ref{PE2}), (\ref{PE0.5}), and (\ref{PE3}):
\begin{align} \nonumber
|\int \int \p_x^k u^P_{Ry} \p_x^{2-k} v &\cdot u_{xx} w_L^4| \\ \n
& \le ||\partial_x^k u^P_{Ry} y x^k ||_{L^\infty} || \frac{\partial_x^{2-k}v}{y} x^{\frac{5}{2}-k}||_{L^2} ||u_{xx} x^{\frac{3}{2}}||_{L^2} \\ \n
& \lesssim  ||\partial_x^k u^P_{Ry} y x^k ||_{L^\infty} ||\partial_x^{2-k}v_y x^{\frac{5}{2}-k}||_{L^2} ||u_{xx} x^{\frac{3}{2}}||_{L^2} \\ \label{only}
& \lesssim \mathcal{O}(\delta) ||\p_x^2 v_y x^{\frac{5}{2}}||_{L^2} ||u||_{X_{2}} + \sum_{k=1}^2 ||u,v||_{X_{3-k}}||u||_{X_2}. 
\end{align}

Let us remark that term (\ref{only}), when $k = 0$, requires the top order norm, $||v||_{X_3}$ in order to control, and so the smallness of $\mathcal{O}(\delta)$ is essential above. This smallness is obtained via (\ref{PE0.5}) and (\ref{PE3}), the latter of which is accompanied by $\eps^{\frac{n}{2}}$. Next, we treat the Eulerian contribution via estimate (\ref{PE5}):
\begin{align} \n
|\sqrt{\epsilon} \int \int \p_x^k u^E_{RY} \p_x^{2-k} v&\cdot u_{xx} w_L^4| \\ \n
& \le ||\partial_x^k u^E_{RY} x^{k + \frac{3}{2}} ||_{L^\infty} || \sqrt{\epsilon} \p_x^{2-k}v x^{1-k}||_{L^2} ||u_{xx} x^{\frac{3}{2}}||_{L^2} \\ 
& \lesssim  \sqrt{\eps} ||u,v||_{X_{2-k}} ||u||_{X_2}.
\end{align}

For the previous calculation, in the event that $k = 2$, we have used the Hardy inequality: 
\begin{align}
||\sqrt{\epsilon} \frac{v}{x}||_{L^2} \lesssim ||\sqrt{\epsilon}v_x ||_{L^2} \lesssim ||v||_{X_1}. 
\end{align}

We are now ready to move to the terms from $S_v$, which we depict here for convenience: 
\begin{align} \label{read.h.v}
\int \int - \epsilon \partial_x \partial_x^2 \Big[u_R v_x + v_{Rx}u + v_R v_y + v_{Ry}v \Big] \cdot  v_x w_L^4 
\end{align} 

We will now work through the first term: 
\begin{align} \n
\int \int -\epsilon \partial_x \partial_{xx} (u_R v_x)& v_x w_L^4 \\ \n
& = \int \int \epsilon \p_{xx}(u_R v_x) \Big(v_{xx} w_L^4 + v_x \partial_x w_L^{4} \Big) \\ \label{HHH}
& = \sum_{k=0}^{2} \sum_{i=1}^2 c_{k,i} \int \int \epsilon \partial_x^k u_R \p_x^{2-k}v_x \cdot \partial_x^i v \partial_x^{2-i} w_L^4. 
\end{align}

First, we will treat the $k = 0$ terms from (\ref{HHH}), using estimates (\ref{PE0.4}) and (\ref{PE5}):
\begin{align} \n
\int \int \epsilon u_R v_{xxx} \cdot v_{xx} w_L^4 &= - \frac{1}{2} \int \int \epsilon v_{xx}^2 \partial_x(u_R w_L^4)  = C \int \int \epsilon v_{xx}^2 \Big(u_{Rx} w_L^4 + u_R \partial_x w_L^4 \Big) \\
&  \lesssim ||u_R, u_{Rx}x||_{L^\infty} ||\sqrt{\epsilon} v_{xx} w_L^{\frac{3}{2}}||_{L^2}^2 \lesssim ||v||_{X_2}^2. 
\end{align}

The second $k = 0$ term, again using (\ref{PE0.4}) and (\ref{PE5}):
\begin{align} \nonumber
\int \int \epsilon u_R v_{xxx}  v_x \partial_x w_L^{4}& = - \int \int \epsilon  v_{xx} \cdot \partial_x \{ u_R v_x  \partial_x w_L^{4} \} \\ \label{CRUC.10}
& =  - \int \int \epsilon v_{xx} u_{Rx} v_x \partial_x w_L^{4} - \epsilon v_{xx} ^2 u_R \partial_x w_L^4 - \epsilon v_{xx} u_R v_x \partial_x^2 w_L^{4} \\ 
& \lesssim ||u_R, u_{Rx}x ||_{L^\infty} \Big( || \sqrt{\epsilon}  v_{xx} w_L^{\frac{3}{2}}||_{L^2}^2 +  ||\sqrt{\epsilon}v_x  x^{\frac{1}{2}}||_{L^2}^2 \Big) \\ \
& \lesssim ||v||_{X_1 \cap X_2}^2. 
\end{align}

\begin{remark} The third term in line (\ref{CRUC.10}) is the term which has improved for the higher-order energy estimates from the second order energy estimate, allowing us to use the cut-off weight $w_L$. Indeed, reading this term with $k = 2$ shows that a factor of $v$ appears, which cannot be controlled in $L^2$.
\end{remark}

Next, we move to the $k > 0$ terms from (\ref{HHH}), for which (\ref{PE0.3}), (\ref{PE0.4}), and (\ref{PE5}) are in constant use: 
\begin{align} \nonumber
|\int \int \epsilon \partial_x^k u_R \p_x^{2-k}v_x &\cdot v_{xx} w_L^4 | \\ \n
& \lesssim ||\partial_x^k u_R x^k||_{L^\infty} ||\sqrt{\epsilon} \partial_x^{3-k}v x^{\frac{5}{2}-k}||_{L^2} ||\sqrt{\epsilon} v_{xx} x^{\frac{3}{2}}||_{L^2} \\
& \lesssim  ||v||_{X_{3-k}} ||v||_{X_2} , \\ \n
|\int \int \epsilon \partial_x^k u_R \p_x^{2-k}v_x &\cdot v_x \partial_x w_L^{4} | \\ \n
& \lesssim ||\partial_x^j u_R x^j||_{L^\infty} ||\sqrt{\epsilon} \partial_x^{3-k}v x^{\frac{5}{2}-k}||_{L^2} ||\sqrt{\epsilon} v_x x^{\frac{1}{2}}||_{L^2} \\
& \lesssim ||v||_{X_{3-k}} ||v||_{X_1}.
\end{align}

We will now approach the second term from (\ref{read.h.v}), for which we use (\ref{PE0.1}) and (\ref{PE4}):
\begin{align} \n
\int \int -\epsilon \partial_x \partial_x^2& (v_{Rx}u) \p_xv w_L^4 \\ \n
& = \sum_{k=0}^{2} \int \int \epsilon \partial_x^k v_{Rx} \p_x^{2-k} u \cdot v_{xx} w_L^4 - \epsilon \p_x^k v_{Rx} \p_x^{2-k} u \cdot v_x \p_x w_L^{4}  \\ \n
& \le \sum_{k= 0}^{2} \sqrt{\epsilon} ||\partial_x^k v_{Rx} x^{k+\frac{3}{2}}||_{L^\infty} ||\partial_x^{2-k}u x^{1-k }||_{L^2} \Big( || \sqrt{\epsilon}  v_{xx} w_L^{\frac{3}{2}}||_{L^2} \\ \n
& \hspace{40 mm} + ||\sqrt{\epsilon} v_x w_L^{\frac{1}{2}} ||_{L^2} \Big) \\
& \le  ||u,v||_{X_1 \cap X_2}^2. 
\end{align}

Above, in the event when $k = 2$, we have used the Hardy inequality to obtain control $||\frac{u}{x}||_{L^2} \lesssim ||u_x||_{L^2}$.  We will now approach the third term from (\ref{read.h.v}), where again we use (\ref{PE0.1}), (\ref{PE1}), and (\ref{PE4}):
\begin{align} \n
|\int \int &\epsilon \partial_x \partial_x^2(v_R v_y) \p_x v w_L^4| \\  \n
& = |\int \int -\epsilon \p_x^2 (v_R v_y) v_{xx} w_L^4 + \p_x^2 (v_R v_y) v_x \partial_x w_L^{4}| \\ \n
 & =| \sum_{k=0}^{2} c_k \int \int \epsilon \partial_x^k v_R \p_x^{2-k}v_y \Big(  v_{xx} w_L^4 +v_x \partial_x w_L^{4} \Big) | \\ \n
 & \lesssim \sum_{k=0}^2 \sqrt{\epsilon} ||\partial_x^k v_R x^{\frac{1}{2}+k} ||_{L^\infty} ||\p_x^{2-k}v_y w_L^{\frac{5}{2}-k} ||_{L^2} \Big( || \sqrt{\epsilon}  v_{xx} w_L^{\frac{3}{2}}||_{L^2} + ||\sqrt{\epsilon} v_x x^{\frac{1}{2}}||_{L^2} \Big)\\ 
 & \lesssim ||v||_{X_1 \cap X_2 }^2 + \eps ||\p_x^2 v_y w_L^{\frac{5}{2}}||_{L^2}^2. 
\end{align}

We move to the fourth term from (\ref{read.h.v}): 
\begin{align} \n
\int \int \epsilon \partial_x \partial_x^2 &(v_{Ry}v) v_x  w_L^4 \\ \n
& = \int \int - \epsilon \partial_x^2 (v_{Ry}v) \Big( v_{xx} w_L^4 + C v_x \partial_x w_L^{4} \Big) \\  
& = \sum_{k=0}^{2} c_k \int \int \epsilon \partial_x^k v_{Ry} \p_x^{2-k}v \Big(v_{xx} w_L^4 + C v_x \partial_x w_L^{4} \Big).
\end{align}

We must split $v_R = v^P_R + v^E_R$. First, we treat the Prandtl component by using estimates (\ref{PE0.1}) - (\ref{PE1}):
\begin{align} \n
|\int \int \epsilon \partial_x^k v^P_{Ry} &\p_x^{2-k}v \Big(  v_{xx} w_L^4 + C v_x \partial_x w_L^{4} \Big)| \\ \n
& \le \sqrt{\epsilon} ||\partial_x^k v^P_{Ry} yx^{\frac{1}{2}+k}||_{L^\infty} ||\p_x^{2-k}\frac{v}{y} w_L^{\frac{5}{2}-k}||_{L^2} ||\sqrt{\epsilon}  v_{xx} w_L^{\frac{3}{2}}, \sqrt{\epsilon} v_x x^{\frac{3}{2}}||_{L^2} \\ \n
& \le \sqrt{\epsilon} ||\partial_x^k v^P_{Ry} yx^{\frac{1}{2}+k}||_{L^\infty} ||\p_x^{2-k}v_y w_L^{\frac{5}{2}-k}||_{L^2} ||\sqrt{\epsilon}  v_{xx} w_L^{\frac{3}{2}}, \sqrt{\epsilon} v_x x^{\frac{3}{2}}||_{L^2} \\ 
& \le \sqrt{\epsilon}  ||u,v||_{X_1 \cap X_2}^2+ \sqrt{\eps} ||\p_x^2 v_y w_L^{\frac{5}{2}}||_{L^2} ||u,v||_{X_1 \cap X_2}. 
\end{align}

Now, fix $k < 2$. The Eulerian contribution is controlled via (\ref{PE4}):
\begin{align} \n
\sqrt{\epsilon} \int \int \epsilon \partial_x^k v^E_{RY} &\p_x^{2-k}v \Big(v_{xx} w_L^4 + C v_x \p_x w_L^{4} \Big) \\ \n
& \le \sqrt{\epsilon} ||\partial_x^k v^E_{RY} x^{\frac{3}{2}+k} ||_{L^\infty} ||\sqrt{\epsilon} \p_x^{2-k}v x^{1-k}||_{L^2} ||\sqrt{\epsilon}  v_{xx} x^{\frac{3}{2}}, \sqrt{\epsilon} v_x x^{\frac{1}{2}}||_{L^2} \\
& \lesssim \sqrt{\epsilon} ||v||_{X_{2-k}} ||v||_{X_1 \cap X_2}. 
\end{align}

For $k = 2$, we must employ the Hardy inequality in addition to (\ref{PE4}) to conclude: 
\begin{align} \n
\sqrt{\epsilon} \int \int \epsilon \partial_x^2 v^E_{RY} v &\Big(v_{xx} w_L^4 + C v_x \p_x w_L^{4} \Big) \\ \n
& \le \sqrt{\epsilon} ||\partial_x^2 v^E_{RY} x^{\frac{3}{2}+2} ||_{L^\infty} ||\sqrt{\epsilon} v x^{-1}||_{L^2} ||\sqrt{\epsilon}  v_{xx} x^{\frac{3}{2}}, \sqrt{\epsilon} v_x x^{\frac{1}{2}}||_{L^2} \\ \n
& \le \sqrt{\epsilon} ||\partial_x^2 v^E_{RY} x^{\frac{3}{2}+2} ||_{L^\infty} ||\sqrt{\epsilon} v_x||_{L^2} ||\sqrt{\epsilon}  v_{xx} x^{\frac{3}{2}}, \sqrt{\epsilon} v_x x^{\frac{1}{2}}||_{L^2} \\
& \le \sqrt{\epsilon} ||\partial_x^2 v^E_{RY} x^{\frac{3}{2}+2} ||_{L^\infty} ||v||_{X_1} ||v||_{X_1 \cap X_2}. 
\end{align}

The final task is to address the right-hand side: 
\begin{align}
\int \int \partial_{xxy} f \cdot v_{x} w_L^4 &= \int \int f_{xx} u_{xx} w_L^4, \\ \nonumber
 -\epsilon \int \int \partial_{xxx} g \cdot v_{x} w_L^4 &= \int \int \epsilon g_{xx} \{v_{xx}w_L^4 + v_x 4w_L^3 w_L' \} \\
 & = \int \int \epsilon g_{xx} v_{xx} w_L^4 + C \epsilon g_x \{ v_{xx} w_L^3w_L' + v_{x} \partial_{xx}\{w_L^4 \}  \}.
\end{align} 

We estimate the terms: 
\begin{align} \n
| \int \int \eps g_x \{v_{xx}w_L^3 w_L' &+ v_x \p_{xx}\{ w_L^4 \} \} | \lesssim \int \int \eps |g_x | \{ |v_{xx}| w_L^3 + |v_x| w_L^2 \} \\ \n
& \le \int \int \eps |g_x| \{ |v_{xx}| w_3^3 + |v_x| w_3^2 \}  \\
& \le \int \int \eps |g_x| \{ |v_{xx}| w_2^3 + |v_x| w_2^2 \} \le \mathcal{W}_2.
\end{align}

A comparison now with the definitions in (\ref{calw3}) gives the desired result upon taking $L \rightarrow \infty$. 

\end{proof}

We now give the third-order positivity estimate, which is the final estimate for our linear analysis:

\begin{proposition}[Third Order Positivity Estimate] \label{prop.ho.4} Let $\delta, \epsilon$ be sufficiently small relative to universal constants. Then solutions $[u,v] \in Z(\Omega^N)$ to the system (\ref{EQ.NSR.1}) - (\ref{EQ.NSR.3}) satisfy: 
\begin{align} \nonumber
||\{\sqrt{\epsilon}v_{xxx}, v_{xxy} \} w_3^{\frac{5}{2}}||_{L^2}^2 &\lesssim ||\{u_{xxy}, \sqrt{\epsilon}u_{xxx}, \epsilon v_{xxx} \} w_3^2||_{L^2}^2  + ||u,v||_{X_1 \cap X_2}^2 \\ 
& \hspace{20 mm} + \mathcal{W}_1 + \mathcal{W}_2 + \mathcal{W}_3. 
\end{align}
\end{proposition}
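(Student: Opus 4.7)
The plan is to mirror the proof of Proposition \ref{prop.ho.2} one order higher: we apply the multiplier $v_{xx} w_{3,L}^{5}$ to the twice-$\partial_x$-differentiated vorticity equation \eqref{stream.3x}, carry out the needed integrations by parts, and then pass to the limit $L \to \infty$ using the Monotone Convergence Theorem. As before, the mollified weight $w_{3,L}$ from \eqref{wk} is used so that no unwanted boundary term appears at $x=\infty$; its almost-linear behavior $|x^{k-1}\partial_x^k w_{3,L}|\lesssim 1$ is the only quantitative information we will need about it. Throughout the calculation we drop the subscript and write $w_L = w_{3,L}$.

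The highest-order terms come from the three pieces $-\partial_y\partial_x^2 u_{yy}$, $-\epsilon\partial_y\partial_x^2 u_{xx}$, $\epsilon^2\partial_x^3 v_{xx}$ and $\epsilon\partial_x^3 v_{yy}$ of the differentiated bi-Laplacian. Using the divergence-free relation $v_{xxy}=-u_{xxx}$ (exactly as in \eqref{pos.lap.1}--\eqref{pos.lap.4}), each of these integrates by parts to a squared quantity multiplied by a derivative of the weight, producing terms of the form
\[
\int u_{xxy}^2\, w_L^{4} w_L',\qquad \epsilon\!\int u_{xxx}^2\, w_L^{4} w_L',\qquad \epsilon^2\!\int v_{xxx}^2\, w_L^{4} w_L',\qquad \epsilon\!\int v_{xxy}^2\, w_L^{4} w_L',
\]
plus lower-order junk controlled by $||u,v||_{X_1\cap X_2}^2$ via the property $|\partial_x^k w_L^5|\lesssim w_L^{5-k}$. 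Since $|w_L'|\lesssim 1$, each of these is bounded by one of the quantities $||\{u_{xxy},\sqrt{\epsilon}u_{xxx},\epsilon v_{xxx}\}w_3^2||_{L^2}^2$ appearing on the right of the claim, so they are absorbed there.

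The essential positive terms come from $S_u$ and $S_v$. For $S_u$, integrating by parts in $y$ and again using $v_{xxy}=-u_{xxx}$ converts $\int \partial_y\partial_x^2 S_u\cdot v_{xx}w_L^5$ into $\int \partial_x^2 S_u\cdot u_{xxx}w_L^5$; expanding $\partial_x^2(u_R u_x)$ the leading piece $u_R u_{xxx}u_{xxx}w_L^5$ becomes $\int u_R v_{xxy}^2 w_L^5 \gtrsim \min|u_R|\,||v_{xxy}w_L^{5/2}||_{L^2}^2$ via \eqref{PE0.4}, \eqref{PE4.new.2}, which yields the $v_{xxy}$ part of the left-hand side. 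For $S_v$, one integration by parts in $x$ produces $\epsilon\int \partial_x^2 S_v\,v_{xxx}w_L^5$, and the analogous leading term $\int \epsilon u_R v_{xxx}^2 w_L^5$ supplies $\min|u_R|\,||\sqrt{\epsilon}v_{xxx}w_L^{5/2}||_{L^2}^2$. All remaining linearized contributions are treated exactly as in the proof of Proposition \ref{prop.ho.2}: each term $\partial_x^k u_R$, $\partial_x^k v_R$, $\partial_x^k u_{Ry}$, $\partial_x^k v_{Ry}$ is absorbed by one of the profile estimates \eqref{PE0.1}--\eqref{PE5}, trading a factor $x^{k+\alpha}$ into the profile and using Hardy in $y$ or $x$ whenever a bare $u$ or $v$ appears. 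The resulting majorants are either $\mathcal{O}(\delta)\cdot||\{\sqrt{\epsilon}v_{xxx},v_{xxy}\}w_L^{5/2}||_{L^2}^2$, which is absorbed back into the left, or pure lower-order energy $||u,v||_{X_1\cap X_2}^2$.

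On the right-hand side of \eqref{stream.3x}, the pairing against $v_{xx}w_L^5$ yields $\int f_{xx}u_{xxx}w_L^5$ (after an $x$-integration by parts using $v_{xxy}=-u_{xxx}$) and, after one more $x$-integration by parts, terms of the form $\int \epsilon g_{xx}v_{xxx}w_L^5$ along with lower-order pieces $\epsilon|g_x||v_{xx}|w_3^3$ and $\epsilon|g_x||v_x|w_3^2$ that were already bounded by $\mathcal{W}_2$ in the second-order estimate. The top-order contributions combine to give $\mathcal{W}_{3,P}$ via \eqref{calw3.P}, once we bound $|u_{xxx}|w_L^5$ by $|u_{xxx}|w_3^5\le w_3^5$ in the appropriate Cauchy–Schwarz pairing. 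The expected obstacle is purely bookkeeping: because $w_L$ degenerates for $1\le x\le 50+50(k-2)$, we cannot freely absorb non-degenerate weights, and we must check that every iterated $\partial_x^k w_L^5$ that arises through Leibniz decays at least as fast as $w_L^{5-k}$, so that each lower-order remainder collapses cleanly into $||u,v||_{X_1\cap X_2}^2+\mathcal{W}_1+\mathcal{W}_2$ rather than requiring an $X_3$-norm. Once this has been verified, passing $L\to\infty$ by Fatou/monotone convergence finishes the proof.
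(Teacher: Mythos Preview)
Your proposal is correct and follows essentially the same route as the paper: apply the multiplier $v_{xx}w_{3,L}^5$ to \eqref{stream.3x}, extract the positive terms $\int u_R u_{xxx}^2 w_L^5 = \int u_R v_{xxy}^2 w_L^5$ from $S_u$ and $\int \epsilon u_R v_{xxx}^2 w_L^5$ from $S_v$, bound the bi-Laplacian remainders by $||\{u_{xxy},\sqrt\epsilon u_{xxx},\epsilon v_{xxx}\}w_3^2||_{L^2}^2$, and send $L\to\infty$. Two small slips worth fixing: the $f$-term $\int f_{xxy}\cdot v_{xx}w_L^5 \to \int f_{xx}u_{xxx}w_L^5$ comes from a \emph{$y$}-integration by parts (not $x$), and the leftover from the $g$-term is $\epsilon|g_{xx}||v_{xx}|w_3^4$, which the paper places directly in $\mathcal W_{3,E}\subset\mathcal W_3$ rather than integrating by parts again to reach $\mathcal W_2$.
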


\begin{proof}
We apply the multiplier $v_{xx}w_{3,L}^5$. As usual, we shall drop the subscript-3 for this calculation, with the understanding that $w_{3,L} = w_L$.The highest-order terms are:
\begin{align} \nonumber
\int \int \{\partial_{yxx} &(-\Delta_\epsilon u) +\epsilon \partial_{xxx}(\Delta_\epsilon v) \} \cdot v_{xx}w_L^5 \\ \nonumber
& =- \int \int u_{yxx} \p_x w_L^5 + C\epsilon \int \int u_{xxx}^2 \p_x w_L^5 + C\epsilon^2 v_{xxx}^2 \p_x w_L^5 \\
&+ C\epsilon^2 \int \int v_{xx}^2 \partial_{x}^3 \{w_L^5 \}.
\end{align}

Next, we have the main profile term from $S_u$, which we will list here for convenience: 
\begin{align} \n
\int \int \partial_y \partial_{xx}& \Big[ u_R u_x + u_{Rx}u + v_R u_y + u_{Ry}v \Big] \cdot v_{xx} w_L^5  \\ \label{read.p.1}
& = \int \int \partial_{xx} \Big[ u_R u_x + u_{Rx}u + v_R u_y + u_{Ry}v \Big] \cdot u_{xxx} w_L^5.
\end{align}

We will now treat the first term from (\ref{read.p.1}):
\begin{align} \n
\int \int \partial_{xx} &(u_R u_x) \cdot u_{xxx} w_L^5 = \int \int \Big( u_R u_{xxx} + 2 u_{Rx} u_{xx} + u_{Rxx} u_x \Big) \cdot u_{xxx} w_L^5 \\ \label{r.p.1}
& \gtrsim \min u_R \int \int u_{xxx}^2 w_L^5 + \int \int u_{Rx}u_{xx}u_{xxx} w_L^5 + u_{Rxx} u_x u_{xxx} w_L^5.
\end{align}

Bounds on the final two integrals on the right-hand side above follow from (\ref{PE0.3}), (\ref{PE5}): 
\begin{align} \n
&| \int \int u_{Rx} u_{xx} u_{xxx} w_L^5| \le ||u_{Rx} x||_{L^\infty} ||u_{xx} w_L^{\frac{3}{2}}||_{L^2} ||u_{xxx} w_L^{\frac{5}{2}}||_{L^2} \\
& \hspace{30 mm} \le C||u,v||_{X_2}^2 + \frac{1}{100,000} ||u_{xxx} w_L^{\frac{5}{2}}||_{L^2}, \\ \n
&| \int \int u_{Rxx} u_x u_{xxx} w_L^5 | \le ||u_{Rxx} x^2||_{L^\infty} ||u_x x^{\frac{1}{2}}||_{L^2}||u_{xxx} w_L^{\frac{5}{2}}||_{L^2} \\
& \hspace{30 mm} \le C||u,v||_{X_1}^2 + \frac{1}{100,000} ||u_{xxx} w_L^{\frac{5}{2}}||_{L^2}^2. 
\end{align}

The $u_{xxx}$ terms on the right-hand side above can be absorbed by the first term in (\ref{r.p.1}). Here, we use that in the support of $\rho_3$, $C_1 x \le \rho_2(x) \le C_2 x$ so that $u_{xx}$ from (\ref{r.p.1}), for instance, does not require any of the degeneration from $w_L$. The next term from (\ref{read.p.1}) is:
\begin{align} \nonumber
\int \int \partial_{xx}&(u_{Rx}u) \cdot u_{xxx} w_L^5| =| \int \int \Big( u_{Rxxx}u + 2u_{Rxx}u_x + u_{Rx}u_{xx} \Big) \cdot u_{xxx}w_L^5.
\end{align}

We shall now split $u_R = u^P_R + u^E_R$. First, the $u^P_R$ contribution, via estimate (\ref{PE0.3}):
\begin{align} \n
| \int \int \partial_x^k u^P_R \partial_x^{i} \cdot u_{xxx} w_L^5| &\le ||\partial_x^k u^P_R x^{k-\frac{1}{2}}y||_{L^\infty} || \frac{\partial_x^{3-k} u}{y} x^{3-k}||_{L^2} ||u_{xxx} w_L^{\frac{5}{2}}||_{L^2} \\ \n
& \le  ||\partial_x^k u^P_R x^{k-\frac{1}{2}}y||_{L^\infty} || \partial_x^{3-k} u_y x^{3-k}||_{L^2} ||u_{xxx} w_L^{\frac{5}{2}}||_{L^2} \\ 
& \lesssim  ||u||_{X_{4-k}} ||u_{xxx} w_L^{\frac{5}{2}}||_{L^2} \le C||u,v||_{X_1 \cap X_2}^2 + \frac{1}{100,000}||u_{xxx} w_L^{\frac{5}{2}}||_{L^2}^2,
\end{align}

for $k = 3,2$. It is worth distinguishing the $k = 1$ case, although the calculation is identical, by appealing to estimate (\ref{PE0.4}):
\begin{align} \n
| \int \int u^P_{Rx} u_{xx} u_{xxx} w_L^5| &\le ||u^P_{Rx} yx^{\frac{1}{2}}||_{L^\infty} ||u_{xxy} w_L^2||_{L^2} ||u_{xxx} w_L^{\frac{5}{2}}||_{L^2} \\
& \le \mathcal{O}(\delta) ||u_{xxy} w_L^2||_{L^2} ||u_{xxx} w_L^{\frac{5}{2}}||_{L^2},
\end{align}

because now both majorizers are part of the $X_3$ norm: $u_{xxy}$ has been estimated in the energy estimate, and $u_{xxx}$ appears in (\ref{r.p.1}). We may then absorb the $u_{xxx}$ term from above into  (\ref{r.p.1}) by taking $\delta$ sufficiently small. Next, the Eulerian contribution, for which we use (\ref{PE5}), first with $k = 1,2$:
\begin{align} \n
| \int \int \partial_x^k u^E_R \partial_x^{3-k}u \cdot u_{xxx} w_L^5 | &\le ||\p_x^k u^E_R x^{k+\frac{1}{2}}||_{L^\infty} ||\p_x^{3-k}u x^{3-k-\frac{1}{2}}||_{L^2} ||u_{xxx} w_L^{\frac{5}{2}}||_{L^2} \\
& \le \sqrt{\eps} ||u||_{X_{3-k}} ||u_{xxx} w_L^{\frac{5}{2}}||_{L^2}. 
\end{align}

For the $k = 3$ case, we must add an extra step via Hardy's inequality: 
\begin{align} \n
| \int \int u^E_{Rxxx} u u_{xxx} w_L^5 | &\le ||u^E_{Rxxx} x^{\frac{7}{2}}||_{L^\infty} ||\frac{u}{x}||_{L^2} ||u_{xxx} w_L^{\frac{5}{2}}||_{L^2} \\
& \lesssim \sqrt{\eps}  ||u_x ||_{L^2}  ||u_{xxx} w_L^{\frac{5}{2}}||_{L^2} \lesssim \sqrt{\eps}  ||u||_{X_1}  ||u_{xxx} w_L^{\frac{5}{2}}||_{L^2}. 
\end{align}

Next, we have the convection term (we set $k = 0,1,2$ below): 
\begin{align} \nonumber
\int \int \partial_{xx} (u_{Ry}v) \cdot u_{xxx} w_L^5 &= \sum_{k=0}^2 \int \int \partial_x^k u_{Ry} \partial_x^{2-k}v \cdot u_{xxx} w_L^5 \\
& = \sum_{k=0}^2 \int \int \partial_x^k \Big(u^P_{Ry} + \sqrt{\epsilon}u^E_{RY}\Big) \partial_x^{2-k}v \cdot u_{xxx} w_L^5.
\end{align}

First, the Prandtl contributions, using estimates (\ref{PE2}):
\begin{align} \n
|\int \int \partial_x^k u^P_{Ry} \partial_x^{2-k} v \cdot u_{xxx} w_L^5 | &\lesssim || \partial_x^k u^P_{Ry} yx^k||_{L^\infty} ||\frac{\partial_x^{2-k}v}{y} w_L^{3-k-\frac{1}{2}}||_{L^2} ||u_{xxx} w_L^{\frac{5}{2}}||_{L^2} \\ \n
& \lesssim || \partial_x^k u^P_{Ry} yx^k||_{L^\infty} ||\partial_x^{2-k}v_y w_L^{3-k-\frac{1}{2}}||_{L^2} ||u_{xxx} w_L^{\frac{5}{2}}||_{L^2}.
\end{align}

Again, we distinguish the $k = 0$ case above, both majorizing terms are in $X_3$, and so we must use the smallness of $\mathcal{O}(\delta)$ to absorb into the (\ref{r.p.1}) positive term. In particular, appealing to estimate (\ref{PE0.5}), (\ref{PE3}), we have:
\begin{align}
||u^P_{Ry} y||_{L^\infty} ||\p_x^2 v_y w_L^{\frac{5}{2}}||_{L^2} ||u_{xxx} w_L^{\frac{5}{2}}||_{L^2} \le \mathcal{O}(\delta)||\p_x^2 v_y w_L^{\frac{5}{2}}||_{L^2} ||u_{xxx} w_L^{\frac{5}{2}}||_{L^2}.
\end{align}

For $k > 0$, we have by using (\ref{PE0.3}) - (\ref{PE2}) and then Young's inequality:
\begin{align} \n
||\p_x^k u^P_{Ry} yx^k||_{L^\infty} ||\p_x^{2-k} v_y w_L^{3-k-\frac{1}{2}}||_{L^2}&||u_{xxx} w_L^{\frac{5}{2}}||_{L^2} \lesssim ||u,v||_{X_1 \cap X_2} ||u_{xxx} w_L^{\frac{5}{2}}||_{L^2} \\
& \le C ||u,v||_{X_1 \cap X_2}^2 + \frac{1}{100,000}||u_{xxx} w_L^{\frac{5}{2}}||_{L^2}^2. 
\end{align}

For the Euler contributions $u^E_R$, we estimate for the $k = 0,1$ cases, using (\ref{PE5}): 
\begin{align} \n
| \int \int \sqrt{\epsilon} \partial_x^k u^E_{RY} \partial_x^{2-k} v u_{xxx} w_L^5| &\lesssim ||\partial_x^k u^E_{RY} x^{k+1+ \frac{1}{2}}||_{L^\infty} ||\sqrt{\epsilon}\partial_x^{2-k}v w_L^{2-k-\frac{1}{2}}||_{L^2} ||u_{xxx}w_L^{\frac{5}{2}}||_{L^2} \\
& \lesssim \sqrt{\eps}  ||v||_{X_{2-k}} ||u_{xxx} w_L^{\frac{5}{2}}||_{L^2}.  
\end{align}

For the $k = 2$ case, we must additionally use the Hardy inequality: 
\begin{align}\n
| \int \int \sqrt{\epsilon} u^E_{RYxx}  v u_{xxx} w_L^5| &\lesssim ||u^E_{RYxx} x^{\frac{7}{2}}||_{L^\infty} ||\sqrt{\epsilon} \frac{v}{x}||_{L^2} ||u_{xxx} w_L^{\frac{5}{2}}||_{L^2} \\ \n
& \lesssim ||u^E_{RYxx} x^{\frac{7}{2}}||_{L^\infty} ||\sqrt{\epsilon} v_x||_{L^2} ||u_{xxx} w_L^{\frac{5}{2}}||_{L^2}  \\
& \lesssim \sqrt{\eps} ||v||_{X_1} ||u_{xxx} w_L^{\frac{5}{2}}||_{L^2}.
\end{align}

The final term in $S_u$ is easily estimated directly, where $k = 0,1,2$, by applying estimates (\ref{PE0.1}) - (\ref{PE1}), and (\ref{PE4}), (\ref{PE4.new.2}), crucially obtaining the factor $\mathcal{O}(\delta)$ when $k = 0$:
\begin{align} 
\Big| \int \int \partial_{xx} &(v_R u_y) \cdot u_{xxx}w_L^5 \Big| \lesssim ||\partial_x^k v_R x^{k+\frac{1}{2}}||_{L^\infty} ||\partial_x^{2-k} u_y w_L^{2-k}||_{L^2} ||u_{xxx} w_L^{\frac{5}{2}}||_{L^2} \\ \nonumber
& \lesssim \mathcal{O}(\delta) ||\p_x^2 u_y w_L^2||_{L^2} ||u_{xxx} w_L^{\frac{5}{2}}||_{L^2} + \frac{1}{100,000} ||u_{xxx} w_L^{\frac{5}{2}}||_{L^2}^2 + C||u,v||_{X_1 \cap X_2}^2.
\end{align}

We now address the profile terms from $S_v$. We shall record these below for convenience: 
\begin{align} \n
\int \int -\epsilon \partial_x \partial_{xx}& \Big[ u_R v_x + v_{Rx}u + v_R v_y + v_{Ry}v \Big] \cdot v_{xx}w_L^5 \\ \label{read.p.2}
& = \int \int \epsilon \partial_{xx} \Big[ u_R v_x + v_{Rx}u + v_R v_y + v_{Ry}v \Big] \cdot \Big[v_{xxx}w_L^5 + v_{xx} \partial_x w_L^5 \Big].
\end{align}

Let us start with the first term above, which yields the desired positivity: 
\begin{align} \n
\int \int \epsilon \partial_{xx} (u_R v_x ) &\cdot (v_{xxx}  w_L^5 + v_{xx} \partial_x w_L^5) \\ 
& = \int \int \epsilon u_R v_{xxx}^2 w_L^5 + \sum_{k=1}^2 \sum_{i=2}^3 \epsilon \partial_x^k u_R \n\partial_x^{3-k}v \cdot \partial_x^i v \partial_x^{3-i} w_L^5 \\ \label{r.p.2}
&\ge \min u_R \int \int \epsilon v_{xxx}^2 w_L^5 + \sum_{k=1}^2 \sum_{i=2}^3 \epsilon \partial_x^k u_R \partial_x^{3-k}v  \cdot \partial_x^i v \partial_x^{3-i} w_L^5.
\end{align}

We estimate the summation on the right-hand side above, by using (\ref{PE0.3}) - (\ref{PE0.4}), (\ref{PE5})
\begin{align} \n
|\int \int \epsilon \partial_x^k u_R \partial_x^{3-k}v  &\cdot \partial_x^i v \partial_x^{3-i} w_L^5|| \\ \n
& \le ||\partial_x^k u_R x^k||_{L^\infty} ||\sqrt{\epsilon} \partial_x^{3-k} v x^{3-k-\frac{1}{2}}||_{L^2} || \sqrt{\epsilon} \partial_x^i v w_L^{i- \frac{1}{2}}||_{L^2} \\
& \lesssim ||v||_{X_{3-k}} ||v||_{X_i} \le C ||u,v||_{X_1 \cap X_2}^2 + \frac{1}{100,000}||\sqrt{\eps}v_{xxx}w_L^{\frac{5}{2}}||_{L^2}^2
\end{align}

When $i = 3$, one obtains the top norm $X_3$ above, and must be absorbed into the positive term from (\ref{r.p.2}), which is done via Young's inequality. The next profile term follows by using the bounds in (\ref{PE0.1}) - (\ref{PE1}) for $v^P_R$ and (\ref{PE4}) for $v^E_R$: 
\begin{align} \nonumber
| \int \int \epsilon \partial_{xx} (v_{Rx}u) &\cdot ( v_{xxx}w_L^5 + v_{xx} \partial_x w_L^5 ) | \\ \n
& \le  \sum_{k=0}^2 \sum_{i=2}^3 |\int \int \epsilon \partial_x^{k+1} v_R \partial_x^{2-k} u \partial_x^i v \partial_x^{3-i} w_L^5 | \\ \n
& \lesssim \sqrt{\epsilon} \sum_{k=0}^2 \sum_{i=2}^3 || \partial_x^{k+1}v_R x^{k+\frac{3}{2}}||_{L^\infty} ||\partial_x^{2-k}u x^{1-k}||_{L^2} ||\sqrt{\epsilon} \partial_x^iv \cdot w_L^{i-\frac{1}{2}}||_{L^2} \\
& \lesssim \sqrt{\epsilon} ||u,v||_{X_1 \cap X_2}^2 + \sqrt{\eps} ||\sqrt{\eps} \p_x^3 v w_L^{\frac{5}{2}}||_{L^2}^2. 
\end{align}

Above, we have used the Hardy inequality in the case when $k = 2$, for the term:
\begin{align}
||\frac{u}{x}||_{L^2} \lesssim ||u_x||_{L^2} \lesssim ||u||_{X_1}. 
\end{align}

The third profile term requires a splitting into Euler and Prandtl components:
\begin{align} \nonumber
\int \int \epsilon \partial_{xx} (v_{Ry}v) &\cdot (v_{xxx}w_L^5 + v_{xx} \partial_x w_L^5) \\ \n
& = \sum_{k=0}^2 \sum_{i=2}^3 \int \int \epsilon \partial_x^k v_{Ry} \partial_x^{2-k}v \cdot \partial_x^i v \partial_x^{3-i} w_L^5 \\
& =  \sum_{k=0}^2 \sum_{i=2}^3 \int \int \epsilon \partial_x^k \Big( v^P_{Ry} + \sqrt{\epsilon} v^E_{RY} \Big) \partial_x^{2-k}v \cdot \partial_x^i v \partial_x^{3-i} w_L^5 
\end{align}

First, according to (\ref{PE0.1}) - (\ref{PE1}), 
\begin{align} \n
| \int \int \epsilon \partial_x^k v^P_{Ry} \partial_x^{2-k} v &\partial_x^i v \partial_x^{3-i}w_L^5 | \\ \n
& \le \sqrt{\epsilon}  ||\partial_x^k v^P_{Ry} y x^{k+ \frac{1}{2}}||_{L^\infty} ||\frac{\partial_x^{2-k} v}{y} w_L^{3-k-\frac{1}{2}}||_{L^2} ||\sqrt{\epsilon} \partial_x^i v w_L^{i-\frac{1}{2}}||_{L^2} \\ \n
& \lesssim \sqrt{\epsilon} ||\partial_x^k v^P_{Ry} y x^{k+ \frac{1}{2}}||_{L^\infty} || \partial_x^{2-k} v_y w_L^{3-k-\frac{1}{2}}||_{L^2} ||\sqrt{\epsilon} \partial_x^i v w_L^{i-\frac{1}{2}}||_{L^2} \\
& \lesssim \sqrt{\epsilon} ||u,v||_{X_1 \cap X_2}^2 + \sqrt{\eps} ||\sqrt{\eps} \p_x^3 v w_L^{\frac{5}{2}}||_{L^2}^2 + \sqrt{\eps} ||\p_x^2 v_y w_L^{\frac{5}{2}}||_{L^2}^2.
\end{align}

Next, according to (\ref{PE4}), 
\begin{align} \n
| \int \int \epsilon^{\frac{3}{2}} \partial_x^k v^E_{RY} &\partial_x^{2-k} v \partial_x^i v \partial_x^{3-i}w_L^5 | \\ \n
& \le \sqrt{\epsilon} ||\partial_x^k v^E_{RY} x^{k+\frac{3}{2}}||_{L^\infty} ||\sqrt{\epsilon} \partial_x^{2-k} v w_L^{1-k}||_{L^2} ||\sqrt{\epsilon} \partial_x^i v w_L^{i-\frac{1}{2}}||_{L^2} \\
& \lesssim \sqrt{\epsilon}  ||u,v||_{X_1 \cap X_2}^2 + \sqrt{\eps} ||\sqrt{\eps} \p_x^3 v w_L^{\frac{5}{2}}||_{L^2}^2. 
\end{align}

Above, we have used the Hardy inequality in the case when $k = 2$ for the term: 
\begin{align}
||\sqrt{\epsilon}\frac{v}{x}||_{L^2} \lesssim ||\sqrt{\epsilon}v_x||_{L^2} \lesssim ||v||_{X_1}. 
\end{align}

The final profile term is handled by appealing to estimates (\ref{PE0.1}) - (\ref{PE1}) and (\ref{PE4}):
\begin{align} \nonumber
|\int \int \epsilon \partial_{xx} (v_R v_y) &\cdot \Big( v_{xxx} w_L^5 + v_{xx} \partial_x w_L^5 \Big)| \\ \n
& \le \eps \sum_{k=0}^2 \sum_{i=2}^3 | \int \int \epsilon \partial_x^k v_R \partial_x^{2-k} v_y \partial_x^i v \partial_x^{3-i}w_L |\\ \n
& \le \eps \sum_{k=0}^2 \sum_{i=2}^3 ||\partial_x^k v_R x^{k+\frac{1}{2}}||_{L^\infty} ||\partial_x^{2-k}v_y w_L^{3-k-\frac{1}{2}} ||_{L^2} ||\partial_x^i v w_L^{i-\frac{1}{2}}||_{L^2} \\
& \lesssim ||u,v||_{X_1 \cap X_2}^2 + \sqrt{\eps} ||\p_x^2 v_y w_L^{\frac{5}{2}}||_{L^2}^2 + \sqrt{\eps} ||\sqrt{\eps}\p_x^3v w_L^{\frac{5}{2}}||_{L^2}^2.
\end{align}

Finally, we have the right-hand side: 
\begin{align}
&\int \int \partial_{xxy} f \cdot v_{xx}w_L^5 = -\int \int f_{xx} \cdot v_{xxy}w_L^5, \\
&- \int \int \epsilon \partial_{xxx}g \cdot v_{xx} w_L^5 = \int \int \epsilon g_{xx} \{ v_{xxx} w_L^5 + 5v_{xx}w_L^4 w_L' \}. 
\end{align}

We estimate: 
\begin{align}
| \int \int \eps g_{xx} v_{xx} w_L^4 w_L' | \le \int \int \eps |g_{xx}| |v_{xx}| w_L^4 \le  \int \int \eps |g_{xx}| |v_{xx}| w_3^4 \le \mathcal{W}_3. 
\end{align}

We end by taking $L \rightarrow \infty$. A comparison with (\ref{calw3}) shows that our claim is proven. 

\end{proof}

Summarizing, then, the conclusions of the linear analysis: 
\begin{theorem}[Linear Estimates] Let $\eps, \delta$ be sufficiently small relative to universal constants, and let $\eps << \delta$. Then $[u,v] \in Z$, solutions to the system (\ref{EQ.NSR.1}) - (\ref{EQ.NSR.3}), (\ref{defn.SU.SV}) - (\ref{bar.g}), with boundary conditions (\ref{BCN}), satisfy the following \textit{a-priori} estimate: 
\begin{align} \label{linsum}
||u,v||_{X_1 \cap X_2 \cap X_3}^2 \lesssim \mathcal{W}_1 + \mathcal{W}_2 + \mathcal{W}_3.
\end{align}
\end{theorem}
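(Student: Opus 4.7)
The strategy is a standard bootstrap: at each order $k=1,2,3$ the energy estimate (Propositions \ref{thm.energy}, \ref{prop.ho.1}, \ref{prop.ho.3}) controls the derivatives of $u$ modulo a small multiple of the $v$-derivatives with an extra half power of $x$, and the positivity estimate (Propositions \ref{prop.pos}, \ref{prop.ho.2}, \ref{prop.ho.4}) recovers exactly that half power of $x$ on the $v$-derivatives modulo the already-controlled $u$-derivatives. Summing the two estimates and taking $\delta$ sufficiently small to absorb the $\mathcal{O}(\delta)$ terms closes the loop at each order.

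Concretely, at first order I add (\ref{NSR.Energy}) and (\ref{NSR.Positivity}). The left-hand side is $\|u_y\|_{L^2}^2 + \|\{\sqrt{\eps}v_x, v_y\}x^{1/2}\|_{L^2}^2$ (up to boundary terms at $x=1$ and $x=M$ which have a good sign and can be dropped), while the right-hand side contains $\mathcal{O}(\delta)$ multiples of the same quantities plus $\mathcal{W}_1$. Choosing $\delta$ small enough absorbs the right-hand-side $v$-quantities, yielding
\[
\|u,v\|_{X_1}^2 \lesssim \mathcal{W}_1.
\]

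At second order I apply the same device to (\ref{dfig}) and (\ref{POS.2.2}) with the weight $w_2=\rho_2 x$. Summing gives
\[
\|u_{xy}w_2\|_{L^2}^2 + \eps\|\{v_{xy},\sqrt{\eps}v_{xx}\}w_2\|_{L^2}^2 + \|\{v_{xy},\sqrt{\eps}v_{xx}\}w_2^{3/2}\|_{L^2}^2 \lesssim \|u,v\|_{X_1}^2 + \mathcal{W}_1+\mathcal{W}_2,
\]
after absorbing the $\mathcal{O}(\delta)$ term $\|\{v_{xy},\sqrt{\eps}v_{xx}\}w_2^{3/2}\|^2$ from (\ref{dfig}) into the corresponding left-hand side of (\ref{POS.2.2}). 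In view of (\ref{norm.x1}) this bounds $\|u,v\|_{X_2}^2$, so together with the first-order step,
\[
\|u,v\|_{X_1\cap X_2}^2 \lesssim \mathcal{W}_1 + \mathcal{W}_2.
\]
The third-order step is identical: combine (\ref{Energy.K}) with the conclusion of Proposition \ref{prop.ho.4}, using the weight $w_3=\rho_3 x$, absorb the $\mathcal{O}(\delta)\|\p_x^2\{v_y,\sqrt{\eps}v_x\}w_3^{5/2}\|^2$ term from the energy estimate into the left-hand side of the positivity estimate, and use the already-established bound on $\|u,v\|_{X_1\cap X_2}^2$ on the right to obtain $\|u,v\|_{X_3}^2 \lesssim \mathcal{W}_1+\mathcal{W}_2+\mathcal{W}_3$.

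The only subtlety is bookkeeping: one must verify that at each order the $\mathcal{O}(\delta)$ coefficients really do sit in front of terms already controlled by the companion estimate at the same order (not a higher one). This is exactly the structure of Propositions \ref{thm.energy}--\ref{prop.ho.4}, where the degenerate weights $w_2^{3/2}$ and $w_3^{5/2}$ on the right of the energy estimates match the non-degenerate left-hand sides of the corresponding positivity estimates. The auxiliary bound (\ref{rhsb}) and the decay properties encoded in $Z(\Omega^N)$ through Lemma \ref{L.Evol} and the estimates (\ref{ZN1})--(\ref{ZN}) have already been used inside the propositions to justify all Fubini exchanges and vanishing of boundary terms at $x=\infty$, so no further work is needed at this step; the present theorem is purely a linear-algebraic combination of the six propositions.
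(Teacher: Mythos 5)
Your proposal is correct and is exactly the argument the paper intends: the theorem is stated as a summary of Propositions \ref{thm.energy}, \ref{prop.pos}, \ref{prop.ho.1}, \ref{prop.ho.2}, \ref{prop.ho.3}, \ref{prop.ho.4}, and the combination you describe — substituting each positivity estimate into the companion energy estimate at the same order, absorbing the $\mathcal{O}(\delta)$ terms, and feeding the lower-order conclusion into the right-hand side of the next order — is the intended (and only) remaining step. Your bookkeeping of the degenerate weights $w_2^{3/2}$, $w_3^{5/2}$ against the non-degenerate left-hand sides is the right point to check, and it checks out.
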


\section{Nonlinear Analysis} \label{Sec.NL}

\subsection{a-priori Estimate of Nonlinearities}

In this subsection, we exhibit control of the right-hand side of (\ref{linsum}). We continue to consider the system (in a manner independent of $N$):
\begin{equation} \label{end.ns.0}
 \left.\begin{aligned}  
	-\Delta_\eps u + S_u + P_x &= \eps^{-\frac{n}{2}-\gamma} R^{u,n} + \eps^{\frac{n}{2}+\gamma} \Big[ \bar{u} \bar{u}_x + \bar{v} u_y \Big]   \\ 
	-\Delta_\eps v + S_v + \frac{P_y}{\eps} &= \eps^{-\frac{n}{2}-\gamma} R^{v,n} + \eps^{\frac{n}{2}+\gamma} \Big[ \bar{u} \bar{v}_x + \bar{v} \bar{v}_y \Big], \\ 
u_x + v_y &= 0. 
       \end{aligned}
 \right\}
  \qquad \text{in $\Omega^N$}
\end{equation}

That is, $f = f(u, \bar{u}, \bar{v})$ and $g = g(\bar{u}, \bar{v})$, as in (\ref{bar.f}), (\ref{bar.g}). Notationally, we continue to depict integration over $\Omega^N$ by $\int \int$ and similarly, norms without further specification are taken over $\Omega^N$. For the forthcoming calculation, we refer the reader to the definitions of $\mathcal{W}^i$, in equations (\ref{calw1}), (\ref{calw2}), (\ref{calw3}). 

\begin{lemma} \label{LemmaW} Suppose $||\bar{u}, \bar{v}||_Z \le 1$. For $0 \le \gamma < \frac{1}{4}$, and fixed parameters $\kappa > 0$ arbitrarily small, for $\delta, \epsilon$ sufficiently small, $n$ sufficiently large: 
\begin{align}\nonumber
\Big| \mathcal{W}_1 + \mathcal{W}_2 + \mathcal{W}_3 \Big| &\lesssim \epsilon^{\frac{1}{4}-\gamma - \kappa} + \epsilon^{\frac{1}{4}-\gamma - \kappa} ||u,v||_{X_1 \cap X_2 \cap X_3}^2 \\ \label{NLO.1}
&+ \epsilon^{\frac{n}{2}-\omega(N_i)} ||u,v||_{Z}^2+ \epsilon^{\frac{n}{2}-\omega(N_i)} ||\bar{u},\bar{v}||_{Z}^4 ,
\end{align}
where $\omega(N_i)$ is a function which depends only on universal constants and $N_i$ in the definition of norm $Z$. 
\end{lemma}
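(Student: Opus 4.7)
The plan is to split each $\mathcal{W}_i$ into a \emph{forcing part} (those contributions from $\eps^{-\frac{n}{2}-\gamma}R^{u,n}, \eps^{-\frac{n}{2}-\gamma}R^{v,n}$) and a \emph{nonlinear part} (the remaining quadratic expressions in $\bar u,\bar v,u$). Each piece will be placed on one of the three sides of the inequality in (\ref{NLO.1}): the forcing part will contribute $\eps^{\frac{1}{4}-\gamma-\kappa}$ (possibly times $\|u,v\|_{X_1\cap X_2\cap X_3}$, to be absorbed by Young's inequality), while the nonlinear part, being cubic or quartic, will give either $\eps^{\frac{n}{2}-\omega(N_i)}\|u,v\|_Z^2$ (after paying one factor of $\|\bar u,\bar v\|_Z\le 1$) or $\eps^{\frac{n}{2}-\omega(N_i)}\|\bar u,\bar v\|_Z^4$.

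For the forcing contributions, I will invoke the remainder bounds from Lemma \ref{Lemma FE}: both the pointwise estimate $\eps^{-\frac{n}{2}-\gamma}|\p_x^k R^{\cdot,n}|\lesssim \eps^{\frac{1}{4}-\gamma-\kappa}x^{-k-\frac{3}{2}+2\sigma_n}$ and the $L^2_y$ estimate with $x^{-k-\frac{5}{4}+2\sigma_n+\kappa}$. For instance, to handle $|\int\int \eps^{-\frac{n}{2}-\gamma}R^{u,n}\cdot u|$ in $\mathcal{W}_{1,E}$ I pair the $L^2_y$ bound on $R^{u,n}$ with $\|u x^{\frac{1}{4}}\|_{L^\infty}$ (controlled by $\|u,v\|_Z$ via (\ref{u0})) and integrate the remaining power of $x$, noting that $-\frac{5}{4}-\frac{1}{4}+2\sigma_n+\kappa<-1$ so the $x$-integral is finite. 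All other forcing-$\mathcal{W}_i$ terms will follow the same pattern: pair $L^2_y$ norms from Lemma \ref{Lemma FE} with sharp $L^\infty$ or weighted $L^2$ embeddings of $(u,v)$ from the norm $Z$ (estimates (\ref{evo.low})--(\ref{unitop})), and check that the resulting weight in $x$ integrates. Since $\sigma_n=\frac{1}{10000}$ and we will always pick up $k+1$ powers of $x^{-1}$ after one application of Cauchy--Schwarz, these integrations are comfortably subcritical.

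For the nonlinear contributions, the philosophy is to always keep $\bar u$ or $\bar v$ in $L^\infty$ and pair with $L^2$-quantities from $Z$. At the lowest order, $\eps^{\frac{n}{2}+\gamma}\int\int |\bar u\bar u_x|\,|u|$ is bounded using $\|\bar u x^{\frac{1}{4}}\|_{L^\infty}\|\bar u_x x\|_{L^\infty_y}^{1/2}$-type controls together with $\|u_y\|_{L^2}$ and Hardy; the hardest term, $\eps^{\frac{n}{2}+\gamma}\int\int \bar v u_y\cdot u$, is handled as in (\ref{intro.cvc}) by writing $\|\bar v x^{\frac{1}{2}}\|_{L^\infty}\|u_y\|_{L^2}\|u_y\|_{L^2}$ (trading the factor $x^{-\frac{1}{2}}$ into $\bar v$ via the sharp embedding (\ref{u0})), paying a factor of $\|\bar u,\bar v\|_Z$ and closing with $\eps^{\frac{n}{2}-\omega(N_i)}\|u,v\|_Z\|\bar u,\bar v\|_Z^2$. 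The $\mathcal{W}_2$ terms, after applying $\p_x$ to $\bar u\bar u_x+\bar v u_y$, produce products of the form $\bar u_x\bar u_x,\bar u\bar u_{xx},\bar v_x u_y,\bar v u_{xy}$; each is paired with $u_x(\rho_2 x)^2$ or $v_{xy}(\rho_2 x)^3$ and estimated using the $L^\infty$ bounds on $\bar u_x x^{\frac{5}{4}}$, $\bar v_x x^{\frac{3}{2}}$ from the $\mathcal{U}$-component of $Z$, together with the $X_1\cap X_2\cap X_3$ controls on $u_y,u_{xy}$.

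The main obstacle will be the top-order contributions in $\mathcal{W}_3$. After applying $\p_{xx}$, the terms $\bar u\bar u_{xxx}$, $\bar v u_{xxy}$, $\bar v\bar v_{xxy}$, $\bar u\bar v_{xxx}$ in $f_{xx},g_{xx}$ must be paired against $u_{xxx}w_3^{\frac{5}{2}}$, $v_{xxx}w_3^{\frac{5}{2}}$, etc. Here no $L^2$-bound on four derivatives of $\bar u,\bar v$ is part of $Z$, so I am forced to keep the triple derivative in $L^2$ and put the $\bar u,\bar v$-factor in $L^\infty$; this is exactly the role of the mixed-norm component $\eps^{N_7}(\int x^4\|\sqrt\eps v_{xx}\|_{L^\infty_y}^2\,dx)^{1/2}$ in $Z$ coming from (\ref{unitop}). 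Concretely, for $\bar v u_{xxy}\cdot u_{xxx}w_3^5$ I will split by first applying Cauchy--Schwarz in $y$, then an integral $\int dx$ Cauchy--Schwarz of the form $(\int x^4\|\sqrt\eps \bar v_{xx}\|_{L^\infty_y}^2)^{1/2}\cdot(\int\|u_{xxy}w_3^2\|_{L^2_y}^2)^{1/2}\cdot(\dots)$, closing via the $X_3$-norm and the $N_7$-component. With $N_i$ chosen according to (\ref{sel.N2})--(\ref{sel.n}) (recall that $n$ is taken much larger than all $N_i$) each factor of $\eps^{-N_i}$ paid in the nonlinear estimates is absorbed by the $\eps^{\frac{n}{2}+\gamma}$ prefactor, yielding the clean bound $\eps^{\frac{n}{2}-\omega(N_i)}$ on the right of (\ref{NLO.1}). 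Once each of the twelve subterms in $\mathcal{W}_1+\mathcal{W}_2+\mathcal{W}_3$ has been handled in this way, the claim follows by summation.
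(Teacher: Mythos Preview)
Your overall strategy---splitting into forcing and nonlinear parts, and closing each with the appropriate embedding from $Z$---is correct and matches the paper. However, there is one genuine gap and two smaller misidentifications.

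\textbf{The gap.} Your treatment of $\eps^{\frac{n}{2}+\gamma}\int\int \bar v\, u_y\, u$ does not work as written. Bounding directly by $\|\bar v x^{\frac12}\|_{L^\infty}\|u_y\|_{L^2}\|u x^{-\frac12}\|_{L^2}$ lands on the \emph{critical} Hardy exponent in $x$, which fails (this is exactly the obstruction discussed around (\ref{HCrit.1})). The reference to (\ref{intro.cvc}) is misleading: that estimate exploits the rapid $y$-decay of the Prandtl profile $u^0_{py}$, which $\bar v$ does not possess. The paper instead uses the structure of the term: since the absolute value in the definition of $\mathcal{W}_{1,E}$ is \emph{outside} the integral for the $f\cdot u$ piece, one may integrate by parts in $y$ to write $\int\int \bar v\, u_y\, u = -\tfrac12\int\int \bar v_y\, u^2$, and then estimate via $\|u x^{\frac14}\|_{L^\infty}\|\bar v_y x^{\frac12}\|_{L^2}\|u x^{-\frac34}\|_{L^2}$, where the last factor is now subcritical. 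This integration by parts is essential and is the reason the remark after Proposition~\ref{thm.energy} singles out this term.

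\textbf{Two smaller points.} First, for the forcing contributions you propose pairing $\|R^{u,n}\|_{L^2_y}$ with $\|u x^{\frac14}\|_{L^\infty}$; this costs a factor $\eps^{-N_4}$ from the $Z$-norm that cannot be absorbed into $\eps^{\frac14-\gamma-\kappa}$. The paper instead pairs $\|R^{u,n}x^{\frac12+\sigma'}\|_{L^2}$ directly with $\|u x^{-\frac12-\sigma'}\|_{L^2}\lesssim \|u_x x^{\frac12-\sigma'}\|_{L^2}\le \|u,v\|_{X_1}$ via the (now subcritical) Hardy inequality in $x$, landing cleanly on $\eps^{\frac14-\gamma-\kappa}\|u,v\|_{X_1\cap X_2\cap X_3}$. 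Second, at top order in $\mathcal{W}_3$ the term that genuinely requires the mixed norm $\int x^4\|\sqrt\eps \bar v_{xx}\|_{L^\infty_y}^2\,dx$ is $\bar v_{xx}\, u_y$ (two $x$-derivatives on $\bar v$, for which no weighted $L^\infty$ control exists in $Z$), not $\bar v\, u_{xxy}$---the latter is straightforward via $\|\bar v x^{\frac12}\|_{L^\infty}$.
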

\begin{proof}

For clarity of exposition, the order in which we treat the terms are as follows: we will first treat the nonlinear terms, $\mathcal{N}^u$, arising from $f$ in $\mathcal{W}_1, \mathcal{W}_2, \mathcal{W}_3$, then those nonlinear terms, $\mathcal{N}^v$, arising from $g$, and finally the forcing terms, $R^{u,n}, R^{v,n}$, in both $f$ and $g$. Turning first to $\mathcal{W}_1$, equation (\ref{calw1}), we start with the term: 
\begin{align} \label{w.f.1}
\int \int \mathcal{N}^u \cdot u = \int \int \eps^{\frac{n}{2}+\gamma} \Big(\bar{u} \bar{u}_x + \bar{v}u_y \Big) \cdot u. 
\end{align}

First, 
\begin{align} 
\Big| \int \int \eps^{\frac{n}{2}+\gamma} \bar{u} \bar{u}_x \cdot u \Big| &\lesssim \eps^{\frac{n}{2}+\gamma} ||\bar{u} x^{\frac{1}{4}}||_{L^\infty} ||\bar{u}_x x^{\frac{1}{2}}||_{L^2} ||\frac{u}{x^{\frac{3}{4}}}||_{L^2} \\ \n
&  \lesssim \eps^{\frac{n}{2}+\gamma} ||\bar{u} x^{\frac{1}{4}}||_{L^\infty} ||\bar{u}_x x^{\frac{1}{2}}||_{L^2} || u_x x^{\frac{1}{4}}||_{L^2}  \lesssim \eps^{\frac{n}{2}+\gamma-\omega(N_i)} ||\bar{u}, \bar{v}||_Z^2 ||u, v||_Z. 
\end{align}

For the next term in (\ref{w.f.1}), we must use the structure of the nonlinearity by integrating by parts once in $y$: 
\begin{align} \label{order.NL} 
\Big| \int \int \eps^{\frac{n}{2}+\gamma} \bar{v} u_y \cdot u \Big| &= \Big| \int \int \eps^{\frac{n}{2}+\gamma} \bar{v}_y \frac{u^2}{2} \Big| \lesssim \eps^{\frac{n}{2}+\gamma} ||ux^{\frac{1}{4}}||_{L^\infty} ||\bar{v}_y x^{\frac{1}{2}}||_{L^2} ||ux^{-\frac{3}{4}}||_{L^2} \\ \n &\lesssim \eps^{\frac{n}{2}+\gamma}  ||ux^{\frac{1}{4}}||_{L^\infty} ||u_x x^{\frac{1}{4}}||_{L^2} ||\bar{v}_y x^{\frac{1}{2}}||_{L^2} \lesssim \eps^{\frac{n}{2}+\gamma - \omega(N_i)} ||u, v||_Z^2 ||\bar{u}, \bar{v}||_Z. 
\end{align}

Note carefully that no absolute values were included on this term in the definition of $\mathcal{W}_1$, equation (\ref{calw1}). We then move to the next term from (\ref{calw1}), which we display below:
\begin{align}
\int \int |\mathcal{N}^u| |v_y| x \le \int \int \eps^{\frac{n}{2}+\gamma} |\bar{u} \bar{u}_x + \bar{v} u_y| |v_y| x.
\end{align}

The nonlinear terms here are treated via: 
\begin{align} \n
\int \int \eps^{\frac{n}{2}+\gamma} |\bar{u} \bar{u}_x v_y x| &\le \eps^{\frac{n}{2}+\gamma} ||\bar{u}||_{L^\infty} ||\bar{u}_x x^{\frac{1}{2}}||_{L^2} ||v_y x^{\frac{1}{2}}||_{L^2} , \\
&\lesssim \eps^{\frac{n}{2}+\gamma - \omega(N_i)} ||\bar{u}, \bar{v}||_Z^2 ||u, v||_Z, \\ \n
\int \int \eps^{\frac{n}{2}+\gamma} | \bar{v} u_y v_y x | &\le \eps^{\frac{n}{2}+\gamma} ||\bar{v}x^{\frac{1}{2}}||_{L^\infty} ||u_y||_{L^2} ||v_y x^{\frac{1}{2}}||_{L^2} \\ \label{exact.NL}
& \lesssim \eps^{\frac{n}{2}+\gamma - \omega(N_i)} ||\bar{u}, \bar{v}||_Z ||u,v||_Z^2. 
\end{align}

Staying with $\mathcal{N}^u$, we will move to $\mathcal{W}_2$ in (\ref{calw2}):
\begin{align} \n
\int \int |\p_x \mathcal{N}^u| \Big[ |u_x| \rho_2^2 x^2 &+ |u_{xx}| \rho_2^3 x^3 \Big] \\ \label{w.f.2}
& = \int \int \eps^{\frac{n}{2}+\gamma} |\p_x (\bar{u} \bar{u}_x + \bar{v} u_y ) | \Big[ |u_x| \rho_2^2 x^2 + |u_{xx}| \rho_2^3 x^3  \Big].
\end{align}

First, we will expand: 
\begin{align} \nonumber
\int \int \eps^{\frac{n}{2}+\gamma} |\partial_x &\Big(\bar{u} \bar{u}_x \Big) | |u_x| \rho_2^2 x^2 \\ \n
& = \int \int  \eps^{\frac{n}{2}+\gamma} |\bar{u}_x^2| |u_x| \rho_2^2 x^2 + \int \int \eps^{\frac{n}{2}+\gamma} | \bar{u} \bar{u}_{xx}| |u_x| \rho_2^2 x^2 \\ \n
& \le  \eps^{\frac{n}{2}+\gamma} ||u_x  x^{\frac{5}{4}}||_{L^\infty(x \ge 20)} ||\bar{u}_x x^{\frac{1}{2}}||_{L^2}^2 +  \eps^{\frac{n}{2}+\gamma} ||\bar{u}||_{L^\infty} ||u_x x^{\frac{1}{2}}||_{L^2} ||\bar{u}_{xx} x^{\frac{3}{2}}||_{L^2} \\
& \le \eps^{\frac{n}{2}+\gamma - \omega(N_i)} ||u, v||_Z ||\bar{u}, \bar{v}||_Z^2. 
\end{align}

Above, we have used that the support of $\rho_2$ is when $x \ge 50$. Next, 
\begin{align} \nonumber
\int \int \eps^{\frac{n}{2}+\gamma} &|\partial_x \Big( \bar{u} \bar{u}_x \Big)|  |u_{xx} | \rho_2^3 x^3 = \int \int \eps^{\frac{n}{2}+\gamma} | \{\bar{u}_x^2 + \bar{u} \bar{u}_{xx} \} | \cdot | u_{xx} | \rho_2^3 x^3 \\ \nonumber
& \le ||\bar{u}_x x^{\frac{5}{4}}||_{L^\infty(x \ge 20)} ||\bar{u}_x x^{\frac{1}{2}}||_{L^2} ||u_{xx} x^{\frac{3}{2}}||_{L^2} + ||\bar{u} x^{\frac{1}{4}} ||_{L^\infty} ||\bar{u}_{xx} x^{\frac{3}{2}}||_{L^2} ||u_{xx} x^{\frac{3}{2}}||_{L^2} \\
& \lesssim \eps^{\frac{n}{2}+\gamma - \omega(N_i)} ||u, v||_Z ||\bar{u}, \bar{v}||_Z^2. 
\end{align}

Next, the second nonlinearity in (\ref{w.f.2}): 
\begin{align} \nonumber
\int \int \eps^{\frac{n}{2}+\gamma} |\partial_x &\Big( \bar{v} u_y \Big)| |u_x | \rho_2^2 x^2 = \int \int \eps^{\frac{n}{2}+\gamma} |\{\bar{v}_x u_y + \bar{v} u_{xy} \}|  |u_x| \rho_2^2 x^2 \\ \n
& \le \eps^{\frac{n}{2}+\gamma} ||\bar{v}_x x^{\frac{3}{2}}||_{L^\infty(x \ge 20)} ||u_y ||_{L^2} ||u_x x^{\frac{1}{2}}||_{L^2}  + \eps^{\frac{n}{2}+\gamma} ||\bar{v} x^{\frac{1}{2}}||_{L^\infty} ||u_{xy} x||_{L^2} ||u_x x^{\frac{1}{2}}||_{L^2} \\
& \lesssim \eps^{\frac{n}{2}+\gamma - \omega(N_i)} ||\bar{u}, \bar{v}||_Z ||u,v||_Z^2. 
\end{align}

For this same nonlinearity: 
\begin{align} \nonumber
\int \int \eps^{\frac{n}{2}+\gamma}& |\{\bar{v} u_{xy} + \bar{v}_x u_y \}|  |u_{xx} | \rho_2^3 x^3 \\ \n
& \le \eps^{\frac{n}{2}+\gamma} ||\bar{v} x^{\frac{1}{2}}||_{L^\infty} ||u_{xy} x||_{L^2} ||u_{xx} x^{\frac{3}{2}}||_{L^2} + \eps^{\frac{n}{2}+\gamma} ||\bar{v}_x x^{\frac{3}{2}} ||_{L^\infty(x \ge 20)} ||u_y ||_{L^2} ||u_{xx} x^{\frac{3}{2}}||_{L^2} \\ \label{danger.zone}
&  \lesssim \eps^{\frac{n}{2}+\gamma - \omega(N_i)} ||\bar{u}, \bar{v}||_Z ||u,v||_Z^2. 
\end{align}

We'll now move to the $\mathcal{N}^u$ terms in $\mathcal{W}_3$ (see (\ref{calw3})), which are the most delicate. These terms are: 
\begin{align} \n
\int \int \p_{xx} \mathcal{N}^u &\Big[ |u_{xx} \rho_3^4 x^4 + |u_{xxx}|^2 \rho_3^5 x^5 \Big] \\ \label{w.f.4}
& = \int \int \eps^{\frac{n}{2}+\gamma} \p_{xx} \Big( \bar{u} \bar{u}_x + \bar{v} u_y \Big) \cdot \Big[ |u_{xx}| \rho_3^4 x^4 + |u_{xxx}|^2 \rho_3^5 x^5 \Big].
\end{align}

First, 
\begin{align} \nonumber
\int \int \eps^{\frac{n}{2}+\gamma} | \partial_{xx} &\Big( \bar{u} \bar{u}_x \Big) |  |u_{xx}| \rho_3^4 x^4 = \int \int \eps^{\frac{n}{2}+\gamma} | \{\bar{u} \bar{u}_{xxx} +3 \bar{u}_x \bar{u}_{xx} \} |  |u_{xx}| \rho_3^4 x^4 \\ \nonumber
& \le \eps^{\frac{n}{2}+\gamma}  ||\bar{u}||_{L^\infty} ||\bar{u}_{xxx} x^{\frac{5}{2}}||_{L^2(x \ge 20)} ||u_{xx} x^{\frac{3}{2}}||_{L^2} \\  \n
& \hspace{20 mm} + \eps^{\frac{n}{2}+\gamma} ||\bar{u}_x x^{\frac{5}{4}}||_{L^\infty(x \ge 20)} ||\bar{u}_{xx} x^{\frac{3}{2}}||_{L^2}  ||u_{xx} x^{\frac{3}{2}}||_{L^2} \\
& \lesssim \eps^{\frac{n}{2}+\gamma - \omega(N_i)} ||\bar{u}, \bar{v}||_Z^2 ||u,v||_Z.
\end{align}

Above, we have used that $\rho_3$ is supported in a strict subset of $\{x \ge 20\}$, which in turn is supported in a strict subset of $\zeta_3$, which appears in the norm $Z$ (see (\ref{zeta}) - (\ref{rho})). Referring back to (\ref{w.f.4}), for this same nonlinear term: 
\begin{align} \nonumber
\int \int \eps^{\frac{n}{2}+\gamma} &| \{ \bar{u} \bar{u}_{xxx} + 3\bar{u}_x \bar{u}_{xx} \} |  |u_{xxx}| \rho_3^5 x^5 \\  \n
& \le  \eps^{\frac{n}{2}+\gamma} ||\bar{u}||_{L^\infty} ||\bar{u}_{xxx} x^{\frac{5}{2}}||_{L^2(x \ge 20)} ||u_{xxx} x^{\frac{5}{2}}||_{L^2( x \ge 20)}  \\ \n
& \hspace{20 mm} + \eps^{\frac{n}{2}+\gamma}  ||\bar{u}_x x^{\frac{5}{4}}||_{L^\infty(x \ge 20)} ||\bar{u}_{xx} x^{\frac{3}{2}}||_{L^2}  ||u_{xxx} x^{\frac{5}{2}}||_{L^2(x \ge 20)} \\
& \le  \eps^{\frac{n}{2}+\gamma -\omega(N_i)}  ||\bar{u}, \bar{v}||_Z^2 ||u,v||_Z. 
\end{align}

We now move to the final nonlinearity contributed by $\mathcal{N}^u$, which is the $\partial_{xx} \{vu_y\}$ term from (\ref{w.f.4}). This term is the most delicate to control. First, expanding yields: 
\begin{align}
\int \int  \eps^{\frac{n}{2}+\gamma} | \partial_{xx} \Big( \bar{v} u_y \Big)| | u_{xx}| \rho_3^4 x^4 = \int \int   \eps^{\frac{n}{2}+\gamma} | \{ \bar{v} u_{xxy} + 2 \bar{v}_x u_{xy} + \bar{v}_{xx} u_y \} |  |u_{xx}| \rho_3^4 x^4 
\end{align}

First, 
\begin{align} \n
 \int \int  \eps^{\frac{n}{2}+\gamma} | \bar{v} u_{xxy}||u_{xx}| \rho_3^4 x^4 &\le  \eps^{\frac{n}{2}+\gamma} ||\bar{v} x^{\frac{1}{2}}||_{L^\infty} ||u_{xxy} x^2||_{L^2(x \ge 20)} ||u_{xx} x^{\frac{3}{2}}||_{L^2} \\
 & \le  \eps^{\frac{n}{2}+\gamma - \omega(N_i)} ||\bar{u}, \bar{v}||_Z ||u,v||_Z^2. 
\end{align}

Second, 
\begin{align} \n
\int \int  \eps^{\frac{n}{2}+\gamma} |\bar{v}_x u_{xy} | |u_{xx}| \rho_3^4 x^4 &\le  \eps^{\frac{n}{2}+\gamma} ||\bar{v}_x x^{\frac{3}{2}}||_{L^\infty(x \ge 20)} ||u_{xy} x||_{L^2} ||u_{xx} x^{\frac{3}{2}}||_{L^2} \\
& \lesssim   \eps^{\frac{n}{2}+\gamma -\omega(N_i)}  ||u,v||_Z^2 ||\bar{u}, \bar{v}||_Z. 
\end{align}

Finally, 
\begin{align} \n
\int \int  \eps^{\frac{n}{2}+\gamma} | \bar{v}_{xx} u_y || u_{xx}| \rho_3^4 x^4 \le & \eps^{\frac{n}{2}+\gamma}\Big[ \sup_{x \ge 20} ||u_y x^{\frac{1}{2}}||_{L^2_y}\Big] ||u_{xx} x^{\frac{3}{2}}||_{L^2} \Big[ \int_{x= 20} x^4 ||\bar{v}_{xx}||_{L^\infty_y}^2 \Big]^{\frac{1}{2}} \\ \label{TON.1}
& \lesssim  \eps^{\frac{n}{2}+\gamma - \omega(N_i)} ||\bar{u}, \bar{v}||_Z ||u,v||_Z^2. 
\end{align}

Turning back to (\ref{w.f.4}), for this same nonlinearity, it remains to treat: 
\begin{align}
\int \int  \eps^{\frac{n}{2}+\gamma} | \{\bar{v} u_{xxy} + 2 \bar{v}_x u_{xy} + \bar{v}_{xx} u_y \}|| u_{xxx} | \rho_3^5 x^5.
\end{align}

First, 
\begin{align} \n
\int \int  \eps^{\frac{n}{2}+\gamma} |\bar{v} u_{xxy}| |u_{xxx}| \rho_3^5 x^5  &\le \eps^{\frac{n}{2}+\gamma} ||\bar{v}x^{\frac{1}{2}}||_{L^\infty} ||u_{xxy} x^{2}||_{L^2(x \ge 20)} ||u_{xxx}x^{\frac{5}{2}}||_{L^2(x \ge 20)} \\
& \lesssim   \eps^{\frac{n}{2}+\gamma -\omega(N_i)}  ||\bar{u}, \bar{v}||_Z ||u,v||_Z^2.  
\end{align}

Second, 
\begin{align} \n
 \int \int  \eps^{\frac{n}{2}+\gamma} | \bar{v}_x u_{xy}| | u_{xxx} | \rho_3^5 x^5  &\le  \eps^{\frac{n}{2}+\gamma} ||\bar{v}_x x^{\frac{3}{2}}||_{L^\infty(x \ge 20)} ||u_{xy} x||_{L^2} ||u_{xxx} x^{\frac{5}{2}}||_{L^2(x \ge 20)} \\
 & \lesssim  \eps^{\frac{n}{2}+\gamma - \omega(N_i)} ||\bar{u}, \bar{v}||_Z ||u,v||_Z^2. 
\end{align}

Last, 
\begin{align}  \n
 \int \int \eps^{\frac{n}{2}+\gamma} &| \bar{v}_{xx} u_y| | u_{xxx}| \rho_3^5 x^5  \\ \n
 & \le  \eps^{\frac{n}{2}+\gamma} \sup_{x \ge 20} ||u_y x^{\frac{1}{2}}||_{L^2_y} ||u_{xxx} x^{\frac{5}{2}}||_{L^2(x \ge 20)}   \Big[\int_{x = 20}^\infty x^4 ||\bar{v}_{xx}||_{L^\infty}^2 \ud x \Big]^{\frac{1}{2}} \\ \label{TON.2}
 & \lesssim  \eps^{\frac{n}{2}+\gamma - \omega(N_i)} ||\bar{u}, \bar{v}||_Z ||u,v||_Z^2. 
\end{align}

We now move to the nonlinear terms, $\mathcal{N}^v$, from $g$, defined in (\ref{bar.g}), starting with $\mathcal{W}_1$, equation (\ref{calw1}): 
\begin{align}
\int \int \eps |\mathcal{N}^v| \Big[ |v| + |v_x| x \Big] = \int \int \eps \Big[ \eps^{\frac{n}{2}+\gamma} |\bar{u} \bar{v}_x| + |\bar{v} \bar{v}_y| \Big] \Big[ |v| + |v_x| x \Big].
\end{align}

First, 
\begin{align} \n
&\int \int \eps^{\frac{n}{2}+\gamma + 1} | \bar{u} \bar{v}_x |\cdot |v| \lesssim \eps^{\frac{n}{2}+\gamma + 1} ||\bar{u} x^{\frac{1}{4}}||_{L^\infty} ||\bar{v}_x x^{\frac{1}{2}}||_{L^2} ||x^{-\frac{3}{4}}v||_{L^2} \\ \n
& \hspace{30 mm} \lesssim \eps^{\frac{n}{2}+\gamma } ||\bar{u} x^{\frac{1}{4}}||_{L^\infty} ||\sqrt{\eps}\bar{v}_x x^{\frac{1}{2}}||_{L^2} ||\sqrt{\eps} v_x x^{\frac{1}{4}}||_{L^2}, \\ 
& \hspace{30 mm} \lesssim \eps^{\frac{n}{2}+\gamma - \omega(N_i)} ||\bar{u}, \bar{v}||_Z^2 ||u,v||_Z, \\ \n
& \int \int \eps^{\frac{n}{2}+\gamma + 1} | \bar{v} \bar{v}_y | \cdot |v | \lesssim \eps^{\frac{n}{2}+\gamma + 1} ||\bar{v} x^{\frac{1}{2}}||_{L^\infty} ||\bar{v}_y x^{\frac{1}{2}}||_{L^2} ||v_x||_{L^2} \\
& \hspace{30 mm} \lesssim \eps^{\frac{n}{2}+\gamma -\omega(N_i)} ||\bar{u}, \bar{v}||_Z^2 ||u,v||_Z. 
\end{align}

Next, 
\begin{align} \n
& \int \int \eps^{\frac{n}{2}+\gamma + 1} | \bar{u} \bar{v}_x | |v_x| x \le \eps^{\frac{n}{2}+\gamma + 1} ||\bar{u}||_{L^\infty} ||\bar{v}_x x^{\frac{1}{2}}||_{L^2} ||v_x x^{\frac{1}{2}}||_{L^2}, \\ 
& \hspace{30 mm} \lesssim \eps^{\frac{n}{2}+\gamma -\omega(N_i)} ||\bar{u}, \bar{v}||_Z^2 ||u,v||_Z,  \\ \n
& \int \int \eps^{\frac{n}{2}+\gamma + 1} |\bar{v} \bar{v}_y| | v_x | x  \le \eps^{\frac{n}{2}+\gamma+1} ||\bar{v} x^{\frac{1}{2}}||_{L^\infty} ||\bar{v}_y x^{\frac{1}{2}}||_{L^2} ||v_x x^{\frac{1}{2}}||_{L^2}, \\
&\hspace{30 mm} \lesssim \eps^{\frac{n}{2}+\gamma -\omega(N_i)} ||u,v||_Z ||\bar{u}, \bar{v}||_Z^2. 
\end{align}

We will now move to the nonlinear terms, $\mathcal{N}^v$, arising from $\mathcal{W}_2$, (see equation (\ref{calw2})), which are summarized here:  
\begin{align} \n
\int \int \eps |\p_x \mathcal{N}^v|& \Big[ |v_x| \rho_2^2 x^2 + |v_{xx}| \rho_2^3 x^3 \Big] \\ \label{w.f.5}
& = \int \int \eps^{\frac{n}{2}+\gamma + 1} |\p_x \Big[\bar{u} \bar{v}_x + \bar{v} \bar{v}_y  \Big]| \Big[ |v_x| \rho_2^2 x^2 + |v_{xx}| \rho_2^3 x^3 \Big] 
\end{align}

We will go through (\ref{w.f.5}) term by term, starting with: 
\begin{align} \nonumber
\int \int \eps^{\frac{n}{2}+\gamma + 1} |\partial_x &\Big( \bar{u} \bar{v}_x \Big)| | v_x | \rho_2^2 x^2 = \int \int \eps^{\frac{n}{2}+\gamma + 1} | \{ \bar{u}_x \bar{v}_x + \bar{u} \bar{v}_{xx} \} | \cdot | v_x | \rho_2^2 x^2 \\ \nonumber
& \le \eps^{\frac{n}{2}+\gamma + 1} ||\bar{u}_x  x^{\frac{5}{4}}||_{L^\infty(x \ge 20)} ||\bar{v}_x x^{\frac{1}{2}}||_{L^2} ||v_x x^{\frac{1}{2}}||_{L^2}  \\ \n
& \hspace{20 mm} + \eps^{\frac{n}{2}+\gamma + 1} ||\bar{u}||_{L^\infty} ||\bar{v}_{xx} x^{\frac{3}{2}}||_{L^2} ||v_x x^{\frac{1}{2}}||_{L^2}, \\
& \le \eps^{\frac{n}{2} + \gamma - \omega(N_i)} ||\bar{u}, \bar{v}||_Z^2 ||u,v||_Z. 
\end{align}

Staying with this nonlinearity, 
\begin{align} \nonumber
\int \int \eps^{\frac{n}{2}+\gamma + 1} &| \{ \bar{u}_x \bar{v}_x + \bar{u} \bar{v}_{xx} \} | \cdot | v_{xx} | \rho_2^3 x^3 \\ \n
&\le \eps^{\frac{n}{2}+\gamma + 1} ||\bar{u}_x x^{\frac{5}{4}}||_{L^\infty(x \ge 20)} ||\bar{v}_x x^{\frac{1}{2}}||_{L^2} ||v_{xx} x^{\frac{3}{2}}||_{L^2}  \\ \n
& \hspace{20 mm} + \eps^{\frac{n}{2}+\gamma + 1} ||\bar{u}||_{L^\infty} ||\bar{v}_{xx} x^{\frac{3}{2}}||_{L^2} ||v_{xx} x^{\frac{3}{2}}||_{L^2} \\
& \lesssim \eps^{\frac{n}{2}+\gamma - \omega(N_i)} ||\bar{u}, \bar{v}||_Z^2 ||u,v||_Z. 
\end{align}

We now move to the $vv_y$ nonlinearity, still in term (\ref{w.f.5}), which we expand:
\begin{align} \nonumber
 \int \int \eps^{\frac{n}{2}+\gamma + 1} | \partial_x &\Big( \bar{v} \bar{v}_y \Big)| | v_x | \rho_2^2 x^2  = \int \int \eps^{\frac{n}{2}+\gamma + 1} | \{\bar{v}_x \bar{v}_y + \bar{v} \bar{v}_{xy} \} | | v_x | \rho_2^2 x^2  \\ \n
& \le \eps^{\frac{n}{2}+\gamma + 1} ||\bar{v}_x x^{\frac{3}{2}}||_{L^\infty(x \ge 20)} ||\bar{v}_y x^{\frac{1}{2}} ||v_x x^{\frac{1}{2}}||_{L^2} \\ \n
& \hspace{20 mm} + \eps^{\frac{n}{2}+\gamma + 1} ||\bar{v} x^{\frac{1}{2}}||_{L^\infty} ||v_x x^{\frac{1}{2}}||_{L^2} ||\bar{v}_{xy} x^{\frac{3}{2}}||_{L^2} \\
& \lesssim \eps^{\frac{n}{2}+\gamma -\omega(N_i)} ||\bar{u}, \bar{v}||_Z^2 ||u,v||_Z. 
\end{align}

For this same nonlinear term: 
\begin{align} \nonumber
\int \int \eps^{\frac{n}{2}+\gamma + 1} &|\{ \bar{v}_x \bar{v}_y + \bar{v} \bar{v}_{xy} \}| \cdot |v_{xx}| \rho_2^3 x^3  \\ \n
& \le \eps^{\frac{n}{2}+\gamma + 1} ||\bar{v}_x x^{\frac{3}{2}}||_{L^\infty(x \ge 20)} ||\bar{v}_y x^{\frac{1}{2}}||_{L^2} ||v_{xx} x^{\frac{3}{2}}||_{L^2} \\ \n
&\hspace{20 mm} + \eps^{\frac{n}{2}+\gamma + 1} ||\bar{v} x^{\frac{1}{2}}||_{L^\infty} ||\bar{v}_{xy} x^{\frac{3}{2}}||_{L^2} ||v_{xx} x^{\frac{3}{2}}||_{L^2} \\
& \lesssim \eps^{\frac{n}{2}+\gamma -\omega(N_i)} ||u,v||_Z ||\bar{u}, \bar{v}||_Z^2. 
\end{align}

We now move to the highest-order terms, which we read from (\ref{calw3}), and summarize here: 
\begin{align} \n
\int \int \eps^{\frac{n}{2}+\gamma + 1}& |\p_{xx} \mathcal{N}^v| \Big[ |v_{xx}| \rho_3^4 x^4 + |v_{xxx}| \rho_3^5 x^5 \Big] \\ \label{w.f.6}
& = \int \int \eps^{\frac{n}{2}+\gamma + 1} | \p_{xx}\Big[ \bar{u} \bar{v}_x + \bar{v} \bar{v}_y \Big] | \Big[ |v_{xx}| \rho_3^4 x^4 + |v_{xxx}| \rho_3^5 x^5 \Big].
\end{align}

We shall expand: 
\begin{align} \n
 \int \int \eps^{\frac{n}{2}+\gamma + 1} &| \partial_{xx} \Big( \bar{u} \bar{v}_x \Big)| \cdot | v_{xx} | \rho_3^4 x^4  \\ 
 & =  \int \int \eps^{\frac{n}{2}+\gamma + 1} | \{ \bar{u} \bar{v}_{xxx} + 2 \bar{u}_x \bar{v}_{xx} + \bar{u}_{xx} \bar{v}_x \} | \cdot | v_{xx} | \rho_3^4 x^4. 
\end{align}

First, 
\begin{align} \n
\int \int \eps^{\frac{n}{2}+\gamma + 1} | \bar{u} \bar{v}_{xxx} | \cdot |v_{xx}| \rho_3^4 x^4  &\le \eps^{\frac{n}{2}+\gamma + 1} ||\bar{u}||_{L^\infty} ||\bar{v}_{xxx} x^{\frac{5}{2}}||_{L^2(x \ge 20)} ||v_{xx} x^{\frac{3}{2}}||_{L^2} \\
& \lesssim \eps^{\frac{n}{2}+\gamma -\omega(N_i)} ||\bar{u}, \bar{v}||_Z^2 ||u,v||_Z. 
\end{align}

Next, 
\begin{align} \n
\int \int \eps^{\frac{n}{2}+\gamma + 1} | \bar{u}_x \bar{v}_{xx} | \cdot |v_{xx}| \rho_3^4 x^4  &\le \eps^{\frac{n}{2}+\gamma + 1} ||\bar{u}_x  x^{\frac{5}{4}}||_{L^\infty(x \ge 20)} ||\bar{v}_{xx} x^{\frac{3}{2}}||_{L^2} ||v_{xx} x^{\frac{3}{2}}||_{L^2} \\
& \le \eps^{\frac{n}{2}+\gamma -\omega(N_i)} ||\bar{u}, \bar{v}||_Z^2 ||u,v||_Z. 
\end{align}

Third, 
\begin{align} \n
\int \int \eps^{\frac{n}{2}+\gamma + 1} | \bar{v}_x \bar{u}_{xx} | | v_{xx} | \rho_3^4 x^4  &\le \eps^{\frac{n}{2}+\gamma + 1} ||\bar{v}_x x^{\frac{3}{2}} ||_{L^\infty(x \ge 20)} ||\bar{u}_{xx}x^{\frac{3}{2}} ||_{L^2} ||\bar{v}_{xx}x^{\frac{3}{2}}||_{L^2} \\
& \lesssim \eps^{\frac{n}{2}+ \gamma - \omega(N_i)} ||\bar{u}, \bar{v}||_Z^2 ||u, v||_Z. 
\end{align}

Turning back to (\ref{w.f.6}), for this same nonlinearity, we will now treat:
\begin{align}
\int \int \eps^{\frac{n}{2}+\gamma + 1} |\{ \bar{u} \bar{v}_{xxx} + 2 \bar{u}_x \bar{v}_{xx} + \bar{u}_{xx} \bar{v}_x \}| |v_{xxx}| \rho_3^5 x^5
\end{align}

First, 
\begin{align} \n
\int \int \eps^{\frac{n}{2}+\gamma + 1} |\bar{u} \bar{v}_{xxx}| |v_{xxx}| \rho_3^5 x^5 &\le \eps^{\frac{n}{2}+\gamma + 1} ||\bar{u}||_{L^\infty} ||\bar{v}_{xxx} x^{\frac{5}{2}}||_{L^2(x \ge 20)} ||v_{xxx} x^{\frac{5}{2}}||_{L^2(x \ge 20)} \\
& \lesssim \eps^{\frac{n}{2}+\gamma - \omega(N_i)} ||\bar{u}, \bar{v}||_Z^2 ||u,v||_Z. 
\end{align}

Second, 
\begin{align} \n
\int \int \eps^{\frac{n}{2}+\gamma + 1} | \bar{u}_x \bar{v}_{xx}| |v_{xxx}| \rho_3^5 x^5  &\le \eps^{\frac{n}{2}+\gamma + 1} ||\bar{u}_x  x^{\frac{5}{4}}||_{L^\infty(x \ge 20)} ||\bar{v}_{xx} x^{\frac{3}{2}}||_{L^2} ||v_{xxx} x^{\frac{5}{2}}||_{L^2(x \ge 20)} \\
& \lesssim \eps^{\frac{n}{2}+\gamma - \omega(N_i)} ||\bar{u}, \bar{v}||_Z^2 ||u,v||_Z.
\end{align}

Third, 
\begin{align} \n
\int \int \eps^{\frac{n}{2}+\gamma + 1} | \bar{u}_{xx} \bar{v}_x| | v_{xxx} | \rho_3^5 x^5  &\le \eps^{\frac{n}{2}+\gamma + 1} ||\bar{v}_x  x^{\frac{3}{2}}||_{L^\infty(x \ge 20)} ||\bar{u}_{xx} x^{\frac{3}{2}}||_{L^2} ||v_{xxx} x^{\frac{5}{2}}||_{L^2(x \ge 20)} \\
& \lesssim \eps^{\frac{n}{2}+\gamma - \omega(N_i)} ||\bar{u}, \bar{v}||_Z^2 ||u,v||_Z.
\end{align}

Turning to (\ref{w.f.6}), we now approach the final nonlinear term in $g$ (see definition (\ref{bar.g})), which is the $\bar{v} \bar{v}_y$ term. First, we will expand: 
\begin{align} \n
\int \int \eps^{\frac{n}{2}+\gamma + 1} &|\partial_{xx} \Big( \bar{v} \bar{v}_y \Big)| \cdot |v_{xx}| \rho_3^4 x^4 \\
& = \int \int \eps^{\frac{n}{2}+\gamma + 1} |\{\bar{v}_{xx} \bar{v}_y + \bar{v}_x \bar{v}_{xy} + \bar{v} \bar{v}_{xxy} \} |\cdot |v_{xx}| \rho_3^4 x^4.
\end{align}

First, 
\begin{align} \n
 \int \int \eps^{\frac{n}{2}+\gamma + 1} | \bar{v}_{xx} \bar{v}_y| \cdot |v_{xx}| \rho_3^4 x^4 &\le \eps^{\frac{n}{2}+\gamma + 1} ||\bar{u}_x x^{\frac{5}{4}}||_{L^\infty(x \ge 20)} || \bar{v}_{xx} x^{\frac{3}{2}}||_{L^2} ||v_{xx} x^{\frac{3}{2}}||_{L^2} \\
 & \lesssim \eps^{\frac{n}{2}+\gamma - \omega(N_i)} ||\bar{u}, \bar{v}||_Z^2 ||u,v||_Z. 
\end{align}

Second, 
\begin{align} \n
\int \int \eps^{\frac{n}{2}+\gamma + 1} | \bar{v}_x \bar{v}_{xy}| |v_{xx} | \rho_3^4 x^4  &\le \eps^{\frac{n}{2}+\gamma + 1} ||\bar{v}_x x^{\frac{3}{2}}||_{L^\infty(x \ge 20)} ||\bar{v}_{xy} x^{\frac{3}{2}}||_{L^2} || \bar{v}_{xx} x^{\frac{3}{2}}||_{L^2} \\
& \lesssim \eps^{\frac{n}{2}+\gamma - \omega(N_i)} ||\bar{u}, \bar{v}||_Z^2 ||u,v||_Z. 
\end{align}

Third, 
\begin{align} \n
 \int \int \eps^{\frac{n}{2}+\gamma + 1} | \bar{v} \bar{v}_{xxy}| |v_{xx}| \rho_3^4 x^4 &\le \eps^{\frac{n}{2}+\gamma + 1} ||\bar{v}x^{\frac{1}{2}}||_{L^\infty} ||\bar{v}_{xxy} x^{\frac{5}{2}}||_{L^2(x \ge 20)} ||v_{xx} x^{\frac{3}{2}}||_{L^2} \\
 & \lesssim \eps^{\frac{n}{2}+\gamma - \omega(N_i)} ||\bar{u}, \bar{v}||_Z^2 ||u,v||_Z. 
\end{align}

Again turning to (\ref{w.f.6}), for this same nonlinearity, we must also treat: 
\begin{align}
\int \int \eps^{\frac{n}{2}+\gamma + 1}  |\{\bar{v}_{xx} \bar{v}_y + \bar{v}_x \bar{v}_{xy} + \bar{v} \bar{v}_{xxy} \}| \cdot |v_{xxx}| \rho_3^5 x^5.
\end{align}

First, 
\begin{align} \n
 \int \int \eps^{\frac{n}{2}+\gamma + 1} &|\bar{v}_{xx} \bar{v}_y| |v_{xxx}| \rho_3^5 x^5 \\ \n
 &  \le \eps^{\frac{n}{2}+\gamma + 1} ||\bar{u}_x  x^{\frac{5}{4}}||_{L^\infty(x \ge 20)} ||\bar{v}_{xx} x^{\frac{3}{2}}||_{L^2} ||v_{xxx} x^{\frac{5}{2}}||_{L^2(x \ge 20)} \\
 & \le \eps^{\frac{n}{2}+\gamma - \omega(N_i)} ||\bar{u}, \bar{v}||_Z^2 ||u,v||_Z.  
\end{align}

Second, 
\begin{align} \n
 \int \int  \eps^{\frac{n}{2}+\gamma + 1} &|\bar{v}_x \bar{v}_{xy}| |v_{xxx}| \rho_3^5 x^5 \\ \n
 & \le  \eps^{\frac{n}{2}+\gamma + 1}  ||\bar{v}_x x^{\frac{3}{2}}||_{L^\infty(x \ge 20)} ||\bar{v}_{xy} x^{\frac{3}{2}}||_{L^2} ||v_{xxx} x^{\frac{5}{2}}||_{L^2(x \ge 20)} \\
 & \lesssim  \eps^{\frac{n}{2}+\gamma -\omega(N_i)}||\bar{u}, \bar{v}||_Z^2 ||u,v||_Z. 
\end{align}

Finally, 
\begin{align} \n
\int \int  \eps^{\frac{n}{2}+\gamma + 1} &|\bar{v} \bar{v}_{xxy}| |v_{xxx}| \rho_3^5 x^5  \\ \n
& \le  \eps^{\frac{n}{2}+\gamma + 1} ||\bar{v} x^{\frac{1}{2}}||_{L^\infty} ||\bar{v}_{xxy} x^{\frac{5}{2}}||_{L^2(x \ge 20)} ||v_{xxx} x^{\frac{5}{2}}||_{L^2( x \ge 20)} \\
& \lesssim \eps^{\frac{n}{2}+\gamma -\omega(N_i)} ||\bar{u}, \bar{v}||_Z^2 ||u,v||_Z. 
\end{align}

This now concludes all of the nonlinear terms in $\mathcal{W}_i$. The final task is to control the $R^{u,n}, R^{v,n}$ terms in $f,g$. Via Lemma \ref{Lemma FE}, for $\kappa$ arbitrarily small and $\sigma_n$ as in (\ref{sigma.i}), we choose now $\sigma' = \frac{1}{10,000}$. Then, we have: 
\begin{align} \nonumber
\Big| \int \int \epsilon^{-\frac{n}{2}-\gamma}  R^{u,n}u &+ \int \int \epsilon^{-\frac{n}{2}-\gamma}  R^{u,n} v_y x \Big| \lesssim \epsilon^{-\frac{n}{2}-\gamma}|| R^{u,n} x^{\frac{1}{2}+\sigma'}||_{L^2} ||x^{-\frac{1}{2}-\sigma'} u, x^{\frac{1}{2}}v_y||_{L^2} \\ \label{sighardy.1}
&\le C(n) \epsilon^{-\frac{n}{2}-\gamma} || ||R^{u,n}||_{L^2_y} x^{\frac{1}{2}+\sigma'} ||_{L^2_x} ||u,v||_{X_1} \\  \label{sighardy.2}
&\le C(  n) \epsilon^{\frac{1}{4}-\gamma-\kappa} ||x^{-\frac{3}{4}+2\sigma_n + \sigma' + \kappa}||_{L^2_x} ||u,v||_{X_1} \\
& \le C( n) \epsilon^{\frac{1}{4}-\gamma-\kappa}||u,v||_{X_1}.
\end{align}

In (\ref{sighardy.1}), we have used the Hardy inequality with power $x^{-\frac{1}{2}-\sigma}$, which is admissible as $u(1,y) = 0$. Upon citing (\ref{sigma.i}), one has: 
\begin{align} \label{choice}
|| x^{-\frac{3}{4}+2\sigma_n + \sigma' + \kappa}||_{L^2_x}|| = || x^{-\frac{3}{4}+\frac{2}{10,000} + \frac{1}{10,000} + \kappa}||_{L^2_x} < \infty. 
\end{align}

Next, again via  Lemma \ref{Lemma FE}, we have: 
\begin{align} \nonumber
\int \int \epsilon^{1- \frac{n}{2}-\gamma} &| R^{v,n}| \Big( |v| + |v_x| x\Big) \le \epsilon^{-\frac{n}{2}-\gamma} ||\sqrt{\epsilon} R^{v,n} x^{\frac{1}{2}+\sigma'} ||_{L^2} ||x^{-\frac{1}{2}-\sigma'} \sqrt{\epsilon}v, x^{\frac{1}{2}} \sqrt{\epsilon} v_x||_{L^2} \\
& \lesssim \epsilon^{\frac{1}{4}-\gamma -\kappa} ||u,v||_{X_1}. 
\end{align}

Summarizing these forcing terms: 
\begin{align}\nonumber
\epsilon^{-\frac{n}{2}-\gamma}  \Big| \int \int R^{u,n}u &+ \epsilon R^{v,n} v + R^{u,n} v_y x + \epsilon  R^{v,n} v_x x \Big| \\  \label{forcing.ultimate}
&\le  \epsilon^{\frac{1}{4}-\gamma -\kappa } + \epsilon^{\frac{1}{4}-\gamma -\kappa } ||u,v||_{X_1}^2. 
\end{align}

Similarly, for higher order terms, 
\begin{align} \nonumber
 \int \int \epsilon^{-\frac{n}{2}-\gamma}  &|R^{u,n}_x| \{|u_x| \rho_2^2 x^2 + |u_{xx}| \rho_2^3 x^3  \}  \\ \n
&\lesssim \epsilon^{\frac{1}{4}-\gamma-\kappa} ||x^{-1-\frac{5}{4}+\sigma'+\kappa_0} x^{\frac{3}{2}}||_{L^2_x} ||u_x x^{\frac{1}{2}}, u_{xx} \rho_2^{\frac{3}{2}} x^{\frac{3}{2}}||_{L^2} \\
& \lesssim \eps^{\frac{1}{4}-\gamma - \kappa} ||u,v||_{X_1 \cap X_2}. 
\end{align}

and 
\begin{align} \n
\int \int \epsilon^{-\frac{n}{2}-\gamma} &|R^{u,n}_{xx}| \{|u_{xx}| \rho_3^4 x^4 + |u_{xxx}| \rho_3^5 x^5 \} \\ \n
& \lesssim \epsilon^{\frac{1}{4}-\gamma - \kappa} ||x^{-2-\frac{5}{4}+\sigma' + \kappa} x^{\frac{5}{2}}||_{L^2_x} ||u_{xx} \rho_3^{\frac{3}{2}} x^{\frac{3}{2}}, u_{xxx} \rho_3^{\frac{5}{2}} x^{\frac{5}{2}}||_{L^2} \\
& \lesssim \epsilon^{\frac{1}{4}-\gamma - \kappa} ||u,v||_{X_1 \cap X_2 \cap X_3}. 
\end{align}

For the terms from $g$: 
\begin{align} \nonumber
\epsilon^{-\frac{n}{2}-\gamma}& \int \int \epsilon |R^{v,n}_x| \{ |v_x| \rho_2^2 x^2 + |v_{xx}| \rho_2^3 x^3 \} \\ \n 
\lesssim &\epsilon^{\frac{1}{4}-\gamma - \kappa} || x^{-1-\frac{5}{4}+\sigma' + \kappa} x^{\frac{3}{2}}||_{L^2_x} ||\sqrt{\epsilon} v_x x^{\frac{1}{2}}, \sqrt{\epsilon}v_{xx}\rho_2^{\frac{3}{2}} x^{\frac{3}{2}}||_{L^2} \\
 \lesssim &\eps^{\frac{1}{4}-\gamma - \kappa} ||u,v||_{X_1 \cap X_2}. 
\end{align}

and
\begin{align} \nonumber
\epsilon^{-\frac{n}{2}-\gamma} & \int \int \epsilon |R^{v,n}_{xx}| \{|v_{xx}| \rho_3^4 x^4 + |v_{xxx}| \rho_3^5 x^5 \}  \\ \n
&\lesssim \epsilon^{\frac{1}{4}-\gamma-\kappa} ||x^{-2-\frac{5}{4}+\sigma' + \kappa} x^{\frac{5}{2}}||_{L^2_x} ||\sqrt{\epsilon} v_{xx} \rho_3^{\frac{3}{2}} x^{\frac{3}{2}},\sqrt{\epsilon} v_{xxx} \rho_3^{\frac{5}{2}}  x^{\frac{5}{2}}||_{L^2} \\
& \lesssim \epsilon^{\frac{1}{4}-\gamma-\kappa} ||u,v||_{X_1 \cap X_2 \cap X_3}. 
\end{align}

Combining all of the estimates we have established, we have proven Lemma \ref{LemmaW}. 

\end{proof}

We may now prove the main result, Theorem \ref{thm.m.2}:
\begin{proof}[Proof of Theorem \ref{thm.m.2}]

The starting point is estimate (\ref{Z.driver.N}):
\begin{align}
 ||u,v||_Z^2  &\lesssim \epsilon^{100} + \epsilon^{\frac{n}{2}+\gamma - \omega(N_i)}||\bar{u}, \bar{v}||_Z^4 + ||u,v||_{X_1 \cap X_2 \cap X_3}^2    \\
&\lesssim  \epsilon^{100} + \epsilon^{\frac{n}{2}+\gamma - \omega(N_i)}||\bar{u}, \bar{v}||_Z^4 + \Big| \mathcal{W}_1 + \mathcal{W}_2 + \mathcal{W}_3 \Big| \\ \nonumber
& \lesssim  \epsilon^{100} + \epsilon^{\frac{n}{2}+\gamma - \omega(N_i)}||\bar{u}, \bar{v}||_Z^4 + \epsilon^{\frac{1}{4}-\gamma - \kappa} + \epsilon^{\frac{1}{4}-\gamma - \kappa} ||u,v||_{X_1 \cap X_2 \cap X_3}^2 \\ 
& \hspace{12 mm} + \epsilon^{\frac{n}{2}-\omega(N_i)} ||u,v||_{Z}^2+ \epsilon^{\frac{n}{2}-\omega(N_i)} ||\bar{u},\bar{v}||_{Z}^4 ,
\end{align}

so absorbing the $||u,v||$ terms to the left-hand side gives: 
\begin{align} \label{final.estimate}
||u,v||_{Z}^2 & \lesssim \epsilon^{\frac{1}{4}-\gamma - \kappa} + \epsilon^{\frac{n}{2}-\omega(N_i)} \Big( ||\bar{u}, \bar{v}||_{Z}^4 \Big). 
\end{align}
\end{proof}

\break

\part*{\centerline{Chapter III: Existence and Uniqueness}}
\addcontentsline{toc}{part}{Chapter III: Existence and Uniqueness}

\section{Overview of Results}

With the main estimate, (\ref{final.estimate}), in hand, we will prove existence and uniqueness of solutions to the nonlinear system specified in (\ref{EQ.NSR.1}) - (\ref{EQ.NSR.3}), which is defined on the domain $\Omega$, with boundary conditions given in (\ref{nsr.bc.1}), and $f, g$ as in (\ref{defn.SU.SV}). The main result of this chapter is: 
\begin{theorem} \label{thm.e.u} For $\eps, \delta$ sufficiently small, $\eps << \delta$, $\kappa > 0$ small, and $0 \le \gamma < \frac{1}{4}$,  there exists a unique solution $[u,v] \in Z(\Omega)$ to the system (\ref{EQ.NSR.1}) - (\ref{EQ.NSR.3}), (\ref{nsr.bc.1}), (\ref{defn.SU.SV}) satisfying the bound:
\begin{align} \label{W.Z.2.orig}
||u,v||_{Z(\Omega)} \lesssim C(u_R, v_R) \eps^{\frac{1}{4}-\gamma - \kappa}. 
\end{align}
\end{theorem}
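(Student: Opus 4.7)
\emph{Existence via truncation and fixed point.} The plan is to first produce a solution on the truncated domain $\Omega^N$ defined in (\ref{omegaN}) and then pass to the limit in $N$. For fixed $(\bar u,\bar v) \in Z(\Omega^N)$ with $\|\bar u,\bar v\|_{Z(\Omega^N)} \le 1$, I would define $T_N(\bar u, \bar v) := (u,v)$, where $[u,v]$ solves the system (\ref{EQ.NSR.1})--(\ref{EQ.NSR.3}) with $f,g$ given by (\ref{bar.f})--(\ref{bar.g}) and boundary conditions (\ref{BCN}); this problem is linear in the unknown $(u,v)$ because the awkward term $\bar v u_y$ still depends linearly on the unknown. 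Existence for the linearized problem follows from a Galerkin approximation or a Lax--Milgram argument on $(X_1 \cap X_2 \cap X_3)(\Omega^N)$, whose Hilbert structure was recorded in (\ref{IP})--(\ref{IP.4}); since $\Omega^N$ is bounded in $y$ and the profile coefficients $u_R,v_R$ and $\bar v$ are all controlled by Theorem \ref{thm.m.part.1} and the assumption $\bar v \in Z$, there is no obstruction to solving the linearized system by standard Stokes theory.

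\emph{Self-map and Schauder on $\Omega^N$.} Theorem \ref{thm.m.2} immediately yields that $T_N$ maps the ball $\{\|\bar u,\bar v\|_{Z(\Omega^N)} \le 1\}$ into itself once $\eps$ is small enough relative to the fixed universal constants $\omega(N_i)$, with image of size $O(\eps^{\frac14 - \gamma -\kappa})$. To obtain a fixed point, I would invoke the Schauder fixed-point theorem on a closed, convex subset of $(X_1\cap X_2 \cap X_3)(\Omega^N)$. The compactness input is supplied by the elliptic regularity of Lemmas \ref{LemmaBC} and \ref{LemmaBC2}, which bootstrap the image of $T_N$ into strictly higher regularity than is recorded in $X_1$, so that Rellich--Kondrachov (available because $y\in [0,N]$ is bounded) provides a compact embedding. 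The loss-of-cancellation phenomenon in $\bar v u_y$ destroys a naive contraction argument, but does not obstruct weak continuity of $T_N$, as one only needs to pass to the limit in the weak formulation of a linear problem with a uniformly bounded transport coefficient.

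\emph{Passage to the limit $N \to \infty$.} Denoting by $[u^N,v^N] \in Z(\Omega^N)$ the resulting fixed point, extended by zero to $\Omega$, the bound (\ref{final.estimate}) is \emph{independent} of $N$ by design of Subsection \ref{appendix} and Theorem \ref{thm.zN}. Weak-$*$ compactness in $(X_1 \cap X_2 \cap X_3)(\Omega)$ together with weak-$*$ compactness of the uniform quantities in $\mathcal U$ (see (\ref{norm.U})) extracts a subsequence converging weakly in $Z(\Omega)$; strong convergence on each fixed $\Omega^M$ (again by Rellich) lets one pass to the limit in the quadratic nonlinearities $\bar u \bar u_x$, $\bar v u_y$, $\bar u \bar v_x$, $\bar v\bar v_y$ via weak-strong pairings. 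The limit $[u,v] \in Z(\Omega)$ solves (\ref{EQ.NSR.1})--(\ref{EQ.NSR.3}) with the true (non-linearized) nonlinearities $f,g$ of (\ref{defn.SU.SV}), inherits the boundary conditions (\ref{nsr.bc.1}) by Lemma \ref{Lemma.BCZ}, and satisfies the advertised bound (\ref{W.Z.2.orig}) by weak lower semicontinuity of the norm.

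\emph{Uniqueness via difference with weaker weights.} This is the main obstacle, because taking the difference $[\tilde u,\tilde v] := [u_1 - u_2, v_1 - v_2]$ of two solutions produces the term $\tilde v\,(u_1)_y + v_2\,\tilde u_y$ in the $u$-equation, and the perfect-derivative cancellation $\int v u_y \cdot u = -\tfrac12 \int v_y u^2$ that was exploited in (\ref{order.NL}) is no longer available since $(u_1)_y \ne 0$. The plan, following the announcement made before Lemma \ref{L1U}, is to repeat the energy estimate of Proposition \ref{thm.energy} and the positivity estimate of Proposition \ref{prop.pos} for $[\tilde u,\tilde v]$, but against weights strictly weaker than those used in Chapter II: replace the multiplier $\psi$ by $\tilde \psi\, x^{-2\tau}$ and $v\,x$ by $\tilde v\,x^{1-2\tau}$ for a small parameter $\tau>0$. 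The $\tau>0$ slack defeats the critical Hardy inequality (\ref{HCrit.1}), while the sharp uniform decay $\||u_1|\,x^{1/4}\|_{L^\infty} + \||v_2|\,x^{1/2}\|_{L^\infty} \lesssim \eps^{\frac14 -\gamma-\kappa}$ furnished by Lemma \ref{Lemma.UIMP} supplies the quantitative smallness required to absorb $\int \tilde v (u_1)_y \tilde u$ into the positive terms on the left-hand side. Closing a Gr\"onwall-type inequality for $[\tilde u,\tilde v]$ in this weaker space then forces $\tilde u = \tilde v \equiv 0$.
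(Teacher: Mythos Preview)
Your uniqueness argument matches the paper's Step 5 in Section \ref{sub.sec.U}: re-run the energy/positivity estimates for the difference with multipliers carrying the weaker weight $x^{-2\tau}$, so that Hardy is subcritical and the troublesome cross term $\tilde v\,(u_1)_y$ can be absorbed using the uniform smallness of the known solution in $Z$. That part is fine.

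The existence scheme, however, has a genuine gap. You rely on Rellich--Kondrachov on $\Omega^N$ to obtain compactness for Schauder, arguing that $y\in[0,N]$ is bounded. But $\Omega^N = [1,\infty)\times[0,N]$ is still unbounded in $x$, so the standard compact embedding fails. The elliptic regularity of Lemmas \ref{LemmaBC}--\ref{LemmaBC2} only controls $\dot H^2$ on fixed compact $x$-regions like $\{x\le 2000\}$; it does not produce a global gain of weighted regularity over all of $\Omega^N$. Likewise, the $X_i$ norms do not carry enough $x$-weight on quantities such as $u_y$ to force compactness at $x\to\infty$. The paper explicitly flags this point in the overview of Chapter III: ``the compactness methods we rely on require the weights from $\alpha A(\psi)$.'' Relatedly, your proposed Lax--Milgram on $(X_1\cap X_2\cap X_3)(\Omega^N)$ is not straightforward: the a priori bound of Theorem \ref{thm.m.2} is obtained by combining two different multipliers (energy and positivity), not from coercivity of a single bilinear form.

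The paper closes both holes by introducing an auxiliary regularization $S_\alpha\psi := \Delta_\eps^2\psi + \alpha A(\psi)$, where $A(\psi)$ carries large growing weights $x^{2m}$ (see (\ref{Aofpsi})). This makes the weighted Hilbert space $H^2_w$ the natural arena: Lax--Milgram gives linear solvability of $S_\alpha$, the transport/profile terms $T[\psi]$ become a \emph{compact} perturbation via the weighted compactness Lemma (\ref{cpct.em.1}), and Fredholm plus the a priori estimates handle $S_\alpha + T$. Schaefer's fixed point then produces a solution of the nonlinear $\alpha$-regularized problem on $\Omega^N$, with $Z$-bounds uniform in $(\alpha,N)$, after which one passes $\alpha\to 0$ and $N\to\infty$ by weak compactness in $X_1\cap X_2\cap X_3$. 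The missing ingredient in your plan is precisely this $\alpha A(\psi)$ layer.
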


The main result, Theorem \ref{thm.m.1} follows immediately from Theorem \ref{thm.e.u}. The proof of this theorem proceeds in several steps, which we now outline:
\begin{itemize}
\item[(Step 1)] Linear existence of solutions to weighted Stokes system, defined as follows: 
\begin{align} \label{Sa.1}
&\Delta_\epsilon^2 \psi + \alpha A(\psi)  = F_y - \epsilon  G_x \text{ on } \Omega^N, \hspace{3 mm} F, G \in L^2(\Omega^N),\\ \label{psiBC}
&\psi|_{y = 0, N} = \psi_y|_{y = 0, N} = 0, \text{ and } \psi|_{x = 1} = \psi_x|_{x = 1} = 0, \\ \label{psiBC.2}
& \lim_{x \rightarrow \infty} [\psi_{x}, \psi_y] = 0. 
\end{align}

where $\alpha > 0$, and 
\begin{align} \n
A(\psi) = \Big[ \psi x^{2m} - \psi_{yy} x^{2m+2} &- \partial_x(\psi_x x^{2m+2}) + \psi_{yyyy} x^{2m+4} \\ \label{Aofpsi}
& + \partial_x \Big( \psi_{yyx} x^{2m+4} \Big) + \partial_{xx} \Big( \psi_{xx} x^{2m+4} \Big) \Big].
\end{align}

Here, $m > 0$ is sufficiently large, and can remain temporarily unspecified. The scaled Bilaplacian is defined as $\Delta_\epsilon^2 := \partial_y^4 + \epsilon \partial_y^2 \partial_x^2 + \epsilon^2 \partial_x^4$. The right-hand sides, $F, G$, should be thought of as generic elements satisfying $F_y - \eps G_x \in H^{-1}$. Upon introducing appropriate function spaces, we define the weak formulation of (\ref{Sa.1}) - (\ref{psiBC}) in (\ref{weak.1}). Depicting the weak-solution operator to the above system by $S_\alpha^{-1}$ (see \ref{S.alph.defn} for a precise definition), Step 1 amounts to studying the solvability of $S_\alpha \psi = F_y - \eps G_x$. 

The boundary conditions as $x \rightarrow \infty$ in (\ref{psiBC.2}) are selected in order to be consistent with (\ref{nsr.bc.1}). However, due to the terms in $A(\psi)$, the weak solution, $[\psi, u, v]$ exhibits rapid decay as $x \rightarrow \infty$. 

\item[(Step 2)] Linear existence of compact perturbations to $S_\alpha$. Define the maps:
\begin{align} \nonumber
T[\psi] &:= \partial_y \Big[ -u_R \psi_{xy} - u_{Rx}\psi_y - (v_R + \eps^{\frac{n}{2}+\gamma} \bar{v}) \psi_{yy} + u_{Ry} \psi_x  \Big] \\  \label{mapT}
& \hspace{42 mm} - \eps \partial_x \Big[ u_R \psi_{xx} - v_{Ry} \psi_y + v_R \psi_{xy} + v_{Ry} \psi_x \Big], \\ \label{mapT0}
T_0[\psi] &:= T[\psi] + \eps^{\frac{n}{2}+\gamma} \p_y [ \bar{v} \psi_{yy}], \\ \label{mapTa}
T_a[\psi] &:= \Big[ -u_R \psi_{xy} - u_{Rx}\psi_y - v_R \psi_{yy} + u_{Ry} \psi_x  \Big], \\ \label{mapTb}
T_b[\psi] &:= \Big[ u_R \psi_{xx} - v_{Ry} \psi_y + v_R \psi_{xy} + v_{Ry} \psi_x \Big].
\end{align}

$T$ has a dependence on $\bar{v}$, so to be precise we will sometimes write $T[\psi; \bar{v}]$. When there is no danger of confusion, we simply write $T[\psi]$. The map $T_0[\psi]$ is defined to match the profile terms, $S_u(u,v), S_v(u,v)$ (see the definition in (\ref{defn.Su})), when they are written in terms of the stream function, $\psi$. We have defined the notation $T_a, T_b$ so that we can write $T_0 = \p_y T_a - \eps \p_x T_b$. In this step, we are interested in establishing solvability of the system:
\begin{align}\label{sysT}
&S_\alpha \psi + T[\psi] = F_y - \eps G_x \text{ on } \Omega^N, \\ 
&[\psi = \psi_{x}]|_{x = 1} = [\psi = \psi_{y}]|_{y = 0} =  [\psi = \psi_{y}]|_{y = N} = \lim_{x \rightarrow \infty} [\psi_x, \psi_y] = 0. 
\end{align}

The essence of the arguments in this step is that upon applying $S_\alpha^{-1}$ to both sides above, $S^{-1}_\alpha T$ is seen as a compact perturbation of the identity. Despite $\Omega^N$ being unbounded in the $x$-direction, the required compactness arises from the weights, $w$, present in $A(\psi)$ above in (\ref{Aofpsi}).  The solution of (\ref{sysT}) is known to decay rapidly as $x \rightarrow \infty$, due to the presence of $A(\psi)$. This is captured in estimate (\ref{welldefined.}).

\item[(Step 3)] Nonlinear existence of auxiliary system: we first invite the reader to refer back to (\ref{defn.SU.SV}) and (\ref{bar.f}) - (\ref{bar.g}) for the definitions of $f$ and $g$. Given this and the definition of $T$ in (\ref{mapT}), we define: 
\begin{align} \label{tildef}
\tilde{f}(\bar{u}, \bar{v}):= \eps^{-\frac{n}{2}-\gamma} R^{u,n} + \eps^{\frac{n}{2}+\gamma} \bar{u} \bar{u}_x, \hspace{3 mm} \text{so that } f(u, \bar{u}, \bar{v}) = \tilde{f}(\bar{u}, \bar{v}) + \eps^{\frac{n}{2}+\gamma} \bar{v}u_y.
\end{align}

The aim of this step is to obtain existence of solutions (which we now index by $\alpha$ and $N$ for clarity) to the nonlinear system:
\begin{align} \label{sysWFD}
S_\alpha \psi^{\alpha, N} + T[\psi^{\alpha, N}; v^{\alpha, N}] = \tilde{f}_y(u^{\alpha, N}, v^{\alpha, N}) + \eps g_x(u^{\alpha, N},v^{\alpha, N}) \text{ on } \Omega^N.
\end{align}

This existence is obtained in the unit ball of $Z(\Omega^N)$ via Schaefer's fixed point theorem. 

\item[(Step 4)] Nonlinear existence of solutions to the system (\ref{EQ.NSR.1}) - (\ref{EQ.NSR.3}), with $f, g$ as in (\ref{defn.SU.SV}): By re-applying the analyses in Sections \ref{Section.Z} - \ref{section.NSR.Linear} and in Lemma \ref{LemmaW}, one obtains the uniform-in-$(\alpha,N)$ estimate: $||u^{\alpha, N}, v^{\alpha, N} ||_{Z(\Omega^N)} \lesssim \mathcal{O}(\delta) \eps^{\frac{1}{4}-\gamma - \kappa}$, which then enables the passage to weak limits in the space $X_1 \cap X_2 \cap X_3$. The weak limit is denoted by $[u,v]$, and is demonstrated to satisfy a weak formulation of system (\ref{EQ.NSR.1}) - (\ref{EQ.NSR.3}), see (\ref{weak.5.1}) for this formulation. Moreover, $[u,v] \in X_1 \cap X_2 \cap X_3$, gives enough regularity to upgrade immediately to a strong solution of (\ref{EQ.NSR.1}) - (\ref{EQ.NSR.3}). 

\begin{remark} To establish existence, we rely on compactness methods as opposed to applying a contraction mapping. The essential reason for this is seen by examining calculation (\ref{order.NL}), in which the structure is not preserved under taking differences.
\end{remark}

\begin{remark} It is important to establish nonlinear existence of the auxiliary system before establishing nonlinear existence of the system (\ref{EQ.NSR.1}) - (\ref{EQ.NSR.3}), as opposed to jumping from linear existence of (\ref{EQ.NSR.1}) - (\ref{EQ.NSR.3}) to nonlinear existence because the compactness methods we rely on require the weights from $\alpha A(\psi)$. 
\end{remark}

\item[(Step 5)] Nonlinear uniqueness for solutions to the system (\ref{EQ.NSR.1}) - (\ref{EQ.NSR.3}), with $f, g$ as in (\ref{defn.SU.SV}): In order to prove uniqueness, we re-apply the estimates in Sections \ref{Section.Z} - \ref{section.NSR.Linear} with weights that are weaker by $x^{-b}$, where $b < 1$, but is arbitrarily close to $1$. This step is necessary (with the weaker weight) due again to the calculation in (\ref{order.NL}), whose structure is destroyed upon considering differences. 

\end{itemize}

\section{Step 1: Invertibility of Weighted Stokes Operator, $S_\alpha$}

In this step, we study the system (\ref{Sa.1}) - (\ref{psiBC}).  We remind the reader that still, all integrations and all norms are taken over $\Omega^N$ unless otherwise specified. There is an abuse of notation here; $\psi$ should be indexed by $\alpha$ and $N$, but this will not cause any confusion for this step, as we view both $\alpha$ and $N$ as fixed. Our intention of this subsection is to exhibit solvability of the system (\ref{Sa.1}) in the space $Z(\Omega^N)$. Denote by $\chi_1(x)$ a cut-off function satisfying (refer to (\ref{zeta}) for the definition of $\zeta_3$):  
\begin{equation} \label{chi1}
\chi_1 = 1 \text{ on } x \ge \frac{12}{10}, \hspace{3 mm} \chi_1 = 0 \text{ for } 1 \le x \le \frac{11}{10}. 
\end{equation}

We define higher-order cut-offs similar to (\ref{chi1}), satisfying the following property: $\text{support } \chi_k \subset \{ \chi_{k-1} = 1 \}$. Define the following auxiliary norms via:  
\begin{align} \label{normHkw}
&||\psi||_{H^2_w}^2 := \int \int \psi^2 x^{2m} + |\nabla \psi|^2 x^{2m+2} + |\nabla^2 \psi|^2 x^{2m+4} \\
&||\psi||_{H^3_w}^2 := ||\psi||_{H^2_w} + \int \int \Big| \nabla^2 \psi_x \Big|^2 x^{2m+4}, \\
&||\psi||_{G^k_{w, B}}^2 := ||\psi||_{H^k_w}^2 + \int \int_B \Big| \partial_y^k \psi \Big|^2, \text{ for any bounded subset $B \subset \Omega^N$,} k = 0,...3, \\
&||\psi||_{H^k_w}^2 := ||\psi||_{H^3_w}^2 + \int \int \chi_k^2 \Big| \nabla^2 \partial_x^{k-2} \psi \Big|^2 x^{2m+4}, \text{ for $k \ge 4$. } 
\end{align}

We will also call $G^k_{w, loc}(\Omega^N)$ the space such that $||\psi||_{G^k_{w,B}} \le C(B)$ for all compact subsets $B$. Define the weak formulation of (\ref{Sa.1}) to be: 
\begin{align} \nonumber
\int \int \nabla_\epsilon^2 \psi : \nabla_\epsilon^2 \phi &+ \alpha \Big[ \int \int \psi \phi x^{2m} +  \int \int \nabla \psi \cdot \nabla \phi x^{2m+2} + \int \int \nabla^2 \psi : \nabla^2 \phi x^{2m+4} \Big]  \\ \label{weak.1}
&=\langle F_y - \epsilon G_x, \phi \rangle_{H^{-1}, H^1}  \text{ for all } \phi \in C_0^{\infty}(\Omega^N), \text{ where } \psi \in H^2_w(\Omega^N). 
\end{align}

Above, $\nabla^2$ is the Hessian matrix, and the inner product between two matrices is given by $A:B = \text{trace}(AB)$. We will need one more norm: 
\begin{align} \label{hmok}
||(F, G)||_{H^{-1}_k} := \sum_{j = 0}^k ||\p_x^j \{F_y - \eps G_x \}||_{H^{-1}}. 
\end{align}

Relevant spaces are defined here: 
\begin{definition} \label{defn.dens}
$H^2_w(\Omega^N)$ is defined to be the closure of $C_0^{\infty}(\Omega^N)$ under the norm $||\cdot||_{H^2_w}$. $H^k_w(\Omega^N)$ for $k \ge 3$ consists of the subspace of $H^2_w(\Omega^N)$ whose $H^k_w(\Omega^N)$ norm is finite. Note that $H^3_w(\Omega^N)$ does not contain all of the third derivatives of $\psi$; it is missing $\partial_y^3 \psi$, which is the reason for the norm, $||\cdot||_{G_{w, B}}$.
\end{definition}

\begin{remark}
There is a distinction between $H^2_w(\Omega^N)$, and $H^k_w(\Omega^N)$ in that:
\begin{align}
H^2_w(\Omega^N) = \overline{C_0^\infty(\Omega^N)}^{||\cdot||_{H^2_w}} \text{ but for $k \ge 3$, } H^3_w(\Omega^N) \xcancel{=} \overline{C^\infty_0(\Omega^N)}^{||\cdot||_{H^3_w}}. 
\end{align}
Due to the weights, there is no ``$H = W$" theorem generically for $H^k_w(\Omega^N)$. 
\end{remark}

\begin{lemma} \label{LBC10} For $\psi \in H^2_w(\Omega^N)$, the following boundary conditions are satisfied: 
\begin{align}
\psi|_{y = 0, N} = \psi_y|_{y = 0, N} = \psi|_{x = 1} = \psi_x|_{x = 1} = 0
\end{align}
\end{lemma}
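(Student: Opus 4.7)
The plan is to reduce all four boundary conditions to standard trace-continuity statements applied to an approximating sequence of compactly supported smooth functions. By Definition \ref{defn.dens}, any $\psi \in H^2_w(\Omega^N)$ is the limit, in the $\|\cdot\|_{H^2_w}$ norm, of a sequence $\{\psi_j\} \subset C_0^\infty(\Omega^N)$. Each $\psi_j$ has compact support strictly inside $\Omega^N$, so each $\psi_j$ automatically vanishes together with its first derivatives on the four boundary pieces $\{y=0\}$, $\{y=N\}$ and $\{x=1\}$ (the latter being both $\psi_j|_{x=1}$ and $\partial_x\psi_j|_{x=1}$). Hence the task is purely to show that these traces pass to the limit.

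First I would fix an arbitrary bounded relatively open set $B \subset \overline{\Omega^N}$ and observe that the weights $x^{2m}$, $x^{2m+2}$, $x^{2m+4}$ are bounded above and below by positive constants on $B$. Consequently convergence of $\psi_j \to \psi$ in $\|\cdot\|_{H^2_w}$ implies convergence in the standard unweighted $H^2(B)$ norm. In particular $\psi \in H^2_{\mathrm{loc}}(\overline{\Omega^N})$ in the sense that its restriction to each such $B$ lies in $H^2(B)$.

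Next I would apply the classical trace theorem on a neighborhood of each boundary piece. Choose $B$ to be a rectangle of the form $(1,1+L)\times (0,N)$ for the vertical boundary $\{x=1\}$, and a rectangle of the form $(1,L)\times(0,\eta)$ (respectively $(1,L)\times(N-\eta,N)$) for the horizontal boundaries. On each such $B$ the trace operator
\[
\mathrm{Tr}: H^2(B) \to H^{3/2}(\partial B \cap \partial \Omega^N) \times H^{1/2}(\partial B \cap \partial \Omega^N),
\qquad \psi \mapsto (\psi, \partial_\nu\psi)\big|_{\partial B \cap \partial \Omega^N},
\]
is bounded. Since $\mathrm{Tr}(\psi_j) = 0$ for all $j$ and $\psi_j \to \psi$ in $H^2(B)$, continuity of $\mathrm{Tr}$ yields $\mathrm{Tr}(\psi) = 0$ on each of the finite boundary pieces. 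Taking $L \to \infty$ and $\eta > 0$ arbitrary exhausts the three boundaries $\{x=1\}$, $\{y=0\}$ and $\{y=N\}$, which is all that is claimed.

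There is essentially no genuine obstacle beyond bookkeeping: the weight $x^{2m+k}$ grows at $x=\infty$ but is harmless near the boundary, and the boundary $\{x=\infty\}$ does not appear in the statement (the decay there is tracked separately through the norm itself and will enter in the forthcoming estimate (\ref{welldefined.})). The one small point to be careful about is that we genuinely need $H^2$-convergence, not just $H^1$-convergence, in order to control the normal derivative trace $\partial_x\psi|_{x=1}$ and $\partial_y\psi|_{y=0,N}$; this is provided by the full $|\nabla^2\psi|^2 x^{2m+4}$ term in $\|\cdot\|_{H^2_w}$, so no further work is needed.
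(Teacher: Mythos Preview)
Your proposal is correct and follows essentially the same approach as the paper: take an approximating sequence from $C_0^\infty(\Omega^N)$ via Definition~\ref{defn.dens}, and pass to the limit using boundedness of the trace operator. The paper's proof consists of exactly these two sentences, while you have (correctly) filled in the details about localizing to bounded sets where the weights are harmless and about needing full $H^2$-convergence to control the normal-derivative traces.
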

\begin{proof}
If $\psi \in H^2_w(\Omega^N)$, obtain a sequence $\phi^{(n)}$ such that $||\phi^{(n)} - \psi||_{H^2_w} \rightarrow 0$. The claim now follows by the standard boundedness properties of the trace operator. 
\end{proof}

\begin{lemma} \label{LemmaHBanach} $H^2_w(\Omega^N)$ as defined in Definition \ref{defn.dens} is a Banach space. 
\end{lemma}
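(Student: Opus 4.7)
The plan is to identify $H^2_w(\Omega^N)$ with a closed subspace of a natural ambient weighted Hilbert space, from which the Banach property is immediate. First I would define an ambient space $\mathcal{H}$ consisting of all $\psi \in L^2_{\mathrm{loc}}(\Omega^N)$ possessing distributional derivatives through order two for which the quantity $\|\psi\|_{H^2_w}$ defined in \eqref{normHkw} is finite. The map
\begin{equation*}
J: \psi \mapsto \bigl(x^m\psi,\ x^{m+1}\nabla\psi,\ x^{m+2}\nabla^2\psi\bigr)
\end{equation*}
embeds $\mathcal{H}$ isometrically into the Hilbert-space product $L^2(\Omega^N) \oplus L^2(\Omega^N)^2 \oplus L^2(\Omega^N)^4$, endowing $\mathcal{H}$ with an inner-product structure.

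Second, I would show that $\mathcal{H}$ is itself complete. Given a Cauchy sequence $\{\phi^{(n)}\} \subset \mathcal{H}$, the components $x^m \phi^{(n)}$, $x^{m+1}\nabla \phi^{(n)}$, $x^{m+2}\nabla^2 \phi^{(n)}$ are Cauchy, hence convergent, in unweighted $L^2(\Omega^N)$. Call the limits $\psi$, $\vec A$, $B$ (after dividing back out by the weights). On any compact subset $K \subset \Omega^N$ avoiding $\{x = 0\}$ and $\{x = \infty\}$, the weights $x^{2m},\ x^{2m+2},\ x^{2m+4}$ are bounded above and below by positive constants depending only on $K$, so convergence in the weighted norm forces $\phi^{(n)} \to \psi$, $\nabla \phi^{(n)} \to \vec A$, $\nabla^2 \phi^{(n)} \to B$ in $L^2(K)$. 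Distributional derivatives commute with $L^2_{\mathrm{loc}}$ limits, so $\vec A = \nabla \psi$ and $B = \nabla^2 \psi$ as distributions on $\Omega^N$; consequently $\psi \in \mathcal{H}$ and $\phi^{(n)} \to \psi$ in $\|\cdot\|_{H^2_w}$, proving completeness of $\mathcal{H}$.

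Third, by Definition \ref{defn.dens}, $H^2_w(\Omega^N)$ is the closure of $C^\infty_0(\Omega^N)$ inside $\mathcal{H}$ under $\|\cdot\|_{H^2_w}$. A norm-closed subspace of a Banach space is itself a Banach space (indeed, a Hilbert space, with inner product inherited from $\mathcal{H}$), which establishes the lemma.

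The main subtle point is the second step, namely that a Cauchy sequence of weighted derivatives really does converge to the (distributional) derivatives of the pointwise weighted limit. This is the only place where the non-constant weights could, in principle, cause trouble; it is resolved by exploiting that the weights are continuous and strictly positive on the interior of $\Omega^N$, reducing the question locally to the standard (unweighted) commutation of distributional differentiation with $L^2$ convergence. No further analysis is needed, since once $\mathcal{H}$ is shown to be complete, the Banach property of the closed subspace $H^2_w(\Omega^N) \subset \mathcal{H}$ is automatic.
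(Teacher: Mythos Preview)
Your proof is correct and follows essentially the same route as the paper: both introduce the ambient weighted space (your $\mathcal{H}$, the paper's $H^2_{0,w}$), establish its completeness, and then use that $H^2_w(\Omega^N)$ is by definition a closed subspace. You supply more detail on the completeness of $\mathcal{H}$ where the paper simply invokes ``standard arguments,'' and your final step is cleaner---the paper writes out an explicit diagonalization to show the limit of a Cauchy sequence lies in $\overline{C^\infty_0}^{\|\cdot\|_{H^2_w}}$, which is just a verbose verification of the fact you cite directly, that a norm-closed subspace of a Banach space is Banach.
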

\begin{proof}
Consider the auxiliary space: 
\begin{align}
H^2_{0,w}(\Omega^N) = \Big\{ \psi: \nabla \psi, \nabla^2 \psi \text{ exist in the weak sense, and }  ||\psi||_{H^2_w(\Omega^N)} < \infty \Big\}. 
\end{align}

Through standard arguments, $H^2_{0,w}(\Omega^N)$ is a Banach space. Suppose $\{\psi^{(n)}\}$ is a Cauchy sequence in $H^2_w(\Omega^N)$. Then $\{\psi^{(n)}\}$ is Cauchy in $H^2_{0,w}(\Omega^N)$, and so there exists a limit point $\psi$ such that: $||\psi - \psi^{(n)}||_{H^2_w} \xrightarrow{n \rightarrow \infty} 0$. As $\psi^{(n)} \in H^2_w(\Omega^N)$, we may find a sequence $\{\phi^{(n)}_m\}_{m \ge 1}$ such that $||\phi^{(n)}_m - \psi^{(n)}||_{H^2_w} \xrightarrow{m \rightarrow \infty} 0$, where $\phi^{(n)}_m \in C^\infty_0(\Omega^N)$. In particular, define, for each $n$, by selecting $m$ large enough: $||\phi^{(n)} - \psi^{(n)}||_{H^2_w} < 2^{-n}$. Thus, $||\phi^{(n)} - \psi||_{H^2_w} \xrightarrow{n \rightarrow \infty} 0$, proving that $\psi \in \overline{C^\infty_0}^{||\cdot||_{H^2_w}}$. This establishes the desired result.

\end{proof}

\begin{lemma} Endowed with the inner product, 
\begin{align} \label{IPH}
\langle \psi, \varphi \rangle_{H^2_w} := \int \int \psi \varphi x^{2m} + \nabla \psi \cdot \nabla \varphi x^{2m+2} + \nabla^2 \psi : \nabla^2 \varphi x^{2m+4}, 
\end{align}

$H^2_w$ is a Hilbert Space. The inner product in (\ref{IPH}) induces the norm defined in (\ref{normHkw}).
\end{lemma}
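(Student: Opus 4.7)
The statement has three parts to verify: that $\langle \cdot, \cdot \rangle_{H^2_w}$ is a genuine inner product on $H^2_w(\Omega^N)$, that the induced norm coincides with the one defined in (\ref{normHkw}), and that the resulting inner product space is complete. My plan is to dispatch each of these in turn, with the bulk of the argument resting on work already done in Lemma \ref{LemmaHBanach}.

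For the inner product axioms, bilinearity and symmetry are immediate from the linearity of $\int \int$, of gradients and Hessians, and from the symmetry of the Frobenius pairing $A : B = \mathrm{trace}(A B)$ on symmetric matrices (so $\nabla^2 \psi : \nabla^2 \varphi = \nabla^2 \varphi : \nabla^2 \psi$). For positive definiteness, note that $\langle \psi, \psi \rangle_{H^2_w} = \int \int \psi^2 x^{2m} + |\nabla \psi|^2 x^{2m+2} + |\nabla^2 \psi|^2 x^{2m+4}$, using $\nabla^2 \psi : \nabla^2 \psi = |\nabla^2 \psi|^2$ in the Frobenius norm. Since $x \ge 1$ on $\Omega^N$, the weight $x^{2m}$ is strictly positive, so $\langle \psi, \psi \rangle_{H^2_w} = 0$ forces $\psi = 0$ almost everywhere, which in $H^2_w(\Omega^N)$ means $\psi$ is the zero element. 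Thus $\langle \cdot, \cdot \rangle_{H^2_w}$ is a positive-definite symmetric bilinear form.

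For the norm identification, the same computation shows $\sqrt{\langle \psi, \psi \rangle_{H^2_w}}$ is exactly $||\psi||_{H^2_w}$ as defined in (\ref{normHkw}). The Cauchy--Schwarz inequality follows from the inner product structure in the usual way.

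Finally, for completeness, Lemma \ref{LemmaHBanach} already established that $H^2_w(\Omega^N)$ is a Banach space under $||\cdot||_{H^2_w}$. Since the inner product $\langle \cdot, \cdot \rangle_{H^2_w}$ induces precisely this norm, $H^2_w(\Omega^N)$ is complete in the inner-product topology, hence a Hilbert space. I do not anticipate a serious obstacle in this argument: the only point requiring minor care is the interpretation of the matrix pairing and ensuring that the weight $x^{2m} \ge 1$ rules out any degeneracy of the inner product on $\Omega^N$.
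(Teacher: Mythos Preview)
Your proof is correct and follows essentially the same approach as the paper: verify the inner-product axioms directly and invoke Lemma \ref{LemmaHBanach} for completeness. The only minor difference is that the paper attributes non-degeneracy to the boundary conditions in (\ref{psiBC}), whereas you obtain it more directly from the strict positivity of the weight $x^{2m}$ on $\Omega^N$ (which already forces $\psi = 0$ a.e.\ from $\int\int \psi^2 x^{2m} = 0$); your route is arguably cleaner here since the zeroth-order term is present in the pairing.
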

\begin{proof}
One easily verifies the standard axioms of an inner-product for (\ref{IPH}). Non-degeneracy of (\ref{IPH}) is obtained via the boundary conditions in (\ref{psiBC}). Completeness is then obtained via Lemma \ref{LemmaHBanach}
\end{proof}

\begin{definition} The $\alpha-$Stokes operator is defined through: 
\begin{align} \label{S.alph.defn}
S_\alpha \psi = F_y - \eps G_x \text{ for $\psi \in H^2_w(\Omega^N), F_y - \eps G_x \in H^{-1}(\Omega^N)$, if and only if (\ref{weak.1}) holds.}  
\end{align}
\end{definition}

It is our aim to study the invertibility of $S_\alpha$:

\begin{lemma} \label{Lemma.weak.1} Given $F_y - \eps G_x \in H^{-1}(\Omega^N)$, there exists a unique weak solution $\psi \in H^2_w(\Omega^N)$ satisfying (\ref{weak.1}). Such a weak solution satisfies the energy inequality: 
\begin{align} \label{EIdentity}
||\psi||_{H^2_w}^2 \lesssim \frac{1}{\alpha} ||F_y - \epsilon G_x||_{H^{-1}}^2 =  \frac{1}{\alpha} ||S_\alpha \psi||_{H^{-1}}^2. 
\end{align}
\end{lemma}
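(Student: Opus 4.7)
The natural strategy is a direct application of the Lax--Milgram lemma to the symmetric, continuous, coercive bilinear form
\begin{align*}
B[\psi,\phi] := \int\!\!\int \nabla_\epsilon^2 \psi : \nabla_\epsilon^2 \phi + \alpha \Big[ \int\!\!\int \psi\phi\, x^{2m} + \nabla\psi\cdot\nabla\phi\, x^{2m+2} + \nabla^2\psi:\nabla^2\phi\, x^{2m+4} \Big]
\end{align*}
on the Hilbert space $H^2_w(\Omega^N)$, paired against the functional $\phi \mapsto \langle F_y - \epsilon G_x, \phi\rangle_{H^{-1}, H^1_0}$. The weak formulation \eqref{weak.1} is precisely $B[\psi,\phi] = \langle F_y - \epsilon G_x, \phi\rangle$ for all $\phi \in C_0^\infty(\Omega^N)$, and by density (Definition of $H^2_w$) this extends to all $\phi \in H^2_w(\Omega^N)$.

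First I would verify the three hypotheses of Lax--Milgram. Continuity of $B$ is immediate from Cauchy--Schwarz together with the pointwise bound $|\nabla_\epsilon^2 \psi| \lesssim |\nabla^2 \psi|$ and the fact that $x \ge 1$ on $\Omega^N$, giving
\[
|B[\psi,\phi]| \lesssim (1+\alpha) \, \|\psi\|_{H^2_w} \|\phi\|_{H^2_w}.
\]
Coercivity is free: discarding the nonnegative Bilaplacian contribution, $B[\psi,\psi] \ge \alpha \|\psi\|_{H^2_w}^2$. For the right-hand side, the key observation is the continuous embedding $H^2_w(\Omega^N) \hookrightarrow H^1_0(\Omega^N)$, which follows from $x \ge 1$ and $m \ge 0$ via $\|\phi\|_{H^1_0} \le \|\phi\|_{H^2_w}$, together with the boundary conditions from Lemma \ref{LBC10}. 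Hence
\[
|\langle F_y - \epsilon G_x, \phi\rangle_{H^{-1}, H^1_0}| \le \|F_y - \epsilon G_x\|_{H^{-1}} \|\phi\|_{H^1_0} \le \|F_y - \epsilon G_x\|_{H^{-1}} \|\phi\|_{H^2_w},
\]
which shows the functional is bounded on $H^2_w$. Lax--Milgram then produces a unique $\psi \in H^2_w(\Omega^N)$ satisfying $B[\psi,\phi] = \langle F_y - \epsilon G_x, \phi\rangle$ for all $\phi \in H^2_w$, and in particular for all test functions $\phi \in C_0^\infty(\Omega^N)$.

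The energy identity \eqref{EIdentity} is obtained by testing the weak formulation against $\phi = \psi$, which is admissible since $\psi \in H^2_w$. Using coercivity on the left, the embedding $H^2_w \hookrightarrow H^1_0$ on the right, and an $\epsilon$-Young inequality to absorb $\|\psi\|_{H^1_0} \le \|\psi\|_{H^2_w}$ into the left-hand side, one arrives at
\[
\alpha \|\psi\|_{H^2_w}^2 \le \|F_y - \epsilon G_x\|_{H^{-1}} \|\psi\|_{H^2_w},
\]
from which \eqref{EIdentity} follows.

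There is no genuine obstacle here: the whole point of the regularizing term $\alpha A(\psi)$, introduced in \eqref{Aofpsi}, is precisely to furnish the coercivity needed to invoke Lax--Milgram on a domain unbounded in $x$, despite the original scaled Bilaplacian $\Delta_\epsilon^2$ being degenerate in $\epsilon$. The only mildly delicate point to double-check is that the embedding $H^2_w(\Omega^N) \hookrightarrow H^1_0(\Omega^N)$ carries the correct boundary traces so that the pairing $\langle F_y - \epsilon G_x, \psi\rangle_{H^{-1}, H^1_0}$ is meaningful for the test function $\psi$ itself; this is handled by Lemma \ref{LBC10} and the density specification of $H^2_w$.
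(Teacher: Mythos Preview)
Your proof is correct and follows essentially the same approach as the paper: define the bilinear form $B$, verify boundedness and coercivity on $H^2_w(\Omega^N)$, check that $F_y - \epsilon G_x$ defines a bounded functional via the embedding $H^2_w \hookrightarrow H^1_0$, apply Lax--Milgram, and test with $\phi = \psi$ for the energy bound. The paper's version is more terse (it simply asserts boundedness and coercivity are ``immediate''), but the argument is the same.
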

\begin{proof}
Define: 
\begin{align} \nonumber
B[\psi, \phi] := \int \int \nabla_\epsilon^2 \psi : \nabla_\epsilon^2 \phi &+ \alpha \Big[ \int \int \psi \phi x^{2m} \\ \label{Buv} 
& +  \int \int \nabla \psi \cdot \nabla \phi x^{2m+2} + \int \int \nabla^2 \psi : \nabla^2 \phi x^{2m+4} \Big].
\end{align}

It is immediate to see that $B$ is bilinear, bounded, and coercive on $H^2_w(\Omega^N)$. Next, $F_y -  \eps G_x$ act as bounded linear functionals on $H^2_w(\Omega^N)$ through the pairing: $\langle F_y - \epsilon G_x, \phi \rangle_{H^{-2}_w, H^2_w} := \langle F_y - \epsilon G_x, \phi \rangle_{H^{-1}, H^1}$. This follows from: $\Big| \langle F_y - \epsilon G_x, \phi \rangle_{H^{-1}, H^1} \Big| \le ||F_y - \epsilon G_x||_{H^{-1}} ||\phi||_{H^2_w}.$ The existence of $\psi \in H^2_w(\Omega^N)$ a solution to (\ref{weak.1}) is then a standard application of the Lax-Milgram Lemma to the Hilbert Space $H^2_w(\Omega^N)$. The energy identity above follows from density of $C_0^{\infty}(\Omega^N)$ in $H^2_w(\Omega^N)$, which enables us to replace $\phi$ with $\psi$ in (\ref{weak.1}).
\end{proof}

The above lemma then says that $S_\alpha^{-1}: H^{-1}(\Omega^N) \rightarrow H^2_w(\Omega^N)$ is well-defined. Our intention now is to upgrade regularity. 

\begin{lemma} Given $F_y - \eps G_x \in H^{-1}(\Omega)$, the unique weak solution in $H^2_w(\Omega^N)$ guaranteed by Lemma \ref{Lemma.weak.1} is in $H^3_w(\Omega^N)$ and satisfies: 
\begin{align} \label{h3psi}
||\psi||_{H^3_w}^2 \lesssim \frac{1}{\alpha} ||F_y - \epsilon G_x||_{H^{-1}}^2 = \frac{1}{\alpha} ||S_\alpha \psi||_{H^{-1}}^2. 
\end{align}
\end{lemma}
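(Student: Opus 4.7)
The plan is to establish $H^3_w$ regularity by a difference-quotient argument in the tangential ($x$) direction, exploiting the translation-invariance of $\Delta_\epsilon^2$ in $x$ and the polynomial structure of the weights in $A(\psi)$. I would first introduce the forward difference quotient $D^h \psi(x,y) := h^{-1}(\psi(x+h, y) - \psi(x, y))$ for small $h>0$, together with a smooth cutoff $\chi_\delta(x)$ supported in $\{x \ge 1+\delta\}$ that avoids the boundary corner at $x=1$. Substituting the test function $\phi = -D^{-h}(D^h \psi \cdot \chi_\delta^2)$ into the weak formulation (\ref{weak.1})---which is admissible by density of $C_0^\infty$ in $H^2_w$ established in Definition \ref{defn.dens}---and applying the discrete integration-by-parts rule $\int f \cdot D^{-h} g = -\int D^h f \cdot g$ converts the equation into one satisfied by $D^h \psi$, with commutator terms thrown to the right-hand side.

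Testing the resulting difference equation against $D^h \psi \cdot \chi_\delta^2$ yields, after further integration by parts, a coercive contribution on the left-hand side of the form
\begin{equation*}
\int\!\!\int \chi_\delta^2 |\nabla_\epsilon^2 D^h \psi|^2 + \alpha \int\!\!\int \chi_\delta^2 \Big[ (D^h\psi)^2 x^{2m} + |\nabla D^h \psi|^2 x^{2m+2} + |\nabla^2 D^h \psi|^2 x^{2m+4}\Big].
\end{equation*}
The key commutator $[D^h, x^{2m+k}]\psi = h^{-1}[(x+h)^{2m+k} - x^{2m+k}]\psi(x+h)$ contributes lower-order weighted quantities in $\psi$ itself (not $D^h \psi$), bounded pointwise by $O(x^{2m+k-1})|\psi(x+h)|$, and is therefore controlled by $\alpha \|\psi\|_{H^2_w}^2$, which by Lemma \ref{Lemma.weak.1} is already bounded by $\|F_y - \epsilon G_x\|_{H^{-1}}^2$. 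The forcing contribution $\langle F_y - \epsilon G_x, \phi \rangle$ is handled by $\|F_y - \epsilon G_x\|_{H^{-1}}\|\phi\|_{H^1}$ together with the standard finite-difference bound $\|D^{-h}(D^h \psi \cdot \chi_\delta^2)\|_{H^1} \lesssim \|\nabla D^h \psi \cdot \chi_\delta\|_{L^2} + \|D^h \psi \cdot \chi_\delta'\|_{L^2}$, absorbed into the coercive term via Young's inequality.

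Passing $h \to 0$ using weak compactness and lower-semicontinuity of the weighted $L^2$ norms, and then $\delta \to 0$ via monotone convergence (the integrands being nonnegative), one recovers
\begin{equation*}
\alpha \int\!\!\int \big[ \psi_x^2 x^{2m} + |\nabla \psi_x|^2 x^{2m+2} + |\nabla^2 \psi_x|^2 x^{2m+4}\big] \lesssim \|F_y - \epsilon G_x\|_{H^{-1}}^2 + \alpha \|\psi\|_{H^2_w}^2,
\end{equation*}
which together with (\ref{EIdentity}) yields (\ref{h3psi}) upon noting that $\int\!\!\int |\nabla^2 \psi_x|^2 x^{2m+4}$ is exactly the additional term separating $\|\psi\|_{H^3_w}^2$ from $\|\psi\|_{H^2_w}^2$.

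The main obstacle is handling the interaction between the difference quotient and the degenerate weights near $x = 1$, precisely where $\chi_\delta$ is forced to vanish. The boundary conditions $\psi = \psi_x = 0$ at $x = 1$ (recorded in Lemma \ref{LBC10}) are essential: they force $\psi$ to vanish to order $(x-1)^2$, which controls the commutator errors generated when $\chi_\delta'$ lands on $\psi$ near $x = 1+\delta$, via a weighted one-dimensional Hardy inequality in $x$. A secondary technical point is verifying that $\phi = -D^{-h}(D^h \psi \cdot \chi_\delta^2)$ is an admissible test function in $H^2_w$; this is done by approximating $\psi$ in $H^2_w$ by its $C_0^\infty$ defining sequence and performing all manipulations at the approximate level before passing to the limit.
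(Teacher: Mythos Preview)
Your overall strategy—tangential difference quotients, cutoff away from $x=1$, commutator estimates for the polynomial weights—matches the paper's approach for the interior region $\{x \ge 1+\delta\}$. The gap is in how you propose to recover the strip near $x=1$.

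You claim that sending $\delta \to 0$ is harmless because $\psi = \psi_x = 0$ at $x=1$ forces $\psi = O((x-1)^2)$, and this controls the commutators via Hardy. But the commutators from the bilinear form $\int \nabla_\epsilon^2 D^h\psi : \nabla_\epsilon^2(D^h\psi\,\chi_\delta^2)$ are not of the form ``$\chi_\delta'$ lands on $\psi$.'' Expanding $\epsilon\,\partial_{xx}(D^h\psi\,\chi_\delta^2)$ produces a term $2(\chi_\delta^2)_x\,D^h\psi_x$; pairing with $\epsilon\,D^h\psi_{xx}$ and applying Young gives, after $h\to 0$, a remainder $\int (\chi_\delta')^2\,|\psi_{xx}|^2$. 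There is no boundary condition on $\psi_{xx}$ at $x=1$, so this integral is generically of order $\delta^{-1}$ and blows up as $\delta\to 0$. (The analogous term $(\chi_\delta^2)_{xx}\,D^h\psi$ involves $\psi_x$, which \emph{does} vanish at $x=1$, but the resulting Hardy bound is $\lesssim \|\psi_{xx}\|_{L^2}^2$ only up to the corner—again circular, since $\psi_{xxx}$ is what you are trying to control.)

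The paper does not attempt to send the cutoff to the boundary. Instead it fixes $\chi_1$ supported in $\{x \ge 11/10\}$, obtains the weighted estimate $\int \chi_1 |\nabla^2\psi_x|^2 x^{2m+4} \lesssim \alpha^{-1}\|F_y - \epsilon G_x\|_{H^{-1}}^2$ by your method, and then treats the remaining strip $1 \le x \le 20$ by localizing the equation and invoking the $H^3$ regularity theory for the biharmonic operator on domains with right-angle corners (Blum--Rannacher, \cite{Biharmonic}). That external input is what replaces your $\delta \to 0$ step; without it, the corner region is not covered.
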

\begin{proof}

As our weak solutions are only in $H^2_w(\Omega^N)$, we must formally use difference quotients within the weak formulation (\ref{weak.1}) to upgrade to $H^3_w(\Omega^N)$. However, we will generate the $H^3_w$ estimate via differentiating (\ref{Sa.1}), with the understanding that everything that is done can be formalized through the use of difference quotients in the standard manner. As such, we take $\partial_x$ of the system (\ref{Sa.1}), which gives: 
\begin{align} \label{diffBILI} 
\Delta_\eps^2 \psi_x + \alpha A(\psi_x) + [\p_x, \alpha A]\psi = \partial_x(F_y - \epsilon G_x), 
\end{align}

where
\begin{align} \nonumber
[\p_x, \alpha A]\psi = \alpha \Big[ 2mx^{2m-1} \psi &- (2m+2)x^{2m+1} \psi_{yy} - (2m+2) \partial_x (\psi_x x^{2m+1} ) \\ \n
& + (2m+4) x^{2m+3} \psi_{yyyy} + (2m+4)\partial_x \Big( \psi_{yyx} x^{2m+3}  \Big) \\ \label{com.A}
& + (2m+4)\partial_{xx} \Big( \psi_{xx} x^{2m+3)} \Big) \Big].
\end{align}

Let $\chi_1$ be as above in (\ref{chi1}). Define the quantities: 
\begin{align} \label{rhoMnew}
\tilde{\chi}_1 = 1 - \chi_1, \hspace{5 mm} \rho_M(x) = \chi_1(x) \tilde{\chi}_1(\frac{x}{M}), \text{ which implies } \Big| x^k \partial_x^k \rho_M(x) \Big| \le 2. 
\end{align}

We now test the above equation, (\ref{diffBILI}), against the multiplier $\rho_M \psi_x$. Doing so first gives from the Bilaplacian: 
\begin{align} \nonumber
\int \int \Delta_\eps^2 \psi_x \cdot \rho_M \psi_x &= \int \int \rho_M \Big[ \eps |\psi_{xxy}|^2 + \eps^2 |\psi_{xxx}|^2 + |\psi_{xyy}|^2 \Big] \\ 
& + c_0 \int \int \rho_M'' \Big[ \eps |\psi_{xy}|^2 + \eps^2 |\psi_{xx}|^2 \Big] + c_1 \int \int \partial_x^4 \rho_M \cdot |\psi_x|^2,
\end{align}

for constants $c_0, c_1$. Next, we have the terms coming from $A$:
\begin{align} \n
\int \int \alpha& A(\psi_x) \cdot  \rho_M \psi_x \\ \n
&\gtrsim \alpha \int \int [\psi_x^2 x^{2m} + \psi_{xy}^2 x^{2m+2} + \psi_{xx}^2 x^{2m+4} + \psi_{xyy}^2 x^{2m+4} \\ \label{com.A.0} &+ \psi_{xxy}^2 x^{2m+4} + \psi_{xxx}^2 x^{2m+4}] \rho_M - ||\psi||_{H^2_w}^2. 
\end{align}

Through a direct integration by parts, the commutator contains lower order terms: 
\begin{align} \label{com.A.1}
\Big| \int \int [\p_x, \alpha A]\psi \cdot \psi_x \rho_M \Big| \lesssim ||\psi||_{H^2_w}^2 \lesssim \frac{1}{\alpha}||F_y - \epsilon G_x||_{H^{-1}}^2. 
\end{align}

For detailed proofs of calculations (\ref{com.A.0}) and (\ref{com.A.1}), we refer the reader to (\ref{SYO.3}) - (\ref{SYO.4}). Finally, on the right-hand side of (\ref{diffBILI}), we have: 
\begin{align}
\Big| \langle \partial_x (F_y - \epsilon G_x), \psi_x \rho_M \rangle_{H^{-2}, H^2} \Big| \le ||F_y - \epsilon G_x ||_{H^{-1}} ||\rho_M \psi_{xx} ||_{H^1}. 
\end{align}

We can send $M \rightarrow \infty$ so that the weight $\rho_M \uparrow \chi_1$, resulting in 
\begin{align} \label{resid.1}
\int \int \chi_1 \Big| \nabla^2 \psi_x \Big|^2 x^{2m+4} \lesssim \frac{1}{\alpha} ||F_y - \epsilon G_x ||_{H^{-1}}^2. 
\end{align}

For the region $1 \le x \le 20$, and $0 \le y \le N$, we apply the standard $\dot{H}^2(\Omega^N)$ estimate for  solutions, $u^\alpha, v^\alpha$ Stokes' equation near corners (see \cite{Biharmonic}, Theorems 1 and 2, and Figure 2, P. 562 also in \cite{Biharmonic} with ``C/C" boundary conditions). Formally, fix another cut-off function, $\chi_2(x,y)$ localized near the corner $(1,0)$ (the identical argument can be given for the other corner, $(1,N)$). First, by calculation, we have: 
\begin{align}
\Delta_\epsilon^2 \Big( \chi_2 \psi \Big) = \chi_2 \Delta_\epsilon^2 \psi + [\Delta_\epsilon^2, \chi] \psi,
\end{align}

where the expression for the commutator is given explicitly: 
\begin{align} \nonumber
 [\Delta_\epsilon, \chi_2] \psi &= 4\partial_y \chi_2 \partial_y^3 \psi + 4 \partial_y^3 \chi_2 \partial_y \psi + 6 \partial_y^2 \chi_2 \partial_y^2 \psi + 2 \epsilon \partial_x^2 \partial_y^2 \chi_2 \psi + 2 \epsilon \partial^2 \chi_2 \partial_x^2 \psi \\ \nonumber
 & + 4 \epsilon \partial_x \partial_y^2 \chi_2 \partial_x \psi + 2\epsilon \partial_x^2 \chi_2 \partial_y^2 \psi + 4 \epsilon \partial_x \chi_2 \partial_x \partial_y^2 \psi + 4\epsilon \partial_x^2 \partial_y \chi_2 \partial_y \psi \\ \nonumber
 & + 4 \epsilon \partial_y \chi_2 \partial_x^2 \partial_y \psi + 8\epsilon  \partial_{xy} \chi_2 \partial_{xy} \psi + 6 \epsilon^2 \partial_x^2 \chi_2 \partial_x^2 \psi + \epsilon^2 \partial_x^4 \chi_2 \psi + 4 \epsilon^2 \partial_x^3 \chi_2 \partial_x \psi \\  \label{commutator}
 & + 4 \epsilon^2 \partial_x \chi_2 \partial_x^3 \psi. 
\end{align}

The salient feature of (\ref{commutator}) will be:
\begin{align}
 [\Delta_\epsilon, \chi_2] \psi = o(\chi_2 \partial^3 \psi),
\end{align}

where this is short-hand notation for containing up to three $\psi$-derivatives, and localized by $\chi_2$ (or any derivative of $\chi_2$ which is also localized). Localizing (\ref{Sa.1}) using $\chi_2$:  
\begin{align} \label{Hm1}
||\chi_2 \psi||_{H^3} &\lesssim ||\chi_2 (F_y - \epsilon G_x) ||_{H^{-1}} + ||o(\chi_2 \partial^3 \psi)||_{H^{-1}}  \lesssim ||\chi_2 (F_y - \epsilon G_x) ||_{H^{-1}}.
\end{align}

Combining (\ref{Hm1}) and (\ref{resid.1}) gives the desired result. 

\end{proof}

\begin{lemma} Fix any bounded set $B \subset \Omega^N$. Then we have: 
\begin{align} \label{AD}
||\psi||_{G^3_{w,B}}^2 \lesssim C(B) \frac{1}{\alpha} ||F_y - \epsilon G_x||_{H^{-1}}^2,
\end{align}
where the constant $C(B)$ depends on $B$.
\end{lemma}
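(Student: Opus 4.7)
The aim is to upgrade the global $H^3_w$ bound (\ref{h3psi})---which controls every third derivative of $\psi$ except $\partial_y^3 \psi$---to a local $L^2(B)$ bound on $\partial_y^3 \psi$. The strategy I would follow is standard local elliptic regularity for the fourth-order operator $\mathcal{L} := \Delta_\eps^2 + \alpha A$. On any bounded $B \subset \Omega^N$, the polynomial weights $x^{2m+j}$ appearing in $A(\psi)$ are uniformly bounded above and below, so the principal symbol of $\mathcal{L}$,
\begin{align*}
(1 + \alpha x^{2m+4})\xi_y^4 \;+\; (\eps + \alpha x^{2m+4})\xi_x^2\xi_y^2 \;+\; (\eps^2 + \alpha x^{2m+4})\xi_x^4,
\end{align*}
is uniformly elliptic on $B$, with ellipticity constants depending on $B, \alpha, \eps$.

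Concretely, I would fix a smooth cutoff $\chi$ equal to $1$ on $B$ and supported in $B' \subset \overline{\Omega^N}$ slightly larger than $B$, chosen so that $\chi \psi$ continues to satisfy the Dirichlet/clamped boundary conditions of $\psi$ on $\partial B' \cap \partial \Omega^N$. Applying $\mathcal{L}$ then yields
\begin{align*}
\mathcal{L}(\chi \psi) = \chi(F_y - \eps G_x) + [\mathcal{L}, \chi]\psi,
\end{align*}
where $[\mathcal{L}, \chi]\psi$ is a linear combination (with smooth coefficients supported in $B'$) of derivatives of $\psi$ of order at most three, directly analogous to the commutator expressions (\ref{BiLCOM}) and (\ref{commutator}). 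Using (\ref{h3psi}) and the boundedness of the weights on $B'$, each commutator term is controlled in $H^{-1}$ by $C(B) \alpha^{-1/2} \|F_y - \eps G_x\|_{H^{-1}}$. I would then invoke the standard $H^3$ regularity estimate for the fourth-order clamped/clamped elliptic system---precisely the result from \cite{Biharmonic} already used to obtain (\ref{Hm1})---applied to $\chi \psi$, giving
\begin{align*}
\|\chi \psi\|_{H^3}^2 \;\lesssim\; C(B, \alpha, \eps) \, \alpha^{-1} \|F_y - \eps G_x\|_{H^{-1}}^2.
\end{align*}
Since $\chi \equiv 1$ on $B$, this in particular controls $\|\partial_y^3 \psi\|_{L^2(B)}^2$, which combined with (\ref{h3psi}) yields (\ref{AD}).

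The main technical point is the treatment of cutoffs and boundary regularity when $B$ touches $\partial \Omega^N$, especially near the corners at $x = 1$. For purely interior $B$ the argument reduces to interior biharmonic regularity. When $B$ meets the smooth portions of $\partial \Omega^N$, one takes $\chi$ as a tensor product of cutoffs in $x$ and $y$ respecting the boundary structure, so that $\chi \psi$ inherits the clamped conditions. Near the corners one invokes the corner regularity results of \cite{Biharmonic} exactly as in the derivation of (\ref{Hm1}); the hypothesis $F_y - \eps G_x \in H^{-1}$ provides precisely the three orders of elliptic regularity required to capture $\partial_y^3 \psi$ in $L^2$.
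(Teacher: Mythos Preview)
Your proposal is correct and follows essentially the same approach as the paper. The paper's proof is a one-sentence reference back to the calculation producing (\ref{Hm1})---localize with cutoffs (there on intervals $x \in [M,M+1]$ rather than a single cutoff on $B$), apply the $H^3$ biharmonic regularity from \cite{Biharmonic}, and absorb the commutator into the already-controlled $H^3_w$ norm---with the $C(B)$ dependence attributed to the polynomial weights in $A(\psi)$, exactly as you identify.
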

\begin{proof}
This argument proceeds identically to the calculation from the previous lemma which resulted in (\ref{Hm1}) by simply replacing $\chi_2$ with cut-off functions localized to each interval $x \in [M, M+1]$. The dependence on $B$ in the constant in (\ref{AD}) arises from the weights $x^{2m},x^{2m+2}, x^{2m+4}$ appearing in the equation (\ref{Sa.1}) through $A(\psi)$. 
\end{proof}

The above lemmas roughly show that $S_\alpha^{-1}$ gains four derivatives. By repeating this procedure for higher-order $x$-derivatives, we can upgrade to higher-regularity: 
\begin{lemma} 
Given $(F, G) \in H^{-1}_2$, the unique weak solution guaranteed by Lemma \ref{Lemma.weak.1} satisfies:
\begin{align}
||\psi|_{H^5_w}^2 \lesssim \frac{1}{\alpha} ||(F,G)||_{H^{-1}_2}^2
\end{align}
\end{lemma}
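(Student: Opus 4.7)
The plan is to iterate the proof strategy of the previous lemma at two more levels of $x$-differentiation, using the nested cut-offs $\chi_k$ from (\ref{chi1}) (with $\mathrm{supp}\,\chi_k \subset \{\chi_{k-1}=1\}$) to stay away from the boundary $x=1$ where only the base $H^3$ corner regularity from \cite{Biharmonic} is available. Formally all calculations are to be justified via difference quotients in $x$, exactly as in the proof of (\ref{h3psi}), but I shall write them as if $\psi$ were already smooth.

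First I would apply $\partial_x^2$ to (\ref{Sa.1}) to obtain
\begin{align*}
\Delta_\epsilon^2 \psi_{xx} + \alpha A(\psi_{xx}) + [\partial_x^2,\alpha A]\psi = \partial_x^2(F_y - \epsilon G_x),
\end{align*}
where $[\partial_x^2,\alpha A]\psi$ is, by a direct computation analogous to (\ref{com.A}), a finite sum of terms of the form $\alpha\, c(x)\,\partial^{\le 2}\psi$ with $|x^k\partial_x^k c| \le C$ and total weight at most $x^{2m+2}$, i.e.\ quantities already controlled by $\|\psi\|_{H^3_w}^2 \lesssim \alpha^{-1}\|(F,G)\|_{H^{-1}_2}^2$ via (\ref{h3psi}). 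Testing against the multiplier $\rho_M \chi_4^2 \psi_{xx}$, integrating by parts exactly as in the derivation of (\ref{com.A.0})--(\ref{com.A.1}), and sending $M\to\infty$ would then yield, using Young's inequality on the right-hand side pairing $\langle \partial_x^2(F_y - \epsilon G_x),\chi_4^2\psi_{xx}\rangle_{H^{-1},H^1}$,
\begin{align*}
\int\!\!\int \chi_4^2\Big[ \epsilon|\psi_{xxxy}|^2 + \epsilon^2|\psi_{xxxx}|^2 + |\psi_{xxyy}|^2\Big] + \alpha\!\int\!\!\int\chi_4^2|\nabla^2\psi_{xx}|^2 x^{2m+4} \lesssim \tfrac{1}{\alpha}\|(F,G)\|_{H^{-1}_2}^2.
\end{align*}
This is precisely the $H^4_w$-layer. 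Repeating the same procedure one more time, applying $\partial_x^3$ to (\ref{Sa.1}) and testing against $\rho_M \chi_5^2 \psi_{xxx}$, produces the $H^5_w$-layer $\int\!\!\int \chi_5^2 |\nabla^2\partial_x^3\psi|^2 x^{2m+4}$; the commutator $[\partial_x^3,\alpha A]\psi$ is now controlled by the $H^4_w$ estimate just established, which is available on $\mathrm{supp}\,\chi_5\subset\{\chi_4 = 1\}$.

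The main obstacle one might worry about is the interaction between the $x$-derivatives and the $x$-weights inside the operator $A$, but the nesting $\mathrm{supp}\,\chi_k\subset\{\chi_{k-1}=1\}$ is exactly designed so that every commutator term at stage $k$ is supported where stage $k-1$ already provides the required control; meanwhile, derivatives of $\chi_k$ are bounded and compactly supported, so they contribute only lower-order terms. The corner singularity near $(1,0)$ and $(1,N)$, which obstructed regularity past $H^3$ in the base case, is entirely avoided because $\chi_k$ vanishes for $x$ near $1$, and the $x=\infty$ boundary contributions vanish after sending $M\to\infty$ in the cut-off $\rho_M$ thanks to the strong weighted decay already encoded in $\|\psi\|_{H^3_w}$. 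Summing the $H^3_w$, $H^4_w$, and $H^5_w$ layer estimates then gives the claimed bound.
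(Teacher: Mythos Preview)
Your proposal is correct and matches the paper's approach exactly --- the paper's own proof is simply the sentence preceding the lemma, ``By repeating this procedure for higher-order $x$-derivatives, we can upgrade to higher-regularity.'' One small correction: the commutator $[\partial_x^2,\alpha A]\psi$ is not literally a sum of terms of the form $\alpha\, c(x)\,\partial^{\le 2}\psi$; just as in (\ref{com.A}) it contains terms with up to four derivatives on $\psi$ (e.g.\ $x^{2m+2}\psi_{yyyy}$ and $\partial_{xx}(\psi_{xx}x^{2m+2})$), and $[\partial_x^3,\alpha A]\psi$ one order higher still. Your subsequent appeal to integration by parts ``exactly as in the derivation of (\ref{com.A.0})--(\ref{com.A.1})'' is what actually handles these, and after testing against $\chi_4^2\psi_{xx}$ (resp.\ $\chi_5^2\psi_{xxx}$) they are indeed controlled by $\|\psi\|_{H^3_w}^2$ (resp.\ $\|\psi\|_{H^4_w}^2$); cf.\ the more detailed commutator computations (\ref{SYO.3})--(\ref{SYO.4}) later in the paper.
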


For $(F, G) \in H^{-1}_2$, we can upgrade weak solutions to strong solutions:  
\begin{lemma} \label{Lemma.IBPS} Given $(F,G) \in H^{-1}_2$, the unique weak solution guaranteed by Lemma \ref{Lemma.weak.1} is a strong solution of (\ref{Sa.1}). 
\end{lemma}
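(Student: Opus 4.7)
The plan is to use the higher regularity established in the preceding lemma to turn the weak identity (\ref{weak.1}) into a pointwise equation by transferring all derivatives onto $\psi$. Since we are assuming $(F,G) \in H^{-1}_2$, the preceding lemma gives $\psi \in H^5_w(\Omega^N)$, and combined with the localized $\dot{H}^3$-estimates near the corners (as in the derivation of (\ref{Hm1})) together with the interior $G^k_{w,\loc}$ bounds, we have enough classical derivatives to carry out the needed integrations by parts.

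First I would fix any $\phi \in C_0^\infty(\Omega^N)$ as a test function in (\ref{weak.1}). The bilinear form $B[\psi,\phi]$ defined in (\ref{Buv}) splits into the Bilaplacian piece $\int\int \nabla_\eps^2 \psi : \nabla_\eps^2 \phi$ and the six weighted zeroeth/first/second-order terms coming from $\alpha A(\psi)$. Using that $\phi$ has compact support strictly inside $\Omega^N$, all boundary contributions from $\partial \Omega^N$ vanish automatically, and the weights $x^{2m+k}$ together with their derivatives are bounded and smooth on $\supp \phi$. The regularity $\psi \in H^5_w \cap G^3_{w,\loc}$ then justifies pulling two derivatives off $\phi$ in each Bilaplacian term and, respectively, zero/one/two derivatives off $\phi$ in the three pairs of terms from $A(\psi)$, matching exactly the form of $A$ displayed in (\ref{Aofpsi}).

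Carrying out this integration by parts yields
\begin{equation*}
\int\int \Bigl[ \Delta_\eps^2 \psi + \alpha A(\psi) - F_y + \eps G_x \Bigr] \phi \, \ud y\, \ud x = 0 \qquad \text{for all } \phi \in C_0^\infty(\Omega^N),
\end{equation*}
where the right-hand side term is interpreted via one further integration by parts against $\phi$ (legitimate since $\phi \in C_0^\infty$ and $F,G \in L^2_{\loc}$ by the $H^{-1}_2$ assumption combined with elliptic regularity of the data). Since the bracketed expression lies in $L^2_{\loc}(\Omega^N)$ by the $H^5_w$ bound on $\psi$, the fundamental lemma of the calculus of variations produces the pointwise identity $\Delta_\eps^2 \psi + \alpha A(\psi) = F_y - \eps G_x$ almost everywhere, which is precisely the strong formulation (\ref{Sa.1}). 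The boundary conditions (\ref{psiBC}) are already encoded in the space $H^2_w(\Omega^N)$ by Lemma \ref{LBC10}, and the decay condition (\ref{psiBC.2}) follows from the weighted control $||\psi||_{H^3_w} < \infty$ via a Sobolev-trace argument in $x$.

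The main obstacle I anticipate is the careful bookkeeping of the integrations by parts for the terms in $A(\psi)$ that are themselves in divergence form, such as $\partial_x(\psi_x x^{2m+2})$ and $\partial_{xx}(\psi_{xx} x^{2m+4})$: one must verify that after pairing with $\phi$ and integrating by parts, the resulting distributional identity genuinely matches $A(\psi)$ pointwise, including the correct weight placement. This is essentially a computation but requires using the product rule on the $x^{2m+k}$-weights and then re-collecting terms; the $H^5_w$ regularity is exactly what ensures every intermediate expression is in $L^2_{\loc}$, so no distributional subtleties arise. The corners at $(1,0)$ and $(1,N)$ are irrelevant here because $\phi$ has compact support inside $\Omega^N$.
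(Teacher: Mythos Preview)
Your proposal is correct and takes essentially the same approach as the paper: use the $H^5_w$ regularity to integrate by parts in the weak formulation against compactly supported test functions, obtain the pointwise equation, and read off the boundary conditions from Lemma \ref{LBC10} and the weighted norm control at $x\to\infty$. The paper's proof is a three-sentence sketch of exactly this; your version simply fills in the mechanics (fundamental lemma of the calculus of variations, bookkeeping of the $A(\psi)$ terms) that the paper leaves implicit.
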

\begin{proof}
An integration by parts of the weak formulation (\ref{weak.1}), justified according to the previous lemma, is equivalent to the equation (\ref{Sa.1}) being satisfied pointwise on $\Omega^N$. The boundary conditions at $x = 1, y = 0, y = N$ are satisfied by Lemma \ref{LBC10}. The boundary condition at $x \rightarrow \infty$ comes from the norms, (\ref{normHkw}), which when applying with $k = 5$, imply that up to four derivatives of $\psi$ vanish rapidly at $x \rightarrow \infty$. 
\end{proof}

\section{Step 2: Compact Perturbations, $S_\alpha \psi + T[\psi]$}

For this step, we invite the reader to refer back to the specification of $T[\psi]$, given in (\ref{mapT}), and the system that we will focus on, given in (\ref{sysT}).  Note that $T[\psi]$ contains a loss of three-derivatives for $\psi$. Note also the presence of the term $\eps^{\frac{n}{2}+\gamma} \bar{v}$. We will now need some compactness lemmas. 

\begin{lemma} Fix two weights, $w_1 = x^{m_1}$, and $w_2 = x^{m_2}$, where $m_2 > m_1 \ge 0$. Then, one has the following compact embedding:
\begin{align} \label{cpct.em.1}
H^1_{ loc}(\Omega^N) \cap L^2_{w_2}(\Omega^N) \hookrightarrow \hookrightarrow L^2_{w_1}(\Omega^N).
\end{align}
\end{lemma}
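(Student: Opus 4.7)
My plan is to run a standard diagonal-plus-tail compactness argument adapted to the weighted setting. Let $\{\psi_k\}$ be a bounded sequence in $H^1_{\loc}(\Omega^N) \cap L^2_{w_2}(\Omega^N)$, so that $\|\psi_k\|_{L^2_{w_2}} \le C$ uniformly and $\|\psi_k\|_{H^1(B)} \le C(B)$ for every bounded $B \subset \Omega^N$. I want to extract a subsequence converging in $L^2_{w_1}(\Omega^N)$.

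First, for each integer $R \ge 1$, set $B_R := \Omega^N \cap \{1 < x < R\}$. Since $y$ ranges over the bounded interval $(0,N)$, $B_R$ is a bounded Lipschitz domain, and by Rellich--Kondrachov $H^1(B_R) \hookrightarrow \hookrightarrow L^2(B_R)$. I would apply this first on $B_1$ to extract a subsequence converging strongly in $L^2(B_1)$, then pass to a further subsequence on $B_2$, and so on, taking a diagonal subsequence (still denoted $\{\psi_k\}$) which converges in $L^2(B_R)$ for every $R$. Since $w_1 = x^{m_1}$ is bounded on each $B_R$, this also yields convergence in $L^2_{w_1}(B_R)$ for every $R$; call the limit $\psi$.

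Second, I would use the stronger weight $w_2$ to control the tail $x \ge R$. Since $m_1 < m_2$, for $x \ge R \ge 1$ we have the pointwise bound $x^{m_1} = x^{m_1-m_2}\, x^{m_2} \le R^{m_1-m_2}\, x^{m_2}$, so
\begin{equation*}
\int\!\!\int_{x \ge R} |\psi_k - \psi_\ell|^2 x^{m_1} \,dx\,dy \;\le\; R^{m_1-m_2} \int\!\!\int_{x \ge R} |\psi_k - \psi_\ell|^2 x^{m_2}\,dx\,dy \;\le\; 4C^2\, R^{m_1-m_2}.
\end{equation*}
Because $m_1 - m_2 < 0$, the right-hand side tends to $0$ as $R \to \infty$, uniformly in $k, \ell$.

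Combining the two ingredients in the standard way: given $\varepsilon > 0$, fix $R$ so large that $4C^2 R^{m_1-m_2} < \varepsilon/2$; on $B_R$ the diagonal subsequence is Cauchy in $L^2_{w_1}(B_R)$, so for $k,\ell$ sufficiently large the contribution of $B_R$ to $\|\psi_k-\psi_\ell\|_{L^2_{w_1}}^2$ is below $\varepsilon/2$. Hence $\{\psi_k\}$ is Cauchy in $L^2_{w_1}(\Omega^N)$, and completeness yields a strong limit, which must coincide with $\psi$. The only genuinely non-routine point is the tail estimate, but it is immediate from the strict inequality $m_1 < m_2$; the local compactness step relies only on Rellich applied on bounded subsets, with no complication from the unbounded $x$-direction thanks to the uniform bound on $y$.
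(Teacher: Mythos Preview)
Your proof is correct and follows essentially the same strategy as the paper: use Rellich compactness on bounded subsets to handle the region $x \le R$, then exploit the strict weight gap $m_1 < m_2$ to show the tail $\{x \ge R\}$ contributes at most $C R^{m_1-m_2} \to 0$, and combine via a diagonal argument. The paper organizes the diagonalization slightly differently (it first extends to $\mathbb{R}^2$ and iterates over tolerances $\delta' = 2^{-n}$ rather than over radii $R$), but the two arguments are the same in substance.
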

\begin{proof}

Consider a family of functions $\{f^n\}$ defined on $\Omega^N$ such that:
\begin{equation}
\sup_n \int \int f_n^2 w_2^2 < \infty,
\end{equation}

and such that $f_n \in H^1_{loc}(\Omega^N)$, uniformly in $n$. By taking Sobolev extensions across $\p \Omega^N$, and subsequently cutting off in the $y$ and negative $x$ directions, we can assume $\{f_n \}$ are defined on $\mathbb{R}^2$, compactly supported in the $y$ direction and negative $x$ direction. Fix any $\delta' > 0$. Since $m_2 > m_1$, there exists a compact set $K = K(\delta')$ such that: 
\begin{align}
\sup_n ||f_n||_{L^2_{w_1}(K^c)} \le \frac{\delta'}{2}.
\end{align}

On $K$, by Rellich compactness, there exists a subsequence (depending on $\delta'$) such that 
\begin{align}
\limsup_{j,k \rightarrow \infty} ||f_{n_j} - f_{n_k} ||_{L^2(K)} \le \frac{\delta'}{2 \times diam(K)^{m_1}}. 
\end{align}

Then, 
\begin{align}
\limsup_{j,k \rightarrow \infty} ||f_{n_j} - f_{n_k} ||_{L^2_{w_1}(K)} \le \frac{\delta'}{2}. 
\end{align}

Combining the above two estimates, 
\begin{align}
\limsup_{j,k \rightarrow \infty} ||f_{n_j} - f_{n_k} ||_{L^2_{w_1}} \le \delta'. 
\end{align}

Taking successively $\delta' = 2^{-n}$ and applying a diagonalization argument gives the result. 

\end{proof}

\begin{lemma}  \label{lemma.cpct} Let the weight, $x^{2m}$, in the expression for $A(\psi)$, equation (\ref{Aofpsi}), be selected for any $m > 0$. Then the map $S_\alpha^{-1}T$ is well-defined and compact $H^2(\Omega^N) \rightarrow H^2(\Omega^N)$.
\end{lemma}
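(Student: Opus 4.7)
The plan is to factor $S_\alpha^{-1} T$ through the weighted regularity space $H^2_w(\Omega^N) \cap G^3_{w,\mathrm{loc}}(\Omega^N)$ supplied by Step 1, then exploit the polynomial weight $x^{2m}$ with $m>0$ appearing in $H^2_w$ to control tails as $x \to \infty$, combined with the Rellich--Kondrachov embedding on bounded subsets to deduce compactness. Well-definedness reduces to checking that $T$ sends $H^2(\Omega^N)$ continuously into $H^{-1}(\Omega^N)$, so that $S_\alpha^{-1}$ from Lemma \ref{Lemma.weak.1} is applicable.

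\textbf{Well-definedness.} First I would write $T[\psi] = \tilde F_y - \eps\, \tilde G_x$ with
\[
\tilde F := -u_R \psi_{xy} - u_{Rx}\psi_y - (v_R + \eps^{\frac{n}{2}+\gamma}\bar v)\psi_{yy} + u_{Ry}\psi_x,
\]
\[
\tilde G := u_R \psi_{xx} - v_{Ry}\psi_y + v_R \psi_{xy} + v_{Ry}\psi_x.
\]
The coefficients $u_R, u_{Rx}, u_{Ry}, v_R, v_{Rx}, v_{Ry}$ are globally bounded on $\Omega^N$ by the profile estimates (\ref{PE0.1})--(\ref{PE4.new.2}) of Theorem \ref{thm.m.part.1}, and $\bar v \in L^\infty(\Omega^N)$ is provided by Lemma \ref{Lemma.UIMP} together with the standing hypothesis $||\bar u, \bar v||_Z \le 1$. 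Hence $||\tilde F||_{L^2} + ||\tilde G||_{L^2} \lesssim ||\psi||_{H^2}$, so $T : H^2(\Omega^N) \to H^{-1}(\Omega^N)$ is continuous, and Lemma \ref{Lemma.weak.1} defines $S_\alpha^{-1} T[\psi] \in H^2_w(\Omega^N) \subset H^2(\Omega^N)$.

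\textbf{Compactness.} Let $\{\psi_n\}$ be bounded in $H^2(\Omega^N)$ and set $\phi_n := S_\alpha^{-1} T[\psi_n]$. The energy estimate (\ref{EIdentity}) and the local regularity estimate (\ref{AD}) give, for every bounded $B \subset \Omega^N$,
\[
||\phi_n||_{H^2_w(\Omega^N)} + ||\phi_n||_{G^3_{w, B}} \le C(\alpha, B) ||\psi_n||_{H^2(\Omega^N)} \le C'.
\]
Since $\Omega^N = [1,\infty) \times [0, N]$ is unbounded only in the $x$-direction and $x \ge 1$, the $H^2_w$ bound produces the tail estimate $||\phi_n||_{H^2(\{x \ge R\})}^2 \lesssim R^{-2m} ||\phi_n||_{H^2_w}^2$, so for any $\varepsilon > 0$ one can fix $R = R(\varepsilon)$ with $\sup_n ||\phi_n||_{H^2(\{x \ge R\})} < \varepsilon$. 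On the bounded region $B_R := [1, R] \times [0, N]$, the norm $G^3_{w, B_R}$ controls every third-order derivative of $\phi_n$, so $\{\phi_n\}$ is bounded in $H^3(B_R)$, and Rellich--Kondrachov $H^3(B_R) \hookrightarrow \hookrightarrow H^2(B_R)$ extracts a subsequence converging strongly in $H^2(B_R)$. A diagonal extraction over an exhaustion $R_k \uparrow \infty$ then produces a subsequence converging in $H^2(\Omega^N)$, which is the desired compactness.

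\textbf{Main obstacle.} The only genuinely delicate point is that $T$ is nominally a third-order differential operator, so a priori $T[\psi] \in H^{-3}$ when $\psi \in H^2$. What rescues the argument is that $T$ presents itself in divergence form $\partial_y \tilde F - \eps \partial_x \tilde G$ with $\tilde F, \tilde G \in L^2$, which is precisely the class of data the Stokes theory of Step 1 handles. Once this is recognized, no decay of the coefficients $u_R, v_R$ is required for the compactness (only $L^\infty$-bounds, already supplied by Theorem \ref{thm.m.part.1}), and the remainder of the proof reduces to the soft tail-plus-Rellich argument above.
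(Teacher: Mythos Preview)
Your proposal is correct and follows essentially the same approach as the paper. The paper's proof is a terse two-line invocation of estimate (\ref{AD}) and the compact embedding (\ref{cpct.em.1}), asserting that $G^3_{w,\mathrm{loc}}(\Omega^N) \hookrightarrow \hookrightarrow H^2(\Omega^N)$; you unpack this compact embedding explicitly via the tail-plus-Rellich-plus-diagonal scheme, and additionally spell out the well-definedness step (the divergence-form structure $T[\psi]=\tilde F_y - \eps \tilde G_x$ with $\tilde F,\tilde G \in L^2$), which the paper leaves implicit.
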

\begin{proof}
According to (\ref{AD}), this follows from the compactness of $G^3_{w, loc}(\Omega^N) \hookrightarrow \hookrightarrow H^2(\Omega^N)$, which in turn follows from (\ref{cpct.em.1}). The lemma is proven.
\end{proof}

We are now ready to study system (\ref{sysT}). The first task is to obtain an energy estimate to the inhomogeneous problem: 
\begin{lemma} \label{lem.wk.e} Suppose $\psi \in H^2(\Omega^N)$ is a solution to (\ref{sysT}), where $(F,G) \in H^{-1}_2$, and $||\bar{u}, \bar{v}||_Z \le 1$. Then $\psi$ obeys the following energy estimate: 
\begin{align} \label{pos.we}
||u_y||_{L^2}^2 + \alpha ||\psi||_{H^2_w}^2 \lesssim \mathcal{O}(\delta) ||\sqrt{\epsilon} v_x x^{\frac{1}{2}}, v_y x^{\frac{1}{2}}||_{L^2}^2 + \int \int F u + \eps |G| |v|. 
\end{align}
\end{lemma}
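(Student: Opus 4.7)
The natural multiplier is $\psi$ itself, mimicking the energy estimate of Proposition \ref{thm.energy} but now enriched with the $\alpha A(\psi)$ contribution. Since $\psi \in H^2(\Omega^N)$ and the weighted stream formulation (\ref{weak.1}) is valid for any $\phi \in H^2_w(\Omega^N)$ with the pairing extended to $T[\psi]\in H^{-1}$, I would first test (\ref{sysT}) against $\psi$ itself. The identification $u = -\psi_y$, $v = \psi_x$ together with integration by parts against the Bilaplacian produces
\[
\int\!\!\int \Delta_\epsilon^2 \psi \cdot \psi = \int\!\!\int |\nabla_\epsilon^2 \psi|^2 \gtrsim \|u_y\|_{L^2}^2 + \epsilon \|\sqrt{\eps}v_x, v_y\|_{L^2}^2,
\]
recovering the highest-order positive term on the left of (\ref{pos.we}). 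The $\alpha A(\psi)\cdot \psi$ pairing gives exactly $\alpha \|\psi\|_{H^2_w}^2$ by the definition of $A$ in (\ref{Aofpsi}) and the inner product (\ref{IPH}).

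Next I would handle $\int\!\!\int T[\psi] \cdot \psi$. Writing $T_0 = T - \epsilon^{n/2+\gamma}\p_y[\bar v \psi_{yy}]$, the pairing $\int T_0[\psi]\cdot \psi$ is identical in structure to $\int\!\!\int (\partial_y S_u - \epsilon \partial_x S_v)\psi$ from the vorticity energy estimate in Proposition \ref{thm.energy}. The same integrations by parts (now with no boundary contributions: the $y$-boundary terms vanish by Lemma \ref{LBC10}, the $x=1$ terms by the same lemma, and the $x\to\infty$ terms by the rapid decay built into $H^2_w(\Omega^N)$ via the $A$-weights) together with the profile bounds (\ref{PE0.1})--(\ref{PE4.new.2}) yield control by $\mathcal{O}(\delta)\|\{\sqrt{\eps}v_x, v_y\} x^{1/2}\|_{L^2}^2$, exactly as in the analog (\ref{NSR.E.Su}),(\ref{stream.prof.v.sum}). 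The remaining piece $\epsilon^{n/2+\gamma}\int\!\!\int \p_y[\bar v \psi_{yy}]\cdot \psi$ is the key new term; integrating by parts once gives $-\epsilon^{n/2+\gamma}\int\!\!\int \bar v u u_y = \frac{\epsilon^{n/2+\gamma}}{2}\int\!\!\int \bar v_y u^2$, the perfect-derivative cancellation already exploited in (\ref{order.NL}). Using $\|\bar u, \bar v\|_Z \le 1$ and the $Z$-embedding $\|\bar v_y x^{1/2}\|_{L^2}\lesssim \eps^{-\omega(N_i)}\|\bar u,\bar v\|_Z$, this term is absorbed into a $\mathcal{O}(\eps^{n/2+\gamma - \omega(N_i)})\|u_y\|_{L^2}^2$ contribution, which for $n$ large is dominated by the left-hand side.

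For the right-hand side, a single integration by parts gives $\langle F_y - \eps G_x, \psi\rangle = \int\!\!\int F\, u + \eps G v$ (upon using $u=-\psi_y$, $v=\psi_x$, and noting all boundary terms vanish for the same reasons as above). Taking absolute values inside yields the forcing term on the right of (\ref{pos.we}).

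The step I expect to be most delicate is making sure every integration by parts is rigorously justified in $H^2_w(\Omega^N)$ rather than on smooth compactly supported approximants. The cleanest route is to appeal to Lemma \ref{Lemma.IBPS} for the $S_\alpha$-piece (so $\psi$ is a strong solution away from the corner, with $H^5_w$ regularity when $(F,G)\in H^{-1}_2$), and to exploit the rapid spatial decay encoded in the weights $x^{2m},\dots,x^{2m+4}$ of $A(\psi)$ to kill the $x\to\infty$ boundary contributions that caused much pain in Section \ref{section.NSR.Linear}; the weights here make these limits trivial. The remaining terms at $x=1$ and $y=0,N$ vanish by Lemma \ref{LBC10}, and the corner formalism of (\ref{corners.1}) transfers verbatim. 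Once the integrations are justified, combining the four pieces above and absorbing the $\mathcal{O}(\delta) \|u_y\|_{L^2}^2$ and $\eps^{n/2+\gamma-\omega(N_i)}\|u_y\|_{L^2}^2$ contributions into the left-hand side (for $\delta$ small and $n$ large) produces (\ref{pos.we}).
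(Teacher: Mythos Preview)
Your proposal is correct and follows essentially the same approach as the paper: test (\ref{sysT}) against $\psi$ (via $C_0^\infty$ approximants $\phi^{(n)}\to\psi$ in $H^2_w$, justified by the bootstrapped $H^5_w$ regularity), split $T=T_0-\eps^{n/2+\gamma}\p_y[\bar v\psi_{yy}]$, invoke the profile estimates of Proposition~\ref{thm.energy} for $T_0$, use the perfect-derivative cancellation for the $\bar v u_y u$ piece, and read off $\alpha\|\psi\|_{H^2_w}^2$ from $A(\psi)\cdot\psi$. One small correction: the $\bar v_y u^2$ term is absorbed into the $\mathcal{O}(\delta)\|v_y x^{1/2}\|_{L^2}^2$ contribution on the right-hand side of (\ref{pos.we}) (via $\eps^{n/2+\gamma-\omega(N_i)}\ll\delta$), not into $\|u_y\|_{L^2}^2$ as you wrote, but this does not affect the argument.
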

\begin{proof}

Supposing there existed such a $\psi$, we would have $T[\psi]  \in H^{-1}(\Omega^N)$, and so by (\ref{h3psi}), we know $\psi \in H^3_w(\Omega^N)$. By bootstrapping this regularity, we obtain that: 
\begin{equation} \label{bsreg}
\psi \in H^5_w(\Omega^N). 
\end{equation}

We would like to apply the multiplier $\psi$ to the equation (\ref{sysT}) in order to repeat the energy estimate from Proposition \ref{thm.energy}. Select test functions, $\phi^{(n)} \in C^\infty_0(\Omega^N)$, which satisfy:  
\begin{align} \label{f.density.1}
||\phi^{(n)} - \psi||_{H^2_w} \rightarrow 0.
\end{align}

This is possible according to the density of $C^\infty_0(\Omega^N)$ in $H^2_w$ in Definition \ref{defn.dens}. Multiplying (\ref{sysT}) by $\phi^{(n)}$, then gives on the left-hand side:
\begin{align} \label{bofa.5}
\int \int \Big( \Delta_\eps^2 \psi + T\psi \Big) \cdot \phi^{(n)} + \alpha \int \int A(\psi) \phi^{(n)}.
\end{align}

First, we shall use (\ref{mapT0}) to write: 
\begin{align} \n
\int \int \Big(\Delta_\eps^2 \psi + T[\psi] \Big) \phi^{(n)} &= \int \int  \Big(\Delta_\eps^2 \psi + T_0[\psi] \Big) \phi^{(n)} - \int \int \eps^{\frac{n}{2}+\gamma} \p_y (\bar{v} u_y) \cdot \phi^{(n)} \\ \label{sf.1.2}
& =  \int \int  \Big(\Delta_\eps^2 \psi + T_0[\psi] \Big) \phi^{(n)} + \int \int \eps^{\frac{n}{2}+\gamma} (\bar{v} u_y) \cdot \phi^{(n)}_y.
\end{align}

According to (\ref{f.density.1}), we pass to limits in the following terms:
\begin{align} \n
\int \int (\Delta_\eps^2 \psi + T_0[\psi]) \phi^{(n)} &= \int \int \nabla_\eps^2 \psi : \nabla_\eps^2 \phi^{(n)} + \int \int T_0[\psi] \phi^{(n)} \\ \label{bsreg.2}
& \xrightarrow{n \rightarrow \infty} \int \int |\nabla_\eps^2 \psi|^2 + \int \int T_0[\psi] \psi. 
\end{align}

We have used:
\begin{align} \n
|\int \int T_0[\psi] \phi^{(n)} - T_0[\psi] \psi| &= \int \int ( \p_y T_a[\psi] - \p_x T_b[\psi] ) \cdot (\phi^{(n)} - \psi) \\ \n
& \le ||T_a[\psi], T_b[\psi]||_{L^2} ||\phi^{(n)} - \psi||_{H^1} \\ 
& \le ||\psi||_{H^5_w}||\phi^{(n)} - \psi||_{H^1}  \xrightarrow{n \rightarrow \infty} 0,
\end{align}

according to (\ref{bsreg}) and the definition in equation (\ref{mapT0}). The integration on the right-hand side of (\ref{bsreg.2}) arises exactly from the energy estimates, Proposition \ref{thm.energy}, in particular, terms (\ref{stream.2.0}), (\ref{su}), (\ref{NSR.E.Sv}), and so we may write: 
\begin{align}
| \lim_{n \rightarrow \infty} \int \int \Big(\Delta_\eps^2 \psi + T_0[\psi] \Big) \phi^{(n)} | \gtrsim ||u_y||_{L^2}^2 - \mathcal{O}(\delta) ||\sqrt{\epsilon} v_x x^{\frac{1}{2}}, v_y x^{\frac{1}{2}}||_{L^2}^2.
\end{align}

We may pass to the limit in the final term of (\ref{sf.1.2}) due to the calculation: 
\begin{align} \n
| \int \int \bar{v} u_y (\phi^{(n)}_y - \psi_y) | &\le ||\bar{v}||_{L^\infty} ||u_y||_{L^2} || \phi^{(n)}_y - \psi_y ||_{L^2} \\ 
& \le \eps^{-N_4} ||\bar{v}||_{Z} ||u_y||_{L^2} || \phi^{(n)}_y - \psi_y ||_{L^2}  \xrightarrow{n \rightarrow \infty} 0.
\end{align}

Upon passing to the limit, we integrate by parts: 
\begin{align}
-\int \int \eps^{\frac{n}{2}+\gamma} (\bar{v} u_y) \cdot \phi^{(n)}_y \xrightarrow{n \rightarrow \infty} \int \int \eps^{\frac{n}{2}+\gamma} (\bar{v} u_y) \cdot u = -\int \int \frac{\eps^{\frac{n}{2}+\gamma}}{2}\bar{v}_y  u^2
\end{align}

From here, we estimate identically as in (\ref{order.NL}):
\begin{align}
|\int \int \frac{\eps^{\frac{n}{2}+\gamma}}{2}\bar{v}_y  u^2| \le \eps^{\frac{n}{2}+\gamma - \omega(N_i)} ||\bar{u}, \bar{v}||_Z ||v_y x^{\frac{1}{2}}||_{L^2}^2 \le \eps^{\frac{n}{2}+\gamma - \omega(N_i)} ||v_y x^{\frac{1}{2}}||_{L^2}^2. 
\end{align}

It remains to treat (\ref{bofa.5}), for which we use the compact support of $\phi^{(n)}$ to justify the integration by parts: 
\begin{align}
\int \int \alpha A(\psi) \cdot \phi^{(n)} = \int \int \psi \phi^{(n)} x^{2m} + \nabla \psi \cdot \nabla \phi^{(n)} x^{2m+2} + \nabla^2 \psi : \nabla^2 \phi^{(n)}x^{2m+4}.
\end{align}

Passing to the limit, according to (\ref{f.density.1}):
\begin{align} \label{AC.0}
\lim_{n \rightarrow \infty} \int \int \alpha A(\psi) \phi^{(n)} = \alpha \int \int \psi^2 x^{2m} + |\nabla \psi|^2 x^{2m+2} + |\nabla^2 \psi|^2 x^{2m+4},
\end{align}

On the right-hand side, we have: 
\begin{align}
\int \int F_y \cdot \phi^{(n)} = - \int \int F \phi^{(n)}_y \xrightarrow{n \rightarrow \infty} \int \int F u, \\
\int \int -\eps G_x \cdot \phi^{(n)} =  \int \int \eps G \cdot \phi^{(n)}_x \xrightarrow{n \rightarrow \infty} \int \int \eps G v. 
\end{align}

Consolidating the previous estimates gives the desired estimate, (\ref{pos.we}). 

\end{proof}

The task now is to estimate the right-hand side of (\ref{pos.we}) in terms of the left-hand side using the smallness of $\mathcal{O}(\delta)$. We refer the reader to Proposition \ref{prop.pos}, whose proof we follow closely. We will point out the subtle differences:
\begin{lemma} \label{lem.wk.p}
 Suppose $\psi \in H^2(\Omega^N)$ is a solution to (\ref{sysT}), where $(F,G) \in H^{-1}_2$, and $||\bar{u}, \bar{v}||_Z \le 1$. Suppose the weight $w = x^{2m}$ from equation (\ref{Aofpsi}) is selected such that $m$ is sufficiently large relative to universal constants. Then $\psi$ obeys: 
\begin{align} \label{bl.mct.1}
||\{\sqrt{\epsilon}v_x, v_y \} x^{\frac{1}{2}} ||_{L^2}^2 &\lesssim ||u_y||_{L^2}^2 +  \alpha ||\psi||_{H^2_w}^2 + \int \int |F||u_x| x + \eps |G||v| + \eps |G| |v_x| x.
\end{align}
\end{lemma}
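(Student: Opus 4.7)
The estimate is the analogue for (\ref{sysT}) of the positivity estimate (Proposition~\ref{prop.pos}): the multiplier is $\psi_x\cdot x = v\cdot x$ applied to $S_\alpha \psi + T[\psi] = F_y - \eps G_x$. The structure of the proof parallels that of Proposition~\ref{prop.pos} exactly, with two new ingredients: the regularizing term $\alpha A(\psi)$, which contributes a piece bounded by $\alpha ||\psi||_{H^2_w}^2$ and thus sits on the right-hand side of (\ref{bl.mct.1}), and the nonlinear piece $\eps^{\frac{n}{2}+\gamma}\partial_y(\bar v\psi_{yy})$ of $T[\psi]$, which is handled using the $Z$-bound on $\bar v$.

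\textbf{Preparatory regularity.} Bootstrapping from Lemma~\ref{Lemma.weak.1} and the strong-solution argument of Lemma~\ref{Lemma.IBPS}, $\psi \in H^5_w(\Omega^N)$ as a strong solution. The weights $x^{2m+k}$ in $A(\psi)$ enforce rapid decay of $\psi$ and all of its derivatives as $x\to\infty$, so every boundary contribution at $x=\infty$ in the forthcoming integrations by parts vanishes trivially; the conditions $\psi = \psi_x = 0$ at $x=1$ and $\psi = \psi_y = 0$ at $y=0,N$ are inherited from Lemma~\ref{LBC10} (and force $\psi_{xy}|_{y=0,N} = 0$, which is used below). Applying $\psi_x x$ as multiplier to the strong form of (\ref{sysT}) is justified equivalently by the same density procedure used in Lemma~\ref{lem.wk.e}.

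\textbf{Term-by-term.} \emph{(i) Biharmonic.} The contributions $\int \partial_y(-\Delta_\eps u)\cdot vx$ and $-\eps\int \partial_x \Delta_\eps v \cdot vx$ are treated verbatim as in the block (\ref{fubini.1})--(\ref{sumlap.1}), producing the coercive quantity $||\{\sqrt{\eps}v_x, v_y\}x^{\frac{1}{2}}||_{L^2}^2$ on the left-hand side plus harmless positive traces. \emph{(ii) The operator $T[\psi]$.} Using (\ref{mapT0}), write $T = T_0 + \eps^{\frac{n}{2}+\gamma}\partial_y(\bar v\psi_{yy})$. The piece $\int T_0[\psi]\,vx$ is precisely $\int(\partial_y S_u - \eps\partial_x S_v)\cdot vx$ in stream-function form, so the profile calculations (\ref{Supos.1})--(\ref{Supos.6}) apply word-for-word and yield at worst $\mathcal{O}(\delta)\bigl(||u_y||_{L^2}^2 + ||\{\sqrt{\eps}v_x, v_y\}x^{\frac{1}{2}}||_{L^2}^2\bigr)$. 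The new nonlinear piece, after one integration by parts in $y$, becomes $\int \eps^{\frac{n}{2}+\gamma}\bar v\, u_y\, v_y\, x$, bounded by $\eps^{\frac{n}{2}+\gamma - \omega(N_i)}\bigl(||u_y||_{L^2}^2 + ||v_y x^{\frac{1}{2}}||_{L^2}^2\bigr)$ using $||\bar v x^{\frac{1}{2}}||_{L^\infty} \lesssim \eps^{-N_4}||\bar u,\bar v||_{Z} \le \eps^{-N_4}$. \emph{(iii) The regularization.} Pairing each of the six terms of $A(\psi)$ in (\ref{Aofpsi}) against $\psi_x x$ and integrating by parts (in both $x$ and $y$, using the vanishing of $\psi_y$ and $\psi_{xy}$ on $y=0,N$ and rapid $x$-decay) yields a sum of weighted quadratic expressions in $(\psi, \nabla\psi, \nabla^2\psi)$ with weights $x^{2m+k}$, $0\le k\le 4$. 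Each is majorized by a multiple of $\alpha ||\psi||_{H^2_w}^2$, which is exactly the second term on the right-hand side of (\ref{bl.mct.1}). \emph{(iv) Right-hand side.} $\int(F_y - \eps G_x)\,vx$ integrated by parts produces $-\int F u_x x + \int \eps G\,(v + v_x x)$, matching the last three integrals in (\ref{bl.mct.1}).

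\textbf{Main obstacle.} Step (iii) is the delicate one. The operator $A$ contains fourth-order terms paired against $\psi_x x$; naively the cross-terms $\int \psi_{yyx}\psi_{xx} x^{2m+5}$ and $\int \psi_{xx}\psi_{xxx} x^{2m+5}$ involve third derivatives of $\psi$ that do not appear in $||\psi||_{H^2_w}^2$. The trick is to choose the correct order of integration by parts (for the first of these, integrate first in $y$ using $\psi_{xy}|_{y=0,N} = 0$ to convert $\psi_{yyx}\psi_{xx}$ into $-\psi_{xy}\psi_{xxy}$, then in $x$ to land on $\psi_{xy}^2 x^{2m+4}$; for the second, integrate in $x$ to land on $\psi_{xx}^2 x^{2m+4}$). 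The constants produced by these rearrangements are linear in $m$, so taking $m$ sufficiently large relative to the universal constants in the profile and commutator estimates is precisely what ensures the contribution of $\alpha A(\psi)$ closes against $\alpha ||\psi||_{H^2_w}^2$ with the implicit constant depending only on universal data. Summing (i)--(iv) and absorbing the $\mathcal{O}(\delta)$ error on the left then gives (\ref{bl.mct.1}).
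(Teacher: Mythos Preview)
Your overall strategy---multiply (\ref{sysT}) by $\psi_x x = vx$ and reproduce the positivity computation of Proposition~\ref{prop.pos}, with $\alpha A(\psi)$ shunted to the right-hand side---is the same as the paper's. But there is a genuine gap in step~(iii). You assert that $\psi\in H^5_w$ makes every $x=\infty$ boundary term vanish ``trivially''; this is false for the pairing $\int A(\psi)\,\psi_x x$. The weights in $\|\psi\|_{H^k_w}$ cap at $x^{2m+4}$ for every $k$, whereas the multiplier $\psi_x x$ contributes an extra factor of $x$. Concretely, after integrating by parts in $\int\partial_{xx}(\psi_{xx}x^{2m+4})\,\psi_x x$ you are left with the trace $\lim_{M\to\infty}\int_y\psi_{xx}^2(M,y)\,M^{2m+5}\,dy$. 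From $\psi_{xx}x^{m+2},\psi_{xxx}x^{m+2}\in L^2$ one only gets $\|\psi_{xx}(M,\cdot)\|_{L^2_y}=o(M^{-m-2})$, so the trace is $o(M)$, not $o(1)$. Your fallback to ``the same density procedure used in Lemma~\ref{lem.wk.e}'' also fails: testing the weak formulation against $\phi=\phi^{(n)}_x x$ with $\phi^{(n)}\to\psi$ in $H^2_w$ forces you to control $\int\psi_{xx}x^{2m+4}\,\phi^{(n)}_{xxx}\,x$, and third derivatives of $\phi^{(n)}$ are not controlled in $H^2_w$ (the paper explicitly notes $H^3_w\neq\overline{C^\infty_0}^{\|\cdot\|_{H^3_w}}$).

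The paper circumvents this by applying the multiplier $\psi_x\,x\,\chi_{L,\alpha}^2$, where $\chi_{L,\alpha}(x)=\chi(\alpha x/L)$ is compactly supported, so no $x=\infty$ boundary arises; whenever $\partial_x$ lands on $\chi_{L,\alpha}$ it produces a factor $\alpha/L$ times something supported where $x\sim L/\alpha$, and these cutoff commutators are absorbed into $\alpha\|\psi\|_{H^2_w}^2$ (and into $\tfrac{\alpha}{L}\|\psi\|_{H^2_w}^2$, which vanishes in the limit). One then sends $L\to\infty$ by monotone convergence. Incidentally, this cutoff-commutator absorption---applied to the $S_u,S_v$ profile pieces, not to $A(\psi)$---is where the hypothesis ``$m$ sufficiently large'' is actually invoked in the paper; your attribution of the $m$-large requirement to closing the $A(\psi)$ integration by parts is misplaced.
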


\begin{proof}

We will repeat the positivity estimate of Proposition \ref{prop.pos}. To do so, we apply the multiplier $\psi_x x \chi_{L,\alpha}^2$ to (\ref{sysT}). Here, $\chi$ is a normalized cut-off function equal to $1$ on $[1,2]$ and $0$ on $[3,\infty)$, and 
\begin{equation} \label{factor.alpha}
\chi_{L,\alpha}(x) := \chi(\frac{\alpha}{L}x), \text{ so that } \p_x^k \chi_{L,\alpha} = \frac{\alpha^{k}}{L^k} \chi^{(k)}. 
\end{equation}

Such a cut-off function was not present in Proposition \ref{prop.pos}. The necessity of it is due to the terms arising from $A(\psi)$. The presence of this cut-off function enables us to justify all integrations by parts in the $x$-direction. For our fixed $\alpha > 0$, we will eventually send $L \rightarrow \infty$. Applying the multiplier $\psi_x x \chi_{L,\alpha}^2$ to (\ref{sysT}), gives on the left-hand side:
\begin{align} \label{sf.3.0}
\int \int \Big( T[\psi] &+ \Delta_\eps^2 \psi \Big) \cdot \psi_x x \chi_{L,\alpha}^2 + \alpha \int \int A(\psi) \cdot \psi_x x \chi_{L,\alpha}^2 \\ \n
& = \int \int \Big( T_0[\psi]  + \Delta_\eps^2 \psi + \eps^{\frac{n}{2}+\gamma} \p_y[\bar{v} u_y]\Big) \cdot \psi_x x \chi_{L,\alpha}^2 + \alpha \int \int A(\psi) \cdot \psi_x x \chi_{L,\alpha}^2.
\end{align}

We will first focus on the first two integrands above in (\ref{sf.3.0}), which appeared in Proposition \ref{prop.pos}.  The obstacle to repeating the calculations exactly as in Proposition \ref{prop.pos} is the presence of the cut-off function, $\chi_{L,\alpha}^2$, in the multiplier. The essential idea is this: when no derivative falls on $\chi_{L,\alpha}^2$, the estimate will be the same as the corresponding term in Proposition \ref{prop.pos}. When at least one derivative falls on $\chi_{L,\alpha}^2$, we may use the factors of $\alpha$ obtained from the scaling in (\ref{factor.alpha}) to absorb the new terms into the left-hand side of (\ref{pos.we}).  Let us start with the profile terms from $T_0[\psi]$, for which we refer the reader to estimates (\ref{Supos.1}) - (\ref{Supos.2}). We will transfer all of the terms to velocity formulation so as to remain consistent with estimates (\ref{Supos.1}) - (\ref{Supos.2}).
\begin{align} \n
\int \int \p_y S_u \cdot v x \chi_{L,\alpha}^2 &= \int \int S_u u_x x \chi_{L,\alpha}^2 \\ \n
& = \int \int [u_R u_x + u_{Rx}u + v_R u_y + u_{Ry}v ] u_x x \chi_{L,\alpha}^2 \\ 
& \gtrsim \int \int u_x^2 x \chi_{L,\alpha}^2 - |\int \int [ u_{Rx}u + v_R u_y + u_{Ry}v ] u_x x \chi_{L,\alpha}^2 |. 
\end{align}

We will treat the three terms on the right-hand side above, using (\ref{PE0.4}) and (\ref{PE5}) starting with: 
\begin{align}\nonumber
\int \int u_{Rx} u x u_x \chi_{L,\alpha}^2 &= \int \int \{ u^P_{Rx} +  u^E_{Rx} \} u x u_x \chi_{L,\alpha}^2 \\ \nonumber
&\le ||yx^{\frac{1}{2}} u^P_{Rx}||_{L^\infty} ||u_y||_{L^2} ||v_y x^{\frac{1}{2}}\chi_{L,\alpha}||_{L^2} \\ \n & \hspace{30 mm} + ||u^E_{Rx} x^{\frac{3}{2}} ||_{L^\infty} ||\frac{u}{x}\chi_{L,\alpha}||_{L^2}||u_x x^{\frac{1}{2}}\chi_{L,\alpha}||_{L^2} \\ \n
&\le \mathcal{O}(\delta) ||u_y||_{L^2}^2 + \mathcal{O}(\delta) ||u_x x^{\frac{1}{2}}\chi_{L,\alpha}||_{L^2}^2 + \frac{\alpha}{L} ||u||_{L^2}^2 \\
& \le \mathcal{O}(\delta) ||u_y||_{L^2}^2 + \mathcal{O}(\delta) ||u_x x^{\frac{1}{2}}\chi_{L,\alpha}||_{L^2}^2 + \frac{\alpha}{L} ||\psi||_{H^2_w}^2.
\end{align}

Above, we have used the Hardy inequality: 
\begin{align} \n
|| \frac{u}{x} \chi_{L,\alpha}||_{L^2} &\lesssim || \p_x(u \chi_{L,\alpha})||_{L^2} \le ||u_x \chi_{L,\alpha}||_{L^2} + \frac{\alpha}{L} ||u \chi'_{L,\alpha}||_{L^2} \\
& \lesssim ||u_x \chi_{L,\alpha}||_{L^2} + \frac{\alpha}{L} ||\psi||_{H^2_w}. 
\end{align}

Next, by (\ref{PE1}), (\ref{PE4.new.2}), we have:
\begin{align} \n
\Big| \int \int v_R u_y v_y x \chi_{L,\alpha}^2 \Big| &\le ||v_R x^{\frac{1}{2}} ||_{L^\infty} ||u_y||_{L^2} ||v_y x^{\frac{1}{2}} \chi_{L,\alpha}||_{L^2} \\ 
& \le \mathcal{O}(\delta) ||u_y||_{L^2}||v_y x^{\frac{1}{2}} \chi_{L,\alpha}||_{L^2}. 
\end{align}

Next, by (\ref{PE0.5}), (\ref{PE3}), (\ref{PE5}) we have: 
\begin{align} \nonumber
\int \int u_{Ry} v u_x x \chi_{L,\alpha}^2 &= \int \int \{ u_{Ry}^P + \sqrt{\epsilon}u_{RY}^E \} vv_y x \chi_{L,\alpha}^2 \\ \nonumber
&\le ||y u_{Ry}^P||_{L^\infty} ||v_y x^{\frac{1}{2}} \chi_{L,\alpha}||_{L^2}^2 \\ \n
& \hspace{20 mm} + \sqrt{\epsilon} ||u^E_{RY} x^{\frac{3}{2}}||_{L^\infty} ||v_y x^{\frac{1}{2}} \chi_{L,\alpha}||_{L^2}||\sqrt{\epsilon} \frac{v}{x} \chi_{L,\alpha}||_{L^2}  \\ \n
&\le ||y u_{Ry}^P||_{L^\infty} ||v_y x^{\frac{1}{2}} \chi_{L,\alpha}||_{L^2}^2 \\ \n & \hspace{20 mm} + \sqrt{\epsilon} ||u^E_{RY} x^{\frac{3}{2}}||_{L^\infty} ||v_y x^{\frac{1}{2}} \chi_{L,\alpha}||_{L^2}||\sqrt{\epsilon} v_x \chi_{L,\alpha}||_{L^2} \\ \n & \hspace{20 mm} + \frac{\alpha}{L}\sqrt{\epsilon} ||u^E_{RY} x^{\frac{3}{2}}||_{L^\infty} ||v_y x^{\frac{1}{2}} \chi_{L,\alpha}||_{L^2}||\sqrt{\epsilon} v \chi'_{L,\alpha}||_{L^2} \\
&\le \mathcal{O}(\delta) ||v_y x^{\frac{1}{2}}\chi_{L,\alpha}||_{L^2}^2 + \sqrt{\epsilon}\mathcal{O}(\delta) ||\sqrt{\epsilon}v_x\chi_{L,\alpha}||_{L^2}^2 + \mathcal{O}(\delta) \frac{\alpha}{L}||\psi|_{H^2_w}^2.   
\end{align}

Summarizing the previous four terms: 
\begin{align} \n
\int \int \p_y S_u \cdot vx \chi_{L,\alpha}^2 \gtrsim ||v_y x^{\frac{1}{2}}& \chi_{L,\alpha}||_{L^2}^2 - \frac{\alpha}{L}||\psi||_{H^2_w}^2 \\ \label{SuBST.1}
& - \mathcal{O}(\delta) ||u_y||_{L^2}^2 - \mathcal{O}(\delta) \sqrt{\eps} ||\sqrt{\eps}v_x \chi_{L,\alpha}||_{L^2}^2. 
\end{align}

We will now move to the profile terms from $S_v$, which are located starting from estimate (\ref{str.p.pr.v}). First, 
\begin{align} \label{mstreep}
\int \int -\eps \p_x S_v \cdot v x \chi_{L,\alpha}^2 = \int \int \eps S_v \cdot [v_x x \chi_{L,\alpha}^2 + v \chi_{L,\alpha}^2 + vx \frac{2\alpha}{L}\chi_{L,\alpha} \chi_{L,\alpha}'].
\end{align} 

Referring to definition (\ref{defn.SU.SV}), consider the term $u_R v_x$ in $S_v$, which is the most delicate profile term: 
\begin{align} \label{bofa.1}
\int \int \eps u_R v_x^2 x \chi_{L,\alpha}^2 + \int \int \eps u_R v_x v [\chi_{L,\alpha} + 2x \frac{\alpha}{L}\chi_{L,\alpha} \chi'_{L,\alpha}].
\end{align} 

The first term above in (\ref{bofa.1}) gives positivity: 
\begin{align}
\int \int \eps u_R v_x^2 x \chi_{L,\alpha}^2 \gtrsim \int \int \eps v_x^2 x \chi_{L,\alpha}^2.
\end{align}

We will treat the second term on the right-hand side of (\ref{bofa.1}):
\begin{align} \label{bofa.2}
&\int \int \eps u_R vv_x \chi_{L,\alpha}^2 = - \int \int \eps \frac{v^2}{2} [u_{Rx} \chi_{L,\alpha}^2 + 2u_R \frac{\alpha}{L} \chi_{L,\alpha} \chi'_{L,\alpha}], \\ \n
&\int \int \eps u_R vv_x  x\frac{\alpha}{L} \chi_{L,\alpha} \chi'_{L,\alpha} = - \int \int \eps \frac{v^2}{2} [u_{Rx} x\frac{\alpha}{L} \chi_{L,\alpha} \chi'_{L,\alpha} \\ \label{bofa.3}
& \hspace{50 mm} + u_R\frac{\alpha}{L} \chi_{L,\alpha} \chi'_{L,\alpha} + u_R x\frac{\alpha^2}{L^2} \chi_{L,\alpha} \chi''_{L,\alpha}  ].
\end{align}

The first term on the right-hand side of (\ref{bofa.2}) yields:
\begin{align}
|\int \int \eps v^2 u_{Rx} \chi^2_{L,\alpha} | \lesssim \sqrt{\eps} ||\chi_{L,\alpha} \{\sqrt{\eps}v_x, v_y\} x^{\frac{1}{2}}||_{L^2}^2 + \frac{\alpha}{L} ||\psi||_{H^2_w}^2, 
\end{align}

in nearly an identical manner to estimate (\ref{Svv}). We now estimate: 
\begin{align}
| \int \int \eps \frac{u_R}{2}v^2 \frac{\alpha}{L} \chi'_{L,\alpha}| \lesssim \int \int \eps v^2  \frac{\alpha}{L} \lesssim \frac{\alpha}{L} ||\psi||_{H^2_w}^2. 
\end{align}

This same estimate can be performed for all the terms in (\ref{bofa.3}). Consolidating these bounds: 
\begin{align}
\int \int \eps v_x^2 x \chi_{L,\alpha}^2 \lesssim (\ref{bofa.1}) + \sqrt{\eps} ||\chi_{L,\alpha} \{\sqrt{\eps}v_x, v_y \} x^{\frac{1}{2}}||_{L^2}^2 + \frac{\alpha}{L}||\psi||_{H^2_w}. 
\end{align}

It remains now to treat the remaining three terms in $S_v$. The second, third, and fourth terms from $S_v$ can be controlled in the same manner as in (\ref{Supos.3}) - (\ref{Supos.5}):
\begin{align}  \label{Supos.a.3}
&\epsilon| \int \int v_{Rx} u v_x x \chi_{L,\alpha}^2| \le \sqrt{\epsilon} ||x^{\frac{3}{2}} v_{Rx}||_{L^\infty} ||\sqrt{\epsilon}v_x x^{\frac{1}{2}} \chi_{L,\alpha}||_{L^2} ||u_x \chi_{L,\alpha}||_{L^2} + \frac{\alpha}{L}||\psi||_{H^2_w}^2. \\ \label{Supos.a.4}
&\epsilon \Big| \int \int v_R v_y v_x x \chi_{L,\alpha}^2 \Big| \le \sqrt{\epsilon} ||v_R||_{L^\infty} ||v_y x^{\frac{1}{2}}\chi_{L,\alpha}||_{L^2} || \sqrt{\epsilon}v_x x^{\frac{1}{2}}\chi_{L,\alpha}||_{L^2} + \frac{\alpha}{L} ||\psi||_{H^2_w}^2. \\ \n
&\epsilon \Big| \int \int v_{Ry} vv_x x \chi_{L,\alpha}^2 \Big| \le \sqrt{\epsilon} ||v_{Ry}^P y||_{L^\infty} ||v_y x^{\frac{1}{2}} \chi_{L,\alpha}||_{L^2} ||\sqrt{\epsilon} v_x x^{\frac{1}{2}}\chi_{L,\alpha}||_{L^2} \\ \label{Supos.a.5}
& \hspace{50 mm} + \sqrt{\epsilon}||v_{RY}^E x^{\frac{3}{2}}||_{L^\infty} ||\sqrt{\epsilon}v_x x^{\frac{1}{2}} \chi_{L,\alpha}||_{L^2}^2 +\frac{\alpha}{L} ||\psi||_{H^2_w}^2.
\end{align}

We now turn back to (\ref{mstreep}), addressing the second term in the bracket for the final three profile terms from $S_v$:
\begin{align}
\int \int [v_{Rx}u + v_R v_y + v_{Ry}v] \cdot \eps v \chi_{L,\alpha}^2. 
\end{align} 

First, through the Hardy inequality and (\ref{PE0.1}), (\ref{PE4}): 
\begin{align} \n
| \int \int \eps v_{Rx} uv \chi_{L,\alpha}^2 | &\le \sqrt{\eps}||x^{\frac{3}{2}}v_{Rx}||_{L^\infty} ||\frac{u}{x^{\frac{3}{4}}} \chi_{L,\alpha}||_{L^2} || \sqrt{\eps}\frac{v}{x^{\frac{3}{4}}} \chi_{L,\alpha}||_{L^2} \\ \n
& \le  \sqrt{\eps} \Big[ ||u_x x^{\frac{1}{4}} \chi_{L,\alpha}||_{L^2}^2 + ||\sqrt{\eps}v_x x^{\frac{1}{4}} \chi_{L,\alpha}||_{L^2}^2 + \frac{\alpha}{L}||\{u, \sqrt{\eps} v x^{\frac{1}{4}} \chi_{L,\alpha}' ||_{L^2}^2 \Big] \\
& \le  \sqrt{\eps} \Big[ ||u_x x^{\frac{1}{4}} \chi_{L,\alpha}||_{L^2}^2 + ||\sqrt{\eps}v_x x^{\frac{1}{4}} \chi_{L,\alpha}||_{L^2}^2 + \frac{\alpha}{L}||\psi||_{H^2_w}^2 \Big],
\end{align}

so long as $w = x^m$ is selected larger than $x^{\frac{1}{4}}$, which is true by the assumption of this lemma.  Next, through an integration by parts and (\ref{PE1}), (\ref{PE4.new.2}):
\begin{align} \n
\int \int[ v_R v_y + v_{Ry}v] \eps v \chi_{L,\alpha}^2 &= \frac{1}{2} \int \int v_{Ry} v^2 \eps \chi_{L,\alpha}^2 \le  \epsilon ||v_{Ry}^P y^2||_{L^\infty} ||v_y \chi_{L,\alpha}||_{L^2}^2 \\ 
& + \sqrt{\epsilon} ||v_{RY}^E x^{\frac{3}{2}}||_{L^\infty} ||\sqrt{\epsilon}v_x x^{\frac{1}{4}} \chi_{L,\alpha}||_{L^2}^2 + \frac{\alpha}{L}||\psi||_{H^2_w}^2. 
\end{align}

The final task for the $S_v$ profile contributions is the third term from (\ref{mstreep}):
\begin{align}
|\int \int \eps [v_{Rx}u + v_R v_y + v_{Ry}v ] \cdot vx \frac{2\alpha}{L}\chi_{L,\alpha} \chi_{L,\alpha}'| \le \frac{\alpha}{L}||\psi||_{H^2_w}^2,
\end{align}

so long as $w = x^m$ is selected larger than $x$, which is true by assumption of this lemma. Let us consolidate all of the calculations from $S_v$:
\begin{align} \label{SuBST.2}
\int \int -\eps \p_x S_v \cdot v x \chi_{L,\alpha}^2 \gtrsim \int \int \eps v_x^2 x \chi_{L,\alpha}^2 - \sqrt{\eps} ||\chi_{L,\alpha} \{\sqrt{\eps}v_x, v_y \} x^{\frac{1}{2}}||_{L^2}^2 - \frac{\alpha}{L}||\psi||_{H^2_w}^2.
\end{align}

It now remains to come to those terms contributed by $\Delta_\eps^2 \psi$ into (\ref{sf.3.0}). We will follow closely the calculations from (\ref{fubini.1}) - (\ref{sumlap.1}) in Proposition \ref{prop.pos}. We will again omit the justifications near the corners of our domain as these are identical to Proposition \ref{prop.pos}. Again, we will write these terms in the velocity form, to remain consistent with the calculations in  (\ref{fubini.1}) - (\ref{sumlap.1}). First, 
\begin{align} \label{SuBST.3}
|\int \int -u_{yy}u_x x \chi_{L,\alpha}^2| = |- \int \int \frac{u_y^2}{2} [\chi_{L,\alpha}^2 + 2x \frac{\alpha}{L} \chi_{L,\alpha} \chi_{L,\alpha}']| \lesssim ||u_y||_{L^2}^2. 
\end{align}

Next, 
\begin{align} \label{SuBST.4}
\int \int -\eps u_{xx} u_x x \chi_{L,\alpha}^2| = | \int \int \eps \frac{u_x^2}{2} [\chi_{L,\alpha}^2 + 2x \frac{\alpha}{L} \chi_{L,\alpha} \chi_{L,\alpha}'] \lesssim \eps ||u_x \chi_{L,\alpha} ||_{L^2}^2 + \eps \frac{\alpha}{L}||\psi||_{H^2_w}^2. 
\end{align}

We now move to the terms from $\Delta_\eps v$, starting with: 
\begin{align} \n
| \int \int \eps^2 v_{xxx} \cdot v x \chi_{L,\alpha}^2 |&= | \int \int \eps^2 v_{xx} \cdot [v_x x \chi_{L,\alpha}^2 + v \chi_{L,\alpha}^2 + 2v x \frac{\alpha}{L} \chi_{L,\alpha} \chi_{L,\alpha}'] | \\ \label{SuBST.5}
&\le \eps ||\sqrt{\eps} v_x  \chi_{L,\alpha} ||_{L^2}^2 + \frac{\alpha}{L}||\psi||_{H^2_w}^2. 
\end{align}

Finally, we have: 
\begin{align} \n
|\int \int -\eps v_{xyy} v x \chi_{L,\alpha}^2 | &=| \int \int \eps v_{xy} v_y x \chi_{L,\alpha}^2| \\ \n
& = | \int \int \eps v_y^2 [\chi_{L,\alpha}^2 + 2x \frac{\alpha}{L} \chi_{L,\alpha} \chi_{L,\alpha}] | \\ \label{SuBST.6}
& \lesssim \eps|| v_y \chi_{L,\alpha}||_{L^2}^2 + \frac{\alpha}{L}||\psi||_{H^2_w}^2. 
\end{align}

By combining calculations (\ref{SuBST.1}), (\ref{SuBST.2}), (\ref{SuBST.3}) - (\ref{SuBST.6}), and absorbing relevant terms to the left-hand side below, we have: 
\begin{align} \n
||\{\sqrt{\epsilon}v_x, v_y \} x^{\frac{1}{2}} \chi_{L,\alpha}||_{L^2}^2 &\lesssim ||u_y||_{L^2}^2 + \frac{\alpha}{L}||\psi||_{H^2_w}^2 \\  \n
& +  \alpha \int \int A(\psi) \cdot \psi_x x \chi_{L, \alpha}^2 + \int \int \eps^{\frac{n}{2}+\gamma} \bar{v} u_y u_x x \chi_{L,\alpha}^2 \\ \label{pos.pos.1}
& + \int \int F_y \cdot v x \chi_{L,\alpha}^2 - \eps G_x \cdot v x \chi_{L,\alpha}^2. 
\end{align}

Via direct integration by parts, which is justified due to the presence of the cut-off function in $x$, we compute: 
\begin{align} \label{AC.1}
\Big| \alpha \int \int A(\psi) \psi_x x \chi_{L,\alpha}^2 \Big| \lesssim \alpha ||\psi||_{H^2_w}^2.
\end{align}

Let us compute each term in $A(\psi)$ to verify (\ref{AC.1}), referring to the definition in (\ref{Aofpsi}), starting with: 
\begin{align} \n
|\int \int \alpha \psi x^{2m} v x \chi_{L,\alpha}^2| &= |-\frac{\alpha}{2} \int \int \psi^2 \p_x[x^{2m+1} \chi_{L,\alpha}^2 ]| \\ \label{aboveme}
& = |-\frac{\alpha}{2} \int \int \psi^2 [C x^{2m} \chi_{L,\alpha}^2 + 2x^{2m+1} \frac{\alpha}{L} \chi_{L,\alpha} \chi'_{L,\alpha}  ]| \\
& \le \alpha ||\psi||_{H^2_w}^2. 
\end{align}

For the second term in (\ref{aboveme}), we have used: $|\frac{\alpha}{L} x \chi_{L,\alpha}| \lesssim 1$.  Next, let us turn to: 
\begin{align} \n
\alpha \int \int (-\psi_{yy} x^{2m+2} &+ \psi_{yyyy} x^{2m+4}) v x \chi_{L,\alpha}^2 \\ \n
& = \alpha \int \int u_y v x^{2m+3}  \chi_{L,\alpha}^2 - \alpha u_y u_{xy} x^{2m+5}\chi_{L,\alpha}^2 \\ \n
& = \alpha \int \int uu_x x^{2m+3}  \chi_{L,\alpha}^2 + \alpha u_y^2 \p_x( x^{2m+5}  \chi_{L,\alpha}^2 ) \\
& \lesssim \alpha \int \int u^2 x^{2m+2} + u_y^2 x^{2m+4} \lesssim \alpha ||\psi||_{H^2_w}^2. 
\end{align}

Next, 
\begin{align} \n
\alpha \int \int \p_x(\psi_x x^{2m+2}) &vx \chi_{L,\alpha}^2 = \alpha \int \int vx^{2m+2}  \p_x(v x \chi_{L,\alpha}^2) \\ \n
& = \alpha \int \int vx^{2m+2}[v_x x \chi_{L,\alpha}^2 + v\chi_{L,\alpha}^2 + 2v x \frac{\alpha}{L} \chi_{L,\alpha} \chi_{L,\alpha}' ] \\
& \lesssim \alpha \int \int v^2 x^{2m+2} \lesssim  \alpha ||\psi||_{H^2_w}^2. 
\end{align}

Next, 
\begin{align} \n
\alpha \int \int \p_x (\psi_{yyx} x^{2m+4}) &v x \chi_{L,\alpha}^2 = \alpha \int \int \p_x(\psi_{xy}x^{2m+4}) v_y x \chi_{L,\alpha}^2 \\
& = \alpha \int \int \p_x(u_x x^{2m+4} ) u_x x \chi_{L,\alpha}^2 \lesssim \alpha \int \int u_x^2 x^{2m+4} \lesssim \alpha ||\psi||_{H^2_w}^2. 
\end{align}

The final term in $A(\psi)$ is: 
\begin{align} \n
\alpha \int \int \p_{xx} (\psi_{xx} x^{2m+4}) v x \chi_{L,\alpha}^2 &= \alpha \int \int \psi_{xx} x^{2m+4} \p_{xx}[v x \chi_{L,\alpha}^2] \\ \n
& = \alpha \int \int v_x x^{2m+4} [v_{xx} x \chi_{L,\alpha}^2 + 2 v_x \p_x(x \chi_{L,\alpha}) + v \p_{xx}(x \chi_{L,\alpha}) ] \\
& \lesssim \alpha ||\psi||_{H^2_w}^2. 
\end{align}

This concludes all the terms in $A(\psi)$, according to (\ref{Aofpsi}). Estimating the next term in (\ref{pos.pos.1}) exactly as in (\ref{exact.NL}) yields: 
\begin{align} \n
|\int \int \eps^{\frac{n}{2}+\gamma} \bar{v} u_y u_x x \chi_{L,\alpha}^2| &\le \eps^{\frac{n}{2}+\gamma - \omega(N_i)} ||\bar{u}, \bar{v}||_Z ||u_y||_{L^2} ||u_x x^{\frac{1}{2}} \chi_{L,\alpha}||_{L^2} \\ & \le \eps^{\frac{n}{2}+\gamma - \omega(N_i)} ||u_x x^{\frac{1}{2}} \chi_{L,\alpha}||_{L^2}^2 + \eps^{\frac{n}{2}+\gamma - \omega(N_i)} ||u_y||_{L^2}^2. 
\end{align}

Finally, we come to the right-hand side: 
\begin{align} \n
\int \int [F_y - \eps G_x ] \cdot vx \chi_{L,\alpha} &= \int \int F u_x x \chi_{L,\alpha} + \eps G \p_x[v x \chi_{L,\alpha}] \\ \n
& \le \int \int |F| |u_x| x + |\int \int \eps G [v_x x \chi_{L,\alpha} + v \chi_{L,\alpha} + v x (\frac{\alpha}{L}) \chi'_{L,\alpha} ]| \\ 
& \le \int \int |F| |u_x| x + \eps |G| |v_x| x + \eps |G| |v|.,
\end{align}

Inserting the previous few calculations into estimate (\ref{pos.pos.1}) gives:
\begin{align} \n
||\{\sqrt{\epsilon}v_x, v_y \} x^{\frac{1}{2}} \chi_{L,\alpha}||_{L^2}^2 &\lesssim ||u_y||_{L^2}^2 + \frac{\alpha}{L}||\psi||_{H^2_w}^2 \\ & +  \alpha ||\psi||_{H^2_w}^2 + \int \int |F| |u_x| x + \eps |G| [ |v_x| x + |v|].
\end{align}

We now send $L \rightarrow \infty$, and appeal to Monotone Convergence Theorem, as $\chi_{L,\alpha} \uparrow 1$ to establish the desired result. 

\end{proof}

Having understood the inhomogeneous problem: 

\begin{lemma} \label{lem.wk} For $(F,G) \in H^{-1}_2$, and $||\bar{u}, \bar{v}||_Z \le 1$, there exists a unique weak solution $\psi \in H^2_w(\Omega^N)$ to the system (\ref{sysT}). 
\end{lemma}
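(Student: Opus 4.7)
The plan is to apply the Fredholm alternative on the Hilbert space $H^2_w(\Omega^N)$. Since $(F,G) \in H^{-1}_2 \hookrightarrow H^{-1}$, Lemma \ref{Lemma.weak.1} furnishes $S_\alpha^{-1}(F_y - \eps G_x) \in H^2_w(\Omega^N)$, and so we may rewrite the system (\ref{sysT}) as the operator equation
\begin{equation} \label{prop.Fred}
(I + S_\alpha^{-1} T)\psi = S_\alpha^{-1}(F_y - \eps G_x) \qquad \text{in } H^2_w(\Omega^N).
\end{equation}
By Lemma \ref{lemma.cpct}, the operator $S_\alpha^{-1}T$ is compact on $H^2_w(\Omega^N)$ (after noting $H^2_w(\Omega^N) \hookrightarrow H^2(\Omega^N)$ and that $S_\alpha^{-1}$ maps back into $H^2_w$ with rapid decay in $x$ encoded by the weighted norms $G^3_{w,\mathrm{loc}}$). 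Thus the Fredholm alternative applies to (\ref{prop.Fred}) and existence and uniqueness will both follow once we verify that the kernel is trivial.

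The key step is to show that any $\psi \in H^2_w(\Omega^N)$ solving the homogeneous problem $S_\alpha \psi + T[\psi] = 0$ must vanish. For such a $\psi$, Lemma \ref{lem.wk.e} with $F = G = 0$ yields
\begin{equation} \label{prop.ee}
||u_y||_{L^2}^2 + \alpha ||\psi||_{H^2_w}^2 \lesssim \mathcal{O}(\delta) ||\{\sqrt{\epsilon} v_x, v_y\} x^{\frac{1}{2}}||_{L^2}^2,
\end{equation}
while Lemma \ref{lem.wk.p} with $F = G = 0$ gives
\begin{equation} \label{prop.pe}
||\{\sqrt{\epsilon} v_x, v_y\} x^{\frac{1}{2}}||_{L^2}^2 \lesssim ||u_y||_{L^2}^2 + \alpha ||\psi||_{H^2_w}^2.
\end{equation}
Substituting (\ref{prop.pe}) into (\ref{prop.ee}), for $\delta$ small enough relative to the universal constants we may absorb the right-hand side, leaving $||u_y||_{L^2}^2 + \alpha ||\psi||_{H^2_w}^2 \le 0$. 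Since $\alpha > 0$ this forces $\psi \equiv 0$, which is the required injectivity.

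I expect that the main technical obstacle will not be the abstract Fredholm argument itself but rather verifying the compactness of $S_\alpha^{-1} T$ rigorously when composed on $H^2_w$ — one must check that the map $T : H^2_w(\Omega^N) \to H^{-1}(\Omega^N)$ is bounded (which uses the profile bounds in Theorem \ref{thm.m.part.1} and the smallness $||\bar{u},\bar{v}||_Z \le 1$ to control the $\eps^{n/2+\gamma}\bar v$ term), and then that the range of $S_\alpha^{-1}$ sits in a space compactly embedded in $H^2_w$ via the decay gained from $A(\psi)$, as established in Lemma \ref{lemma.cpct}. Once this functional-analytic setup is in place, the combination of (\ref{prop.ee}) and (\ref{prop.pe}) is the decisive ingredient: it is precisely the same delicate balance between the large convective term $u_{Ry}v \cdot u$ (absorbed by the self-similar smallness of $u_{Ry}$) and the weighted $v_x, v_y$ terms that drove the \textit{a priori} estimates in Propositions \ref{thm.energy}--\ref{prop.pos}, now applied at the level of a homogeneous linear problem.
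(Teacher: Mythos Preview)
Your overall strategy matches the paper's exactly: recast (\ref{sysT}) as $(I + S_\alpha^{-1}T)\psi = S_\alpha^{-1}(F_y - \eps G_x)$, invoke the Fredholm alternative via compactness of $S_\alpha^{-1}T$, and kill the kernel by combining the energy estimate (Lemma \ref{lem.wk.e}) with the positivity estimate (Lemma \ref{lem.wk.p}) at $F=G=0$.

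There is one technical slip. You set up the Fredholm argument in $H^2_w(\Omega^N)$, but Lemma \ref{lemma.cpct} asserts compactness of $S_\alpha^{-1}T$ only on the \emph{unweighted} space $H^2(\Omega^N)$; your parenthetical justification (that $H^2_w \hookrightarrow H^2$ and that $S_\alpha^{-1}$ lands back in $H^2_w$) gives boundedness on $H^2_w$ but not compactness, since the image sits in $H^3_w$ and the embedding $H^3_w \hookrightarrow H^2_w$ is not compact (the weights on the second-order terms do not improve). The paper avoids this by running Fredholm in $H^2(\Omega^N)$, obtaining a unique $\psi \in H^2(\Omega^N)$, and only then bootstrapping: writing $\psi = S_\alpha^{-1}(F_y - \eps G_x - T\psi)$ with $F_y - \eps G_x - T\psi \in H^{-1}$, estimate (\ref{EIdentity}) immediately upgrades $\psi$ to $H^2_w(\Omega^N)$. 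This two-step structure is the only adjustment your argument needs.
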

\begin{proof}

We apply $S_\alpha^{-1}$ to both sides of (\ref{sysT}), which is valid as the right-hand side and therefore the left-hand side is assumed to be in at least $H^{-1}(\Omega^N)$, thereby yielding: 
\begin{align} \label{ImK}
\psi + S_\alpha^{-1}T \psi = S_\alpha^{-1} \Big(F_y - \eps G_x \Big).
\end{align}

We will study the equation (\ref{ImK}) as an equality in the space $H^2(\Omega^N)$. According to the Fredholm alternative, which is available according to Lemma \ref{lemma.cpct}, there either exists a unique solution $\psi \in H^2(\Omega^N)$ to the system (\ref{ImK}), or a non-trivial solution $\psi \in H^2(\Omega^N)$ to: 
\begin{align} \label{zero}
\psi + S_\alpha^{-1} T \psi = 0 \iff S_\alpha \psi = -T\psi. 
\end{align}

Therefore, coupling (\ref{bl.mct.1}) with (\ref{pos.we}), taking $F = G = 0$, we have:
\begin{align}
||\sqrt{\epsilon}u_x, u_y||_{L^2}^2 + \alpha ||\psi||_{H^2_w}^2 + ||\sqrt{\epsilon} v_x x^{\frac{1}{2}}, v_y x^{\frac{1}{2}}||_{L^2}^2 \le 0,
\end{align}

implying $\psi, u, v = 0$. Thus, by the Fredholm alternative, there exists a unique solution $\psi \in H^2(\Omega^N)$ to (\ref{ImK}). Rearranging (\ref{ImK}): 
\begin{align}
\psi = S_\alpha^{-1} \Big( F_y - \eps G_x - T\psi \Big), 
\end{align}

where $F_y - \eps G_x - T\psi \in H^{-1}(\Omega^N)$, and so an application of (\ref{EIdentity}) shows that $\psi \in H^2_w(\Omega^N)$. This concludes the proof. 

\end{proof}

\begin{lemma} Let $\psi$ be the unique $H^2_w(\Omega^N)$ weak solution from Lemma \ref{lem.wk}. Then for $(F,G) \in H^{-1}_2$, $\psi \in H^5_w(\Omega^N)$. 
\end{lemma}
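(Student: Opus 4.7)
The plan is a straightforward three-step bootstrap in which we reinterpret the equation (\ref{sysT}) as
$$S_\alpha \psi \;=\; F_y - \varepsilon G_x - T[\psi],$$
and successively apply the $H^{j+3}_w$-regularity gains for $S_\alpha^{-1}$ established in the previous lemmas (the base gain in (\ref{h3psi}) together with its higher-order iterations culminating in the $H^5_w$ bound $\|\psi\|_{H^5_w}^2 \lesssim \alpha^{-1}\|(F,G)\|_{H^{-1}_2}^2$). The key observation is that, reading off the definition (\ref{mapT}) together with (\ref{mapT0})--(\ref{mapTb}), we may write $T[\psi] = \partial_y A_1 - \varepsilon \partial_x A_2$, where $A_1, A_2$ are linear combinations of $\psi$, $\nabla\psi$, $\nabla^2\psi$ with coefficients built from the profiles $u_R, v_R$ and from $\varepsilon^{\frac{n}{2}+\gamma}\bar v$. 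Consequently, if $\psi \in H^{j+2}_w$, then $\partial_x^j A_1, \partial_x^j A_2 \in L^2$, giving $T[\psi] \in H^{-1}_j$ in the sense of (\ref{hmok}).

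The bootstrap then proceeds as follows. First, by Lemma \ref{lem.wk} we have $\psi \in H^2_w$, so $A_1, A_2 \in L^2$ and $T[\psi] \in H^{-1}$; combined with the hypothesis $(F,G) \in H^{-1}_2 \subset H^{-1}_0$, the right-hand side $F_y - \varepsilon G_x - T[\psi]$ lies in $H^{-1}$, and (\ref{h3psi}) upgrades $\psi$ to $H^3_w$. Second, given $\psi \in H^3_w$, the derivatives $\partial_x A_1, \partial_x A_2$ are in $L^2$ (using the pointwise profile bounds (\ref{PE0.1})--(\ref{PE4.new.2}) from Theorem \ref{thm.m.part.1} together with the $Z$-control on $\bar v$ from the standing hypothesis $\|\bar u,\bar v\|_Z \le 1$), so $\partial_x T[\psi] \in H^{-1}$; therefore the full right-hand side is in $H^{-1}_1$, and the corresponding $H^4_w$ regularity gain for $S_\alpha^{-1}$ yields $\psi \in H^4_w$. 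Third, iterating once more, $\psi \in H^4_w$ gives $T[\psi] \in H^{-1}_2$, so the right-hand side is in $H^{-1}_2$, and the final regularity result cited immediately above the present lemma delivers $\psi \in H^5_w$ with the quantitative estimate.

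The principal technical point to verify at each stage is that the profile-coefficient multiplications in $A_1, A_2$ respect the weighted spaces $H^k_w$; the estimates (\ref{PE0.1})--(\ref{PE4.new.2}) are written with exactly the weights of $x$ needed to pair with the weights $x^{2m+2k}$ appearing in $\|\cdot\|_{H^k_w}$, so this is mechanical. The only mildly delicate piece is the $\varepsilon^{\frac{n}{2}+\gamma}\bar v \psi_{yy}$ contribution coming from the definition of $T$: here we use the embeddings in Lemma \ref{Lemma.UIMP} together with the definition of the norm $Z$ to control $\partial_x^j \bar v$ for $j \le 2$ in $L^\infty$ (up to powers of $\varepsilon^{-N_i}$), which is absorbed harmlessly since $n$ is taken large relative to the $N_i$. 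I do not anticipate a main obstacle beyond bookkeeping; the argument is essentially a rerun of the regularity bootstrap already carried out for the bare operator $S_\alpha$, with $T[\psi]$ absorbed into the right-hand side at each iterate.
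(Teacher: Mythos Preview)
Your proposal is correct and follows essentially the same bootstrap approach as the paper: rewrite the equation as $S_\alpha\psi = F_y - \varepsilon G_x - T[\psi]$, use $\psi\in H^2_w$ to place $T[\psi]\in H^{-1}$, apply (\ref{h3psi}) to get $\psi\in H^3_w$, and iterate twice more to reach $H^5_w$. The paper's proof is simply a two-sentence compression of exactly this argument; your additional discussion of the $A_1,A_2$ structure and the $\bar v$ term is fine (though note a minor slip: the weights in $\|\cdot\|_{H^k_w}$ are fixed at $x^{2m},x^{2m+2},x^{2m+4}$ rather than growing as $x^{2m+2k}$).
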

\begin{proof}
$T \psi \in H^{-1}(\Omega^N)$, and so $S_\alpha \psi = - T\psi + F_y - \eps G_x \in H^{-1}(\Omega^N)$, which implies that $\psi \in H^3_w(\Omega^N)$ according to (\ref{h3psi}). Iterating this regularity then gives $\psi \in H^5_w(\Omega^N)$. 
\end{proof}

We now introduce more notation, which is more suitable for the velocities:
\begin{align}
L_{\alpha, \bar{v}}[u,v] = (\tilde{f}, g) \iff S_\alpha \psi + T[\psi; \bar{v}] = \tilde{f}_y - \eps g_x. 
\end{align} 

Summarizing the established results, we have: 
\begin{corollary} \label{CORBCS} For $\tilde{f}, g \in H^{-1}_2(\Omega^N)$, $\alpha > 0$, and $||\bar{u}, \bar{v}||_{Z(\Omega^N)} \le 1$, the map $L_{\alpha, \bar{v}}[u,v]$ is invertible, where 
\begin{align} \label{welldefined.}
L_{\alpha, \bar{v}}^{-1}: (\tilde{f}, g) \in H^{-1}_2(\Omega^N) \rightarrow [u,v] \in H^4_w(\Omega^N).
\end{align}

Moreover, the boundary conditions (\ref{psiBC}) are satisfied by $[u,v] = L_{\alpha, \bar{v}}^{-1}[\tilde{f},g]$.
\end{corollary}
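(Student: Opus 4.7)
My plan is to assemble the corollary directly from the three preceding lemmas, with the only real work being to carefully track how regularity statements about the stream function $\psi$ transfer to the velocity field $(u,v)=(-\psi_y,\psi_x)$, and to verify the far-field and boundary decay encoded in (\ref{psiBC}).

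First I would apply Lemma \ref{lem.wk} with the data $(F,G)=(\tilde f,g)$, which is in $H^{-1}_2(\Omega^N)\subset H^{-1}(\Omega^N)$ by the definition of the norm (\ref{hmok}) together with the standing hypothesis $\|\bar u,\bar v\|_{Z(\Omega^N)}\le 1$. This yields a unique $\psi\in H^2_w(\Omega^N)$ satisfying (\ref{sysT}) in the weak sense, hence $L_{\alpha,\bar v}$ has a well-defined inverse at the level of weak solutions. Because the same lemma establishes injectivity of $L_{\alpha,\bar v}$ (via the Fredholm alternative applied to $I+S_\alpha^{-1}T$ on $H^2(\Omega^N)$), invertibility is already in hand.

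Next I would upgrade regularity by invoking the lemma immediately following Lemma \ref{lem.wk}, which states that for $(F,G)\in H^{-1}_2$ the weak solution lies in $H^5_w(\Omega^N)$: one writes $S_\alpha\psi=-T[\psi;\bar v]+F_y-\epsilon G_x$, and since $T$ costs three $\psi$-derivatives while $(F,G)$ gain two through application of $S_\alpha^{-1}$, iterating the $H^3_w$ bound (\ref{h3psi}) twice lands $\psi$ in $H^5_w$. The translation to velocities is then routine: $u=-\psi_y$ and $v=\psi_x$ inherit one fewer derivative of control, so $(u,v)\in H^4_w(\Omega^N)$ with the weighted bounds (\ref{normHkw}) for $k=4$.

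Finally I would verify the boundary conditions in (\ref{psiBC}). The conditions at $x=1$, $y=0$, and $y=N$ follow directly from Lemma \ref{LBC10}, applied now to $\psi\in H^2_w(\Omega^N)$, which already gives $\psi=\psi_x=\psi_y=0$ on these three pieces of $\partial\Omega^N$. The decay condition $\lim_{x\to\infty}[\psi_x,\psi_y]=0$ is built into membership in $H^4_w(\Omega^N)$: the weights $x^{2m}$, $x^{2m+2}$, $x^{2m+4}$ in the definition of the norms force pointwise decay of $\psi$ and its first and second derivatives as $x\to\infty$, as can be extracted by an anisotropic Sobolev embedding in $x$ (integrating $\partial_x(|\partial^k\psi|^2 x^{j})$ and exploiting that both the $L^2_w$ mass and its $x$-derivative are controlled). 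The main care point in this final step, and the only place where one must be genuinely attentive, is choosing $m$ large enough that the weighted embedding indeed forces the uniform vanishing of $\psi_x,\psi_y$ at infinity and not merely $L^2$ decay; since $m$ has been fixed sufficiently large already in Lemma \ref{lem.wk.p}, this imposes no new constraint.
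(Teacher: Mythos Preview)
Your proposal is correct and follows the same approach as the paper, which treats this corollary as a direct summary of the preceding lemmas (the paper writes only ``Summarizing the established results, we have:'' before the statement). Your added care in verifying the far-field decay via the weighted embedding and in tracking the boundary conditions through Lemma \ref{LBC10} is appropriate; the one place to be slightly cautious is the translation ``$\psi\in H^5_w \Rightarrow (u,v)\in H^4_w$,'' since the $H^k_w$ norms for $k\ge 3$ are anisotropic (missing pure $\partial_y$ derivatives at top order, cf.\ the remark after the $G^k_{w,B}$ definition), but this is an ambiguity in the paper's notation rather than a gap in your argument.
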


It is now our intention to repeat the second and third order energy and positivity estimates from Section \ref{section.NSR.Linear}, with our new system (\ref{sysT}). For this, we will  need to understand several calculations. First, we introduce some norms: 
\begin{align} \label{norm.J.1}
&||\psi||_{J^2}^2 := ||\psi||_{H^2_w}^2 ,\\ \label{norm.J.2}
&||\psi||_{J^{k+2}}^2 := \int \int |\p_x^k \psi|^2 (\rho_{k+1})^{2k} x^{2m+2k} + |\nabla \p_x^k \psi|^2 \rho_{k+1}^{2k} x^{2m+2k+2} \\ \label{norm.J.3}
& \hspace{30 mm} + |\nabla^2 \p_x^k \psi|^2 \rho_{k+1}^{2k} x^{2m+2k+4} \text{ for $k \ge 1$.}
\end{align}

The reader is referred to the definitions of $\rho_k$ provided in (\ref{rho}). The essential difference between these $J^k$-norms and the $H^k_w$ norms introduced in (\ref{normHkw}) are the growing weights of $x$ which each application of $\p_x$, which mimics the structure of the energy norms, $X_k$, in (\ref{norm.x0}). 

\begin{lemma}
\begin{align} \label{SYO.2}
\int \int A(\p_x^k \psi) \cdot \p_x^k \psi x^{2k} \chi_{L,\alpha}^2 \rho_{k+1}^{2k} \gtrsim ||\chi_{L,\alpha} \psi||_{J^{k+2}}^2 - \sum_{i=0}^{k-1} ||\psi||_{J^{i+2}}^2 
\end{align}
\end{lemma}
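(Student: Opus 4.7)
The plan is to expand $A(\partial_x^k \psi)$ as a sum of six terms (corresponding to the six terms in (\ref{Aofpsi}) with $\psi$ replaced by $\partial_x^k\psi$), pair each against the test function $\partial_x^k\psi \cdot x^{2k}\chi_{L,\alpha}^2 \rho_{k+1}^{2k}$, and integrate by parts in $x$ and $y$ to transfer derivatives symmetrically onto $\partial_x^k\psi$. The first term gives directly $\int\int |\partial_x^k\psi|^2 x^{2m+2k}\chi_{L,\alpha}^2\rho_{k+1}^{2k}$, which is the first integrand in $\|\chi_{L,\alpha}\psi\|_{J^{k+2}}^2$. The second ($-\partial_{yy}$) term yields after one integration by parts in $y$ the positive term $\int\int |\partial_y\partial_x^k\psi|^2 x^{2m+2k+2}\chi_{L,\alpha}^2\rho_{k+1}^{2k}$ with no commutator. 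The third term ($-\partial_x(\partial_x^{k+1}\psi \cdot x^{2m+2})$) gives after one integration by parts in $x$ the positive term $\int\int |\partial_x^{k+1}\psi|^2 x^{2m+2k+2}\chi_{L,\alpha}^2\rho_{k+1}^{2k}$ plus commutators when $\partial_x$ falls on $x^{2k}$, $\chi_{L,\alpha}^2$, or $\rho_{k+1}^{2k}$. The fourth, fifth, and sixth terms are handled analogously and produce the positive quantities $\int\int |\partial_{yy}\partial_x^k\psi|^2$, $\int\int |\partial_{xy}\partial_x^k\psi|^2$, and $\int\int |\partial_x^{k+2}\psi|^2$ (each with the weight $x^{2m+2k+4}\chi_{L,\alpha}^2\rho_{k+1}^{2k}$), together with commutators of analogous type.

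Collecting the six leading positive terms reproduces precisely the integrand $|\partial_x^k\psi|^2 x^{2m+2k} + |\nabla\partial_x^k\psi|^2 x^{2m+2k+2} + |\nabla^2\partial_x^k\psi|^2 x^{2m+2k+4}$ in the definition (\ref{norm.J.2})--(\ref{norm.J.3}) of $\|\chi_{L,\alpha}\psi\|_{J^{k+2}}^2$, up to the presence of $\chi_{L,\alpha}$ inside vs.\ outside the derivatives, which costs only $O(\alpha/L)$ corrections already absorbed in the calculation surrounding (\ref{factor.alpha}) and so contributes to the lower-order remainder. The key mechanism for controlling the commutators is the bound $|x^j \partial_x^j \{x^{2k}\rho_{k+1}^{2k}\}| \lesssim x^{2k}\rho_k^{2(k-1)}\cdot\mathbf{1}_{\mathrm{supp}\,\rho_k}$ for $1 \le j \le 2$, obtained from the fact that derivatives of $\rho_{k+1}$ are supported in a bounded region where $\rho_k \equiv 1$. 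This produces terms with at most $k-1$ $x$-derivatives on $\psi$ paired against weights of strictly lower order, which by Young's inequality are dominated by $\sum_{i=0}^{k-1}\|\psi\|_{J^{i+2}}^2$.

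The order I would carry out the steps: (i) record the six terms in $A(\partial_x^k\psi)$ explicitly and integrate each against the multiplier, tracking the sign of boundary contributions (which vanish thanks to the boundary conditions in Lemma \ref{LBC10} and the compact support of $\chi_{L,\alpha}$ in $x$); (ii) identify the principal positive term in each case, following the template already carried out for the $k=0$ case in (\ref{AC.0}) and for the $k=1$ case in the derivation leading to (\ref{com.A.0})--(\ref{com.A.1}); (iii) group the commutators by the number of $x$-derivatives transferred onto weights and apply $|x^j\partial_x^j(\text{weight})|\lesssim \rho_k^{2(k-1)}x^{2(k-1)}$ together with Young's inequality to bound them by $\|\psi\|_{J^{k+2-1}}^2 + \cdots + \|\psi\|_{J^{2}}^2$; (iv) handle $\chi_{L,\alpha}$-derivative commutators separately using $|\partial_x^j\chi_{L,\alpha}|\lesssim (\alpha/L)^j$, noting these contribute $O(\alpha/L)\|\psi\|_{H^2_w}^2$ which is dominated by the right-hand side.

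The main obstacle will be the fifth and sixth terms (those producing $\int|\nabla^2\partial_x^k\psi|^2$), which require two integrations by parts in $x$ and hence generate commutators of order up to two in the weights. These contain subterms like $\int\int (\partial_x^{k+2}\psi)(\partial_x^k\psi)\cdot \partial_x^2\{x^{2k}\rho_{k+1}^{2k}\}\chi_{L,\alpha}^2$, where a naive Cauchy--Schwarz would require pairing the top-order $\partial_x^{k+2}\psi$ against the very quantity we are trying to establish. The resolution is to integrate by parts once more to transfer one $x$-derivative off $\partial_x^{k+2}\psi$ onto the lower-order factor or onto the weight, thus reducing these to strictly lower-order $J$-norms; this is the analogue of the trick used in passing from (\ref{weight.1.-2}) to (\ref{weight.1}) in the second-order energy estimate.
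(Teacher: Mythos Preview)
Your proposal is correct and follows essentially the same approach as the paper's proof: both expand $A(\partial_x^k\psi)$ term by term from the definition (\ref{Aofpsi}), integrate by parts in $y$ and $x$ to extract the six positive principal contributions matching the integrand of $\|\chi_{L,\alpha}\psi\|_{J^{k+2}}^2$, and control the commutator terms using the nested-support structure $\mathrm{supp}(\rho_{k+1}) \subset \{\rho_k \equiv 1\}$ together with $|\frac{\alpha}{L}x\chi'_{L,\alpha}| \lesssim 1$. The paper's handling of the top-order $\partial_{xx}(\partial_x^{k+2}\psi\, x^{2m+4})$ term (your ``main obstacle'') is exactly the additional integration by parts you describe, reducing the cross terms to $\|\psi\|_{J^{k+1}}^2$.
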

\begin{proof}

Referring to (\ref{Aofpsi}), the first term is: 
\begin{align}
\int \int |\p_x^k \psi|^2 x^{2m} x^{2k} \chi_{L,\alpha}^2 \rho_{k+1}^{2k}.
\end{align}

The next terms, via an integration by parts in $y$: 
\begin{align}
\int \int -\p_x^k \psi_{yy} x^{2m+2} \cdot \p_x^k \psi x^{2k} \rho_{k+1}^{2k} \chi_{L,\alpha}^2 = \int \int |\p_x^k \psi_y|^2 x^{2m + 2k+2} \rho_{k+1}^{2k} \chi_{L,\alpha}^2, \\
\int \int \p_x^k \psi_{yyyy} x^{2m+4} \cdot \p_x^k \psi x^{2k} \rho_{k+1}^{2k} \chi_{L,\alpha}^2 = \int \int |\p_x^k \psi_{yy}|^2 x^{2m + 2k+4} \rho_{k+1}^{2k} \chi_{L,\alpha}^2. 
\end{align}

Next, 
\begin{align} \n
\int \int \p_x[ (\p_x^k &\psi)_{yyx} x^{2m+4} ] \cdot \p_x^k \psi x^{2k} \chi_{L,\alpha}^2 \rho_{k+1}^{2k} \\
& = \int \int \p_x^{k+1}\psi_y x^{2m+4} \cdot \p_x[ \p_x^k \psi_y  x^{2k} \chi^2_{L,\alpha} \rho_{k+1}^{2k} ] \\ & \gtrsim \int \int |\p_x^{k+1} \psi_y|^2 x^{2m+2k + 4} \chi_{L,\alpha}^2 \rho_{k+1}^{2k} - \int \int |\p_x^k \psi_y|^2 x^{2m+2k + 2} \rho_{k}^{2(k-1)} \\
& \gtrsim \int \int |\p_x^{k+1} \psi_y|^2 x^{2m+2k + 4} \chi_{L,\alpha}^2 \rho_{k+1}^{2k} - \sum_{i=0}^{k-1} ||\psi||_{J^{i+2}}. 
\end{align}

Above, we have used the calculation: 
\begin{align}\n
\p_x[x^{2k} \chi^2_{L,\alpha} \rho^{2k}_{k+1}] &= 2k x^{2k-1}\chi^2_{L,\alpha} \rho^{2k}_{k+1} + x^{2k} \frac{2 \alpha}{L} \chi_{L,\alpha}  \chi_{L,\alpha}' \rho_{k+1}^{2k} \\ \label{lirr.2}
& + x^{2k} \chi^2_{L,\alpha} 2k \rho^{2k-1}_{k+1} \rho_{k+1}'.
\end{align}

For the second term on the right-hand side of (\ref{lirr.2}), we estimate: $\frac{2\alpha}{L} x \chi_{L,\alpha} \lesssim 1$. For the third term on the right-hand side, we use that the support of $\rho_{k+1}'$ is localized in $x$. We also use that: $\text{support} (\rho_k ) \subset \{\rho_{k-1} = 1\}$.  Next, we integrate by parts twice in $x$ to obtain: 
\begin{align} \n
\int \int \p_{xx}&[ (\p_x^k\psi)_{xx} x^{2m+4}] \cdot \p_x^k \psi x^{2k} \chi_{L,\alpha}^2 \rho_{k+1}^{2k} \\
& = \int \int \p_x^{k+2} \psi x^{2m+4} \p_{xx}[  \p_x^k \psi x^{2k} \chi^2_{L,\alpha} \rho^{2k}_{k+1} ] \\ \n
& = \int \int |\p_x^{k+2} \psi|^2 x^{2m+2k+4} \chi^2_{L,\alpha} \rho^{2k}_{k+1} + \int \int \p_x^{k+2} \psi x^{2m+4} \p_x^{k+1} \psi \p_x[x^{2k} \chi^2_{L,\alpha} \rho^{2k}_{k+1}] \\ \label{lirr.3}
& + \int \int \p_x^{k+2} \psi x^{2m+4} \p_x^k \psi \p_{xx}[x^{2k} \chi^2_{L,\alpha} \rho^{2k}_{k+1}].
\end{align}

The final two terms on the right-hand side of (\ref{lirr.3}) are estimated through further integrations by parts: 
\begin{align} \n
|\int \int \p_x^{k+2} &\psi x^{2m+4} \p_x^{k+1} \psi \p_x[x^{2k} \chi^2_{L,\alpha} \rho^{2k}_{k+1}] + \int \int \p_x^{k+2} \psi x^{2m+4} \p_x^k \psi \p_{xx}[x^{2k} \chi^2_{L,\alpha} \rho^{2k}_{k+1}]| \\
& \lesssim ||\psi||_{J^{k+1}}^2.
\end{align}

Finally, 
\begin{align} \n
- \int \int \p_x[ (\p_x^k \psi)_x &x^{2m+2}] \cdot \p_x^k \psi x^{2k} \chi_{L,\alpha}^2 \rho_{k+1}^{2k} = \int \int \p_x^{k+1} \psi x^{2m+2} \p_x [\p_x^k \psi x^{2k}  \chi^2_{L,\alpha} \rho_{k+1}^{2k} ] \\
& = \int \int |\p_x^{k+1} \psi |^2 x^{2m+2+2k} \chi^2_{L,\alpha} \rho^{2k}_{k+1} \\
& \hspace{10 mm} + \int \int \p_x^{k+1}\psi x^{2m+2} \p_x^k \psi \p_x[ x^{2k} \chi^2_{L,\alpha}  \rho^{2k}_{k+1} ] \\
& \gtrsim \int \int |\p_x^{k+1} \psi |^2 x^{2m+2+2k} \chi^2_{L,\alpha} \rho^{2k}_{k+1} - ||\psi||_{J^{k+1}}^2. 
\end{align}

Piecing all of the above estimates together yields the desired bound. 

\end{proof}

\begin{lemma}
\begin{align} \label{SYO.2.P}
|\int \int A(\p_x^k \psi) \cdot \p_x^{k+1} \psi x^{2k+1} \chi_{L,\alpha}^2 \rho_{k+1}^{2k+1}| \lesssim \sum_{i=0}^k ||\psi||_{J^{i+2}}^2. 
\end{align}
\end{lemma}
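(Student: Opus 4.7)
The plan is to expand $A(\partial_x^k\psi)$ into its six constituent terms according to definition \eqref{Aofpsi} and treat each pairing with the test function $\partial_x^{k+1}\psi \cdot x^{2k+1}\chi_{L,\alpha}^2 \rho_{k+1}^{2k+1}$ separately. Unlike \eqref{SYO.2}, we are \emph{not} looking for a positive lower bound; instead we must show that every term can be absorbed into the lower-order norms $\sum_{i=0}^k \|\psi\|_{J^{i+2}}^2$. The guiding principle is that whenever the multiplier $\partial_x^{k+1}\psi$ meets an expression involving $\partial_x^{k}\psi$ or $\partial_x^{k+2}\psi$, we can recognize (after integration by parts) a perfect $\partial_x$-derivative of $|\partial_x^{k}\psi|^2$ or $|\partial_x^{k+1}\psi|^2$, which allows us to throw the $\partial_x$ onto the smooth weight $x^{2k+1}\chi_{L,\alpha}^2\rho_{k+1}^{2k+1}$ and lose one power of $x$.

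The first three terms of $A$, involving $\partial_x^k\psi \cdot x^{2m}$, $-\partial_x^k\psi_{yy}\cdot x^{2m+2}$, and $\partial_x^k\psi_{yyyy}\cdot x^{2m+4}$, are the easiest. For the first one we write $\partial_x^k\psi \cdot \partial_x^{k+1}\psi = \tfrac{1}{2}\partial_x|\partial_x^k\psi|^2$ and integrate by parts in $x$; the resulting weight is $\partial_x\bigl(x^{2m+2k+1}\chi_{L,\alpha}^2\rho_{k+1}^{2k+1}\bigr)$, producing $|\partial_x^k\psi|^2 x^{2m+2k}$, which belongs to $\|\psi\|_{J^{k+2}}^2$. (As in \eqref{lirr.2}, the derivatives hitting $\chi_{L,\alpha}$ contribute an admissible factor $\tfrac{\alpha}{L}x\chi_{L,\alpha}'\lesssim 1$, while the derivatives on $\rho_{k+1}$ are supported where $\rho_k=1$ and absorbed into lower $J^{i+2}$ norms.) The $y$-derivative terms are handled by first integrating by parts in $y$ to replace $\partial_y^2$ (resp.\ $\partial_y^4$) by $|\partial_x^k\psi_y|^2$ (resp.\ $|\partial_x^k\psi_{yy}|^2$) paired with $\partial_y\partial_x^{k+1}\psi$, then applying the same perfect-derivative trick in $x$.

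The three terms with $\partial_x$-derivatives on the outside, namely $-\partial_x(\partial_x^k\psi_x \cdot x^{2m+2})$, $\partial_x(\partial_x^k\psi_{yyx}\cdot x^{2m+4})$, and $\partial_{xx}(\partial_x^k\psi_{xx}\cdot x^{2m+4})$, require slightly more care. For the first two, one integration by parts in $x$ throws the outer $\partial_x$ onto the test function, producing $\partial_x^{k+1}\psi$ (resp.\ $\partial_x^{k+1}\psi_{yy}$) paired with $\partial_x(\partial_x^{k+1}\psi\cdot x^{2k+1}\chi_{L,\alpha}^2 \rho_{k+1}^{2k+1})$; expanding the product rule, the top-order piece becomes $|\partial_x^{k+1}\psi|^2 x^{2m+2k+3}$ (resp.\ $|\partial_x^{k+1}\psi_y|^2 x^{2m+2k+4}$), both of which sit comfortably inside $\|\psi\|_{J^{k+2}}^2$, while the lower-order pieces are treated again via the perfect-derivative device, following exactly \eqref{lirr.3}.

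The main obstacle is the top-order term $\partial_{xx}(\partial_x^{k+2}\psi\cdot x^{2m+4})$, since a naive double integration by parts in $x$ would produce $\partial_x^{k+3}\psi$, which exceeds the regularity available in $J^{k+2}$. I would instead integrate by parts only \emph{once} in $x$, obtaining
\[
-\iint \partial_x\bigl(\partial_x^{k+2}\psi\cdot x^{2m+4}\bigr)\cdot \partial_x\bigl(\partial_x^{k+1}\psi\cdot x^{2k+1}\chi_{L,\alpha}^2 \rho_{k+1}^{2k+1}\bigr),
\]
and then expand both sides by the Leibniz rule. The single genuinely top-order piece is $\iint \partial_x^{k+3}\psi \cdot \partial_x^{k+2}\psi\cdot x^{2m+2k+5}\chi_{L,\alpha}^2\rho_{k+1}^{2k+1}$, which I would rewrite as $\tfrac{1}{2}\iint \partial_x(|\partial_x^{k+2}\psi|^2)\cdot x^{2m+2k+5}\chi_{L,\alpha}^2\rho_{k+1}^{2k+1}$ and integrate by parts once more; this shifts the $\partial_x$ onto the weight and leaves only $|\partial_x^{k+2}\psi|^2 x^{2m+2k+4}$, which is a component of $\|\psi\|_{J^{k+2}}^2$. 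All remaining cross terms contain at most two $\partial_x$ derivatives beyond $\partial_x^k\psi$ and weights that match the $J^{i+2}$ norms for $i\le k$ after Cauchy--Schwarz. Summing the six contributions and invoking the support properties of $\rho_{k+1}$ (which live inside $\{\rho_k=1\}$) and $\chi_{L,\alpha}$ (bounded along with $\tfrac{\alpha}{L}x\chi_{L,\alpha}'$) completes the proof of \eqref{SYO.2.P}.
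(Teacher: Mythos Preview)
Your proposal is correct and follows essentially the same approach as the paper: expand $A(\partial_x^k\psi)$ into its six pieces and reduce each pairing to a lower-order $J$-norm via the perfect-$\partial_x$-derivative device (with auxiliary $y$-integrations where needed), using throughout that $\partial_x[x^{p}\chi_{L,\alpha}^2\rho_{k+1}^{2k+1}]\lesssim x^{p-1}\rho_{k+1}^{2k}$. One small slip to tighten: after expanding the Leibniz rule for the $-\partial_x(\partial_x^{k+1}\psi\, x^{2m+2})$ term you obtain the cross term $\partial_x^{k+1}\psi\cdot\partial_x^{k+2}\psi\cdot x^{2m+2k+3}$, not $|\partial_x^{k+1}\psi|^2 x^{2m+2k+3}$; one further integration by parts turns it into $|\partial_x^{k+1}\psi|^2 x^{2m+2k+2}$, which is the weight actually appearing in $\|\psi\|_{J^{k+2}}^2$.
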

\begin{proof}

Again, referring to definition (\ref{Aofpsi}), we will proceed term by term, starting with the following, for which we integrate by parts once: 
\begin{align} \label{lirr.4}
| \int \int \p_x^k \psi x^{2m} \cdot \p_x^{k+1} \psi x^{2k+1} \chi_{L,\alpha}^2 \rho_{k+1}^{2k+1} | =-\frac{1}{2} \int \int |\p_x^k \psi|^2  \p_x[x^{2m+2k+1} \chi_{L,\alpha}^2 \rho_{k+1}^{2k+1}]
\end{align}

Let us expand the product rule above: 
\begin{align} \n
\p_x[x^{2m+2k+1} \chi_{L,\alpha}^2 \rho_k^{2k+1}] &= C x^{2m+2k} \chi^2_{L,\alpha} \rho_{k+1}^{2k+1} + C x^{2m+2k+1} \frac{\alpha}{L} \chi_{L,\alpha} \chi'_{L,\alpha} \rho^{2k+1}_{k+1} \\
& + x^{2m+2k+1} \chi^2_{L,\alpha} \rho^{2k}_{k+1} \rho'_{k+1}  \lesssim x^{2m+2k} \rho_{k+1}^{2k}.
\end{align}

This, the term (\ref{lirr.4}) can be controlled via: 
\begin{align}
|(\ref{lirr.4})| \lesssim \int \int |\p_x^k \psi|^2 x^{2m+2k} \rho^{2k}_{k+1} \lesssim \sum_{i=0}^k ||\psi||_{J^{i+2}}^2. 
\end{align}

The second term in (\ref{Aofpsi}) is treated via: 
\begin{align} \n
\int \int - &\partial_x(\p_x^{k+1}\psi x^{2m+2})  \cdot \p_x^{k+1} \psi x^{2k+1} \chi_{L,\alpha}^2 \rho_{k+1}^{2k+1} \\ \n
& = \int \int [-\p_x^{k+2}\psi x^{2m+2} - C\p_x^{k+1}\psi x^{2m+1} ] \cdot  \p_x^{k+1} \psi x^{2k+1} \chi_{L,\alpha}^2 \rho_{k+1}^{2k+1} \\ \label{lirr.7}
& = \int \int |\p_x^{k+1}\psi|^2 \p_x[x^{2m+2k+3} \chi^2_{L,\alpha} \rho_{k+1}^{2k+1} ] -C |\p_x^{k+1}\psi|^2 x^{2m+2k+2} \chi^2_{L,\alpha} \rho_{k+1}^{2k+1} \\
& \lesssim \int \int |\p_x^{k+1} \psi|^2 x^{2m+2k+2} \rho^{2k}_{k+1} \lesssim \sum_{i = 0}^k||\psi||_{J^{i+2}}
\end{align}

We have expanded the product in the first term on the right-hand side of (\ref{lirr.7}):
\begin{align} \n
\p_x[x^{2m+2k+3} \chi^2_{L,\alpha} \rho_{k+1}^{2k+1}] &= Cx^{2m+2k+2} \chi^2_{L,\alpha} \rho^{2k+1}_{k+1} + Cx^{2m+2k+3} (\frac{\alpha}{L}) \chi_{L,\alpha} \chi'_{L,\alpha} \rho_{k+1}^{2k+1} \\ \n
& + Cx^{2m+2k+3} \chi^2_{L,\alpha} \chi^{2k+1}_{k+1} \rho_{k+1}^{2k} \rho_{k+1}' \\
& \lesssim x^{2m+2k+2} \rho^{2k}_{k+1}. 
\end{align}

Next, we have: 
\begin{align}  \label{lirr.5}
\int \int -\p_x^k \psi_{yy} x^{2m+2} \cdot \p_x^{k+1} \psi x^{2k+1} \chi_{L,\alpha}^2 \rho_{k+1}^{2k+1} = \int \int |\p_x^k \psi_y|^2 \p_x[ x^{2m+2k+3}  \chi_{L,\alpha}^2 \rho_{k+1}^{2k+1} ]
\end{align}

We will expand the product rule above: 
\begin{align} \n
\p_x[ x^{2m+2k+3}  \chi_{L,\alpha}^2 \rho_{k+1}^{2k+1} ] &= Cx^{2m+2k+2} \chi_{L,\alpha}^2 \rho_{k+1}^{2k+1} + C \frac{\alpha}{L} x^{2m+2k+3} \chi_{L,\alpha} \chi_{L,\alpha}' \rho_{k+1}^{2k+1} \\
& + C x^{2m+2k+3} \chi_{L,\alpha}^2 \rho_{k+1}^{2k} \rho_k'  \lesssim x^{2k+2m+2} \rho_{k+1}^{2k}. 
\end{align}

Inserting this above yields: $| (\ref{lirr.5})| \lesssim \sum_{i=0}^k ||\psi||_{J^{i+2}}^2$. Next, after two integrations by parts in $y$, and one in $x$:
\begin{align} \label{lirr.6}
\int \int \p_x^k \psi_{yyyy} x^{2m+4} \cdot \p_x^{k+1} \psi x^{2k+1} \rho_{k+1}^{2k+1} \chi_{L,\alpha}^2 = \int \int |\p_x^k \psi_{yy} |^2 \p_x[ x^{2m+2k+5}  \rho_{k+1}^{2k+1} \chi^2_{L,\alpha} ]
\end{align}

Expanding the product rule above yields: 
\begin{align} \n
\p_x[ x^{2m+2k+5}  \rho_{k+1}^{2k+1} \chi^2_{L,\alpha} ] &= C x^{2m+2k+4} \rho_{k+1}^{2k+1} \chi^2_{L,\alpha} + Cx^{2m+2k+5} \rho_{k+1}^{2k} \rho_{k+1}' \chi^2_{L,\alpha} \\
& +C x^{2m+2k+5}  \rho_{k+1}^{2k+1} \chi_{L,\alpha} \chi'_{L,\alpha} \frac{\alpha}{L} \lesssim x^{2m+2k+4} \rho_{k+1}^{2k} 
\end{align}

Inserting above yields: $|(\ref{lirr.6})| \lesssim \sum_{i=0}^k ||\psi||_{J^{i+2}}^2$.  The next term from $A(\psi)$ in definition (\ref{Aofpsi}) is: 
\begin{align} \n
\int \int &\p_x[ (\p_x^k \psi)_{yyx} x^{2m+4} ] \cdot \p_x^{k+1} \psi x^{2k+1} \rho_{k+1}^{2k+1} \chi_{L,\alpha}^2 \\ \n
& = - \int \int \p_x^{k+1} \psi_{y} x^{2m+4} \cdot \p_x[\p_x^{k+1} \psi_y x^{2k+1} \rho^{2k+1}_{k+1} \chi^2_{L,\alpha}  ] \\ \n
& = - \int \int  \p_x^{k+1} \psi_{y} x^{2m+4} \cdot \p_x^{k+2}  \psi_y x^{2k+1} \rho^{2k+1}_{k+1} \chi^2_{L,\alpha} \\ \n
& \hspace{30 mm} - \int \int |\p_x^{k+1}\psi_y|^2 x^{2m+4} \p_x[x^{2k+1} \rho^{2k+1}_{k+1} \chi^2_{L,\alpha}] \\ \n
& =  \int \int  |\p_x^{k+1} \psi_{y}|^2 \p_x[ x^{2m+2k+5} \rho^{2k+1}_{k+1} \chi^2_{L,\alpha} ]\\ \n
& \hspace{30 mm} - \int \int |\p_x^{k+1}\psi_y|^2 x^{2m+4} \p_x[x^{2k+1} \rho^{2k+1}_{k+1} \chi^2_{L,\alpha}] \\
& \lesssim \int \int |\p_x^{k+1}\psi_y|^2 x^{2m+2k+4} \rho_{k+1}^{2k} \lesssim \sum_{i=0}^k ||\psi||_{J^{i+2}}^2. 
\end{align}

The final term from $A(\psi)$ in definition (\ref{Aofpsi}) is: 
\begin{align}
\int \int \p_{xx}&(\p_x^{k+2} \psi x^{2m+4}) \cdot  \p_x^{k+1} \psi x^{2k+1} \rho_{k+1}^{2k+1} \chi_{L,\alpha}^2 \\ \n
& = - \int \int \p_x^{k+2} \psi x^{2m+4} \cdot \p_{xx}[\p_x^{k+1} \psi x^{2k+1} \rho_{k+1}^{2k+1} \chi_{L,\alpha}^2] \\ \n
& = - \int \int \p_x^{k+2} \psi x^{2m+4} \cdot \p_x^{k+3} \psi x^{2k+1} \rho_{k+1}^{2k+1} \chi_{L,\alpha}^2 \\ \n
& \hspace{30 mm} - \int \int |\p_x^{k+2}\psi|^2  x^{2m+4} \cdot  \p_x[x^{2k+1} \rho_{k+1}^{2k+1} \chi_{L,\alpha}^2] \\ \n
& \hspace{30 mm} - \int \int |\p_x^{k+1}\psi|^2 \p_{xxx}[x^{2m+2k+5} \rho_{k+1}^{2k+1}\chi^2_{L,\alpha}] \\
& \lesssim \sum_{i=0}^k ||\psi||_{J^{i+2}}^2.
\end{align}

This concludes the proof of the desired estimate, (\ref{SYO.2.P}). 

\end{proof}

\begin{lemma}
\begin{align} \label{SYO.3}
|\int \int [\p_x^k, A] \psi \cdot \p_x^k \psi x^{2k} \chi_{L,\alpha}^2 \rho_{k+1}^{2k}| \lesssim \sum_{i=0}^{k-1} ||\psi||_{J^{i+2}}^2. 
\end{align}
\end{lemma}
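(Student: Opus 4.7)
The strategy is to expand the commutator term-by-term. Each of the six terms appearing in $A(\psi)$ (see (\ref{Aofpsi})) has the schematic form $\partial_x^a(\psi_\sharp \cdot x^{2m+b})$, where $a \in \{0,1,2\}$, $b \in \{0,2,2,4,4,4\}$, and $\psi_\sharp$ involves at most two $y$-derivatives and at most one $x$-derivative. Applying $\partial_x^k$ and invoking the Leibniz rule,
\begin{align*}
\partial_x^k\bigl[\partial_x^a(\psi_\sharp x^{2m+b})\bigr] = \partial_x^a\Bigl[\,\partial_x^k \psi_\sharp \cdot x^{2m+b} + \sum_{j=1}^{k}\binom{k}{j}c_{j,b}\,\partial_x^{k-j}\psi_\sharp\cdot x^{2m+b-j}\Bigr],
\end{align*}
and the $j=0$ summand is exactly the contribution to $A(\partial_x^k\psi)$. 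Hence $[\partial_x^k,A]\psi$ is a sum of terms $c_{j,k,b}\partial_x^a[\partial_x^{k-j}\psi_\sharp \cdot x^{2m+b-j}]$ with $j\ge 1$; in particular, at least one factor of $x$ has been consumed by differentiating the weight, so the overall scaling is strictly subcritical with respect to the norm $\|\psi\|_{J^{k+2}}$.

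\textbf{Estimation.} Pair each commutator summand against the test function $\partial_x^k\psi\cdot x^{2k}\chi_{L,\alpha}^2\rho_{k+1}^{2k}$ and integrate by parts as needed. Each $y$-integration by parts is boundary-free since $\psi|_{y=0,N}=\psi_y|_{y=0,N}=0$; each $x$-integration by parts is boundary-free since $\psi|_{x=1}=\psi_x|_{x=1}=0$ and the cut-off $\chi_{L,\alpha}$ vanishes as $x\to\infty$. After distributing derivatives, each contribution takes the form
\begin{equation*}
\int\!\!\int \partial_x^{k-j+\alpha_1}\nabla_{y}^{\beta_1}\psi\;\cdot\;\partial_x^{k+\alpha_2}\nabla_{y}^{\beta_2}\psi\;\cdot\; x^{2m+2k+b-j-r}\,\rho_{k+1}^{2k-s}\,\chi_{L,\alpha}^{2},
\end{equation*}
where $\alpha_1+\alpha_2\le a$, $\beta_1+\beta_2\le$ (total $y$-order of the term), $r\ge 0$, and $s\ge 0$ counts derivatives that have landed on $\rho_{k+1}$. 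Cauchy-Schwarz then gives the product of two weighted $L^2$-norms. The inequality $\rho_{k+1}^{2k-s}\le \rho_k^{2k-2}$ (valid because $\operatorname{supp}\rho_{k+1}\subset\{\rho_k=1\}$ and each cut-off is bounded by $1$) and the bound $|\tfrac{\alpha}{L} x\,\chi_{L,\alpha}'|\lesssim 1$ (mirroring (\ref{rhoMnew})) ensure that the remaining cut-off and cut-off-derivative factors are harmless. Since $j\ge 1$, both $L^2$-factors fit into some $\|\psi\|_{J^{i+2}}^{2}$ with $i\le k-1$.

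\textbf{Main obstacle.} The delicate case is $j=1$, for which some commutator summands formally contain $\partial_x^{k+1}\psi_\sharp$, which is one derivative beyond the top-order norm $\|\psi\|_{J^{k+2}}$ and thus cannot be handled by a direct Cauchy-Schwarz. The remedy is a second integration by parts in $x$: schematically,
\begin{equation*}
\int\!\!\int \partial_x(\partial_x^k\Phi)\cdot \partial_x^k\Phi\cdot W \;=\; -\tfrac{1}{2}\int\!\!\int |\partial_x^k\Phi|^{2}\,\partial_x W,
\end{equation*}
with $\Phi$ denoting an appropriate $y$-derivative of $\psi$ and $W$ the weighted multiplier. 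This trades the excess $x$-derivative for a further derivative on the weight $W=x^{2m+2k+b-1-r}\chi_{L,\alpha}^{2}\rho_{k+1}^{2k-s}$, producing at worst $x^{2m+2k+b-2-r}\rho_{k+1}^{2k-s-1}$, again localized in a region where $\rho_{k+1}^{2k-s-1}\le \rho_k^{2k-2}$. The resulting integrand is then $\int |\partial_x^k\Phi|^2 x^{2(m+k)+b-2-r}\rho_k^{2k-2}$, which embeds into $\|\psi\|_{J^{k+1}}^{2}\subset\sum_{i=0}^{k-1}\|\psi\|_{J^{i+2}}^{2}$. The case $a=2$ (i.e., the final term $\partial_{xx}(\psi_{xx}x^{2m+4})$ in (\ref{Aofpsi})) requires one additional integration by parts in $x$ but yields to the same mechanism. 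The $j\ge 2$ summands are automatically lower order, since $\partial_x^{k-j}\psi_\sharp$ has derivative count at most $k-1$, and so are readily absorbed into $\sum_{i=0}^{k-1}\|\psi\|_{J^{i+2}}^{2}$ by direct Cauchy-Schwarz with the weight split $x^{2m+2k+b-j-r} = x^{m+(k-j)+b/2}\cdot x^{m+k+b/2 - r}$.
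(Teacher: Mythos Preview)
Your approach is essentially the same as the paper's: expand the commutator via Leibniz, observe that every surviving term has $j\ge 1$ so at least one power of $x$ is lost, and then integrate by parts term-by-term until everything lands in lower-order $J$-norms. The paper carries this out explicitly only for $k=1$ and declares the general case identical; your write-up is a more uniform-in-$k$ schematic of the same mechanism.

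One small point of imprecision: your final claim that the $j\ge 2$ summands are absorbed by ``direct Cauchy--Schwarz'' is not quite true for the $a=2$ term $\partial_{xx}(\psi_{xx}x^{2m+4})$ if you move both outer $\partial_x$ onto the test function (choosing $\alpha_1=0$, $\alpha_2=2$ in your notation). In that case one Cauchy--Schwarz factor is $\|\partial_x^{k+2}\psi\, x^{m+k+2}\|$, which sits at the $J^{k+2}$ scale, not $J^{k+1}$. The fix is immediate---either distribute the outer derivatives symmetrically ($\alpha_1=\alpha_2=1$), which yields $\int|\partial_x^{k+1}\psi|^2 x^{2m+2k+2}\rho_{k+1}^{2k}\lesssim\|\psi\|_{J^{k+1}}^2$, or apply the same additional integration by parts you already invoke for $j=1$. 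Either way the argument closes; this is a bookkeeping issue rather than a gap in the method.
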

\begin{proof}

To keep notations simple, we will prove the $k = 1$ case, with the $k \ge 2$ cases following identically. We will proceed term by term from the commutator expression in (\ref{com.A}). First, 
\begin{align} \n
\int \int \psi x^{2m-1} \cdot \psi_x x^2 \chi^2_{L,\alpha} \rho_{2}^2 &= -\frac{1}{2} \int \int |\psi|^2 \p_x[x^{2m+1}  \chi^2_{L,\alpha} \rho^2_{2} ] \\
& \lesssim \int \int \psi^2 x^{2m} \lesssim ||\psi||_{J^2}^2. 
\end{align}

Next, 
\begin{align} \n
\int \int -\psi_{yy} x^{2m+1} \cdot \psi_x x^2 \chi^2_{L,\alpha} \rho_2^2 &= \int \int \psi_y^2 \p_x[x^{2m+3} \chi^2_{L,\alpha} \rho^2_2 ] \\ 
& \lesssim \int \int \psi_y^2 x^{2m+2} \lesssim ||\psi||_{J^2}^2. 
\end{align}

Next, 
\begin{align}
\int \int -\p_x (\psi_x x^{2m+1}) \cdot \psi_x x^2 \chi^2_{L,\alpha} \rho^2_2 \lesssim \int \int \psi_x^2 x^{2m+2} \lesssim ||\psi||_{J^2}^2. 
\end{align}

We will now move to the high order terms, starting with: 
\begin{align} \n
\int \int \psi_{yyyy} &x^{2m+3} \cdot \psi_x x^2 \chi^2_{L,\alpha} \rho_2^2 = \int \int \psi_{yy} \psi_{yyx} x^{2m+5} \chi^2_{L,\alpha} \rho_2^2 \\
& = -\frac{1}{2}\int \int |\psi_{yy}|^2 \p_x[x^{2m+5} \chi^2_{L,\alpha} \rho^2_2] \lesssim ||\psi||_{J^2}^2. 
\end{align}

Next, again integrating by parts several times: 
\begin{align} \n
\int \int &\p_x[\psi_{yyx} x^{2m+3}] \cdot \psi_x x^2 \chi^2_{L,\alpha} \rho_2^2 \\
& = \int \int \psi_{xy}^2 [x^{2m+3} \p_x[x^2 \chi^2_{L,\alpha} \rho^2_2 ] -\p_x[x^{2m+5} \chi^2_{L,\alpha} \rho^2_2 ] ] \lesssim ||\psi||_{J^2}^2. 
\end{align}

The final term from $A(\psi)$, which after integrating by parts several times in the same way as above, 
\begin{align}
\int \int \p_{xx}[\psi_{xx} x^{2m+3}] \cdot \psi_x x^2 \chi^2_{L,\alpha} \rho^2_2 \lesssim \int \int ||\psi||_{J^2}^2. 
\end{align}

This concludes the proof of (\ref{SYO.3}).

\end{proof}

\begin{lemma}
\begin{align} \label{SYO.4}
|\int \int [\p_x^k, A] \psi \cdot \p_x^{k+1} \psi x^{2k+1} \chi_{L,\alpha}^2 \rho_{k+1}^{2k+1}| \lesssim \sum_{i=0}^k ||\psi||_{J^{i+2}}.
\end{align}
\end{lemma}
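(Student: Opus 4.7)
The plan is to mirror exactly the strategy used for (\ref{SYO.3}), processing the commutator expression from (\ref{com.A}) term by term, but now with the test function $\partial_x^{k+1}\psi \cdot x^{2k+1}\chi^2_{L,\alpha}\rho^{2k+1}_{k+1}$ in place of $\partial_x^k\psi \cdot x^{2k}\chi^2_{L,\alpha}\rho^{2k}_{k+1}$. The essential observation, exactly as in the proof of (\ref{SYO.3}), is that every term in $[\partial_x^k, A]\psi$ comes from having landed at least one $\partial_x$ on the $x$-weight inside $A$, so the resulting expressions have a power of $x$ strictly less than the corresponding term in $A(\partial_x^k \psi)$. That gain of one power is precisely what we need to close the estimate against lower-order $J^{i+2}$ norms. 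For notational clarity I would establish the $k=1$ case in detail and indicate that the general $k$ case is identical.

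For the zero-order and first-order pieces of the commutator, namely $\psi x^{2m-1}$, $-\psi_{yy}x^{2m+1}$, and $-\partial_x(\psi_x x^{2m+1})$, the plan is to integrate by parts once in $x$ against $\partial_x^{k+1}\psi x^{2k+1}\chi^2_{L,\alpha}\rho^{2k+1}_{k+1}$, so that the $\partial_x^{k+1}$ becomes a $\partial_x^k$ derivative acting on the product weight. Expanding $\partial_x[x^{2m+2k+a}\chi^2_{L,\alpha}\rho^{2k+1}_{k+1}]$ via the product rule produces three types of terms: the main one with weight $x^{2m+2k+a-1}$, a term carrying $\frac{\alpha}{L}\chi'_{L,\alpha}$ which is bounded since $\frac{\alpha}{L}x\chi_{L,\alpha} \lesssim 1$, and a term carrying $\rho'_{k+1}$ which is localized in a bounded region in $x$ where $\rho_k = 1$. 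All three families are controlled by $\sum_{i \le k} \|\psi\|_{J^{i+2}}^2$ upon applying Cauchy--Schwarz and matching the resulting weights with the definition (\ref{norm.J.2}) -- (\ref{norm.J.3}).

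For the third-order pieces $\psi_{yyyy}x^{2m+3}$, $\partial_x[\psi_{yyx}x^{2m+3}]$, and $\partial_{xx}[\psi_{xx}x^{2m+3}]$, the plan is to integrate by parts one, two, and three times respectively to transfer derivatives off the top-order factor until one is symmetrically pairing $\partial_x^k \psi_{yy}$, $\partial_x^{k+1}\psi_y$, or $\partial_x^{k+1}\psi$ with itself and a $\partial_x$-derivative of a cut-off weight. Exactly as in the proofs of (\ref{SYO.2}) and (\ref{SYO.3}), these repeated integrations by parts are justified by the presence of $\chi_{L,\alpha}$, and each application of $\partial_x$ to the composite weight again produces either a bounded main piece, a $\frac{\alpha}{L}\chi'_{L,\alpha}$ correction, or a $\rho'_{k+1}$ correction, each of which is absorbed into $\sum_{i \le k}\|\psi\|_{J^{i+2}}^2$.

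The main obstacle, as in (\ref{SYO.2.P}), is bookkeeping: there are many terms, the weight powers must be tracked carefully across each integration by parts, and one must verify at every step that the two cut-off corrections ($\frac{\alpha}{L}\chi'_{L,\alpha}$ and $\rho'_{k+1}$) do not accumulate into something uncontrollable. However, nothing genuinely new happens compared with the lemma that proved (\ref{SYO.3}); only the specific exponents shift by one because of the $x^{2k+1}$ weight (as opposed to $x^{2k}$) and the use of $\rho^{2k+1}_{k+1}$ (as opposed to $\rho^{2k}_{k+1}$). Once this careful comparison is made, no term is top-order and every piece fits into the lower-order norms $\|\psi\|_{J^{i+2}}$ with $i \le k$, establishing (\ref{SYO.4}).
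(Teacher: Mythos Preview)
Your proposal is correct and takes essentially the same approach as the paper, which simply remarks that the argument follows (\ref{SYO.3}) with the single adjustment that the extra $\partial_x$ in the multiplier is compensated by having $\|\psi\|_{J^{k+2}}$ (not just up to $J^{k+1}$) available on the right-hand side, illustrating this only for the highest-order commutator piece $\partial_{xx}(\psi_{xx}x^{2m+3})$. Your integration-by-parts counts (``one, two, and three times respectively'') are a bit loose---for instance the $\psi_{yyyy}$ piece paired with $\partial_x^{k+1}\psi$ actually needs two $y$-integrations and one $x$-integration to reach a symmetric $|\partial_x^k\psi_{yy}|^2$ form---but the strategy you describe is sound and matches the paper.
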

\begin{proof}

This estimate proceeds in the same manner as those from (\ref{SYO.3}), with the adjustment that the extra derivative in the multiplier from (\ref{SYO.4}) is accounted for by the increment in order on the right-hand sides of (\ref{SYO.4}) versus (\ref{SYO.3}). Indeed, let us take the highest order term from the commutator, $[\p_x, A]\psi$:
\begin{align}
|\int \int \p_{xx}(\psi_{xx} x^{2m+3}) \cdot \psi_{xx} x^3 \chi^2_{L,\alpha} \rho_2^3| \lesssim \int \int |\psi_{xxx}|^2 x^{2m+6} \chi^2_{L,\alpha} \rho_2^3 + ||\psi||_{J^2}^2. 
\end{align}

The first term on the right-hand side above can be controlled by $||\psi||_{J^3}^2$, as can be seen from a comparison to (\ref{norm.J.3}) with $k = 1$. The remaining terms work identically. 

\end{proof}

Using the above calculations, we may repeat the energy and positivity estimates, for $k \ge 1$: 
\begin{lemma}[$k+1$'th order Auxiliary Energy Estimate] \label{lem.wk.e.2} Let $k = 1,2$. Then,
\begin{align} \n
||\p_x^k u_y \cdot (\rho_{k+1} x)^{k}||_{L^2}^2 + \alpha ||\psi||_{J^{k+2}}^2 &\lesssim \alpha \sum_{i = 0}^{k-1}||\psi||_{J^{i+2}}^2 + \mathcal{O}(\delta)||\p_x^k \{ \sqrt{\eps}v_x, v_y \} x^{k+\frac{1}{2}} \rho_{k+1}^{k+\frac{1}{2}}||_{L^2}^2 \\
& + \mathcal{W}_1 + \sum_{i=1}^k \mathcal{W}_{i+1}.
\end{align}
\end{lemma}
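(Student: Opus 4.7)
The plan is to repeat the structure of the second-order energy estimate from Proposition~\ref{prop.ho.1}, but now work with the full weighted operator $S_\alpha + T$ and replace the weight $w_2$ by the iterated weight $(\rho_{k+1} x)^{k}$ adapted to the $J^{k+2}$ norm. Starting from (\ref{sysT}) I differentiate $k$ times in $x$ to obtain
\[
\Delta_\eps^2 \p_x^k \psi + \p_x^k T[\psi] + \alpha A(\p_x^k \psi) + \alpha [\p_x^k, A]\psi = \p_x^k(F_y - \eps G_x),
\]
and test against the multiplier $\p_x^k \psi \cdot x^{2k} \rho_{k+1}^{2k} \chi_{L,\alpha}^2$, with $\chi_{L,\alpha}$ as in (\ref{factor.alpha}); the cutoff is only present to justify the $x$-integrations by parts, and is eventually removed by monotone convergence as $L\to\infty$, producing at each step an error of size $\tfrac{\alpha}{L}\|\psi\|_{H^2_w}^2$ which is already bounded by (\ref{EIdentity}). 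The $H^5_w$-regularity of $\psi$ furnished by Corollary~\ref{CORBCS} guarantees that all integrations by parts performed below are rigorously justified and that the boundary contributions at $x=\infty$ vanish.

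For the biharmonic part, the calculation mimics (\ref{weight.1.-2})--(\ref{fubini.1.5}): after integration by parts one obtains the positive term $\|\p_x^k u_y (\rho_{k+1}x)^k\|_{L^2}^2$ together with additional $\eps$-scaled quantities that are strictly lower order relative to this principal term. For the linearized profile part $T_0[\psi]$ defined in (\ref{mapT0}), I carry out the same term-by-term bookkeeping as in Proposition~\ref{prop.ho.1}, relying on the sharp profile bounds (\ref{PE0.1})--(\ref{PE4.new.2}) to extract the $\mathcal{O}(\delta)$ smallness: first on the convective contribution $u_{Ry} v$ through the Hardy-type trade-off $y^2 x^{-1/2} u^0_{py} \in L^\infty$ as in (\ref{key.1}), and second on the coefficient $v_R$ itself (via (\ref{PE1}) and (\ref{PE4.new.2})), which produces precisely the $\mathcal{O}(\delta)$ factor in front of $\|\p_x^k\{\sqrt\eps v_x,v_y\}x^{k+\frac12}\rho_{k+1}^{k+\frac12}\|_{L^2}^2$ appearing on the right-hand side of the claim. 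The genuine-nonlinearity piece $\eps^{n/2+\gamma}\p_y(\bar v u_y)$ in $T-T_0$ is integrated by parts once in $y$ and estimated exactly as in (\ref{order.NL}) using $\|\bar v_y x^{1/2}\|_{L^2}\le \eps^{-\omega(N_i)}\|\bar u,\bar v\|_Z\le \eps^{-\omega(N_i)}$; commutator terms $[\p_x^k,T]\psi$ in which derivatives fall on profile coefficients each gain a factor of $x^{-1}$ and are dominated by the lower-order $\mathcal{W}_j$'s and $\|\psi\|_{J^{j+2}}^2$ for $j<k$.

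The $\alpha A$ contributions are handled precisely by the calculus lemmas developed above: testing $\alpha A(\p_x^k\psi)$ against $\p_x^k\psi \cdot x^{2k}\rho_{k+1}^{2k}\chi_{L,\alpha}^2$ yields, via (\ref{SYO.2}), the principal term $\alpha\|\chi_{L,\alpha}\psi\|_{J^{k+2}}^2$ modulo the lower-order sum $\alpha\sum_{i=0}^{k-1}\|\psi\|_{J^{i+2}}^2$, which appears explicitly on the right-hand side of the claim; the commutator $\alpha[\p_x^k,A]\psi$ tested against the same multiplier is absorbed by this same lower-order sum via (\ref{SYO.3}). Finally, on the right-hand side, one integrates $\p_x^k(F_y-\eps G_x)$ by parts in $y$ and in $x$ against $\p_x^k\psi \cdot x^{2k}\rho_{k+1}^{2k}\chi_{L,\alpha}^2$; after sending $L\to\infty$ by monotone convergence, the principal piece becomes $\mathcal{W}_{k+1}$ as defined in (\ref{calw2})--(\ref{calw3}), while the derivatives falling on the weight $x^{2k}\rho_{k+1}^{2k}$ produce quantities with strictly fewer $x$-weights that are controlled by $\mathcal{W}_1,\dots,\mathcal{W}_k$.

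The main obstacle I anticipate is the careful weight bookkeeping in the convective term $\p_x^k(u_{Ry}v)$: when all derivatives hit $v$, the profile $u_{Ry}$ must absorb one power of $y$ via (\ref{PE0.5}) (or $\eps^{n/2}$ through (\ref{PE3}) for the $u^n_p$ piece), and the resulting $v/y$ must be converted through Hardy into precisely the norm $\p_x^k v_y x^{k+\frac12}\rho_{k+1}^{k+\frac12}$ appearing on the right-hand side, rather than one that cannot be absorbed into the principal positive term. This follows the template of (\ref{NSR.conv.0})--(\ref{NSR.conv.2}) and (\ref{conv.ho.1}), but requires verifying at each distribution of the $k$ derivatives across $u_{Ry}$ and $v$ that the surviving weight on the profile side matches the one granted by the relevant case of (\ref{PE0.3}), (\ref{PE0.5}) or (\ref{PE3}), and that the $\rho_{k+1}^{k}$ factor --- being a cutoff away from $x=1$ --- never interacts badly with the profile estimates. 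Once this bookkeeping is carried out, all remaining terms are straightforward analogues of the $k=1$ case and combine to give the stated bound.
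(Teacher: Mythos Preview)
Your proposal is correct and follows essentially the same approach as the paper: differentiate (\ref{sysT}) $k$ times, multiply by $\p_x^k\psi\,x^{2k}\rho_{k+1}^{2k}\chi_{L,\alpha}^2$, invoke (\ref{SYO.2})--(\ref{SYO.3}) for the $\alpha A$ and commutator pieces, and handle the biharmonic and $T[\psi]$ contributions by the same profile bookkeeping as in the lower-order energy estimates. The paper's own proof is a two-line sketch pointing back to Lemma~\ref{lem.wk.e}; your expansion of the details is accurate, including the key point that the convective term $\p_x^k(u_{Ry}v)$ needs the Hardy trade-off via (\ref{PE0.5})/(\ref{PE3}) to produce the $\mathcal{O}(\delta)$ factor on $\|\p_x^k v_y\,x^{k+\frac12}\rho_{k+1}^{k+\frac12}\|_{L^2}^2$.
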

\begin{proof}

We apply the operator $\p_x^k$ to the system (\ref{sysT}):
\begin{align} \label{lirr.1}
\Delta_\eps \p_x^k \psi + \p_x^k T[\psi] +\alpha A(\p_x^k \psi) + \alpha [\p_x^k, A] \psi = \p_x^k \{F_y - \eps G_x\}.
\end{align}

We subsequently apply the multiplier $\p_x^k \psi x^{2k} \rho_{k+1}^{2k} \chi_{L,\alpha}^2$: 
\begin{align} \n
\int \int [\Delta_\eps \p_x^k \psi &+ \p_x^k T[\psi]  +\alpha A(\p_x^k \psi) + \alpha [\p_x^k, A] \psi] \cdot \p_x^k \psi x^{2k} \rho_{k+1}^{2k} \chi_{L,\alpha}^2 \\ 
& = \int \int [\p_x^k \{F_y - \eps G_x\}] \cdot \p_x^k \psi x^{2k} \rho_{k+1}^{2k} \chi_{L,\alpha}^2.
\end{align}

The desired estimate now follows using similar calculations as in Lemma \ref{lem.wk.e}. 

\end{proof}

\begin{lemma}[$k+1$'th order Auxiliary Positivity Estimate] \label{lem.wk.p.2}
\begin{align} 
||\p_x^k \{ \sqrt{\eps}v_x, v_y \} x^{k+\frac{1}{2}} \rho_{k+1}^{k+\frac{1}{2}}||_{L^2}^2 &\lesssim ||\p_x^k u_y \cdot (\rho_{k+1} x)^{k}||_{L^2}^2  + \alpha \sum_{i = 0}^{k} ||\psi||_{J^{i+2}}^2 + \mathcal{W}_1 + \sum_{i=1}^k \mathcal{W}_{i+1}.
\end{align}
\end{lemma}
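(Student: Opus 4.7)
The plan is to mirror the structure of Proposition \ref{prop.ho.2} (for $k=1$) and Proposition \ref{prop.ho.4} (for $k=2$) at the level of the auxiliary system \eqref{sysT}, but with the two modifications already exploited in Lemma \ref{lem.wk.p}: first, the appearance of the weighted $A(\psi)$-terms must be absorbed using the pre-computed identity \eqref{SYO.2.P} and commutator bound \eqref{SYO.4}; second, every $x$-integration-by-parts must be localized by the cut-off $\chi_{L,\alpha}(x)$ of \eqref{factor.alpha}, because $\psi$ only lies in $H^{k+3}_w(\Omega^N)$ and genuine decay at $x=\infty$ is only obtainable in the limit $L\to\infty$. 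Concretely, apply $\p_x^k$ to \eqref{sysT}, producing
\begin{equation*}
\Delta_\eps^2 \p_x^k\psi + T[\p_x^k\psi;\bar v] + \alpha A(\p_x^k\psi) + [\p_x^k,T]\psi + \alpha[\p_x^k,A]\psi = \p_x^k(F_y-\eps G_x),
\end{equation*}
and test against the positivity multiplier $\p_x^{k+1}\psi\cdot x^{2k+1}\rho_{k+1}^{2k+1}\chi_{L,\alpha}^2$, which in velocity formulation amounts to the weighted $v_x$-multiplier used in Propositions \ref{prop.ho.2}--\ref{prop.ho.4}.

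For the Bilaplacian and the profile operator $T_0$, redo the calculations of the corresponding higher-order positivity estimates verbatim: the leading-order positive contributions $\|u_R^{1/2}\p_x^k u_x (\rho_{k+1}x)^{k+\frac12}\|_{L^2}^2$ and $\|\sqrt{\eps}u_R^{1/2}\p_x^{k+1}v(\rho_{k+1}x)^{k+\frac12}\|_{L^2}^2$ arise, all profile cross-terms carry the small factor $\mathcal{O}(\delta)$ by the estimates \eqref{PE0.1}--\eqref{PE4.new.2}, and every derivative that falls on $\chi_{L,\alpha}$ produces an extra $\alpha/L$ which, by \eqref{factor.alpha}, converts the resulting term into $(\alpha/L)\|\psi\|_{J^{k+2}}^2$, and hence vanishes in the eventual $L\to\infty$ limit. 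The new nonlinear piece $\eps^{\frac{n}{2}+\gamma}\p_y[\bar v\,\p_x^k u_y]$ inside $T[\p_x^k\psi;\bar v]$ is absorbed exactly as in \eqref{exact.NL}--\eqref{order.NL}, using $\|\bar u,\bar v\|_Z\le 1$ and the selection of $N_i$ to render the prefactor small. For the $A(\p_x^k\psi)$ term apply \eqref{SYO.2.P} directly, which gives a bound by $\sum_{i=0}^k\|\psi\|_{J^{i+2}}^2$; for the commutator $[\p_x^k,A]\psi$ apply \eqref{SYO.4}, yielding the same type of bound. The commutator $[\p_x^k,T]\psi$ is handled by the pointwise profile bounds of Theorem \ref{thm.m.part.1} exactly as in the proofs of Propositions \ref{prop.ho.1}--\ref{prop.ho.4}, and contributes terms controlled by $\mathcal{W}_i$ and by the lower-order $J$-norms.

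The right-hand side $\p_x^k(F_y-\eps G_x)$ tested against the multiplier is, after integration by parts (again justified by $\chi_{L,\alpha}$), exactly of the form appearing inside $\mathcal{W}_1+\sum_{i=1}^{k}\mathcal{W}_{i+1}$; the terms generated when the $x$-derivative falls on $\chi_{L,\alpha}$ or on $\rho_{k+1}$ are lower order and dominated by $\mathcal{W}_1+\sum_{i=1}^{k-1}\mathcal{W}_{i+1}$ plus $\alpha\|\psi\|_{J^{k+1}}^2$. Combining these and absorbing the $\mathcal{O}(\delta)$-small $\p_x^k u_y$ back into the left-hand side via the companion Lemma \ref{lem.wk.e.2}, then sending $L\to\infty$ and applying the Monotone Convergence Theorem to replace $\chi_{L,\alpha}$ by $1$, yields the stated bound.

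The main obstacle, as in Lemma \ref{lem.wk.p}, will be a careful bookkeeping of the $\chi_{L,\alpha}$-derivative terms: each derivative on $\chi_{L,\alpha}$ gains $\alpha/L$ but must be paid for by one power of the weight $x^{m}$ appearing in $A(\psi)$, so the requirement that $m$ be taken sufficiently large relative to the universal exponents $k, k+\frac{1}{2}, k+\frac{5}{2}$ appearing in the multiplier is essential; this is precisely the $m$-largeness hypothesis already present in Lemma \ref{lem.wk.p}. A secondary but routine difficulty is to ensure that the $v_{Ry}$, $v_R$ absorption inequalities analogous to \eqref{cruc.} but now at order $k$ survive the presence of $\chi_{L,\alpha}$; since the cut-off weight $w_{3,L}$ of \eqref{wk} already accommodates a vanishing-at-infinity cut-off in the third-order positivity estimate, no new idea beyond that of Proposition \ref{prop.ho.4} is needed.
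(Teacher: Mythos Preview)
Your proposal is correct and follows essentially the same approach as the paper: apply the multiplier $\p_x^{k+1}\psi \cdot x^{2k+1}\rho_{k+1}^{2k+1}\chi_{L,\alpha}^2$ to the $\p_x^k$-differentiated system \eqref{lirr.1}, invoke the precomputed identities \eqref{SYO.2.P} and \eqref{SYO.4} for the $A$-contributions, and handle the remaining terms by the calculations already carried out in Lemma~\ref{lem.wk.p} and Propositions~\ref{prop.ho.2}, \ref{prop.ho.4}. The paper's own proof is in fact only two sentences long and defers entirely to these prior calculations, so your write-up is more explicit than what appears there; the one minor inaccuracy is your phrase about ``absorbing the $\mathcal{O}(\delta)$-small $\p_x^k u_y$ back into the left-hand side via Lemma~\ref{lem.wk.e.2}'' --- in the positivity estimate itself, $\|\p_x^k u_y\cdot(\rho_{k+1}x)^k\|_{L^2}^2$ simply sits on the right-hand side as stated, and the coupling with the energy estimate occurs only when the two lemmas are combined.
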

\begin{proof}

We apply the multiplier $\p_x^{k+1} \psi x^{2k+1} \rho_{k+1}^{2k+1} \chi_{L,\alpha}^2 $ to the system (\ref{lirr.1}):
\begin{align} \n
\int \int [\Delta_\eps \p_x^k \psi &+ \p_x^k T[\psi]  +\alpha A(\p_x^k \psi) + \alpha [\p_x^k, A] \psi] \cdot \p_x^{k+1} \psi x^{2k+1} \rho_{k+1}^{2k+1} \chi_{L,\alpha}^2 \\ 
& = \int \int [\p_x^k \{F_y - \eps G_x\}] \cdot \p_x^{k+1} \psi x^{2k+1} \rho_{k+1}^{2k+1} \chi_{L,\alpha}^2.
\end{align}

The desired estimate now follows using similar calculations as in Lemma \ref{lem.wk.p}. 

\end{proof}

\section{Step 3: Nonlinear Existence of Auxiliary Systems}

For this subsection, it is necessary to be more precise with notation; we will index solutions by $(\alpha, N)$ and also specify domains over which norms are being taken. We shall also transition our right-hand sides from being generic ($F, G$) to being the particular right-hand sides of interest, $(\tilde{f}, g)$ as defined in $(\ref{tildef})$. Our intention now is to study the map, $M^\alpha$:
\begin{align} \n
M^\alpha&[\bar{u}^{\alpha, N}, \bar{v}^{\alpha, N}] = [u^{\alpha. N},v^{\alpha, N}] \\ \n
&\iff L_{\alpha, \bar{v}^{\alpha, N}}[u^{\alpha, N},v^{\alpha, N}] = \tilde{f}_y(\bar{u}^{\alpha, N}, \bar{v}^{\alpha, N}) - \eps g_x(\bar{u}^{\alpha, N}, \bar{v}^{\alpha, N}) \\ \label{Malphapre}
& \iff [u^{\alpha, N},v^{\alpha, N}] = L_{\alpha, \bar{v}^{\alpha, N}}^{-1} \{ \tilde{f}_y(\bar{u}^{\alpha, N}, \bar{v}^{\alpha, N}) - \eps g_x(\bar{u}^{\alpha, N}, \bar{v}^{\alpha, N}) \}.
\end{align}

which corresponds to the system written in vorticity form: 
\begin{align} \label{ns.N}
\Delta_\eps^2 \psi^{\alpha, N} + \alpha A(\psi^{\alpha, N}) + T(\psi^{\alpha, N}; \bar{v}^{\alpha, N}) = \tilde{f}_y(\bar{u}^{\alpha, N}, \bar{v}^{\alpha, N}) - \eps g_x(\bar{u}^{\alpha, N}, \bar{v}^{\alpha, N}) \hspace{3 mm} \text{ on } \Omega^N. 
\end{align}

A fixed point of (\ref{ns.N}) corresponds to the desired solution of (\ref{sysWFD}). By repeating the analysis in Section \ref{Section.Z},the energy and positivity estimates in Section \ref{section.NSR.Linear}, and finally the estimates on $\mathcal{W}_i$ in Lemma \ref{LemmaW} for the system, one obtains
\begin{lemma} Suppose $||\bar{u}^{\alpha, N}, \bar{v}^{\alpha, N}||_{Z(\Omega^N)} \le 1$. Fix any open set $B \subset \Omega^N$. Let $\alpha > 0$ and $N >> 1$. Solutions $\psi^{\alpha, N}$, or equivalently $[u^{\alpha, N},v^{\alpha, N}]$, to the system (\ref{ns.N}) satisfy the following estimates, independent of $N$, where $\omega(N_i)$ is based on universal constants: 
\begin{align} \label{Z.d.2}
\eps^{N_0} C(B) ||\psi^{\alpha, N}||_{H^5(B)} &+ ||u^{\alpha, N},v^{\alpha, N}||_{Z(\Omega^N)} \\ \n
&\lesssim \epsilon^{100} + ||u^{\alpha, N},v^{\alpha, N}||_{X_1 \cap X_2 \cap X_3(\Omega^N)} + \eps^{\frac{n}{2}+\gamma - \omega(N_i)}||\bar{u}^{\alpha, N}, \bar{v}^{\alpha, N}||_{Z(\Omega^N)}^2. 
\end{align}

The following energy and positivity estimates hold:
\begin{align} \label{Z.d.2.b}
\alpha ||\psi^{\alpha, N}||_{H^4_w(\Omega^N)}^2 +  ||u^{\alpha, N},v^{\alpha, N}||_{X_1 \cap X_2 \cap X_3(\Omega^N)}^2 \lesssim \mathcal{W}_1 + \mathcal{W}_2 + \mathcal{W}_3, 
\end{align}

Finally, one has: 
\begin{align} \n
\alpha||\psi^{\alpha, N}||_{H^4_w(\Omega^N)}^2 + \eps^{N_0} C(B) ||\psi^{\alpha, N}||_{H^5(B)}^2 &+ ||u^{\alpha, N},v^{\alpha, N}||_{Z(\Omega^N)}^2 \\  \label{MOT} 
& \lesssim \eps^{\frac{1}{4}-\gamma - \kappa} + \eps^{\frac{n}{2}-\omega(N_i)} ||\bar{u}^{\alpha, N}, \bar{v}^{\alpha, N}||_{Z(\Omega^N)}^4. 
\end{align}

All constants appearing in the above estimates are independent of $(\alpha, N)$. 
\end{lemma}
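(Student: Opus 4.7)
The strategy is to systematically adapt the \textit{a-priori} framework of Chapter II (Propositions \ref{thm.energy}, \ref{prop.pos}, \ref{prop.ho.1}--\ref{prop.ho.4}, Theorem \ref{thm.z}, and Lemma \ref{LemmaW}) to the perturbed system (\ref{ns.N}), treating the new $\alpha A(\psi^{\alpha,N})$ term as a favorable addition and the weighted cutoff $\chi_{L,\alpha}$ as a technical device that disappears in the limit $L\to\infty$. The estimates of Lemmas \ref{lem.wk.e}--\ref{lem.wk.p.2} show exactly how $\alpha A(\psi^{\alpha,N})$ interacts with the energy and positivity multipliers: against the energy multiplier it produces the positive quantity $\alpha \|\psi^{\alpha,N}\|_{H^2_w}^2$ (and analogously $\alpha\|\psi^{\alpha,N}\|_{J^{k+2}}^2$ at higher order) up to commutator terms controlled by lower-order $J^{i+2}$-norms, while against the positivity multiplier it contributes only lower-order terms that are reabsorbed by induction on $k$. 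The cutoff $\chi_{L,\alpha}$ is tuned (via the scaling $\p_x^k\chi_{L,\alpha}=(\alpha/L)^k\chi^{(k)}$) so that every error produced by its derivatives carries a factor $\alpha/L$ or higher, and therefore vanishes when we send $L\to\infty$ and invoke monotone convergence.

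\textbf{Step 1: Energy/positivity cascade.} First, I apply the multiplier $\psi^{\alpha,N}$ to (\ref{ns.N}) exactly as in Lemma \ref{lem.wk.e}, producing the lowest-order energy estimate with an extra $\alpha\|\psi^{\alpha,N}\|_{H^2_w}^2$ on the left and the contribution from $\eps^{\frac{n}{2}+\gamma}\bar v u_y$ handled via the cancellation in (\ref{order.NL}). Next, the multiplier $\psi^{\alpha,N}_x\, x\,\chi_{L,\alpha}^2$ reproduces the positivity estimate of Lemma \ref{lem.wk.p}, controlling $\|\{\sqrt\eps v_x, v_y\} x^{1/2}\|_{L^2}^2$ modulo $\alpha\|\psi^{\alpha,N}\|_{H^2_w}^2$. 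I then iterate this pairing for $k=1,2$ using the multipliers $\p_x^k\psi^{\alpha,N}\,x^{2k}\rho_{k+1}^{2k}\chi_{L,\alpha}^2$ and $\p_x^{k+1}\psi^{\alpha,N}\,x^{2k+1}\rho_{k+1}^{2k+1}\chi_{L,\alpha}^2$, invoking Lemmas \ref{lem.wk.e.2}--\ref{lem.wk.p.2} together with the commutator bounds (\ref{SYO.2})--(\ref{SYO.4}); this recovers the second- and third-order energy and positivity inequalities of Propositions \ref{prop.ho.1}--\ref{prop.ho.4} and controls $\alpha\|\psi^{\alpha,N}\|_{H^4_w}^2$.

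\textbf{Step 2: Embeddings and elliptic upgrade.} With the $X_1\cap X_2\cap X_3(\Omega^N)$ bound in hand, I invoke the $Z(\Omega^N)$-embedding Theorem \ref{thm.zN}, using the elliptic regularity Lemmas \ref{LemmaBC} and \ref{LemmaBC2} (whose proofs only exploit Stokes-type interior/boundary $H^k$ estimates and are insensitive to the lower-order addition $\alpha A(\psi)$, since its contribution is majorized in $H^{-1}$ by $\alpha\|\psi^{\alpha,N}\|_{H^4_w}$ and absorbed into the $Y_2,Y_3$-norms). Separately, the same elliptic bootstrap applied on any bounded $B\subset\Omega^N$, combined with the $H^4_w$-regularity coming from $S_\alpha^{-1}$ (Corollary \ref{CORBCS}), gives the local $H^5(B)$ control with constant $C(B)\eps^{-N_0}$ appearing in (\ref{Z.d.2}). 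This yields (\ref{Z.d.2}).

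\textbf{Step 3: Absorbing nonlinearities and conclusion.} I now estimate $\mathcal{W}_1+\mathcal{W}_2+\mathcal{W}_3$ on the right-hand side of (\ref{Z.d.2.b}) by a verbatim application of Lemma \ref{LemmaW}, since that lemma only uses the profile estimates of Theorem \ref{thm.m.part.1} and the uniform $Z$-bound on $(\bar u^{\alpha,N},\bar v^{\alpha,N})$, neither of which changes here. Combining the output $\mathcal{W}_1+\mathcal{W}_2+\mathcal{W}_3\lesssim \eps^{1/4-\gamma-\kappa}+\eps^{1/4-\gamma-\kappa}\|u^{\alpha,N},v^{\alpha,N}\|_{X_1\cap X_2\cap X_3}^2+\eps^{n/2-\omega(N_i)}\|\bar u^{\alpha,N},\bar v^{\alpha,N}\|_Z^4$ with (\ref{Z.d.2}) and (\ref{Z.d.2.b}), and absorbing the small $\eps^{1/4-\gamma-\kappa}\|u^{\alpha,N},v^{\alpha,N}\|^2$ term into the left, produces the master estimate (\ref{MOT}).

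\textbf{Principal obstacle.} The delicate point is ensuring that the $\alpha$-weighted terms generated by $A(\psi^{\alpha,N})$ when paired against the positivity multipliers in Step 1 produce only errors of the form $\frac{\alpha}{L}\|\psi^{\alpha,N}\|_{H^2_w}^2$ plus strictly lower-order $J^{i+2}$-norms, so that the cascade closes order by order and all cutoff-derivative errors vanish as $L\to\infty$ by monotone convergence. This is the content of the weight calculations in Lemmas \ref{lem.wk.p} and \ref{lem.wk.p.2}, and requires the weight exponent $m$ to be chosen large relative to the exponents $x^{k+1/2}$ arising in the positivity multipliers (as flagged in the hypothesis of Lemma \ref{lem.wk.p}). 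Once this bookkeeping is executed, all other steps are faithful reproductions of the Chapter II analysis, with the uniformity of constants in $(\alpha,N)$ coming from the fact that neither the profile estimates of Theorem \ref{thm.m.part.1} nor the embedding constants in Theorem \ref{thm.zN} depend on these parameters.
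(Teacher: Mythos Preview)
Your proposal is correct and follows essentially the same approach as the paper: the paper's proof of (\ref{Z.d.2}) repeats the elliptic regularity of Lemmas \ref{LemmaBC}--\ref{LemmaBC2} and the embeddings of Subsection \ref{subsection.EM} (noting that $\alpha A(\psi)$ is harmless for local-in-$x$ estimates), the proof of (\ref{Z.d.2.b}) cites Lemmas \ref{lem.wk.e.2}--\ref{lem.wk.p.2} and compares $\|\cdot\|_{J^k}$ with $\|\cdot\|_{H^k_w}$, and the proof of (\ref{MOT}) repeats Lemma \ref{LemmaW}. Your three steps map onto these exactly, and your identification of the principal obstacle (bookkeeping the $\alpha A(\psi)$ contributions against positivity multipliers via the $\chi_{L,\alpha}$ cutoff and the $J^{i+2}$ hierarchy) is precisely the content of the auxiliary Lemmas \ref{lem.wk.p} and \ref{lem.wk.p.2} that the paper invokes.
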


\begin{proof}[Proof of Estimate (\ref{Z.d.2})]

This follows by repeating the proofs of elliptic regularity in Subsection \ref{subsection.sing}, namely Lemmas \ref{LemmaBC} and \ref{LemmaBC2}, to the new system, (\ref{ns.N}). The only new term in (\ref{ns.N}) as compared to \ref{EQ.NSR.1} - (\ref{EQ.NSR.3}), (\ref{bar.f}) - (\ref{bar.g}) are $\alpha A(\psi)$. The proof of Lemmas \ref{LemmaBC} and \ref{LemmaBC2} then follows identically, as these are local-in-$x$ estimates, which are unaffected by the weights in $A(\psi)$. At this point, one repeats the estimates in Subsection \ref{subsection.EM}, which hold independent of any equation.

\end{proof}

\begin{proof}[Proof of Estimate \ref{Z.d.2.b}] This follows from Lemmas \ref{lem.wk.e.2} - \ref{lem.wk.p.2}, and subsequently comparing $||\cdot||_{J^k}$ with $||\cdot||_{H^k_w}$. 
\end{proof}

\begin{proof}[Proof of Estimate \ref{MOT}] This follows by repeating the proof of Lemma \ref{LemmaW}.

\end{proof}

Motivated by (\ref{MOT}), we define the notation:
\begin{align} \label{normF}
||u^{\alpha, N}, v^{\alpha, N}||_{\mathcal{F}(\Omega^N)} := \alpha||\psi^{\alpha, N}||_{H^4_w(\Omega^N)}^2 + ||u^{\alpha, N},v^{\alpha, N}||_{Z(\Omega^N)}^2.
\end{align} 

\begin{lemma}[Properties of $M^\alpha$] \label{L.weakstar} Fix any $\alpha > 0$ and any $N > 0$, and $\gamma, \kappa > 0$ arbitrarily small. 
\begin{align} \n
&(1) \hspace{5 mm} M^\alpha: B_Z(1) \subset Z(\Omega^N) \rightarrow B_Z(1) \subset Z(\Omega^N), \text{ where  $B_Z(1)$ is the unit ball in $Z(\Omega^N)$;} \\ \n
&(2) \hspace{5 mm} M^\alpha \text{ is continuous and compact as an operator on $B_Z(1)$}.  \\ \n
&(3) \hspace{5 mm} \text{There exists a fixed point, } [u^{\alpha, N},v^{\alpha, N}] = M^\alpha[u^{\alpha, N},v^{\alpha, N}] \text{ in } B_Z(1). \\ \label{weakstar}
&(4) \hspace{5 mm} \text{The fixed point satisfies, $||u^{\alpha, N}, v^{\alpha, N}||_{Z(\Omega^N)} \lesssim \eps^{\frac{1}{4}-\gamma - \kappa}$, independent of $\alpha, N$. }
\end{align}
\end{lemma}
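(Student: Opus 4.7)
The plan is to verify the four stated properties of $M^\alpha$ in order, with the bulk of the work residing in (2)--(3) since (1) and (4) are essentially direct consequences of the key \emph{a-priori} estimate (\ref{MOT}) already in hand.

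For property (1), I would first verify that $M^\alpha$ is well-defined on $B_Z(1)$. Given $[\bar{u}^{\alpha,N}, \bar{v}^{\alpha,N}]$ with $\|\bar{u}^{\alpha,N}, \bar{v}^{\alpha,N}\|_{Z(\Omega^N)} \le 1$, one checks via the definitions in (\ref{tildef}) and (\ref{bar.g}) together with the $Z$-norm controls and the forcing bounds (\ref{Rnl2.INTRO}) that $(\tilde f(\bar u, \bar v), g(\bar u, \bar v)) \in H^{-1}_2(\Omega^N)$, so Corollary~\ref{CORBCS} applies and $[u^{\alpha,N}, v^{\alpha,N}] = L^{-1}_{\alpha, \bar v^{\alpha,N}}[\tilde f, g]$ lives in $H^4_w(\Omega^N)$. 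The self-mapping property then follows directly from estimate (\ref{MOT}): when $\|\bar u^{\alpha,N}, \bar v^{\alpha,N}\|_Z \le 1$, we get $\|u^{\alpha,N}, v^{\alpha,N}\|_Z^2 \lesssim \eps^{1/4-\gamma-\kappa} + \eps^{n/2-\omega(N_i)} \le 1$ by taking $\eps$ sufficiently small (recall $n$ is chosen large relative to the $N_i$ so $\eps^{n/2-\omega(N_i)}$ is arbitrarily small).

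For property (2), compactness is the cleaner piece: from (\ref{Z.d.2.b}) the image of $B_Z(1)$ is bounded in the weighted space $\alpha^{1/2}\|\cdot\|_{H^4_w(\Omega^N)}$ with the weight $x^{2m}$ (and higher) in $A(\psi)$, while the $Z$-norm involves weights like $x^{1/4}$ or $x^{1/2}$ at leading uniform order. The compact embedding lemma (\ref{cpct.em.1}) together with Rellich compactness on bounded sets then shows the image is precompact in $Z(\Omega^N)$: higher weighted regularity produces both decay at $x \to \infty$ and local smoothness, which suffices to pass to strongly convergent subsequences in all the norms making up $Z$. Continuity will be the main obstacle. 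Given $\bar u^{(k)} \to \bar u$ in $Z$, one writes the difference equation for $\psi^{(k)} - \psi$ and applies the linear theory: $L_{\alpha, \bar v^{(k)}}(u^{(k)} - u, v^{(k)} - v) = [\tilde f(\bar u^{(k)}, \bar v^{(k)}) - \tilde f(\bar u, \bar v)]_y - \eps [g(\bar u^{(k)}, \bar v^{(k)}) - g(\bar u, \bar v)]_x + (T[\psi; \bar v] - T[\psi; \bar v^{(k)}])$. The right-hand side differences are controlled by $\|\bar u^{(k)} - \bar u, \bar v^{(k)} - \bar v\|_Z$ times a bounded $Z$-factor, using the product-form nonlinear estimates already carried out in Lemma~\ref{LemmaW}; applying Lemmas~\ref{lem.wk.e}--\ref{lem.wk.p} and their higher-order analogues to the difference then yields $\|u^{(k)} - u, v^{(k)} - v\|_{X_1 \cap X_2 \cap X_3} \to 0$, and elliptic regularity from (\ref{Y1em})--(\ref{Y3em}) upgrades this to $Z$-convergence. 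The care needed is to handle the $\bar v^{(k)}$ coefficient change in $T$, which is done by absorbing via the smallness of $\eps^{n/2+\gamma}$.

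For property (3), $B_Z(1)$ is a closed convex bounded subset of the Banach space $Z(\Omega^N)$, $M^\alpha$ maps it into itself, and is continuous and compact by (2); Schauder's fixed point theorem then produces a fixed point $[u^{\alpha,N}, v^{\alpha,N}] = M^\alpha[u^{\alpha,N}, v^{\alpha,N}]$, which by construction satisfies the nonlinear system (\ref{sysWFD}). For property (4), the fixed point equation means $\bar u = u$ and $\bar v = v$, so plugging into (\ref{MOT}) gives $\|u^{\alpha,N}, v^{\alpha,N}\|_Z^2 \lesssim \eps^{1/4-\gamma-\kappa} + \eps^{n/2-\omega(N_i)} \|u^{\alpha,N}, v^{\alpha,N}\|_Z^4$. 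Since we already know $\|u^{\alpha,N}, v^{\alpha,N}\|_Z \le 1$, the last term is bounded by $\eps^{n/2-\omega(N_i)} \|u^{\alpha,N}, v^{\alpha,N}\|_Z^2$, which for $n$ large absorbs into the left-hand side, yielding the claimed bound $\|u^{\alpha,N}, v^{\alpha,N}\|_Z \lesssim \eps^{1/4-\gamma-\kappa}$ with constant independent of $(\alpha,N)$. The crucial point is that all estimates from Sections \ref{Section.Z}--\ref{Sec.NL} were carried out with constants independent of $N$ (as emphasized throughout Chapter II), and the only $\alpha$-dependent term in (\ref{MOT}) appears on the left with a favorable sign, so $\alpha \downarrow 0$ and $N \uparrow \infty$ preserve the bound—this is what will let us extract limits in Step 4.
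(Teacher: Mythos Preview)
Your overall architecture matches the paper's, but there is a genuine gap in your continuity argument for property (2), and it is precisely the point the paper singles out as nontrivial. When you take differences, the term $T[\psi;\bar v] - T[\psi;\bar v^{(k)}]$ contributes (after pairing with $\hat u$ in the energy estimate) an integral of the form
\[
\int\int \eps^{\frac{n}{2}+\gamma}\,\hat{\bar v}\; u_y\;\hat u,
\]
where $u_y$ is the derivative of a \emph{fixed} image point, not of the difference. You propose to absorb this via the smallness of $\eps^{n/2+\gamma}$, but the estimate cannot close using only $Z$-norm quantities: placing $\|\hat{\bar v}\,x^{1/2}\|_{L^\infty}$ and $\|u_y\|_{L^2}$ leaves the factor $\|\hat u\,x^{-1/2}\|_{L^2}$, which is exactly the critical Hardy case (cf.\ (\ref{HCrit.1})) and is not controlled by $\|\hat u,\hat v\|_{X_1\cap X_2\cap X_3}$. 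The integration-by-parts trick of (\ref{order.NL}) is unavailable here because $u_y$ is not a derivative of $\hat u$. The paper's fix (see (\ref{sf.3})) is to use the weighted bound $\|u_y\,x^m\|_{L^2}\le \|u\|_{H^2_w}\le \alpha^{-1}\|u\|_{\mathcal F}$ supplied by the $\alpha A(\psi)$ term, paired against $\|\hat u\,x^{-m-1/2}\|_{L^2}$, which is now subcritical. The resulting continuity estimate carries a factor $\alpha^{-2}$, so $M^\alpha$ is continuous but \emph{not} a contraction---this is exactly why the auxiliary weighted operator $A(\psi)$ was introduced, and why contraction mapping fails for this problem.

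A secondary difference: you invoke Schauder for (3), whereas the paper uses Schaefer, verifying uniform bounds on solutions of $[u,v]=\lambda M^\alpha[u,v]$ for $\lambda\in[0,1]$. Given that you have already established self-mapping in (1), Schauder is equally valid and arguably more direct; the paper's route has the mild advantage of delivering (3) and (4) in one stroke. Your derivation of (4) from (\ref{MOT}) by absorbing the quartic term is correct.
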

\begin{proof}
The outline of this proof is as follows. The map $M^\alpha$ is shown to be well-defined in the appropriate domains and codomains, according to (1) above. Continuity of $M^\alpha$ is investigated by considering differences, and compactness of $M^\alpha$ is obtained using our compactness lemmas above. One then applies a fixed point argument to prove (3) and (4).

(1) Suppose $[\bar{u}, \bar{v}] \in Z(\Omega^N)$. This implies that $(\tilde{f},g) \in H^{-1}_2$, so by (\ref{welldefined.}), the map $M^\alpha$ is well-defined on $Z(\Omega^N)$. The property (\ref{clos.1}) is verified according to Lemma \ref{Lemma.weak.1}, and the definition of $H^2_w(\Omega^N)$, Definition \ref{defn.dens}, which ensures that $[u^\alpha, v^\alpha]$ are contained in $\overline{C^\infty_{0,D}}^{||\cdot||_{X_1}}$. Supposing the pre-images are contained in the unit ball of $Z(\Omega^N)$, $||\bar{u}^{\alpha, N}, \bar{v}^{\alpha, N}||_{Z(\Omega^N)} \le 1$, one has estimate (\ref{MOT}), which  implies that $M^\alpha(\bar{u}, \bar{v}) \in B_Z(1)$. 

(2) To check continuity of the map $M^\alpha$ on $B_Z(1)$, suppose:  
\begin{align}
M^\alpha[\bar{u}^{\alpha, N}_i, \bar{v}^{\alpha, N}_i] = [u^{\alpha,N}_i, v^{\alpha, N}_i] \text{ for $i = 1,2$,}
\end{align} 

where 
\begin{equation} \label{sf.2}
||\bar{u}^{\alpha, N}_i, \bar{v}^{\alpha, N}_i||_{Z} \le 1. 
\end{equation}

Define the notation for the differences,
\begin{align}
[\hat{\bar{\psi}}, \hat{\bar{u}}, \hat{\bar{v}}] &= [\bar{\psi}^{\alpha, N}_2 - \bar{\psi}^{\alpha, N}_1, \bar{u}^{\alpha, N}_{2} - \bar{u}^{\alpha, N}_{1}, \bar{v}^{\alpha, N}_{2} - \bar{v}^{\alpha, N}_{1}], \\
[\hat{\psi}, \hat{u}, \hat{v}] &= [\psi^{\alpha, N}_2 - \psi^{\alpha, N}_1, u^{\alpha, N}_{2} - u^{\alpha, N}_{1}, v^{\alpha, N}_{2} - v^{\alpha, N}_{1}].
\end{align}

By consulting (\ref{ns.N}), one then obtains the following system satisfied by the differences:
\begin{align} \n
\Delta_\eps^2 \hat{\psi} + \alpha A(\hat{\psi}) + T(\hat{\psi}) &= \tilde{f}_y(u^{\alpha, N}_2,\bar{u}^{\alpha, N}_2, \bar{v}^{\alpha, N}_2) - \tilde{f}_y(u^{\alpha, N}_1,\bar{u}^{\alpha, N}_1, \bar{v}^{\alpha, N}_1) \\ 
& - \eps g_x(\bar{u}^{\alpha, N}_2, \bar{v}^{\alpha, N}_2) + \eps g_x(\bar{u}^{\alpha, N}_1, \bar{v}^{\alpha, N}_1).
\end{align}

We may then repeat the estimates which resulted in (\ref{Z.d.2}) - (\ref{MOT}) to obtain: 
\begin{align} \label{sf.1}
||\hat{u}, \hat{v}||_{Z(\Omega^N)}^2 \lesssim \frac{1}{\alpha^2} ||\hat{\bar{u}}, \hat{\bar{v}}||_{Z(\Omega^N)}^2. 
\end{align}

The only non-trivial calculation when repeating the estimates which resulted in (\ref{Z.d.2}) - (\ref{MOT}) is to handle the nonlinearity (\ref{order.NL}) under taking differences. For this, we first write: 
\begin{align} \n
\bar{v}^{\alpha, N}_2 u^{\alpha, N}_{2y} &- \bar{v}^{\alpha, N}_1 u^{\alpha, N}_{1y} \\ \n
&= \bar{v}^{\alpha, N}_2 u^{\alpha, N}_{2y} - \bar{v}^{\alpha, N}_2 u^{\alpha, N}_{1y} + \bar{v}^{\alpha, N}_2 u^{\alpha, N}_{1y} - \bar{v}^{\alpha, N}_1 u^{\alpha, N}_{1y} \\
& = \bar{v}^{\alpha, N}_2 \hat{u}_y + \hat{\bar{v}} u^{\alpha, N}_{1,y}.
\end{align}

Repeating calculation (\ref{order.NL}) then yields: 
\begin{align} \n
\int \int \eps^{\frac{n}{2}+\gamma} [\bar{v}^{\alpha, N}_2 u^{\alpha, N}_{2y} - \bar{v}^{\alpha, N}_1 u^{\alpha, N}_{1y} ] \cdot \hat{u} &= \int \int  \eps^{\frac{n}{2}+\gamma} [\bar{v}^{\alpha, N}_2 \hat{u}_y + \hat{\bar{v}} u^{\alpha, N}_{1,y}] \cdot \hat{u} \\
& =- \int \int  \frac{\eps^{\frac{n}{2}+\gamma}}{2} \hat{u}^2 \bar{v}^{\alpha, N}_{2y} + \int \int \eps^{\frac{n}{2}+\gamma} \hat{\bar{v}} u^{\alpha, N}_{1,y} \hat{u} 
\end{align}

For the first term in the right-hand side above, we give the same estimate as in (\ref{order.NL}), which shows: 
\begin{align}
| \int \int  \frac{\eps^{\frac{n}{2}+\gamma}}{2} \hat{u}^2 \bar{v}^{\alpha, N}_{2y}| \lesssim \eps^{\frac{n}{2}+\gamma - \omega(N_i)} ||\hat{u}||_{Z(\Omega^N)}^2 ||\bar{u}^{\alpha, N}_2, \bar{v}^{\alpha, N}_2||_{Z(\Omega^N)} \lesssim \eps^{\frac{n}{2}+\gamma - \omega(N_i)} ||\hat{u}||_{Z(\Omega^N)}^2.
\end{align}

This then gets absorbed into the left-hand side of (\ref{sf.1}). For the second term on the right-hand side above, we estimate: 
\begin{align} \n
|\int \int \eps^{\frac{n}{2}+\gamma} \hat{\bar{v}} u^{\alpha, N}_{1,y} \hat{u} | &\le \eps^{\frac{n}{2}+\gamma} ||\hat{\bar{v}} x^{\frac{1}{2}}||_{L^\infty} ||u^{\alpha, N}_{1,y} x^m||_{L^2} || \hat{u} x^{-m-\frac{1}{2}}||_{L^2} \\
 \n & \lesssim  \eps^{\frac{n}{2}+\gamma - \omega(N_i)} ||\hat{\bar{v}}||_{Z(\Omega^N)} ||u^{\alpha, N}_{1}||_{H^2_w(\Omega^N)} ||\hat{u}||_{Z(\Omega^N)} \\ \n
 & \lesssim \eps^{\frac{n}{2}+\gamma - \omega(N_i)} ||\hat{\bar{v}}||_{Z(\Omega^N)} \frac{1}{\alpha}||u^{\alpha, N}_1||_{\mathcal{F}(\Omega^N)} ||\hat{u}||_{Z(\Omega^N)} \\ \n
 & \lesssim  \eps^{\frac{n}{2}+\gamma- \omega(N_i)} ||\hat{\bar{v}}||_{Z(\Omega^N)} \frac{1}{\alpha} ||\hat{u}||_{Z(\Omega^N)} \\ \label{sf.3}
 & \lesssim \eps^{2(\frac{n}{2}+\gamma - \omega(N_i))} ||\hat{u}||_{Z(\Omega^N)}^2 + \frac{1}{\alpha^2} ||\hat{\bar{v}}||_{Z(\Omega^N)}^2,
\end{align}

where we have used (\ref{sf.2}) coupled with (\ref{MOT}) to conclude that: $||u^{\alpha, N}_1||_{\mathcal{F}(\Omega^N)} \lesssim \eps^{\frac{1}{4}-\gamma - \kappa}$. The weight, $x^m$, arises from the definition (\ref{Aofpsi}), and consequently in (\ref{normHkw}). The first term on the right-hand side of (\ref{sf.3}) is absorbed into the left-hand side of (\ref{sf.1}), whereas the second term contributes to the right-hand side of (\ref{sf.1}). All of the remaining calculations which produced (\ref{MOT}) can be repeated in a similar fashion. Estimate (\ref{sf.1}) then implies the continuity of $M^\alpha$ on $B_Z(1)$. The modulus of continuity of $M^\alpha$ is $\frac{1}{\alpha^2}$, which prevents $M^\alpha$ from being a contraction map. Nevertheless, continuity is retained for all $\alpha > 0$. 

We now turn to compactness. According to Lemma \ref{lemma.cpct}, (\ref{MOT}) shows that $M^\alpha(B_Z(1))$ is compactly embedded in $B_Z(1)$ so long as $m$ is sufficiently large. 

(3 and 4) Consider the family of solutions: 
\begin{align}
[u^{\alpha, N}_\lambda, v^{\alpha, N}_\lambda] = \lambda M^{\alpha}[u^{\alpha, N}_\lambda, v^{\alpha, N}_\lambda], \hspace{5 mm} \text{ for } 0 \le \lambda \le 1. 
\end{align}

By (\ref{Malphapre}) and linearity of $L_\alpha^{-1}$, this occurs if and only if 
\begin{align}
[u^{\alpha, N}_\lambda, v^{\alpha, N}_\lambda] = L_\alpha^{-1}\{ \lambda \tilde{f}_y(u^{\alpha, N}_\lambda, v^{\alpha, N}_\lambda) - \eps \lambda g_x (u^{\alpha, N}_\lambda, v^{\alpha, N}_\lambda) \}.
\end{align}

By repeating the estimates which culminated in (\ref{MOT}), one sees the uniform in $\lambda$ bound: 
\begin{align}
||u^{\alpha, N}_\lambda, v^{\alpha, N}_\lambda||_{Z(\Omega^N)}^2 \lesssim \eps^{\frac{1}{4}-\gamma - \kappa}.
\end{align}

Thus, Schaefer's fixed point theorem applied to the convex subset $B_Z(1) \subset Z(\Omega^N)$ produces a fixed point, $[u^{\alpha, N}, v^{\alpha, N}] \in B_Z(1)$. The estimate it obeys follows from (\ref{MOT}). 
\end{proof}

\section{Step 4: Nonlinear Existence}

We now need to pass to the limit as $\alpha \rightarrow 0$ and as $ N \rightarrow \infty$. The fixed point of the system (\ref{ns.N}), from Lemma \ref{L.weakstar} satisfies the following integral identity for any $\phi \in C^\infty_0(\Omega^N)$:
\begin{align} \nonumber
&\int \int_{\Omega^N} \nabla_\epsilon^2 \psi^{N, \alpha} : \nabla_\epsilon^2 \phi + \alpha \Big[ \int \int_{\Omega^N} \psi^{N, \alpha} \phi x^{2m} +  \int \int_{\Omega^N} \nabla \psi^{N, \alpha} \cdot \nabla \phi x^{2m+2} \\ \n
& \hspace{20 mm} + \int \int_{\Omega^N} \nabla^2 \psi^{N, \alpha} : \nabla^2 \phi x^{2m+4} \Big] 
+ \int \int_{\Omega^N} -S_u \cdot \phi_y + \eps S_v \cdot \phi_x  \\ \n
& \hspace{20 mm} + \int \int_{\Omega^N} \eps^{-\frac{n}{2}-\gamma} \Big[ R^{u,n} \cdot \phi_y  - \eps R^{v,n} \cdot \phi_x \Big]   \\ \label{weak.3}
&= \int \int_{\Omega^N} \eps^{\frac{n}{2}+\gamma} \Big[ - u^{N, \alpha}u^{N, \alpha}_x \phi_y - v^{N, \alpha}u^{N, \alpha}_y \phi_y + \eps u^{N, \alpha}v^{N, \alpha}_x \phi_x + \eps v^{N, \alpha}v^{N, \alpha}_y \phi_x \Big]. 
\end{align}

First, we shall pass to the limit as $\alpha \rightarrow 0$, fixing an $N$. To do so, we first use (\ref{weakstar}) to obtain a weak subsequential limit point:
\begin{align}
u^{N,\alpha} \xrightharpoonup{\alpha \rightarrow 0} u^N, \text{ weakly in $(X_1 \cap X_2 \cap X_3)(\Omega^N)$}.
\end{align}

It is now our task to pass to the limit in the equation, (\ref{weak.3}), along the subsequence $\alpha \rightarrow 0$. Given a test-function, denote by $U_\phi$ to be the support of $\phi$. As $U_\phi$ is bounded, we have Poincare inequalities available:
\begin{align} \n
 \alpha |\Big[ \int \int_{\Omega^N}& \psi^{N, \alpha} \phi x^{2m} +  \int \int_{\Omega^N} \nabla \psi^{N, \alpha} \cdot \nabla \phi x^{2m+2} + \int \int_{\Omega^N} \nabla^2 \psi^{N, \alpha} : \nabla^2 \phi x^{2m+4} \Big]| \\  \n
& \le C(\phi) \alpha \Big[ ||\psi^{N, \alpha}||_{L^2(U_\phi)} + ||\nabla \psi^{N, \alpha}||_{L^2(U_\phi)} + ||\nabla^2 \psi^{N, \alpha}||_{L^2(U_\phi)}\Big] \\
& \le C(\phi) \alpha ||\nabla u^{N, \alpha}, \nabla v^{N,\alpha} ||_{L^2(U_\phi)} \le C(\phi) \alpha ||u^{N, \alpha}, v^{N, \alpha}||_{Z(\Omega^N)} \xrightarrow{\alpha \rightarrow 0} 0. 
\end{align}

For all of the linear terms, we use the weak convergence in $(X_1 \cap X_2 \cap X_3)(\Omega^N)$:
\begin{align} \n
 \lim_{\alpha \rightarrow 0} &\int \int_{\Omega^N} \nabla^2_\eps \psi^{\alpha, N} : \nabla_\eps^2 \phi - \int \int_{\Omega^N} S_u(u^{N,\alpha}, v^{N,\alpha}) \phi_y + \eps S_v(u^{N,\alpha}, v^{N,\alpha}) \cdot \phi_x  \\ 
 & = \int \int_{\Omega^N} \nabla^2_\eps \psi^N : \nabla_\eps^2 \phi - \int \int_{\Omega^N} S_u(u^N, v^N) \phi_y + \eps S_v(u^{N}, v^{N}) \cdot \phi_x.
\end{align}

Finally, we turn to the nonlinear terms for which we integrate by parts: 
\begin{align}
\int \int_{\Omega^N} u^{N, \alpha}u^{N, \alpha}_x \phi_y + v^{N, \alpha}u^{N, \alpha}_y \phi_y =  \int \int_{\Omega^N} - |u^{N, \alpha}|^2 \phi_{xy} - u^{N, \alpha}v^{N, \alpha} \phi_{yy}, \\
\int \int_{\Omega^N} u^{N, \alpha}v^{N, \alpha}_x \phi_x + v^{N, \alpha}v^{N, \alpha}_y \phi_x = \int \int_{\Omega^N} - |v^{N, \alpha}|^2 \phi_{xy} - u^{N, \alpha}v^{N, \alpha} \phi_{xx}.
\end{align}

Fixing a compactly supported $\phi$, we can localize the integrations above to $U_\phi$. On this set, the weak convergence of $u^{N,\alpha} \xrightharpoonup{X_1 \cap X_2 \cap X_3} u^N$ implies strong convergence in $L^2$. Thus, 
\begin{align} \n
| \int \int_{U_\phi} \Big[ &|u^{N,\alpha}|^2- u^{N,\alpha} u^N + u^{N,\alpha} u^N - |u^N|^2 \Big] \phi_{xy} \\ 
& \lesssim ||u^{N,\alpha} - u^N||_{L^2(U_\phi)} ||u^{N,\alpha}||_{L^2(U_\phi)} + ||u^N||_{L^2(U_\phi)} ||u^{N,\alpha} - u^N||_{L^2(U_\phi)}. 
\end{align}

The right-hand side converges to zero. The same bound works for all of the other nonlinear terms. Thus, the weak limit $[u^N,v^N]$ or equivalently $\psi^N$ satisfies the weak formulation: 
\begin{align} \nonumber
&\int \int_{\Omega^N} \nabla_\epsilon^2 \psi^N : \nabla_\epsilon^2 \phi - \int \int_{\Omega^N} S_u(u^N, v^N) \cdot \phi_y + \eps S_v(u^N, v^N) \cdot \phi_x  \\ \n
& \hspace{50 mm} + \int \int_{\Omega^N} \eps^{-\frac{n}{2}-\gamma} \Big[ R^{u,n} \cdot \phi_y  - \eps R^{v,n} \cdot \phi_x \Big]   \\ \label{weak.4}
&= \int \int_{\Omega^N} \eps^{\frac{n}{2}+\gamma} \Big[ -u^Nu^N_x \phi_y - v^Nu^N_y \phi_y + \eps u^Nv^N_x \phi_x + \eps v^Nv^N_y \phi_x \Big]. 
\end{align}

The weak limit $[u^N,v^N]$ must satisfy the bound: 
\begin{align} \label{UNIN}
||u^N,v^N||_{(X_1 \cap X_2 \cap X_3)(\Omega^N)} \lesssim C(u_R, v_R) \eps^{\frac{1}{4}-\gamma - \kappa},
\end{align}

independent of $N$. We may now repeat this exact procedure with the subsequential $N$ limit: denote by $[u,v]$ and $\psi$ the subsequential  $(X_1 \cap X_2 \cap X_3)(\Omega)$-weak limit as $N \rightarrow \infty$, guaranteed by (\ref{UNIN}). One then passes to the limit in the equation (\ref{weak.4}) to obtain: 
\begin{align} \nonumber
&\int \int_{\Omega} \nabla_\epsilon^2 \psi : \nabla_\epsilon^2 \phi - \int \int_{\Omega} S_u(u, v) \cdot \phi_y + \eps S_v(u, v) \cdot \phi_x  \\ \n
& \hspace{50 mm} + \int \int_{\Omega} \eps^{-\frac{n}{2}-\gamma} \Big[ R^{u,n} \cdot \phi_y  - \eps R^{v,n} \cdot \phi_x \Big]   \\ \label{weak.5.1}
&= \int \int_{\Omega} \eps^{\frac{n}{2}+\gamma} \Big[ - uu_x \phi_y- vu_y \phi_y + \eps uv_x \phi_x + \eps vv_y \phi_x \Big],
\end{align}

with the limit satisfying: 
\begin{align} \label{formal}
||u,v||_{(X_1 \cap X_2 \cap X_3)(\Omega)} \lesssim \eps^{\frac{1}{4}-\gamma - \kappa}.
\end{align}

We now state the main existence result:
\begin{theorem} \label{thm.existence} For $\eps, \delta$ sufficiently small, $\kappa > 0$ small, and $0 \le \gamma < \frac{1}{4}$,  there exists a solution to the system (\ref{EQ.NSR.1}) - (\ref{EQ.NSR.3}), (\ref{nsr.bc.1}), (\ref{defn.SU.SV}) satisfying: 
\begin{align} \label{W.Z.2}
||u,v||_{Z(\Omega)} \lesssim C(u_R, v_R) \eps^{\frac{1}{4}-\gamma - \kappa}. 
\end{align}
\end{theorem}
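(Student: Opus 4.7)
The plan is to obtain the solution as a double limit of the regularized/truncated fixed points produced in Lemma \ref{L.weakstar}. Specifically, for each $(\alpha, N)$ with $\alpha > 0$ small and $N$ large, we already have $[u^{\alpha,N}, v^{\alpha,N}] = M^\alpha[u^{\alpha,N}, v^{\alpha,N}]$ on $\Omega^N$ satisfying the uniform-in-$(\alpha,N)$ bound $\|u^{\alpha,N}, v^{\alpha,N}\|_{Z(\Omega^N)} \lesssim \eps^{1/4 - \gamma - \kappa}$, and correspondingly the weak identity (\ref{weak.3}). Weak-$\ast$ compactness in $X_1 \cap X_2 \cap X_3(\Omega^N)$, together with lower semicontinuity of the norm, then gives a first subsequential limit $[u^N, v^N]$ as $\alpha \to 0$. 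The regularizing terms $\alpha A(\psi^{\alpha,N})$ tested against a compactly supported $\phi$ can be controlled by $C(\phi)\,\alpha \|u^{\alpha,N}, v^{\alpha,N}\|_{Z(\Omega^N)}$, which vanishes in the limit, so the $\alpha$-dependence drops out of the weak formulation entirely. Taking the second weak subsequential limit $N \to \infty$ in $(X_1 \cap X_2 \cap X_3)(\Omega)$, which is legitimate because the bound (\ref{UNIN}) is independent of $N$, yields $[u,v]$ satisfying the weak formulation (\ref{weak.5.1}).

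The delicate point in both limits is the passage to the limit in the quadratic nonlinearities $u u_x$, $v u_y$, $u v_x$, $v v_y$. The approach is to first integrate by parts so that each nonlinear integrand becomes a pointwise product of $L^2$-type quantities against second derivatives of the test function (e.g.\ $\int u^2 \phi_{xy}$, $\int uv \phi_{yy}$). On the compact support $U_\phi$ of $\phi$, the weak convergence in $X_1 \cap X_2 \cap X_3$ upgrades to strong convergence in $L^2(U_\phi)$ by Rellich--Kondrachov, which is precisely what is needed to commute the limit past the quadratic products by the usual $|u^{\alpha,N}|^2 - |u|^2 = (u^{\alpha,N} - u)(u^{\alpha,N} + u)$ trick. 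Once both weak limits are taken, the identity (\ref{weak.5.1}) holds, and since $[u,v] \in Z(\Omega)$ has sufficient regularity (three derivatives away from $x = 1$ and elliptic regularity near $x = 1$ from Subsection \ref{subsection.sing}, now applicable a posteriori), a further integration by parts converts the weak equation into the pointwise system (\ref{EQ.NSR.1})--(\ref{EQ.NSR.3}). The boundary conditions (\ref{nsr.bc.1}) are inherited: those at $x = 1$, $y = 0$, and $y \to \infty$ pass to the weak limit directly from membership in $Z$ (cf.\ Lemma \ref{Lemma.BCZ}), and decay as $x \to \infty$ is encoded by the weighted norms in $Z(\Omega)$. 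Lower semicontinuity of the norm $\|\cdot\|_{Z(\Omega)}$ along the weak limits gives the bound (\ref{W.Z.2}).

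The main obstacle is the double limit in the nonlinear term $\bar{v} u_y \cdot u$, whose treatment in the \emph{a priori} analysis (cf.\ (\ref{order.NL})) relied on the perfect-derivative cancellation $\bar v u_y u = \frac{1}{2} \p_y(\bar v u^2) - \frac{1}{2} \bar v_y u^2$; this structure is preserved under $\alpha \to 0$ and $N \to \infty$ when $\bar u = u, \bar v = v$, but one must verify it survives in the weak formulation tested against $\phi$, which is precisely why the integration by parts rewriting is carried out before taking limits. The second subtlety, already handled in the continuity argument of Lemma \ref{L.weakstar} (see the estimate (\ref{sf.3})), is that the modulus of convergence in the relevant fixed-point map depends poorly on $\alpha^{-1}$; this obstructs any attempt to prove existence by contraction, which is why the Schaefer/compactness strategy was employed upstream and why here we only need a single passage to the weak limit, not a rate.
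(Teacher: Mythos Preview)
Your overall strategy is the same as the paper's: pass to weak subsequential limits first in $\alpha \to 0$ and then in $N \to \infty$ within $X_1 \cap X_2 \cap X_3$, kill the $\alpha A(\psi)$ terms by the Poincar\'e bound on $U_\phi$, and handle the quadratic nonlinearities by rewriting them as $\int |u^{N,\alpha}|^2 \phi_{xy}$, $\int u^{N,\alpha} v^{N,\alpha} \phi_{yy}$, etc., so that Rellich on $U_\phi$ suffices. This matches the paper exactly.

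There is, however, a gap in how you recover the full $Z$-bound (\ref{W.Z.2}). You claim it follows from ``lower semicontinuity of $\|\cdot\|_{Z(\Omega)}$ along the weak limits,'' but the $Z$-norm contains the $Y_2, Y_3$ pieces and the uniform components in (\ref{norm.U}), and these are \emph{not} obviously lower semicontinuous under weak $X_1 \cap X_2 \cap X_3$ convergence---in particular, the $Y_i$ norms were obtained in Subsection~\ref{subsection.sing} by elliptic regularity that \emph{uses the equation}. The paper's route is: the weak limit yields only (\ref{formal}), i.e.\ the $X_1 \cap X_2 \cap X_3$ bound; this is already enough regularity to integrate (\ref{weak.5.1}) by parts and obtain the pointwise system; the boundary conditions then follow from Lemma~\ref{L.XiBC} (which needs only $X_1 \cap X_2 \cap X_3$, not $Z$---your appeal to Lemma~\ref{Lemma.BCZ} is circular at this stage); and \emph{only then} does one re-run Lemmas~\ref{LemmaBC}--\ref{LemmaBC2} on the nonlinear system with $[\bar u, \bar v] = [u,v]$, followed by the embeddings of Subsection~\ref{subsection.EM}, to upgrade to the full $Z$-bound. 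So the correct sequencing is: weak limit $\Rightarrow$ $X_1\cap X_2\cap X_3$ bound $\Rightarrow$ strong solution $\Rightarrow$ elliptic regularity a posteriori $\Rightarrow$ $Z$-bound, rather than a direct lower-semicontinuity argument.
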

\begin{proof}

Estimate (\ref{formal}) implies enough regularity to integrate by parts identity (\ref{weak.3}) to: 
\begin{align}
\int \int_{\Omega} \Big[\Delta_\eps^2 \psi + \p_y S_u - \eps \p_x S_v - \p_y f + \eps \p_x g \Big] \cdot \phi = 0,
\end{align}

which then implies that the PDE is satisfied pointwise in $\Omega$. The boundary conditions (\ref{nsr.bc.1}) are satisfied by elements in $(X_1 \cap X_2 \cap X_3)(\Omega)$, according to Lemma \ref{L.XiBC}. From here, one repeats the embedding theorems in Section \ref{Section.Z} which give estimate (\ref{W.Z.2}). That this is possible for those embeddings in Subsection \ref{subsection.EM} is straightforward to see, as these did not require $[u,v]$ to satisfy any equations. Let us then turn to Subsection \ref{subsection.sing}. We must repeat the proofs of Lemmas \ref{LemmaBC} and \ref{LemmaBC2} to the \textit{nonlinear} system, (\ref{EQ.NSR.1}) - (\ref{EQ.NSR.3}), with $f, g$ as in (\ref{defn.SU.SV}). This amounts to replacing $[\bar{u}, \bar{v}]$ with $[u, v]$ in Lemmas \ref{LemmaBC} and \ref{LemmaBC2}, and foregoing the assumption that $||\bar{u}, \bar{v}||_Z \le 1$. A nearly identical proof to Lemma \ref{LemmaBC} then yields: 
\begin{align}
\sup_{x \le 2000} ||u,v||_{L^\infty_y} &+ ||u,v||_{\dot{H}^2(x \le 2000)} \lesssim \eps^{-M_2}. 
\end{align}

One now bootstraps the estimate in Lemma \ref{LemmaBC2} in the identical manner. This gives estimate (\ref{W.Z.2}). We have verified that $[u,v] \in Z(\Omega)$ satisfies (\ref{EQ.NSR.1}) - (\ref{EQ.NSR.3}), (\ref{nsr.bc.1}), (\ref{defn.SU.SV}). 
\end{proof}

\section{Step 5: Uniqueness} \label{sub.sec.U}

In this final subsection, we prove uniqueness of the solution $[u,v]$ from Theorem \ref{thm.existence}. Suppose there existed two solutions, $[u_1, v_1]$ and $[u_2, v_2]$ to the system in (\ref{EQ.NSR.1}) - (\ref{EQ.NSR.3}), (\ref{nsr.bc.1}), (\ref{defn.SU.SV}). Define: 
\begin{equation}
\hat{u} = u_1 - u_2, \hspace{3 mm} \hat{v} = v_1 - v_2, \hspace{3 mm} \hat{P} = P_1 - P_2.
\end{equation}

Then the new unknowns satisfy: 
\begin{align} \label{hats.1}
-\Delta_\epsilon \hat{u} + S_u(\hat{u}, \hat{v}) + \hat{P}_{x} = \hat{f} := \eps^{\frac{n}{2}+\gamma} \Big[ u_{1}u_{1x} - u_2 u_{2x} + v_1 u_{1y} - v_2 u_{2y} \Big] ,\\ \label{hats.2}
- \Delta_\epsilon \hat{v} + S_v(\hat{u}, \hat{v}) + \frac{\hat{P}_y}{\epsilon} = \hat{g} :=  \eps^{\frac{n}{2}+\gamma} \Big[ u_1 v_{1x} - u_2 v_{2x} + v_1 v_{1y} -  v_2 v_{2y} \Big],
\end{align}

together with the divergence-free condition, $\hat{u}_x + \hat{v}_y = 0$, and also satisfy the boundary conditions:
\begin{align} \label{hats.bc.1}
\{\hat{u}, \hat{v}\}|_{\{y=0\}} =   \{\hat{u}, \hat{v}\}|_{\{x = 1\}} = 0.
\end{align}

Going to vorticity, 
\begin{align} \n
\p_y \Big[ -\Delta_\eps \hat{u} &+ S_u(\hat{u}, \hat{v}) \Big] - \eps \p_x \Big[ -\Delta_\eps v + S_v(\hat{u}, \hat{v}) \Big] = \eps^{\frac{n}{2}} \Big\{ \p_y \Big[ u_{1}u_{1x} \\ - \label{hats.v}
&  u_2 u_{2x} + v_1 u_{1y} - v_2 u_{2y}\Big] - \eps \p_x \Big[ u_1 v_{1x} - u_2 v_{2x} + v_1 v_{1y} -  v_2 v_{2y}\Big] \Big\}.
\end{align}

We shall repeat the basic energy and positivity estimates using a slightly weaker weight. It is convenient to work with the weak formulation, which is given in (\ref{weak.5.1}). Then, $\hat{u}, \hat{v}$ satisfy the following:  
\begin{align}  \label{weak.5}
\int \int \nabla_\epsilon^2 \hat{\psi} : \nabla_\epsilon^2 \phi + \int \int \eps S_v(\hat{u}, \hat{v})  \cdot \phi_x - S_u(\hat{u}, \hat{v}) \cdot \phi_y = \int \int\Big[ -\hat{f} \phi_y + \eps \hat{g} \phi_x \Big],
\end{align}

for all $\phi \in C_0^\infty(\Omega)$. We make the notational convention that 
\begin{align}
\int \int := \int \int_{\Omega}.
\end{align}

\begin{lemma} \label{L1U} There exists a $0 < b < 1$, sufficiently close to $0$, depending only on universal constants, such that for $\delta, \eps$ sufficiently small and $\eps << \delta << b$, the solutions $[\hat{u}, \hat{v}] \in Z$ to the system (\ref{hats.1}) - (\ref{hats.2}) with boundary conditions (\ref{hats.bc.1}) satisfy the following estimate: 
\begin{align} \label{hat.Energy}
b||\{\hat{u}, \sqrt{\eps} \hat{v}\} x^{-b-\frac{1}{2}}||_{L^2}^2 + ||\hat{u}_y x^{-b}||_{L^2}^2 \lesssim \mathcal{O}(\delta) ||\{\sqrt{\eps}\hat{v}_x, \hat{v}_y\} x^{\frac{1}{2}-b}||_{L^2}^2 + \mathcal{W}_{1,E,b},
\end{align}
where
\begin{align} \label{calw1b.E}
&\mathcal{W}_{1,E,b} :=  \int \int \hat{f} \hat{u} x^{-2b} + \eps \hat{g} \hat{v} x^{-2b} - 2b \eps \hat{g} \hat{\psi} x^{-2b-1}, \\ \label{calw1b.P}
&\mathcal{W}_{1,P,b} :=  \int \int \hat{f} \hat{u}_x x^{1-2b} + \eps \hat{g}\hat{v}_x x^{1-2b}, \\ \label{calw1b}
&\mathcal{W}_{1,b} = \mathcal{W}_{1,E,b} +  \mathcal{W}_{1,P,b}. 
\end{align}
\end{lemma}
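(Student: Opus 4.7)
The plan is to apply the multiplier $\hat{\psi} x^{-2b}$ to the vorticity form (\ref{hats.v}) of the difference equation (equivalently, the weak formulation (\ref{weak.5}) tested against $\hat{\psi} x^{-2b}$), mimicking the lowest-order energy estimate of Proposition \ref{thm.energy} but with the weakened weight $x^{-2b}$ for $0 < b \ll 1$. All integrations by parts are rigorously justified by cutting off at finite $x = M$ and sending $M \to \infty$, admissible because $[\hat{u}, \hat{v}] \in Z$ inherits all decay rates encoded in $Z$ via (\ref{evo.low})--(\ref{evo.high}).

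The principal observation is that the new positive term $b ||\{\hat{u}, \sqrt{\eps}\hat{v}\} x^{-b-\frac{1}{2}}||_{L^2}^2$ on the left-hand side of (\ref{hat.Energy}) arises from the interaction of the weight derivative $\p_x(x^{-2b}) = -2b x^{-2b-1}$ with the main profile terms $u_R \hat{u}_x$ in $S_u(\hat{u},\hat{v})$ and $\eps u_R \hat{v}_x$ in $\eps S_v(\hat{u},\hat{v})$. Concretely, an integration by parts using $\hat{u}|_{x=1}=0$ gives
\begin{equation*}
\int \int u_R \hat{u}_x \hat{u} \, x^{-2b} = b \int \int u_R \hat{u}^2 x^{-2b-1} - \int \int \frac{u_{Rx}}{2} \hat{u}^2 x^{-2b},
\end{equation*}
and invoking $u_R \geq 1 - \mathcal{O}(\delta)$ from (\ref{pr.pos}) and Theorem \ref{thm.m.part.1} produces the positive contribution $(b - \mathcal{O}(b\delta)) ||\hat{u} x^{-b-\frac{1}{2}}||_{L^2}^2$; the second term is controlled by $\mathcal{O}(\delta) ||\hat{u} x^{-b-\frac{1}{2}}||_{L^2}^2$ via (\ref{PE0.4}) and (\ref{PE5}), and is absorbed using $\delta \ll b$. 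An analogous computation for $\eps u_R \hat{v}_x \hat{v} x^{-2b}$ supplies the $\sqrt{\eps}\hat{v}$ positive contribution. The Bi-Laplacian terms generate $||\hat{u}_y x^{-b}||_{L^2}^2$ plus cross terms of size $\mathcal{O}(b\eps)\int\int \hat{u}^2 x^{-2b-2}$, absorbed into the positive $b$-term using $\eps \ll b$. The remaining profile terms in $S_u, S_v$ proceed precisely as in Proposition \ref{thm.energy}; in particular the critical convective term is estimated as
\begin{equation*}
\Big| \int \int u^0_{Ry} \hat{v} \hat{u} \, x^{-2b} \Big| \leq ||y^2 x^{-\frac{1}{2}} u^0_{Ry}||_{L^\infty} \Big|\Big| \frac{\hat{u}}{y} x^{-b} \Big|\Big|_{L^2} \Big|\Big| \frac{\hat{v}}{y} x^{\frac{1}{2}-b} \Big|\Big|_{L^2} \leq \mathcal{O}(\delta) ||\hat{u}_y x^{-b}||_{L^2} ||\hat{v}_y x^{\frac{1}{2}-b}||_{L^2},
\end{equation*}
by the absorption (\ref{intro.s.s.absorb}) and Hardy's inequality in $y$ (available since $\hat{u}|_{y=0} = \hat{v}|_{y=0} = 0$). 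The Eulerian pieces pick up a factor of $\sqrt{\eps}$ through (\ref{PE5}).

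The right-hand side of (\ref{hat.Energy}) emerges directly from testing $\p_y \hat{f} - \eps \p_x \hat{g}$ against $\hat{\psi} x^{-2b}$: the $\hat{f}$-piece yields $\int \int \hat{f} \hat{u} x^{-2b}$ (using $\p_y \hat{\psi} = -\hat{u}$), while the $\eps \hat{g}$-piece yields $\eps \int \int \hat{g} \hat{v} x^{-2b} - 2b\eps \int \int \hat{g} \hat{\psi} x^{-2b-1}$ through the product rule $\p_x(\hat{\psi} x^{-2b}) = \hat{v} x^{-2b} - 2b\hat{\psi} x^{-2b-1}$. Together these precisely reproduce $\mathcal{W}_{1,E,b}$ as defined in (\ref{calw1b.E}).

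The main obstacle is understanding \emph{why} the parameter $b>0$ is forced upon us, and hence why this lemma is needed at all (as opposed to re-running Proposition \ref{thm.energy} directly for the differences). The perfect-derivative cancellation of (\ref{order.NL}), which was decisive in closing the original energy estimate via the identity $\int\int \bar{v} u_y u = -\int\int \frac{\bar{v}_y}{2}u^2$, is destroyed under taking differences: the splitting $v_1 u_{1y} - v_2 u_{2y} = \hat{v} u_{1y} + v_2 \hat{u}_y$ no longer collapses to a pure $y$-derivative of $\hat{u}^2$. When one subsequently estimates $\mathcal{W}_{1,E,b}$ using the $Z$-bounds on $u_1, u_2$, a standard Hardy inequality in $x$ with $L^2$-weight $x^{-\frac{1}{2}}$ falls on the critical boundary (\ref{HCrit.1}). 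The strictly positive Hardy-type term $b ||\hat{u} x^{-b-\frac{1}{2}}||_{L^2}^2$ produced here provides exactly the $b$-worth of sub-critical room required to close the subsequent uniqueness argument, which explains the constraint hierarchy $\eps \ll \delta \ll b \ll 1$.
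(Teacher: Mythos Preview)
Your overall strategy---applying the multiplier $\hat{\psi}x^{-2b}$ to the vorticity form and extracting positive terms from $u_R\hat{u}_x\hat{u}x^{-2b}$ and $\eps u_R\hat{v}_x\hat{v}x^{-2b}$---matches the paper's. However, you have glossed over the most delicate part of the argument, and this is a genuine gap.

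When you integrate by parts the $\eps\p_x S_v$ term against $\hat{\psi}x^{-2b}$, the product rule gives $\eps S_v\cdot[\hat{v}x^{-2b}-2b\hat{\psi}x^{-2b-1}]$. You correctly use this product rule for the right-hand side $\hat g$ terms, but for the $S_v$ profile terms you only write ``An analogous computation for $\eps u_R \hat{v}_x \hat{v} x^{-2b}$ supplies the $\sqrt{\eps}\hat{v}$ positive contribution,'' ignoring the companion term $-2b\eps u_R\hat{v}_x\hat{\psi}x^{-2b-1}$. This term is \emph{not} lower order: after integrating by parts in $x$ it produces both a further positive $\hat v^2$ contribution \emph{and} a negative term $-b(2b+1)(2b+2)\int\int \eps u_R\hat\psi^2 x^{-2b-3}$. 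The paper controls this negative $\hat\psi^2$ term by a sharp Hardy-type inequality relating $\|u_R^{1/2}\hat\psi x^{-b-3/2}\|_{L^2}$ to $\|u_R^{1/2}\hat v x^{-b-1/2}\|_{L^2}$, and then observes that the resulting competing constants satisfy
\[
\lim_{b\to 0}\frac{4b(2b+1)(2b+2)}{(2b+2)^2}\Big/ (3b)=\frac{2}{3}<1,
\]
so that for $b$ sufficiently small the net $\hat v^2$ contribution stays strictly positive. This competition is precisely where the requirement ``$b$ sufficiently close to $0$'' in the lemma statement originates---not from the nonlinear Hardy criticality you describe (which only forces $b>0$). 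Without this analysis, your claim that the $\sqrt\eps\hat v$ positivity is ``analogous'' to the $\hat u$ case is unsupported, and the smallness constraint on $b$ is unexplained.
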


\begin{proof}

The estimate will follow upon applying the multiplier $\hat{\psi} \cdot x^{-2b}$ to the system in (\ref{hats.v}). To work rigorously, we will apply approximate multipliers, and work with the weak formulation given in (\ref{weak.5}). Fix $[\hat{u}^{{n}}, \hat{v}^{(n)}, \hat{\psi}^{(n)}] \in C^\infty_0(\Omega)$, such that: 
\begin{align} \label{cnrs.1}
[\hat{u}^{(n)}, \hat{v}^{(n)}] \xrightarrow{X_1} [\hat{u}, \hat{v}], 
\end{align}

where $X_1$ is defined in (\ref{norm.x0}). Within the notation of (\ref{weak.5}), $\phi = \hat{\psi}^{(n)}x^{-2b}$. The existence of the sequence specified in (\ref{cnrs.1}) is guaranteed by $[\hat{u}, \hat{v}] \in Z(\Omega)$. That $\phi$ is compactly supported in $(x,y)$ follows from the representations:   
\begin{equation}
\hat{\psi}^{(n)} = -\int_0^y \hat{u}^{(n)} = \int_0^x \hat{v}^{(n)}. 
\end{equation}

Let us first treat the second-order terms: 
\begin{align} \n
\int \int  \nabla_\eps^2 &\hat{\psi} : \nabla_\eps^2 \phi = \int \int \nabla_\eps^2 \hat{\psi} : \nabla_\eps^2 (\hat{\psi}^{(n)} x^{-2b}) \\ \label{BIG2}
& = \int \int \hat{\psi}_{yy} \hat{\psi}^{(n)}_{yy} x^{-2b} + 2\eps \hat{\psi}_{xy} \p_x \Big( \hat{\psi}^{(n)}_y x^{-2b} \Big) + \eps^2 \hat{\psi}_{xx} \p_{xx} \Big( \hat{\psi}^{(n)} x^{-2b} \Big). 
\end{align}

The first two terms from (\ref{BIG2}) above are: 
\begin{align} \n
\int \int \hat{\psi}_{yy} \hat{\psi}^{(n)}_{yy} x^{-2b} &+ 2\eps \hat{\psi}_{xy} \hat{\psi}^{(n)}_{xy}x^{-2b} + 2\eps \hat{\psi}_{xy} \hat{\psi}^{(n)}_y \p_x x^{-2b} \\  \label{h.UL}
&=  \int \int \hat{u}_y \hat{u}^{(n)}_y x^{-2b} + 2\eps \hat{u}_x \hat{u}^{(n)}_x x^{-2b} -2 \eps \hat{u}_x \hat{u}^{(n)} \p_x x^{-2b}.
\end{align}

We shall take the limit as $n \rightarrow \infty$ above. According to the definition (\ref{norm.x0}), the convergence in (\ref{cnrs.1}) implies:
\begin{align} \n
|\int \int \hat{u}_y (\hat{u}^{(n)}_y - \hat{u}_y) x^{-2b}| &+ | \int \int \hat{u}_x (\hat{u}^{(n)}_x - \hat{u}_x) x^{-2b}| \\
& + | \int \int \hat{u}_x (\hat{u}^{(n)} - \hat{u}) \p_x x^{-2b} | \xrightarrow{n \rightarrow \infty} 0.
\end{align}

Expanding the third term from (\ref{BIG2}), 
\begin{align} \label{h.VL}
\int \int \eps^2 \hat{\psi}_{xx} \p_{xx} \Big( \hat{\psi}^{(n)} x^{-2b} \Big) = \int \int \eps^2 \hat{v}_x \cdot \Big[ \hat{v}^{(n)}_x x^{-2b} +2 \hat{v}^{(n)} \p_x x^{-2b} + \hat{\psi}^{(n)} \p_{xx} x^{-2b} \Big].
\end{align}

By referring to the definition of $X_1$ in (\ref{norm.x0}) and (\ref{cnrs.1}), we may pass to the limit: 
\begin{align} \n
\text{Equation }& (\ref{h.UL}) \xrightarrow{n \rightarrow \infty} \int \int[ \hat{u}_y^2 + 2\eps \hat{u}_x^2] x^{-2b} - \int \int \eps \hat{u}_x \hat{u} \p_x x^{-2b} \\ \n
& = \int \int[ \hat{u}_y^2 + 2\eps \hat{u}_x^2] x^{-2b} + b(2b+1) \eps \hat{u}^2 x^{-2-2b} + \eps b \lim_{M \rightarrow \infty} \int_{x = M} \hat{u}^2 x^{-1-2b} \\
& = \int \int[ \hat{u}_y^2 + 2\eps \hat{u}_x^2] x^{-2b} + b(2b+1) \eps \hat{u}^2 x^{-2-2b},
\end{align}

and: 
\begin{align} \label{rigL.1}
\text{ Equation } (\ref{h.VL}) \xrightarrow{n \rightarrow \infty} \int \int \eps^2 \hat{v}_x^2 x^{-2b} - 4b \eps^2 \hat{v}_x \hat{v} x^{-1-2b} + 2b(2b+1) \eps^2 \hat{v}_x \hat{\psi} x^{-2-2b}. 
\end{align}

Integrating by parts the final two terms above in (\ref{rigL.1}), and referring to estimate (\ref{evo.low}),
\begin{align} \n
-4b \int \int  \eps^2 \hat{v}_x \hat{v} x^{-1-2b} &= \int \int 2b \eps^2 \hat{v}^2 \p_x x^{-1-2b} - 2b \lim_{M \rightarrow \infty} \int_{x = M}\eps^2 \hat{v}^2 x^{-1-2b} \\ \label{fall.1}
& =  -2b (1+2b) \int \int \eps^2 \hat{v}^2  x^{-2-2b},
\end{align}

and similarly, to treat the final term in (\ref{rigL.1}), we appeal to the estimates in (\ref{evo.low}):
\begin{align} \n
 \int \int \eps^2 \hat{v}_x \hat{\psi} x^{-2-2b} &= - \int \int  \eps^2 \hat{v}^2 x^{-2-2b} -  \int \int \eps^2 \hat{v} \hat{\psi} \p_x x^{-2-2b} \\ 
& \hspace{40 mm} + \lim_{M \rightarrow \infty} \int_{x = M}  \eps^2 \hat{v} \hat{\psi} x^{-2-2b} \\ \n
& =  - \int \int  \eps^2 \hat{v}^2 x^{-2-2b} + \int \int \frac{(2b+3)(2b+2)}{2} \eps^2 \hat{\psi}^2 x^{-4-2b} \\ 
& \hspace{40 mm} + \lim_{M \rightarrow \infty} \int_{x = M} \frac{2b+2}{2} \eps^2 \hat{\psi}^2 x^{-3-2b} \\ \label{fall.2}
& =  - \int \int \eps^2 \hat{v}^2 x^{-2-2b} + \int \int \frac{(2b+3)(2b+2)}{2}  \eps^2 \hat{\psi}^2 x^{-4-2b}. 
\end{align}

Therefore, summarizing the highest order calculation: 
\begin{align} \n
 \int \int \nabla_\eps^2 \hat{\psi} : \nabla_\eps^2 (\hat{\psi}^{(n)} x^{-2b}) \gtrsim \int \int &[\hat{u}_y^2 x^{-2b} + 2\eps \hat{u}_x^2 + \eps^2 \hat{v}_x^2] x^{-2b} \\ \label{UQ.1}
 & - \int \int [\eps^2 \hat{v}^2 + \eps \hat{u}^2] x^{-2-2b} + \eps^2 \hat{\psi}^2 x^{-4-2b} \\ \label{UQ.1.cnrs}
  \gtrsim \int \int &\hat{u}_y^2 x^{-2b} - C \int \int \eps^2 \hat{v}_x^2 x^{-2b} - C \int \int \eps \hat{u}_x^2 x^{-2b}.
\end{align}

To go from (\ref{UQ.1}) to (\ref{UQ.1.cnrs}), we have used the Hardy inequality in the $x$-direction. We will now address the profile terms arising from $S_u(\hat{u}, \hat{v})$ in the weak formulation (\ref{weak.5}), whose definition has been given in (\ref{defn.SU.SV}): 
\begin{align} \n
- \int \int \Big[u_R \hat{u}_x &+ u_{Rx}\hat{u} + u_{Ry}\hat{v} + v_R \hat{u}_y \Big] \cdot \p_y \phi  \\ \n
= -\int \int  \Big[u_R &\hat{u}_x + u_{Rx}\hat{u} + u_{Ry}\hat{v} + v_R \hat{u}_y \Big] \cdot \p_y \hat{\psi}^{(n)} x^{-2b} \\ \label{Suhat}
& = \int \int \Big[u_R \hat{u}_x + u_{Rx}\hat{u} + u_{Ry}\hat{v} + v_R \hat{u}_y \Big] \cdot \hat{u}^{(n)} x^{-2b}. 
\end{align}

We will first pass to the limit in (\ref{Suhat}), using the definition of $X_1$ in (\ref{norm.x0}), which gives: 
\begin{align} \label{Suhat.1}
(\ref{Suhat}) \xrightarrow{n \rightarrow \infty}  \int \int \Big[u_R \hat{u}_x + u_{Rx}\hat{u} + u_{Ry}\hat{v} + v_R \hat{u}_y \Big] \cdot \hat{u} x^{-2b}.
\end{align}

We proceed to treat each term in (\ref{Suhat.1}), starting with: 
\begin{align} \n
\int \int u_R \hat{u}_x \hat{u} x^{-2b} &= - \int \int \hat{u}^2 \frac{\p_x}{2} \Big(u_R x^{-2b} \Big) + \lim_{M \rightarrow \infty} \int_{x = M} \hat{u}^2 x^{-2b} \\ \n
& = - \int \int \hat{u}^2 \Big( u_{Rx} x^{-2b} - 2b u_R x^{-2b-1} \Big)  \\  \n
& \gtrsim - ||u_{Rx} x||_{L^\infty} \int \int \hat{u}^2 x^{-2b-1} + 2b \min u_R \int \int \hat{u}^2 x^{-2b-1} \\
& \gtrsim b \int \int \hat{u}^2 x^{-2b-1},
\end{align}

according to estimates (\ref{PE0.4}), (\ref{PE4}), so long as $\delta$ is taken small relative to $b$. For the $M$-limit above, we have used estimate (\ref{evo.low}), which is valid so long as $b > 0$. For the second term in (\ref{Suhat.1}), we again appeal to estimates (\ref{PE0.4}), (\ref{PE5}):
\begin{align} \n
| \int u_{Rx} \hat{u}^2 x^{-2b} | &\lesssim ||u_{Rx} x||_{L^\infty} ||\hat{u} x^{-2b-\frac{1}{2}}||_{L^2}^2  \lesssim \mathcal{O}(\delta)  ||\hat{u} x^{-b-\frac{1}{2}}||_{L^2}^2.
\end{align}

For the third term, we shall split $u_R = u^{n-1,p}_R + \eps^{\frac{n}{2}} u^n_{pR} + u^E_R$. First, we apply estimate (\ref{PE0.5}): 
\begin{align} \n
| \int \int u^{P,n-1}_{Ry} \hat{v} \hat{u} x^{-2b} | &\le ||y^2 x^{-\frac{1}{2}} u^{P,n-1}_{Ry}||_{L^\infty} ||\frac{\hat{u}}{y} x^{-b}||_{L^2} ||\frac{\hat{v}}{y} x^{\frac{1}{2}-b}||_{L^2} \\
& \le \mathcal{O}(\delta) ||\hat{u}_y x^{-b}||_{L^2} ||\hat{v}_y x^{\frac{1}{2}-b}||_{L^2}. 
\end{align}

Next, for $\sigma_n$ as in (\ref{sigma.i}), according to estimate (\ref{PE3}),
\begin{align} \n
| \int \int u^{P,n}_{Ry} \hat{v} \hat{u} x^{-2b} | &\le \eps^{\frac{n}{2}} ||u^n_{py} yx^{\frac{1}{2}-\sigma_n} ||_{L^\infty} ||\hat{u}x^{-1+\sigma_n - b} ||_{L^2} ||\frac{\hat{v}}{y} x^{\frac{1}{2}-b}||_{L^2} \\
& \lesssim \eps^{\frac{n}{2}}  ||\hat{u}_x x^{\sigma_n - b} ||_{L^2} ||\hat{v}_y x^{\frac{1}{2}-b}||_{L^2} \lesssim \eps^{\frac{n}{2}} \mathcal{O}(\delta)  ||\hat{v}_y x^{\frac{1}{2}-b}||_{L^2}^2. 
\end{align}

Finally, the Eulerian contribution is handled by an application of (\ref{PE5}):
\begin{align} \n
| \int \int \sqrt{\eps} u^E_{RY} \hat{u} \hat{v} x^{-2b}| &\le \sqrt{\eps} ||u^E_{RY} x^{\frac{3}{2}}||_{L^\infty} ||\frac{\hat{u}}{x^{\frac{3}{4}+b}} ||_{L^2} ||\frac{\hat{v}}{x^{\frac{3}{4}+b}}||_{L^2} \\
& \lesssim \sqrt{\eps} ||\hat{u}_x x^{\frac{1}{4}-b}||_{L^2} ||\sqrt{\eps}\hat{v}_x x^{\frac{1}{4}-b}||_{L^2}.
\end{align}

The fourth term from (\ref{Suhat}), upon using estimate (\ref{PE0.4}) and (\ref{PE5}), reads:
\begin{align}
| \int \int v_R \hat{u}_y \hat{u} x^{-2b}| = | \int \int \frac{v_{Ry}}{2} \hat{u}^2 x^{-2b} | \lesssim ||u_{Rx} x||_{L^\infty} ||\hat{u} x^{-\frac{1}{2}-b}||_{L^2}^2 \lesssim \mathcal{O}(\delta) ||\hat{u}x^{-\frac{1}{2}-b}||_{L^2}^2. 
\end{align}

Summarizing these calculations, 
\begin{align} \n
| (\ref{Suhat.1})| &\gtrsim b ||\hat{u} x^{-\frac{1}{2}-b}||_{L^2}^2 - \mathcal{O}(\delta) ||\hat{u} x^{-\frac{1}{2}-b}||_{L^2}^2 - \mathcal{O}(\delta) ||\hat{u}_y x^{-b}||_{L^2}^2 \\ \n
& \hspace{27 mm} - \mathcal{O}(\delta) ||\{\sqrt{\eps}v_x \hat{v}_y \} x^{\frac{1}{2}-b}||_{L^2}^2 \\ \label{goodB}
& \gtrsim b ||\hat{u} x^{-\frac{1}{2}-b}||_{L^2}^2 - \mathcal{O}(\delta) ||\{\sqrt{\eps}v_x, v_y \} x^{\frac{1}{2}-b}||_{L^2}^2. 
\end{align}

We have absorbed the $\hat{u}_y$ terms into (\ref{UQ.1}), and taken $\delta$ sufficiently small relative to $b$. We shall now address the profile terms from $S_v$: 
\begin{align} \n
\int \int \eps S_v(\hat{u}, \hat{v}) \cdot \hat{\phi}_x   = \int \int \eps \Big[u_R \hat{v}_x + v_{Rx}\hat{u} + v_R \hat{v}_y &+ v_{Ry}\hat{v} \Big] \times \\  \label{Svhat}
&  \Big[ \hat{v}^{(n)} x^{-2b} - 2b \hat{\psi}^{(n)} x^{-2b - 1} \Big].
\end{align}

We may take $n \rightarrow \infty$ above due to the definition of $X_1$ from (\ref{norm.x0}) and (\ref{cnrs.1}):
\begin{align} \label{Svhat.1}
(\ref{Svhat}) \xrightarrow{n \rightarrow \infty}  \int \int \eps \Big[u_R \hat{v}_x + v_{Rx}\hat{u} + v_R \hat{v}_y + v_{Ry}\hat{v} \Big] \cdot \Big[ \hat{v} x^{-2b} - 2b \hat{\psi} x^{-2b - 1} \Big].
\end{align}

We will now proceed to treat each term in (\ref{Svhat.1}). The first profile term, $u_R v_x$ is the most delicate:
\begin{align} \label{stay.stay}
\int \int \eps u_R \hat{v}_x [\hat{v}x^{-2b} - 2b \hat{\psi} x^{-2b-1}].
\end{align}

First, 
\begin{align} \n
\int \int \eps u_R \hat{v}_x \hat{v} x^{-2b} &= - \int \int \eps \hat{v}^2 \frac{\p_x}{2} \Big( u_R x^{-2b} \Big) + \lim_{M \rightarrow \infty} \int_{x = M} \frac{\eps u_R}{2} \hat{v}^2 x^{-2b}  \\ \label{d.phi.LN.2}
& = - \int \int \eps \hat{v}^2 \frac{u_{Rx}}{2} x^{-2b} +\int \int b \eps u_R \hat{v}^2 x^{-2b-1}. 
\end{align}

The $M$-limit above vanishes due to (\ref{evo.low}). Staying with the term (\ref{stay.stay}): 
\begin{align} \n
-2b \int \int &\eps u_R \hat{v}_x \hat{\psi} x^{-2b-1} = 2b \int \int \eps \hat{v} \p_x \Big( u_R \hat{\psi} x^{-2b-1} \Big) \\ \n
& = \int \int 2b \eps u_{Rx}\hat{v} \hat{\psi} x^{-2b-1} + \int \int 2b \eps u_R \hat{v}^2 x^{-2b-1}  \\ \label{d.psi.LN}
& \hspace{42 mm} - \int \int  2b(2b+1) \eps u_R \hat{\psi} \hat{v} x^{-2b-2} \\ \n
& = \int \int 2b \eps u_{Rx}\hat{v} \hat{\psi} x^{-2b-1} + \int \int 2b \eps u_R \hat{v}^2 x^{-2b-1} \\ \n
& \hspace{42 mm} + \int \int b(2b+1) \eps \hat{\psi}^2 u_{Rx} x^{-2b-2} \\ \label{d.psi}
& \hspace{42 mm}  - \int \int  b(2b+1)(2b+2) \eps u_R \hat{\psi}^2 x^{-2b-3}.
\end{align}

Combining the positive terms in (\ref{d.psi}) and (\ref{d.phi.LN.2}), the total positive contribution is $\int \int 3b\eps u_R \hat{v}^2 x^{-2b-1}$. For the final term in (\ref{d.psi}), we will now give the estimate:
\begin{align} \n
\int \int u_R \hat{\psi}^2 x^{-2b-3} &= \int \int u_R \hat{\psi}^2 \frac{-\p_x}{2b+2} x^{-2b-2} \\ \n
& = \int \int \frac{2}{2b+2} u_R \hat{\psi} \hat{v} x^{-2b-2} + \int \int \frac{u_{Rx}}{2b+2} \hat{\psi}^2 x^{-2b-2} \\ \n
& \le  \Big[ \frac{1}{2} || u_R^{\frac{1}{2}}  \hat{\psi} x^{-b-\frac{3}{2}}||_{L^2}^2 + \frac{1}{2} \frac{4}{(2b+2)^2}  || u_R^{\frac{1}{2}}  \hat{v} x^{-b-\frac{1}{2}}||_{L^2}^2 \Big]\\
& \hspace{30 mm} + \frac{||u_{Rx}x||_{L^\infty}}{2b+2}  \frac{\sup |u_R|}{\inf |u_R|} \int \int u_R \hat{\psi}^2 x^{-2b-3}.
\end{align}

By collecting terms and rearranging, we obtain: 
\begin{align}
\Big[1 - \frac{1}{2} -  \frac{||u_{Rx}x||_{L^\infty}}{2b+2}  \frac{\sup |u_R|}{\inf |u_R|} \Big] ||u_R^{\frac{1}{2}} \hat{\psi} x^{-b-\frac{3}{2}}||_{L^2}^2 \le \frac{2}{(2b+2)^2} ||u_R^{\frac{1}{2}} \hat{v} x^{-b-\frac{1}{2}}||_{L^2}^2. 
\end{align}

This then implies: 
\begin{align}
|| u_R^{\frac{1}{2}}  \hat{\psi}x^{-b-\frac{3}{2}}||_{L^2}^2 \le \frac{1}{1-\mathcal{O}(\delta)} \frac{4}{(2b+2)^2} || u_R^{\frac{1}{2}} \hat{v} x^{-b-\frac{1}{2}}||_{L^2}^2. 
\end{align}

Inserting this into (\ref{d.psi}), one arrives at: 
\begin{align} \n 
|\int \int  b(2b+1)(2b+2) \eps &u_R \hat{\psi}^2 x^{-2b-3}| \\ \n
&\le \frac{1}{1-\mathcal{O}(\delta)} \frac{4b(2b+1)(2b+2)}{(2b+2)^2}  \int \int \eps u_R \hat{v}^2 x^{-1-2b} \\ 
& \le  \int \int \frac{5b}{2} u_R \eps \hat{v}^2 x^{-1-2b},
\end{align}

so long as $b$ is sufficiently close to $0$, by the following calculation: 
\begin{align}
\lim_{b \rightarrow 0} \frac{(2b+1)(2b+2)}{(2b+2)^2} = \frac{1}{2}.
\end{align}

Thus, taking $b$ sufficiently small, and recalling the positive contributions from (\ref{d.psi}) and (\ref{d.phi.LN.2}), we have: 
\begin{align} \label{Yabove}
3b \int\int \eps u_R \hat{v}^2 x^{-2b-1} &- \frac{5b}{2} \int \int \eps u_R \hat{v}^2 x^{-2b-1} = \frac{b}{2} \int \int \eps u_R \hat{v}^2 x^{-2b-1}.
\end{align}

The remaining terms from (\ref{d.phi.LN.2}) and (\ref{d.psi}) are then estimated in terms of (\ref{Yabove}) using the smallness of $\mathcal{O}(\delta)$. Summarizing, we have established control over:
\begin{align}
\int \int \eps u_R \hat{v}_x \cdot \Big[\hat{v} x^{-2b} -2b \hat{\psi} x^{-2b-1} \Big] \gtrsim \int \int b \eps \hat{v}^2 x^{-1-2b},
\end{align}

for a constant independent of small $\delta$ and $b$. We will now move to the second term from (\ref{Svhat}), for which we recall estimates (\ref{PE0.1}) and (\ref{PE4}):
\begin{align} \n
| \int \int \eps v_{Rx} \hat{u} \cdot \Big[ \hat{v} x^{-2b} - 2b \hat{\psi} x^{-2b-1} \Big] | &\le \sqrt{\eps} ||v_{Rx} x^{\frac{3}{2}}||_{L^\infty} ||\frac{\hat{u}}{x^{\frac{3}{4}-b}}||_{L^2} ||\sqrt{\eps}\frac{\hat{v}}{x^{\frac{3}{4}-b}}||_{L^2} \\
& \le \sqrt{\eps} ||\hat{u}_x x^{\frac{1}{4}-b}||_{L^2} ||\sqrt{\eps}\hat{v}_x x^{\frac{1}{4}-b}||_{L^2}.
\end{align}

For the third term from (\ref{Svhat}), we use Young's inequality and estimates (\ref{PE1}), (\ref{PE4.new.2}):
\begin{align} \n
| \int \int \eps v_R \hat{v}_y &\Big[ \hat{v} x^{-2b} -2b \hat{\psi} x^{-2b-1} \Big] | \\ \n
& \le ||v_R x^{\frac{1}{2}}||_{L^\infty} \Big[ ||\hat{v}_y x^{\frac{1}{2}-b}||_{L^2}^2 + ||\sqrt{\eps}\hat{v} x^{-b-\frac{1}{2}}||_{L^2}^2 + ||\sqrt{\eps} \hat{\psi} x^{-b-\frac{3}{2}}||_{L^2}^2 \Big] \\
& \le \mathcal{O}(\delta) \Big[ ||\hat{v}_y x^{\frac{1}{2}-b}||_{L^2}^2 + ||\sqrt{\eps}\hat{v} x^{-b-\frac{1}{2}}||_{L^2}^2 + ||\sqrt{\eps} \hat{\psi} x^{-b-\frac{3}{2}}||_{L^2}^2 \Big] .
\end{align}

For the final term from (\ref{Svhat}), we use Young's inequality and estimates (\ref{PE1}), (\ref{PE4.new.2}):
\begin{align}
|\int \int\eps v_{Ry} \hat{v} &\cdot \Big[ \hat{v} x^{-2b} - 2b \hat{\psi} x^{-2b-1} \Big]|  \\ \n
& \lesssim ||v_{Ry} x||_{L^\infty} \Big[ ||\sqrt{\eps} \hat{v} x^{-b-\frac{1}{2}}||_{L^2}^2 + b ||\sqrt{\eps}  \hat{\psi} x^{-b-\frac{3}{2}}||_{L^2}^2 \Big]. 
\end{align}

Summarizing these last few terms, we obtain: 
\begin{align} \label{Svhat.2}
|(\ref{Svhat.1}) | \gtrsim \int \int b \eps \hat{v} x^{-1-2b} + \mathcal{O}(\delta) \Big[ ||\{\hat{u}, \sqrt{\eps}v \} x^{\frac{1}{2}-b}||_{L^2}^2 + || \hat{v}_y x^{\frac{1}{2}-b} ||_{L^2}^2 \Big].
\end{align}

The final task is to turn to the right-hand side. Reading from (\ref{weak.5}), and (\ref{hats.1}) - (\ref{hats.2}):
\begin{align} \n
\int \int \hat{f} \cdot \phi_y + \eps \hat{g} \cdot \phi_x &= \int \int \hat{f} \cdot \hat{u}^{(n)} x^{-2b} + \eps \hat{g} \cdot [\hat{v}^{(n)} x^{-2b} + \hat{\psi}^{(n)} \p_x x^{-2b}] \\ \label{UQ.RHS}
& \xrightarrow{n \rightarrow \infty} \int \int \hat{f} \cdot \hat{u}^{(n)} x^{-2b} + \eps \hat{g} \cdot [\hat{v} x^{-2b} + \hat{\psi} \p_x x^{-2b}],
\end{align}

where we have passed to the limit using again the definition of $X_1$ from (\ref{norm.x0}). Combining (\ref{UQ.1}), (\ref{goodB}), (\ref{Svhat.2}), and (\ref{UQ.RHS}), one obtains the desired result, estimate (\ref{hat.Energy}).

\end{proof}

We now repeat the positivity estimate, with a correspondingly weaker weight in order to close the above energy estimate. We refer the reader to Proposition \ref{prop.pos} for a comparison. 

\begin{lemma} Fix any $0 < b < 1$. Let $\delta, \eps$ be sufficiently small relative to universal constants, and $\eps << \delta$. Then for $[\hat{u}, \hat{v}] \in Z$ solutions to (\ref{hats.1}) - (\ref{hats.2}) with boundary conditions (\ref{hats.bc.1}) satisfy the following estimate:
\begin{align} \label{pos.hat.1}
||\{\hat{u}_x, \sqrt{\eps} \hat{v}_x \} x^{\frac{1}{2}-b}||_{L^2}^2 \lesssim ||\hat{u}_y x^{-b}||_{L^2}^2 + ||\{\sqrt{\eps} \hat{v}, \hat{u} \} x^{-\frac{1}{2}-b}||_{L^2}^2 + \mathcal{W}_{1,P,b}. 
\end{align}
\end{lemma}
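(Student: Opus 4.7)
The plan is to mirror the structure of Proposition \ref{prop.pos}, replacing the multiplier $v \cdot x$ with $\hat v \cdot x^{1-2b}$ (equivalently $\hat\psi_x x^{1-2b}$), where the slightly weaker weight $x^{1-2b}$ with $b > 0$ is forced on us by the fact that Lemma \ref{L1U} only controls $\{\hat u, \sqrt{\eps}\hat v\} x^{-1/2-b}$ on its left-hand side. As in the proof of Lemma \ref{L1U}, I would work inside the weak formulation \eqref{weak.5}: approximate $\hat\psi x^{1-2b}$ by compactly supported $\hat\psi^{(n)} x^{1-2b}$ with $\hat\psi^{(n)} \to \hat\psi$ in $X_1$ (as in \eqref{cnrs.1}), test against this approximation, then pass to the limit. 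The $b > 0$ is essential both for boundary integrability at $x = \infty$ (the limits $\int_{x=M}$ vanish by \eqref{evo.low}) and for integrations by parts that convert weight derivatives into positive factors of $b$.

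The bulk of the work is organized as in Proposition \ref{prop.pos}. First, the Bilaplacian terms, after integration by parts in $y$ and $x$, yield the positive contributions $\int\int\{\hat u_x^2, \eps \hat v_x^2\} x^{1-2b}$ plus lower-order commutators involving $\p_x x^{1-2b}$ and $\p_{xx} x^{1-2b}$, which are bounded by $\|\hat u_y x^{-b}\|_{L^2}^2$ and $\|\{\hat u, \sqrt\eps \hat v\} x^{-1/2-b}\|_{L^2}^2$ via Hardy. Second, the profile terms from $S_u$ and $S_v$ are handled exactly as in (\ref{Supos.1})--(\ref{Supos.5}): the main term $\int\int u_R \hat u_x^2 x^{1-2b}$ and $\int\int \eps u_R \hat v_x^2 x^{1-2b}$ give the desired positivity, while the remaining convective pieces $u_{Ry}\hat v \cdot \hat u_x x^{1-2b}$, $v_{Rx}\hat u \cdot \eps \hat v_x x^{1-2b}$, etc., are absorbed using the self-similar trade (\ref{PE0.5}), (\ref{PE3}), (\ref{PE4.new.2}) and the smallness of $\mathcal{O}(\delta)$. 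Third, the right-hand side contributes exactly the quantities defined in $\mathcal W_{1,P,b}$ after noting that $\phi_x = \hat v^{(n)} x^{1-2b} + (1-2b)\hat\psi^{(n)}x^{-2b}$; the extra $\hat\psi \cdot x^{-2b}$ piece can be reabsorbed using Hardy, yielding only the two products appearing in (\ref{calw1b.P}).

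The main obstacle is twofold. Most delicate is the convection term $u^0_{py} \hat v \cdot \hat v x^{1-2b}$ arising from $\int\int u_{Ry}\hat v \cdot \hat v_y x^{1-2b}$ after integration by parts: the naive bound $\|yu^0_{py}\|_{L^\infty}\|\hat v/y\|_{L^2}\|\hat v_y x^{1-2b}\|_{L^2}$ forces us to sacrifice a full factor of $y$ for $x^{1/2}$, which does work because $1-2b < 1$, and so $\|\hat v_y x^{1/2-b}\|_{L^2}$ appears on both sides but with an $\mathcal{O}(\delta)$ prefactor on the right and is reabsorbed. Equally delicate is the fact that, because we are taking differences, the perfect-derivative cancellation used in (\ref{order.NL}) is lost, so we cannot hope to run the estimate at the critical weight $x$; taking $b$ strictly positive but arbitrarily small is exactly what buys back the room for Hardy's inequality at every sub-critical power.

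Once \eqref{pos.hat.1} is proven, combining it with Lemma \ref{L1U} and taking $\delta$ small relative to $b$ absorbs the $\mathcal{O}(\delta)\|\{\sqrt\eps \hat v_x, \hat v_y\}x^{1/2-b}\|_{L^2}^2$ terms into the left-hand side, and estimating $\mathcal{W}_{1,b}$ by the nonlinear arguments of Lemma \ref{LemmaW} with weights shifted by $x^{-2b}$ (using the bound $\|u_i,v_i\|_Z \lesssim \eps^{1/4-\gamma-\kappa}$ from Theorem \ref{thm.existence}) yields $\|\hat u,\hat v\|_{\cdot} \equiv 0$, completing the uniqueness proof of Theorem \ref{thm.e.u}.
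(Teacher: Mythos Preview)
Your approach is essentially the paper's: multiply the vorticity form by $\hat v\cdot x^{1-2b}$ (working through the weak formulation \eqref{weak.5} with approximants converging in $X_1$), extract positivity from the $u_R\hat u_x^2$ and $\eps u_R\hat v_x^2$ terms, and bound the remaining profile contributions via (\ref{PE0.5}), (\ref{PE3}), (\ref{PE4.new.2}) exactly as in Proposition~\ref{prop.pos}. One correction: the test function is $\phi=\hat v^{(n)}x^{1-2b}$, not $\hat\psi^{(n)}x^{1-2b}$, so $\phi_x=\hat v^{(n)}_xx^{1-2b}+(1-2b)\hat v^{(n)}x^{-2b}$ and the extra piece on the right-hand side is $\eps\hat g\,\hat v\,x^{-2b}$ rather than a $\hat\psi$ term; this is absorbed by the $\|\sqrt\eps\hat v\,x^{-1/2-b}\|_{L^2}^2$ already present on the right of \eqref{pos.hat.1} (or folded into $\mathcal W_{1,b}$ when combining with Lemma~\ref{L1U}).
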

\begin{proof}

The estimate will follow upon applying the multiplier $\hat{v} x^{1-2b}$ to the system (\ref{hats.v}). In order to proceed formally, we must start with the weak formulation given in (\ref{weak.5}), and select the test function:  
\begin{align} \label{jos.0}
\phi = \hat{v}^{(n)} x^{1-2b}, \hspace{3 mm} [\hat{u}^{(n)}, \hat{v}^{(n)}] \xrightarrow{X_1} [\hat{u}, \hat{v}],
\end{align}

where $X_1$ is defined in (\ref{norm.x0}). Turning to the weak formulation in (\ref{weak.5}), we will first expand the second-order terms: 
\begin{align} \n
\int \int \nabla^2_\eps \hat{\psi} : \nabla^2_\eps \phi &= \int \int \nabla^2_\eps \hat{\psi} : \nabla^2_\eps \hat{v}^{(n)} x^{1-2b} \\ \n
& = \int \int \hat{\psi}_{yy} \hat{v}^{(n)}_{yy} x^{1-2b} + 2\eps \hat{\psi}_{xy} \p_x \Big( \hat{v}^{(n)}_y x^{1-2b} \Big) + \eps^2 \hat{\psi}_{xx} \p_{xx} \Big( \hat{v}^{(n)}  x^{1-2b} \Big) \\ \label{Q.P.1}
& =  \int \int -\hat{u}_{y} \hat{v}^{(n)}_{yy} x^{1-2b} + 2\eps \hat{v}_{y} \p_x \Big( \hat{v}^{(n)}_y x^{1-2b} \Big) + \eps^2 \hat{v}_{x} \p_{xx} \Big( \hat{v}^{(n)}  x^{1-2b} \Big)
\end{align}

We first arrive at the first two terms from (\ref{Q.P.1}): 
\begin{align} \n
\int \int -\hat{u}_{y} &\hat{v}^{(n)}_{yy} x^{1-2b} -2 \eps \hat{u}_x \hat{v}^{(n)}_{xy} x^{1-2b} -2 \eps \hat{u}_x \hat{v}^{(n)}_y x^{-2b} \\ \label{jos.1}
& = \int \int - \hat{u}^{(n)}_{y} \p_x[ \hat{u}_{y} x^{1-2b}] - 2\eps \hat{u}^{(n)}_{x} \p_x [ \hat{u}_x  x^{1-2b} ] -2\eps \hat{u}_x \hat{v}^{(n)}_y x^{-2b}.
\end{align}

Referring to the definition of $X_1$ in (\ref{norm.x0}), according to (\ref{jos.0}), we may pass to the limit as $n \rightarrow \infty$, and appeal to the estimates in (\ref{evo.low}) and (\ref{evo.mid}), to obtain: 
\begin{align} \n
(\ref{jos.1})& \xrightarrow{n \rightarrow \infty} \int \int - \hat{u}_{y} \p_x[ \hat{u}_{y} x^{1-2b}] - 2 \eps \hat{u}_{x} \p_x [ \hat{u}_x  x^{1-2b}]  - 2\eps \hat{u}_x \hat{v}_y x^{-2b} \\ \n
& = \int \int -\frac{(1-2b)}{2} \hat{u}_y^2 x^{-2b} + (1+2b) \eps \hat{u}_x^2 x^{-2b} + \lim_{M \rightarrow \infty} \int_{x = M} \Big[ \frac{1}{2} \hat{u}_y^2 x^{1-2b} - \eps \hat{u}_x^2 x^{1-2b} \Big] \\
& =   \int \int -\frac{(1-2b)}{2} \hat{u}_y^2 x^{-2b} + (1+2b) \eps \hat{u}_x^2 x^{-2b}. 
\end{align}

Again referring to the definition in (\ref{norm.x0}), the third term from (\ref{Q.P.1}) is treated by: 
\begin{align} \n
\int \int \eps^2 &\hat{v}_x \p_{xx} \Big( \hat{v}^{(n)} x^{1-2b} \Big) = - \int \int \eps^2 \hat{v}_{xx} \p_x \Big( \hat{v}^{(n)} x^{1-2b} \Big) \\ \label{iq.1}
& \xrightarrow{n \rightarrow \infty}  - \int \int \eps^2 \hat{v}_{xx} \p_x \Big( \hat{v} x^{1-2b} \Big) = \int \int -\eps^2 \hat{v}_{xx} \hat{v}_x x^{1-2b} - \int \int \eps^2 \hat{v}_{xx} \hat{v} (1-2b) x^{-2b}. 
\end{align}

Integrating by parts the first term on the right-hand side of (\ref{iq.1}), and appealing to estimate (\ref{evo.mid}):
\begin{align} \n
 \int \int -\eps^2 \hat{v}_{xx} \hat{v}_x x^{1-2b} &= \int \int \eps^2 \frac{1-2b}{2} |\hat{v}_x|^2 x^{-2b} - \lim_{M \rightarrow \infty} \int_{x = M} \frac{\eps^2}{2} \hat{v}_x^2 x^{1-2b} \\ \label{iq.2}
 & =  \int \int \eps^2 \frac{1-2b}{2} |\hat{v}_x|^2 x^{-2b}.
\end{align}

Integrating by parts the second term on the right-hand side of (\ref{iq.1}), and again appealing to estimates (\ref{evo.low}) - (\ref{evo.high}) for the $M$-limit below:
\begin{align} \n
- \int \int \eps^2 \hat{v}_{xx} \hat{v} (1-2b) x^{-2b} &= \int \int \eps^2 (1-2b) \hat{v}_x \p_x \Big[ \hat{v} x^{-2b}  \Big] + \lim_{M \rightarrow \infty} \int_{x = M} \eps^2 (1-2b) \hat{v}_x \hat{v} x^{-2b}\\ \n
& =  \int \int \eps^2 (1-2b) \hat{v}_x^2 x^{-2b} - \int \int \eps^2 2b(1-2b) \hat{v}_x \hat{v} x^{-2b-1} \\ \n
& =  \int \int \eps^2 (1-2b) \hat{v}_x^2 x^{-2b} + \int \int \eps^2 b(1-2b) \hat{v}^2 \p_x x^{-2b-1} \\ \n
& \hspace{5 mm} - \lim_{M \rightarrow \infty} \eps^2 b(1-2b) \hat{v}^2 x^{-2b-1} \\ \label{iq.3}
& = \int \int \eps^2 (1-2b) \hat{v}_x^2 x^{-2b} - \int \int \eps^2 b(1-2b) (2b+1) \hat{v}^2  x^{-2b-2}. 
\end{align}

Combining the above estimates:  
\begin{align}
(\ref{iq.1}) = \int \int \frac{3}{2}(1-2b) \eps^2 \hat{v}_x^2 x^{-2b}  - b(2b+1)(1-2b) \eps^2 \hat{v}^2 x^{-2b-2}.
\end{align}

Hence, summarizing (\ref{jos.1}) - (\ref{iq.3}):
\begin{align}
|\lim_{n \rightarrow \infty} \int \int \nabla^2_\eps \hat{\psi} : \nabla^2_\eps \phi| = | \int \int \nabla^2_\eps \hat{\psi} : \nabla^2_\eps (\hat{v} x^{1-2b})| \lesssim \int \int [\eps^2 \hat{v}_x^2 + \eps \hat{u}_x^2 + \hat{u}_y^2 ]x^{-2b}.
\end{align}

We will now turn to the profile terms from $S_u$, which upon consultation with (\ref{weak.5}), the definition in (\ref{norm.x0}), and (\ref{jos.0}), read:
\begin{align} \n
\int \int  \Big[u_R \hat{u}_x &+ u_{Rx} \hat{u} + u_{Ry}\hat{v} + v_R \hat{u}_y \Big] \cdot \hat{u}^{(n)}_x x^{1-2b} \\  \label{hatSU2}
&\xrightarrow{n \rightarrow \infty} \int \int  \Big[u_R \hat{u}_x + u_{Rx} \hat{u} + u_{Ry}\hat{v} + v_R \hat{u}_y \Big] \cdot \hat{u}_x x^{1-2b}.
\end{align}

We now turn our attention to (\ref{hatSU2}). The first term yields the desired positivity: 
\begin{align}
\int \int u_R \hat{u}_x^2 x^{1-2b} \gtrsim \min u_R \int \int \hat{u}_x^2 x^{1-2b}. 
\end{align}

Next, by (\ref{PE0.4}), (\ref{PE5}):
\begin{align} \n
|\int \int u_{Rx} \hat{u} \hat{u}_x x^{1-2b}| &\le ||u_{Rx} x||_{L^\infty} ||\hat{u}_x x^{\frac{1}{2}-b}||_{L^2} ||\hat{u} x^{-\frac{1}{2}-b}||_{L^2} \\
& \le \mathcal{O}(\delta) ||\hat{u}_x x^{\frac{1}{2}-b}||_{L^2}^2. 
\end{align}

Next, we shall split $u_R = u^P_R + u^E_{R}$, and use estimate (\ref{PE0.5}) and (\ref{PE3}) for (\ref{new.new.1}) below and (\ref{PE5}) for (\ref{new.new.2}) below:
\begin{align} \label{new.new.1}
&| \int \int u^P_{Ry} \hat{v} \hat{u}_x x^{1-2b} | \le ||y u^P_{Ry}||_{L^\infty} ||\hat{v}_y x^{\frac{1}{2}-b}||_{L^2}^2, \\ \n
&| \int \int \sqrt{\eps} u^E_{RY} \hat{v} \hat{u}_x x^{1-2b} | \le ||u^E_{RY} x^{\frac{3}{2}}||_{L^\infty} || \sqrt{\eps} \hat{v} x^{-\frac{1}{2}-b}||_{L^2} ||\hat{u}_x x^{\frac{1}{2}-b}||_{L^2} \\ \label{new.new.2}
& \hspace{40 mm} \lesssim \sqrt{\eps} \Big[ ||\sqrt{\eps} \hat{v} x^{-\frac{1}{2}-b}||_{L^2}^2 + ||\hat{u}_x x^{\frac{1}{2}-b}||_{L^2}^2 \Big].
\end{align}

For the fourth term from (\ref{hatSU2}), by estimates (\ref{PE1}) and (\ref{PE4.new.2}):
\begin{align} \n
| \int \int v_R \hat{u}_y \hat{u}_x x^{1-2b} | &\le ||v_R x^{\frac{1}{2}}||_{L^\infty} ||\hat{u}_y x^{-b}||_{L^2} ||\hat{u}_x x^{\frac{1}{2}-b}||_{L^2} \\
& \le \mathcal{O}(\delta)  ||\hat{u}_y x^{-b}||_{L^2} ||\hat{u}_x x^{\frac{1}{2}-b}||_{L^2}. 
\end{align}

Summarizing the last four calculations: 
\begin{align} \n
| (\ref{hatSU2}) | \gtrsim \int \int \hat{u}_x^2 x^{1-2b} &- \mathcal{O}(\delta) \Big[ ||\hat{u} x^{-\frac{1}{2}-b}||_{L^2}^2 \\ 
& + ||\hat{u}_y x^{-b}||_{L^2}^2 + ||\sqrt{\eps} \hat{v} x^{-\frac{1}{2}-b}||_{L^2}^2 \Big].
\end{align}

The final three terms appearing on the right-hand side above all appear on the right-hand side of estimate (\ref{pos.hat.1}). Turning now to the profile terms, from $S_v$, for which we read (\ref{weak.5}) with $\phi = \hat{v}^{(n)} x^{1-2b}$, appeal to (\ref{norm.x0}) and (\ref{jos.0}), giving ultimately:
\begin{align} \n
\int \int \eps  \Big[u_R \hat{v}_x &+ v_{Rx} \hat{u} + v_R \hat{v}_y + v_{Ry} \hat{v} \Big] \cdot \p_x[\hat{v}^{(n)} x^{1-2b} ] \\ \label{Svlimit.g}
& \xrightarrow{n \rightarrow \infty} \int \int \eps  \Big[u_R \hat{v}_x + v_{Rx} \hat{u} + v_R \hat{v}_y + v_{Ry} \hat{v} \Big] \cdot \p_x[\hat{v} x^{1-2b} ].
\end{align}

We will treat each term in (\ref{Svlimit.g}). For the first term from (\ref{Svlimit.g}):
\begin{align} \n
 \int \int \eps u_R \hat{v}_x &\Big( \hat{v}_x x^{1-2b} + (1-2b) \hat{v} x^{-2b} \Big)  \\ \n
& = \int \int \eps u_R \hat{v}_x^2 x^{1-2b} + b(1-2b) u_R \eps \hat{v}^2 x^{-1-2b} - \int \int \frac{1-2b}{2} \eps u_{Rx} \hat{v}^2 x^{-2b} \\
& \gtrsim \int \int \eps \hat{v}_x^2 x^{1-2b} + b \int \int \eps \hat{v}^2 x^{-1-2b}. 
\end{align}

Above we have used (\ref{PE0.4}) and (\ref{PE5}). For the second term, we integrate by parts:
\begin{align} \n
\int \int \eps v_{Rx} \hat{u} \p_x[ \hat{v} x^{1-2b}] &= 
\int \int -\eps \p_x \Big( v_{Rx} \hat{u} \Big) \cdot \hat{v} x^{1-2b} + \lim_{M \rightarrow \infty} \int_{x = M} v_{Rx} \hat{u} \hat{v} x^{1-2b} \\ \n
&= \int \int -\eps v_{Rxx} \hat{u} \hat{v} x^{1-2b} - \eps v_{Rx} \hat{v} \hat{u}_x x^{1-2b} \\ \n
& \le \sqrt{\eps} ||v_{Rx} x^{\frac{3}{2}}, v_{Rxx} x^{\frac{5}{2}}||_{L^\infty}\Big[ ||\hat{u}x^{-\frac{1}{2}-b}||_{L^2}^2  \\ 
& \hspace{20 mm} + ||\hat{u}_x x^{\frac{1}{2}-b}||_{L^2}^2 + ||\sqrt{\eps} \hat{v} x^{-\frac{1}{2}-b}||_{L^2}^2 \Big].
\end{align}

The above $M-$limit vanishes according to estimates (\ref{evo.low}), and we have used estimates (\ref{PE0.1}) and (\ref{PE4}). For the third term, we recall estimates (\ref{PE1}), (\ref{PE4.new.2}):
\begin{align} \n
\int \int \eps v_R \hat{v}_y & \hat{v}_x x^{1-2b} + c_0 \eps v_R \hat{v}_y \hat{v} x^{-2b}  \\
& \le \sqrt{\eps} ||v_R x^{\frac{1}{2}}||_{L^\infty} \Big[ ||\hat{v}_y x^{\frac{1}{2}-b}||_{L^2}^2 + ||\sqrt{\eps} \hat{v}_x x^{\frac{1}{2}-b}||_{L^2}^2 + ||\sqrt{\eps} \hat{v} x^{-\frac{1}{2}-b}||_{L^2}^2 \Big].
\end{align}

For the fourth term, we integrate by parts and appeal to (\ref{evo.low}), (\ref{PE0.1}) - (\ref{PE1}), and (\ref{PE4}):
\begin{align} \n
\int \int \eps v_{Ry} \hat{v} \cdot \p_x[ \hat{v} x^{1-2b}] &= -\int \int \eps \p_x[ v_{Ry} \hat{v}] \cdot\hat{v} x^{1-2b} + \lim_{M \rightarrow \infty} \int_{x = M} \eps v_{Ry} \hat{v}^2 x^{1-2b} \\ \n
&= \int \int -\eps v_{Rxy} \hat{v}^2 x^{1-2b} - \eps v_{Ry} \hat{v}_x \hat{v} x^{1-2b} \\
& \le ||v_{Ry}x, v_{Rxy} x^2 ||_{L^\infty}\Big[ ||\sqrt{\eps} \hat{v}_x x^{\frac{1}{2}-b}||_{L^2}^2 + ||\sqrt{\eps}\hat{v} x^{-\frac{1}{2}-b}||_{L^2}^2 \Big]. 
\end{align}

Summarizing these four terms, 
\begin{align} \n
|(\ref{Svlimit.g})| &\gtrsim \int \int \eps \hat{v}_x^2 x^{1-2b} + b \int \int \eps \hat{v}^2 x^{-1-2b} \\
 & - \mathcal{O}(\delta) \Big[ ||\hat{u} x^{-\frac{1}{2}-b}, \hat{u}_x x^{\frac{1}{2}-b}||_{L^2}^2 + ||\sqrt{\eps} \hat{v} x^{-\frac{1}{2}-b}, \sqrt{\eps} \hat{v}_x x^{\frac{1}{2}-b}||_{L^2}^2 \Big].
\end{align}

On the right-hand side, appealing again to (\ref{norm.x0}), (\ref{jos.0}), and the definitions of $\hat{f}, \hat{g}$ in (\ref{hats.1}) - (\ref{hats.2}), one obtains: 
\begin{align} \n
 \int \int \hat{f} \hat{u}_x^{(n)} x^{1-2b} &+\hat{g} \Big[ \hat{v}^{(n)}_x x^{1-2b} + (1-2b) \hat{v}^{(n)} x^{-2b} \Big] \\
 & \xrightarrow{ n \rightarrow \infty}  \int \int \hat{f} \hat{u}_x x^{1-2b} +\hat{g} \Big[ \hat{v}_x x^{1-2b} + (1-2b) \hat{v} x^{-2b} \Big],
\end{align}

Placing the above estimates together yields the estimate (\ref{pos.hat.1}). 

\end{proof}

We will now introduce some notation, which is a natural adaptation of what is found in Section \ref{Section.Z} to the weaker weight of $x^{-b}$. The reader should recall the definitions of the cutoff functions introduced in (\ref{zeta}) - (\ref{rho}). The energy norms are defined as follows:
\begin{align} \label{norm.x0b}
||u,v||_{X_{1,b}}^2 &:= ||u_y x^{-b}||_{L^2}^2 + ||\{\sqrt{\epsilon}v_x, v_y \} x^{\frac{1}{2}-b}||_{L^2}^2\\ \label{norm.x1b}
||u,v||_{X_{2,b}}^2 &:= ||  u_{xy} \cdot \rho_2 x^{1-b}||_{L^2}^2 + ||  \{ \sqrt{\epsilon} v_{xx},  v_{xy} \} \cdot \rho_2^{\frac{3}{2}} x^{\frac{3}{2}-b}||_{L^2}^2, \\ \label{norm.x2b}
||u,v||_{X_{3,b}}^2 &:= ||  u_{xxy} \cdot \rho_3^2 x^{2-b}||_{L^2}^2 +||  \{ \sqrt{\epsilon} v_{xxx},  v_{xxy} \} \cdot \rho_3^{\frac{5}{2}} x^{\frac{5}{2}-b}||_{L^2}^2.
\end{align}

\begin{definition} The norms $Y_{2,b}, Y_{3,b}$ are strengthenings of $X_{2,b}, X_{3,b}$ near the boundary, $x = 1$, and defined through: 
\begin{align} \label{norm.Y2b}
&||u,v||_{Y_{2,b}}^2 := ||u_{xy} x^{1-b}||_{L^2}^2 + ||\{\sqrt{\epsilon} v_{xx}, v_{xy} \} x^{\frac{3}{2}-b}||_{L^2}^2 + ||u_{yy}||_{L^2(x \le 2000)}, \\ \label{norm.Y3b}
&||u,v||_{Y_{3,b}}^2 := || u_{xxy} \cdot \zeta_3 x^{2-b}||_{L^2}^2 + || \{ \sqrt{\epsilon} v_{xxx}, v_{xxy} \} \cdot \zeta_3 x^{\frac{5}{2}-b}||_{L^2}^2.
\end{align}
\end{definition}

\begin{definition} The norm $Z_b$ is defined through: 
\begin{align} \nonumber
||u,v||_{Z_b} := &||u,v||_{X_{1,b} \cap X_{2,b} \cap X_{3,b}} + \epsilon^{N_2} ||u,v||_{Y_{2,b}} + \epsilon^{N_3} ||u,v||_{Y_{3,b}} \\ \nonumber 
&+ \epsilon^{N_4} ||ux^{\frac{1}{4}-b} , \sqrt{\eps} v x^{\frac{1}{2}-b}||_{L^\infty} + \epsilon^{N_5} \sup_{x \ge 20} ||\sqrt{\eps} v_x x^{\frac{3}{2}-b} , u_x x^{\frac{5}{4}-b} ||_{L^\infty} \\ \label{norm.Zb}
& + \eps^{N_6}  \sup_{x \ge 20} ||u_y x^{\frac{1}{2}-b}||_{L^2_y} + \epsilon^{N_7} \Big[\int_{20}^\infty x^{4-b} ||\sqrt{\eps} v_{xx}||_{L^\infty_y}^2 dx \Big]^{\frac{1}{2}} .
\end{align}
\end{definition}

Next, we record the second and third order versions of the energy and positivity estimates, which mimic Propositions \ref{prop.ho.1}, \ref{prop.ho.2}, \ref{prop.ho.3}, \ref{prop.ho.4}. We will omit most details, and record only those differences which arise. 

\begin{lemma}[Second-Order Energy Estimate] Fix any $0 < b < 1$. Let $\delta, \eps$ be sufficiently small relative to universal constants, and $\eps << \delta$. Then for $[\hat{u}, \hat{v}] \in Z$ solutions to (\ref{hats.1}) - (\ref{hats.2}):
\begin{align}
|| \hat{u}_{xy} \rho_2 x^{1-b}||_{L^2}^2 \lesssim \mathcal{O}(\delta)|| \{ \sqrt{\eps} \hat{v}_{xx}, \hat{v}_{xy}\} \rho_2^{\frac{3}{2}} x^{\frac{3}{2}-b}||_{L^2}^2 + ||\hat{u},\hat{v}||_{X_{1,b}}^2 +\mathcal{W}_{1,b} + \mathcal{W}_{2,E,b},
\end{align}
where (recall the definition of $\rho_2$ from (\ref{rho})):
\begin{align} \label{calw2b.E}
&\mathcal{W}_{2,E,b} := \int \int \hat{f}_x  \hat{u}_x \rho_2^2 x^{2-2b} + \int \int \eps \hat{g}_{x} \hat{v}_x \rho_2^2 x^{2-2b}, \\ \label{calw2b.P}
&\mathcal{W}_{2,P,b} = \int \int \hat{f}_x \hat{u}_{xx} \rho_2^3 x^{3-2b} +  \int \int \eps \hat{g}_{x}\hat{v}_{xx} \rho_2^3 x^{3-2b}, \\ \label{calw2b}
&\mathcal{W}_{2,b} := \mathcal{W}_{2,E,b} + \mathcal{W}_{2,P,b}.
\end{align}
\end{lemma}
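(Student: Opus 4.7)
The plan is to mimic the proof of Proposition \ref{prop.ho.1} applied to the difference system (\ref{hats.1})--(\ref{hats.2}), with the almost-linear weight $w_2 = \rho_2 x$ replaced by its weakened analog $w_{2,b} := \rho_2 x^{1-b}$. Concretely, I will apply the multiplier $\hat v \cdot w_{2,b}^2$ to the vorticity form (\ref{hats.v}) of the difference equation. Since we only have $[\hat u,\hat v]\in Z$, a direct integration by parts is not immediately justified at $x=\infty$, so as in Lemma \ref{L1U} I will formalize this by picking a sequence $[\hat u^{(n)},\hat v^{(n)}]\in C_0^\infty(\Omega)$ converging to $[\hat u,\hat v]$ in $X_1$, testing the weak formulation (\ref{weak.5}) against $\phi=\hat v^{(n)}w_{2,b}^2$, and passing to the limit. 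Boundary contributions at $x\to\infty$ vanish by the estimates in (\ref{evo.low})--(\ref{evo.high}), and those at $x=1$ vanish because of the cutoff $\rho_2$ in $w_{2,b}$.

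The second-order terms from $\nabla_\eps^2\hat\psi:\nabla_\eps^2\phi$ will, after distributing derivatives and integrating by parts once more, produce the positive quantities
\[
\int\!\!\int \Big[\hat u_{xy}^2 + \eps \hat u_{xx}^2 + \eps \hat v_{xy}^2 + \eps^2 \hat v_{xx}^2\Big]w_{2,b}^2
\]
up to error terms controlled by $\|\hat u,\hat v\|_{X_{1,b}}^2$; the latter arise from derivatives falling on $w_{2,b}^2$, whose almost-linear property $|x^{k-1}\partial_x^k w_{2,b}|\lesssim 1$ (with an extra factor of $x^{-b}$) still holds. The profile terms from $S_u(\hat u,\hat v)$ and $\eps S_v(\hat u,\hat v)$ are handled exactly as in Proposition \ref{prop.ho.1}, invoking the pointwise bounds (\ref{PE0.1})--(\ref{PE4.new.2}) on $u_R,v_R$. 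Each profile estimate trades weights of $y$ or $x$ against derivatives of $u_R,v_R$ and produces either (i) $\mathcal O(\delta)$ times $\|\hat u,\hat v\|_{X_{1,b}}^2$, (ii) $\mathcal O(\delta)$ times $\|\{\sqrt{\eps}\hat v_{xx},\hat v_{xy}\}\rho_2^{3/2}x^{3/2-b}\|_{L^2}^2$ (which is the term to be absorbed later by the positivity estimate), or (iii) a term already inside $\|\hat u,\hat v\|_{X_{1,b}}^2$ via Hardy inequalities in $x$ and $y$ (available since $\hat u,\hat v$ vanish at $x=1$ and $y=0$).

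The right-hand side of (\ref{weak.5}) upon integration by parts in $y$ produces $\int\int \hat f\,\hat u_x\rho_2^2 x^{2-2b}$ and similarly, after an $x$-integration by parts, $\int\int \eps\hat g[\hat v_x\rho_2^2 x^{2-2b}+\hat v\cdot\partial_x(\rho_2^2 x^{2-2b})]$. The latter piece, after bounding $|\partial_x(\rho_2^2 x^{2-2b})|\lesssim \rho_2 x^{1-b}$, contributes at worst $\int\int \eps|\hat g||\hat v|x^{-2b}$ and $\int\int \eps|\hat g||\hat v_x|x^{1-2b}$, which are majorized by $\mathcal W_{1,b}+\mathcal W_{2,E,b}$ as defined in (\ref{calw1b}) and (\ref{calw2b.E}) (note the $f$-contribution has no absolute values inside the integrand in $\mathcal W_{2,E,b}$, exactly to preserve the cancellation structure used for the nonlinearity $\bar v\hat u_y$, analogous to (\ref{order.NL})).

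The main obstacle I anticipate is the same critical Hardy issue encountered in Lemma \ref{L1U}: since we have shifted weights down by $x^{-b}$, certain profile contributions (in particular $v_R\hat u_y$ and $u_{Ry}\hat v$ coupled against the second-order terms) become borderline. This is precisely why the parameter $b$ must be taken strictly positive but sufficiently small relative to the universal constants controlling the profile norms from Theorem \ref{thm.m.part.1}. The factor $b$ creates the analog of the coercive term $b\|\hat u x^{-1/2-b}\|_{L^2}^2$ produced in the proof of Lemma \ref{L1U}, which here manifests in the lower-order $X_{1,b}$ piece of the right-hand side and allows all degenerate contributions to be absorbed.
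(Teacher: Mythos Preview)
There is a genuine gap in the choice of multiplier. Testing the \emph{undifferentiated} vorticity form (\ref{hats.v}), or equivalently the weak formulation (\ref{weak.5}), against $\phi = \hat v\, w_{2,b}^2$ does \emph{not} produce the second-order energy; it reproduces the first-order positivity estimate. Indeed, the highest-order pairing in (\ref{weak.5}) is
\[
\int\!\!\int \hat\psi_{yy}\,\phi_{yy}
= \int\!\!\int (-\hat u_y)\,\hat v_{yy}\,w_{2,b}^2
= \int\!\!\int \hat u_y\,\hat u_{xy}\,w_{2,b}^2
= -\tfrac12\int\!\!\int \hat u_y^2\,\partial_x(w_{2,b}^2),
\]
which controls $\|\hat u_y x^{-b}\|_{L^2}^2$, not $\|\hat u_{xy}\rho_2 x^{1-b}\|_{L^2}^2$. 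The same mismatch appears on the right-hand side: with your multiplier you obtain $\int\!\!\int \hat f\,\hat u_x\,\rho_2^2 x^{2-2b}$, whereas the quantity $\mathcal W_{2,E,b}$ in the statement involves $\hat f_x\,\hat u_x$.

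The correct procedure, exactly as in Proposition~\ref{prop.ho.1} and as carried out in the paper, is to first differentiate the weak formulation once in $x$ to obtain (\ref{weak.6}), and then test with $\phi = \rho_2^2\,\hat v^{(n)} x^{2-2b}$. In that case the leading term becomes $\int\!\!\int \nabla_\eps^2\hat\psi_x:\nabla_\eps^2\phi = \int\!\!\int \hat v_{yy}\,\hat v^{(n)}_{yy}\,\rho_2^2 x^{2-2b} = \int\!\!\int \hat u_{xy}\,\hat u^{(n)}_{xy}\,\rho_2^2 x^{2-2b}$, which after integrating by parts onto $\hat u^{(n)}_y$ (so that convergence in $X_1$ suffices) and passing to the limit yields the desired $\|\hat u_{xy}\rho_2 x^{1-b}\|_{L^2}^2$. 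Once you make this correction, the remainder of your outline---treating the profile terms via (\ref{PE0.1})--(\ref{PE4.new.2}) and absorbing the $\mathcal O(\delta)$ contributions---goes through as in Proposition~\ref{prop.ho.1}, with the lower-order errors controlled by $\|\hat u,\hat v\|_{X_{1,b}}^2$ (the term $\int\!\!\int \eps\hat v^2\,\partial_x^3(w_{2,b}^2)$ is now genuinely of this type since $\partial_x^3(\rho_2^2 x^{2-2b})\sim x^{-1-2b}$ and $\|\sqrt\eps\,\hat v\, x^{-\frac12-b}\|_{L^2}$ is part of the $X_{1,b}$ control from Lemma~\ref{L1U}).
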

\begin{proof}
Differentiating the weak formulation gives: 
\begin{align} \n
\int \int \nabla_\epsilon^2 \hat{\psi_x} : \nabla_\epsilon^2 \phi - \int \int \p_x S_u(\hat{u}, \hat{v}) \cdot \phi_y &+ \eps \p_x S_v(\hat{u}, \hat{v}) \cdot \phi_x \\ \label{weak.6}  &= \int \int \eps^{\frac{n}{2}+\gamma} \Big[ - \p_x \hat{f} \phi_y + \eps \p_x \hat{g} \phi_x \Big],
\end{align}

For the second-order energy estimate, we select $\phi = \rho_2^2 \hat{v}^{(n)} x^{2-2b}$, where: 
\begin{align} \label{UQ.lim}
[\hat{u}^{(n)}, \hat{v}^{(n)}] \in C^\infty_{0,D}, \hspace{3 mm} [\hat{u}^{(n)}, \hat{v}^{(n)}] \xrightarrow{X_1} [\hat{u}, \hat{v}]. 
\end{align}

Let us turn to the highest-order terms:
\begin{align} \n
\int \int \nabla^2_\eps \hat{\psi}_x : \nabla^2_\eps \phi &= \int \int \nabla^2_\eps \hat{v} : \nabla^2_\eps \rho_2^2 \hat{v}^{(n)} x^{2-2b} \\ \n
& = \int \int \hat{v}_{yy} \rho_2^2 \hat{v}^{(n)}_{yy} x^{2-2b} + 2\eps \hat{v}_{xy} \p_x[\hat{v}^{(n)}_y \rho_2^2 x^{2-2b}] + \eps^2 \hat{v}_{xx} \p_{xx}[\rho_2^2 \hat{v}^{(n)} x^{2-2b}] \\ \n
& =  \int \int  \hat{u}_{xy} \rho_2^2 \hat{u}^{(n)}_{xy} x^{2-2b} + 2\eps \hat{v}_{xy} \p_x[\hat{v}^{(n)}_y \rho_2^2 x^{2-2b}] + \eps^2 \hat{v}_{xx} \p_{xx}[\rho_2^2 \hat{v}^{(n)} x^{2-2b}] \\ \label{UQP.1}
& = \int \int -\p_x[ \hat{u}_{xy} \rho_2^2 x^{2-2b}] \hat{u}^{(n)}_y - 2\eps \hat{v}_{xxy} \hat{v}^{(n)}_y \rho_2^2 x^{2-2b} - \eps^2 \hat{v}_{xxx} \p_x[\rho_2^2 \hat{v}^{(n)} x^{2-2b}]
\end{align}

One now checks according to the definition (\ref{norm.x0}), that (\ref{UQ.lim}) suffices to pass to the limit in the above identity, which upon integrating by parts in $x$ yields:
\begin{align} \n
(\ref{UQP.1}) \xrightarrow{n \rightarrow \infty} &\int \int -\p_x[ \hat{u}_{xy} \rho_2^2 x^{2-2b}] \hat{u}_y - 2\eps \hat{v}_{xxy} \hat{v}_y \rho_2^2 x^{2-2b} - \eps^2 \hat{v}_{xxx} \p_x[\rho_2^2 \hat{v} x^{2-2b}] \\ \n
& = \int \int [\hat{u}_{xy}^2 + \eps \hat{u}_{xx}^2 + \eps^2 \hat{v}_{xx}^2] \rho_2^2 x^{2-2b} + J. 
\end{align}

where $|J| = |c_0 \eps^2 \hat{v}_x^2 \p_{xx}(\rho_2^2 x^{2-2b}) + c_1 \eps^2 \hat{v}^2 \p_{x}^4 (\rho_2^2 x^{2-2b})|  \lesssim ||u,v||_{X_{1,b}}^2$. From here, repeating the calculations in Proposition \ref{prop.ho.1} gives the desired result, where the required integrations by parts are justified upon using that $b > 0$, combined with the estimates in (\ref{evo.low}) - (\ref{evo.high}). These justifications are analogous to those in Lemma \ref{L1U}, and so we omit the details. 

\end{proof}

\begin{lemma}[Second-Order Positivity Estimate]  Fix any $0 < b < 1$. Let $\delta, \eps$ be sufficiently small relative to universal constants, and $\eps << \delta$. Then for $[\hat{u}, \hat{v}] \in Z$ solutions to (\ref{hats.1}) - (\ref{hats.2}):
\begin{align}
||\{ \sqrt{\eps}\hat{v}_{xx}, \hat{v}_{xy} \} \rho_2^{\frac{3}{2}} x^{\frac{3}{2}-b}||_{L^2}^2 \lesssim ||\hat{u}_{xy} \rho_2 x^{1-b}||_{L^2}^2 + ||\hat{u},\hat{v}||_{X_{1,b}}^2 +\mathcal{W}_{1,b} + \mathcal{W}_{2,b}. 
\end{align}
\end{lemma}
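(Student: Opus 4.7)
\medskip

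The proof proceeds in parallel with Proposition \ref{prop.ho.2}, but now working from the once-differentiated weak formulation (\ref{weak.6}) with the weakened weights encoded in $Z_b$. Concretely, I would test (\ref{weak.6}) against $\phi = \rho_2^3 \hat{v}^{(n)}_x x^{3-2b}$, where $[\hat{u}^{(n)}, \hat{v}^{(n)}] \in C^\infty_{0,D}$ is a sequence approximating $[\hat{u}, \hat{v}]$ in $X_1$. After rigorously passing to the limit $n \to \infty$ (justified as in Lemma \ref{L1U} using the $X_1$ density and the decay estimates (\ref{evo.low})--(\ref{evo.high}) to kill the $x=\infty$ boundary contributions), this is equivalent to multiplying the stream-function form of the differentiated vorticity equation by $\rho_2^3 \hat{v}_x x^{3-2b}$ and integrating by parts.

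The highest-order terms $\nabla_\eps^2 \hat{\psi}_x : \nabla_\eps^2 \phi$ yield, after two integrations by parts in $y$ and suitable integrations in $x$, the quantity
\begin{align*}
\int\int [\hat{u}_{xxy}^2 + \eps \hat{u}_{xxx}^2 + \eps^2 \hat{v}_{xxx}^2 + \eps \hat{v}_{xxy}^2] \rho_2^3 x^{3-2b},
\end{align*}
plus commutator-type remainders involving at most two derivatives of $\hat{u}, \hat{v}$ weighted by $\p_x$-derivatives of $\rho_2^3 x^{3-2b}$; since $|\p_x^k(\rho_2^3 x^{3-2b})| \lesssim x^{2-2b}$ for $k=1$, and drops further for higher $k$, these remainders are controlled by $\|\hat{u},\hat{v}\|_{X_{1,b}}^2$. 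The profile terms from $\p_x S_u(\hat{u}, \hat{v})$ and $\eps \p_x S_v(\hat{u}, \hat{v})$ are handled exactly as in Proposition \ref{prop.ho.2}: expand via the product rule, use the profile estimates (\ref{PE0.1})--(\ref{PE5}) to extract $\mathcal{O}(\delta)$ smallness wherever $u_R, v_R$ appear undifferentiated (the key estimate being $\|v_R x^{1/2}\|_{L^\infty} \le \mathcal{O}(\delta)$), and apply Hardy's inequality in $x$ and $y$ to convert the lower-order terms into $X_{1,b}$ quantities. The main positive term $\int\int \eps u_R \hat{v}_{xx}^2 \rho_2^3 x^{3-2b}$ coming from $\eps \p_x(u_R \hat{v}_x) \cdot \p_x\phi$ contributes the required $\sqrt{\eps}\hat{v}_{xx}$ control on the left side.

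The hard part, as in Lemma \ref{L1U}, is the bookkeeping around the weakened weight $x^{-b}$: specifically, every time an integration by parts in $x$ produces a factor of $(1-2b)$ or $(3-2b)$, one must verify that the residual sign structure does not destroy positivity of the main terms, and that all limits $\int_{x=M}$ vanish. Here the $M \to \infty$ boundary contributions from terms like $\int_{x=M} \eps^2 \hat{v}_{xx} \hat{v}_x \rho_2^3 x^{3-2b}$ and $\int_{x=M} \eps u_R \hat{v}_x^2 \rho_2^3 x^{3-2b}$ are estimated using the decay rates $\|\{\sqrt{\eps}\hat{v}_x, \hat{v}_y\} x^{3/2-b}\|_{L^2_y}$, $\|\hat{v}_{xx} x^{3/2-b}\|_{L^2_y}$, etc., which follow from the fact that $[\hat{u}, \hat{v}] \in Z \subset Z_b$ (so no new decay is needed beyond what is already available in Lemma \ref{L.Evol}). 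The right-hand side $\int\int [-\hat{f}_x \phi_y + \eps \hat{g}_x \phi_x]$ produces, after integration by parts, exactly the $\mathcal{W}_{2,b}$ and $\mathcal{W}_{1,b}$ terms from definitions (\ref{calw2b.E})--(\ref{calw2b.P}), the $\mathcal{W}_{1,b}$ contributions coming from the terms where $\p_x$ lands on the weight $\rho_2^3 x^{3-2b}$ rather than on $\hat{v}_x$.

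The principal obstacle I anticipate is not any single new estimate but rather ensuring that the choice of $b > 0$ small enough universal (matching the restriction from Lemma \ref{L1U}) is compatible with all the Hardy-type absorptions simultaneously: in particular, in the profile term estimates analogous to (\ref{I1.it}), (\ref{I1.it.2}), (\ref{I1.it.3}), one must check that the $\mathcal{O}(\delta)$-smallness is not degraded by powers $(1-2b)^{-1}$ or similar, which is verified directly since all such factors remain $O(1)$ for $b$ bounded away from $1/2$. Once this is confirmed, the estimate follows by collecting all contributions and absorbing the $\mathcal{O}(\delta)$ terms on the left, exactly paralleling the end of the proof of Proposition \ref{prop.ho.2}.
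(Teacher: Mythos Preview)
Your overall strategy—parallel Proposition \ref{prop.ho.2} with the weaker weight $x^{-2b}$—is correct, and the formal manipulations you describe go through. The gap is in the rigorous justification using $X_1$ approximation together with the full (non-truncated) weight $\rho_2^3 x^{3-2b}$. With $\phi = \rho_2^3 \hat{v}^{(n)}_x x^{3-2b}$ inserted into (\ref{weak.6}), the bilaplacian term $\nabla_\eps^2 \hat{\psi}_x : \nabla_\eps^2 \phi$ contains $\eps^2\hat{v}_{xx}\,\phi_{xx}$, and $\phi_{xx}$ involves $\hat{v}^{(n)}_{xxx}$, which $X_1$ convergence does not control. To reduce to $X_1$-level quantities on the approximation you must integrate by parts twice more in $x$, which then requires $\|\hat{v}_{xxxx}\, x^{5/2-2b}\|_{L^2}<\infty$ in order to pair against $\|\sqrt{\eps}(\hat{v}^{(n)}_x-\hat{v}_x)\,x^{1/2}\|_{L^2}$; but (\ref{rhsb}) only gives $\|\hat{v}_{xxxx}\, x\|_{L^2}<\infty$, which is strictly weaker for small $b$. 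Equivalently, after passing to the limit and integrating by parts back, the boundary contribution $\int_{x=M}\eps^2\hat{v}_{xxx}\hat{v}_x\, \rho_2^3 x^{3-2b}$ is of size $M^{1-2b}$ via (\ref{three}) and (\ref{evo.mid}), and does not vanish for the small $b$ required by Lemma \ref{L1U}. (The specific boundary terms you list do happen to vanish; the problematic one involves $\hat{v}_{xxx}$, which you do not address.)

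The paper sidesteps this by replacing $x^{3-2b}$ with a compactly-supported-in-$x$ truncation $x_L^{3-2b}$, where $x_L := (a_L\ast\phi_L)\,\chi(x/10L)$ as in (\ref{Nweight}), so that the test function $\hat{v}_x^{(n)}\rho_2^3 x_L^{3-2b}$ lives in a bounded strip $\Omega_L$. On $\Omega_L$ one is in an unweighted Sobolev setting where standard density lets one approximate $[\hat{u}_x,\hat{v}_x]$ in $H^1(\Omega_L)$ by smooth compactly supported functions; the local regularity of the solution (it lies in $Z$) is then more than sufficient to justify all integrations by parts, and no $x=\infty$ boundary terms ever arise. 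One then repeats Proposition \ref{prop.ho.2} verbatim and sends $L\to\infty$ by monotone convergence. The $L$-cutoff, not decay at infinity, is what makes the argument close here.
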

\begin{proof}

We start again with the weak formulation in (\ref{weak.6}). Fix a large $0 < L < \infty$. We then make the selection: $\phi = \hat{v}^{(n)}_x \cdot \rho_2^3 x_L^{3-2b}$, where, referring to (\ref{Nweight}), the weight $x_L$ is defined via: $x_L := \Big( a_L \ast \phi_L \Big) \chi\Big(\frac{x}{10L} \Big)$.  Define the domain: $\Omega_L := \{x: 3 < x < 50L + 100 \}$, so that $\hat{v}_x \cdot \rho_2^3 x_L^{3-2b} = 0$ on $\Omega_L^C$. The sequence $\hat{v}^{(n)}$ is selected according to: 
\begin{align}
[\hat{u}^{(n)}_x, \hat{v}^{(n)}_x] \in C^\infty_{0,D}(\Omega_L), \hspace{5 mm}  [\hat{u}^{(n)}_x, \hat{v}^{(n)}_x] \xrightarrow{H^1(\Omega_L)} [\hat{u}_x, \hat{v}_x]. 
\end{align}

The existence of such a sequence is guaranteed due to the standard Sobolev space theory, because we are now in the un-weighted setting. It is now straightforward to repeat all estimates in Proposition \ref{prop.ho.2} using the test function $\phi$. Upon doing so, we pass to the limit first as $n \rightarrow \infty$, and then as $L \rightarrow \infty$ to obtain the desired estimate. 

\end{proof}

\begin{lemma}[Third-Order Energy Estimate]  Fix any $0 < b < 1$. Let $\delta, \eps$ be sufficiently small relative to universal constants, and $\eps << \delta$. Then for $[\hat{u}, \hat{v}] \in Z$ solutions to (\ref{hats.1}) - (\ref{hats.2}):
\begin{align}
|| \hat{u}_{xxy} \rho_3^2 x^{2-b}||_{L^2}^2 \lesssim \mathcal{O}(\delta)|| \{ \sqrt{\eps} \hat{v}_{xxx}, \hat{v}_{xxy}\} \rho_3^{\frac{5}{2}} x^{\frac{5}{2}-b}||_{L^2}^2 + ||\hat{u},\hat{v}||_{X_{1,b} \cap X_{2,b}}^2 + \sum_{i = 1}^2 \mathcal{W}_{i,b} + W_{3,E,b},
\end{align}
where 
\begin{align}  \label{calw3b.E}
&\mathcal{W}_{3,E,b} := \int \int  \hat{f}_{xx} \hat{u}_{xx} \rho_3^4 x^{4-2b} + \int \int \eps \hat{g}_{xx}  \hat{v}_{xx} \rho_3^4  x^{4-2b}, \\ \label{calw3b.P}
&\mathcal{W}_{3,P,b} := \int \int  \hat{f}_{xx} \hat{u}_{xxx} \rho_3^5  x^{5-2b}  + \int \int \eps \hat{g}_{xx} \hat{v}_{xxx} \rho_3^5 x^{5-2b}, \\ \label{calw3b}
&\mathcal{W}_{3,b} := W_{3,E,b} + W_{3,P,b}.
\end{align}
\end{lemma}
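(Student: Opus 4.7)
The plan is to adapt the argument of Proposition \ref{prop.ho.3} to the weakened weight $x^{-2b}$, paralleling how Lemma \ref{L1U} and the preceding second-order estimates in this subsection were obtained from their analogues in Section \ref{section.NSR.Linear}. I would start from the weak formulation (\ref{weak.5}) with a test function corresponding to the multiplier $\hat{v}_x \rho_3^4 x_L^{4-2b}$, where $x_L$ is the truncated weight from (\ref{Nweight}) and $\rho_3$ is the cut-off from (\ref{rho}). Smooth compactly-supported approximants $[\hat{u}^{(n)}_x, \hat{v}^{(n)}_x] \in C^\infty_{0,D}(\Omega_L)$ converging to $[\hat{u}_x, \hat{v}_x]$ in $H^1(\Omega_L)$, as already used in the preceding second-order positivity estimate of this subsection, justify all integrations by parts in $y$ and in $x$; I would then pass $n \to \infty$ followed by $L \to \infty$, relying on the decay rates (\ref{evo.low})--(\ref{evo.high}) together with $b > 0$ to eliminate all boundary contributions at $x = \infty$.

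From the scaled bi-Laplacian I expect the positive contribution $\|\hat{u}_{xxy}\rho_3^2 x^{2-b}\|_{L^2}^2$ (together with $\eps$-weighted $\hat{u}_{xxx}$ and $\eps^2$-weighted $\hat{v}_{xxx}$ terms whose role is exactly as in Proposition \ref{prop.ho.3}), plus commutator errors in which one or more $\partial_x$ falls on $\rho_3^4 x_L^{4-2b}$; by the Almost Linear Property (\ref{ALProp}) applied to $x^{4-2b}$ together with $|\partial_x^k \rho_3| \lesssim 1$, all such errors are dominated by $\|\hat{u},\hat{v}\|_{X_{1,b} \cap X_{2,b}}^2$. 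The profile contributions from $\partial_{xx} S_u(\hat u,\hat v)$ and $\partial_{xx} S_v(\hat u,\hat v)$ are processed term-by-term exactly as in Proposition \ref{prop.ho.3}, using the sharp estimates (\ref{PE0.1})--(\ref{PE5}); each weight $x^k$ appearing in the original multiplier is replaced here by $x^{k-2b}$, which does not disrupt any Hardy or Young inequality since $b \in (0,1)$ is fixed and small. The forcing on the right-hand side contributes $\mathcal{W}_{3,E,b}$ directly, together with correction terms of the form $\eps \hat{g}_{xx} \hat{v}_x \partial_x(\rho_3^4 x^{4-2b})$ that are dominated by $\mathcal{W}_{2,b}$.

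The main obstacle is the top-order convective pairing from $\partial_{xx}(u_{Ry}^{P,n-1}\hat v)$: when both $\partial_x$ fall on $\hat v$ it produces $\int\int u_{Ry}^{P,n-1}\hat v_{xx}\hat u_{xxx}\rho_3^4 x^{4-2b}$, which after invoking Hardy in $y$ becomes bounded by $\mathcal{O}(\delta)\|\hat v_{xxy}\rho_3^{5/2}x^{5/2-b}\|_{L^2}\|\hat u_{xxy}\rho_3^2 x^{2-b}\|_{L^2}$, exactly mirroring the mechanism of (\ref{only}) in the original proof; the first factor enters the claimed inequality with the desired $\mathcal{O}(\delta)$ coefficient while the second is absorbed to the left-hand side for $\delta$ small relative to universal constants. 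The corresponding contribution from the final Prandtl layer $u^n_p$ is even more favorable thanks to the extra $\eps^{n/2}$ factor supplied by (\ref{PE3}), and the Eulerian analogue $\sqrt{\eps} u^E_{RY}$ is controlled via (\ref{PE5}). A completely analogous absorption mechanism, needed also for the quasilinear pairing $v_R \hat v_y$ against $\hat u_{xxx}\rho_3^4 x^{4-2b}$, extracts the $\mathcal{O}(\delta)$ factor from $\|v_R x^{1/2}\|_{L^\infty}$ via (\ref{PE1}) and (\ref{PE4.new.2}), which is exactly where the hypothesis that $\eps \ll \delta$ and $\delta$ is small relative to universal constants is used decisively.
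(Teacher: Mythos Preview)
Your approach matches the paper's: adapt Proposition \ref{prop.ho.3} with the weakened weight $x^{-2b}$, justify via approximants and the truncated weight $x_L$, and absorb the top-order convective contribution using the $\mathcal{O}(\delta)$ from (\ref{PE0.5}), (\ref{PE3}). Two small technical corrections are in order. First, you should work in the \emph{twice}-differentiated weak formulation (\ref{weak.7}) rather than (\ref{weak.5}); correspondingly, the approximants must be taken one order higher than in the preceding second-order positivity lemma, namely $[\hat{u}^{(n)}_{xx}, \hat{v}^{(n)}_{xx}] \xrightarrow{H^1(\Omega_L)} [\hat{u}_{xx}, \hat{v}_{xx}]$, so that the third-order quantities $\hat{v}^{(n)}_{xxx}, \hat{v}^{(n)}_{xxy}$ converge in $L^2$. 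Second, after integrating $\partial_y$ onto the multiplier $\hat{v}_x$, the convective pairing is $\int\int u^{P,n-1}_{Ry} \hat{v}_{xx} \cdot \hat{u}_{xx}\, \rho_3^4 x^{4-2b}$ (with $\hat{u}_{xx}$, not $\hat{u}_{xxx}$), exactly mirroring (\ref{only}); the second factor is then $\|\hat{u}_{xx}\rho_3^{3/2} x^{3/2-b}\|_{L^2}$, which already sits in $\|\hat u,\hat v\|_{X_{2,b}}$ and need not be absorbed into the left-hand side.
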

\begin{proof}

The first step is to differentiate the weak formulation (\ref{weak.6}) yet again, which formally takes place using difference quotients, yielding: 
\begin{align} \n
\int \int \nabla_\epsilon^2 \hat{\psi_{xx}} : \nabla_\epsilon^2 \phi - \int \int \p_{xx} S_u(\hat{u}, \hat{v}) \cdot \phi_y &+ \eps \p_{xx} S_v(\hat{u}, \hat{v}) \cdot \phi_x \\ \label{weak.7}  &= \int \int \eps^{\frac{n}{2}+\gamma} \Big[ - \p_{xx} \hat{f} \phi_y + \eps \p_{xx} \hat{g} \phi_x \Big],
\end{align}

Fix any $L$ large, finite. The selection of test function is now $\phi := \hat{v}^{(n)}_x \rho_3^4 x_L^{4-2b}$, where the sequence: 
\begin{align}
[\hat{u}^{(n)}_{xx}, \hat{v}^{(n)}_{xx}] \in C^\infty_{0,D}(\Omega_L), \hspace{5 mm} [\hat{u}^{(n)}_{xx}, \hat{v}^{(n)}_{xx}] \xrightarrow{H^1(\Omega_L)} [\hat{u}_{xx}, \hat{v}_{xx}]. 
\end{align}

From here, repeating the estimates given in Proposition \ref{prop.ho.3}, and sending $n \rightarrow \infty$ and then $L \rightarrow \infty$ gives the desired result. 

\end{proof}

\begin{lemma}[Third-Order Positivity Estimate]  Fix any $0 < b < 1$. Let $\delta, \eps$ be sufficiently small relative to universal constants, and $\eps << \delta$. Then for $[\hat{u}, \hat{v}] \in Z$ solutions to (\ref{hats.1}) - (\ref{hats.2}):
\begin{align}
|| \{ \sqrt{\eps} \hat{v}_{xxx}, \hat{v}_{xxy}\} \rho_3^{\frac{5}{2}} x^{\frac{5}{2}-b}||_{L^2}^2 \lesssim || \hat{u}_{xxy} \rho_3^2 x^{2-b}||_{L^2}^2 + ||\hat{u},\hat{v}||_{X_{1,b} \cap X_{2,b}}^2 + \sum_{i = 1}^3 \mathcal{W}_{i,b}.
\end{align}
\end{lemma}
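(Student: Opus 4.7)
The proof will mirror the structure of Proposition \ref{prop.ho.4}, which is the unweighted third-order positivity estimate, with the adaptation that all $x$-weights are reduced by a factor of $x^{-b}$. The plan is to test the twice-differentiated weak formulation (\ref{weak.7}) against an appropriately chosen multiplier and then carefully absorb error terms into the right-hand side, using the smallness $\mathcal{O}(\delta)$ from the profile estimates (\ref{PE0.1})--(\ref{PE4.new.2}). As in the preceding three lemmas in Section \ref{sub.sec.U}, the introduction of $b>0$ is the only structural change, and the required integrations by parts are justified by a density argument using the decay estimates (\ref{evo.low})--(\ref{evo.high}).

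Concretely, I would fix $L$ large, recall the $L$-truncated weight $x_L$ from (\ref{Nweight}), and choose test function $\phi := \hat v^{(n)}_{xx}\,\rho_3^{5}\,x_L^{5-2b}$, where $[\hat u^{(n)}_{xx},\hat v^{(n)}_{xx}]\in C^\infty_{0,D}(\Omega_L)$ is an $H^1(\Omega_L)$-approximation of $[\hat u_{xx},\hat v_{xx}]$ (available as in the third-order energy estimate, since the weights are locally bounded away from $x=1$). The Bilaplacian pairing
\begin{align*}
\int\!\!\int \nabla_\eps^2 \hat\psi_{xx}:\nabla_\eps^2\phi
\end{align*}
produces, after passing to the $n\to\infty$ limit, the positive terms
\begin{align*}
\int\!\!\int \Big(\hat u_{xxy}^2 + \eps \hat u_{xxx}^2 + \eps^2 \hat v_{xxx}^2\Big)\rho_3^{5}x_L^{5-2b} \cdot \p_x(\cdot)
\end{align*}
plus commutator terms where derivatives fall on $\rho_3^5 x_L^{5-2b}$; by the bounded-derivative property $|x^k \p_x^k(\rho_3^5 x_L^{5-2b})|\lesssim 1$ these are controlled by $\|\hat u,\hat v\|_{X_{1,b}\cap X_{2,b}}^2$. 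This is exactly the step that generated (\ref{r.p.1})--(\ref{r.p.2}) in Proposition \ref{prop.ho.4}.

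Next, for the profile contributions $\p_{xx}S_u(\hat u,\hat v)\cdot \phi_y$ and $-\eps \p_{xx}S_v(\hat u,\hat v)\cdot \phi_x$, I would expand via the product rule as in Proposition \ref{prop.ho.4} and estimate each term by $L^\infty$-$L^2$-$L^2$ Holder, distributing one weight of $x^{-b/2}\rho_3^{-\sigma}$ into each $L^2$-factor so that the majorizing quantities become exactly $\|\cdot\|_{X_{i,b}}$-norms. The most delicate terms are those involving $\p_x^k u_R$ and $\p_x^k v_R$ with $k=2,3$: for the Prandtl parts one uses (\ref{PE0.3}), (\ref{PE0.5}), (\ref{PE3}) together with Hardy in $y$ to trade $v$ for $v_y$, and for the Euler parts (\ref{PE5}), (\ref{PE4}), (\ref{PE4.new.2}) together with Hardy in $x$ to trade $u/x$ for $u_x$; since the Hardy weight on the right-hand side is $x^{-1/2-b}$ with $b>0$, the Hardy inequalities remain non-critical. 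The smallness $\mathcal O(\delta)$ from $u_R,v_R$ absorbs the top-order terms $\|\hat u_{xxy}\rho_3^2 x^{2-b}\|_{L^2}$ and $\|\{\sqrt{\eps}\hat v_{xxx},\hat v_{xxy}\}\rho_3^{5/2}x^{5/2-b}\|_{L^2}$ into the left-hand side.

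Finally, the right-hand side $\int\!\int \p_{xx}\hat f\,\phi_y - \eps \p_{xx}\hat g\,\phi_x$ unfolds, after integration by parts moving one $\p_x$ off $\phi$, into $\mathcal W_{3,P,b}$ plus additional terms of the form $\eps \hat g_{xx}\hat v_{xx}\rho_3^4 x^{4-2b}\cdot \p_x(\rho_3 x^{1-2b})$ which are bounded by $\mathcal W_{3,E,b}$ (cf.\ the closing estimate in Proposition \ref{prop.ho.4}). After sending $n\to\infty$ and then $L\to\infty$ by Monotone Convergence (the boundary terms at $x=M$ vanishing by (\ref{evo.mid})--(\ref{evo.high}) applied to $\hat u,\hat v\in Z$, which has already been established for this system via Theorem \ref{thm.existence}), one obtains the claimed estimate. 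The only substantive obstacle, as in all four lemmas of Section \ref{sub.sec.U}, is to check that $b$ can be held fixed independently of $\delta,\eps$ and chosen close enough to $0$ so that the Hardy steps close; since no $b$-critical integration arises at third order (the analogue of (\ref{cruc.}) is absent here, per the remark in Section \ref{section.NSR.Linear}), there is no additional smallness requirement on $b$ beyond $0<b<1$.
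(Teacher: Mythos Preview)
Your proposal is correct and follows essentially the same approach as the paper: you select the identical test function $\phi = \hat v^{(n)}_{xx}\rho_3^5 x_L^{5-2b}$ with the same $H^1(\Omega_L)$-approximation scheme, and then repeat the estimates of Proposition~\ref{prop.ho.4} before passing $n\to\infty$ and $L\to\infty$. The paper's own proof is in fact terser than yours---it simply points to Proposition~\ref{prop.ho.4} without spelling out the term-by-term analysis---so your additional detail (the Bilaplacian expansion, the profile-term splitting, the handling of the right-hand side into $\mathcal{W}_{3,b}$) is consistent with, and a faithful elaboration of, that reference.
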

\begin{proof}

Again, fix any $L$ large, finite. The selection of the test function is now $\phi := \hat{v}^{(n)}_{xx} \rho_3^5 x_L^{5-2b}$, where the sequence $[\hat{u}^{(n)}, \hat{v}^{(n)}]$ is selected according to: 
\begin{align}
[\hat{u}^{(n)}_{xx}, \hat{v}^{(n)}_{xx}] \in C^\infty_{0,D}(\Omega_L), \hspace{5 mm}  [\hat{u}^{(n)}_{xx}, \hat{v}^{(n)}_{xx}] \xrightarrow{H^1(\Omega_L)} [\hat{u}_{xx}, \hat{v}_{xx}]. 
\end{align}

From here, repeating the estimates in Proposition \ref{prop.ho.4}, and sending $n \rightarrow \infty$ and then $L \rightarrow \infty$ gives the desired result. 

\end{proof}

Piecing together the above set of estimates, 
\begin{proposition} Let $\delta, \eps$ be sufficiently small relative to universal constants, and $\eps << \delta << b$. Then for $[\hat{u}, \hat{v}] \in Z$ solutions to (\ref{hats.1}) - (\ref{hats.2}): 
\begin{align} \label{hat.un.1}
||\hat{u}, \hat{v}||_{X_{1,b} \cap X_{2,b} \cap X_{3,b}}^2 \lesssim \mathcal{W}_{1,b} + \mathcal{W}_{2,b} + \mathcal{W}_{3,b},
\end{align}
where $\mathcal{W}_{i,b}$ have been defined in (\ref{calw1b}), (\ref{calw2b}), (\ref{calw3b}).  
\end{proposition}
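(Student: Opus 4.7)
The proof will proceed by pairwise combining the energy and positivity estimates at each order, exactly as is done in Sections \ref{section.NSR.Linear} and Lemma \ref{LemmaW}, but carefully tracking the interplay with the loss of weight $x^{-b}$. The crucial new feature versus the basic $Z$-estimate is that the constant from (\ref{hat.Energy}) on the term $b \|\{\hat u, \sqrt{\eps}\hat v\} x^{-b-1/2}\|_{L^2}^2$ degenerates as $b \downarrow 0$, so the absorption step requires $\delta \ll b$, which is the hypothesis of the proposition. The key structural observation is that, by the divergence-free condition $\hat u_x + \hat v_y = 0$, control of $\hat u_x x^{1/2-b}$ in $L^2$ from the positivity estimate immediately upgrades to control of $\hat v_y x^{1/2-b}$, which is precisely the quantity appearing on the right-hand side of the energy estimate (\ref{hat.Energy}) with the factor $\mathcal{O}(\delta)$.

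First, I close the first-order estimate. Multiply (\ref{hat.Energy}) by a large constant $K/b$ (with $K$ universal) and add it to (\ref{pos.hat.1}). The $\|\hat u_y x^{-b}\|_{L^2}^2$ appearing on the right of positivity is absorbed into the corresponding term on the left of (scaled) energy, the $\|\{\hat u, \sqrt{\eps}\hat v\} x^{-b-1/2}\|_{L^2}^2$ term from the right of positivity is absorbed into the $bK/b = K$ copy of it on the left, and finally the $(K/b)\mathcal{O}(\delta) \|\{\sqrt{\eps}\hat v_x, \hat v_y\} x^{1/2-b}\|_{L^2}^2$ term on the right is absorbed into the left-hand side of positivity (after using divergence-free to replace $\hat v_y$ by $-\hat u_x$) provided $\delta/b$ is sufficiently small, which is ensured by $\delta \ll b$. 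This yields
\begin{equation*}
\|\hat u, \hat v\|_{X_{1,b}}^2 \lesssim \mathcal{W}_{1,b}.
\end{equation*}

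Second, I close the second-order estimate by combining the two second-order lemmas in the identical fashion. Now $\|\hat u, \hat v\|_{X_{1,b}}^2$ is a \emph{known} quantity on the right-hand sides, already controlled by $\mathcal{W}_{1,b}$ from the previous step. The smallness absorption again uses $\delta$ sufficiently small relative to universal constants (the $\mathcal{O}(\delta)$ factor on the right of the second-order energy is absorbed into the second-order positivity left-hand side, again via divergence-free to pass from $\hat v_{xy}$ to $\hat u_{xx}$ up to lower-order terms handled using the first-order bound). This gives
\begin{equation*}
\|\hat u, \hat v\|_{X_{1,b} \cap X_{2,b}}^2 \lesssim \mathcal{W}_{1,b} + \mathcal{W}_{2,b}.
\end{equation*}
Repeating the identical procedure with the third-order energy and positivity estimates, now with $\|\hat u,\hat v\|_{X_{1,b} \cap X_{2,b}}^2$ known on the right, yields (\ref{hat.un.1}).

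The main obstacle, and the reason for the hypothesis $\eps \ll \delta \ll b$, is the degenerating constant $b$ in front of the $\|\{\hat u, \sqrt{\eps}\hat v\} x^{-b-1/2}\|_{L^2}^2$ term in the first-order energy estimate (\ref{hat.Energy}): this stability constant arose by effectively applying the Hardy inequality with a sub-critical weight $x^{-b}$ in place of the critical $x^{-1/2}$, and it is precisely the term needed to dominate the $\mathcal{W}_{1,b}$-contribution $-2b\,\eps \hat g \hat\psi x^{-2b-1}$. Every estimate in Lemma \ref{L1U} had to be checked to produce no further loss of $b^{-1}$, and the positive term generated in (\ref{Yabove}) by the delicate integration-by-parts on $\int\!\!\int \eps u_R \hat v_x \cdot (-2b\hat\psi x^{-2b-1})$ also has only a single factor of $b$ out front, which is the reason the combined absorption step goes through at the cost of merely $\delta \ll b$ rather than a worse relation. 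No additional smallness is required at the higher orders since there the $\hat u, \hat v$ terms (as opposed to their derivatives) are already controlled.
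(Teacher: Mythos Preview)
Your proof is correct and is essentially what the paper intends by ``Piecing together the above set of estimates'' (the paper gives no further argument). Your identification of the mechanism is right: at first order the term $\|\{\sqrt{\eps}\hat v,\hat u\}x^{-\frac12-b}\|_{L^2}^2$ on the right of the positivity estimate (\ref{pos.hat.1}) is not in $X_{1,b}$ and must be absorbed by the $b$-weighted term on the left of (\ref{hat.Energy}); scaling up the energy estimate by a factor of order $1/b$ then forces $\delta\ll b$ to absorb the resulting $\mathcal{O}(\delta)/b$ factor into the positivity left-hand side, while at second and third order no such $b$-loss occurs because the lower-order quantities are already controlled.

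One small remark: your aside that the $b$-weighted positive term is ``precisely the term needed to dominate the $\mathcal{W}_{1,b}$-contribution $-2b\,\eps\hat g\hat\psi x^{-2b-1}$'' is off-target. That contribution sits inside $\mathcal{W}_{1,E,b}$ on the right and is simply carried along; the $b$-weighted positive term is needed instead to absorb the $\|\{\sqrt{\eps}\hat v,\hat u\}x^{-\frac12-b}\|_{L^2}^2$ term coming from the \emph{positivity} estimate. Also, at second order you do not need the divergence-free substitution $\hat v_{xy}=-\hat u_{xx}$: the $\mathcal{O}(\delta)$ term on the right of the second-order energy estimate is literally the left-hand side of the second-order positivity estimate, so the absorption is direct.
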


By repeating the analysis in Section \ref{Section.Z}, one has: 
\begin{lemma} Let $\delta, \eps$ be sufficiently small relative to universal constants, and $\eps << \delta << b$. Then for $[\hat{u}, \hat{v}] \in Z$ solutions to (\ref{hats.1}) - (\ref{hats.2}): 
\begin{align} \label{W.Z.0}
||\hat{u}, \hat{v}||_{Z_b}^2 \lesssim \eps^{\frac{n}{2}+\gamma - \omega(N_i)} ||\hat{u}, \hat{v}||_{Z_b}^4 + ||\hat{u}, \hat{v}||_{X_{1,b} \cap X_{2,b} \cap X_{3,b}}^2. 
\end{align}
\end{lemma}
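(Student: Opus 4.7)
The plan is to follow the blueprint of Theorem \ref{thm.z} essentially verbatim, with every weight in the $X_i, Y_i$ and uniform components of $Z$ replaced by its $x^{-b}$-weaker counterpart appearing in $Z_b$. Having collected the six energy/positivity estimates for the difference system (Lemma \ref{L1U} through the third-order positivity estimate), which together with the standard combination yield the controlling inequality (\ref{hat.un.1}) on $\|\hat u, \hat v\|_{X_{1,b}\cap X_{2,b}\cap X_{3,b}}$, all that remains to obtain the $Z_b$-bound is to reproduce the elliptic estimates of Subsection \ref{subsection.sing} and the embedding theorems of Subsection \ref{subsection.EM} with weights weaker by $x^{-b}$.

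First I would redo the elliptic estimates producing $Y_{2,b}$ and $Y_{3,b}$ controls. The key observation is that the cutoff functions supporting the $Y_i$-norms are localized in $x \le 2000$ and $x \ge \tfrac{3}{2}$ respectively, so on these bounded-below and bounded-above supports $x^{-b}$ acts as a harmless bounded multiplier. Thus Lemmas \ref{LemmaBC} and \ref{LemmaBC2} rerun line by line after trivially absorbing powers of $x^{-b}$, yielding the analogues of (\ref{Y1em}) and (\ref{Y3em}): namely
\[
\eps^{N_2}\|\hat u,\hat v\|_{Y_{2,b}} \lesssim \eps^{N_2-M_2}\bigl(1 + \|\hat u,\hat v\|_{X_{1,b}}\bigr) + \eps^{N_2}\|\hat u,\hat v\|_{X_{2,b}} + \eps^{\tfrac{n}{2}+\gamma-\omega(N_i)}\|\hat u,\hat v\|_{Z_b}^2,
\]
and similarly for $Y_{3,b}$. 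The nonlinear contributions are quadratic in $\hat u,\hat v$ after writing $u_1 u_{1x}-u_2 u_{2x} = u_1\hat u_x + \hat u\,u_{2x}$, and are absorbed using the a priori $Z$-bound $\|u_i,v_i\|_Z \lesssim \eps^{1/4-\gamma-\kappa}$ from Theorem \ref{thm.existence}, which controls one factor, leaving the remaining factor in $Z_b$; bounding an additional factor of $Z_b$ by $Z_b^2$ produces the quartic right-hand side stated in (\ref{W.Z.0}).

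Next I would reproduce the evolution lemma (Lemma \ref{L.Evol}) and the uniform embedding lemmas (Lemma \ref{Lemma.UIMP} and the top-order lemma) with the $Z_b$-weights. The calculation $\partial_x \int \hat u^2 x^{-2\sigma} = 2\int \hat u \hat u_x x^{-2\sigma} - 2\sigma \int \hat u^2 x^{-1-2\sigma}$ propagates unchanged; now with the weaker weights of $Z_b$ the critical Hardy power $x^{-1/2}$ in $L^2$ is shifted to $x^{-1/2-b}$, which is strictly subcritical for any $b > 0$, so the integrations-by-parts boundary terms at $x \to \infty$ still vanish via the $Z_b$ rapid-decay (together with (\ref{ZN})), and Cauchy-Schwarz closes without logarithmic loss. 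Combining the $H^1_y\hookrightarrow L^\infty_y$ embedding with the new evolution bounds yields $\|\hat u x^{1/4-b}, \sqrt{\eps}\hat v x^{1/2-b}\|_{L^\infty}\lesssim \|\hat u,\hat v\|_{X_{1,b}\cap Y_{2,b}\cap Y_{3,b}}$, exactly parallel to (\ref{u0}), and similarly for the higher-order uniform norms and the mixed $L^\infty_y L^2_x$ norm of $\sqrt{\eps}\hat v_{xx}$. Consolidating gives the analogue of (\ref{Uemb}): $\|\hat u,\hat v\|_{\mathcal U_b}\lesssim \eps^{\max(N_2,N_3)}\|\hat u,\hat v\|_{X_{1,b}\cap Y_{2,b}\cap Y_{3,b}}$.

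Finally I would assemble exactly as in the proof of Theorem \ref{thm.z}: substitute the $Y_{3,b}$-estimate into the bound for $\eps^{N_3}\|\hat u,\hat v\|_{Y_{3,b}}$, then substitute the $Y_{2,b}$-estimate, with the exponent selections (\ref{sel.N2})--(\ref{sel.n}) ensuring all residual terms with $\eps^{n/2+\gamma-\omega(N_i)}$ prefactors absorb into the $Z_b^4$ term on the right, and the linear $X_{i,b}$ terms remain. I expect the main technical obstacle to be the very first step above: verifying that in rerunning the $H^2$-elliptic argument of Lemma \ref{LemmaBC} the nonlinear pieces $\hat f, \hat g$ (which are differences rather than genuine products) really do admit the uniform-in-$b$ bound, for which one must invoke the a priori $Z$-bound on both $[u_i,v_i]$ at the right places so that the loss in weight induced by the $x^{-b}$ factor is absorbed by the smallness in $\eps$; once this bookkeeping is settled the remainder of the proof is a mechanical translation of Section \ref{Section.Z}.
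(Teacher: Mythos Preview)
Your proposal is correct and follows exactly the approach the paper intends: the paper's own proof of this lemma is the single sentence ``By repeating the analysis in Section \ref{Section.Z},'' and your plan is precisely a detailed execution of that repetition, with the key observation that the $x^{-b}$ factor is harmless on the bounded-$x$ supports of the elliptic cutoffs and that $b>0$ makes the Hardy exponents in the evolution/embedding lemmas strictly subcritical.
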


Due to (\ref{hat.un.1}), we will now turn to estimating $\mathcal{W}_{i,b}$
\begin{lemma} Let $\mathcal{W}_{1,b}, \mathcal{W}_{2,b}, \mathcal{W}_{3,b}$ be as in (\ref{calw1}), (\ref{calw2}), (\ref{calw3}). Then:
\begin{align} \label{W.Z.1}
| \mathcal{W}_{1,b} + \mathcal{W}_{2,b} + \mathcal{W}_{3,b} | \lesssim C(b) \eps^{\frac{n}{2}+\gamma - \omega(N_i)} ||\hat{u}, \hat{v}||_{Z_b}^2,
\end{align}
where $C(b) \uparrow \infty$ as $b \downarrow 0$. 
\end{lemma}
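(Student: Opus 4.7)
The plan is to mimic the nonlinear estimate of Lemma \ref{LemmaW}, but with weights weakened by $x^{-b}$ throughout and with $\hat{f},\hat{g}$ rewritten using the product-difference identities
\begin{align*}
\hat{f} &= \eps^{\frac{n}{2}+\gamma}\bigl[u_1\hat{u}_x+\hat{u}u_{2x}+v_1\hat{u}_y+\hat{v}u_{2y}\bigr],\\
\hat{g} &= \eps^{\frac{n}{2}+\gamma}\bigl[u_1\hat{v}_x+\hat{u}v_{2x}+v_1\hat{v}_y+\hat{v}v_{2y}\bigr],
\end{align*}
so that every term is bilinear with one factor from $\{u_i,v_i\}\in Z$ (controlled a-priori by $\lesssim \eps^{1/4-\gamma-\kappa}$ via Theorem \ref{thm.existence}) and one factor from $\{\hat{u},\hat{v}\}\in Z_b$. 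First I would pair this with the Hardy-type embeddings of Lemma \ref{L.Evol} applied to the weakened weight $x^{-b}$: the bounds $\|\hat{u}x^{-b-1/2}\|_{L^2}\le C(b)\|\hat{u}_x x^{1/2-b}\|_{L^2}$ and $\|\hat{v}x^{-b-1/2}\|_{L^2}\le C(b)\|\hat{v}_x x^{1/2-b}\|_{L^2}$ hold because $b>0$ (the $C(b)\uparrow\infty$ as $b\downarrow 0$ accounts for the $C(b)$ in the conclusion, since the exponent approaches the critical Hardy value $-1/2$).

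With these tools I would proceed term by term along the same layout as Lemma \ref{LemmaW}. The $\mathcal{W}_{1,E,b}$-type terms involving $\hat{f}\hat{u}x^{-2b}$, $\eps\hat{g}\hat{v}x^{-2b}$ and $\eps\hat{g}\hat{\psi}x^{-2b-1}$ are handled by putting $\{u_1,v_1,u_{2x},v_{2x},u_{2y},v_{2y}\}$ in $L^\infty_{x^k}$-type norms (using the uniform bounds built into $Z$ and the embeddings of Lemma \ref{Lemma.UIMP}) and placing the factors $\{\hat{u},\hat{v}\,\,\text{or}\,\hat{\psi}\}$ together with the remaining derivative factor in the $L^2$ norms defining $X_{1,b}$. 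For the stream-function term $-2b\eps\hat{g}\hat{\psi}x^{-2b-1}$, I would apply the Hardy inequality in $x$ in the form $\|\hat{\psi}x^{-b-3/2}\|_{L^2}\le C(b)\|\hat{v}x^{-b-1/2}\|_{L^2}\le C(b)\|\hat{v}_x x^{1/2-b}\|_{L^2}$, which is admissible because of $\hat{\psi}|_{x=1}=0$.

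For $\mathcal{W}_{2,b}$ and $\mathcal{W}_{3,b}$ the strategy is identical after expanding $\p_x\hat{f},\p_x\hat{g},\p_{xx}\hat{f},\p_{xx}\hat{g}$ by the Leibniz rule: each resulting term is again a product of one factor carrying profile regularity (bounded in $Z$ with the strong weights) and one factor carrying difference regularity (bounded in $Z_b$ with the weaker weights). The top-order nonlinear terms of the form $v_1\hat{u}_{xxy}$ or $\hat{v}v_{2xxy}$ are precisely those treated in estimates (\ref{TON.1})--(\ref{TON.2}) of Lemma \ref{LemmaW}, and are handled here by the identical mixed-norm $L^2_xL^\infty_y$ argument using the $\epsilon^{N_7}$-component of the $Z$-norm on the profile factor and the $X_{3,b}$ norm on the difference factor.

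The main obstacle is the term $\int\int v_1\hat{u}_y\cdot\hat{u}\,x^{-2b}$ appearing in $\mathcal{W}_{1,E,b}$. In the proof of Lemma \ref{LemmaW}, the corresponding term was treated by the integration by parts $\int\int\bar{v}u_y\cdot u=-\tfrac12\int\int\bar{v}_yu^2$, which gave a clean gain. Under differences this perfect-derivative structure is only partially preserved: one gets $v_1\hat{u}_y\hat{u}=-\tfrac12 v_{1y}\hat{u}^2+\text{good boundary}$, so this sub-piece is still absorbed, but the companion cross-term $\int\int\hat{v}u_{2y}\hat{u}\,x^{-2b}$ has no such cancellation and must be handled by the Hardy trade-off
\[
\Bigl|\int\int\hat{v}u_{2y}\hat{u}\,x^{-2b}\Bigr|\le \|yu_{2y}\|_{L^\infty}\,\|\hat{u}x^{-b-1/2}\|_{L^2}\,\|\hat{v}y^{-1}x^{1/2-b}\|_{L^2}\lesssim\mathcal{O}(\delta)\|\hat{u}_xx^{1/2-b}\|_{L^2}\|\hat{v}_yx^{1/2-b}\|_{L^2},
\]
where the factor $\|yu_{2y}\|_{L^\infty}$ is controlled by $\mathcal{O}(\delta)$ through the profile estimate (\ref{PE1}) applied to $u_R$ combined with the $u_y$ control of $u_2\in Z$. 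Once this piece is absorbed into the left-hand side of (\ref{hat.un.1}) by taking $\delta$ small, combining the resulting bounds across $\mathcal{W}_{1,b}$, $\mathcal{W}_{2,b}$, $\mathcal{W}_{3,b}$ and using $\eps^{\frac{n}{2}+\gamma}\|u_i,v_i\|_Z\le \eps^{\frac{n}{2}+\gamma-\omega(N_i)}$ yields (\ref{W.Z.1}). Substituting into (\ref{W.Z.0}) then gives the uniqueness conclusion $\|\hat{u},\hat{v}\|_{Z_b}\equiv 0$ for $\eps$ small, completing the proof of Theorem \ref{thm.e.u}.
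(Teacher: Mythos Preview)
Your overall strategy is right and tracks the paper's argument closely, but there is a genuine gap in your treatment of the most delicate cross-term. For the piece
\[
\int\int \eps^{\frac{n}{2}+\gamma}\,\hat{v}\,u_{2y}\,\hat{u}\,x^{-2b}
\]
you invoke the bound $\|y\,u_{2y}\|_{L^\infty}\lesssim\mathcal{O}(\delta)$, citing the profile estimate (\ref{PE1}). That estimate is for the Prandtl profiles $v^P_R$, not for the remainder $u_2$; there is no pointwise control of $y\,u_{2y}$ available for a generic element of $Z$. A look at the norm $Z$ in (\ref{norm.Z}) shows that $u_y$ is only controlled in $L^2$-type norms (namely $\|u_y\|_{L^2}$ from $X_1$ and $\eps^{N_6}\sup_{x\ge 20}\|u_y x^{1/2}\|_{L^2_y}$), never in $L^\infty$ with a $y$-weight. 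So your Hardy-in-$y$ route for this term cannot close.

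The paper's resolution is to flip the roles: put $\hat{v}$ in weighted $L^\infty$ and $u^{(1)}_y$ in plain $L^2$,
\[
\Bigl|\int\int \eps^{\frac{n}{2}+\gamma}\,\hat{v}\,u^{(1)}_y\,\hat{u}\,x^{-2b}\Bigr|
\le \eps^{\frac{n}{2}+\gamma}\,\|\hat{v}\,x^{\frac12-b}\|_{L^\infty}\,\|u^{(1)}_y\|_{L^2}\,\|\hat{u}\,x^{-\frac12-b}\|_{L^2},
\]
followed by Hardy in $x$ on the last factor (producing the $C(b)$). Here $\|\hat{v}\,x^{1/2-b}\|_{L^\infty}$ is exactly the uniform component of $Z_b$, and $\|u^{(1)}_y\|_{L^2}$ sits in $X_1\subset Z$. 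The paper's remark immediately after this proof makes the point explicit: controlling this term is what \emph{forces} the second- and third-order energy norms into $Z_b$, since one cannot avoid placing $\hat{v}$ in $L^\infty$ here; any attempt to integrate by parts off $u^{(1)}_y$ yields suboptimal decay. Once you replace your estimate of this cross-term with the paper's version, the rest of your outline (mimicking Lemma~\ref{LemmaW} term by term with $x^{-b}$ weights) is correct and matches the paper.
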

\begin{proof}

We will work with the expression: 
\begin{align} \n
\hat{f} &= \eps^{\frac{n}{2}+\gamma} \Big[ u^{(1)} u^{(1)}_x - u^{(2)} u^{(2)}_x + v^{(1)} u^{(1)}_y - v^{(2)} u^{(2)}_y  \Big]\\ 
& = \eps^{\frac{n}{2}+\gamma} \Big[ \hat{u} u^{(1)}_x + u^{(2)} \hat{u}_x + \hat{v} u^{(1)}_y + v^{(2)} \hat{u}_y \Big], \\ \n
\hat{g} & = \eps^{\frac{n}{2}+\gamma} \Big[ u^{(1)} v^{(1)}_x - u^{(2)} v^{(2)}_x + v^{(1)} v^{(1)}_y - v^{(2)} v^{(2)}_y \Big] \\ \label{hat.g.ex}
& = \eps^{\frac{n}{2}+\gamma} \Big[\hat{u} v^{(1)}_x + u^{(2)} \hat{v}_x + \hat{v} v^{(1)}_y + v^{(2)} \hat{v}_y \Big]. 
\end{align}

Concerning $\mathcal{W}_{1,b}$, let us bring particular attention to the following term from $\int \int| \hat{f} | \cdot |\hat{u}| x^{-2b}$:
\begin{align} \n
\int \int \eps^{\frac{n}{2}+\gamma}&[ \hat{v} u^{(1)}_y + v^{(2)} \hat{u}_y] \cdot |\hat{u}| x^{-2b} \\ \n
& \le \eps^{\frac{n}{2}+\gamma} ||\hat{v} x^{\frac{1}{2}-b}||_{L^\infty} ||u^{(1)}_y||_{L^2} ||\hat{u} x^{-\frac{1}{2}-b}||_{L^2} \\ \n
& \hspace{20 mm} + \eps^{\frac{n}{2}+\gamma} ||v^{(2)} x^{\frac{1}{2}}||_{L^\infty} ||\hat{u}_y x^{-b}||_{L^2} ||\hat{u} x^{-\frac{1}{2}-b}||_{L^2} \\ \n
& \le \eps^{\frac{n}{2}+\gamma} \Big[ ||\hat{v} x^{\frac{1}{2}-b}||_{L^\infty} ||u^{(1)}_y||_{L^2} ||\hat{u}_x x^{\frac{1}{2}-b}||_{L^2} \\ \n
& \hspace{20 mm} + ||v^{(2)} x^{\frac{1}{2}}||_{L^\infty} ||\hat{u}_y x^{-b}||_{L^2} ||\hat{u}_x x^{\frac{1}{2}-b}||_{L^2} \Big] \\ \label{mdiff}
& \le C(b) \eps^{\frac{n}{2}+\gamma - \omega(N_i)} ||u^{(i)}, v^{(i)}||_Z ||\hat{u}, \hat{v} ||_{Z_b}^2. 
\end{align}

The above term requires the weight of $x^{-2b}, b > 0$, in order to apply the Hardy inequality. Indeed, this was not required for the existence proof (see calculation (\ref{order.NL})), because the structure of $vu_y \cdot u$ enabled us to integrate by parts, unlike in the present situation. The remaining terms in $\mathcal{W}_{1,b}$, and all terms in $\mathcal{W}_{2,b}, \mathcal{W}_{3,b}$ are treated nearly identically to the Lemma \ref{LemmaW}, and so we omit repeating those calculations. 

\end{proof}

\begin{corollary} Fix $0 < b < 1$ sufficiently small, relative to universal constants. Suppose $\eps, \delta$ are sufficiently small, such that $\eps << \delta << b$. Then $\hat{u}, \hat{v} = 0$. 
\end{corollary}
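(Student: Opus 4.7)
The plan is to combine the three displayed estimates immediately preceding the corollary and then use the \emph{a priori} size bound on each solution provided by Theorem~\ref{thm.existence} to trap the difference.

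First, I would fix $b>0$ small enough so that all of the lemmas in Section~\ref{sub.sec.U} apply (in particular, Lemma~\ref{L1U} requires $b$ smaller than a universal constant), and then take $\eps\ll\delta\ll b$. Let $A := \tfrac{n}{2}+\gamma-\omega(N_i)$, which is strictly positive and large once $n$ is chosen sufficiently large relative to the $N_i$'s. Chaining (\ref{hat.un.1}) with (\ref{W.Z.1}) yields
\begin{equation*}
\|\hat u,\hat v\|_{X_{1,b}\cap X_{2,b}\cap X_{3,b}}^{2} \;\lesssim\; C(b)\,\eps^{A}\|\hat u,\hat v\|_{Z_b}^{2},
\end{equation*}
and substituting this into (\ref{W.Z.0}) gives the single inequality
\begin{equation*}
\|\hat u,\hat v\|_{Z_b}^{2}
\;\le\; C_{1}\eps^{A}\|\hat u,\hat v\|_{Z_b}^{4}
\;+\; C_{2}(b)\eps^{A}\|\hat u,\hat v\|_{Z_b}^{2}.
\end{equation*}

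Next, because $b$ is fixed first and then $\eps$ is sent to zero, we may take $\eps$ so small that $C_{2}(b)\eps^{A}\le \tfrac12$. Absorbing that term into the left-hand side produces
\begin{equation*}
\tfrac12\|\hat u,\hat v\|_{Z_b}^{2}
\;\le\; C_{1}\eps^{A}\|\hat u,\hat v\|_{Z_b}^{4},
\end{equation*}
so either $\|\hat u,\hat v\|_{Z_b}=0$, in which case we are done, or else $\|\hat u,\hat v\|_{Z_b}^{2}\ge (2C_{1})^{-1}\eps^{-A}$. The latter alternative must be ruled out using the uniform bound on the two original solutions. Since $u^{(1)},u^{(2)}\in Z(\Omega)$ with $\|u^{(i)},v^{(i)}\|_{Z}\lesssim \eps^{\frac14-\gamma-\kappa}$ by Theorem~\ref{thm.existence}, and because the weights appearing in $Z_{b}$ are uniformly dominated by those in $Z$ on $\Omega=\{x\ge 1\}$ (every factor $x^{-b}$ or $x^{\alpha-b}$ is pointwise at most the corresponding factor with $b=0$), one obtains
\begin{equation*}
\|\hat u,\hat v\|_{Z_b}\;\le\;\|u^{(1)},v^{(1)}\|_{Z}+\|u^{(2)},v^{(2)}\|_{Z}\;\lesssim\;\eps^{\frac14-\gamma-\kappa}.
\end{equation*}

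Finally, for $n$ sufficiently large (independently of $b$), we have $A > \tfrac12-2\gamma-2\kappa$, so the lower bound $\|\hat u,\hat v\|_{Z_b}^{2}\gtrsim \eps^{-A}$ is incompatible with the \emph{a priori} upper bound $\|\hat u,\hat v\|_{Z_b}^{2}\lesssim \eps^{\frac12-2\gamma-2\kappa}$ once $\eps$ is small. This forces $\|\hat u,\hat v\|_{Z_b}=0$, and hence $\hat u\equiv 0$, $\hat v\equiv 0$, which yields uniqueness and completes the proof of Theorem~\ref{thm.e.u}. The only subtle points are the dependence of the constant $C_{2}(b)$ on $b$ (which is why the three-way ordering $\eps\ll\delta\ll b$ is essential: $b$ is fixed first, then $\eps$ is taken small in terms of $b$), and the verification that $Z_b\hookrightarrow$ the completion containing the weak limit so that the upper bound $\|\hat u,\hat v\|_{Z_b}<\infty$ is actually available; both are routine given the norms are dominated by those in $Z$.
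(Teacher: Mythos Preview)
Your proof is correct and follows essentially the same route as the paper. Both arguments chain (\ref{hat.un.1}) and (\ref{W.Z.1}) into (\ref{W.Z.0}) and then invoke the a priori bound (\ref{W.Z.2}) together with the pointwise domination $\|\cdot\|_{Z_b}\le\|\cdot\|_Z$ on $\{x\ge 1\}$ to close. The only cosmetic difference is that the paper uses (\ref{W.Z.2}) to convert the quartic term $\eps^{A}\|\hat u,\hat v\|_{Z_b}^4$ directly into a small multiple of $\|\hat u,\hat v\|_{Z_b}^2$ (so that a single absorption step yields $\|\hat u,\hat v\|_{Z_b}=0$), whereas you first absorb the linear term and then rule out the large-norm branch of the resulting dichotomy by the same a priori bound; the two are logically equivalent.
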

\begin{proof}

Combining estimate (\ref{W.Z.1}) and (\ref{W.Z.0}) with estimate (\ref{W.Z.2}) yields: 
\begin{align}
||\hat{u}, \hat{v}||_{Z_b}^2 \lesssim C(b) \eps^{\frac{n}{2}+\gamma - \omega(N_i)} ||\hat{u}, \hat{v}||_{Z_b}^2. 
\end{align}

For $\eps$ sufficiently small, this then implies $||\hat{u}, \hat{v}||_{Z_b} = 0$. Upon consultation with the norm $Z_b$, and (\ref{hats.bc.1}), this implies that $\hat{u}, \hat{v} = 0$. 
\end{proof}

\begin{remark} We have controlled the second and third order energy norms, (\ref{norm.x1b}) - (\ref{norm.x2b}) in order to treat the term $\int \int \hat{v} u^{(1)}_y |\hat{u}| x^{-2b}$, which appears in (\ref{mdiff}). This term forces us to control $||\hat{v} x^{\frac{1}{2}-b}||_{L^\infty}$. One cannot get around placing this term in $L^\infty$ (for instance by integrating by parts from $u^{(1)}_y$) because this produces suboptimal decay rates, according to (\ref{u0}) - (\ref{Uf}).  
\end{remark}

This then establishes Theorem \ref{thm.e.u}, and controlling $[u,v] \in Z$ then immediately establishes the main result, Theorem \ref{thm.m.1}. 

\vspace{5 mm}

\textbf{Acknowledgements:} The author thanks Yan Guo for many valuable discussions regarding this research. The author also thanks Bjorn Sandstede for introducing him to the paper \cite{BKL}.

\end{document}